\newtheorem{Cor}[subsubsection]{Corollary}
\newtheorem{Lm}[subsubsection]{Lemma}
\newtheorem{Pp}[subsubsection]{Proposition}
\newtheorem{Thm}[subsubsection]{Theorem}
\newtheorem{Def}[subsubsection]{Definition}
\newtheorem{Rem}[subsubsection]{Remark}
\theoremstyle{definition}
\theoremstyle{remark}
\newcommand{\nc}{\newcommand}
\nc{\renc}{\renewcommand}
\nc{\ssec}{\subsection}
\nc{\sssec}{\subsubsection}
\nc{\on}{\operatorname}
\nc\ol{\overline}
\nc\wt{\widetilde}
\nc\tboxtimes{\wt{\boxtimes}}
\newcommand{\cA}{{\mathcal A}}
\newcommand{\cB}{{\mathcal B}}
\newcommand{\cC}{{\mathcal C}}
\newcommand{\cD}{{\mathcal D}}
\newcommand{\cH}{{\mathcal H}}
\newcommand{\cE}{{\mathcal E}}
\newcommand{\cG}{{\mathcal G}}
\newcommand{\cI}{{\mathcal I}}
\newcommand{\cJ}{{\mathcal J}}
\newcommand{\cO}{{\mathcal O}}
\newcommand{\cL}{{\mathcal L}}
\newcommand{\cM}{{\mathcal M}}
\newcommand{\cF}{{\mathcal F}}
\newcommand{\cK}{{\mathcal K}}
\newcommand{\cP}{{\mathcal P}}
\newcommand{\cS}{{\mathcal S}}
\newcommand{\cT}{{\mathcal T}}
\newcommand{\cX}{{\mathcal X}}
\newcommand{\cY}{{\mathcal Y}}
\newcommand{\cZ}{{\mathcal Z}}
\newcommand{\NN}{{\mathbb N}}
\newcommand{\ZZ}{{\mathbb Z}}
\newcommand{\PP}{{\mathbb P}}
\nc{\gi}{\mathfrak{i}}
\newcommand{\gU}{\mathfrak{U}}
\nc{\gM}{\mathfrak{M}}
\nc{\gV}{\mathfrak{V}}
\nc{\gE}{\mathfrak{E}}
\nc{\gL}{\mathfrak{L}}
\nc{\bC}{{\mathbf C}}
\nc{\uZ}{\underline{\cZ}}
\newcommand{\Rep}{{\on{Rep}}}
\newcommand{\Sch}{{\on{Sch}}}
\newcommand{\Gm}{\mathbb{G}_m}
\newcommand{\toup}[1]{\stackrel{#1}{\to}}
\newcommand{\hook}[1]{\stackrel{#1}{\hookrightarrow}}
\newcommand{\getsup}[1]{\stackrel{#1}{\gets}}
\newcommand{\IC}{\on{IC}}
\newcommand{\Hom}{\on{Hom}}
\newcommand{\Aut}{\on{Aut}}
\newcommand{\RG}{\on{R\Gamma}}
\newcommand{\Spec}{\on{Spec}}
\newcommand{\supp}{\on{supp}}
\newcommand{\HOM}{{{\mathcal H}om}}
\newcommand{\Gr}{\on{Gr}}
\newcommand{\Fun}{{\on{Fun}}}
\newcommand{\pr}{\on{pr}}
\newcommand{\id}{\on{id}}
\newcommand{\QED}{$\square$} 
\newcommand{\iso}{{\widetilde\to}}
\newcommand{\comp}{\circ}
\renewcommand{\H}{{\on{H}}}   
\newcommand{\DD}{\mathbb{D}}  
\newcommand{\D}{\on{D}}       
\newcommand{\ov}[1]{\overline{#1}}
\newcommand{\select}[1]{{\it{#1}}}
\newcommand{\und}[1]{\underline{#1}}
\newcommand{\ev}{\mathit{ev}}
\newcommand{\Conv}{\on{Conv}}
\newcommand{\Loc}{\on{Loc}}
\newcommand{\Sph}{\on{Sph}}
\newcommand{\ttimes}{\tilde\times}
\newcommand{\act}{\on{act}}
\newcommand{\Funct}{\on{Funct}}
\renewcommand{\Im}{\on{Im}}
\newcommand{\Vect}{\on{Vect}}
\newcommand{\Ind}{\on{Ind}}
\nc{\Perv}{\on{Perv}}
\nc{\Gra}{\on{Gra}}
\nc{\PPerv}{\on{{\PP}erv}}
\nc{\oX}{\overset{\scriptscriptstyle\circ}{X}}
\nc{\ocL}{\overset{\scriptscriptstyle\circ}{\cL}}
\nc{\gRes}{\on{gRes}}
\nc{\Sign}{\on{Sign}}
\nc{\goodat}{\rm{good\, at}}
\nc{\Whit}{\on{Whit}}
\nc{\add}{\on{add}}
\nc{\FS}{\on{FS}}
\nc{\oo}[1]{\overset{\scriptscriptstyle\circ}{#1}}
\nc{\can}{\on{can}}
\nc{\summ}{\on{sum}}
\nc{\SiSu}{\on{SS}}
\nc{\Irr}{\on{Irr}}
\nc{\Hecke}{\on{Hecke}}
\nc{\oHecke}{\overset{\scriptstyle\bullet}{\Hecke}}
\nc{\og}[1]{\overset{\scriptscriptstyle\bullet}{#1}}
\nc{\of}{\overset{\scriptstyle\bullet}{f}}
\nc{\Exp}{\on{{\mathcal E}xp}}
\nc{\Chain}{\on{Chain}}
\nc{\Map}{\on{Map}}
\nc{\cSet}{\on{{\mathcal S}et}}
\nc{\Cat}{\on{\mathcal{C}at}}
\nc{\bfitDelta}{\bm{\mathit{\Delta}}}
\nc{\Grpd}{\on{Grpd}}
\nc{\Kan}{\on{{\mathcal K}an}}
\nc{\Spc}{\on{Spc}}
\nc{\Yon}{\on{Yon}}
\nc{\colim}{\on{colim}}
\nc{\Fin}{\on{{\mathcal F}in}}
\nc{\Alg}{\on{Alg}}
\nc{\Triv}{\on{Triv}}
\nc{\Grp}{\on{{\mathcal G}rp}}
\nc{\Mon}{\on{Mon}}
\nc{\Disc}{\on{Disc}}
\nc{\PreStk}{\on{PreStk}}
\nc{\Stk}{\on{Stk}}
\nc{\DGCat}{\on{DGCat}}
\nc{\Ab}{\on{Ab}}
\nc{\ComGrp}{\on{{\mathcal C}omGrp}}
\nc{\Ptd}{\on{Ptd}}
\nc{\Shv}{\on{Shv}}
\nc{\DG}{\on{DG}}
\nc{\cLS}{\on{\mathcal L\mathcal S}}
\nc{\Ge}{\on{Ge}}
\nc{\Ran}{\on{Ran}}
\nc{\Surj}{\on{Surj}}
\nc{\FactGe}{\on{FactGe}}
\nc{\Quad}{\on{Quad}}
\nc{\detrel}{\on{detrel}}
\nc{\Fact}{\on{Fact}}
\nc{\Sptr}{\on{Sptr}}
\nc{\cPr}{\on{{\mathcal P}r}}
\nc{\Tw}{\on{Tw}}
\nc{\QCoh}{\on{QCoh}}
\nc{\uMap}{\und{\Map}}
\nc{\cTw}{{\mathcal T}w}
\nc{\FactPic}{\on{FactPic}}
\nc{\oblv}{\on{oblv}}
\nc{\com}{\on{com}}
\nc{\Glue}{\on{Glue}}
\nc{\Sat}{\on{Sat}}
\nc{\coind}{\on{coind}}
\nc{\counit}{\on{coun}}
\nc{\Conf}{\on{Conf}}
\nc{\Cart}{\on{Cart}}
\newcommand*{\doublerightarrow}[2]{\mathrel{
  \settowidth{\@tempdima}{$\scriptstyle#1$}
  \settowidth{\@tempdimb}{$\scriptstyle#2$}
  \ifdim\@tempdimb>\@tempdima \@tempdima=\@tempdimb\fi
  \mathop{\vcenter{
    \offinterlineskip\ialign{\hbox to\dimexpr\@tempdima+1em{##}\cr
    \rightarrowfill\cr\noalign{\kern.5ex}
    \rightarrowfill\cr}}}\limits^{\!#1}_{\!#2}}}
\newcommand*{\triplerightarrow}[1]{\mathrel{
  \settowidth{\@tempdima}{$\scriptstyle#1$}
  \mathop{\vcenter{
    \offinterlineskip\ialign{\hbox to\dimexpr\@tempdima+1em{##}\cr
    \rightarrowfill\cr\noalign{\kern.5ex}
    \rightarrowfill\cr\noalign{\kern.5ex}
    \rightarrowfill\cr}}}\limits^{\!#1}}}
\begin{document}

\title{Note on factorization categories}
\author{Sergey Lysenko}
\address{Institut Elie Cartan Lorraine, Universit\'e de Lorraine, 
B.P. 239, F-54506 Vandoeuvre-l\`es-Nancy Cedex, France}
\email{Sergey.Lysenko@univ-lorraine.fr}
\date{July 24, 2026}
\subjclass[2020]{22E57, 14D24}
\begin{abstract}
We systematically study the commutative factorization categories over the Ran space.
We fill in what we consider as a gap in the construction of the factorizable Satake functor in the constructible setting in \cite{Gai19Ran}, \cite{GLys}. To do so, we summarize four different constructions of commutative factorization sheaves of categories on $\Ran$ and establish some relations between them. We also generalize the Drinfeld-Pl\"ucker formalism from \cite{Gai19Ran, DL}. 

 In addition, we study the commutative factorization categories $\Fact(C)$ with an additional grading of $C$. We apply our results to relate the versions of the Satake functors for the Ran space with that of the configuration space of colored divisors on a curve. This paper is a companion of \cite{DL2}.  
\end{abstract}
\thanks{It is a pleasure to thank Sam Raskin for answering my questions and very useful email correspondence. I am grateful to Gurbir Dhillon and Misha Finkelberg for many fruitful conversations.}
\maketitle
\tableofcontents

\section{Introduction}

\sssec{} Let $X$ be a smooth projective connected curve over an algebraically closed field $k$, $\Ran$ be the Ran space of $X$. This paper is written to achive several goals:

\medskip

 \select{Goal 1}:  Let $G$ be a connected reductive group over $k$. We fill in what we consider as a gap in the construction of the factorizable Satake functor $\Sat_{G, \Ran}: \Fact(\Rep(\check{G})\to \Sph_{G,\Ran}$ in the constructible context in \cite{Gai19Ran}, \cite{GLys} (cf. Section~\ref{Sect_Application to Satake} for the notation). 

 In (\cite{Ras2}, Section~6) the functor $\Sat_{G, \Ran}$ is defined in the setting of $\cD$-modules, and some proofs in \select{loc.cit.} do not apply in the constructible context. In Section~\ref{Sect_Application to Satake} we explain how to modify the argument from \select{loc.cit.} so that it works uniformly for $\cD$-modules and in the constructible context to get the functor $\Sat_{G,\Ran}$.
 
 In \cite{Gai19Ran} we referred to \cite{Reich} for the construction of the above functor, however, the paper \cite{Reich} is written with insufficient level of precision and contains mistakes.
 
 \medskip
 
\select{Goal 2}:  We systematically study the commutative factorization categories over $\Ran$ attached to commutative algebras $C(X)$ in $Shv(X)$-module categories and related constructions. Some of these constructions have appeared in \cite{GLys}, where the proofs were left to a reader. One of our motivations is to provide complete proofs for them to facilitate the reading of \cite{GLys}. This paper is also a companion of \cite{DL2}, and our second purpose is to provide a convenient reference for some general results on the commutative factorization categories used in \cite{DL2}.

 A detailed description of our results is found below in this introduction, the most important among them are: i) equivalence of different constructions of $\Fact(C)$; 
ii) development of some version of the Drinfeld-Pl\"ucker formalism more general than that of \cite{Gai19SI, Gai19Ran}; iii) structural results on $\Fact(C)$ under an additional assumption that $C(X)$ is equipped with a grading compatible with the symmetric monoidal structure on $C(X)$. These results are essential for \cite{DL2} and allow to construct, in particular, some new versions of chiral Hecke algebras in \select{loc.cit.}

\sssec{} We summarize several different construction of commutative factorization categories and establish their basic properties and relations between them.
 
 Work with a sheaf theory as in \cite{GLys}, which is either $\cD$-modules or a sheaf theory in the constructible context. Let $fSets$ be the category of finite non-empty sets, where the morphisms are surjections.
 
 If $C(X)$ is a non-unital commutative algebra in $Shv(X)-mod$ (cf. Section~\ref{Sect_Conventions} for notations) then following D. Gaitsgory one associates to it a factorization sheaf of categories $\Fact(C)$ on $\Ran$. This constructed is studied in Section~\ref{Sect_Fact_cat_for_com_algebras}, it uses closed subsets of $X^I$ for $I\in fSets$. 

 If $C(X)$ is a non-unital cocommutative coalgebra in $Shv(X)-mod$ then one associates to it a sheaf of categories $\Fact^{co}(C)$ on $\Ran$, which is a factorization sheaf of categories under some additional assumptions. This construction is studied in Section~\ref{Sect_Fact_cat_for_cocom_coalg}, it is obtained in a manner dual to that of Section~\ref{Sect_Fact_cat_for_com_algebras}. 
 
 If $C$ is a non-unital commutative algebra in $\DGCat_{cont}$ then following S. Raskin  one associates to it a sheaf of categories $\ov{\Fact}(C)$ on $\Ran$, which is a factorization sheaf of categories under mild additional assumptions. This construction is studied in Section~\ref{Sect_4}. It uses a different combinatorics, namely, open subsets of $X^I$ for $I\in fSets$ instead of closed ones.
 
 If $C$ is a non-unital cocommutative coalgebra in $\DGCat_{cont}$ then one associated to it a factorization sheaf of categories $\ov{\Fact}^{co}(C)$ on $\Ran$ in a manner dual to that of Section~\ref{Sect_4}. This construction is studied in Section~\ref{Sect_5}. 

\sssec{} We study some basic properties of the above sheaves of categories and construct natural functors between them. One of our main results is Theorem~\ref{Thm_5.1.11} which claims that under some mild assumptions all the four constructions produce the same category. 
For the convenience of the reader, the precise assumptions in Theorem~\ref{Thm_5.1.11} are as follows: 
\begin{itemize}
\item[] $C$ is a unital cocommutative coalgebra in $\DGCat_{cont}$ for which the comultiplication map $\com: C\to C^{\otimes 2}$ admits a left adjoint, the counit map $C\to \Vect$ admits a left adjoint, and $C$ is compactly generated.
\end{itemize}

 To obtain Theorem~\ref{Thm_5.1.11}, we extend to the constructible context a beautiful result of S. Raskin (\cite{Ras2}, Lemma~6.18.1), whose proof in \select{loc.cit.} works only for $\cD$-modules. The corresponding result is our Proposition~\ref{Pp_4.1.10_Raskin_dualizability}, its proof in Appendix~\ref{Sect_proof_Pp_4.1.10_Raskin_dualizability} is maybe of independent interest. 

\sssec{} Let un illustrate this by an example. Let $e$ be field of coefficients of our sheaf theory, $\check{G}$ be the Langlands dual group of $G$ over $e$. Take $C=\Rep(\check{G})$. 

 The product $m: C^{\otimes 2}\to C$ admits a right adjoint $m^R: C\to C^{\otimes 2}$ in $\DGCat_{cont}$. We may view $(C, m)$ as a commutative algebra in $\DGCat_{cont}$, $(C, m^R)$ as a cocommutative coalgebra in $\DGCat_{cont}$. We may similarly view $C(X)=C\otimes Shv(X)$ as a commutative algebra (resp., cocommutative coalgebra) in $Shv(X)-mod$.

 By Theorem~\ref{Thm_5.1.11}, we have canonical equivalences
$$
\ov{\Fact}^{co}(C)\,\iso\, \Fact^{co}(C)\,\iso\,\ov{\Fact}(C)\,\iso\,\Fact(C)
$$

\sssec{} Theorem~\ref{Thm_5.1.11} is not immediate: for example, let $C$ be a non-unital commutative algebra in $\DGCat_{cont}$ and $I\in fSets$. Consider the natural map $X^I\to \Ran$. By definition, the global sections $\Gamma(X^I, \ov{\Fact}(C))$ are given as a limit of some functor 
$$
\bar\cG_{I, C}: \Tw(I)^{op}\to Shv(X^I)-mod
$$ 
(cf. Section~\ref{Sect_4}). Assume the multiplication $m: C^{\otimes 2}\to C$ has a continuous right adjoint $m^R$. Then $(C, m^R)$ is a non-unital cocommutative coalgebra in $\DGCat_{cont}$, and by definition
$\Gamma(X^I, \ov{\Fact}^{co}(C))$ is the colimit of the functor 
$$
\bar\cG_{I, C}^R: \Tw(I)\to Shv(X^I)-mod
$$ 
obtained from $\bar\cG_{I, C}$ by passing to the right adjoints in $Shv(X^I)-mod$. In general, there is no reason for the latter colimit to coincide with the previous limit. 

\sssec{} Our Proposition~\ref{Pp_4.1.10_Raskin_dualizability} is applied in Section~\ref{Sect_Application to Satake} to extend the construction of the factorizable Satake functor $\Sat_{G,\Ran}$ from the $\cD$-module case (obtained in \cite{Ras2}) to the constructible context. We also review the construction of the chiral Hecke algebra in a way uniform for our sheaf theories. 

\ssec{Overview of other results}

\sssec{} The content of Section~\ref{Sect_Fact_cat_for_com_algebras} is as follows. We show that $\Ran$ is 1-affine for our sheaf theories. This is a consequence of a more general result in Appendix~\ref{Sect_Safe pseudo-schemes}, where we introduce a notion of a \select{safe pseudo-scheme}. We show that any safe pseudo-scheme is 1-affine for our sheaf theories (this is a new result of independent interest, especially in the constructible context).

Given $C(X)\in CAlg^{nu}(Shv(X)-mod)$, we define a factorization sheaf of categories $\Fact(C)$ over $\Ran$, equip it with the commutative chiral product. We equip $\Fact(C)$ with three distinct non-unital symmetric monoidal structures: $\star, \otimes^{ch}, \otimes^!$.
Here $\otimes^{ch}$ is called the chiral symmetric monoidal structure.

 For exeample, if $C(X)=Shv(X)$ then $\Fact(C)\,\iso\,Shv(\Ran)$ naturally, and the $\otimes^!$-symmetric monoidal structure on $\Fact(C)$ is the usual one on $Shv(\Ran)$, which justifies our notation $\otimes^!$. The existence of $\otimes^!$ comes from the fact that the functor $CAlg^{nu}(Shv(X)-mod)\to Shv(\Ran)-mod$, $C(X)\mapsto \Fact(C)$ itself is equipped with a non-unital symmetric monoidal structure. The non-unital symmetric monoidal structure $\otimes^!$ on $\Fact(C)$ is compatible with factorization. 
 
 In Section~\ref{Sect_2.2.19_now} we explain that one may talk about "sheaves of categories of $\Fact(C)$-modules over $\Ran$". We show that $\Ran$ (equipped with the sheaf of commutative algebras $(\Fact(C), \otimes^!)$) is also 1-affine. That is, a sheaf of categories of $\Fact(C)$-modules over $\Ran$ is the same as its category of global sections.
 
 Further we discuss the commutative factorization algebras in $\Fact(C)$ and their properties. We explain that they can be viewed as certain non-unital commutative algebras in $(\Fact(C), \star)$ and also as non-unital commutative algebras in $(\Fact(C),\otimes^!)$. We add also unital version of some of the previous results. 
 
 We then study further properties of $\Fact(C)$ as dualizability, interaction with the duality, ULA property, interaction with the involution $(C(X), m)\mapsto (C(X)^{\vee}, (m^R)^{\vee})$, where $m: C(X)^{\otimes 2}\to C(X)$ is the multiplication (provided that $m$ has a $Shv(X)$-linear continuous right adjoint).  
 
 We then study properties of some functors between the commutative factorization categories. In particular, given a monadic functor $D(X)\to C(X)$ over $X$, we propose a sufficient condition for the functor obtained from it by `speading over $\Ran$' or over $X^I$ to remain monadic. We show that if $C(X)$ is relatively rigid over $X$ then $C_{X^I}$ (resp., $\Fact(C)$) is relatively rigid over $Shv(X^I)$ (resp., over $Shv(\Ran)$ (cf. Corollary~\ref{Cor_2.7.7}). The notion of relative rigidity generalizes a notion of a rigid monoidal category from (\cite{G}, ch. I.1, 9.1.2), we study this notion in details in Appendix~\ref{Sect_Appendix_G}. 
 
 Next, we explain that $(\Fact(C), \otimes^{ch})$ is pro-nilpotent in the sense of (\cite{FG}, 4.1.1). We apply this to extend some results of \cite{FG} to our setting. Namely, one may define the category of factorization cocommutative coalgebras in $(\Fact(C), \otimes^{ch})$ and establish a version of Koszul duality for this category (cf. Corollary~\ref{Cor_Koszul_duality}). More generally, for a reduced operad $\cP$ and its dual cooperad $Q=\cP^{\vee}$ (in the sense of \cite{G}, IV.2) one may consider the corresponding categories of $\cP$-algebras (resp., $Q$-coalgebras) in $(\Fact(C), \otimes^{ch})$ and $(\Fact(C), \star)$. We explain that the corresponding version of the basic diagram for them commutes as in \cite{FG}, that is, the Koszul duality functors intertwine induction and restriction (cf. Proposition~\ref{Pp_basic_diagram_2.8.10}). This uses the fact that $\id: (\Fact(C), \otimes^{ch})\to (\Fact(C), \star)$ is right-lax non-unital symmetric monoidal.
 
  In Section~\ref{Sect_Filtration_on_Fact(A)} we explain how a filtration on a factorization algebra $A$ yields a filtration on $\Fact(A)$, this uses the ideas from \cite{G} and the Day convolution product. In Section~\ref{Sect_special_constr_context} we collected some properties of $\Fact(C)$ special to the constructible context (they do not hold for $\cD$-modules). 
 
\sssec{} In Section~\ref{Sect_Fact_cat_for_cocom_coalg} we study the construction dual to that of Section~\ref{Sect_Fact_cat_for_com_algebras}. It attaches to $C(X)\in CoCAlg^{nu}(Shv(X)-mod)$ a sheaf of categories $\Fact^{co}(C)$. By construction, there are natural candidates for the factorization morphisms. In this sense the factorization structure on $\Fact^{co}(C)$ is not an additional structure, but a property. We give a sufficient condition for this property to hold. We then define the chiral comultiplication for $\Fact^{co}(C)$. 

\sssec{} In Section~\ref{Sect_4} we associate to $C\in CAlg^{nu}(\DGCat_{cont})$ a sheaf of categories $\ov{\Fact}(C)$ on $\Ran$. We equip it with the commutative chiral product, so that the factorization structure on $\ov{\Fact}(C)$ becomes a property (not an additional structure). We give a sufficient condition for this property to hold (cf. Remark~\ref{Rem_4.1.6}). 

We equip the functor $CAlg^{nu}(\DGCat_{cont})\to Shv(\Ran)-mod$, $C\mapsto \ov{\Fact}(C)$ with a non-unital right-lax symmetric monoidal structure (compatible with the chiral product). We construct a canonical arrow 
\begin{equation}
\label{map_intriduction_Fact(C)_to_ov(Fact)(C)}
\Fact(C)\to \ov{\Fact}(C)
\end{equation}
compatible with factorization.

 Given a t-structure on $C$ (satisfying some minor assumptions, cf. Section~\ref{Sect_4.1.12_now_t-str}) we equip $\ov{\Fact}(C)$ with an induced t-structure. We make observations about some cohomology groups of commutative factorization algebras with respect to this t-structure (Remark~\ref{Rem_4.1.13_now}). 
 
 Proposition~\ref{Pp_4.1.10_Raskin_dualizability} is one of our main technical results. It extends to the constructible context a beautiful result of Raskin (\cite{Ras2}, 6.18.1), whose proof in \select{loc.cit.} holds for $\cD$-modules only. It allows to conclude that $\ov{\Fact}(C)$ is a factorization category provided that $C$ is dualizable in $\DGCat_{cont}$ and the product $m; C^{\otimes 2}\to C$ has a continuous right adjoint. 
 
 We then analyze the interaction of $\ov{\Fact}(C)$ with duality. We give a sufficient condition for the arrow (\ref{map_intriduction_Fact(C)_to_ov(Fact)(C)}) to be an equivalence. 
 
\sssec{} In Section~\ref{Sect_5} we attach to $C\in CoCAlg^{nu}(\DGCat_{cont})$ a sheaf of categories $\ov{\Fact}^{co}(C)$ on $\Ran$. We equip it with a commutative chiral coproduct and show it is a factorization sheaf of categories (cf. Lemma~\ref{Lm_5.1.6_now}). We further define a canonical arrow $\ov{\Fact}^{co}(C)\to \Fact^{co}(C)$. We then establish our main Theorem~\ref{Thm_5.1.11} compairing all the four constructions. 

\sssec{} In Section~\ref{Sect_Spreading right-lax symmetric monoidal functors} we generalize a construction of Gaitsgory from (\cite{Gai19Ran}, Section~2.6). Informally speaking, it allows to spread along the $\Ran$ space certain right-lax symmetric monoidal functors given originally over $X$ in a factorizable way (so extending the idea of the construction of factorization algebras in commutative factorization categories). 
 
 We apply it to generalize the result of D.~Gaitsgory (\cite{Gai19Ran}, Proposition~5.4.7), which allowed in \select{loc.cit.} to present the Ran space version of the semi-infinite $\IC$-sheaf $\IC^{\frac{\infty}{2}}_{\Ran}$ as certain colimit. This new version of the Drinfeld-Pl\"ucker formalism plays an essential role in our paper \cite{DL2}.
 
\sssec{} In Appendix~\ref{Sect_Sheaves of categories} we summarize the generalities about sheaves of categories for our sheaf theories.

\sssec{} In Appendix~\ref{appendix_some_generalities} we collect some properties of our sheaf theories, which are either not known or proved for other sheaf theories elsewhere. Especially, several results of Raskin from \cite{Ras2} are extended there from the case of $\cD$-modules to the constructible context. Some results seem new. For example, for $S\in\Sch_{ft}$ and $C\in Shv(S)-mod$ we show that dualizability of $C$ in $Shv(S)-mod$ is local in Zariski topology of $S$ (Lemma~\ref{Lm_dualizability_is_local}). Another example, we show that if $C$ is ULA over $Shv(S)$ then the exponent $\Fun_{Shv(S)}(C, Shv(S))$ is also ULA over $Shv(S)$ (Lemma~\ref{Lm_C^vee(X)_is_still_ULA}).  

\sssec{} In Appendix~\ref{Sect_Some categorical notions} we collect some general categorical observations. Given an object $C\in CAlg(\DGCat_{cont})$ and an automorphism $\epsilon$ of the identity functor on $C$ (as a symmetric monoidal functor), in (\cite{GLys}, 8.2.4) we constructed some twist $C^{\epsilon}\in CAlg(\DGCat_{cont})$. In Section~\ref{Sect_Twist} we give an alternative construction of the same twist. It is used in the construction of Satake functors in Section~\ref{Sect_Application to Satake}.
 
\sssec{} In Appendix~\ref{Sect_E} we study graded versions of commutative factorization categories. Let $\Lambda$ be a semigroup isomorphic to $\ZZ_+^n$ for some $n>0$, set $\Lambda^*=\Lambda-\{0\}$. Let $C(X)\in CAlg^{nu}(Shv(X)-mod$). In Appendix~\ref{Sect_E} we systematically study the sheaves of categories $\Fact(C)$ assuming given an additional $\Lambda$-grading (resp., $\Lambda^*$-grading) on $C(X)$  compatible with the symmetric monoidal structure. Then $\Fact(C)$ inherits a $\Lambda$-grading (resp., $\Lambda^*$-grading). Similarly, if $A\in CAlg^{nu}(C(X))$ is graded then 
then $\Fact(A)$ inherits the corresponding grading.

 We calculate $\Fact(C)$ in the simplest cases when $C(X)$ is the category of $\Lambda^*$-graded (resp., $\Lambda$-graded) sheaves on $X$ (cf. Proposition~\ref{Pp_E.1.5} and \ref{Pp_E.2.3}). Let $\Conf$ be the moduli scheme of $\Lambda^*$-valued divisors on $X$. As an important surprising byproduct we get a canonical $Shv(\Ran)$-module structure on $Shv(\Conf)$. 
 
 Let $(\Conf\times\Ran)^{\subset}$ be the subfunctor of $\Conf\times\Ran$ classifying $(D\in\Conf, \cI\in\Ran)$ such that $D$ is 'set-theoretically' supported on the set $\cI$ (cf. Section~\ref{Sect_E.0.5}). If $C(X)$ is $\Lambda$-graded (resp., $\Lambda^*$-graded) then we equip $\Fact(C)$ with an additional structure of a $Shv((\Conf\times\Ran)^{\subset})$-module category (resp., of a $Shv(\Conf)$-module category).  
  
 For $E(X)\in CAlg^{nu}(Shv(X)-mod)$ we get an equivalence
$$
\Fact(E)\otimes_{Shv(\Ran)} Shv(\Conf)\,\iso\,\Fact(C),
$$ 
where $C=\underset{\lambda\in\Lambda^*}{\oplus} E(X)$ is viewed as a $\Lambda^*$-graded non-unital commutative algebra in $Shv(X)-mod$ (cf. Proposition~\ref{Pp_E.1.22}). We also study the corresponding unital case. 
 
\sssec{}  For $C(X)=\underset{\lambda\in\Lambda}{\oplus} C(X)_{\lambda}$ as above let $C_{>0}=\underset{\lambda\in\Lambda}{\oplus} C(X)_{\lambda}$. In Section~\ref{Sect_Adding and removing units}
we discuss the procedures of adding and removing units relating $\Fact(C)$ with $\Fact(C_{>0})$ under the assumption that $C(X)_0\,\iso\, Shv(X)$, and similarly for commutative factorization algebras in $\Fact(C)$.
 
  This section culminates with Theorem~\ref{Con_E.3.8}, which is one of our main technical results. It affirms that $\Fact(C)$ is obtained from $\Fact(C_{>0})$ by the base change along the projection map $(\Conf\times\Ran)^{\subset}\to \Conf$. In Section~\ref{Sect_Adding units for graded factorization algebras} we establish a similar result for graded factorization algebras: $\Fact(A)$ is obtained from $\Fact(A_{>0})$ by base change. This result is very important for our joint paper \cite{DL2} with G. Dhillon. 
  
\sssec{} In Section~\ref{Sect_E.3} we apply the above considerations to the two versions of the affine Grassmanian $\Gr_{G,\Ran}$ and $\Gr_{G,\Conf}$ of a reductive group $G$. Though
there is no natural map $\can_{\Gr}: \Gr_{G,\Conf}\to \Gr_{G,\Ran}$ of prestacks, we explain that the functor $\can_{\Gr}^!$ does exist between the corresponding usual (or spherical-equivariant) $\DG$-categories of sheaves. For the corresponding $\DG$-categories of spherical sheaves $\Sph_{G,\Ran}$ and $\Sph_{G, \Conf}$ on these prestacks we promote the functor $\can_{\Gr}^!$ to a non-unital symmetric monoidal functor
$$
\can_{\Gr}^!: (\Sph_{G,\Ran}, \,\otimes^{ch})\to (\Sph_{G,\Conf}, \,\otimes^{ch}),
$$
where $\otimes^{ch}$ denotes the corresponding chiral symmetric monoidal structure (cf. Proposition~\ref{Pp_E.4.12}). The same functor is also non-unital monoidal
$$
\can_{\Gr}^!: (\Sph_{G,\Ran}, \,\star)\to (\Sph_{G,\Conf}, \,\star)
$$
with respect to the exterior convolutions (cf. Proposition~\ref{Pp_E.4.16}). 
 
 As an application, we obtain the version of the Satake functor for the configuration space $\Conf$ out of the usual Satake functor $\Sat_{G,\Ran}: \Fact(\Rep(\check{G}))\to \Sph_{G,\Ran}$ by the base change $\_\otimes_{Shv(\Ran)} Shv(\Conf)$.  
 
\sssec{} In Section~\ref{Sect_Factorization coalgebras} we study a construction attaching to $C\in CAlg^{nu}(\DGCat_{cont})$ and $A\in CoCAlg^{nu}(C)$ an object $\Fact^{coalg}(A)\in\Fact(C)$. We show that, under the assumption that $C$ is $\Lambda^*$-graded as in Section~\ref{Sect_E1}, $\Fact^{coalg}(A)$ is a factorization coalgebra in $(\Fact(C), \star)$ in the sense of \cite{FG}. 
  

\ssec{Conventions}
\label{Sect_Conventions} 

\sssec{} Our conventions are those of \cite{GLys}. We work with a sheaf theory, which is either in constructible context or the theory of $\cD$-modules. Our geometric objects are over an algebraically closed field $k$. Let $X$ be a smooth projective connected curve. Write $\Ran$ for the Ran space of $X$ (\cite{GLys}, Section~2.1). 

 We denote by $\Sch_{ft}$ (resp., $\Sch^{aff}_{ft}$) the category of classical schemes (resp., affine classical schemes) of finite type over $k$. 
 
\sssec{} Write $1-\Cat$ for the $\infty$-category of $\infty$-categories, $\Spc$ for the $\infty$-category of spaces. The field of coefficients of our sheaf theory is denoted $e$. Write $\DGCat_{cont}$ for the category of $e$-linear presentable cocomplete $\DG$-categories and continuous functors. For $C, D\in\DGCat_{cont}$, $C\otimes D$ denotes the Lurie tensor product in $\DGCat_{cont}$. For $A\in Alg(\DGCat_{cont})$ set $A-mod=A-mod(\DGCat_{cont})$, write $A^{rm}$ for $A$ equipped with the reversed mutliplication. For $M,L\in A-mod$ we denote by $\Fun_A(M, N)\in\DGCat_{cont}$ the category of continuous exact $A$-linear functors. 

 If $C,D\in \DGCat_{cont}$ are equipped with accessible t-structures then $C\otimes D$ is also equipped with an accessible t-structure defined in (\cite{Ly}, 9.3.17). 
 
\sssec{} If not mentioned otherwise, for $S\in\Sch_{ft}$ the category $Shv(S)$ is equipped with the $\otimes^!$-symmetric monoidal structure. Recall that $\PreStk_{lft}=\Fun((\Sch^{aff}_{ft})^{op}, \Spc)$.  

 The theory of sheaves of categories on prestacks (for a given theory of sheaves as above) is recalled in Appendix~\ref{Sect_Sheaves of categories}. For $S\in\Sch_{ft}$, $S$ is 1-affine, so $ShvCat(S)\,\iso\, Shv(S)-mod$ canonically. 
 
 For $S, S'\in \Sch_{ft}$, $C\in Shv(S)-mod, C'\in Shv(S')-mod$ we denote by $C\boxtimes C'$ the corresponding object in $Shv(S\times S')-mod$. In the constructible context we will at some point need also to consider the tensor product $C\otimes C'$ as $Shv(S)\otimes Shv(S')$-module, then $C\boxtimes C'$ is obtained from $C\otimes C'$ via extension of scalars $Shv(S)\otimes Shv(S')\to Shv(S\times S')$. (In the case of $\cD$-modules they coincide).

\sssec{} We denote by $Alg$ (resp., $Alg^{nu}$) the operad of unital (resp., non-unital) associative algebras. The corresponding operads for commutative algebras are denoted $CAlg, CAlg^{nu}$. For $A\in CAlg(1-\Cat)$ we write
$CoCAlg(A)$, (resp., $CoCAlg^{nu}(A)$) for the category of unital (resp., non-unital) cocommutative coalgebras in $A$. 

 By default, our factorization categories and algebras are non-unital. 
 
\sssec{} For $C, D\in CAlg(1-\Cat)$ a right-lax non-unital symmetric monoidal functor $f: C\to D$ gives rise, in particular, to a morphism $1_D\toup{\alpha} f(1_C)$. We call it right-lax symmetric monoidal if $\alpha$ is an isomorphism in $D$. 

\section{Factorization categories attached to commutative algebras}
\label{Sect_Fact_cat_for_com_algebras}

\ssec{} In this section we associate to $C(X)\in CAlg^{nu}(Shv(X)-mod)$ a factorization sheaf of categories $\Fact(C)$ on $\Ran$. This is the construction from (\cite{GLys}, Section~8.1).

\sssec{} Write $fSets$ for the category whose objects are finite nonempty sets, and morphisms are surjections. For a map $J\to K$ in $fSets$ write $\vartriangle^{(J/K)}: X^K\to X^J$ for the corresponding closed embedding. Recall that 
$$
\Ran\,\iso\, \underset{I\in fSets^{op}}{\colim} X^I
$$
in $\PreStk_{lft}$. For $I\in fSets$ write $\vartriangle^I: X^I\to \Ran$ for the structure map.
Recall also that for $I\in fSets$ the map $\vartriangle^I$ is pseudo-proper by (\cite{Ga}, 7.4.2). For the notion of a pseudo-proper map cf. (\cite{Ga}, 1.5). 

Recall that for $I, J\in fSets$, 
$$
X^I\times_{\Ran} X^J\,\iso\, \underset{I\twoheadrightarrow K\twoheadleftarrow J}{\colim} X^K
$$ 
in $\PreStk_{lft}$ (cf. \cite{Ga}, 8.1.2), here the colimit is taken over $(fSets_{I/}\times_{fSets} fSets_{J/})^{op}$. For the notion of 1-affineness (for our sheaf theories!)
see Section~\ref{Sect_A.1.5}.

 The next observation is, from our point of view, of independent interest (especially in the constructible context). 
\begin{Lm} 
\label{Lm_Ran_is_1-affine}
The prestack $\Ran$ 1-affine.
\end{Lm}
\begin{proof} 
The natural map $X^I\times_{\Ran} X^J\to X^I\times X^J$ is a closed immersion. So, $\Ran$ is a safe pseudo-scheme in the sense of Definition~\ref{Def_safe_pseudo-scheme}. Our claim follows by Corollary~\ref{Cor_F.0.9} 
\end{proof}

 In what follows we do not distinguish a sheaf of categories $E$ on $\Ran$ from $\Gamma(\Ran, E)\in Shv(\Ran)-mod$. 

\begin{Rem} In the constructible context the category $Shv(\Ran)$ is equipped with the non-unital symmetric monoidal structure $\otimes$ defined in Section~\ref{Sect_F.2.6}.
\end{Rem}

\begin{Lm} 
\label{Lm_2.1.11} i) If  $I\in fSets$ then $Shv(X^I)-mod$ is dualizable in $Shv(\Ran)-mod$.\\
ii) If $L\in Shv(X^I)-mod$ is dualizable in $Shv(X^I)-mod$ then $L$ is dualizable in $Shv(\Ran)-mod$.
\end{Lm}
\begin{proof}
i) {\bf Step 1}. If $\alpha: Z\to S$ is a finite morphism in $\Sch_{ft}$ then $Shv(Z)$ is dualizable in $Shv(S)-mod$. Indeed, for $\cD$-modules this follows from the fact that $Shv(Z)$ is dualizable in $\DGCat_{cont}$.

 In the constructible context $\alpha_!$ has a left adjoint, so $\alpha_!$ is monadic and $Shv(Z)\,\iso\, (\alpha_!e)-mod(Shv(S))$. By (\cite{G}, I.1, 8.6.3), $(\alpha_!e)-mod(Shv(S))$ is dualizable in $Shv(S)-mod$.
 
 Combining the above with (\cite{HA}, 4.6.2.13), we see that $Shv(Z)$ is left dualizable as a $(Shv(S), Shv(Z))$-bimodule in $\DGCat_{cont}$.  
 
\medskip\noindent
{\bf Step 2}. By (\cite{Ga1}, 1.4.6), it suffices to show that for any $J\in fSets$, $Shv(X^I)\otimes_{Shv(\Ran)} Shv(X^J)$ is dualizable in $Shv(X^J)-mod$. By Theorem~\ref{Thm_Kunneth_formula}, $Shv(X^I)\otimes_{Shv(\Ran)} Shv(X^J)\,\iso\, Shv(X^I\times_{\Ran} X^J)$ canonically. Since the projection $X^I\times_{\Ran} X^J\to X^J$ is finite, our claim follows from Step 1.

\medskip\noindent
ii) Our claim follows from i) combined with Lemma~\ref{Lm_2.1.5_Lin_Chen} below.
\end{proof}

\begin{Lm}[\cite{Chen}, A.3.8] 
\label{Lm_2.1.5_Lin_Chen}
Let $A\to B$ be a map in $CAlg(\DGCat_{cont})$. Let $L\in B-mod$ be dualizable in $B-mod$. Let $B$ be left-dualizable as a $(A,B)$-bimodule in $\DGCat_{cont}$. Then $L$ is dualizable in $A-mod$. \QED
\end{Lm}

\begin{Cor} 
\label{Cor_2.2.9_dualizability}
An object $M\in Shv(\Ran)-mod\,\iso\, ShvCat(\Ran)$ is dualizable iff for each $I\in fSets$, $M\otimes_{Shv(\Ran)} Shv(X^I)$ is dualizable in $Shv(X^I)-mod$.
\end{Cor}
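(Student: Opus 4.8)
The plan is to exhibit $Shv(\Ran)-mod$ as a limit of symmetric monoidal $\infty$-categories along symmetric monoidal functors, and then to apply the general fact that in such a limit dualizability is detected componentwise. Recall first that $ShvCat$ carries colimits of prestacks to limits of $\infty$-categories (immediate from its definition via affine test schemes). Hence from $\Ran\,\iso\,\underset{I\in fSets^{op}}{\colim} X^I$ in $\PreStk_{lft}$ we obtain $ShvCat(\Ran)\,\iso\,\underset{I\in fSets}{\lim}\, ShvCat(X^I)$, where the transition functor attached to a surjection $I\twoheadrightarrow J$ in $fSets$ is the restriction $\vartriangle^!$ along the diagonal $\vartriangle\colon X^J\hookrightarrow X^I$, and the projection to the $I$-th term is $(\vartriangle^I)^!$. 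Since $\Ran$ is $1$-affine by \lemref{Lm_2.2.8_Ran_is_1-affine} and each $X^I\in\Sch_{ft}$ is $1$-affine, this becomes an equivalence
$$
Shv(\Ran)-mod\,\iso\,\underset{I\in fSets}{\lim}\, Shv(X^I)-mod,
$$
under which the transition functor for $I\twoheadrightarrow J$ is extension of scalars $-\otimes_{Shv(X^I)}Shv(X^J)$ along the ($\otimes^!$-)symmetric monoidal functor $\vartriangle^!\colon Shv(X^I)\to Shv(X^J)$, and the projection to the $I$-th term sends $M$ to $M\otimes_{Shv(\Ran)}Shv(X^I)$; this last identification is the one already used in the proof of \lemref{Lm_2.2.8_Ran_is_1-affine}. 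In particular all the transition functors and all the projections of this limit are symmetric monoidal for the relative tensor products.

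The general fact I would then invoke is: if $\cD\,\iso\,\underset{a}{\lim}\,\cD_a$ is a limit of symmetric monoidal $\infty$-categories along symmetric monoidal functors, with (necessarily symmetric monoidal) projections $p_a\colon\cD\to\cD_a$, then $M\in\cD$ is dualizable if and only if $p_a(M)$ is dualizable for every $a$. The implication $(\Rightarrow)$ holds because a symmetric monoidal functor sends a dualizable object, together with its evaluation and coevaluation maps, to such data again. For $(\Leftarrow)$ one uses that the space of duality data $(N,\ \mathbf 1\to M\otimes N,\ N\otimes M\to\mathbf 1)$ exhibiting an object as dualizable is either empty or contractible; for $M\in\cD=\underset{a}{\lim}\cD_a$ this space is the limit over $a$ of the corresponding spaces for $p_a(M)$ in $\cD_a$, and a limit of contractible spaces is contractible, so if each $p_a(M)$ is dualizable then so is $M$.

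Applying this to $Shv(\Ran)-mod\,\iso\,\underset{I\in fSets}{\lim}\, Shv(X^I)-mod$, with the $I$-th component of $M$ being $M\otimes_{Shv(\Ran)}Shv(X^I)$, gives the corollary. The step demanding the most care is the first one: one must check that the equivalence $Shv(\Ran)-mod\,\iso\,\underset{I\in fSets}{\lim} Shv(X^I)-mod$ coming from $1$-affineness is one of symmetric monoidal $\infty$-categories, and that the structure and projection functors are symmetric monoidal for the relative tensor products. This is genuine input and not formal nonsense: for a general limit of commutative algebras $A\,\iso\,\underset{a}{\lim} A_a$ the category $A-mod$ need not be $\underset{a}{\lim} A_a-mod$, and it is exactly $1$-affineness of $\Ran$ and of the $X^I$ (together with the fact that $ShvCat$ refines to a limit-preserving functor valued in symmetric monoidal categories, with restriction functors $\otimes^!$-symmetric monoidal) that makes it work. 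Everything after that is the formal descent of dualizability along a limit of symmetric monoidal $\infty$-categories.
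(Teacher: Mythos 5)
Your proof is correct and is in substance the route the paper takes: the paper's entire proof is a citation of (\cite{Ga1}, Lemma~1.4.6), which is precisely the principle you reprove — using 1-affineness of $\Ran$ (\lemref{Lm_2.2.8_Ran_is_1-affine}) to present $Shv(\Ran)-mod$ as a limit of the symmetric monoidal categories $Shv(X^I)-mod$ with symmetric monoidal projections $M\mapsto M\otimes_{Shv(\Ran)}Shv(X^I)$, and then detecting dualizability componentwise because duality data form an empty-or-contractible space compatible with limits. The only cosmetic difference is that you check the componentwise-detection lemma directly over the $X^I$, while the cited lemma tests on affine schemes over the prestack; the reduction between the two formulations is immediate.
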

\begin{proof}
Apply (\cite{Ga1}, Lemma 1.4.6). 
\end{proof}

\sssec{} For $\cC\in 1-\Cat$ write $\cTw(\cC)$ for the twisted arrows category. By definition, $\cTw(\cC)\to \cC\times\cC^{op}$ is the cartesian fibration in spaces associated to the functor $\cC^{op}\times\cC\to \Spc$, $(c',c)\mapsto \Map_C(c',c)$. So, an object of $\cTw(\cC)$ is an arrow $\alpha: c'\to c$ in $\cC$. A map from $\alpha: c'\to c$ to $\beta: d'\to d$ in $\cTw(\cC)$ gives rise to a commutative diagram
$$
\begin{array}{ccc}
c' & \toup{\alpha} & c\\
\downarrow && \uparrow\\
d'& \toup{\beta} & d
\end{array}
$$
\sssec{} For $I\in fSets$ set $Tw(I)=\cTw(fSets)\times_{fSets} fSets_{I/}$. An object of $Tw(I)$ is a diagram $\Sigma=(I\toup{p}J\to K)$ in $fSets$, a morphism from $\Sigma_1$ to $\Sigma_2$ is given by a diagram
\begin{equation}
\label{morphism_in_Tw_v2}
\begin{array}{ccccc}
I & \toup{p_1} & J_1 & \to &K_1\\
\Vert && \downarrow &&\uparrow\\
I &\toup{p_2} & J_2 & \to &K_2
\end{array}
\end{equation} 

\sssec{} 
\label{Sect_2.1.3_now}
Let $C(X)\in CAlg^{nu}(Shv(X)-mod)$. For $J\in fSets$ write $C^{\otimes J}(X)$ for the $J$-th tensor power of $C(X)$ in $Shv(X)-mod$. 

 Let $I\in fSets$. Write 
$$
\cF_{I,C}: Tw(I)\to Shv(X^I)-mod
$$ 
for the functor sending $\Sigma=(I\to J\toup{\phi} K)$ to $C^{\otimes \phi}:=\underset{k\in K}{\boxtimes} C^{\otimes J_k}(X)$. Here $J_k$ is the fibre over $k\in K$. The functor $\cF_{I,C}$ sends the map (\ref{morphism_in_Tw_v2}) to the composition 
$$
\underset{k\in K_1}{\boxtimes} C^{\otimes (J_1)_k}(X)\toup{m} \underset{k\in K_1}{\boxtimes} C^{\otimes (J_2)_k}(X)\to\underset{k\in K_2}{\boxtimes} C^{\otimes (J_2)_k}(X),
$$
where the first map is the product in $C(X)$ along $J_1\to J_2$, and the second is the natural map 
\begin{equation}
\label{map_transition_for_cF_I,C}
\vartriangle_!\vartriangle^!(\underset{k\in K_2}{\boxtimes} C^{\otimes (J_2)_k}(X))\to \underset{k\in K_2}{\boxtimes} C^{\otimes (J_2)_k}(X)
\end{equation}
for $\vartriangle: X^{K_1}\to X^{K_2}$. Set
$$
C_{X^I}=\underset{Tw(I)}{\colim} \; \cF_{I,C}
$$
The latter colimit can also be understood in $Shv(X^I)-mod$ or equivalently in 
$\DGCat_{cont}$.

\sssec{} 
\label{Sect_2.1.4_now}
Let us check that this produces a factorization sheaf of categories on $\Ran$. 
This amounts to the following. 

Given a map $f: I\to I'$ in $fSets$, for the closed immersion $\vartriangle: X^{I'}\to X^I$ we need to construct an equivalence 
\begin{equation}
\label{equiv_Fact(C)_is_a_sheaf_Sect_2.1.4}
C_{X^I}\otimes_{Shv(X^I)} Shv(X^{I'})\,\iso\, C_{X^{I'}}
\end{equation}
in a way compatible with compositions, so giving rise to a sheaf of categories $\Fact(C)$ on $\Ran$ with $\Fact(C)\otimes_{Shv(\Ran)}\Shv(X^I)\,\iso\, C_{X^I}$.

For a map $I\toup{\phi} I'$ in $fSets$ set $X^I_{\phi, d}=\{(x_i)\in X^I\mid \mbox{if}\; \phi(i_1)\ne \phi(i_2)\;\mbox{then}\; x_{i_1}\ne x_{i_2}\}$, here the subscript $d$ stands for \select{disjoint}. The factorization structure on $\Fact(C)$ amounts to a collection of equivalences for any $I\toup{\phi} I'$ in $fSets$
\begin{equation}
\label{iso_after_Lm_3.2.4}
C_{X^I}\mid_{X^I_{\phi, d}}\,\iso\, (\mathop{\boxtimes}\limits_{i'\in I'}
C_{X^{I_{i'}}})\mid_{X^I_{\phi, d}}
\end{equation} 
in $Shv(X^I_{\phi, d})-mod$ together with compatibilities as in (\cite{BD_chiral}, 3.4.4). That is, the equivalences (\ref{iso_after_Lm_3.2.4}) have to be mutually compatible with respect to the compositions of $\phi$'s, and also compatible with (\ref{equiv_Fact(C)_is_a_sheaf_Sect_2.1.4}). 

 We construct this structure in Sections~\ref{Sect_2.1.5_now}-\ref{Sect_2.1.11_now} below.

\sssec{} 
\label{Sect_2.1.5_now}
Let $Q(I)$ be the set of equivalence relations on $I$. Recall that $Q(I)$ is partially ordered. As in \cite{BD_chiral}, we write $I'\in Q(I)$ for a map $I\to I'$ in $fSets$ viewed as an equivalence relation on $I$. We write $I''\le I'$ iff $I''\in Q(I')$. Then $Q(I)$ is a lattice (\cite{BD_chiral}, 1.3.1). For $I', I''\in Q(I)$ we have $\inf(I',I'')$ and $\sup(I', I'')$. Note that $\inf(I',I'')$ can be seen as a push-out of the diagram $I'\gets I\to I''$. 

Let $f: I\to I'$ be a map in $fSets$. Then $f$ induces a full embedding $\Tw(I')\subset \Tw(I)$ sending $I'\to J'\to K'$ to $I\toup{f'} J'\to K'$, where $f'$ is the composition $I\to I'\to J'$. 
Let 
$$
\xi: \Tw(I)\to \Tw(I')
$$ 
be the functor sending $I\to J\to K$ to the diagram $I'\to J'\to K'$ obtained by push-out along $f: I\to I'$. It sends the morphism (\ref{morphism_in_Tw_v2}) to the induced diagram
$$
\begin{array}{ccccc}
I' & \to & J'_1 & \to & K'_1\\
\Vert && \downarrow &&\uparrow\\
I' &\to & J'_2 & \to & K'_2
\end{array}
$$

 If $J_2\le J_1$ in $Q(I)$ then for $J'_i=\inf(J_i, I')$ we get $X^{J_2}\times_{X^{J_1}} X^{J'_1}\,\iso\, X^{J'_2}$.
 
\sssec{} 
\label{Sect_2.1.6_v4}
For $(I\to J\to K)\in \Tw(I)$ let $(I'\to J'\to K')\in\Tw(I')$ be its image under $\xi$. Consider the functor
\begin{equation}
\label{functor_for_Fact_v1}
\mathop{\boxtimes}\limits_{k\in K} C^{\otimes J_k}(X)\to \mathop{\boxtimes}\limits_{k'\in K'} C^{\otimes J'_{k'}}(X),
\end{equation}
given as the composition
$$
\mathop{\boxtimes}\limits_{k\in K} C^{\otimes J_k}(X)\to (\mathop{\boxtimes}\limits_{k\in K} C^{\otimes J_k})(X)\otimes_{\Shv(X^K)} Shv(X^{K'})\,\iso\, \mathop{\boxtimes}\limits_{k'\in K'} C^{\otimes J_{k'}}(X)\,\to\, \mathop{\boxtimes}\limits_{k'\in K'} C^{\otimes J'_{k'}}(X)
$$
where the second map is the product in $C(X)$ along $J_{k'}\to J'_{k'}$ for any $k'\in K'$. We used the closed immersion $X^{K'}\to X^K$. Now (\ref{functor_for_Fact_v1}) extends to a morphism  $\cF_{I,C}\to \cF_{I', C}\comp \xi$ in $\Funct(\Tw(I), \Shv(X^I)-mod)$. Namely, for any morphism (\ref{morphism_in_Tw_v2}) the diagram commutes
$$
\begin{array}{ccc}
\mathop{\boxtimes}\limits_{k\in K_1} C^{\otimes (J_1)_k}(X)& \to & 
\mathop{\boxtimes}\limits_{k\in K'_1} C^{\otimes (J'_1)_k}(X)
 \\
\downarrow && \downarrow\\
\mathop{\boxtimes}\limits_{k\in K_2} C^{\otimes (J_2)_k}(X) &
 \to & \mathop{\boxtimes}\limits_{k\in K'_2} C^{\otimes (J'_2)_k}(X)
\end{array}
$$  
It uses the fact that the square is cartesian
$$
\begin{array}{ccc}
X^{K_1} & \getsup{\vartriangle} & X^{K'_1}\\
\downarrow\lefteqn{\scriptstyle\vartriangle}   &&\downarrow\lefteqn{\scriptstyle\vartriangle}\\
X^{K_2} & \getsup{\vartriangle} & X^{K'_2}
\end{array}
$$
and the base change holds $\vartriangle^!\vartriangle_*\,\iso\, \vartriangle_*\vartriangle^!$. Here $K'_1=inf(K_1, K'_2)$. 
 
  This gives the desired functor 
\begin{equation}
\label{functor_restr_for_Fact(C)}
C_{X^I}\,\iso\, 
\underset{\Tw(I)}{\colim} \;\cF_{I,C} \to \underset{\Tw(I)}{\colim} \;(\cF_{I', C}\comp\xi)\to \underset{\Tw(I')}{\colim} \;\cF_{I', C}\,\iso\, C_{X^{I'}}
\end{equation}

\sssec{} 
\label{Sect_2.1.7_now}
Let us show that the functor
\begin{equation}
\label{functor_restr_for_Fact(C)_again}
C_{X^I}\otimes_{\Shv(X^I)} Shv(X^{I'})\to C_{X^{I'}}
\end{equation}
induced by (\ref{functor_restr_for_Fact(C)}) is an equivalence.

Denote by $\Tw(I)^f\subset \Tw(I)$ the full subcategory of $(I\to J\to K)$ such that $K\in Q(I')$. The embedding $Tw(I)^f\subset Tw(I)$ has a right adjoint $\beta: Tw(I)\to Tw(I)^f$ sending $(I\to J\to K)$ to $(I\to J\to K')$ with $K'=inf(I', K)$. We have 
$$
C_{X^I}\otimes_{\Shv(X^I)} Shv(X^{I'})\,\iso\,\mathop{\colim}\limits_{(I\to J\to K)\in Tw(I)} (\mathop{\boxtimes}\limits_{k'\in K'} C^{\otimes J_{k'}}(X)),
$$ 
here $(I\to J\to K')=\beta(I\to J\to K)$. The expression under the colimit is the composition
$$
Tw(I)\toup{\beta} Tw(I)^f\toup{\cF^f_{I,C}} Shv(X^{I'})-mod,
$$
where $\cF^f_{I,C}: Tw(I)^f\to Shv(X^{I'})-mod$ is the restriction of $\cF_{I,C}$ to this full subcategory. So, we first calculate the LKE under $\beta: Tw(I)\to Tw(I)^f$ of $\cF^f_I\comp \beta$. By (\cite{G}, ch. I.1, 2.2.3), $\beta$ is cofinal, so the above colimit identifies with
$$
\mathop{\colim}\limits_{(I\to J\to K)\in Tw(I)^f} (\mathop{\boxtimes}\limits_{k\in K} C^{\otimes J_k}(X)).
$$ 

 Consider the full embedding $Tw(I')\subset Tw(I)^f$. It has a left adjoint $\xi^f: Tw(I)^f\to Tw(I')$. Here $\xi^f$ is the restriction of $\xi$. 
So, the full embedding $Tw(I')\subset Tw(I)^f$ is cofinal. So, the latter colimit identifies with 
$$
\mathop{\colim}\limits_{(I'\to J'\to K')\in Tw(I')} (\mathop{\boxtimes}\limits_{k'\in K'} C^{\otimes J'_{k'}}(X))\,\iso\, C_{X^{I'}}
$$ 
So, (\ref{functor_restr_for_Fact(C)_again}) is an equivalence. These equivalences are compatible with compositions of maps in $fSets$, so we obatined a sheaf of categories $\Fact(C)$ on $\Ran$.  

\sssec{} 
\label{Sect_2.1.8_v4}
There is also a direct definition of $\Fact(C)$ as follows. Consider the functor
\begin{equation}
\label{functor_cF_Ran_C}
\cF_{\Ran, C}: \cTw(fSets)\to Shv(\Ran)-mod
\end{equation}
sending $(J\to K)$ to $\underset{k\in K}{\boxtimes} C^{\otimes J_k}(X)$. It sends a map
\begin{equation}
\label{map_in_cTw(fSets)}
\begin{array}{ccc}
J_1 &\to & K_1\\
\downarrow && \uparrow\\
J_2 & \to & K_2
\end{array}
\end{equation}
in $\cTw(fSets)$ to the composition 
$$
\underset{k\in K_1}{\boxtimes} C^{\otimes (J_1)_k}(X)\toup{m} \underset{k\in K_1}{\boxtimes} C^{\otimes (J_2)_k}(X)\to \underset{k\in K_2}{\boxtimes} C^{\otimes (J_2)_k}(X),
$$
here the first map is the product in $C(X)$ along $J_1\to J_2$, and the second is (\ref{map_transition_for_cF_I,C}). 

\begin{Lm} 
\label{Lm_2.1.2_about_Fact(C)}
One has canonically 
$$
\Fact(C)\,\iso\, \underset{\cTw(fSets)}{\colim}\; \cF_{\Ran, C}.
$$
\end{Lm}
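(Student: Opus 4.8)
The plan is to identify both sides as colimits over related index categories and show the comparison functor is induced by a cofinal functor. Concretely, $\Fact(C)$ is constructed as a sheaf of categories on $\Ran$ whose value on $X^I$ is $C_{X^I} = \colim_{\Tw(I)} \cF_{I,C}$, and by construction $\Fact(C) = \colim_{I \in fSets^{op}} C_{X^I}$ (the colimit over the category $fSets^{op}$ indexing the cells $X^I \to \Ran$), where the transition functors are the equivalences~\eqref{functor_restr_for_Fact(C)_again} established in Section~\ref{Sect_2.1.7_now}. On the other side, $\cF_{\Ran,C}$ restricts on each $fSets_{I/}$ to recover $\cF_{I,C}$ after the obvious reindexing $Tw(I) = \cTw(fSets) \times_{fSets} fSets_{I/}$, and $\Shv(\Ran)\text{-mod} = \colim_{I} \Shv(X^I)\text{-mod}$. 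So the statement amounts to a Fubini-type interchange of colimits.

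First I would make precise the bookkeeping: the Grothendieck construction presents $\cTw(fSets)$ as fibered over $fSets$ (via the target), and $Tw(I)$ is the fiber category $\cTw(fSets) \times_{fSets} fSets_{I/}$ appearing in Section~2.1.2. Using the colimit formula $\Shv(\Ran)\text{-mod} \simeq \colim_{I \in (fSets)^{op}} \Shv(X^I)\text{-mod}$ together with the fact that colimits in $\DGCat_{cont}$ (equivalently, in $ShvCat$) commute with each other, rewrite $\colim_{\cTw(fSets)} \cF_{\Ran,C}$ as an iterated colimit: first over the fibers $Tw(I)$ for fixed $I$, producing $C_{X^I}$, and then over $I \in (fSets)^{op}$. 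This is where I would invoke the standard decomposition of a colimit over a total space of a cocartesian (or cartesian) fibration as an iterated colimit, e.g.\ (\cite{G}, ch.~I.1) or Lurie.

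The key step is then to check that the outer colimit $\colim_{I} C_{X^I}$, with the transition functors induced by the pushforward/restriction maps of Section~\ref{Sect_2.1.6_v4}, agrees with the limit-free description of a sheaf of categories on $\Ran$: namely that $\Fact(C) = \colim_{I} \Fact(C)\otimes_{\Shv(\Ran)} \Shv(X^I) = \colim_I C_{X^I}$, which holds because $\Ran = \colim_I X^I$ in $\PreStk$ and sheaves of categories on a colimit of prestacks are the limit, hence (after passing to the module description and using that the restriction functors have the adjoints making everything work) one recovers the colimit presentation. Here I would use precisely the equivalences~\eqref{functor_restr_for_Fact(C)_again} to identify the transition functors on both sides.

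The main obstacle I expect is matching the transition functors carefully — not the existence of the two colimit presentations, but checking that the morphism $\cF_{I,C} \to \cF_{I',C} \circ \xi$ built in Section~\ref{Sect_2.1.6_v4} is exactly the one induced by functoriality of $\cF_{\Ran,C}$ along the morphisms of $\cTw(fSets)$ coming from $fSets_{I/} \to fSets_{I'/}$, and that the cofinality arguments of Section~\ref{Sect_2.1.7_now} (via $\beta$ and $\xi^f$) are compatible with this global reindexing. Once the indexing categories and transition maps are shown to agree on the nose, the lemma follows formally from commutation of colimits; so I would organize the proof around a clean statement of the iterated-colimit decomposition and then a diagram-chase identifying the two systems of transition functors.
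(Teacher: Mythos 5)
Your outline locates the difficulty in the wrong place: the two steps you declare formal are where the actual content of the lemma lies. The lemma asserts that the $Shv(\Ran)$-module $\colim_{\cTw(fSets)}\cF_{\Ran,C}$ satisfies $\bigl(\colim_{\cTw(fSets)}\cF_{\Ran,C}\bigr)\otimes_{Shv(\Ran)}Shv(X^I)\iso C_{X^I}$ compatibly. You reduce this to the claim that $\Fact(C)\iso\colim_{I\in fSets^{op}}C_{X^I}$ ``by construction''. But the construction only produces the sheaf of categories (the compatible system); passing to left adjoints identifies its global sections with $\colim_{I}C_{X^I}$ as a category, and this does \emph{not} yet tell you that this $Shv(\Ran)$-module base-changes back to $C_{X^J}$ over each $X^J$ — that statement is in effect 1-affineness of $\Ran$ for this module (Lemma~\ref{Lm_2.2.8_Ran_is_1-affine}, which appears later and is proved by the very same technique), so as written your argument either presupposes the hard point or pushes it into a black box. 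The paper's proof avoids this entirely: in Section~\ref{Sect_2.1.12} one applies $(\vartriangle^I)^!$ directly to the colimit, uses the description $X^K\times_{\Ran}X^I\iso\colim_{I\to\bar K\gets K}X^{\bar K}$ together with base change for pseudo-proper maps (as in Lemma~\ref{Lm_2.1.11}) to rewrite the result as a colimit over the auxiliary category $\cE$ of diagrams $(I\to\bar K\gets K\gets J)$, and then concludes by two cofinality arguments: $l_0:\cE\to\cE_0$ is cofinal because it has a left adjoint, and $q:\Tw(I)\to\cE_0$ is cofinal, identifying the answer with $\colim_{\Tw(I)}\cF_{I,C}=C_{X^I}$. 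None of this base-change-plus-cofinality mechanism appears in your plan, whereas the ``matching of transition functors'' you single out as the main obstacle is comparatively routine.

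Your Fubini step is also not covered by the principle you cite. $\Tw(I)$ is not a fiber of $\cTw(fSets)\to fSets$: it is the comma category $\cTw(fSets)\times_{fSets}fSets_{I/}$, and the source projection $(J\to K)\mapsto J$ is a cartesian, not cocartesian, fibration; the standard fiberwise decomposition of a colimit (equivalently the pointwise left Kan extension formula) would involve the overcategories $fSets_{/J}$ with an outer colimit over $fSets$, not the undercategories $fSets_{I/}$ with outer index $fSets^{op}$. An identification $\colim_{\cTw(fSets)}\cF_{\Ran,C}\iso\colim_{I\in fSets^{op}}\colim_{\Tw(I)}\cF_{I,C}$ can be established, but it needs its own argument (for instance via an equivalence $\cTw(fSets)\iso\colim_{I\in fSets^{op}}\Tw(I)$ in $1-\Cat$, or a cofinality argument of the same kind as the paper's), not an off-the-shelf interchange of colimits. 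So while ``both sides are colimits; compare them by cofinality'' is the right slogan, the proposal as written leaves the two substantive steps unproved.
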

\begin{proof}
Consider the functor $\Tw: fSets^{op}\to 1-\Cat$ sending $I$ to $\Tw(I)$. It sends a map $I\toup{f} I'$ in $fSets$ to the natural fully faithful embedding $\Tw(I')\hook{} \Tw(I)$. Let $\cY_{\Tw}\to fSets^{op}$ be the cocartesian fibration attached to $\Tw$ by strengthening. So, an object of $\cY_{\Tw}$ is a diagram $(I\to J\to K)$ in $fSets$. A morphism from $(I_1\to J_1\to K_1)$ to $(I_2\to J_2\to K_2)$ in $\cY_{\Tw}$ is a commutative diagram in $fSets$
$$
\begin{array}{ccccc}
I_2 & \to &J_2 & \to & K_2\\
\downarrow && \uparrow && \downarrow\\
I_1 & \to & J_1 & \to & K_1.
\end{array}
$$

Let $\zeta: \cY_{\Tw}\to\cTw(fSets)$ be the functor sending $(I\to J\to K)$ to $(J\to K)$. 
We have 
$$
\Gamma(\Ran, \Fact(C))\,\iso\, \underset{I\in fSets^{op}}{\colim}\;\underset{(I\to J\to K)\in \Tw(I)}{\colim} \; (\underset{k\in K}{\boxtimes} C^{\otimes J_k}(X))\,\iso\; \underset{\cY_{\Tw}}{\colim} \; \cF_{\Ran, C}\comp \zeta
$$
We used here (\cite{G}, I.1, 2.2.4) for the cocartesian fibration $\cY_{\Tw}\to fSets^{op}$.

 We claim that the LKE of $\cF_{\Ran, C}\comp \zeta$ along $\zeta$ identifies canonically with $\cF_{\Ran, C}$. Indeed, pick an object $\phi\in \cTw(fSets)$ given by a map $\phi: J\to K$ in $fSets$. Note that $\zeta$ is a cocartesian fibration. By (\cite{G}, I.1, 2.2.4), the canonical functor $(\cY_{\Tw})_{\phi}\to \cY_{\Tw}\times_{\cTw(fSets)} \cTw(fSets)_{/\phi}$ is cofinal. So, the value of the LKE of $\cF_{\Ran, C}\comp \zeta$ along $\zeta$ at $\phi$ is
$$
\underset{(I_1\to J_1\to K_1)\in (\cY_{\Tw})_{\phi}}{\colim} \cF_{\Ran, C}(J\to K)
$$
In turn, $(\cY_{\Tw})_{\phi}\,\iso\, fSets_{/J}$ is contractible, as it has a final object. Finally, for $I\in 1-\Cat$, $I\to \mid I\mid$ is cofinal. Our claim follows.
\end{proof}

\sssec{} To construct the factorization structure on $\Fact(C)$, recall the following.

\begin{Lm} 
\label{Lm_3.2.4_from_chiral_algebras}
Let $I'\getsup{\phi} I\to K$ be a diagram in $fSets$. Then $X^I_{\phi, d}\times_{X^I} X^K$ is empty unless $I'\in Q(K)$, that is, $\phi$ decomposes as $I\to K\toup{\phi'} I'$. In the latter case the square is cartesian
$$
\begin{array}{ccc}
X^I_{\phi, d} & \hook{} & X^I\\
\uparrow && \uparrow\lefteqn{\scriptstyle\vartriangle}\\
X^K_{\phi', d} & \hook{} & X^K,
\end{array}
$$
where $\vartriangle$ is the diagonal. \QED
\end{Lm}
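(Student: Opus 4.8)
The plan is to analyze the fiber product $X^I_{\phi,d}\times_{X^I}X^K$ set-theoretically, keeping in mind that $X^K\hookrightarrow X^I$ is the closed embedding $\vartriangle^{(I/K)}$ induced by $I\twoheadrightarrow K$, so a point of $X^K$ is a tuple $(x_i)_{i\in I}$ with $x_{i_1}=x_{i_2}$ whenever $i_1,i_2$ lie in the same fiber of $I\to K$. First I would observe that a point of the fiber product is such a tuple that also lies in $X^I_{\phi,d}$, i.e.\ satisfies $x_{i_1}\ne x_{i_2}$ whenever $\phi(i_1)\ne\phi(i_2)$. These two conditions are compatible only if the $K$-equivalence relation is \emph{finer} than the $\phi$-equivalence relation on $I$: indeed, if two indices become equal along $I\to K$ they are forced to be equal, so they had better be allowed to be equal, i.e.\ must already have the same image under $\phi$. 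This is exactly the statement that $\phi$ factors through $I\twoheadrightarrow K$, which in the poset $Q(I)$ of equivalence relations reads $I'\in Q(K)$, i.e.\ $\phi=\phi'\comp(I\to K)$ for a (necessarily unique) surjection $\phi': K\twoheadrightarrow I'$. If no such factorization exists, the fiber product is empty, proving the first assertion.

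Next, assuming the factorization $I\to K\toup{\phi'}I'$ exists, I would identify the fiber product with $X^K_{\phi',d}$. Under the closed embedding $X^K\hookrightarrow X^I$, a point $(x_k)_{k\in K}\in X^K$ pulls back to the $I$-tuple $(x_{\phi_0(i)})_{i\in I}$ where $\phi_0\colon I\to K$ is the given surjection; the disjointness condition defining $X^I_{\phi,d}$ becomes $x_{\phi_0(i_1)}\ne x_{\phi_0(i_2)}$ whenever $\phi(i_1)\ne\phi(i_2)$, equivalently whenever $\phi'(\phi_0(i_1))\ne\phi'(\phi_0(i_2))$. Since $\phi_0$ is surjective, every pair of distinct elements $k_1,k_2\in K$ arises as $(\phi_0(i_1),\phi_0(i_2))$, so this is precisely the condition $x_{k_1}\ne x_{k_2}$ whenever $\phi'(k_1)\ne\phi'(k_2)$, which is the definition of $X^K_{\phi',d}\subset X^K$. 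Hence the left vertical arrow $X^K_{\phi',d}\hookrightarrow X^I_{\phi,d}$ is an open immersion, the square commutes, and by the above description of points it is cartesian; one checks this on functors of points (or notes both vertical arrows are open immersions cut out by the complement of the same closed subset), so the scheme-theoretic structure matches as well.

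The main obstacle — though it is minor — is bookkeeping: making sure that when I pass from a set-theoretic check on $k$-points to the claim of a cartesian square of schemes, I genuinely have an \emph{open} immersion on the bottom (the loci $X^{(-)}_{(-),d}$ are open subschemes, so this is automatic) and that $X^I_{\phi,d}\times_{X^I}X^K$, being the intersection of an open and a closed subscheme of $X^I$, is computed correctly as an open subscheme of $X^K$. Since open and closed immersions are stable under base change and the intersection of the open $X^I_{\phi,d}$ with the closed $X^K$ inside $X^I$ is exactly the open subscheme of $X^K$ cut out by the pulled-back non-vanishing conditions, which we identified with $X^K_{\phi',d}$, the square is cartesian by construction. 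The uniqueness of $\phi'$ follows from surjectivity of $I\to K$. This completes the proof.
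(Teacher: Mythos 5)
Your proof is correct, and it is essentially the intended justification: the paper states this lemma without proof (it is the standard diagonal-versus-disjointness observation going back to \cite{BD_chiral}, 3.4), and your pointwise identification of $X^I_{\phi,d}\times_{X^I}X^K$ with the open subscheme of $X^K$ cut out by the pulled-back conditions, which you correctly compute to be $X^K_{\phi',d}$ using surjectivity of $I\to K$, is exactly what is needed; since the top horizontal arrow is an open immersion, the cartesian property is indeed automatic once the underlying open subset of $X^K$ is identified. One small slip of terminology: the left vertical arrow $X^K_{\phi',d}\to X^I_{\phi,d}$ is a \emph{closed} immersion (the restriction of the diagonal $\vartriangle$), not an open one --- the open immersions in the square are the horizontal arrows; this does not affect your argument.
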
 

\sssec{} 
\label{Sect_2.1.11_now}
Let us construct the equivalence (\ref{iso_after_Lm_3.2.4}). Write $\Tw(I)_{\phi}$ for the full subcategory of $\Tw(I)$ spanned by objects $(I\to J\to K)$ such that $I'\in Q(K)$. We have the equivalence 
$$
\Tw(I)_{\phi}\,\iso\,\prod_{i'\in I'} \Tw(I_{i'})
$$ 
sending $(I\to J\to K)$ to the collection $(I_{i'}\to J_{i'}\to K_{i'})\in \Tw(I_{i'})$ for $i'\in I'$, the corresponding fibres over $i'$. It is easy to check that $\Tw(I)_{\phi}\hook{}\Tw(I)$ is zero-cofinal in the sense of Section~\ref{Def_zero-cofinal}. 

 The LHS of (\ref{iso_after_Lm_3.2.4}) identifies with
$$
\mathop{\colim}\limits_{(I\to J\to K)\in \Tw(I)}  ((\mathop{\boxtimes}\limits_{k\in K} C^{\otimes J_k}(X)\otimes_{\Shv(X^I)} Shv(X^I_{\phi, d}))
$$  
By Lemma~\ref{Lm_3.2.4_from_chiral_algebras} and Section~\ref{Sect_C.0.2}, the latter colimit rewrites as the colimit of the same diagram restricted to $Tw(I)_{\phi}$. For $(I\to J\to K)\in Tw(I)_{\phi}$ we get
$$
(\mathop{\boxtimes}\limits_{k\in K} C^{\otimes J_k}(X))\otimes_{\Shv(X^I)} Shv(X^I_{\phi, d})\,\iso\, (\mathop{\boxtimes}\limits_{i'\in I'}(\mathop{\boxtimes}\limits_{k\in K_{i'}} C^{\otimes J_k}(X))) \otimes_{\Shv(X^I)} Shv(X^I_{\phi, d})
$$ 
Since 
$$
\mathop{\colim}\limits_{(I_{i'}\to J_{i'}\to K_{i'})\in Tw(I_{i'})} \; (\mathop{\boxtimes}\limits_{k\in K_{i'}} C^{\otimes J_k}(X))\,\iso\, C_{X^{I_{i'}}},
$$ 
passing to the colimit over $Tw(I)_{\phi}$, we get the desired equivalence (\ref{iso_after_Lm_3.2.4}).  

 This defines on $\Fact(C)$ a structure of a factorization sheaf of categories on $\Ran$. 
 
\ssec{First properties of $\Fact(C)$}

\sssec{} Let $I\in fSets$. The category $Tw(I)$ has an object $(I\to I\to I)$. 
Denote by $\Loc: \mathop{\boxtimes}\limits_{i\in I} C(X)\to C_{X^I}$ the corresponding structure morphism in $\cF_{I, C}$.  
 
\begin{Lm} 
\label{Lm_3.5.2_generated_under_colim}
Assume $C(X)$ is unital, that is, $C(X)\in CAlg(Shv(X)-mod)$. Then
the functor $\Loc: \mathop{\boxtimes}\limits_{i\in I} C(X)\to C_{X^I}$ generates $C_{X^I}$ under colimits.
\end{Lm}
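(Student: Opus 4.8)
The plan is to show that the colimit $C_{X^I} = \colim_{Tw(I)} \cF_{I,C}$ is generated under colimits by the image of the single structure morphism $\Loc$ coming from the object $(I\to I\to I)\in Tw(I)$, using the unital structure on $C(X)$ to produce sections that connect every other object of $Tw(I)$ back to $(I\to I\to I)$. First I would recall that in a colimit of $\DG$-categories (equivalently $Shv(X^I)$-modules) indexed by a small category $\cD$, the total category is generated under colimits by the union of the images of the insertion functors $\iota_d\colon \cF(d)\to \colim_\cD \cF$ over all $d\in\Ob(\cD)$. So it suffices to show that for every $\Sigma=(I\to J\xrightarrow{\phi}K)\in Tw(I)$ the image of the insertion functor $\iota_\Sigma\colon C^{\otimes\phi}=\boxtimes_{k\in K}C^{\otimes J_k}(X)\to C_{X^I}$ lies in the subcategory generated under colimits by $\Im(\Loc)$.

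The key step is to exhibit, for each such $\Sigma$, a morphism in $Tw(I)$ from $(I\to I\to I)$ to $\Sigma$ (or the reverse) whose induced transition functor, after inserting into the colimit, expresses $\iota_\Sigma$ in terms of $\Loc$ composed with a functor that preserves colimits. Concretely, consider the morphism in $Tw(I)$ given by the diagram with top row $I\xrightarrow{\id}I\xrightarrow{\id}I$ and bottom row $I\xrightarrow{p}J\xrightarrow{\phi}K$, where the middle vertical arrow is $p\colon I\to J$ and the right vertical arrow $K\to I$ must go upward; here one uses that $K$ is a quotient of $I$, so there is a surjection — but the arrow direction in $Tw(I)$ (cf. \eqref{morphism_in_Tw_v2}) requires a map $K\to I$ in $fSets$, i.e. a surjection $I\twoheadrightarrow K$, which indeed exists since $K\in Q(I)$. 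Thus $(I\to I\to I)$ maps to $\Sigma$ in $Tw(I)$, and $\cF_{I,C}$ applied to this morphism is a functor $\mathop{\boxtimes}_{i\in I}C(X)\to C^{\otimes\phi}$ whose composition with $\iota_\Sigma$ equals $\Loc$. This alone only shows $\Im(\Loc)$ sits inside $\Im(\iota_\Sigma)$'s generated subcategory — the wrong direction. So instead I would use the unit: for each $\Sigma$ there is a morphism in $Tw(I)$ \emph{from} $\Sigma$ \emph{to} $(I\to I\to I)$ precisely when $J=I$ (i.e. $p=\id$); for general $\Sigma$ one factors through the intermediate object $(I\xrightarrow{\id}I\xrightarrow{\bar\phi}K)$ where $\bar\phi$ is the composite $I\twoheadrightarrow J\xrightarrow{\phi}K$. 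The point is that $C^{\otimes\phi}\to C^{\otimes\bar\phi}$ (product along $I\to J$, fibrewise over $K$) need not be a colimit-generating functor, but going the other way, the unit of $C(X)$ gives a section: using $1_{C(X)}\colon Shv(X)\to C(X)$ one builds, for each $k\in K$, a functor $C^{\otimes\{i_k\}}(X)\to C^{\otimes J_k}(X)$ by inserting units in the coordinates of $J_k$ not in the image of a chosen splitting $K\to I$, but this is not $Shv(X^K)$-linear on the nose and must be handled via the closed-immersion pushforward $\vartriangle_!$ appearing in \eqref{map_transition_for_cF_I,C}.

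Rather than chase splittings, the cleaner route — and the one I would actually write up — is: by Lemma~\ref{Lm_2.1.2_about_Fact(C)} and the cofinality arguments of Section~\ref{Sect_2.1.7_now}, $C_{X^I}$ is the colimit over $Tw(I)$, and $Tw(I)$ has a terminal-type object $(I\to I\to I)$ relative to the relevant transition maps once one restricts attention to the cofinal subcategory where $J=I$. More precisely, the full subcategory $Tw(I)^{J=I}\subset Tw(I)$ of objects of the form $(I\xrightarrow{\id}I\to K)$ is identified with $(fSets_{I/})^{op}$-like data, and the inclusion admits a left adjoint (send $(I\to J\to K)$ to $(I\to I\to K)$), hence is cofinal; on this subcategory every object receives a map from $(I\to I\to I)$ and the transition functors are the pushforwards $\vartriangle^{(I/K)}_!$ along closed immersions, which are $Shv(X^I)$-linear and whose essential images together generate under colimits — here is exactly where unitality of $C(X)$ enters, to guarantee that $\cF_{I,C}$ restricted to $Tw(I)^{J=I}$ really is this diagram of $\vartriangle_!$'s applied to $\mathop{\boxtimes}_{i\in I}C(X)$, since the transition map from $(I\to I\to I)$ to $(I\to I\to K)$ is induced by the structure maps involving the unit. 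I expect the main obstacle to be this last identification: verifying that the relevant left adjoint exists and is genuinely cofinal, and that after restricting to the cofinal subcategory the insertion functors are all of the form $\vartriangle^{(I/K)}_!\circ(\text{functor out of }\mathop{\boxtimes}_iC(X))$, so that $\Im(\Loc)$ generates. The role of unitality is that without a unit $1_{C(X)}$ there is no canonical way to map the $I$-fold tensor power forward into the diagrams with collapsed $K$, and the conclusion genuinely fails.
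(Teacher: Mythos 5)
Your opening reduction (the colimit is generated under colimits by the images of all the insertion functors $\iota_\Sigma\colon \cF_{I,C}(\Sigma)\to C_{X^I}$) is correct, but the ``cleaner route'' you settle on breaks at its central step. The functor $(I\to J\to K)\mapsto (I\to I\to K)$ is a \emph{right} adjoint to the inclusion ${^0\Tw(I)}\subset \Tw(I)$, not a left adjoint: a morphism in $\Tw(I)$ from $(I\to I\to K_0)$ to $(I\toup{p}J\toup{q}K)$ is the same datum as a morphism $(I\to I\to K_0)\to (I\toup{\id} I\toup{qp}K)$ in ${^0\Tw(I)}$, whereas a morphism \emph{into} $(I\to I\to K_0)$ from a general $\Sigma$ would require a splitting $J\to I$ of $p$ and usually does not exist. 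Hence the inclusion is not cofinal for colimits, and it cannot be: $(I\toup{\id}I\toup{\id}I)$ is a \emph{terminal} object of ${^0\Tw(I)}$ (every object maps to it, not from it, contrary to what you write), so cofinality would collapse $C_{X^I}$ to $\boxtimes_{i\in I}C(X)$, i.e.\ would make $\Loc$ an equivalence, which fails in general (restricting to the main diagonal it would force the product $C^{\otimes I}(X)\to C(X)$ to be an equivalence). Restriction along a functor with a right adjoint computes limits, not colimits; this is exactly why the paper works with the limit presentation $C_{X^I}\simeq \lim_{\Tw(I)^{op}}\cF^R_{I,C}$ and only claims that the restriction to ${^0\Tw(I)^{op}}$ is \emph{conservative}, proving it by comparing $\cF^R_{I,C}$ with its right Kan extension from ${^0\Tw(I)^{op}}$: objectwise the comparison is $\boxtimes_{k\in K}C^{\otimes J_k}(X)\to \boxtimes_{k\in K}C^{\otimes I_k}(X)$, which is conservative because its left adjoint, the product along $I\to J$, is essentially surjective thanks to the unit. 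This is also where your attribution of the role of unitality is off: identifying $\cF_{I,C}|_{{^0\Tw(I)}}$ with a diagram of $\vartriangle_!$'s is automatic (the product part of those transition functors is trivial since $J=I$); the unit is needed to relate the full index category $\Tw(I)$ to ${^0\Tw(I)}$.

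If you prefer to stay on the colimit side rather than pass to right adjoints as the paper does, your abandoned first idea can be completed with no cofinality at all. For $\Sigma=(I\toup{p}J\toup{q}K)$ the transition functor along the morphism $(I\toup{\id}I\toup{qp}K)\to\Sigma$ is the product along $p$, which is essentially surjective because $C(X)$ is unital (choose a section of $p$ and insert units in the remaining coordinates; no $Shv(X^K)$-linearity of this section is needed, only essential surjectivity of the product). Hence the essential image of $\iota_\Sigma$ is contained in that of $\iota_{(I\to I\to K)}$. Next, the unique morphism $(I\toup{\id}I\toup{qp}K)\to(I\toup{\id}I\toup{\id}I)$ gives $\iota_{(I\to I\to K)}\simeq \Loc\circ\vartriangle^{(I/K)}_!$, so that image lies in the essential image of $\Loc$. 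Since the $\iota_\Sigma$ jointly generate $C_{X^I}$ under colimits, $\Loc$ generates, as claimed.
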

\begin{proof}
It suffices to show, by (\cite{G}, ch. I.1, 5.4.3), that the right adjoint $\Loc^R: C_{X^I}\to \mathop{\boxtimes}\limits_{i\in I} C(X)$ is conservative. Denote by $j: {^0\Tw(I)}\subset \Tw(I)$ the full subcategory spanned by objects of the form $(I\toup{p} J\to K)$, where $p$ is an isomorphism. We have an adjoint pair 
\begin{equation}
\label{adj_pair_j_and_j^R_Lm_3.5.2}
j: {^0Tw(I)}\leftrightarrows Tw(I): j^R,
\end{equation} 
where $j^R(I\toup{p} J\toup{q} K)=(I\toup{\id} I\toup{qp} K)$. 
 
 We have
$$
C_{X^I}\,\iso\,\mathop{\lim}\limits_{(I\to J\to K)\in Tw(I)^{op}} \; \mathop{\boxtimes}\limits_{k\in K} C^{\otimes J_k}(X),
$$ 
the limit of the functor $\cF_{I,C}^R$ obtained from $\cF_{I,C}$ by passing to right adjoints. Restricting $\cF_{I,C}^R$ to the full subcategory $^0Tw(I)^{op}$, we get a morphism 
\begin{equation}
\label{map_ffaith_for_3.3.8}
C_{X^I}\to \mathop{\lim}\limits_{(I\to J\to K)\in {^0Tw(I)^{op}}} \cF_{I,C}^R\comp j^{op}
\end{equation}
 
  Let $\cG$ denote the RKE of  $\cF_{I,C}^R\comp j^{op}$ under $^0Tw(I)^{op}\to Tw(I)^{op}$. By (\cite{Ly}, 2.2.39), 
$$
\cG=\cF_{I,C}^R\comp j^{op}\comp (j^R)^{op}.
$$ 
The map of functors $\cF^R_{I,C}\to \cG$ evaluated at an object $(I\toup{p} J\toup{q} K)=\Sigma\in Tw(I)^{op}$ becomes
$$
\mathop{\boxtimes}\limits_{k\in K} C^{\otimes J_k}(X)
\to \mathop{\boxtimes}\limits_{k\in K} C^{\otimes I_k}(X).
$$ 
It is conservative, as its left adjoint is surjective. So, passing to the limit over $Tw(I)^{op}$, we conclude by (\cite{Ly}, Cor. 2.5.3) that (\ref{map_ffaith_for_3.3.8}) is conservative.
  
  The category $^0Tw(I)^{op}$ has an initial object $(I\toup{\id} I\toup{id} I)$. So, 
$$
\mathop{\lim}\limits_{(I\to J\to K)\in {^0Tw(I)^{op}}} \cF_{I,C}^R\comp j^{op}\,\iso\, \mathop{\boxtimes}\limits_{i\in I} C(X)
$$ 
Thus, $\Loc^R$ is conservative.
\end{proof}  

 Under the assumptions of Lemma~\ref{Lm_3.5.2_generated_under_colim} there could be at most a unique structure of an object of $CAlg^{nu}(Shv(X^I)-mod)$ on $C_{X^I}$ for which $\Loc$ is non-unital symmetric monoidal. 
 
\sssec{Commutative chiral category structure} The structure of a commutative chiral category on $\Fact(C)$ is as follows. 

 Given $I_1, I_2\in fSets$ let $I=I_1\sqcup I_2$. Consider the functor 
\begin{equation}
\label{map_alpha}
\alpha: Tw(I_1)\times Tw(I_2)\to Tw(I)
\end{equation}
sending a pair 
$(I_1\to J_1\to K_1), (I_2\to J_2\to K_2)$ to $(I\to J\to K)$ with 
$$
J=J_1\sqcup J_2, K=K_1\sqcup K_2.
$$  
 
Note that $\alpha$ is fully faithful. For an object of $Tw(I_1)\times Tw(I_2)$ whose image under $\alpha$ is $(I\to J\to K)$ we get an equivalence
\begin{equation}
\label{equiv_for_Fact(C)(Ran)_operation}
(\mathop{\boxtimes}\limits_{k\in K_1} C^{\otimes (J_1)_k}(X))\boxtimes (\mathop{\boxtimes}\limits_{k\in K_2} C^{\otimes (J_2)_k}(X))\,\iso\, \mathop{\boxtimes}\limits_{k\in K} C^{\otimes J_k}(X)
\end{equation} 
It extends naturally to an isomorphism of functors $\cF_{I_1, C}\boxtimes\cF_{I_2, C}\,\iso\,\cF_{I, C}\comp \alpha$
in 
$$
\Fun(Tw(I_1)\times Tw(I_2), Shv(X^I)-mod).
$$ 

 Passing to the colimit over $Tw(I_1)\times Tw(I_2)$ we get a morphism
\begin{equation}
\label{map_for_comm_chiral_cat_on_Fact(C)}
\eta_{I_1, I_2}: C_{X^{I_1}}\boxtimes C_{X^{I_2}}\,\iso\, \mathop{\colim}\limits_{Tw(I_1)\times Tw(I_2)} \cF_{I,C}\comp\alpha\to \mathop{\colim}\limits_{Tw(I)} \cF_{I,C}\,\iso\, C_{X^I}
\end{equation}
in $Shv(X^I)-mod$. 

 Set
$$
(X^{I_1}\times X^{I_2})_d=\{(x_i)\in X^I\mid \mbox{if}\; i_1\in I_1, i_2\in I_2\;\mbox{then}\; (x_{i_1}, x_{i_2})\in X^2-X\}
$$
 
 If $(I\to J\to K)\in Tw(I)$ then 
$$
X^K\times_{X^I} (X^{I_1}\times X^{I_2})_d
$$ 
is empty unless $(I\to J\to K)$ lies in the full subcategory $Tw(I_1)\times Tw(I_2)$. This shows that  (\ref{map_for_comm_chiral_cat_on_Fact(C)}) becomes an isomorphism after restriction to $(X^{I_1}\times X^{I_2})_d$. 
 
\sssec{} 
\label{Sect_2.2.4}
Let now $I_1\to I'_1, I_2\to I'_2$ be maps in $fSets$. Then (\ref{map_for_comm_chiral_cat_on_Fact(C)}) fits into a commutative diagram 
\begin{equation}
\label{diag_for_chiral_multiplication_in_Fact(C)}
\begin{array}{ccc} 
 C_{X^{I_1}}\boxtimes C_{X^{I_2}} & \toup{(\ref{map_for_comm_chiral_cat_on_Fact(C)})} & C_{X^{I_1\sqcup I_2}}\\
\downarrow && \downarrow \\
 C_{X^{I'_1}}\boxtimes C_{X^{I'_2}} & \toup{(\ref{map_for_comm_chiral_cat_on_Fact(C)})} & C_{X^{I'_1\sqcup I'_2}}, 
\end{array}
\end{equation} 
where the vertical maps are $!$-restrictions along the closed immersion $X^{I'_1\sqcup I'_2}\hook{} X^{I_1\sqcup I_2}$.  
Passing to the limit over $I_1, I_2\in fSet\times fSets$, the above diagram yields the functor
\begin{equation}
\label{map_beta_chiracl_mult_for_Fact(C)}
\eta: \Gamma(\Ran, \Fact(C))\boxtimes \Gamma(\Ran, \Fact(C))\to \Gamma(\Ran\times\Ran, u^!\Fact(C)).  
\end{equation}
for the sum map $u: \Ran\times\Ran\to \Ran$. Here the notation $u^!$ is that of Section~\ref{Sect_Sheaves of categories}.

\sssec{Non-unital symmetric monoidal structure} 
\label{Sect_Non-unital symmetric monoidal structure_Fact(C)(Ran)}
As in (\cite{R}, Section~7.17), we promote $\Fact(C)(\Ran)$ to an object of $CAlg^{nu}(\DGCat_{cont})$ in two ways as follows.

 Note that $\Ran$ is a pseudo-indscheme, and $u: \Ran\times\Ran\to \Ran$ is a pseudo-indproper map of pseudo-indschemes in the sense of (\cite{R}, 7.15.1). By Section~\ref{Sect_A.1.11}, for $\cE\in ShvCat(\Ran)$ there is a left adjoint 
$$
u_{!, \cE}: \Gamma(\Ran\times\Ran, u^!\cE)\to \Gamma(\Ran, \cE)
$$ 
to the restriction functor $u^!_{\cE}: \Gamma(\Ran, \cE)\to \Gamma(\Ran\times\Ran, u^!\cE)$. 

 The $\star$-product in $\Fact(C)(\Ran)$ is given by the composition
\begin{multline*}
\Gamma(\Ran, \Fact(C))\otimes \Gamma(\Ran, \Fact(C))\to \Gamma(\Ran\times\Ran, \Fact(C)\boxtimes \Fact(C))\toup{(\ref{map_beta_chiracl_mult_for_Fact(C)})}
\\ \Gamma(\Ran\times\Ran, u^!\Fact(C))\toup{u_{!, \Fact(C)}} \Gamma(\Ran, \Fact(C))
\end{multline*} 

The $\otimes^{ch}$-product in $\Fact(C)(\Ran)$ is given by the composition
\begin{multline*}
\Gamma(\Ran, \Fact(C))\otimes \Gamma(\Ran, \Fact(C))\to \Gamma(\Ran\times\Ran, \Fact(C)\boxtimes \Fact(C))\toup{j_*j^*}\\
\Gamma(\Ran\times\Ran, \Fact(C)\boxtimes \Fact(C))
\toup{(\ref{map_beta_chiracl_mult_for_Fact(C)})}
\Gamma(\Ran\times\Ran, u^!\Fact(C))\toup{u_{!, \Fact(C)}} \Gamma(\Ran, \Fact(C)),
\end{multline*}
where $j:(\Ran\times\Ran)_d\to \Ran\times\Ran$ is the open immersion. Here the dual pair
$$
j^*: \Fact(C)\boxtimes\Fact(C)\leftrightarrows \Gamma(\Ran^2_d, \Fact(C)\boxtimes\Fact(C)): j_*
$$ 
in $Shv(\Ran\times\Ran)-mod$ is obtained from the adjoint pair $j^*: Shv(\Ran^2)\leftrightarrows Shv(\Ran^2_d): j_*$ by the base change $\_\otimes_{Shv(\Ran^2)} (\Fact(C)\boxtimes\Fact(C))$.  

\sssec{} 
\label{Sect_2.3.6_now}
A rigorous definition of the $\star$- and $\otimes^{ch}$-symmetric monoidal structure on the category $ \Gamma(\Ran, \Fact(C))$ is as follows. 

 Recall the $\infty$-operad $\Surj$ defined in (\cite{HA}, 5.4.4.1). The category $\cTw(fSets)$ is an object of $CAlg^{nu}(1-\Cat)$ via the operation sending $(J_1\to K_1), (J_2\to K_2)$ to 
$$
(J_1\sqcup J_2\to K_1\sqcup K_2).
$$ 
Consider the category $\Fun(\cTw(fSets), \DGCat_{cont})$ equipped with the Day convolution product (\cite{HA}, Construction 2.2.6.7) applied to the $\infty$-operad $\Surj$. This promotes $\Fun(\cTw(fSets), \DGCat_{cont})$ to an object of $CAlg^{nu}(1-\Cat)$. Then 
$$
\colim: \Fun(\cTw(fSets), \DGCat_{cont})\to \DGCat_{cont}
$$
is nonunital symmetric monoidal, hence yields a functor 
\begin{equation}
\label{map_between_CAlg_nu_for_Sect_2.2.6}
CAlg^{nu}(\Fun(\cTw(fSets), \DGCat_{cont}))\to CAlg^{nu}(\DGCat_{cont}).
\end{equation} 
By (\cite{HA}, 2.2.6.8), 
$$
CAlg^{nu}(\Fun(\cTw(fSets), \DGCat_{cont}))\,\iso\,\Fun^{rlax}(\cTw(fSets), \DGCat_{cont})),
$$
here the superscript $rlax$ denotes the category of right-lax non-unital symmetric monoidal functors. We equip the composition 
\begin{equation}
\label{functor_for_Sect_2.2.6}
\cTw(fSets)\toup{\cF_{\Ran, C}} Shv(\Ran)-mod\to \DGCat_{cont}
\end{equation} 
with two distinct right-lax non-unital symmetric monoidal structures as follows. 

 First, consider the right-lax non-unital symmetric monoidal structure on (\ref{functor_for_Sect_2.2.6}) given by (\ref{equiv_for_Fact(C)(Ran)_operation}). Applying (\ref{map_between_CAlg_nu_for_Sect_2.2.6}) to the corresponding composition, one obtains
$$
(\Fact(C)(\Ran), \star)\in CAlg^{nu}(\DGCat_{cont})
$$

 Second, let $(J_1\to K_1), (J_2\to K_2)\in \cTw(fSets)$ and $J=J_1\sqcup J_2, K=K_1\sqcup K_2$. Consider the right-lax non-unital symmetric monoidal structure on (\ref{functor_for_Sect_2.2.6}) given by the maps
\begin{multline*}
\cF_{\Ran, C}(J_1\to K_1)\otimes \cF_{\Ran, C}(J_2\to K_2)\to \cF_{\Ran, C}(J_1\to K_1)\boxtimes \cF_{\Ran, C}(J_2\to K_2)\toup{j_*j^*} \\ \cF_{\Ran, C}(J_1\to K_1)\boxtimes \cF_{\Ran, C}(J_2\to K_2)\, \toup{(\ref{equiv_for_Fact(C)(Ran)_operation})}\; \cF_{\Ran, C}(J\to K)
\end{multline*}
Applying (\ref{map_between_CAlg_nu_for_Sect_2.2.6}) to the corresponding composition, one obtains
$$
(\Fact(C)(\Ran), \otimes^{ch})\in CAlg^{nu}(\DGCat_{cont}).
$$

 As in \cite{FG}, the identity functor $\id: (\Fact(C), \otimes^{ch})\to (\Fact(C), \star)$ is non-unital right-lax symmetric monoidal.  

\sssec{Example} Take $C(X)=Shv(X)$. Then one has canonically in $Shv(\Ran)-mod$
$$
\Fact(C)\,\iso\, \Fact(C)(\Ran)\,\iso\,Shv(\Ran)
$$ 
 
\sssec{} 
\label{Sect_2.2.10_now}
For $C(X)\in CAlg^{nu}(Shv(X)-mod)$ one has
\begin{equation}
\label{Fact(C)_as_colimit_of_C_X^I}
\Gamma(\Ran,\Fact(C))\,\iso\, \mathop{\colim}\limits_{I\in fSets^{op}} C_{X^I}
\end{equation} 
in $\DGCat_{cont}$ and also in $\Shv(\Ran)-mod$. Write 
$$
\cF_{fSets, C}: fSets^{op}\to Shv(\Ran)-mod
$$ 
for the diagram giving the latter colimit. Let $\cF_{fSets, C}^R: fSets\to Shv(\Ran)-mod$ be the functor obtained from $\cF_{fSets, C}$ by passing to right adjoints in $Shv(\Ran)-mod$. One has $\Fact(C)\,\iso\, \underset{fSets}{\lim} \cF_{fSets, C}^R$. 

\sssec{} 
\label{Sect_2.2.11_v4}
Let $I\in fSets$. In Sections~\ref{Sect_2.2.11_v4}-\ref{Sect_2.2.15_v4}
we equip the functor 
\begin{equation}
\label{functor_C_goes_to_C_X^I_nonunital}
CAlg^{nu}(Shv(X)-mod)\to Shv(X^I)-mod, \; C(X)\mapsto C_{X^I}
\end{equation} 
with a non-unital symmetric monoidal structure. 

 Write $'Tw(I)$ for the category whose objects are collections: $I\to J_1, I\to J_2$ in $fSets$ and a surjection $J_1\sqcup J_2\to K$ with $K\in Q(I)$. That is, we require that $I\sqcup I\to K$ factors as $I\sqcup I\to I\to K$.  A map in $'Tw(I)$ is covariant in $J_1, J_2$ and contravariant in $K$, as for the category $Tw(I)$ itself. That is, a map from $(J_1, J_2, K)$ to $(J'_1, J'_2, K')$ is given by a diagram
$$
\begin{array}{ccccc}
I\sqcup I & \to & J_1\sqcup J_2 & \to & K\\
& \searrow & \downarrow\lefteqn{\scriptstyle a_1\sqcup a_2} && \uparrow\\
& & J'_1\sqcup J'_2 & \to & K',
\end{array}
$$ 
in $fSets$ with $a_i: J_i\to J'_i$. 

 For an object of $'Tw(I)$ as above and $k\in K$ we have the fibres $(J_1)_k, (J_2)_k$. 

We have the full embedding $'j: Tw(I)\to {'Tw(I)}$ given by the property that $J_1, J_2$ define the same element of the set $Q(I)$ of equivalence relations on $I$. 
Let 
$$
\upsilon: Tw(I)\times Tw(I)\to {'Tw(I)}
$$ 
be the map 
$$
(I\to J_1\to K_1), (I\to J_2\to K_2)\mapsto (J_1, J_2, K),
$$
where $K$ s the push-out of $I\gets I\sqcup I\to K_1\sqcup K_2$.

\sssec{} Let $C, D\in CAlg^{nu}(Shv(X)-mod)$. Let $\kappa: {'Tw(I)}\to Shv(X^I)-mod$ be the functor sending $(J_1, J_2, K)$ to
$$
\underset{k\in K}{\boxtimes} (C^{\otimes (J_1)_k}(X)\otimes_{Shv(X)} D^{\otimes (J_2)_k}(X))
$$
with the natural transition maps as in the definitions of $\cF_{I, C}, \cF_{I, D}$.
By definition, 
$$
C_{X^I}\otimes_{Shv(X^I)} D_{X^I}\;\iso\;\underset{\substack{(I\to J_1\to K_1)\in Tw(I)\\ 
(I\to J_2\to K_2)\in Tw(I)}}{\colim} (\underset{k\in K_1}{\boxtimes} (C^{\otimes (J_1)_k}(X))\otimes_{Shv(X^I)}(\underset{k\in K_2}{\boxtimes} D^{\otimes (J_2)_k}(X))
$$
Recall that
$$
X^{K_1\sqcup K_2}\times_{X^{I\sqcup I}} X^I\,\iso\, X^{(K_1\sqcup K_2)'},
$$ 
where $(K_1\sqcup K_2)'$ is the push-out of $I\gets I\sqcup I\to K_1\sqcup K_2$. 
So,
$$
C_{X^I}\otimes_{Shv(X^I)} D_{X^I}\;\iso\; \underset{Tw(I)\times Tw(I)}{
\colim} \kappa\comp \upsilon
$$
\begin{Lm}
\label{Lm_2.2.13_v4}
Both natural maps in the diagram 
\begin{equation}
\label{comp_maps_for_def_r_I}
\underset{Tw(I)\times Tw(I)}{\colim} \kappa\comp \upsilon\to \;\underset{'Tw(I)}{\colim} \; \kappa\gets \;\underset{Tw(I)}{\colim} \; \kappa\comp {'j}
\end{equation}
are equivalences.
\end{Lm}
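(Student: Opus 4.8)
The plan is to prove both maps in (\ref{comp_maps_for_def_r_I}) are equivalences by the same cofinality mechanism used repeatedly in Section~\ref{Sect_Fact_cat_for_com_algebras} (e.g.\ in \ref{Sect_2.1.7_now} and the proof of Lemma~\ref{Lm_2.1.2_about_Fact(C)}): exhibit each of the two source diagram-categories, $Tw(I)\times Tw(I)$ and $Tw(I)$, as a \emph{cofinal} subcategory (or image of a cofinal functor) inside ${'Tw(I)}$ in a way that is compatible with the functor $\kappa$, so that the colimit of $\kappa$ over ${'Tw(I)}$ is computed by restricting along these cofinal functors. Concretely, for the right-hand arrow I would work with the full embedding ${'j}: Tw(I)\hookrightarrow {'Tw(I)}$ of \ref{Sect_2.2.11_v4} and construct an adjoint ${'Tw(I)}\to Tw(I)$; for the left-hand arrow I would analyze the functor $\upsilon$ and build an adjoint to it (or to a factorization of it through an intermediate category).

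\emph{The right-hand map.} I expect ${'j}: Tw(I)\to {'Tw(I)}$ to admit a \emph{left} adjoint $\pi$, sending $(J_1,J_2,K)$ to the object in which $J_1,J_2$ are replaced by $\inf$-type data adapted to $K$: more precisely, one replaces the pair $(J_1,J_2)$ by $J:=J_1\sqcup_{I\sqcup I}\cdots$ forced to define a single equivalence relation on $I$ refining that of $K$, i.e.\ $J=\inf(\sup(\bar J_1,\bar J_2), \ldots)$ in the lattice $Q(I)$ of \ref{Sect_2.1.5_now}, retaining $K$. One checks this is left adjoint to ${'j}$ (a unit/counit computation with the diagrams of \ref{Sect_2.2.11_v4}), hence ${'j}$ is cofinal by (\cite{G}, ch.\ I.1, 2.2.3), \emph{and} that $\kappa\circ{'j}$ together with the natural map $\kappa\circ{'j}\circ\pi \to \kappa$ realizes $\kappa$ as the left Kan extension of $\kappa\circ{'j}$ along ${'j}$ — the key point being that the extra product maps in $C(X)$ and $D(X)$ along $J_i\to (\pi\text{-image})_i$, combined with the identification $X^{K_1\sqcup K_2}\times_{X^{I\sqcup I}}X^I\iso X^{(K_1\sqcup K_2)'}$ already recorded in \ref{Sect_2.2.11_v4}, make the comparison $\colim_{Tw(I)}\kappa\circ{'j}\to \colim_{{'Tw(I)}}\kappa$ an equivalence.

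\emph{The left-hand map.} Here I would show $\upsilon: Tw(I)\times Tw(I)\to{'Tw(I)}$ is cofinal and that $\kappa\circ\upsilon \iso \cF_{I,C}\boxtimes\cF_{I,D}$ precomposed appropriately, so that its colimit is $C_{X^I}\otimes_{Shv(X^I)}D_{X^I}$ — this last identification is exactly the computation already given just before the Lemma. For cofinality of $\upsilon$ one produces, for each $(J_1,J_2,K)\in{'Tw(I)}$, an initial object (or weakly initial plus contractibility) in the relevant comma category: given $(J_1,J_2,K)$ one maps it from $\upsilon(I\to J_1\to K_1',\,I\to J_2\to K_2')$ where $K_i'$ is the image of $J_i$ under $J_1\sqcup J_2\to K$, i.e.\ $K$ equipped with its two "halves"; the pushout of $I\gets I\sqcup I\to K_1'\sqcup K_2'$ is then $K$, so $\upsilon$ of this pair maps to $(J_1,J_2,K)$, and one checks the comma category $(\upsilon \downarrow (J_1,J_2,K))$ is cofiltered / contractible. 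Then apply (\cite{G}, ch.\ I.1, 2.2.3) again.

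\textbf{Main obstacle.} The routine part is the lattice bookkeeping with the diagrams of \ref{Sect_2.2.11_v4}; the genuinely delicate step is verifying that $\kappa$ is the left Kan extension of its restriction $\kappa\circ{'j}$ along ${'j}$ — equivalently, that the transition maps built from products $m$ in $C(X)$ and $D(X)$ and from $!$-restrictions along diagonals assemble into a \emph{left} Kan extension and not merely a map. This requires computing the relevant colimit $\colim_{(J_1,J_2,K)\to \cdot}\kappa(J_1,J_2,K)$ over the appropriate comma category and matching it with $\kappa\circ{'j}$ on the nose; I expect this to reduce, after identifying pushouts in $fSets$ with $\inf$/$\sup$ in $Q(I)$ as in \ref{Sect_2.1.5_now}, to the statement that the product map $C^{\otimes J}(X)\to C^{\otimes J'}(X)$ along a surjection $J\to J'$ exhibits the target as a colimit of the source over a category with a terminal object — which is immediate — so the whole thing goes through, but care is needed to ensure all the identifications are natural in ${'Tw(I)}$ simultaneously for $C$ and $D$.
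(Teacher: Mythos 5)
Your core mechanism is the right one, and in fact you have already written down the paper's combinatorial data: the paper's proof consists of exactly two observations, namely that $'j$ admits a left adjoint $^L{'j}:{'Tw(I)}\to Tw(I)$ sending $(J_1,J_2,K)$ to $(I\to J\to K)$ with $J$ the push-out of $I\gets I\sqcup I\to J_1\sqcup J_2$ (i.e.\ $J=\inf(J_1,J_2)$ in $Q(I)$; the induced map $J\to K$ exists since $K\le J_1,J_2$, hence $K\le \inf(J_1,J_2)$), and that $\upsilon$ admits a left adjoint $^L\upsilon:{'Tw(I)}\to Tw(I)\times Tw(I)$ sending $(J_1,J_2,K)$ to the pair $(I\to J_i\to K_i)$ with $K_i=\Im(J_i\to K)$. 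Right adjoints are cofinal, and the two arrows in (\ref{comp_maps_for_def_r_I}) are precisely the canonical comparison maps along $\upsilon$ and $'j$, so cofinality finishes the proof. Note in passing that your formula ``$J=\inf(\sup(\bar J_1,\bar J_2),\ldots)$'' is garbled: the push-out is simply $\inf(J_1,J_2)$ with the paper's conventions on $Q(I)$.

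The genuine problems are the two places where you deviate from this. First, the step you single out as the ``main obstacle'' --- verifying that $\kappa$ is the left Kan extension of $\kappa\comp{'j}$ along $'j$ --- is not needed and is not what the argument requires: by the very definition of cofinality (\cite{HTT}, 4.1.3.1; this is the notion used in \cite{G}), a functor $F:A\to B$ is cofinal iff for \emph{every} diagram $G:B\to E$ into a cocomplete category the canonical map $\colim_A G\comp F\to\colim_B G$ is an equivalence; no hypothesis on $G=\kappa$, and no Kan-extension statement, enters. Pursuing your LKE condition would send you into an irrelevant (and in general false) verification. Second, your treatment of $\upsilon$ uses the wrong comma categories: for colimits one must show contractibility of the under-comma categories $(Tw(I)\times Tw(I))\times_{'Tw(I)}({'Tw(I)})_{c/}$, whereas you work with $\upsilon\downarrow c$ and then merely assert it is ``cofiltered/contractible''; exhibiting one object of a comma category proves nothing by itself. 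The repair is to promote the data you already produced ($K_i=\Im(J_i\to K)$, together with the fact that the push-out of $I\gets I\sqcup I\to K_1\sqcup K_2$ recovers $K$) to the unit of an adjunction $^L\upsilon\dashv\upsilon$; this gives an initial object in each under-comma category, hence cofinality, exactly as the paper does.
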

\begin{proof} The map $\upsilon$ is cofinal, because it has a left adjoint $^L\upsilon: {'Tw(I)}\to Tw(I)\times Tw(I)$. Here $^L\upsilon$ sends an element $(J_1, J_2, K)$ to $(I\to J_1\to K_1), (I\to J_2\to K_2)$, where $K_i=\Im(J_i\to K)$. 

The map $'j: Tw(I)\to {'Tw(I)}$ is cofinal also, because it has a left adjoint $^L{'j}: {'Tw(I)}\to Tw(I)$ sending $(J_1, J_2, K)$ to $(I\to J\to K)$, where $J$ is the push-out of the diagram
$$
I\gets I\sqcup I\to J_1\sqcup J_2.
$$ 
Our claim follows.
\end{proof}

\sssec{} 
\label{Sect_2.2.15_v4}
Write $C\otimes D$ for $C(X)\otimes_{Shv(X)} D(X)$. Since $(C\otimes D)_{X^I}\,\iso\, \underset{Tw(I)}{\colim} \; \kappa\comp {'j}$ canonically, 
the diagram (\ref{comp_maps_for_def_r_I}) yields a canonical equivalence 
\begin{equation}
\label{functor_r_I_non-unital}
r_I: C_{X^{I}}\otimes_{Shv(X^I)} D_{X^{I}}\;\iso\; 
(C\otimes D)_{X^I}
\end{equation}
This equips (\ref{functor_C_goes_to_C_X^I_nonunital}) with the desired non-unital symmetric monoidal structure.

\sssec{} In particular, (\ref{functor_C_goes_to_C_X^I_nonunital}) upgrades to a functor 
$$
CAlg^{nu}(Shv(X)-mod)\to CAlg^{nu}(Shv(X^I)-mod), \; C(X)\mapsto C_{X^I}
$$
The non-unital symmetric monoidal structure on $C_{X^I}$ so obtained is denoted $\otimes^!$. In particular, this notation is used for the product in $C(X)\in CAlg^{nu}(Shv(X)-mod)$. 

\sssec{} Let now $C(X)\in CAlg^{nu}(Shv(X)-mod)$. For $(I\to J\to K)\in Tw(I)$ the structure functor 
\begin{equation}
\label{map_structure_C_X^I_non-unital}
\underset{k\in K}{\boxtimes} C^{\otimes J_k}(X)\to C_{X^I}
\end{equation}
is a map in $CAlg^{nu}(Shv(X^I)-mod)$. Indeed, the functor 
$$
CAlg^{nu}(Shv(X)-mod)\to Shv(X^I)-mod, \; C(X)\mapsto \underset{k\in K}{\boxtimes} C^{\otimes J_k}(X)
$$ 
is non-unital symmetric monoidal, and (\ref{map_structure_C_X^I_non-unital}) is a morphism of non-unital symmetric monoidal functors $CAlg^{nu}(Shv(X)-mod)\to Shv(X^I)-mod$. 

\sssec{} Let now $\phi: I\to I'$ be a map in $fSets$. Let $\vartriangle: X^{I'}\to X^I$ be the corresponding closed embedding. Then for $C(X)\in CAlg^{nu}(Shv(X)-mod)$
the functor $\vartriangle^!: C_{X^I}\to C_{X^{I'}}$ is non-unital symmetric monoidal, so $\underset{I\in fSets}{\lim} C_{X^I}$ can be understood in $CAlg^{nu}(Shv(\Ran)-mod)$. Denote the so obtained symmetric monoidal structure on $\Fact(C)$ by $\otimes^!$, it is distinct from $\star$.  

 More generally, given $\phi: I\to I'$ the diagram commutes
\begin{equation}
\label{diag_for_Sect_2.2.17_v4}
\begin{array}{ccc}
C_{X^I}\otimes_{Shv(X^I)} D_{X^I}&\toup{r_I} &(C\otimes D)_{X^I}\\
\downarrow &&\downarrow\\ 
C_{X^{I'}}\otimes_{Shv(X^{I'})} D_{X^{I'}}&\toup{r_{I'}} &(C\otimes D)_{X^{I'}}
\end{array}
\end{equation}
Here the vertical arrows are defined as in Section~\ref{Sect_2.1.6_v4} for
$\vartriangle: X^{I'}\to X^I$.

Passing to the limit over $I\in fSets$, we conclude that the functor
$$
CAlg^{nu}(Shv(X)-mod)\to Shv(\Ran)-mod, \; C(X)\mapsto \Fact(C)
$$
is equipped with a non-untal symmetric monoidal structure. Given $C(X), D(X)\in CAlg^{nu}(Shv(X)-mod)$ we get 
$$
r_{\Ran}: \Fact(C)\otimes_{Shv(\Ran)} \Fact(D)\,\iso\, \Fact(C\otimes D),
$$ 
where $C\otimes D$ stands for $C(X)\otimes_{Shv(X)} D(X)$. 

 In particular, our functor lifts to
$$
CAlg^{nu}(Shv(X)-mod)\to CAlg^{nu}(Shv(\Ran)-mod), \; C(X)\mapsto (\Fact(C),\otimes^!)
$$

\sssec{Example} Take $C=\Vect$, so that $\Fact(C)\,\iso\, Shv(\Ran)$. The symmetric monoidal structure denoted by $\otimes^!$ on $\Fact(C)$ in this case becomes the usual $\otimes^!$-symmetric monoidal structure on $Shv(\Ran)$. This explains our notatiion $\otimes^!$ in general.

\begin{Lm} 
\label{Lm_rlax_and_factorization_compatibility_non-unital}
The non-unital symmetric monoidal structure on (\ref{functor_C_goes_to_C_X^I_nonunital}) is compatible with the factorization. That is, for a map $\phi: I\to I'$ in $fSets$, $C,D\in CAlg^{nu}(Shv(X)-mod)$ the diagram commutes
$$
\begin{array}{ccc}
C_{X^I}\otimes_{Shv(X^I)} D_{X^I}\mid_{X^I_{\phi, d}} & \toup{r_I} & (C\otimes D)_{X^I}\mid_{X^I_{\phi, d}}\\
\downarrow && \downarrow\\
(\underset{i\in I'}{\boxtimes} C_{X^{I_i}})\otimes_{Shv(X^I)}(\underset{i\in I'}{\boxtimes} D_{X^{I_i}})\mid_{X^I_{\phi, d}} & \toup{\underset{i\in I'}{\boxtimes} r_{I_i}} & (\underset{i\in I'}{\boxtimes} (C\otimes D)_{X^{I_i}})\mid_{X^I_{\phi, d}},
\end{array}
$$
where the vertical arrows are the factorization equivalences. 
\end{Lm}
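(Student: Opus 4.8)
The plan is to reduce the claimed commutativity to the fact, already established, that the factorization equivalence \eqref{iso_after_Lm_3.2.4} and the non-unital symmetric monoidal equivalence $r_I$ of \eqref{functor_r_I_non-unital} are each obtained by passing to a colimit of an equivalence at the level of the indexing functors on twisted-arrow categories. So the strategy is: \emph{first} express all four categories appearing in the diagram as colimits over appropriate full subcategories of $\Tw(I)$, $'\!\Tw(I)$, and products thereof; \emph{then} exhibit the diagram of functors whose colimit is the claimed diagram; \emph{finally} check that this diagram of functors commutes already over $X$, i.e. at the level of the $\kappa$-type indexing functors, which is a statement about $\boxtimes$, $!$-restriction along diagonals, and the products in $C(X)$ and $D(X)$ — and this holds because the symmetric monoidal structure on $Shv(X)-mod$ is symmetric.

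In more detail: I would fix $\phi\colon I\to I'$ and, as in Section~\ref{Sect_2.1.11_now}, work inside the full subcategories $\Tw(I)_\phi\subset\Tw(I)$ and $'\!\Tw(I)_\phi\subset\,'\!\Tw(I)$ of objects whose $K$ (resp.\ whose output set) lies in $Q(I')$; by Lemma~\ref{Lm_3.2.4_from_chiral_algebras} and Section~\ref{Sect_C.0.2}, restricting to $X^I_{\phi,d}$ turns each of the four colimits defining the corners of the square into a colimit over the $\phi$-subcategory, and the equivalence $\Tw(I)_\phi\iso\prod_{i'\in I'}\Tw(I_{i'})$ (together with its $'\!\Tw$-analogue $'\!\Tw(I)_\phi\iso\prod_{i'\in I'}{'\!\Tw(I_{i'})}$) identifies the two bottom corners with the external products over $i'\in I'$ of the corresponding categories for $I_{i'}$. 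Under these identifications the top horizontal map $r_I$ restricted to $X^I_{\phi,d}$ is, by construction in Sections~\ref{Sect_2.2.15_v4}, the colimit over $\Tw(I)_\phi$ (equivalently over $\prod_{i'}\Tw(I_{i'})$) of the equivalence $\kappa\comp{'j}\iso$ (the $(C\otimes D)$-indexing functor), and this factors as the product over $i'$ of the analogous equivalences for $I_{i'}$ — which is exactly $\underset{i'\in I'}{\boxtimes}r_{I_{i'}}$. Hence the square commutes after restriction to $X^I_{\phi,d}$ by functoriality of colimits, provided the underlying square of indexing functors commutes.

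That underlying square of functors $\prod_{i'}\Tw(I_{i'})\to Shv(X^I_{\phi,d})-mod$ is the main point to verify, and it is essentially formal: for an object $((J_1)_{i'},(J_2)_{i'},K_{i'})_{i'\in I'}$ it compares, on one side, first forming $\underset{i'}{\boxtimes}\big(\underset{k\in K_{i'}}{\boxtimes}(C^{\otimes(J_1)_k}(X)\otimes_{Shv(X)}D^{\otimes(J_2)_k}(X))\big)$ and then restricting to $X^I_{\phi,d}$, and on the other side first forming the $i'$-wise $\boxtimes$ of $(C\otimes D)_{X^{I_{i'}}}$-pieces; since $\boxtimes$ over distinct factors of $X^{I'}$ commutes with $\boxtimes$ over the $k\in K_{i'}$ and with the products in $C(X)$, $D(X)$, and with $!$-restriction along the relevant (disjoint) diagonals, both composites are canonically the same functor. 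I expect the only genuine work is bookkeeping: matching the indexing categories and their cofinality/zero-cofinality properties (invoking Section~\ref{Sect_C.0.2} for the compatibility of $\otimes_{Shv(X^I)}Shv(X^I_{\phi,d})$ with the colimits, and Lemma~\ref{Lm_2.2.13_v4} for rewriting the tensor-product colimits via $'\!\Tw$), and then observing that the diagram of transition maps — built from $m$'s in $C(X),D(X)$ and $\vartriangle_!\vartriangle^!$'s — commutes by the naturality already proved for $r_I$ in diagram~\eqref{diag_for_Sect_2.2.17_v4} restricted appropriately. The hard part, to the extent there is one, is simply to organize these identifications so that the two routes around the square are visibly the colimit of one and the same natural transformation; once that is set up, commutativity is automatic.
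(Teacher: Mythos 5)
The paper's own proof here is just ``Left to a reader'', and your argument is exactly the intended one: restrict everything to $X^I_{\phi, d}$, use Lemma~\ref{Lm_3.2.4_from_chiral_algebras} together with zero-cofinality (Section~\ref{Sect_C.0.2}) to replace all four corner colimits by colimits over the $\phi$-subcategories, identify these with products over $i'\in I'$ (for $\Tw(I)$, $\Tw(I)\times\Tw(I)$ and ${}'\Tw(I)$ alike — noting that the support condition on the pushout $K$ forces both members of a pair to lie in $\Tw(I)_{\phi}$, so the restricted zig-zag (\ref{comp_maps_for_def_r_I}) defining $r_I$ becomes the exterior product over $i'$ of the zig-zags defining the $r_{I_{i'}}$), while the vertical factorization equivalences are the canonical identifications of the same indexing diagrams. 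Your proposal is correct, and the remaining work is only the bookkeeping you acknowledge.
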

\begin{proof} Left to a reader.
\end{proof}

\sssec{} From Lemma~\ref{Lm_rlax_and_factorization_compatibility_non-unital} we conclude that we get a functor
$$
CAlg^{nu}(Shv(X)-mod)\to CAlg^{nu, Fact}(Shv(\Ran)-mod), \; C(X)\mapsto (\Fact(C), \otimes^!)
$$
where $CAlg^{nu, Fact}(Shv(\Ran)-mod)$ denotes the category of non-unital symmetric monoidal factorization categories on $\Ran$. 

\sssec{} 
\label{Sect_2.2.19_now}
Let $C(X)\in CAlg^{nu}(Shv(X)-mod)$. Set 
$$
ShvCat_C(\Ran)=\underset{I\in fSets}{\lim} C_{X^I}-mod
$$ 
calculated in $1-\Cat$. Let
$$
\Loc_C: (\Fact(C),\otimes^!)-\!\!\!\!\!\mod\to \underset{I\in fSets}{\lim} C_{X^I}-mod
$$ 
be the functor sending $M$ to the compatible family $\{M_I\}$ with $M_I=M\otimes_{(\Fact(C),\otimes^!)} C_{X^I}$ for $I\in fSets$. The functor $\Loc_C$ has a right adjoint 
$$
\{M_{X^I}\}\mapsto \Gamma(\Ran, M)=\underset{I\in fSets}{lim} M_{X^I}
$$  
calculated in $(\Fact(C),\otimes^!)-\!\!\!\mod$ or equivalently in $Shv(\Ran)-mod$ or in $\DGCat_{cont}$. 
\begin{Lm} 
\label{Lm_Loc_C_is_an_equivalence}
The functor $\Loc_C$ is an equivalence.
\end{Lm}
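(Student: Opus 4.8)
The plan is to exhibit $\Loc_C$ as an equivalence by producing an explicit inverse and checking the unit and counit are isomorphisms, exploiting the $1$-affineness of $\Ran$ (Lemma~\ref{Lm_2.2.8_Ran_is_1-affine}) together with the $1$-affineness of each $X^I$. The key observation is that the formula $\Gamma(\Ran,\Fact(C))\,\iso\,\underset{I\in fSets^{op}}{\colim}\,C_{X^I}$ from \eqref{Fact(C)_as_colimit_of_C_X^I}, which presents $(\Fact(C),\otimes^!)$ as a colimit of the $C_{X^I}$ along the functors ${}_{!,C}$-type transition maps (equivalently, $\Fact(C)\,\iso\,\underset{fSets}{\lim}\,\cF_{fSets,C}^R$), should upgrade to module categories: one wants
$$
(\Fact(C),\otimes^!)-\!\!\!\mod\,\iso\,\underset{I\in fSets}{\lim}\,C_{X^I}-\!\!\!\mod,
$$
with the limit computed in $1-\Cat$ along the base-change functors $\,\cdot\otimes_{C_{X^{I'}}}C_{X^I}$ induced by the symmetric monoidal functors $\vartriangle^!\colon C_{X^{I'}}\to C_{X^I}$ of Section~\ref{Sect_2.1.6_v4}.

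First I would recall the general principle (as in (\cite{Ga1}), or the $1$-affineness package of Appendix~\ref{Sect_Sheaves of categories}) that for a prestack $Y\,\iso\,\colim_\alpha Y_\alpha$ which is $1$-affine with each $Y_\alpha$ $1$-affine, and for a sheaf of symmetric monoidal categories $\cA$ on $Y$, one has $\Gamma(Y,\cA)-\!\!\!\mod\,\iso\,\underset{\alpha}{\lim}\,\Gamma(Y_\alpha,\cA)-\!\!\!\mod$; concretely this is the statement that passing to module categories commutes with the relevant colimit of algebras. Here $Y=\Ran$, $Y_\alpha=X^I$, and $\cA=(\Fact(C),\otimes^!)$, whose restriction to $X^I$ is $(C_{X^I},\otimes^!)$ by the construction of Section~\ref{Sect_2.1.4_now} together with the symmetric-monoidal refinement of Section~\ref{Sect_2.2.17_v4}. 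Thus $\Loc_C$, which by definition sends $M$ to $\{M\otimes_{\Fact(C)}C_{X^I}\}$, is precisely the comparison functor of this general principle, and its right adjoint $\{M_{X^I}\}\mapsto\underset{I}{\lim}\,M_{X^I}$ is the one described in the statement. I would then verify the hypotheses: $\Ran$ is $1$-affine by Lemma~\ref{Lm_2.2.8_Ran_is_1-affine}, each $X^I\in\Sch_{ft}$ is $1$-affine, the transition maps $\vartriangle^!$ are morphisms of commutative algebras (Section~\ref{Sect_2.2.17_v4}), and the base-change identity $C_{X^I}\otimes_{Shv(X^I)}Shv(X^{I'})\,\iso\,C_{X^{I'}}$ of \eqref{equiv_Fact(C)_is_a_sheaf_Sect_2.1.4} plus Lemma~\ref{Lm_2.1.11}(i) guarantee the descent diagram is of the required form.

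The concrete check then proceeds on the two adjunction $2$-morphisms. For the counit: given a compatible family $\{M_{X^I}\}$, one must show $(\underset{I}{\lim}\,M_{X^I})\otimes_{\Fact(C)}C_{X^{I_0}}\,\iso\,M_{X^{I_0}}$ for each $I_0$; using \eqref{Fact(C)_as_colimit_of_C_X^I} to write $\Fact(C)\,\iso\,\underset{I}{\colim}\,C_{X^I}$ and the fact that $-\otimes_{\Fact(C)}C_{X^{I_0}}$ is a continuous functor, this reduces to a cofinality computation over $fSets_{I_0/}$ exactly parallel to the ones in the proof of Lemma~\ref{Lm_2.2.8_Ran_is_1-affine} and Lemma~\ref{Lm_2.1.2_about_Fact(C)} — the initial object $(I_0\to I_0)$ of $fSets_{I_0/}$ does the collapsing. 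For the unit: given $M\in(\Fact(C),\otimes^!)-\!\!\!\mod$, one must show $M\,\iso\,\underset{I}{\lim}\,(M\otimes_{\Fact(C)}C_{X^I})$, i.e. that $M$ is recovered from its localizations; this follows by writing $M\,\iso\,M\otimes_{\Fact(C)}\Fact(C)\,\iso\,M\otimes_{\Fact(C)}(\underset{I^{op}}{\colim}\,C_{X^I})$, commuting the tensor past the colimit, and then converting the colimit over $fSets^{op}$ back to the limit over $fSets$ by passing to adjoints (using that the transition maps $(\vartriangle^{(J/I)})_{!,C}$ and $\vartriangle^!$ form an adjoint pair, as in Lemma~\ref{Lm_2.1.11}(i) tensored up over $\Fact(C)$).

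The main obstacle I expect is not the cofinality bookkeeping but justifying that module categories may be computed "one $X^I$ at a time", i.e. that the natural functor $(\Fact(C),\otimes^!)-\!\!\!\mod\to\underset{I}{\lim}\,C_{X^I}-\!\!\!\mod$ is an equivalence at the level of $1-\Cat$ — this is where the $1$-affineness of $\Ran$ is genuinely used, via the identification $Shv(\Ran)-\!\!\!\mod\,\iso\,ShvCat(\Ran)$ and its compatibility with the algebra object $(\Fact(C),\otimes^!)$ in $CAlg(Shv(\Ran)-\!\!\!\mod)$. If one prefers to avoid invoking a general descent statement, the same content can be extracted directly: the colimit presentation \eqref{Fact(C)_as_colimit_of_C_X^I} of $\Fact(C)$ as an algebra, together with the fact that in $\DGCat_{cont}$ one has $(\colim_i A_i)-\!\!\!\mod\,\iso\,\lim_i(A_i-\!\!\!\mod)$ for a filtered-enough, adjointable diagram of commutative algebras, yields the claim; checking the adjointability of the diagram $\cF_{fSets,C}$ (which is exactly the passage to right adjoints recorded in Section~\ref{Sect_2.2.10_now}) is the one nontrivial hypothesis, and it holds because each $(\vartriangle^{(J/I)})_{!,C}$ is $\Fact(C)$-linear with a continuous right adjoint.
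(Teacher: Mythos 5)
Your proposal is correct and takes essentially the same route as the paper: the paper's own proof likewise mimics Lemma~\ref{Lm_2.2.8_Ran_is_1-affine}, its Step 1 being exactly the point you isolate — that the adjunction $\vartriangle_!\dashv\vartriangle^!$ holds in $C_{X^I}$-mod (projection formula), so that $\underset{I}{\lim}\, M_I\,\iso\,\underset{I^{op}}{\colim}\, M_I$ persists after tensoring over $(\Fact(C),\otimes^!)$, and your device of tensoring the $Shv$-level adjunction of Lemma~\ref{Lm_2.1.11} up along $C_{X^I}\,\iso\,\Fact(C)\otimes_{Shv(\Ran)}Shv(X^I)$ does supply this. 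One cosmetic caution: (\ref{Fact(C)_as_colimit_of_C_X^I}) presents $\Fact(C)$ as a colimit along the non-monoidal pushforwards $\vartriangle_!$, so it is not a presentation "as an algebra" (that is the limit along the symmetric monoidal $\vartriangle^!$), but your actual unit/counit argument does not depend on that misstatement.
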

\begin{proof} 
{\bf Step 1} Given a map $I\to J$ in $fSets$, one checks that for $\vartriangle: X^J\to X^I$ the dual pair
$$
\vartriangle_!: C_{X^J}\leftrightarrows C_{X^I}: \vartriangle^!
$$
takes place in $C_{X^I}-mod$. In other words, for $K\in C_{X^J}, L\in C_{X^I}$ one has the projection formula: $(\vartriangle_! K)\otimes^! L\,\iso\, \vartriangle_!(K\otimes^! \vartriangle^! L)$ canonically. 

 So, for $L\in C_{X^I}-mod$ tensoring by $L$ one gets an adjoint pair $
\vartriangle_!: C_{X^J}\otimes_{C_{X^I}} L\leftrightarrows L: \vartriangle^!$
in $C_{X^I}-mod$. 

 Thus, given $\{M_I\}\in ShvCat_C(\Ran)$, we may pass to the left adjoint in the diagram $\underset{I\in fSets}{\lim} M_I$ in $\DGCat_{cont}$ and get
$$
\underset{I\in fSets}{\lim} M_I\,\iso\,\underset{I\in fSets^{op}}{\colim} M_I,
$$ 
where the colimit is calculated in $\DGCat_{cont}$ or equivalently, in $Shv(\Ran)-mod$ or equivalently in $(\Fact(C),\otimes^!)-mod$. 

\medskip\noindent
{\bf Step 2} The rest of the argument is as in Lemma~\ref{Lm_Ran_is_1-affine}. Namely, let $\cM:=\{M_I\}\in ShvCat_C(\Ran)$. We must show that the natural map
$$
\Gamma(\Ran, \cM)\otimes_{\Fact(C)} C_{X^I}\to M_I
$$
is an equivalence. We have 
$$
\Gamma(\Ran, \cM)\otimes_{\Fact(C)} C_{X^I}\,\iso\, \underset{J\in fSets^{op}}{\colim} M_J\otimes_{\Fact(C)} C_{X^I}
$$
From the 1-affineness of $\Ran$ we get $C_{X^J}\otimes_{\Fact(C)} C_{X^I}\,\iso\, \Gamma(X^I\times_{\Ran} X^J, C)$, so  
$$
M_J\otimes_{\Fact(C)} C_{X^I}\,\iso\, M_J\otimes_{C_{X^J}} C_{X^J}\otimes_{\Fact(C)} C_{X^I}\,\iso\, \Gamma(X^I\times_{\Ran} X^J, \cM).
$$
Since $\underset{J\in fSets^{op}}{\colim} \Gamma(X^I\times_{\Ran} X^J, \cM)\,\iso\, \Gamma(X^I, \cM)$, our claim follows.
\end{proof}

\ssec{Decomposition of colimits into pieces} 

\sssec{} Let $I\in fSets$ with $\mid I\mid \ge 2$. Let $\Tw(I)^{>1}\subset \Tw(I)$ be the full subcategory of those $(I\to J\to K)$ for which $\mid J\mid >1$. Let $f: I\to *$ in $fSets$.
Recall the full subcategory $\Tw(I)^f\subset \Tw(I)$, it is spaned by objects of the form $(I\to J\to *)$ in $\Tw(I)$. 
\begin{Lm} 
\label{Lm_2.2.12}
The diagram is cocartesian in the category of small categories
$$
\begin{array}{ccc}
\Tw(I)^{>1} & \to & \Tw(I)\\
\uparrow && \uparrow\\
\Tw(I)^{>1}\cap \Tw(I)^f & \hook{} & \Tw(I)^f
\end{array}
$$
\end{Lm}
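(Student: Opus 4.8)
The plan is to exhibit the square as a pushout of small categories by producing explicit functors and checking the universal property on objects and morphisms. The key observation is that $\Tw(I) = \Tw(I)^{>1} \cup \Tw(I)^f$ as a "covering" by full subcategories: an object $(I\to J\to K)$ of $\Tw(I)$ either has $\mid J\mid > 1$ (so it lies in $\Tw(I)^{>1}$) or has $\mid J\mid = 1$, which forces $\mid K\mid = 1$ as well, hence $K = *$ and the object lies in $\Tw(I)^f$. The intersection of the two subcategories is exactly $\Tw(I)^{>1}\cap \Tw(I)^f$, which appears in the lower-left corner. So set-theoretically on objects the pushout claim is clear; the content is that it holds at the level of morphisms and $2$-cells (i.e.\ as a genuine pushout of categories, not just of object-sets).

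First I would reduce to a general lemma: if a small category $\cC$ is written as a union $\cC = \cC_1 \cup \cC_2$ of two full subcategories such that every morphism of $\cC$ lies in $\cC_1$ or in $\cC_2$, then the square with corners $\cC_1\cap\cC_2$, $\cC_1$, $\cC_2$, $\cC$ is cocartesian in $\Cat$. This is a standard fact; the only thing to verify in our situation is the hypothesis on morphisms. So the second step is: given a morphism in $\Tw(I)$ as in \eqref{morphism_in_Tw_v2}, from $(I\to J_1\to K_1)$ to $(I\to J_2\to K_2)$, show that if the target is \emph{not} in $\Tw(I)^{>1}$ (i.e.\ $\mid J_2\mid = 1$), then the source lies in $\Tw(I)^f$, so that the morphism lies in $\Tw(I)^f$. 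But $\mid J_2\mid = 1$ forces $K_2 = *$, and since $\Tw(I)^f$ is the full subcategory on objects $(I\to J\to *)$, the only thing to observe is that $K_2 = *$ means the source of any morphism into this object need not itself have $K_1 = *$ --- so this is \emph{not} quite enough, and one must instead argue directly.

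The correct splitting is the following: a morphism $(I\to J_1\to K_1)\to(I\to J_2\to K_2)$ in $\Tw(I)$ lies in $\Tw(I)^{>1}$ when \emph{both} $\mid J_1\mid>1$ and $\mid J_2\mid>1$, and lies in $\Tw(I)^f$ when $K_1 = K_2 = *$; we must show every morphism satisfies (at least) one of these. The remaining case is $\mid J_2\mid = 1$ with $\mid J_1\mid$ arbitrary, or $\mid J_1\mid = 1$. If $\mid J_1\mid = 1$ then also $K_1 = *$; and the map $K_1\to K_2$ in \eqref{morphism_in_Tw_v2} goes \emph{the wrong way} (it is $K_2\to K_1$ in that display --- note the upward arrow), so $K_2 \to K_1 = *$ forces $K_2 = *$, hence the morphism is in $\Tw(I)^f$. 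Symmetrically, if $\mid J_2\mid = 1$, then $K_2 = *$; the arrow $K_1\to K_2$ direction in $\Tw(I)$ is from $K_2$ to $K_1$, so this gives a surjection $* = K_2 \to \dots$, which does not constrain $K_1$ --- but in this case we instead use that $J_1 \to J_2$ composed with $I \to J_1$ is the canonical $I \to J_2$, and $\mid J_2 \mid = 1$ just says $J_2 = *$; this does not force $\mid J_1\mid = 1$. So the genuinely problematic morphisms are those from an object with $\mid J_1\mid > 1$ (hence possibly $K_1 \ne *$) to an object with $J_2 = *$. I claim these do not exist unless $K_1 = *$: the map $J_1\to J_2 = *$ must be compatible with $J_i \to K_i$ and the surjection $K_2 \to K_1$; since $K_2 = *$, the surjection $* \to K_1$ forces $K_1 = *$. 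Hence such a morphism lies in $\Tw(I)^f$ after all. This disposes of all cases, so the covering hypothesis holds and the general lemma applies.

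The main obstacle is precisely this bookkeeping with the variance of the three legs $I\to J\to K$ in the twisted-arrow-type category $\Tw(I)$: one must track that $J$ is covariant and $K$ is contravariant along a morphism, and correctly deduce the constraints $\mid K\mid \le \mid J\mid$ and the propagation of the condition $K = *$. Once the covering-by-full-subcategories criterion is in place and the morphism condition is verified, the cocartesian-ness is formal (one can, e.g., check it against an arbitrary test category $\cD$ by noting that a functor out of the pushout is the same as a compatible pair of functors on $\Tw(I)^{>1}$ and $\Tw(I)^f$, and such a pair glues uniquely to a functor on $\Tw(I)$ precisely because the two subcategories are full, cover all objects, and cover all morphisms). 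I would also remark that the analogous decomposition with $*$ replaced by a general quotient, iterated, is what underlies the "decomposition into pieces" used later.
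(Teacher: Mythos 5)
Your overall plan coincides with the paper's: reduce to a general criterion saying that a square of full subcategories covering a category is cocartesian, and then verify the combinatorial hypothesis for $\Tw(I)^{>1}$ and $\Tw(I)^f$. However, your verification contains false intermediate claims caused by exactly the variance bookkeeping you set out to control. In a morphism (\ref{morphism_in_Tw_v2}) of $\Tw(I)$ from $(I\to J_1\to K_1)$ to $(I\to J_2\to K_2)$ one has surjections $J_1\twoheadrightarrow J_2$ and $K_2\twoheadrightarrow K_1$. Your claim that ``$K_2\to K_1=*$ forces $K_2=*$'' is wrong: any nonempty set surjects onto a point, so nothing is forced; in that case ($|J_1|=1$) the correct reason the morphism lies in $\Tw(I)^f$ is that $J_2$ is a quotient of $J_1=*$, hence $J_2=*$ and therefore $K_2=*$. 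Conversely, your assertion that the surjection $*=K_2\to K_1$ ``does not constrain $K_1$'' is also wrong — it forces $K_1=*$, which is precisely what you invoke two sentences later. The clean statement, and the way the paper argues, is that both subcategories are closed under passing to the source of a morphism: if $|J_2|>1$ then $|J_1|>1$ because $J_1\twoheadrightarrow J_2$, and if $K_2=*$ then $K_1=*$ because $K_2\twoheadrightarrow K_1$. Combined with the (correct) observation that every object lies in $\Tw(I)^{>1}$ or $\Tw(I)^f$, this yields the covering with no case analysis at all.

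Second, be careful with the ``standard fact'' you invoke. The pushout must be taken in $1-\Cat$, i.e.\ $\infty$-categorically — this is what feeds into \corref{Cor_2.3.3}, where colimits in a cocomplete $\infty$-category are decomposed — and your justification (gluing a compatible pair of functors on objects and morphisms against a test category) is a purely $1$-categorical argument; it does not by itself establish the required equivalence of functor $\infty$-categories. The paper's \lemref{Lm_C.0.3} assumes the stronger ``sieve'' condition just verified (the source of any map into each subcategory again lies in it), which makes the three inclusions cartesian fibrations over $\Tw(I)$, and proves cocartesianness by straightening to presheaves with values $*$ or $\emptyset$. Your weaker hypothesis (every morphism lies in one of the two full subcategories) does in fact suffice, but then one must check that every simplex of the nerve — not only every edge — lies in one of the two pieces (this follows from morphism-covering by considering the total composite of a chain) and then conclude by a homotopy-pushout argument for nerves; as written, this step is absent from your proposal. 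Since the sieve condition holds here anyway, the simplest repair is to verify it directly, as above, and quote the paper's gluing lemma.
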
  
\begin{proof}
Any object of $\Tw(I)$ lies either in $\Tw(I)^{>1}$ or in $\Tw(I)^f$. Let 
$$
\Sigma_1=(I\to J_1\to K_1), \Sigma_2=(I\to J_2\to K_2)\in \Tw(I)
$$ 
Assume given a map $\Sigma_1\to \Sigma_2$ in $\Tw(I)$.

1) If $\Sigma_2\in \Tw(I)^f$ then $\Sigma_1\in \Tw(I)^f$ also.

2) If $\Sigma_2\in \Tw(I)^{>1}$ then $\Sigma_1\in Tw(I)^{>1}$ also. 

\noindent
The claim follows from Lemma~\ref{Lm_C.0.3}. 
\end{proof}

 Lemma~\ref{Lm_2.2.12} allows to decompose $\underset{\Tw(I)}\colim$ into pieces as follows.

\begin{Cor} 
\label{Cor_2.3.3}
Let $I\in fSets$. Let $h: \Tw(I)\to E$ be a map in $1-\Cat$, where $E$ is cocomplete. Then the square is cocartesian in $E$
$$
\begin{array}{ccc}
\underset{\Tw(I)^{>1}}{\colim}\; h & \to & \underset{\Tw(I)}{\colim}\;  h\\
\uparrow && \uparrow\\
\underset{\Tw(I)^{>1}\cap \Tw(I)^f}{\colim}\;  h & \to & \underset{\Tw(I)^f}{\colim} \; h,
\end{array}
$$
where all the colimits are calculated in $E$. Besides, $\underset{\Tw(I)^f}{\colim} \; h\,\iso\, h(I\to *\to *)$ 
\end{Cor}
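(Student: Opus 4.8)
The plan is to deduce both assertions from the cocartesian square of small categories in \lemref{Lm_2.2.12}, applying the colimit construction together with a terminal-object observation. Throughout I assume $\mid I\mid\ge 2$; when $\mid I\mid=1$ the categories $\Tw(I)^{>1}$ and $\Tw(I)^{>1}\cap\Tw(I)^f$ are empty while $\Tw(I)^f=\Tw(I)=\{(I\to *\to *)\}$, so the statement is immediate.

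First I would settle the auxiliary identification $\underset{\Tw(I)^f}{\colim}\, h\,\iso\, h(I\to *\to *)$. The point is that $(I\to *\to *)$ is a terminal object of $\Tw(I)^f$: by~(\ref{morphism_in_Tw_v2}), a morphism in $\Tw(I)^f$ from $(I\to J\to *)$ to $(I\to *\to *)$ is nothing but the unique surjection $J\to *$ compatible with the maps from $I$. Hence $\{(I\to *\to *)\}\hook{}\Tw(I)^f$ is cofinal and the colimit over $\Tw(I)^f$ is the value of $h$ at that object.

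For the cocartesian square I would run a corepresentability argument. Fix $e\in E$. For any small $\cC$ and $g\colon\cC\to E$ one has $\Map_E(\underset{\cC}{\colim}\, g, e)\,\iso\,\underset{\cC^{op}}{\lim}\,\Map_E(g(-), e)$. By \lemref{Lm_2.2.12}, passing to opposite categories, $\Tw(I)^{op}$ is the pushout of $(\Tw(I)^{>1})^{op}\gets(\Tw(I)^{>1}\cap\Tw(I)^f)^{op}\hook{}(\Tw(I)^f)^{op}$ in $1-\Cat$; since $\Fun(-,\Spc)$ sends colimits of $\infty$-categories to limits and $\underset{(-)}{\lim}$ is compatible with restriction, $\underset{\Tw(I)^{op}}{\lim}\,\Map_E(h(-),e)$ is the fibre product of $\underset{(\Tw(I)^{>1})^{op}}{\lim}$ and $\underset{(\Tw(I)^f)^{op}}{\lim}$ over $\underset{(\Tw(I)^{>1}\cap\Tw(I)^f)^{op}}{\lim}$, applied to the appropriate restrictions of $\Map_E(h(-),e)$. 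Combining the two displays, $\Map_E(\underset{\Tw(I)}{\colim}\, h, e)$ is, functorially in $e$, the fibre product of $\Map_E(\underset{\Tw(I)^{>1}}{\colim}\, h, e)$ and $\Map_E(\underset{\Tw(I)^f}{\colim}\, h, e)$ over $\Map_E(\underset{\Tw(I)^{>1}\cap\Tw(I)^f}{\colim}\, h, e)$; by the Yoneda lemma this says precisely that the square of the statement is cocartesian in $E$.

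The one delicate input — the step I would expect to require the most care, although it is entirely standard — is the assertion that a pushout of small $\infty$-categories is turned into a pullback by $\underset{(-)}{\lim}$ of space-valued diagrams; equivalently, that colimits commute with colimits. It can be packaged as: $\underset{\cC}{\colim}\colon\Fun(\cC,E)\to E$ is compatible with left Kan extension along functors of indexing categories, and left Kan extension along the pushout of \lemref{Lm_2.2.12} realizes the pushout inside $\Fun(-,E)$. Granting this, no computation beyond \lemref{Lm_2.2.12} is needed, and the combinatorics of $\Tw(I)$ enters only through that lemma.
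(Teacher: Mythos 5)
Your proof is correct and follows essentially the same route as the paper: both deduce the cocartesian square from the cocartesian square of index categories in \lemref{Lm_2.2.12}, the only difference being that where the paper simply cites \cite{PT} for the principle that a pushout of indexing categories induces a pushout of colimits, you prove that principle directly by the corepresentability/Yoneda argument, and you make explicit the terminal-object observation that $(I\to *\to *)$ is final in $\Tw(I)^f$ (which the paper leaves implicit), plus the trivial case $\mid I\mid=1$.
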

\begin{proof}
Apply \cite{PT} to Lemma~\ref{Lm_2.2.12}. 
\end{proof}

\begin{Cor} 
\label{Cor_2.2.14}
The square is cocartesian in $Shv(X^I)-mod$
$$
\begin{array}{ccc}
\underset{\Tw(I)^{>1}}{\colim}\; \cF_{I,C} & \to & C_{X^I}\\
\uparrow && \uparrow\\
\underset{\Tw(I)^{>1}\cap \Tw(I)^f}{\colim}\; \cF_{I,C} & \toup{m} & C(X),
\end{array}
$$
where the colimits of the pieces are calculated in $Shv(X^I)-mod$. Here the right vertical map is the structure map for $(I\to *\to *)$ in $\cF_{I,C}$. \QED
\end{Cor}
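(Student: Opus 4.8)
The plan is to deduce the statement as a direct instance of \corref{Cor_2.3.3}. First I would take $E=Shv(X^I)-mod$, which is presentable and hence cocomplete, and $h=\cF_{I,C}\colon\Tw(I)\to Shv(X^I)-mod$; then \corref{Cor_2.3.3} applies and produces a cocartesian square in $Shv(X^I)-mod$ whose four corners are $\underset{\Tw(I)^{>1}}{\colim}\cF_{I,C}$, $\underset{\Tw(I)}{\colim}\cF_{I,C}$, $\underset{\Tw(I)^{>1}\cap\Tw(I)^f}{\colim}\cF_{I,C}$ and $\underset{\Tw(I)^f}{\colim}\cF_{I,C}$, together with the identification $\underset{\Tw(I)^f}{\colim}\cF_{I,C}\,\iso\,\cF_{I,C}(I\to *\to *)$.

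Next I would recognise the right-hand column. By the definition of $C_{X^I}$ in \secref{Sect_2.1.3_now} the top right corner is $\underset{\Tw(I)}{\colim}\cF_{I,C}\,\iso\,C_{X^I}$. For the bottom right corner I would unwind the definition of $\cF_{I,C}$ at the object $(I\to *\to *)$: the surjection $*\to *$ has a unique fibre, equal to the one-element set, and the exterior product $\boxtimes$ over a one-element index is trivial, so $\cF_{I,C}(I\to *\to *)=C^{\otimes *}(X)=C(X)$. Under these two identifications the right vertical arrow of the square from \corref{Cor_2.3.3} is exactly the transition map in the colimit diagram attached to the cone object $(I\to *\to *)$, i.e.\ the structure map $m\colon C(X)\to C_{X^I}$ of $\cF_{I,C}$, which is the right vertical map in the statement. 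Transporting the square along these identifications yields the claim.

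I do not expect a genuine obstacle here: the whole combinatorial content is the cocartesian square of categories in \lemref{Lm_2.2.12}, already packaged by \corref{Cor_2.3.3}, and what remains are the two bookkeeping identifications above. The only point worth an explicit remark is that the colimits over the subcategories $\Tw(I)^{>1}$ and $\Tw(I)^{>1}\cap\Tw(I)^f$ may be computed in $Shv(X^I)-mod$ or, equivalently, in $\DGCat_{cont}$, because the forgetful functor $Shv(X^I)-mod\to\DGCat_{cont}$ preserves colimits; this is the same observation already used for $C_{X^I}$ in \secref{Sect_2.1.3_now}, and it makes the phrase ``the colimits of the pieces are calculated in $Shv(X^I)-mod$'' unambiguous.
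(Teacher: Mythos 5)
Your proposal is correct and is exactly the paper's argument: Corollary~\ref{Cor_2.2.14} is stated with no separate proof precisely because it is the instance of Corollary~\ref{Cor_2.3.3} with $E=Shv(X^I)-mod$ and $h=\cF_{I,C}$, together with the identifications $\underset{\Tw(I)}{\colim}\cF_{I,C}\,\iso\,C_{X^I}$ and $\underset{\Tw(I)^f}{\colim}\cF_{I,C}\,\iso\,\cF_{I,C}(I\to *\to *)=C(X)$ (the latter already part of Corollary~\ref{Cor_2.3.3}), under which the right vertical arrow becomes the structure map for $(I\to *\to *)$. Your closing remark about computing the colimits equivalently in $Shv(X^I)-mod$ or $\DGCat_{cont}$ is harmless but not needed, since Corollary~\ref{Cor_2.3.3} is applied directly with $E=Shv(X^I)-mod$.
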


\sssec{} Write $\cTw(fSets)^{>1}\subset \cTw(fSets)$ for the full subcategory of $(J\to K)\in \cTw(fSets)$ with $\mid J\mid >1$. View $fSets$ as a full subcategory of $\cTw(fSets)$ via the inclusion sending $J$ to $J\to *$. As in Lemma~\ref{Lm_2.2.12}, one shows that the square
$$
\begin{array}{ccc}
\cTw(fSets)^{>1} & \to & \cTw(fSets)\\
\uparrow && \uparrow\\
\cTw(fSets)^{>1}\cap fSets & \to & fSets
\end{array}
$$
is cocartesian in the category of small categories. 

\ssec{Factorization algebras in $\Fact(C)$} 
\label{Sect_Factorization algebras in Fact(C)}

\sssec{} For the convenience of the reader, recall the construction of the commutative factorization algebras in $\Fact(C)$ essentially given in \cite{Gai19Ran}. Let $A\in CAlg^{nu}(C(X))$. 

\sssec{} 
\label{Sect_2.4.2_v4}
 Given a map $\phi: J\to J'$ in $fSets$, for the product map $m_{\phi}: C^{\otimes J}(X)\to C^{\otimes J'}(X)$ we get the product map $m_{\phi}(A^{\otimes J})\to A^{\otimes J'}$ in $C^{\otimes J'}(X)$ for the algebra $A$. Here $A^{\otimes J}$ denotes the $J$-th tensor power of $A$ in $C^{\otimes J}(X)$. 

 Let $I\in fSets$. Define the functor $\cF_{I, A}: Tw(I)\to C_{X^I}$ as follows. We will write $\cF_{I, A}^C=\cF_{I, A}$ if we need to express its dependence on $C$. The functor $\cF_{I, A}$ sends $(I\to J\toup{\phi} K)$ to the image under $\cF_{I,C}(I\to J\to K)\to C_{X^I}$ of the object
$$
A^{\otimes\phi}:=\mathop{\boxtimes}\limits_{k\in K} A^{\otimes J_k}\in  \mathop{\boxtimes}\limits_{k\in K} C^{\otimes J_k}(X)=\cF_{I,C}(I\to J\to K).
$$

 For a map (\ref{morphism_in_Tw_v2}) in $Tw(I)$ we get a morphism in $\mathop{\boxtimes}\limits_{k\in K_2} C^{\otimes (J_2)_k}(X)$ and hence in $C_{X^I}$
$$
\cF_{I,A}(I\to J_1\to K_1)\to  \cF_{I,A}(I\to J_2\to K_2)
$$
as follows. First, for the diagram (defining the transition functor for $\cF_{I,C}$)
\begin{equation}
\label{transition_map_for_cF_IC}
\mathop{\boxtimes}\limits_{k\in K_1} C^{\otimes (J_1)_k}(X)\toup{m} \mathop{\boxtimes}\limits_{k\in K_1} C^{\otimes (J_2)_k}(X)\to \mathop{\boxtimes}\limits_{k\in K_2} C^{\otimes (J_2)_k}(X)
\end{equation}
we get natural product map $m(\mathop{\boxtimes}\limits_{k\in K_1} A^{\otimes (J_1)_k})\to \mathop{\boxtimes}\limits_{k\in K_1} A^{\otimes (J_2)_k}$ in $\mathop{\boxtimes}\limits_{k\in K_1} C^{\otimes (J_2)_k}(X)$ for the algebra $A$. Further, for $\vartriangle: X^{K_1}\hook{} X^{K_2}$ we have 
$$
\vartriangle^!(\mathop{\boxtimes}\limits_{k\in K_2} A^{\otimes (J_2)_k})\,\iso\, \mathop{\boxtimes}\limits_{k\in K_1} A^{\otimes (J_2)_k}
$$ 
in $\mathop{\boxtimes}\limits_{k\in K_1} C^{\otimes (J_2)_k}(X)$. So, we compose the previous product map with 
$$
\vartriangle_!(\mathop{\boxtimes}\limits_{k\in K_1} A^{\otimes (J_2)_k})\,\iso\,
\vartriangle_!\vartriangle^!(\mathop{\boxtimes}\limits_{k\in K_2} A^{\otimes (J_2)_k})\to \mathop{\boxtimes}\limits_{k\in K_2} A^{\otimes (J_2)_k}
$$

 Finally, $A_{X^I}\in C_{X^I}$ is defined as $\mathop{\colim}\limits_{(I\to J\to K)\in Tw(I)} \cF_{I, A}$ in $C_{X^I}$. That is, 
$$
A_{X^I}\,\iso\, \mathop{\colim}\limits_{(I\to J\to K)\in Tw(I)} \; (\mathop{\boxtimes}\limits_{k\in K} A^{\otimes J_k})
$$
taken in $C_{X^I}$. 
 
\sssec{} Let us check that this defines indeed an object of the category of global sections $\Fact(C)(\Ran)$. For a surjection $I\to I'$ in $fSets$, we identify canonically the image of $A_{X^I}$ under (\ref{functor_restr_for_Fact(C)}) with $A_{X^{I'}}$. 

 This is done as in Section~\ref{Sect_2.1.7_now}. First, the image of $A_{X^I}$ under (\ref{functor_restr_for_Fact(C)}) writes as
$$
\mathop{\colim}\limits_{(I\to J\to K)\in Tw(I)} (\mathop{\boxtimes}\limits_{k\in K'} A^{\otimes J_k})
$$
taken in $C_{X^{I'}}$, where $K'=\inf(K, I')$. So, this is the colimit of the composition
$$
Tw(I)\toup{\beta} Tw(I)^f\toup{\cF_{I, A}^f} C_{X^{I'}},
$$
where $\cF_{I,A}^f$ is the restriction of $\cF_{I, A}$ to the full subcategory $Tw(I)^f\subset Tw(I)$ composed with the natural map $C_{X^I}\to C_{X^{I'}}$. Since $\beta$ is cofinal, the above colimit rewrites as
$$
\mathop{\colim}\limits_{(I\to J\to K)\in Tw(I)^f} \,(\mathop{\boxtimes}\limits_{k\in K} A^{\otimes J_k})
$$
taken in $C_{X^{I'}}$. Since $Tw(I')\to Tw(I)^f$ is cofinal, the above colimit rewrites as
$$
\mathop{\colim}\limits_{(I'\to J'\to K')\in Tw(I')} \, (\mathop{\boxtimes}\limits_{k\in K'} A^{\otimes J'_k})
$$
taken in $C_{X^{I'}}$, hence identifies with $A_{X^{I'}}$. This defines $\Fact(A)$ in $\Fact(C)(\Ran)$.  

\sssec{} 
\label{Sect_2.4.4_now}
As in the case of $\Fact(C)$ itself, there is also the following direct definition of $\Fact(A)$. Consider the functor
$$
\cF_{\Ran, A}: \cTw(fSets)\to \Fact(C)
$$
sending $(J\to K)$ to the image of $\mathop{\boxtimes}\limits_{k\in K} A^{\otimes J_k}$ under $\cF_{\Ran, C}(J\to K)\to \Fact(C)$. It sends a map (\ref{map_in_cTw(fSets)}) to the composition of 
$$
m(\mathop{\boxtimes}\limits_{k\in K_1} A^{\otimes (J_1)_k}) \to \mathop{\boxtimes}\limits_{k\in K_1} A^{\otimes (J_2)_k} 
$$
with 
$$
\mathop{\boxtimes}\limits_{k\in K_1} A^{\otimes (J_2)_k} \,\iso\, \vartriangle_!\vartriangle^!(\mathop{\boxtimes}\limits_{k\in K_2} A^{\otimes (J_2)_k}\to \mathop{\boxtimes}\limits_{k\in K_2} A^{\otimes (J_2)_k} 
$$
Here $m$ is that of (\ref{transition_map_for_cF_IC}). Then
$$
\Fact(A)\,\iso\, \underset{\cTw(fSets)}{\colim}\; \cF_{\Ran, A}
$$

 As in Lemma~\ref{Lm_2.1.2_about_Fact(C)}, one checks that the two definitions are equivalent.
 
\sssec{} The factorization structure on $\Fact(A)$ amounts to a collection (indexed by maps $\phi: I\to I'$ in $fSets$) of isomorphisms
\begin{equation}
\label{fact_isom_for_Fact(A)}
A_{X^I}\mid_{X^I_{\phi, d}}\,\iso\, (\mathop{\boxtimes}\limits_{i'\in I'} A_{X^{I_{i'}}})\mid_{X^I_{\phi, d}}
\end{equation} 
in $C_{X^I}\otimes_{Shv(X^I)} Shv(X^I_{\phi, d})$, where we use the equivalence (\ref{iso_after_Lm_3.2.4}) to see both sides in the same category. They have to satisfy the compatibilities similar to those for the collection of $C_{X^I}$ theirself. 

 The isomorphisms (\ref{fact_isom_for_Fact(A)}) are constructed as in Section~\ref{Sect_2.1.11_now}. This concludes the construction of $\Fact(A)$. 

\sssec{} In Section~\ref{Sect_Non-unital symmetric monoidal structure_Fact(C)(Ran)} we defined the object $(\Fact(C)(\Ran), \star)\in CAlg^{nu}(\DGCat_{cont})$. Let us promote $\Fact(A)\in \Fact(C)(\Ran)$ to a non-unital commutative algebra in $(\Fact(C)(\Ran), \star)$. 

 Let $I_1, I_2\in fSets$ with $I=I_1\sqcup I_2$. Let 
$$
(I_1\to J_1\to K_1)\in Tw(I_1), (I_2\to J_2\to K_2)\in Tw(I_2)
$$ 
Recall the functor $\alpha: Tw(I_1)\times Tw(I_2)\to Tw(I)$, let $(I\to J\to K)$ be the image of this pair under $\alpha$. Under the equivalence (\ref{equiv_for_Fact(C)(Ran)_operation}) one gets an isomorphism 
$$
(\mathop{\boxtimes}\limits_{k\in K_1}A^{\otimes (J_1)_k})\boxtimes (\mathop{\boxtimes}\limits_{k\in K_2}A^{\otimes (J_2)_k})\,\iso\, \mathop{\boxtimes}\limits_{k\in K} A^{\otimes J_k}
$$
in $\mathop{\boxtimes}\limits_{k\in K} C^{\otimes J_k}(X)$, hence also in $C_{X^{I_1}}\boxtimes C_{X^{I_2}}$. Passing to the colimit over $Tw(I_1)\times Tw(I_2)$ in $C_{X^{I_1}}\boxtimes C_{X^{I_2}}$, we get 
$$
A_{X^{I_1}}\boxtimes A_{X^{I_2}}\,\iso\, \underset{\begin{array}{cc}
{\scriptstyle (I_1\to J_1\to K_1)\in Tw(I_1)}\\
{\scriptstyle (I_2\to J_2\to K_2)\in Tw(I_2)}
\end{array}}
{\colim} \mathop{\boxtimes}\limits_{k\in K} A^{\otimes J_k}.
$$
 Applying further the functor (\ref{map_for_comm_chiral_cat_on_Fact(C)}), we get a natural map in $C_{X^I}$
\begin{equation}
\label{map_product_for_Fact(A)_for_I_1_I_2}
\beta_{I_1, I_2}: \eta_{I_1, I_2}(A_{X^{I_1}}\boxtimes A_{X^{I_2}}) \to A_{X^I}
\end{equation}
 
  Now if $I_1\to I'_1, I_2\to I'_2$ are maps in $fSets$, let $I'=I'_1\sqcup I'_2$.  
Using the commutative diagram (\ref{diag_for_chiral_multiplication_in_Fact(C)}), we identify the $!$-restriction of (\ref{map_product_for_Fact(A)_for_I_1_I_2}) under $X^{I'}\hook{} X^I$ with
$$
\beta_{I'_1, I'_2}: \eta_{I'_1, I'_2}(A_{X^{I'_1}}\boxtimes A_{X^{I'_2}}) \to A_{X^{I'}}
$$
Passing to the limit over $I, I'\in fSet\times fSets$, this gives a map 
$$
\eta(\Fact(A)\boxtimes\Fact(A))\to u^!_{\Fact(C)}\Fact(A)
$$ 
in $\Gamma(\Ran\times\Ran, u^!\Fact(C))$, here $\eta$ is the morphism (\ref{map_beta_chiracl_mult_for_Fact(C)}). 
Recall that here  $u^!_{\Fact(C)}: \Gamma(\Ran, \Fact(C))\to \Gamma(\Ran\times\Ran, u^!\Fact(C))$ the pullback for sections, which has a left adjoint
adjoint 
$$
u_{!,\Fact(C)}: \Gamma(\Ran\times\Ran, u^!\Fact(C))\to \Gamma(\Ran, \Fact(C)),
$$ 
cf. Section~\ref{Sect_Non-unital symmetric monoidal structure_Fact(C)(Ran)}. By definition of the monoindal category $(\Gamma(\Ran,\Fact(C)), \star)$, this gives a map
$$
\Fact(A)\star \Fact(A)\to \Fact(A)
$$
 
  This is the product on $\Fact(A)$ in $(\Gamma(\Ran, \Fact(C)),\star)$. 
 
\sssec{} The rigorous way to promote $\Fact(A)$ to an object of $CAlg^{nu}(\Gamma(\Ran, \Fact(C)),\star)$ is as in Section~\ref{Sect_2.3.6_now}. Namely, one shows that 
$$
\cF_{\Ran, A}: \cTw(fSets)\to (\Fact(C), \star)
$$ 
is right-lax non-unital symmetric monoidal. The category $\Fun(\cTw(fSets), \Fact(C))$ has a Day convolution structure, so is an object of $CAlg^{nu}(\DGCat_{cont})$ as in (\cite{Ly}, 9.2.41). The functor 
$$
\colim: \Fun(\cTw(fSets), \Fact(C))\to (\Fact(C), \star)
$$ 
is nonunital symmetric monoidal. So, it gives
$$
\colim: CAlg^{nu}(\Fun(\cTw(fSets), \Fact(C))\to  CAlg^{nu}(Fact(C), \star)
$$ 
By (\cite{HA}, 2.2.6.8), 
$$
CAlg^{nu}(\Fun(\cTw(fSets), \Fact(C))\,\iso\, \Fun^{rlax}(\cTw(fSets), \Fact(C)).
$$ 
So, $\underset{\cTw(fSets)}{\colim} \cF_{\Ran, A}$ gets a structure of a non-unital commutative algebra in $(\Fact(C), \star)$. 

\begin{Lm} 
\label{Lm_2.4.8_v4}
Let $C,D\in CAlg^{nu}(Shv(X)-mod)$ and $A\in CAlg^{nu}(C(X), \otimes^!)$, $B\in CAlg^{nu}(D(X), \otimes^!)$. \\
i) Let $I\in fSets$. Under the equivalence (\ref{functor_r_I_non-unital}) one has a canonical isomorphism
$$
r_I(A_{X^I}\otimes_{Shv(X^I)} B_{X^I})\,\iso\, (A\otimes B)_{X^I}
$$
in $(C\otimes D)_{X^I}$. Here $A\otimes B$ denotes their exterior tensor product in $C(X)\otimes_{Shv(X)} D(X)$, so $A\otimes B\in CAlg^{nu}(C\otimes D)$. 

\smallskip\noindent
ii) Let $\phi: I\to I'$ be a map in $fSets$. Then the image of the above isomorphism under the $!$-restriction for $X^{I'}\to X^I$ identifies via (\ref{diag_for_Sect_2.2.17_v4}) with the corresponding isomorphism
$$
r_{I'}(A_{X^{I'}}\otimes_{Shv(X^{I'})} B_{X^{I'}})\,\iso\, (A\otimes B)_{X^{I'}}
$$ 
So, we also have canonically 
$$
r_{\Ran}(\Fact(A)\otimes_{Shv(\Ran)} \Fact(B))\,\iso\, \Fact(A\otimes B)
$$
in $\Fact(C\otimes D)$. 
\end{Lm}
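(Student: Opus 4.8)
I would prove \lemref{Lm_2.4.8_v4} by lifting the construction of $r_I$ from Section~\ref{Sect_2.2.15_v4} to the level of the algebras $A,B$. Recall that $r_I$ was produced in \lemref{Lm_2.2.13_v4} from the span of equivalences
$$
\underset{Tw(I)\times Tw(I)}{\colim}\kappa\comp\upsilon\;\iso\;\underset{'Tw(I)}{\colim}\kappa\;\getsiso\;\underset{Tw(I)}{\colim}\kappa\comp{'j},
$$
the cofinality of $\upsilon$ and of $'j$, and the identifications of the two outer colimits with $C_{X^I}\otimes_{Shv(X^I)} D_{X^I}$ and with $(C\otimes D)_{X^I}$. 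First I would build a functor $\kappa_{A,B}: {'Tw(I)}\to (C\otimes D)_{X^I}$ lifting the cocone of $\kappa$: it sends an object $(J_1,J_2,K)$ to the image of the object $\mathop{\boxtimes}\limits_{k\in K}\bigl(A^{\otimes (J_1)_k}\otimes_{Shv(X)} B^{\otimes (J_2)_k}\bigr)\in\kappa(J_1,J_2,K)$ under the structure functor $\kappa(J_1,J_2,K)\to(C\otimes D)_{X^I}$, its transition maps being assembled from the product maps of $A$, the product maps of $B$ and the counits $\vartriangle_!\vartriangle^!\to\id$ exactly as in the definition of $\cF_{I,A}$ in Section~\ref{Sect_2.4.2_v4}.

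Granting that $\kappa_{A,B}$ is well defined as a functor, the two cofinalities of \lemref{Lm_2.2.13_v4} apply verbatim to $\kappa_{A,B}$ and give
$$
\underset{Tw(I)\times Tw(I)}{\colim}\kappa_{A,B}\comp\upsilon\;\iso\;\underset{'Tw(I)}{\colim}\kappa_{A,B}\;\getsiso\;\underset{Tw(I)}{\colim}\kappa_{A,B}\comp{'j}.
$$
By construction $\kappa_{A,B}\comp\upsilon$ is identified, via the base change $X^{K_1\sqcup K_2}\times_{X^{I\sqcup I}}X^I\iso X^{(K_1\sqcup K_2)'}$ of Section~\ref{Sect_2.2.15_v4}, with $(\Sigma_1,\Sigma_2)\mapsto\cF_{I,A}(\Sigma_1)\otimes_{Shv(X^I)}\cF_{I,B}(\Sigma_2)$, whose colimit is $A_{X^I}\otimes_{Shv(X^I)} B_{X^I}$; and $\kappa_{A,B}\comp{'j}$ is identified with $\cF^{C\otimes D}_{I,A\otimes B}$, using the canonical isomorphism $(A\otimes B)^{\otimes J_k}\iso A^{\otimes J_k}\otimes_{Shv(X)} B^{\otimes J_k}$ that characterizes the exterior product of (non-unital) commutative algebras, whose colimit is $(A\otimes B)_{X^I}$. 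Since this span of equivalences maps to the span defining $r_I$, passing to colimits yields the isomorphism $r_I(A_{X^I}\otimes_{Shv(X^I)} B_{X^I})\iso(A\otimes B)_{X^I}$ of i), compatibly with $r_I$.

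For ii) I would check naturality in $I$. For a map $\phi: I\to I'$ with closed embedding $\vartriangle: X^{I'}\to X^I$, the assignments $I\mapsto Tw(I),{'Tw(I)},\kappa,\upsilon,{'j}$ carry push-out functors as in Section~\ref{Sect_2.1.5_now} intertwining the $\vartriangle^!$-restrictions, and the algebra transition maps defining $\cF_{I,A}$ and $\kappa_{A,B}$ are compatible with these by the base change $\vartriangle^!\vartriangle_*\iso\vartriangle_*\vartriangle^!$, exactly as in Section~\ref{Sect_2.1.6_v4}. Hence the square relating the two sides of i) for $I$ and for $I'$ via $\vartriangle^!$ commutes; together with the commutativity of (\ref{diag_for_Sect_2.2.17_v4}) this shows that the isomorphisms of i) assemble into a morphism of the two $fSets$-indexed diagrams whose limits compute $\Fact(A)\otimes_{Shv(\Ran)}\Fact(B)$ and $\Fact(A\otimes B)$, and passing to the limit over $fSets$ gives $r_{\Ran}(\Fact(A)\otimes_{Shv(\Ran)}\Fact(B))\iso\Fact(A\otimes B)$ in $\Fact(C\otimes D)$.

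The main obstacle is the very first step, namely that $\kappa_{A,B}$ is genuinely a functor on ${'Tw(I)}$ rather than an assignment on objects and $1$-morphisms: one must see that the product maps of $A$, the product maps of $B$, and the counits $\vartriangle_!\vartriangle^!\to\id$ cohere over the larger indexing category ${'Tw(I)}$, whose morphisms, unlike those of $Tw(I)$, are covariant in two separate $J$-slots. This is the exact analogue of the coherence already built into $\cF_{I,A}$ in Section~\ref{Sect_2.4.2_v4}, and the cleanest way to secure it is to observe that the construction $C(X)\mapsto\cF_{I,C}$, and more generally $(C(X),A)\mapsto\cF_{I,A}$, is symmetric monoidal in the variable $C(X)$ (Sections~\ref{Sect_2.2.11_v4}--\ref{Sect_2.2.15_v4}), so that feeding the pair $(A,B)$ into the same machinery manufactures $\kappa_{A,B}$ together with all its coherences automatically; the remaining identifications of $\kappa_{A,B}\comp\upsilon$ and $\kappa_{A,B}\comp{'j}$ are then bookkeeping as in the body of Section~\ref{Sect_2.4.2_v4}.
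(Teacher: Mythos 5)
Your proposal is correct and takes essentially the same route as the paper: the paper's proof of i) likewise defines a diagram $\bar\kappa$ on $'Tw(I)$ sending $(J_1,J_2,K)$ to $\underset{k\in K}{\boxtimes}(A^{\otimes (J_1)_k}\otimes B^{\otimes (J_2)_k})$, identifies $A_{X^I}\otimes_{Shv(X^I)}B_{X^I}$ with the colimit of $\bar\kappa\comp\upsilon$, and concludes by the same cofinality argument as in Lemma~\ref{Lm_2.2.13_v4}, while ii) is left to the reader exactly along the naturality lines you spell out.
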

\begin{proof}
i) Let $(I\to J_1\to K_1), (I\to J_2\to K_2)\in Tw(I)\times Tw(I)$. Let $(J_1, J_2, K)$ be its image under $\upsilon$. The image of 
$$
(\underset{k\in K_1}{\boxtimes} A^{\otimes (J_1)_k})\otimes_{Shv(X^I)}(\underset{k\in K_2}{\boxtimes} B^{\otimes (J_2)_k})
$$ 
under
$$
(\underset{k\in K_1}{\boxtimes} C^{\otimes (J_1)_k}(X))\otimes_{Shv(X^I)}(\underset{k\in K_2}{\boxtimes} D^{\otimes (J_2)_k}(X))\,\iso\, \underset{k\in K}{\boxtimes} (C^{\otimes (J_1)_k}(X)\otimes_{Shv(X)} D^{\otimes (J_2)_k}(X))
$$
is
\begin{equation}
\label{functor_bar_kappa}
\underset{k\in K}{\boxtimes} (A^{\otimes (J_1)_k}\otimes B^{\otimes (J_2)_k})
\end{equation}
So, $A_{X^I}\otimes_{Shv(X^I)} B_{X^I}$ is the colimit of the composition 
$$
Tw(I)\times Tw(I)\toup{\upsilon} {'Tw(I)}\toup{\bar\kappa} C_{X^I}\otimes_{Shv(X^I)} D_{X^I},$$
where $\bar\kappa$ sends $(J_1, J_2, K)$ to (\ref{functor_bar_kappa}). Arguing now as in Lemma~\ref{Lm_2.2.13_v4}, one concludes the proof.

\smallskip\noindent
ii) Left to a reader. 
\end{proof}
\begin{Rem} The isomorphisms of Lemma~\ref{Lm_2.4.8_v4} are compatible with factorization as in Lemma~\ref{Lm_rlax_and_factorization_compatibility_non-unital}.
\end{Rem}

\sssec{} 
\label{Sect_2.4.10_v4}
Let now $C(X)\in CAlg^{nu}(Shv(X)-mod)$. Let us equip the functor 
\begin{equation}
\label{functor_A_goes_to_A_X^I_non-unital}
CAlg^{nu}(C(X))\to (C_{X^I}, \otimes^!),\; A\mapsto A_{X^I}
\end{equation}
with a non-unital symmetric monoidal structure.
The product in $(C_{X^I}, \otimes^!)$ is given by the composition
$$
C_{X^I}\otimes_{Shv(X^I)} C_{X^I}\toup{r_I} (C\otimes C)_{X^I}\toup{\otimes^!_{X^I}} C_{X^I},
$$
where the map $\otimes^!_{X^I}$ comes by functoriality from the morphism $\otimes^!: C\otimes C\to C$ in $CAlg^{nu}(Shv(X)-mod)$. 

 Given $A, B\in CAlg^{nu}(C(X))$, write $A\otimes B$ for their exterior tensor product in $(C\otimes C)(X)$. Apply Remark~\ref{Rem_image_of_A_X^I} to the map $\otimes^!: C\otimes C\to C$. It shows that the image of $(A\otimes B)_{X^I}$ under $\otimes^!_{X^I}: (C\otimes C)_{X^I}\to C_{X^I}$ identifies canonically with $(A\otimes^!B)_{X^I}$. This gives a canonical isomorphism
$$
A_{X^I}\otimes^! B_{X^I}\,\iso\, (A\otimes^! B)_{X^I}
$$
in $C_{X^I}$. This is the desired non-unital symmetric monoidal structure on (\ref{functor_A_goes_to_A_X^I_non-unital}). 

\begin{Rem} 
\label{Rem_image_of_A_X^I}
If $f: C(X)\to C'(X)$ is a map in $CAlg^{nu}(Shv(X))$ and $A\in CAlg^{nu}(C(X))$ then for any $I\in fSets$, the image under $f_{X^I}: C_{X^I}\to C'_{X^I}$ of $A_{X_I}$ identifies canonically with $(f(A))_{X^I}$. 
\end{Rem}

\sssec{} In particular, (\ref{functor_A_goes_to_A_X^I_non-unital}) lifts naturally to a functor
$$
CAlg^{nu}(C(X))\to CAlg^{nu}(C_{X^I}, \otimes^!),\; A\mapsto A_{X^I}
$$

\sssec{} Let now $A\in CAlg^{nu}(C(X),\otimes^!)$. For each $I\in fSets$ one gets $A_{X^I}-mod(C_{X^I})\in C_{X^I}-mod$. Note that for $I\to J$ in $fSets$ and $\vartriangle: X^J\to X^I$ one has canonically
$$
A_{X^I}-mod(C_{X^I})\otimes_{C_{X^I}} C_{X^J}\,\iso\, A_{X^J}-mod(C_{X^J})
$$
by (\cite{G}, ch. I.1, 8.5.7), here the functor $C_{X^I}\to C_{X^J}$ is $\vartriangle^!$. So, the compatible collection $\{A_{X^I}-mod(C_{X^I})\}$ is an object of $ShvCat_C(\Ran)$. In view of Lemma~\ref{Lm_Loc_C_is_an_equivalence} it is the image under $\Loc_C$ of
$$
\Fact(A)-mod(\Fact(C))\in (\Fact(C),\otimes^!)-mod
$$ 
As in Lemma~\ref{Lm_Loc_C_is_an_equivalence}, 
$$
\Fact(A)-mod(\Fact(C))\,\iso\, \underset{I\in fSets^{op}}{
\colim} A_{X^I}-mod(C_{X^I}), 
$$
where the colimit is understood in $\DGCat_{cont}$ or equivalently in $(\Fact(C), \otimes^!)-mod$.

 Note that $\Fact(A)-mod(\Fact(C))$ gets a structure of a factorization category over $\Ran$. Namely, for a map $\phi: I\to I'$ in $fSets$ recall the isomorphism (\ref{fact_isom_for_Fact(A)}) via the equivalence (\ref{iso_after_Lm_3.2.4}). By (\cite{G}, ch. I.1, 8.5.4), we get canonically
$$
A_{X^I}-mod(C_{X^I})\mid_{X^I_{\phi, d}}\,\iso\, (\underset{i'\in I'}{\boxtimes} A_{X^{I_{i'}}}-mod(C_{X^{I_{i'}}}))\mid_{X^I_{\phi, d}}
$$  
in $Shv(X^I_{\phi, d})-mod$ and also in $C_{X^I}\mid_{X^I_{\phi, d}}-mod$.

 Note that if $M\in (\Fact(C),\otimes^!)-mod$ one has
\begin{multline*}
\Fact(A)-mod(M)\,\iso\, \Fact(A)-mod(\Fact(C))\otimes_{\Fact(C)} M\,\iso\,
\underset{I\in fSets^{op}}{\colim} A_{X^I}-mod(M_I),
\end{multline*}
where $M_I=C_{X^I}\otimes_{\Fact(C)} M$.  

\sssec{} Consider the natural map $\vartriangle: X\to \Ran$. It is easy to see that the functor $\vartriangle^!: (\Fact(C), \star)\to (C(X),\otimes^!)$ is non-unital symmetric monoidal. So, it gives rise to the functor $\vartriangle^!: CAlg^{nu}(\Fact(C), \star)\to CAlg^{nu}(C(X))$. The latter functor preserves limits, and both categories are presentable by (\cite{HA}, 3.2.3.5), so it admits a left adjoint $\cL$. It is easy to see that $\cL$ is fully faithful.

\begin{Pp} 
\label{Pp_2.4.15}
The functor 
$$
\Fact: CAlg^{nu}(C(X))\to CAlg^{nu}(\Fact(C), \star), \; A\mapsto \Fact(A)
$$ 
is left adjoint to $\vartriangle^!: CAlg^{nu}(\Fact(C), \star)\to CAlg^{nu}(C(X),\otimes^!)$. Besides, $\Fact$ is fully faithful.
\end{Pp}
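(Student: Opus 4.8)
The plan is to identify $\Fact$ with the left adjoint $\cL$ of $\vartriangle^!$ constructed in the paragraph before the statement; since the left adjoint of a given functor is determined up to canonical equivalence and $\cL$ is already known to be fully faithful, this yields both assertions. As a preliminary remark, for $I=\{*\}$ the category $\Tw(\{*\})$ is a single point, so
$$
\vartriangle^!\Fact(A)\,\iso\,\Fact(A)\otimes_{Shv(\Ran)}Shv(X^{\{*\}})\,\iso\,A_{X^{\{*\}}}\,\iso\,A ,
$$
the last arrow being the canonical structure map; hence the unit of any adjunction $\Fact\dashv\vartriangle^!$ is an equivalence, and the full faithfulness claim will be automatic once the adjunction is in place.

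To prove $\Fact\iso\cL$ I would argue by monadic descent. The forgetful functor $\oblv\colon CAlg^{nu}(C(X),\otimes^!)\to C(X)$ is monadic (Barr--Beck), so each $A$ is a geometric realization $A\iso|\on{Bar}_\bullet(A)|$ of a simplicial object built from free non-unital commutative algebras. Now $\cL$, being a left adjoint, preserves this colimit, and I claim $\Fact$ also preserves geometric realizations. Indeed, the underlying object $\oblv_{\Fact(C)}\Fact(A)\iso\colim_{\cTw(fSets)}\cF_{\Ran,A}$ (cf. \secref{Sect_2.4.4_now}) is built functorially in $A$ out of: the assignments $A\mapsto A^{\otimes J}$ (which factor through $\oblv$, and $V\mapsto V^{\otimes J}$ preserves sifted colimits), the product functors of $C(X)$, the operation $\boxtimes$, the pushforwards along $X^K\to\Ran$, and $\colim_{\cTw(fSets)}$ — all of these preserve the relevant colimits; and since $\oblv_{\Fact(C)}$ is itself monadic ($(\Fact(C),\star)$ being presentably symmetric monoidal, \secref{Sect_2.3.6_now}) it creates geometric realizations, so $\Fact$ preserves them. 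It remains to check that $\Fact$ and $\cL$ agree on free algebras. For $\cL$ this is formal: $\vartriangle^!$ is a non-unital symmetric monoidal functor $(\Fact(C),\star)\to(C(X),\otimes^!)$ whose underlying functor $\Fact(C)\to C(X)$ admits a left adjoint $\iota$ ($X\to\Ran$ being pseudo-proper, cf. \lemref{Lm_2.1.11}), so passing to left adjoints in the commuting square $\oblv\circ\vartriangle^!=\vartriangle^!\circ\oblv$ gives $\cL\circ\on{Free}_{C(X)}\iso\on{Free}_{(\Fact(C),\star)}\circ\iota$. For $\Fact$ one computes $\Fact(\on{Free}_{C(X)}(V))$ directly: decomposing $\colim_{\cTw(fSets)}$ into pieces as in \corref{Cor_2.3.3} and \corref{Cor_2.2.14}, one matches it — compatibly with the preliminary remark — with the free factorization algebra $\bigoplus_{n\ge1}\bigl(\iota(V)^{\star n}\bigr)_{h\Sigma_n}=\on{Free}_{(\Fact(C),\star)}(\iota(V))$. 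Feeding the bar resolution of $A$ through both functors then gives $\cL(A)\iso|\cL(\on{Bar}_\bullet A)|\iso|\Fact(\on{Bar}_\bullet A)|\iso\Fact(A)$, naturally in $A$.

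A more hands-on alternative would be to exhibit the counit $\epsilon_B\colon\Fact(\vartriangle^!B)\to B$ and verify the triangle identities. By \secref{Sect_2.4.4_now}, $\Fact(\vartriangle^!B)\iso\colim_{\cTw(fSets)}\cF_{\Ran,\vartriangle^!B}$, and the term indexed by $(J\to K)$ — the image in $\Fact(C)$ of $\boxtimes_{k\in K}(\vartriangle^!B)^{\otimes J_k}$ — maps to $B$ by combining, over $k\in K$, the $|J_k|$-fold multiplication of $B$ in $(\Fact(C),\star)$ with the counit $\iota\,\vartriangle^!B\to B$ and the associativity identifications for $\star$; assembling this into a morphism of right-lax non-unital symmetric monoidal functors $\cF_{\Ran,\vartriangle^!B}\to\underline B$ as in \secref{Sect_2.3.6_now} and applying $\colim$ yields $\epsilon_B$, and the triangle identities reduce to the cocone/cofinality bookkeeping of Sections~\ref{Sect_2.1.5_now}--\ref{Sect_2.1.12}. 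In both approaches the real work — and the main obstacle — is the same: the explicit computation of $\Fact$ on free algebras (equivalently, the assembly of $\epsilon$ together with the triangle identities); the conceptual skeleton is routine, but matching the combinatorics of $\cTw(fSets)$ against that of the symmetric powers requires some care.
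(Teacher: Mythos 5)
The central step of your argument is precisely the one you defer. Both of your routes reduce the proposition to an explicit computation: either the identification $\Fact(\on{Free}_{C(X)}(V))\simeq \on{Free}_{(\Fact(C),\star)}(\vartriangle_!V)$, or the construction of the counit $\Fact(\vartriangle^!B)\to B$ as a map of algebras together with the triangle identities. You assert the first "by decomposing the colimit as in Corollary~\ref{Cor_2.3.3} and Corollary~\ref{Cor_2.2.14}", but those corollaries only split $\Tw(I)$ into $\Tw(I)^{>1}$ and $\Tw(I)^f$; they do not produce the symmetric-power decomposition, and matching the $\cTw(fSets)$-colimit against $\bigoplus_n(\vartriangle_!V)^{\star n}_{h\Sigma_n}$ is genuine work that you acknowledge but do not carry out. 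Note that in the paper the compatibility of $\Fact$ with free algebras is the Corollary \emph{after} Proposition~\ref{Pp_2.4.15}, deduced from it by passing to right adjoints, so you cannot point to it as an available input. The paper's own proof avoids this computation entirely: it invokes a general statement (\cite{Ly8}, 1.2.14) that a lax symmetric monoidal adjunction induces an adjunction on non-unital commutative algebra objects, with an explicit formula for the left adjoint $F_{enh}$, and then observes that this formula literally reproduces the definition of $\Fact(A)$ from Section~\ref{Sect_2.4.4_now}; full faithfulness comes from full faithfulness of $\vartriangle_!$ (Lemma~\ref{Lm_2.1.11}) via \cite{Ly8}, 1.1.3.

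There is also a structural flaw in the bar-resolution gluing as you state it. Objectwise agreement of $\Fact$ and $\cL$ on free algebras, plus preservation of geometric realizations, does not yield $|\cL(\on{Bar}_\bullet A)|\simeq|\Fact(\on{Bar}_\bullet A)|$: the face maps of $\on{Bar}_\bullet(A)$ include the monad multiplication $\on{Free}(\oblv B)\to B$, which is not of the form $\on{Free}(f)$ for a map $f$ in $C(X)$, so an equivalence of the composites $\Fact\circ\on{Free}\simeq\cL\circ\on{Free}$ (even a natural one on $C(X)$) does not by itself identify the two simplicial objects. The correct order of operations is to first construct a natural transformation $\cL\to\Fact$, i.e.\ a natural algebra map $A\to\vartriangle^!\Fact(A)$ with all coherences, and then check it is an equivalence on free algebras; but producing that transformation coherently is essentially the same work as building the unit/counit, which is what you set out to avoid. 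Relatedly, your preliminary remark is not quite an argument: an abstract equivalence $\vartriangle^!\Fact(A)\simeq A$ does not show that the unit of a hypothetical adjunction is an equivalence unless you identify the unit with that map. Once $\Fact\simeq\cL$ is established, full faithfulness is indeed immediate from the paper's observation that $\cL$ is fully faithful, so that part is fine; the gap is in establishing the adjunction itself.
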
 
\begin{proof}
More generally, let $\cC,\cD\in CAlg^{nu}(\DGCat_{cont})$ and $F: \cC\leftrightarrows \cD: G$
be an adjoint pair in $\DGCat_{cont}$, where $G$ is non-unital symmetric monoidal. So, $F$ is left-lax nonunital symmetric monoidal. Then $G$ gives rise to the functor
$G_{enh}: CAlg^{nu}(\cD)\to CAlg^{nu}(\cC)$ satisfying $\oblv G_{enh}\,\iso\, G\oblv$ for the forgetful functors $\oblv: CAlg^{nu}(\cD)\to \cD$, $\oblv: CAlg^{nu}(\cC)\to\cC$.
In this situation $G_{enh}$ admits a left adjoint $F_{enh}$ described in (\cite{Ly8}, 1.2.14). 
Moreover, if $F$ is fully faithful then $F_{enh}$ is also fully faithful by (\cite{Ly8}, 1.1.3).

The functor $F_{enh}$ is given by an explicit construction from (\cite{Ly8}, 1.2.14), which translates in our case into the definition of $\Fact(A)$ from Section~\ref{Sect_2.4.4_now}. Our claim follows.
\end{proof}

\begin{Cor} The diagram commutes naturally
$$
\begin{array}{ccc}
CAlg^{nu}(C(X)) & \toup{\Fact} & CAlg^{nu}(\Fact(C), \star)\\
\uparrow\lefteqn{\scriptstyle free}&& \uparrow\lefteqn{\scriptstyle free}\\
C(X) &\toup{\vartriangle_!} & \Fact(C),
\end{array}
$$
where $free$ denotes the left adjoint to the corresponding oblivion functor. 
\end{Cor}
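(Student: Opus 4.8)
The plan is to deduce the commutativity formally from \propref{Pp_2.4.15} by uniqueness of adjoints, so that I would not compute anything explicitly. First I would observe that every functor in the square is a left adjoint: the two functors labelled $free$ are left adjoint to the respective forgetful functors $\oblv$, which preserve limits and are accessible, the four categories involved being presentable (cf. (\cite{HA}, 3.2.3.5)); the functor $\vartriangle_!: C(X)\to\Fact(C)$ is, by construction, left adjoint to the restriction functor $\vartriangle^!: \Fact(C)\to C(X)$ of the sheaf of categories $\Fact(C)$ along $\vartriangle: X\to\Ran$ (cf. \lemref{Lm_2.1.11} and \secref{Sect_A.1.11}); and $\Fact: CAlg^{nu}(C(X))\to CAlg^{nu}(\Fact(C),\star)$ is left adjoint to $\vartriangle^!_{enh}: CAlg^{nu}(\Fact(C),\star)\to CAlg^{nu}(C(X))$, the enhancement of $\vartriangle^!$ to non-unital commutative algebras, by \propref{Pp_2.4.15}. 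Hence both composites $\Fact\comp free$ and $free\comp\vartriangle_!$ are left adjoints, with right adjoints $\oblv\comp\vartriangle^!_{enh}$ and $\vartriangle^!\comp\oblv$ respectively.

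Next I would reduce the claim to a single identification: it suffices to produce a canonical isomorphism $\oblv\comp\vartriangle^!_{enh}\iso\vartriangle^!\comp\oblv$ of functors $CAlg^{nu}(\Fact(C),\star)\to C(X)$, since the desired natural isomorphism $\Fact\comp free\iso free\comp\vartriangle_!$ then results by passing to left adjoints. But this isomorphism of right adjoints is exactly the defining compatibility of the enhancement recalled in the proof of \propref{Pp_2.4.15}: for the non-unital symmetric monoidal functor $G=\vartriangle^!: (\Fact(C),\star)\to (C(X),\otimes^!)$ one has $\oblv\, G_{enh}\iso G\,\oblv$ as functors over the forgetful functors, and $G_{enh}=\vartriangle^!_{enh}$. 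Invoking this and transposing back along the two adjunctions would finish the argument, and the resulting isomorphism is natural in $C(X)$, as required.

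I do not expect a genuine obstacle here: the statement is a formal consequence of \propref{Pp_2.4.15} together with the functoriality of forming free non-unital commutative algebras, which is precisely why it is phrased as a corollary. The only point where a little care is warranted is to make sure that one really is comparing the correct symmetric monoidal structures --- $\star$ on $\Fact(C)$ and $\otimes^!$ on $C(X)$ --- matching the setup of \propref{Pp_2.4.15}. Should a hands-on check be wanted instead, one could unwind the explicit description of $\Fact(-)$ from \secref{Sect_2.4.4_now} and of the free non-unital commutative algebra functor, but that route is more laborious and essentially re-derives a special case of \propref{Pp_2.4.15}.
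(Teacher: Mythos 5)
Your argument is correct and is essentially the paper's own proof: the paper simply notes that the diagram obtained by passing to right adjoints commutes, which is exactly your identification $\oblv\comp\vartriangle^!_{enh}\,\iso\,\vartriangle^!\comp\oblv$ coming from $\oblv\, G_{enh}\,\iso\, G\,\oblv$ in the proof of Proposition~\ref{Pp_2.4.15}, followed by uniqueness of left adjoints. Your write-up just makes the adjunction bookkeeping more explicit.
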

\begin{proof} The diagram obtained by passing to right adjoints commutes.
\end{proof}

\ssec{Unital versions}
\label{Subsect_unital_case}

\sssec{} Many of the previous results have unital versions. We indicate some of them. 

As in Section~\ref{Sect_2.2.11_v4}, the functor
$$
CAlg(Shv(X)-mod)\to Shv(X^I)-mod, \; C(X)\mapsto C_{X^I}
$$
has a symmetric monoidal structure (it preserves the units). So, it upgrades to a functor
$$
CAlg(Shv(X)-mod)\to CAlg(Shv(X^I)-mod), \; C(X)\mapsto (C_{X^I}, \otimes^!)
$$
The above two functors are compatible with the factorization. The unit of $C_{X^I}$ is the image of $\omega_{X^I}$ under the unit map $Shv(X^I)\to C_{X^I}$. 

\sssec{} If $C\in CAlg(Shv(X)-mod)$. For $(I\to J\to K)\in Tw(I)$ the structure functor (\ref{map_structure_C_X^I_non-unital}) is a map only in $CAlg^{nu}(Shv(X)-mod)$. 
 
\sssec{} The functor 
$$
CAlg(Shv(X)-mod)\to Shv(\Ran)-mod,\; C(X)\mapsto \Fact(C)
$$
has a symmetric monoidal structure, so lifts to a functor
$$
CAlg(Shv(X)-mod)\to CAlg(Shv(\Ran)-mod),\; C(X)\mapsto \Fact(C)
$$ 
The correspoonding symmetric monoidal structure on $\Fact(C)$ is $\otimes^!$. These two functors are compatible with the factorization. The unit of $(\Fact(C),\otimes^!)$ is the image of $\omega_{\Ran}$ under the unit map $Shv(\Ran)\to \Fact(C)$.  

 This gives a functor 
$$
CAlg(Shv(X)-mod)\to CAlg^{Fact}(Shv(\Ran)-mod), C(X)\mapsto (\Fact(C), \otimes^!),
$$
where $CAlg^{Fact}(Shv(\Ran)-mod)$ denotes the category of symmetric monoidal factorization categories on $\Ran$.

\sssec{} Let $C(X)\in CAlg(Shv(X)-mod)$. As in Section~\ref{Sect_2.4.10_v4}, the functor 
\begin{equation}
\label{functor_A_goes_to_A_X^I}
CAlg(C(X))\to (C_{X^I}, \otimes^!),\; A\mapsto A_{X^I}
\end{equation}
is symmetric monoidal. The fact that it preserves the units follows from Remark~\ref{Rem_image_of_A_X^I}. In particular, (\ref{functor_A_goes_to_A_X^I}) lifts to a functor
$$
CAlg(C(X))\to CAlg(C_{X^I}, \otimes^!),\; A\mapsto A_{X^I}. 
$$

\ssec{Further properties}
\label{Sect_Further properties}

\sssec{} In Section~\ref{Sect_Further properties} we assume that $C(X)\in CAlg^{nu}(Shv(X)-mod)$, $C(X)$ is dualizable as a $Shv(X)$-module, the functor $C^{\otimes 2}(X)\toup{m} C(X)$ admits a continuous right adjoint, which is $Shv(X)$-linear.

\begin{Lm} 
\label{Lm_3.2.6_for_sheaves_of_cat}
Let $I\in fSets$. \\
i) For each $(I\to J\toup{\phi} K)\in Tw(I)$, $C^{\otimes\phi}$ is dualizable in $Shv(X^I)-mod$. 

\smallskip\noindent
ii) Each transition functor in the diagram $\cF_{I, C}: Tw(I)\to Shv(X^I)-mod$ admits a continuous $Shv(X^I)$-linear right adjoint. The category $\cC_{X^I}$ is dualizable in $Shv(X^I)-mod$. Besides, for any $(I\to J\to K)\in Tw(I)$ the natural functor 
\begin{equation}
\label{functor_for_Lm_3.5.4_Fact}
\mathop{\boxtimes}\limits_{k\in K} \cC^{\otimes J_k}(X)\to \cC_{X^I}
\end{equation}
admits a continuous right adjoint, which is a strict morphism of $Shv(X^I)$-modules. 
\end{Lm}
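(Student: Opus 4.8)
The plan is to reduce both assertions to the dualizability hypothesis on $C(X)$ together with the standard calculus of colimits along right-adjointable diagrams.

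For i), I would argue in three steps. Since $C(X)$ is dualizable in $Shv(X)-mod$, every tensor power $C^{\otimes n}(X)$ over $Shv(X)$ is dualizable in $Shv(X)-mod$, because tensor powers of a dualizable object of a symmetric monoidal $\infty$-category are dualizable. Hence $\underset{k\in K}{\boxtimes} C^{\otimes J_k}(X)$ is dualizable in $Shv(X^K)-mod$, since external products of dualizable module categories are dualizable. Finally $C^{\otimes\phi}$ is the image of this object under restriction of scalars along the closed embedding $X^K\hook{} X^I$ (whose associated algebra map is the symmetric monoidal functor $\vartriangle^!\colon Shv(X^I)\to Shv(X^K)$), and this operation preserves dualizability: $\vartriangle_!\colon Shv(X^K)\leftrightarrows Shv(X^I)\colon\vartriangle^!$ is an adjoint pair in $Shv(X^I)-mod$ with $\vartriangle_!$ fully faithful, exactly as in \lemref{Lm_2.1.11}~i), so (\cite{Ly}, 9.2.32) applies.

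For ii), the first point is that each transition functor of $\cF_{I,C}$ admits a continuous $Shv(X^I)$-linear right adjoint. By the description in \secref{Sect_2.1.3_now}, the functor attached to a map (\ref{morphism_in_Tw_v2}) factors as a product functor built from the multiplication $m$, followed by the counit-type map (\ref{map_transition_for_cF_I,C}) attached to a diagonal $\vartriangle\colon X^{K_1}\to X^{K_2}$. The product functor is assembled from $m\colon C^{\otimes 2}(X)\to C(X)$ by iteration, tensoring and $\boxtimes$; since by hypothesis $m$ has a continuous $Shv(X)$-linear right adjoint $m^R$, and these operations preserve the existence of continuous linear right adjoints, the product functor has a continuous $Shv(X^I)$-linear right adjoint. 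The map (\ref{map_transition_for_cF_I,C}) is $Shv(X^I)$-linear and, using i) together with the adjoint pair $\vartriangle_!\dashv\vartriangle^!$ and the projection formula for closed embeddings, it likewise has a continuous $Shv(X^I)$-linear right adjoint; hence so does the composite. Granting this, $C_{X^I}=\underset{Tw(I)}{\colim}\cF_{I,C}$ is a colimit of dualizable objects of $Shv(X^I)-mod$ along a diagram whose transition functors all admit continuous $Shv(X^I)$-linear right adjoints. Passing to right adjoints identifies this colimit with the limit $\underset{Tw(I)^{op}}{\lim}\cF_{I,C}^R$, and the general calculus of dualizable categories (\cite{Ly}, Section~9.2) then gives that $C_{X^I}$ is dualizable in $Shv(X^I)-mod$ and that each structure functor (\ref{functor_for_Lm_3.5.4_Fact}) admits a continuous right adjoint, namely the evaluation at $(I\to J\to K)$ out of this limit. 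This right adjoint is a strict morphism of $Shv(X^I)$-modules: the structure functor is $Shv(X^I)$-linear and, by i), its source and target are dualizable over $Shv(X^I)$, so the relevant projection formula holds (equivalently, the unit and counit of the adjunction are $Shv(X^I)$-linear).

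The step I expect to be the main obstacle is the first claim in ii): ensuring that the right adjoints of the transition functors are \emph{continuous} — this is exactly where the hypotheses that $C(X)$ be dualizable and that $m^R$ be continuous enter, and the continuity of the right adjoint of the diagonal counit map (\ref{map_transition_for_cF_I,C}) needs the dualizability from i) — together with the verification that the right adjoint of the structure functor (\ref{functor_for_Lm_3.5.4_Fact}) is strictly, not merely laxly, $Shv(X^I)$-linear.
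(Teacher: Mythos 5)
Your proposal is correct and follows essentially the same route as the paper: dualizability of $C^{\otimes\phi}$ via exterior products and the colocalization $\vartriangle_!\dashv\vartriangle^!$ (citing \cite{Ly}, 9.2.32), then continuity and $Shv(X^I)$-linearity of the right adjoints to the transition functors (built from $m^R$ and $\vartriangle^!$), the identification of the colimit with the limit of the right-adjoint diagram so that the right adjoint of (\ref{functor_for_Lm_3.5.4_Fact}) is the evaluation out of that limit, and dualizability of $C_{X^I}$ by the general statement on colimits of dualizables along linearly right-adjointable diagrams. The only slip is your side remark that strict $Shv(X^I)$-linearity of this right adjoint follows from dualizability of source and target via a projection formula — that implication is not valid in general (the base $Shv(X^I)$ is not rigid) — but it is redundant, since strictness already follows from your identification of the right adjoint with the evaluation functor out of the limit computed in $Shv(X^I)-mod$, which is exactly how the paper argues.
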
  
\begin{proof}
\noindent
i) First, $\underset{j\in J}{\boxtimes} \cC(X)$ is dualizable as a $Shv(X^J)$-module, as the functor 
$$
\prod_J Shv(X)-mod\to Shv(X^J)-mod
$$ 
of exterior product is symmetric monoidal. Now the extensions of scalars functor $Shv(X^J)-mod\to Shv(X^K)-mod$ with respect to $\vartriangle^!: Shv(X^J)\to Shv(X^K)$ is symmetric monodal. So, $\mathop{\boxtimes}\limits_{k\in K} C^{\otimes J_k}(X)$ is dualizable in $Shv(X^K)-mod$. Applying (\cite{Ly}, 9.2.32) for the colocalization $\vartriangle^{(I/K)}_!: 
Shv(X^K)\leftrightarrows Shv(X^I): \vartriangle^{(I/K) !}$, we conclude that $\mathop{\boxtimes}\limits_{k\in K} C^{\otimes J_k}(X)$ is dualizable in $Shv(X^I)-mod$. 

\medskip\noindent
ii) Consider a morphism in $Tw(I)$ given by (\ref{morphism_in_Tw_v2}). We claim that in the diagram
$$
\mathop{\boxtimes}\limits_{k\in K_1} \cC^{\otimes (J_1)_k}(X)\to \mathop{\boxtimes}\limits_{k\in K_1} \cC^{\otimes (J_2)_k}(X)\to \mathop{\boxtimes}\limits_{k\in K_2} \cC^{\otimes (J_2)_k}(X)
$$
both maps admit continuous right adjoints, which are $Shv(X^I)$-linear. For the first map we first check that it is $Shv(X^{K_1})$-linear using (\cite{Ly}, 4.1.6), and apply the functor of direct image $Shv(X^{K_1})-mod\to Shv(X^I)-mod$. For the second map we use the fact that for any $M\in Shv(X^{K_2})-mod$, we have an adjoint pair 
$$
\vartriangle_!: M\otimes_{Shv(X^{K_2})} Shv(X^{K_1})\leftrightarrows M: \vartriangle^!
$$
in $Shv(K^2)-mod$, which is also an adjoint pair in $Shv(X^I)-mod$. 

 So, in the functor $\cF_{I,C}: Tw(I)\to Shv(X^I)-mod$ we may pass to right adjoints and get a functor $\cF_{I,C}^R: Tw(I)^{op}\to Shv(X^I)-mod$. Recall that $\oblv: Shv(X^I)-mod\to \DGCat_{cont}$ preserves limits and colimits, so we may understand $\lim \cF^R_I$ either in $\DGCat_{cont}$ or in $Shv(X^I)-mod$. Recall that $\colim \cF_I\,\iso\, \lim \cF^R_I$, where the limit is understood in $\DGCat_{cont}$, the claim about the right adjoint to (\ref{functor_for_Lm_3.5.4_Fact}) follows. To get the dualizability of $C_{X^I}$ in $Shv(X^I)-mod$ apply (\cite{Ly}, 3.1.10). 
\end{proof} 

\sssec{} The proof of Lemma~\ref{Lm_3.2.6_for_sheaves_of_cat} shows that each transition map in the diagram (\ref{functor_cF_Ran_C}) admits a $Shv(\Ran)$-linear continuous right adjoint. Let $\cF^R_{\Ran, C}: \cTw(fSets)^{op}\to Shv(\Ran)-mod$ be obtained from $\cF_{\Ran, C}$ by passing to right adjoints. Note that
$$
\Fact(C)\,\iso\, \underset{\cTw(fSets)^{op}}{\lim}\, \cF^R_{\Ran, C}
$$
in $Shv(\Ran)-mod$ and in $\DGCat_{cont}$. 

\begin{Cor} The category $\Fact(C)$ is dualizable in $Shv(\Ran)-mod$. 
\end{Cor}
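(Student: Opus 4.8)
The plan is to reduce this to the dualizability criterion already established over $\Ran$. Since $\Ran$ is 1-affine (Lemma~\ref{Lm_2.2.8_Ran_is_1-affine}), Corollary~\ref{Cor_2.2.9_dualizability} tells us that $\Fact(C)\in Shv(\Ran)-mod$ is dualizable if and only if $\Fact(C)\otimes_{Shv(\Ran)} Shv(X^I)$ is dualizable in $Shv(X^I)-mod$ for every $I\in fSets$. By the very construction of the sheaf of categories $\Fact(C)$ (see Section~\ref{Sect_2.1.4_now}), these base changes are canonically identified with the categories $C_{X^I}$.

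So the only thing left to invoke is that each $C_{X^I}$ is dualizable in $Shv(X^I)-mod$, which is precisely Lemma~\ref{Lm_3.2.6_for_sheaves_of_cat}(ii), valid under the hypotheses in force throughout Section~\ref{Sect_Further properties} (namely $C(X)$ dualizable over $Shv(X)$ and $m$ admitting a continuous $Shv(X)$-linear right adjoint). Combining these two inputs yields the claim, so this corollary is essentially immediate; I do not expect any genuine obstacle, since the real work — producing continuous linear right adjoints to the transition functors of $\cF_{I,C}$ and passing to the limit — was already carried out in the proof of Lemma~\ref{Lm_3.2.6_for_sheaves_of_cat}. (Alternatively, one could start from the limit presentation $\Fact(C)\,\iso\,\underset{\cTw(fSets)^{op}}{\lim}\,\cF^R_{\Ran, C}$ recorded just above, but this still requires the 1-affineness reduction above to pass to the conclusion, so it is not any shorter.)
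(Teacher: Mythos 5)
Your argument coincides with the paper's proof: it deduces dualizability of $\Fact(C)$ from Corollary~\ref{Cor_2.2.9_dualizability} together with the identifications $\Fact(C)\otimes_{Shv(\Ran)} Shv(X^I)\,\iso\, C_{X^I}$ and Lemma~\ref{Lm_3.2.6_for_sheaves_of_cat}(ii). No gaps; this is exactly the intended two-line reduction.
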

\begin{proof}
For each $I\in fSets$, $\Fact(C)\otimes_{Shv(\Ran)} Shv(X^I)\,\iso\, C_{X^I}$ is dualizable in $Shv(X^I)-mod$ by Lemma~\ref{Lm_3.2.6_for_sheaves_of_cat}. Our claim follows now from Corollary~\ref{Cor_2.2.9_dualizability}. 
\end{proof}

 Let $\cF_{fSets, C}^{\vee}: fSets\to Shv(\Ran)-mod$ be obtained from the functor $\cF_{fSets, C}$ by passing to the duals in $Shv(\Ran)-mod$.

\sssec{} 
\label{Sect_2.5.5}
The right adjoint $m^R: C(X)\to C^{\otimes 2}(X)$ of $m$ defines on $C(X)$ the structure of a non-unital cocommutative coalgebra in $Shv(X)-mod$, that is, an object of $CoCAlg^{nu}(Shv(X)-mod)$. Write $C^{\vee}(X)$ for the dual of $C(X)$ in $Shv(X)-mod$. It
becomes an object of $CAlg^{nu}(Shv(X)-mod)$ with the product 
$$
(m^R)^{\vee}: (C^{\vee})^{\otimes 2}(X)\to C^{\vee}(X)
$$  

 Under our assumptions, the map $C(X)\mapsto C^{\vee}(X)$ is an involution. It interacts nicely with the construction of $\Fact(C)$, we discuss this in the next subsection.
 
\sssec{} Write $\cF_{I, C}^{\vee}: Tw(I)^{op}\to Shv(X^I)-mod$ for the functor obtained from $\cF_{I,C}$ by passing to the duals.

\begin{Lm} The functor $\cF_{I, C}^{\vee}: Tw(I)^{op}\to Shv(X^I)-mod$ identifies canonically with $\cF_{I, C^{\vee}}^R$. For $I\in fSets$ one has canonically in $Shv(X^I)-mod$
\begin{equation}
\label{iso_for_Sect_2.4.6}
(C_{X^I})^{\vee}\,\iso\, (C^{\vee})_{X^I}
\end{equation}
\end{Lm}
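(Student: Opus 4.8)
The plan is to prove part~(i) --- the identification of functors --- and then read off part~(ii) by passing to colimits.

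\emph{Objectwise.} Fix $\Sigma=(I\to J\toup{\phi}K)\in Tw(I)$. The module category $\cF_{I,C}(\Sigma)=\underset{k\in K}{\boxtimes}C^{\otimes J_k}(X)$ is produced from $C(X)$ by applying factorwise the exteriour product $\prod_J Shv(X)-mod\to Shv(X^J)-mod$, then extension of scalars along the $!$-restriction $Shv(X^J)\to Shv(X^K)$, and finally restriction of scalars along the symmetric monoidal functor $\vartriangle^{(I/K)!}\colon Shv(X^I)\to Shv(X^K)$. Each of these three operations is symmetric monoidal, hence preserves dualizable objects and commutes with passing to duals; combined with $C(X)^{\vee}=C^{\vee}(X)$ from \secref{Sect_2.5.5} this gives a canonical $\cF_{I,C}(\Sigma)^{\vee}\iso\underset{k\in K}{\boxtimes}(C^{\vee})^{\otimes J_k}(X)=\cF_{I,C^{\vee}}(\Sigma)$ in $Shv(X^I)-mod$.

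\emph{On transition maps.} By \lemref{Lm_3.2.6_for_sheaves_of_cat} the transition maps of $\cF_{I,C}$ admit continuous $Shv(X^I)$-linear right adjoints, and for a map \eqref{morphism_in_Tw_v2} the corresponding transition map of $\cF_{I,C}^R$ is the composite of the $!$-restriction $\vartriangle^!$ for $\vartriangle\colon X^{K_1}\to X^{K_2}$ with the exteriour power over $K_1$ of the comultiplications $m^R\colon C^{\otimes (J_2)_k}(X)\to C^{\otimes (J_1)_k}(X)$ of $C(X)$ (the right adjoints of the products along $(J_1)_k\to (J_2)_k$). Dualize this composite: extension of scalars commutes with $(-)^{\vee}$, so the dual of $\vartriangle^!$ is the base-change map $\vartriangle_!$; and by the very definition of the algebra structure $(m^R)^{\vee}$ on $C^{\vee}(X)$ (\secref{Sect_2.5.5}), the dual of the comultiplication $m^R$ of $C$ is a product map of $C^{\vee}$. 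One thus identifies $(\cF_{I,C}^R)^{\vee}$ with $\cF_{I,C^{\vee}}$ as functors $Tw(I)\to Shv(X^I)-mod$; since duality is an involution and (by the same token) $C^{\vee}(X)$ again satisfies the standing hypotheses of \secref{Sect_Further properties} --- it is dualizable with $C^{\vee\vee}\iso C$, and $(m^R)^{\vee}$ has a continuous $Shv(X)$-linear right adjoint because $m^R$ is $Shv(X)$-linear, so that $\cF_{I,C^{\vee}}^R$ is defined --- applying this with $C^{\vee}$ in place of $C$ yields $\cF_{I,C}^{\vee}\iso\cF_{I,C^{\vee}}^R$. That these levelwise identifications upgrade to an equivalence of functors, rather than only a levelwise one, reduces to the compatibility of $(-)^{\vee}$ with the cartesian base-change squares and the isomorphism $\vartriangle^!\vartriangle_*\iso\vartriangle_*\vartriangle^!$ recorded in \secref{Sect_2.1.6_v4}.

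\emph{Part~(ii).} Since $C_{X^I}$ is dualizable in $Shv(X^I)-mod$ by \lemref{Lm_3.2.6_for_sheaves_of_cat}, and since a colimit of $\cF_{I,C}$, resp.\ of $\cF_{I,C^{\vee}}$, equals the limit over $Tw(I)^{op}$ of the diagram of right adjoints, one has
\[
(C_{X^I})^{\vee}=\Bigl(\underset{Tw(I)}{\colim}\,\cF_{I,C}\Bigr)^{\vee}\iso\underset{Tw(I)^{op}}{\lim}\,\cF_{I,C}^{\vee}\iso\underset{Tw(I)^{op}}{\lim}\,\cF_{I,C^{\vee}}^R\iso\underset{Tw(I)}{\colim}\,\cF_{I,C^{\vee}}=(C^{\vee})_{X^I},
\]
the first isomorphism being the standard identification of the dual of a colimit of dualizable objects with the limit of the duals over the opposite index category, and the middle isomorphism being part~(i).

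\emph{Main obstacle.} The delicate point is the interplay between ``passing to the dual'' and ``passing to an adjoint'': the algebra structure on $C^{\vee}$ is itself obtained by taking the adjoint $m^R$, so the identification of transition maps is a two-step dual/adjoint manipulation, and it must be carried out coherently --- simultaneously with, and compatibly with, the base-change structures of \secref{Sect_2.1.6_v4} (the cartesian squares and the projection/base-change isomorphisms) --- rather than merely value by value. Setting up that compatibility of duality with the base-change apparatus is the technical heart; granted it, the remaining verifications are formal.
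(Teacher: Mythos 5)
Your proof is correct and follows essentially the same route as the paper: identify the objectwise duals, use Lemma~\ref{Lm_3.2.6_for_sheaves_of_cat} to get dualizability of $C_{X^I}$ and the right adjoints of the transition functors, recognize the dual diagram as $\cF^R_{I,C^{\vee}}$, and pass between the limit over $Tw(I)^{op}$ and the colimit over $Tw(I)$ by adjunction. Your detour through $(\cF_{I,C}^R)^{\vee}\iso\cF_{I,C^{\vee}}$ together with the involutivity of $C\mapsto C^{\vee}$ is only a cosmetic variant of the paper's direct computation of the dual transition maps as $m^{\vee}\comp\vartriangle^!$.
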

\begin{proof}
For $\Sigma=(I\to J\to K)\in Tw(I)$, the dual of $\mathop{\boxtimes}\limits_{k\in K} C^{\otimes J_k}(X)$ in $Shv(X^I)-mod$ is $\mathop{\boxtimes}\limits_{k\in K} (C^{\vee})^{\otimes J_k}(X)$. From Lemma~\ref{Lm_3.2.6_for_sheaves_of_cat} we conclude that the dual of $C_{X^I}$ in $Shv(X^I)-mod$ writes as
\begin{equation}
(C_{X^I})^{\vee}\,\iso\,
\label{diag_for_(C_X^I)_dual}
\mathop{\lim}\limits_{(I\to J\to K\in Tw(I)^{op}} \; \mathop{\boxtimes}\limits_{k\in K}\; (C^{\vee})^{\otimes J_k}(X)
\end{equation}
the limit taken in $Shv(X^I)-mod$. For a map (\ref{morphism_in_Tw_v2}) in $Tw(I)$ the transition map in the latter limit is
$$
\mathop{\boxtimes}\limits_{k\in K_2} (C^{\vee})^{\otimes (J_2)_k}(X)\toup{\vartriangle^!} \mathop{\boxtimes}\limits_{k\in K_1} (C^{\vee})^{\otimes (J_2)_k}(X)\toup{m^{\vee}}
 \mathop{\boxtimes}\limits_{k\in K_1} (C^{\vee})^{\otimes (J_1)_k}(X)
$$
for $\vartriangle: X^{K_1}\to X^{K_2}$.

 We may pass to the left adjoints in $Shv(X^I)-mod$ in the diagram (\ref{diag_for_(C_X^I)_dual}), and get
$$
(C_{X^I})^{\vee}\,\iso\, \mathop{\colim}\limits_{(I\to J\to K\in Tw(I)} \; \mathop{\boxtimes}\limits_{k\in K} (C^{\vee})^{\otimes J_k}(X)
$$
The corresponding diagram is nothing but the functor $\cF_{I, C^{\vee}}$. This yields the equivalence 
(\ref{iso_for_Sect_2.4.6}).
\end{proof}

 Note that for $D\in Shv(X^I)-mod$ one has 
\begin{equation}
\label{isom_after_Lm_2.5.7}
\Fun_{Shv(X^I)}(C_{X^I}, D)\,\iso\, (C_{X^I})^{\vee}\otimes_{Shv(X^I)} D\,\iso\, (C^{\vee})_{X^I}\otimes_{Shv(X^I)} D
\end{equation}  

\begin{Pp} 
\label{Pp_2.4.7}
i) The functor 
$$
\cF^{\vee}_{fSets, C}: fSets\to Shv(\Ran)-mod
$$ 
identifies canonically with $\cF^R_{fSets, C^{\vee}}$.\\
ii) The canonical pairing between $\underset{fSets^{op}}{
\colim} \cF_{fSets, C}$ and $\underset{fSets}{\lim} \cF^{\vee}_{fSets, C}$ provides a counit of a duality datum 
$$
\Fact(C)\otimes_{Shv(\Ran)} \Fact(C^{\vee})\to Shv(\Ran)
$$ 
in $Shv(\Ran)-mod$. So, 
$$
\Fact(C)^{\vee}\,\iso\, \Fact(C^{\vee})
$$ 
canonically in $Shv(\Ran)-mod$. 
\end{Pp}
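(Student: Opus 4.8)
The plan is to reduce both parts to the pointwise statements of the preceding Lemma (on $\cF^\vee_{I,C}$ and on $(C_{X^I})^\vee$) and then assemble over $fSets$, using throughout that under the standing assumptions of Section~\ref{Sect_Further properties} every category in sight is dualizable in $Shv(\Ran)-mod$ and every transition functor is right-adjointable.

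For (i), I would start by recalling, from Section~\ref{Sect_2.2.10_now} and the proof of Lemma~\ref{Lm_2.2.8_Ran_is_1-affine}, that $\cF_{fSets, C}: fSets^{op}\to Shv(\Ran)-mod$ sends $I$ to $C_{X^I}$ and a surjection $I\to J$ to the pushforward $\vartriangle_!: C_{X^J}\to C_{X^I}$ along the closed immersion $\vartriangle: X^J\hook{}X^I$. Passing to duals in $Shv(\Ran)-mod$: on objects one uses the preceding Lemma, $(C_{X^I})^\vee\iso (C^\vee)_{X^I}$ in $Shv(X^I)-mod$, together with Lemma~\ref{Lm_2.1.11}(ii) and $(\cite{Ly}, 9.2.32)$ applied to the colocalization $\vartriangle^I_!: Shv(X^I)\leftrightarrows Shv(\Ran): (\vartriangle^I)^!$, to see that this is also the dual in $Shv(\Ran)-mod$; on morphisms one identifies the dual of $\vartriangle_!$ with $\vartriangle^!: (C^\vee)_{X^I}\to (C^\vee)_{X^J}$ exactly as in the proof of the preceding Lemma, i.e.\ via the Verdier self-duality of the sheaf categories on products of copies of $X$, base-changed along $C^\vee$. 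Since all these identifications are obtained by base change along $\Fact(C)$ from the equivalences $C_{X^I}\iso \Fact(C)\otimes_{Shv(\Ran)}Shv(X^I)$, compatibility with compositions in $fSets$ is automatic, and one obtains $\cF^\vee_{fSets, C}\iso \cF^R_{fSets, C^\vee}$.

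For (ii), I would first record that under the assumptions of Section~\ref{Sect_Further properties} Lemma~\ref{Lm_3.2.6_for_sheaves_of_cat} and Lemma~\ref{Lm_2.1.11}(ii) make each $C_{X^I}$ dualizable in $Shv(\Ran)-mod$ with dual $(C^\vee)_{X^I}$, and the preceding Corollary makes $\Fact(C)$ dualizable. The counit is then assembled pointwise: for each $I$, the counit $C_{X^I}\otimes_{Shv(X^I)}(C^\vee)_{X^I}\to Shv(X^I)$ of the pointwise duality, post-composed with $\vartriangle^I_!: Shv(X^I)\to Shv(\Ran)$ and read through the canonical equivalence $C_{X^I}\otimes_{Shv(\Ran)}\Fact(C^\vee)\iso C_{X^I}\otimes_{Shv(X^I)}(C^\vee)_{X^I}$ (base change of $\Fact(C^\vee)\otimes_{Shv(\Ran)}Shv(X^I)\iso (C^\vee)_{X^I}$ along $C_{X^I}$), gives $\epsilon_I: C_{X^I}\otimes_{Shv(\Ran)}\Fact(C^\vee)\to Shv(\Ran)$. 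Using $\vartriangle^J_!\iso \vartriangle^I_!\comp\vartriangle_!$ and the projection formula for closed immersions one checks the $\epsilon_I$ are compatible with the transition maps $\vartriangle_!$ of $\cF_{fSets, C}$, so they pass to the colimit over $fSets^{op}$ to a map $\epsilon: \Fact(C)\otimes_{Shv(\Ran)}\Fact(C^\vee)\to Shv(\Ran)$, which is the canonical pairing between $\colim_{fSets^{op}}\cF_{fSets, C}$ and $\lim_{fSets}\cF^\vee_{fSets, C}$; the unit $\eta$ is constructed symmetrically.

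To see $(\epsilon,\eta)$ is a duality datum I would argue abstractly and then match: $\Fact(C)$ is dualizable and presented as $\colim_{fSets^{op}}\cF_{fSets, C}$ with dualizable terms and right-adjointable transition functors (Lemma~\ref{Lm_2.1.11}(i)), so by $(\cite{Ly}, 3.1.10)$ the dual of this colimit is the limit of the duals, $\Fact(C)^\vee\iso\lim_{fSets}\cF^\vee_{fSets, C}$; by part (i) and Section~\ref{Sect_2.2.10_now} applied to $C^\vee$ the right-hand side is $\lim_{fSets}\cF^R_{fSets, C^\vee}\iso \Fact(C^\vee)$. It then remains only to identify this equivalence with the one determined by $\epsilon$, which is checked pointwise, where it is the pointwise duality of $C_{X^I}$ with $(C^\vee)_{X^I}$; equivalently, one verifies the two triangle identities for $(\epsilon,\eta)$ by reducing them to $fSets^{op}$ and then to the pointwise duality. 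The main obstacle, as usual in this circle of ideas, is the bookkeeping making the pointwise duality and adjunction data compatible with the $\vartriangle_!$/$\vartriangle^!$ transition functors (the projection-formula and base-change compatibilities along closed immersions) so that they glue, together with the fact that here the dual of a colimit is the limit of the duals — for which the dualizability (the preceding Corollary) and right-adjointability (Lemma~\ref{Lm_2.1.11}) hypotheses are exactly what is needed, precisely as the introduction stresses for the analogous comparison of $\ov{\Fact}$ and $\ov{\Fact}^{co}$.
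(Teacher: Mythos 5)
Your argument is essentially the paper's: part (i) is exactly the pointwise identification $(C_{X^I})^\vee\iso (C^\vee)_{X^I}$ from the preceding lemma together with identifying the dual of $\vartriangle_!$ with $\vartriangle^!$ (the $Shv(X^I)$- versus $Shv(\Ran)$-dual issue being handled, as you do, via Lemma~\ref{Lm_2.1.11} and the self-duality of $Shv(X^I)$ over $Shv(\Ran)$), and part (ii) is the general principle that the dual of a colimit of dualizable modules along right-adjointable transition functors is the limit of the duals with the canonical pairing as counit, which the paper simply cites as (\cite{G}, I.1, 6.3.4) while you spell it out. So the proposal is correct and takes the same route, only with more explicit bookkeeping than the paper's two-line proof.
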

\begin{proof}
i) Let $J\to K$ be a map in $fSets$. From (\ref{iso_for_Sect_2.4.6}) we conclude that the corresponding transition functor in the diagram $\cF^{\vee}_{fSets, C}$ identifies with the map $\vartriangle^!: (C^{\vee})_{X^J}\to ((C^{\vee})_{X^J})\otimes_{Shv(X^J)} Shv(X^K)\,\iso\, (C^{\vee})_{X^K}$, where $\vartriangle: X^K\to X^J$. 

\medskip\noindent
ii) This follows from i) as in (\cite{G}, I.1, 6.3.4).
\end{proof}

\begin{Rem} The $\star$-product $\Fact(C)\otimes \Fact(C)\to\Fact(C)$ admits a continuous right adjoint.
\end{Rem}
\begin{proof}
Given $(J^1\to K^1), (J^2\to K^2)\in\cTw(fSets)$ let $J=J_1\cup J_2, K=K_1\cup K_2$, so $(J\to K)\in\cTw(fSets)$. 
The functor under consideration is the natural map
\begin{multline*}
\underset{(J_1\to K_1, J_2\to K_2)\in\cTw(fSets)^2}{\colim} (\underset{k\in K_1}{\boxtimes} C^{\otimes J^1_k}(X))\otimes (\underset{k\in K_2}{\boxtimes} C^{\otimes J^2_k}(X))\to \\ \underset{(J_1\to K_1, J_2\to K_2)\in\cTw(fSets)^2}{\colim} (\underset{k\in K}{\boxtimes} C^{\otimes J_k}(X))\to \underset{(J\to K)\in\cTw(fSets)}{\colim} (\underset{k\in K}{\boxtimes} C^{\otimes J_k}(X)).
\end{multline*}
For $(J^1\to K^1), (J^2\to K^2)\in\cTw(fSets)$ the functor
$$
(\underset{k\in K_1}{\boxtimes} C^{\otimes J^1_k}(X))\otimes (\underset{k\in K_2}{\boxtimes} C^{\otimes J^2_k}(X))\to \underset{k\in K}{\boxtimes} C^{\otimes J_k}(X)
$$ 
admits a continuous right adjoint. Indeed, $\boxtimes: Shv(X^{K_1})\otimes Shv(X^{K_2})\to Shv(X^K)$ has a continuous $Shv(X^{K_1})\otimes Shv(X^{K_2})$-linear right adjoint by (\cite{Ly4}, 0.0.7). Now, since the transition map for the diagram (\ref{functor_cF_Ran_C}) admit continuous right adjoints, our claim follows from (\cite{Ly}, 9.2.6).
\end{proof}
\sssec{} 
\label{Sect_2.5.9}
For the rest of Section~\ref{Sect_Further properties} assume in addition that $C(X)\in CAlg(Shv(X)-mod)$, $C(X)$ is compactly generated and ULA over $Shv(X)$. Once again, $C(X)$ is now assumed unital. The ULA property is discussed in Section~\ref{appendix_some_generalities}. 

\begin{Rem}
By Lemma~\ref{Lm_C^vee(X)_is_still_ULA}, under our assumptions $C^{\vee}(X)$ is also ULA over $Shv(X)$. In particular, $C^{\vee}(X)$ is compactly generated. 
\end{Rem}

\begin{Lm} 
\label{Lm_ULA_general_lemma}
For any $I\in fSets$, $C_{X^I}$ is ULA over $Shv(X^I)$ in the sense of Definition~\ref{Def_ULA_module-category}. In particular, $C_{X^I}$ is compactly generated. 
\end{Lm}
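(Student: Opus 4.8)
The plan is to reduce, via Lemma~\ref{Lm_3.2.6_for_sheaves_of_cat}, to the ULA-ness of each term of the defining colimit $C_{X^I}\iso\underset{Tw(I)}{\colim}\,\cF_{I,C}$, and then to propagate the ULA property to the colimit itself. First I would recall from Section~\ref{appendix_some_generalities} the basic stability properties of the ULA condition for module categories, parallel to the dualizability statements used there: stability under relative tensor products over $Shv(X)$, under external products $\boxtimes$, and under the ``colocalizations'' of the base in the sense of (\cite{Ly}, 9.2.32). Fix $\Sigma=(I\to J\toup{\phi}K)\in Tw(I)$ and consider the term $\cF_{I,C}(\Sigma)=\mathop{\boxtimes}\limits_{k\in K} C^{\otimes J_k}(X)$, regarded as a $Shv(X^I)$-module through the closed immersion $\vartriangle:X^K\hook{}X^I$. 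Since $C(X)$ is ULA over $Shv(X)$, each relative tensor power $C^{\otimes J_k}(X)$ is ULA over $Shv(X)$; hence $\mathop{\boxtimes}\limits_{k\in K} C^{\otimes J_k}(X)$ is ULA over $Shv(X^K)$; and then, exactly as in the dualizability argument in the proof of Lemma~\ref{Lm_3.2.6_for_sheaves_of_cat}, applying the colocalization $\vartriangle_!:Shv(X^K)\leftrightarrows Shv(X^I):\vartriangle^!$ shows that $\cF_{I,C}(\Sigma)$ is ULA over $Shv(X^I)$. In particular each term is compactly generated, its compact and ULA objects coinciding.

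Next I would pass to the colimit. The point is that $Tw(I)$ is equivalent to a finite category (there are finitely many surjections out of the finite set $I$, and finitely many further surjections), so $C_{X^I}$ is a \emph{finite} colimit in $Shv(X^I)-mod$ of the ULA module categories $\cF_{I,C}(\Sigma)$, and by Lemma~\ref{Lm_3.2.6_for_sheaves_of_cat}(ii) every transition functor of $\cF_{I,C}$ admits a continuous $Shv(X^I)$-linear right adjoint. I would then invoke the stability of ULA module categories over $Shv(X^I)$ under finite colimits along $Shv(X^I)$-linear functors admitting continuous $Shv(X^I)$-linear right adjoints (a statement to add to Section~\ref{appendix_some_generalities}); equivalently, one may rewrite the colimit as $C_{X^I}\iso\underset{Tw(I)^{op}}{\lim}\,\cF^R_{I,C}$, pass to duals to obtain $(C_{X^I})^{\vee}\iso(C^{\vee})_{X^I}$ — a colimit of the same shape built from the module category $C^{\vee}(X)$, which is itself ULA over $Shv(X)$ by Lemma~\ref{Lm_C^vee(X)_is_still_ULA} — and combine this with the dualizability of $C_{X^I}$ from Lemma~\ref{Lm_3.2.6_for_sheaves_of_cat} and the self-duality of the ULA condition. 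Either way one concludes that $C_{X^I}$ is ULA over $Shv(X^I)$. Compact generation then follows formally; it may also be seen directly, since by Lemma~\ref{Lm_3.2.6_for_sheaves_of_cat} the structure functors $\cF_{I,C}(\Sigma)\to C_{X^I}$ admit continuous right adjoints, hence preserve compact objects, and jointly generate $C_{X^I}$ under colimits, so the images of the compact generators of the $\cF_{I,C}(\Sigma)$ generate $C_{X^I}$.

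I expect the colimit step to be the main obstacle: the assertion that ULA module categories are closed under finite colimits along $Shv(X^I)$-linear functors with continuous $Shv(X^I)$-linear right adjoints is the one genuinely non-formal input, and it is precisely there that dualizability of all the categories involved, together with the continuous right adjoints supplied by Lemma~\ref{Lm_3.2.6_for_sheaves_of_cat}, are used. The first step — the external-product, relative-tensor-product and colocalization stabilities of the ULA condition, together with the ULA-ness of $C^{\vee}(X)$ — is comparatively routine and runs parallel to the dualizability statements already established.
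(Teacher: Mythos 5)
Your plan is workable in outline, but it is not the route the paper takes, and as written it leans on statements that you neither prove nor can actually cite. The paper never establishes (and does not need) that every term $\cF_{I,C}(\Sigma)$ is ULA over $Shv(X^I)$, nor any closure of ULA module categories under colimits. Instead it exhibits explicit generators: by Lemma~\ref{Lm_3.5.2_generated_under_colim} (which uses unitality of $C(X)$) the single structure functor $\Loc:\underset{i\in I}{\boxtimes}C(X)\to C_{X^I}$ at the object $(I\to I\to I)$ already generates under colimits; Step~1 of the paper's proof shows $\underset{i\in I}{\boxtimes}C(X)$ is compactly generated by objects $(\underset{i\in I}{\boxtimes}c_i)\otimes_{Shv(X)^{\otimes I}}z$ with $c_i$ ULA and $z\in Shv(X^I)^c$ -- this is exactly where the constructible-context issue ($Shv(X)^{\otimes I}\not\simeq Shv(X^I)$) is handled; Step~2 shows by a short adjunction/base-change trick that $\underset{i\in I}{\boxtimes}c_i$ is ULA over $Shv(X^I)$; and Lemma~\ref{Lm_3.2.6_for_sheaves_of_cat}(ii) combined with Proposition~\ref{Pp_3.6.6} and Remark~\ref{Rem_3.6.3_now} shows that $\Loc$ preserves ULA objects and compact objects, which yields both assertions of the lemma simultaneously.

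Concerning your two steps: the colimit-closure statement you single out as ``the main obstacle'' is in fact the easy, essentially formal part, and it follows from exactly the inputs you quote: by Lemma~\ref{Lm_3.2.6_for_sheaves_of_cat}(ii) each insertion $\cF_{I,C}(\Sigma)\to C_{X^I}$ has a continuous $Shv(X^I)$-linear right adjoint, hence preserves ULA objects by Remark~\ref{Rem_3.6.3_now}(ii); and since $C_{X^I}\iso\underset{\Tw(I)^{op}}{\lim}\cF^R_{I,C}$, the right adjoints to the insertions are jointly conservative, so the insertions jointly generate under colimits (finiteness of $\Tw(I)$ plays no role). By contrast, the part you call ``comparatively routine'' -- that each $\underset{k\in K}{\boxtimes}C^{\otimes J_k}(X)$ is ULA over $Shv(X^I)$, in particular the generation statement in the constructible context -- is where the real work sits: the stabilities of the ULA condition under relative tensor powers, external products and base change that you propose to ``recall'' are not in Appendix~\ref{appendix_some_generalities} and would have to be proved, essentially by redoing Steps~1--2 of the paper's argument for every term rather than just for $(I\to I\to I)$. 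Two further inaccuracies: compact and ULA objects of the terms do not coincide (ULA implies compact but not conversely -- already for $C(X)=Shv(X)$ a skyscraper sheaf is compact and not ULA); and your alternative via $(C_{X^I})^{\vee}\iso(C^{\vee})_{X^I}$ and Lemma~\ref{Lm_C^vee(X)_is_still_ULA} is circular, since to know that $(C^{\vee})_{X^I}$ is ULA you need the very same colimit (or generator) argument applied to $C^{\vee}(X)$.
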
 
\begin{proof} 
{\bf Step 1} Recall that our notation $\mathop{\boxtimes}\limits_{i\in I} C(X)$ means $(C(X)^{\otimes I})\otimes_{(Shv(X)^{\otimes I})} Shv(X^I)$. Let us show that  the latter category is compactly generated by objects of the form 
\begin{equation}
\label{object_for_Lm3.5.6}
(\mathop{\boxtimes}\limits_{i\in I} c_i)\otimes_{(Shv(X)^{\otimes I})} z
\end{equation} 
with $c_i\in C(X)$ ULA over $Shv(X)$, and $z\in Shv(X^I)^c$. In the case of $\cD$-modules, $Shv(X)^{\boxtimes I}\to Shv(X^I)$ is an equivalence, and there is nothing to prove. Assume now we are in the constructible context. 

 In this case for any $S\in\Sch_{ft}$, $\otimes^!: Shv(S)\otimes Shv(S)\to Shv(S)$ has a continuous right adjoint. Note that if $c_i\in \cC(X)$ is ULA over $Shv(X)$ then $\otimes_{i\in I} \; c_i\in C(X)^{\otimes I}$ is ULA over $Shv(X)^{\otimes I}$. So, for any $z\in Shv(X^I)^c$, 
(\ref{object_for_Lm3.5.6}) is compact in $(C(X)^{\otimes I})\otimes_{(Shv(X)^{\otimes I})} Shv(X^I)$ by Remark~\ref{Rem_3.6.3_now}.

 Let $\cD\subset (C(X)^{\otimes I})\otimes_{(Shv(X)^{\otimes I})} Shv(X^I)$ be a full embedding in $\DGCat_{cont}$ such that $\cD$ contains all the objects of the form (\ref{object_for_Lm3.5.6}). Then it contains all the objects $c'\otimes_{(Shv(X)^{\otimes I})} z$ for $c'\in C(X)^{\boxtimes I}, z\in Shv(X^I)$ by (\cite{G}, I.1, 7.4.2). Applying in addition (\cite{G}, I.1, 8.2.6), we see that $\cD=(C(X)^{\otimes I})\otimes_{(Shv(X)^{\otimes I})} Shv(X^I)$.
 
\medskip\noindent
{\bf Step 2} If $c_i\in \cC(X)$ are ULA over $Shv(X)$ then $\underset{i\in I}{\boxtimes} c_i\,\iso\,(\mathop{\otimes}\limits_{i\in I} c_i)\otimes_{(Shv(X)^{\otimes I})} \omega_{X^I}$ is ULA over $Shv(X^I)$. 

 Indeed, consider the adjoint pair $Shv(X)^{\otimes I}\leftrightarrows \cC(X)^{\otimes I}$ in $Shv(X)^{\otimes I}-mod$, where the left adjoint is the multimlication by $\otimes_{i\in I}\,  c_i$. Tensoring with $Shv(X^I)$ over $Shv(X)^{\otimes I}$, we get the desired adjoint pair in $Shv(X^I)$. 
  
\medskip\noindent
{\bf Step 3} By Lemma~\ref{Lm_3.5.2_generated_under_colim}, the essential image of $\Loc: \mathop{\boxtimes}\limits_{i\in I} C(X)\to C_{X^I}$ generates $C_{X^I}$ under colimits. Now if $c_i\in \cC(X)$ are ULA over $Shv(X)$, $\underset{i\in I}{\boxtimes} c_i\in \mathop{\boxtimes}\limits_{i\in I} C(X)$ is ULA over $Shv(X^I)$. By Proposition~\ref{Pp_3.6.6} and Lemma~\ref{Lm_3.2.6_for_sheaves_of_cat}, $\Loc(\underset{i\in I}{\boxtimes} c_i)$ is ULA over $Shv(X^I)$. 

 Let $c_i\in C(X)$ are ULA over $Shv(X)$, $z\in Shv(X^I)^c$. By Remark~\ref{Rem_3.6.3_now}, 
$$
(\mathop{\boxtimes}\limits_{i\in I} c_i)\otimes_{(Shv(X)^{\boxtimes I})} z \in (\mathop{\boxtimes}\limits_{i\in I} C(X))^c.
$$ 
By Lemma~\ref{Lm_3.2.6_for_sheaves_of_cat}, 
$$
\Loc((\mathop{\boxtimes}\limits_{i\in I} c_i)\otimes_{(Shv(X)^{\boxtimes I})} z)
$$
is compact in $C_{X^I}$, and these objects generate $C_{X^I}$ by Lemma~\ref{Lm_3.5.2_generated_under_colim}.
\end{proof}

\begin{Cor} $\Fact(C)$ is compactly generated.
\end{Cor}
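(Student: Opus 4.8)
The plan is to combine the presentation of $\Fact(C)$ as a colimit in $\DGCat_{cont}$ with Lemma~\ref{Lm_ULA_general_lemma}. By (\ref{Fact(C)_as_colimit_of_C_X^I}) one has
$$
\Fact(C)\,\iso\,\mathop{\colim}\limits_{I\in fSets^{op}} C_{X^I}
$$
in $\DGCat_{cont}$, where for a surjection $I\to J$ in $fSets$ the transition functor $C_{X^J}\to C_{X^I}$ is $\vartriangle_!$ for the closed immersion $\vartriangle: X^J\to X^I$, exactly as in the proof of Lemma~\ref{Lm_2.2.8_Ran_is_1-affine}. One could equally well run the argument with $\Fact(C)\,\iso\,\colim_{\cTw(fSets)}\cF_{\Ran, C}$ from Lemma~\ref{Lm_2.1.2_about_Fact(C)}.

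First I would note that every term $C_{X^I}$ of this colimit is compactly generated, which is precisely the last assertion of Lemma~\ref{Lm_ULA_general_lemma}. Next I would check that every transition functor preserves compact objects. The functor $\vartriangle_!: C_{X^J}\to C_{X^I}$ is $Shv(X^I)$-linear (projection formula for the closed immersion $\vartriangle$), hence $Shv(\Ran)$-linear, and under the equivalence $C_{X^I}\,\iso\,\Fact(C)\otimes_{Shv(\Ran)} Shv(X^I)$ it is obtained from $\vartriangle_!: Shv(X^J)\to Shv(X^I)$ by base change; its right adjoint is then the base change of $\vartriangle^!$ and is in particular continuous. Since a continuous functor between compactly generated DG-categories preserves compact objects if and only if its right adjoint is continuous, $\vartriangle_!$ preserves compacts. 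In the $\cTw(fSets)$-presentation one uses instead that the transition maps of $\cF_{\Ran, C}$ have continuous right adjoints, as recorded just after Lemma~\ref{Lm_3.2.6_for_sheaves_of_cat}, together with the compact generation of each $\mathop{\boxtimes}\limits_{k\in K}C^{\otimes J_k}(X)$, which follows as in Step~1 of the proof of Lemma~\ref{Lm_ULA_general_lemma}.

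Finally I would invoke the general fact that a colimit in $\DGCat_{cont}$ of compactly generated DG-categories, taken along continuous functors preserving compact objects, is again compactly generated --- equivalently, that $\Ind$ from the $\infty$-category of small idempotent-complete stable DG-categories to $\DGCat_{cont}$ commutes with colimits (\cite{G}, ch.~I.1). Applied to the displayed colimit, this yields the assertion. There is no real obstacle here: the substantive content is already in Lemma~\ref{Lm_ULA_general_lemma}, and the only point requiring attention is the compatibility of the transition functors with compact objects, which is exactly why the continuity of the right adjoints $\vartriangle^!$ is used.
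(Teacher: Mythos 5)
Your argument is correct and is essentially the paper's proof: the paper likewise combines the compact generation of each $C_{X^I}$ from Lemma~\ref{Lm_ULA_general_lemma} with the colimit presentation (\ref{Fact(C)_as_colimit_of_C_X^I}), citing (\cite{G}, I.1, 7.2.7) for the general fact about colimits of compactly generated categories in $\DGCat_{cont}$. Your explicit verification that the transition functors $\vartriangle_!$ preserve compact objects (via continuity of their right adjoints $\vartriangle^!$) is precisely the hypothesis the paper leaves implicit in that citation.
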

\begin{proof}
By Lemma~\ref{Lm_ULA_general_lemma}, if $I\in fSets$ then $C_{X^I}$ is compactly generated. Now Apply (\cite{G}, I.1, 7.2.7) to the formula (\ref{Fact(C)_as_colimit_of_C_X^I}).
\end{proof}

\ssec{Properties of some functors}

\sssec{} In this subsection we establish some properties of functors between commutative factorization categories.

\sssec{} 
\label{Sect_2.7.2}
Let $C(X), D(X)\in CAlg^{nu}(Shv(X)-mod)$ and $L: C(X)\leftrightarrows D(X): R$ is an adjoint pair in $Shv(X)-mod$, where $L$ is a map in $CAlg^{nu}(Shv(X)-mod)$, so $R$ is right-lax non-unital symmetric monoidal. Assume that both $m_C: C^{\otimes 2}(X)\to C(X)$ and $m_D: D^{\otimes 2}(X)\to D(X)$ admit continuous $Shv(X)$-linear right adjoints. 

 Let $I\in fSets$. Write $L_I: (C_{X^I}, \otimes^!)\to (D_{X^I},\otimes^!)$ for the non-unital symmetric monoidal functor obtained from $L$ by functoriality, so $L_I$ is a map in $CAlg^{nu}(Shv(X^I)-mod)$. Let $R_I: D_{X^I}\to C_{X^I}$ denote the right adjoint to $L_I$. 
 
\begin{Lm} i) $R_I$ is a map in $Shv(X^I)-mod$. \\
ii) For $I\to J$ in $fSets$ the !-restriction of $R_I$ under $X^J\to X^I$ is the right adjoint to $L_J: C_{X^J}\to D_{X^J}$. \\
iii) Both $L_I$ and $R_I$ factorize. Namely, given a map $\phi: I\to I'$ in $fSets$, one has canonically
$$
L_I\mid_{X^I_{\phi, d}}\,\iso\,\underset{i\in I'}{\boxtimes} L_{I_i}\mid_{X^I_{\phi, d}}\;\;\;\;\;\;\mbox{and}\;\;\;\;\;\;\;\;  R_I\mid_{X^I_{\phi, d}}\,\iso\,\underset{i\in I'}{\boxtimes} R_{I_i}\mid_{X^I_{\phi, d}}.
$$   
\end{Lm}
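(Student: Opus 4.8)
The plan is to reduce everything to the level of the diagrams $\cF_{I,C}$ and $\cF_{I,D}$ and the adjoint pair $(L,R)$ on $X$, using the fact (from \lemref{Lm_3.2.6_for_sheaves_of_cat}, applicable by the standing assumptions of Section~\ref{Sect_Further properties} on $m_C$ and $m_D$) that all transition functors in these diagrams have continuous $Shv(X^I)$-linear right adjoints, so that $C_{X^I}\iso\lim \cF_{I,C}^R$ and likewise for $D$. For (i), I would first observe that $L$ a map in $CAlg^{nu}(Shv(X)-mod)$ forces each $L_I$ to be $Shv(X^I)$-linear by functoriality; concretely $L_I$ is induced termwise from the functors $\boxtimes_{k\in K}L^{\otimes J_k}$, each of which is $Shv(X^I)$-linear, and these assemble to a morphism of functors $\cF_{I,C}\to\cF_{I,D}$ compatible with the transition maps (the transition maps of $\cF_{I,C}$ and $\cF_{I,D}$ are built from the products $m_C,m_D$ and the diagonal $\vartriangle_!\vartriangle^!$, and $L$ commutes with both since $L$ is symmetric monoidal and $Shv(X)$-linear). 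Passing to right adjoints in $Shv(X^I)-mod$ gives a morphism $\cF_{I,D}^R\to\cF_{I,C}^R$ in $\Fun(Tw(I)^{op}, Shv(X^I)-mod)$ — here one uses that each component right adjoint $R^{\otimes J_k}$ is $Shv(X)$-linear (which follows from $R$ being right-lax symmetric monoidal and the projection-formula-type argument as in the proof of \lemref{Lm_3.2.6_for_sheaves_of_cat}(ii)) — and $R_I$ is the induced map on limits $\lim\cF_{I,D}^R\to\lim\cF_{I,C}^R$, hence $Shv(X^I)$-linear.

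For (ii), the key point is that $!$-restriction along $\vartriangle\colon X^J\to X^I$ is the base change $\,\cdot\,\otimes_{Shv(X^I)}Shv(X^J)$ on module categories, and that taking right adjoints commutes with this base change for the functors in play. I would argue as follows: $L_I$ is $Shv(X^I)$-linear with right adjoint $R_I$ that is also $Shv(X^I)$-linear by part (i), and both $L_I,R_I$ respect the continuous right adjoint structure; now \lemref{Lm_3.2.6_for_sheaves_of_cat} together with the identification $C_{X^I}\otimes_{Shv(X^I)}Shv(X^J)\iso C_{X^J}$ (equation~(\ref{equiv_Fact(C)_is_a_sheaf_Sect_2.1.4})) shows that tensoring the adjunction $L_I\dashv R_I$ with $Shv(X^J)$ over $Shv(X^I)$ yields an adjunction between $C_{X^J}$ and $D_{X^J}$, and by functoriality of the whole construction the left adjoint of this tensored pair is identified with $L_J$, so its right adjoint is $\vartriangle^!R_I$, as claimed. (Compatibility with compositions of such $\vartriangle$'s is automatic.)

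For (iii), the factorization of $L_I$ is immediate from the factorization structure on the functor $C(X)\mapsto C_{X^I}$ (Lemma~\ref{Lm_rlax_and_factorization_compatibility_non-unital} and the discussion around~(\ref{iso_after_Lm_3.2.4})): under the equivalence $C_{X^I}\mid_{X^I_{\phi,d}}\iso(\boxtimes_{i\in I'}C_{X^{I_i}})\mid_{X^I_{\phi,d}}$, the functor $L_I$ restricted to $X^I_{\phi,d}$ corresponds to $\boxtimes_{i\in I'}L_{I_i}$ because both sides are computed from the same colimit over $Tw(I)_\phi\iso\prod_{i\in I'}Tw(I_i)$ and $L$ acts componentwise. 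The factorization of $R_I$ then follows formally: $\boxtimes_{i\in I'}R_{I_i}$ is right adjoint to $\boxtimes_{i\in I'}L_{I_i}$ (an external product of adjunctions is an adjunction), and the restriction to $X^I_{\phi,d}$ of the adjunction $L_I\dashv R_I$ is an adjunction with left adjoint $\boxtimes_{i\in I'}L_{I_i}\mid_{X^I_{\phi,d}}$ by the previous sentence, hence its right adjoint is $\boxtimes_{i\in I'}R_{I_i}\mid_{X^I_{\phi,d}}$ by uniqueness of adjoints.

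I expect the main obstacle to be the bookkeeping in (ii): one must check that the right adjoint $R_I$ is computed "fibrewise over $X^I$" compatibly enough that base change along $\vartriangle$ produces exactly $L_J\dashv R_J$ rather than merely some adjunction, which requires tracking the $Shv(X^I)$-linearity through the limit presentation $C_{X^I}\iso\lim\cF_{I,C}^R$ and invoking that $\oblv\colon Shv(X^I)-mod\to\DGCat_{cont}$ preserves the relevant limits and that base change commutes with these limits; the analogous fibrewise verification already underlies \lemref{Lm_3.2.6_for_sheaves_of_cat}, so the argument there can be imitated, but stating it cleanly is the delicate part. Parts (i) and (iii) are comparatively formal.
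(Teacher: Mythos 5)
Your argument is correct and is essentially the paper's: for (i) the paper likewise encodes the morphism $\cF_{I,C}\to\cF_{I,D}$ as a functor $\Tw(I)\times[1]\to Shv(X^I)-mod$, passes to the componentwise right adjoints $\alpha_\phi^R$, and concludes (citing \cite{Ly}, 9.2.39) that $R_I$ is the limit of these, hence $Shv(X^I)$-linear. The paper then simply states that (ii) and (iii) follow from (i); your base-change-of-the-linear-adjunction and uniqueness-of-adjoints arguments are a reasonable way of spelling out exactly that implication.
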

\begin{proof} i) We have the functor $\cF_{I, C}: \Tw(I)\to Shv(X^I)-mod$ with $C_{X^I}\,\iso\, \underset{\Tw(I)}{\colim} \; \cF_{I, C}$. We may pass to right adjoints in the latter functor and get $\cF^R_{I, C}: \Tw(I)^{op}\to Shv(X^I)-mod$ as in Lemma~\ref{Lm_3.2.6_for_sheaves_of_cat}. So, $C_{X^I}\,\iso\, \underset{\Tw(I)^{op}}{\lim} \cF^R_{I, C}$. 

 Consider the functor $\cF: \Tw(I)\times [1]\to Shv(X^I)-mod$ sending $(I\to J\toup{\phi} K)$ to the map $\alpha_{\phi}: C^{\otimes\phi}\to D^{\otimes\phi}$. Clearly, $\alpha_{\phi}$ has a right adjoint $\alpha_{\phi}^R$ in $Shv(X^I)-mod$. So, we may pass to right adjonts in $\cF$ and get $\cF^R: (\Tw(I)\times [1])^{op}\to Shv(X^I)-mod$ sending $(I\to J\toup{\phi} K)$ to
\begin{equation} 
\label{functor_alpha_phi^R_for_Lm2.7.3} 
 \alpha_{\phi}^R: D^{\otimes\phi}\to C^{\otimes\phi}.
\end{equation} 
Now (\cite{Ly}, 9.2.39) shows that $R_I$ is $\underset{\Tw(I)^{op}}{\lim}$ of the functors $\alpha_{\phi}^R: D^{\otimes\phi}\to C^{\otimes\phi}$. 
 
\smallskip\noindent
ii and iii) follow from i).  
\end{proof} 

\begin{Lm} 
\label{Lm_A.1.3_for_appendix} 
In the situation of Section~\ref{Sect_2.7.2} assume in addition that the right-lax $C(X)$-module structure on the
map $R: D(X)\to C(X)$ is strict. Then for any $I\in fSets$ the dual pair $L_I: C_{X^I}\leftrightarrows D_{X^I}: R_I$ takes place in $(C_{X^I}, \otimes^!)-mod$.
\end{Lm}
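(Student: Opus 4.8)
I would begin by unwinding the assertion. Since $L_I\colon(C_{X^I},\otimes^!)\to(D_{X^I},\otimes^!)$ is a non-unital symmetric monoidal functor, $D_{X^I}$ is automatically a module over $(C_{X^I},\otimes^!)$ by restriction of scalars along $L_I$, and its right adjoint $R_I$ — continuous and $Shv(X^I)$-linear by the preceding lemma — carries a canonical right-lax $(C_{X^I},\otimes^!)$-linear structure whose structure map is the projection morphism
$$
\pi_{c,d}\colon\ c\otimes^! R_I(d)\ \longrightarrow\ R_I\bigl(L_I(c)\otimes^! d\bigr),
$$
the mate of $L_I(c\otimes^! R_I(d))\iso L_I(c)\otimes^! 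L_IR_I(d)\to L_I(c)\otimes^! d$ obtained from the counit. The claim that $L_I\dashv R_I$ takes place in $(C_{X^I},\otimes^!)-mod$ is equivalent to this right-lax structure being strict, i.e. to $\pi_{c,d}$ being an equivalence for all $c,d$; once strictness is known, the unit and counit of the adjunction become $(C_{X^I},\otimes^!)$-linear for free, exactly as in the proof of the preceding lemma (cf. (\cite{Ly}, 9.2.39)). So the whole statement is the projection formula for $R_I$ over $(C_{X^I},\otimes^!)$.

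The plan is to deduce this from the hypothesis — the projection formula for $R$ over $(C(X),\otimes^!)$, which is the $I=\ast$ case — by propagating it through the construction of $\cF_{I,C}$. First, for each object $\phi=(I\to J\to K)\in\Tw(I)$ I would check that the adjoint pair $\alpha_\phi\colon C^{\otimes\phi}\leftrightarrows D^{\otimes\phi}\colon\alpha_\phi^R$ of the preceding lemma takes place in $(C^{\otimes\phi},\otimes^!)-mod$, where $C^{\otimes\phi}$ acts on $D^{\otimes\phi}$ via $\alpha_\phi$. Indeed $\alpha_\phi$ is obtained from $L$ by iterated relative tensor power over $Shv(X)$ — producing $L^{\otimes J_k}\colon C^{\otimes J_k}(X)\to D^{\otimes J_k}(X)$ with right adjoint $R^{\otimes J_k}$ — then exterior product over $k\in K$, then base change along the diagonal map $\vartriangle^!\colon Shv(X^I)\to Shv(X^K)$; the projection formula, being the assertion that the lax module structure on the right adjoint is \emph{strict}, is stable under each of these three operations, the base-change step crucially using strictness rather than mere lax-linearity. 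Moreover these identifications are natural in $\phi$, i.e. compatible with the transition functors of $\Tw(I)$ and, by parts (ii)--(iii) of the preceding lemma, with $!$-restriction along the diagonals and with the factorization isomorphisms.

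It then remains to pass from the per-$\phi$ statement to $R_I$. Here one uses that under the standing assumptions of this subsection every transition functor of $\cF_{I,C}$ (and of $\cF_{I,D}$) has a continuous $Shv(X^I)$-linear right adjoint (Lemma~\ref{Lm_3.2.6_for_sheaves_of_cat}), so that $C_{X^I}\iso\underset{\Tw(I)}{\colim}\cF_{I,C}\iso\underset{\Tw(I)^{op}}{\lim}\cF_{I,C}^R$ and likewise $R_I\iso\underset{\Tw(I)^{op}}{\lim}\alpha_\phi^R$, as in the preceding lemma. The transformation $\{\alpha_\phi\}\colon\cF_{I,C}\Rightarrow\cF_{I,D}$ of $CAlg^{nu}(Shv(X^I)-mod)$-valued functors exhibits $\cF_{I,D}$ as a module over $\cF_{I,C}$ in $\Fun(\Tw(I),CAlg^{nu}(Shv(X^I)-mod))$, and the previous paragraph says the right-adjoint transformation $\{\alpha_\phi^R\}$ is a \emph{strictly} linear map of such modules; applying a module-enhanced version of the ``colimit $=$ limit of right adjoints'' identification of (\cite{Ly}, 9.2.39) to this picture yields that $R_I$ is strictly $(C_{X^I},\otimes^!)$-linear, as wanted.

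The main obstacle is precisely this last step. The naive reduction via generators fails: the Beck--Chevalley square comparing $R_I\circ\iota_\phi^D$ with $\iota_\phi\circ\alpha_\phi^R$ — for the structure functors $\iota_\phi\colon C^{\otimes\phi}\to C_{X^I}$ and $\iota_\phi^D\colon D^{\otimes\phi}\to D_{X^I}$ — need not commute, so one cannot simply verify $\pi_{c,d}$ on objects pulled back from the $C^{\otimes\phi}$. One must instead perform the passage to right adjoints \emph{internally to} the $2$-category of $(C_{X^I},\otimes^!)$-modules (equivalently, establish and invoke the module-enhanced form of (\cite{Ly}, 9.2.39)) and keep track of the compatibility of the projection isomorphisms with the transition maps of $\Tw(I)$, so that they glue over the limit rather than holding only objectwise.
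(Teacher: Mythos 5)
There is a genuine gap, and it sits exactly where you flag it yourself. Your reduction of the statement to the projection formula $c\otimes^! R_I(d)\to R_I(L_I(c)\otimes^! d)$ is correct, and the objectwise claim that each $\alpha_\phi\colon C^{\otimes\phi}\rightleftarrows D^{\otimes\phi}\colon\alpha_\phi^R$ is an adjunction in $(C^{\otimes\phi},\otimes^!)$-modules is fine. But the passage from this family of statements, indexed by $\Tw(I)$, to the $(C_{X^I},\otimes^!)$-linearity of $R_I$ is not carried out: you assert that one should "perform the passage to right adjoints internally to the $2$-category of $(C_{X^I},\otimes^!)$-modules" via a module-enhanced form of (\cite{Ly}, 9.2.39), but no such enhancement is proved, and it is not even clear how to formulate it here, because the terms $D^{\otimes\phi}$ of the limit diagram carry actions of the \emph{varying} algebras $C^{\otimes\phi}$ and are not $C_{X^I}$-modules (the structure maps go $C^{\otimes\phi}\to C_{X^I}$, not the other way), so the diagram whose limit computes $R_I$ does not live in $(C_{X^I},\otimes^!)$-mod. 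Equally, as you note, the Beck--Chevalley squares against the structure functors $\iota_\phi$ need not commute, so one cannot verify $\pi_{c,d}$ on objects coming from the $C^{\otimes\phi}$ either. So the proposal identifies the difficulty but does not overcome it.

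The paper's proof takes a different and much shorter route, which your plan never uses: it exploits parts (ii) and (iii) of the preceding lemma, namely that $L_I$ and $R_I$ are compatible with $!$-restriction along the diagonals $X^J\to X^I$ and with the factorization equivalences. The projection map $\pi_{c,d}$ is a morphism in $C_{X^I}$, so by d\'evissage along the diagonal stratification (and induction on $|I|$, the restriction to $X^J$ being the analogous map for $J$) it suffices to check it over the open locus $\oo{X}{}^I$; there, factorization identifies $(L_I,R_I)$ with the exterior-product adjunction $L^{\boxtimes I}\colon\underset{i\in I}{\boxtimes}C(X)\rightleftarrows\underset{i\in I}{\boxtimes}D(X)\colon R^{\boxtimes I}$, which is an adjunction in $\underset{i\in I}{\boxtimes}C(X)$-modules directly from the hypothesis that $R$ is strictly $C(X)$-linear. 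This is exactly the mechanism that makes any gluing over $\Tw(I)$ unnecessary; to repair your argument you would either have to adopt this d\'evissage or genuinely establish the lax-module descent statement you invoke, neither of which is done in the proposal.
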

\begin{proof}
Take $c\in C_{X^I}, d\in D_{X^I}$. We must show that the natural map $c\otimes^! R_I(d)\to R_I(d\otimes^! L_I(c))$ is an isomorphism in $C_{X^I}$. Since the functors $R_I, L_I$ are compatible with $!$-restrictiions under $X^J\to X^I$ for a map $I\to J$ in $fSets$ and with factorization, it suffices to establish the above isomorphism over $\oo{X}{}^I$.

 The dual pair $L^{\boxtimes I}: \underset{i\in I}{\boxtimes} C(X)\leftrightarrows \underset{i\in I}{\boxtimes} D(X): R^{\boxtimes I}$ takes place in $\underset{i\in I}{\boxtimes} C(X)-mod$. Our claim follows. 
\end{proof}

\begin{Lm} In the situation of Section~\ref{Sect_2.7.2} assume $C(X), D(X)\in CAlg(Shv(X)-mod)$, $L$ is symmetric monoidal, and the dual pair $L: C(X)\leftrightarrows D(X): R$ takes place in $C(X)-mod$.  Assume also that $R$ is monadic. 
Set 
$$
A=R(e_D)\in CAlg(C(X)),
$$ 
where $e_D\in D(X)$ is the unit. Let $I\in fSets$. Assume that for any $(I\to J\to K)\in \Tw(I)$ the functor (\ref{functor_alpha_phi^R_for_Lm2.7.3}) is conservative. Then $R_I: D_{X^I}\to C_{X^I}$ is monadic, and the corresponding monad identifies canonically with $R_IL_I(e_{X^I})\,\iso\, A_{X^I}\in CAlg(C_{X^I}, \otimes^!)$. Here $e_{X^I}\in (C_{X^I},\otimes^!)$ is the unit. So, 
$$
D_{X^I}\,\iso\, A_{X^I}-mod(C_{X^I}).
$$ 
\end{Lm}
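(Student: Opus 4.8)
The plan is to establish the statement one level at a time over $\Tw(I)$ and then pass to the colimit. Fix $\Sigma=(I\to J\toup{\phi}K)\in\Tw(I)$. The functor $\alpha_\phi^R$ of (\ref{functor_alpha_phi^R_for_Lm2.7.3}) is continuous (it is a morphism in $Shv(X^I)-mod$) and conservative by hypothesis, so by Barr--Beck--Lurie (\cite{HA}, 4.7.3.5) the adjunction $\alpha_\phi:C^{\otimes\phi}\leftrightarrows D^{\otimes\phi}:\alpha_\phi^R$ is monadic. Arguing as in Lemma~\ref{Lm_A.1.3_for_appendix} — using that the dual pair $L\dashv R$ takes place in $C(X)-mod$, hence so do the $\boxtimes$-powers $L^{\otimes J_k}\dashv R^{\otimes J_k}$ and their exterior products over $K$ — the monad $\alpha_\phi^R\alpha_\phi$ is $C^{\otimes\phi}$-linear for $\otimes^!$; being the corresponding exterior power of the $C(X)$-linear monad $RL=(-\otimes^! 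A)$ on $C(X)$, it equals $(-\otimes^! A^{\otimes\phi})$ with $A^{\otimes\phi}=\boxtimes_{k\in K}A^{\otimes J_k}$. Hence $D^{\otimes\phi}\iso A^{\otimes\phi}-mod(C^{\otimes\phi})$ as module categories over $(C^{\otimes\phi},\otimes^!)$, with $\alpha_\phi$ the free-module functor.

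Next I would make these equivalences functorial in $\Sigma$. The assignment $\Sigma\mapsto\alpha_\phi$ is the natural transformation $\cF_{I,C}\to\cF_{I,D}$ used above in the description of $R_I$ (the functor $\Tw(I)\times[1]\to Shv(X^I)-mod$, $\Sigma\mapsto\alpha_\phi$), so $\Sigma\mapsto(\alpha_\phi,\alpha_\phi^R)$ is a natural family of monadic adjunctions and $\Sigma\mapsto(-\otimes^! A^{\otimes\phi})$ is a natural $C^{\otimes\phi}$-linear monad on $\cF_{I,C}$; here $\Sigma\mapsto A^{\otimes\phi}$ is the functor $\cF_{I,A}$ of Section~\ref{Sect_2.4.2_v4}, with $\underset{\Tw(I)}{\colim}\,\cF_{I,A}\iso A_{X^I}$ in $(C_{X^I},\otimes^!)$. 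Passing to $\underset{\Tw(I)}{\colim}$ of the natural equivalence $\cF_{I,D}\iso A^{\otimes\phi}-mod(C^{\otimes\phi})$, and using that the formation of modules over a commutative algebra commutes with this colimit — the same mechanism producing $\Fact(A)-mod(\Fact(C))\iso\underset{fSets^{op}}{\colim}\,A_{X^I}-mod(C_{X^I})$ in Section~\ref{Sect_Factorization algebras in Fact(C)}, which rests on the base-change identity (\cite{G}, ch. I.1, 8.5.7) for the transition functors of $\cF_{I,C}$ — I obtain $D_{X^I}\iso A_{X^I}-mod(C_{X^I})$ in $C_{X^I}-mod$, under which $L_I=\underset{\Tw(I)}{\colim}\,\alpha_\phi$ becomes the free-module functor $C_{X^I}\to A_{X^I}-mod(C_{X^I})$.

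Granting this, the rest is formal. As the right adjoint of $L_I$, the functor $R_I$ is thereby identified with the forgetful functor $A_{X^I}-mod(C_{X^I})\to C_{X^I}$, which is monadic (again by \cite{HA}, 4.7.3.5) with monad $(-\otimes^! A_{X^I})$; and $R_IL_I(e_{X^I})=R_I(e_{D_{X^I}})$ is the underlying object of the unit $A_{X^I}$ of $(A_{X^I}-mod(C_{X^I}),\otimes_{A_{X^I}})$, so $R_IL_I(e_{X^I})\iso A_{X^I}$ in $CAlg(C_{X^I},\otimes^!)$.

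The main obstacle is the colimit step in the second paragraph: one must check that the level-wise monadic equivalences $\cF_{I,D}(\Sigma)\iso A^{\otimes\phi}-mod(C^{\otimes\phi})$ are compatible with the transition functors of $\cF_{I,C}$ and $\cF_{I,D}$ (so that they assemble into an equivalence of $\Tw(I)$-diagrams — this is the functoriality of the monadic reconstruction), and that passage to module categories commutes with $\underset{\Tw(I)}{\colim}$. Both are handled as in the construction of $\Fact(A)-mod(\Fact(C))$ recalled above, using Lemma~\ref{Lm_3.2.6_for_sheaves_of_cat} (the transition functors of $\cF_{I,C}$ admit continuous $Shv(X^I)$-linear right adjoints) and the base-change compatibilities (\cite{G}, ch. I.1, 8.5.4, 8.5.7). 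A slightly longer alternative avoiding the level-wise reduction: first show via (\cite{HA}, 4.7.3.5) that $R_I$ is monadic — it is $\underset{\Tw(I)^{op}}{\lim}\,\alpha_\phi^R$, hence conservative (as in Lemma~\ref{Lm_3.5.2_generated_under_colim}) and continuous — then use Lemma~\ref{Lm_A.1.3_for_appendix} to get that $R_IL_I$ is $C_{X^I}$-linear, equal to $(-\otimes^! R_I(e_{D_{X^I}}))$, and finally identify $R_I(e_{D_{X^I}})$ with $A_{X^I}$ via the equivalence $D(X)\iso A-mod(C(X))$ (valid as $R$ is monadic and $C(X)$-linear) and the compatibility of $C(X)\mapsto C_{X^I}$ with forming module categories.
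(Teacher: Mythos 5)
Your reduction to a single $\Sigma=(I\to J\toup{\phi}K)$ is fine (each $\alpha_\phi\dashv\alpha_\phi^R$ is monadic with monad $-\otimes^!A^{\otimes\phi}$), and your ``alternative'' route begins exactly as the paper does: monadicity of $R_I$ from continuity plus conservativity of $\underset{\Tw(I)^{op}}{\lim}\alpha_\phi^R$, and $C_{X^I}$-linearity of the pair $(L_I,R_I)$ from Lemma~\ref{Lm_A.1.3_for_appendix}, so that everything reduces to computing $R_IL_I(e_{X^I})$. The genuine gap is precisely at that last identification, in both of your routes. In the main route you must commute the formation of module categories with $\underset{\Tw(I)}{\colim}$ and check naturality of the level-wise monadic equivalences in $\Sigma$; the mechanism you cite (the construction of $\Fact(A)-mod(\Fact(C))$ via \cite{G}, I.1, 8.5.4, 8.5.7) does not apply here, because that colimit is over $fSets^{op}$ with transition functors given by base change $\vartriangle^!$ along maps of the ground categories, whereas the transition functors of $\cF_{I,C}$, $\cF_{I,D}$ over $\Tw(I)$ involve $\vartriangle_!$ and are not symmetric monoidal for $\otimes^!$, so they are not base-change functors of the form those results handle. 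In the alternative route you invoke ``the compatibility of $C(X)\mapsto C_{X^I}$ with forming module categories,'' i.e. $(A\hbox{-}mod(C(X)))_{X^I}\iso A_{X^I}-mod(C_{X^I})$; this is nowhere established in the paper and, given $D(X)\iso A-mod(C(X))$, is essentially the statement you are trying to prove, so this step is circular.

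The missing idea is the paper's devissage. One constructs a canonical comparison map globally: the counit $L(A)=LR(e_D)\to e_D\iso L(e_C)$ is a map in $CAlg(D(X))$; applying the functor $(-)_{X^I}$ and Remark~\ref{Rem_image_of_A_X^I} (so that $(L(A))_{X^I}\iso L_I(A_{X^I})$) and then adjunction, one gets a morphism $A_{X^I}\to R_IL_I(e_{X^I})$ in $C_{X^I}$. One then proves this single map is an equivalence by checking it is compatible with $!$-restrictions along $X^J\to X^I$ and with factorization, which reduces the claim to the open stratum $\oo{X}{}^I$, where by factorization it is the exterior product of copies of the defining identity $A\iso R(e_D)$ (using that $L^{\boxtimes I}\dashv R^{\boxtimes I}$ is a dual pair in $\underset{i\in I}{\boxtimes}C(X)-mod$). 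Your proposal never produces this comparison map nor the stratumwise verification, so as written neither route closes the argument.
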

\begin{proof}
Let us show that $R_I$ is monadic. Since $R_I$ is continuous, for this it suffices to show that $R_I$ is conservative. This follows from (\cite{Ly}, 2.5.3) by passing to the limit over $(I\to J\to K)\in \Tw(I)$, because each (\ref{functor_alpha_phi^R_for_Lm2.7.3}) is assumed conservative.
 
 The dual pair $(L_I, R_I)$ takes place in $(C_{X^I}, \otimes^!)-mod$ by Lemma~\ref{Lm_A.1.3_for_appendix}. It remains to calculate the monad $R_IL_I(e_{X^I})\in CAlg(C_{X^I}, \otimes^!)$. 
 
 We get an adjoint pair $L_I: CAlg(C_{X^I}, \otimes^!)\leftrightarrows CAlg(D_{X^I}, \otimes^!): R_I$. So, the adjointness map $L(A)=LR(e_D)\to e_D\,\iso\, L(e_C)$ is a map in $CAlg(D(X))$, where $e_C\in C(X)$ is the unit. By functoriality, it gives the morphism $(L(A))_{X^I}\to L_I(e_{X^I})$ in $D_{X^I}$. By Remark~\ref{Rem_image_of_A_X^I}, $(L(A))_{X^I}\,\iso\, L_I(A_{X^I})$ canonically. By adjointness, this gives a morphism $A_{X^I}\to R_IL_I(e_{X^I})$ in $D_{X^I}$. We claim this is an isomorphism. Indeed, it is compatible with $!$-restrictions with respect to $X^J\to X^I$ for a map $I\to J$ in $fSets$, as well as with the factorization. It is clearly an isomorphism over $\oo{X}{}^I$. Our claim follows. 
\end{proof}

\begin{Cor} 
\label{Cor_2.7.6}
Let $C(X)\in CAlg^{nu}(Shv(X)-mod)$. Assume that $m: C^{\otimes 2}(X)\to C(X)$ admits a continuous $C^{\otimes 2}(X)$-linear right adjoint. Then \\
i) for $I\in fSets$ the multiplication $\otimes^!: (C_{X^I})\otimes_{Shv(X^I)} (C_{X^I})\to C_{X^I}$ admits a continuous $(C_{X^I})\otimes_{Shv(X^I)} (C_{X^I})$-linear right adjoint.

\noindent
ii) the multiplication $\otimes^!: \Fact(C)\otimes_{Shv(\Ran)}\Fact(C)\to\Fact(C)$ admits a continuous $\Fact(C)\otimes_{Shv(\Ran)}\Fact(C)$-linear right adjoint. 
\end{Cor}
\begin{proof} i) This follows from Lemma~\ref{Lm_A.1.3_for_appendix}.\\
ii) Combine i) and (\cite{Ly}, 9.2.39). 
\end{proof}

\begin{Cor} 
\label{Cor_2.7.7}
Let $C(X)\in CAlg(Shv(X)-mod)$. Assume that $m: C^{\otimes 2}(X)\to C(X)$ admits a continuous $C^{\otimes 2}(X)$-linear right adjoint, and $1_C: Shv(X)\to C(X)$ has a continuous $Shv(X)$-linear right adjoint. That is, $C(X)$ is relatively rigid over $(Shv(X), \otimes^!)$ in the sense of Definition~\ref{Def_relatively_rigid_G}. Then\\
i) for $I\in fSets$ the product $\otimes^!: (C_{X^I})\otimes_{Shv(X^I)} (C_{X^I})\to C_{X^I}$ admits a continuous $(C_{X^I})\otimes_{Shv(X^I)} (C_{X^I})$-linear right adjoint, and the unit $Shv(X^I)\to C_{X^I}$ admits a continuous $Shv(X^I)$-linear right adjoint. So, $(C_{X^I}, \otimes^!)$ is retaively rigid over $(Shv(X^I), \otimes^!)$.
\\
i) the multiplication $\otimes^!: \Fact(C)\otimes_{Shv(\Ran)}\Fact(C)\to\Fact(C)$ admits a continuous $\Fact(C)\otimes_{Shv(\Ran)}\Fact(C)$-linear right adjoint, and the unit $Shv(\Ran)\to \Fact(C)$ admits a continuous $Shv(\Ran)$-linear right adjoint. So, $(\Fact(C),\otimes^!)$ is relatively rigid over $(Shv(\Ran), \otimes^!)$. 
\end{Cor}
\begin{proof} As in Corollary~\ref{Cor_2.7.6}. 
\end{proof}

\sssec{} In the situation of Corollary~\ref{Cor_2.7.7} it is not true that $(C(X), \otimes^!)$ or $(C_{X^I}, \otimes^!)$ or $(\Fact(C),\otimes^!)$ is rigid.

\ssec{Pro-nilpotence and the basic diagram}
\label{Sect_Pro-nilpotence}

\sssec{} 
\label{Sect_2.8.1}
Let $C(X)\in CAlg^{nu}(Shv(X)-mod)$. The main results of this subsection are Proposition~\ref{Pp_2.8.7} and \ref{Pp_basic_diagram_2.8.10} below.

\sssec{} 
\label{Sect_2.8.2}
Write $fSets_{\le n}\subset fSets$ for the full subcategory of those $I$ with $\mid I\mid\le n$. Set 
$$
\Ran_{\le n}=\underset{I\in (fSets_{\le n})^{op}}{\colim} X^I
$$ 
in $\PreStk$. We have natural maps $\Ran_{\le n}\to \Ran_{\le n+1}\to \Ran$ for $n\ge 1$. Moreover, $\Ran\,\iso\,\underset{n\in\NN}{\colim}\Ran_{\le n}$. Indeed, $\underset{n\in\NN}{\colim} fSets_{\le n}\to fSets$ is fully faithful by (\cite{Ly}, 2.7.16), and essentially surjective by (\cite{Ly}, 13.1.14), so an equivalence. Our claim follows now from \cite{PT}. 

 As in \cite{FG}, let $X^{I,\le n}$ be the union of the images of $\vartriangle: X^J\to X^I$ for all maps $I\to J$ in $fSets$ with $\mid J\mid\le n$. It is closed in $X^I$. Let $X^{I, >n}$ be the complement open subset. Then $fSets^{op}\to \PreStk, I\mapsto X^{I,>n}$ is a functor, and we let $\Ran_{>n}=\underset{I\in fSets^{op}}{\colim} X^{I, >n}$ in $\PreStk$. We have the natural maps $i^n: \Ran_{\le n}\to \Ran$, $j^n: \Ran_{>n}\to \Ran$.  
 
  For a map $I\to J$ in $fSets$ both squares are cartesian
$$
\begin{array}{ccccc}
X^{I,\le n} & \toup{i^{I, n}} &
X^I &\getsup{j^{I, n}} &X^{I, >n}\\
\uparrow && \uparrow && \uparrow\\
X^{J,\le n} & \toup{i^{J, n}} & X^J & \getsup{j^{J, n}} & X^{J, >n},
\end{array}
$$
here we denoted by $i^{I,n }, j^{I, n}$ the corresponding natural maps.

\sssec{}  Define $(j^n)^*: Shv(\Ran)\to Shv(\Ran_{>n})$ as the limit of the functors $(j^{I, n})^*: Shv(X^I)\to Shv(X^{I, >n})$ and apply (\cite{G}, ch. I.1, 2.6.3) to describe its right adjoint $(j^n)_*: Shv(\Ran_{>n})\to Shv(\Ran)$. So, $(j^n)_*$ is obtained by passing to the limit over $I\in fSets$ in the fully faithful functors $(j^{I, n})_*: Shv(X^{I, >n})\to Shv(X^I)$. For $I\in fSets$ the diagram commutes
$$
\begin{array}{ccc}
Shv(\Ran_{>n}) & \toup{(j^n)_*} & Shv(\Ran)\\
\downarrow\lefteqn{\scriptstyle (\vartriangle^I)^!} && \downarrow\lefteqn{\scriptstyle (\vartriangle^I)^!}\\
Shv(X^{I, >n}) & \toup{(j^{I, n})_*} & Shv(X^I)
\end{array}
$$
In particular, $(j^n)_*$ is continuous and fully faithful. It is easy to see that $(j^n)_*$ is $Shv(\Ran)$-linear naturally.

 We have $Shv(\Ran_{\le n})\,\iso\, \underset{I\in fSets^{op}}{\colim} Shv(X^{I,\le n})$, where the transition functors are direct images. The functor $(i^n)_!: Shv(\Ran_{\le n})\to Shv(\Ran)$ is obtained via (\cite{Ly}, 9.2.21). The functor $(i^n)_!$ is obtained by passing to the limit over $I\in fSets$ in the fully faihtful functors $(i^{I, n})_!: Shv(X^{I,\le n})\to Shv(X^I)$, so $(i^n)_!$ is fully faithful by (\cite{Ly}, 2.2.17). 
 
 As in (\cite{FG}, 5.1.3), we get a short exact sequence in $Shv(\Ran)-mod$
$$
Shv(\Ran_{\le n})\leftrightarrows Shv(\Ran)\leftrightarrows Shv(\Ran_{>n})
$$ 
Namely, $(j^n)^*(i^n)_*=0$, and $(j^n)_*Shv(\Ran_{>n})$ identifies with the right orthogonal of $Shv(\Ran_{\le n})$ inside $Shv(\Ran)$. For $K\in Shv(\Ran)$ one has a fibre sequence
\begin{equation}
\label{f_seq_for_Sect_2.8.3}
(i^n)_!(i^n)^!K\to K\to (j^n)_*(j^n)^*K
\end{equation} 
in $Shv(\Ran)$. 

\sssec{} Let $D\in Shv(\Ran)-mod$, which we also view as a sheaf of categories on $\Ran$.
Set $D_{>n}=D\otimes_{Shv(\Ran)} Shv(\Ran_{>n})$.
 Applying $\otimes_{Shv(\Ran)} D$, one gets adjoint pairs $(j^n)^*: D \leftrightarrows D_{>n}: (j^n)_*$ and $(i^n)_!: D\otimes_{Shv(\Ran)} Shv(\Ran_{\le n})\leftrightarrows D: (i^n)^!$ in $Shv(\Ran)-mod$. For $K\in D$ we still have a canonical fibre sequence (\ref{f_seq_for_Sect_2.8.3}) in $D$.  

 For $I\in fSets$ set $D_I=D\otimes_{Shv(\Ran)} Shv(X^I)$, so $D\,\iso\, \underset{I\in fSets^{op}}{\colim} D_I\,\iso\, \underset{I\in fSets}{\lim} D_I$ in $\DGCat_{cont}$, where the transition functors are !-direct image and !-pullbacks respectively. Let $D_{\le n}=D\otimes_{Shv(\Ran)} Shv(\Ran_{\le n})$ then similarly
$$
D_{\le n}=\underset{I\in fSets_{\le n}^{op}}{\colim} D_I.
$$ 
Using \cite{PT}, as in Section~\ref{Sect_2.8.2} we get 
$D\,\iso\,\underset{n\in\NN}{\colim} D_{\le n}\,\iso\, \underset{n\in\NN^{op}}{\lim} D_{\le n}$ taken in $\DGCat_{cont}$. 

\sssec{} Assume we are in the setting of Secton~\ref{Sect_2.8.1}. 

\begin{Lm} 
\label{Lm_2.8.6_now}
If $n\ge 1$ then the chiral symmetric monoidal structure on $\Fact(C)$ is compatible with the localization functor $(i^n)^!: \Fact(C)\to \Fact(C)_{\le n}$ in the sense of (\cite{HA}, 2.2.1.6). 
\end{Lm}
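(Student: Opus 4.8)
The plan is to verify the criterion of (\cite{HA}, 2.2.1.6): it suffices to check that for every morphism $f$ in $\Fact(C)$ with $(i^n)^!(f)$ an equivalence and every $Z\in\Fact(C)$, the morphism $(i^n)^!(f\otimes^{ch}Z)$ is again an equivalence. Since $\otimes^{ch}$ is exact in each variable and $\Fact(C)$ is stable, this is equivalent to the assertion that $\ker((i^n)^!)\subset\Fact(C)$ is a $\otimes^{ch}$-ideal. By the fibre sequence (\ref{f_seq_for_Sect_2.8.3}) together with the relation $(i^n)^!(j^n)_*=0$ (a consequence of the recollement recalled above, $(j^n)_*\Fact(C)_{>n}$ being the right orthogonal of $(i^n)_!\Fact(C)_{\le n}$), one has $\ker((i^n)^!)=(j^n)_*\Fact(C)_{>n}$, i.e. the objects ``supported on $\Ran_{>n}$''. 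Thus I must prove: if $Y\in\Fact(C)$ lies in $\Im((j^n)_*)$, then so does $Y\otimes^{ch}Z$ for every $Z\in\Fact(C)$.

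For this I would unwind the definition of $\otimes^{ch}$ from Section~\ref{Sect_Non-unital symmetric monoidal structure_Fact(C)(Ran)}: $Y\otimes^{ch}Z=u_{!,\Fact(C)}\,\eta\big(j_*j^*(Y\boxtimes Z)\big)$, where $j:(\Ran\times\Ran)_d\hookrightarrow\Ran\times\Ran$ is the open immersion of the disjoint locus, $u:\Ran\times\Ran\to\Ran$ the sum map, and $\eta$ the morphism (\ref{map_beta_chiracl_mult_for_Fact(C)}). Write $\iota=i^n\times\id:\Ran_{\le n}\times\Ran\hookrightarrow\Ran\times\Ran$ and $\jmath=j^n\times\id:\Ran_{>n}\times\Ran\hookrightarrow\Ran\times\Ran$ for the complementary closed and open immersions; as above, $\Im(\jmath_*)$ inside $\Gamma(\Ran\times\Ran,\Fact(C)\boxtimes\Fact(C))$ is $\{W:\iota^!W=0\}$. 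The argument is then in four steps. (i) Since $(i^n)^!Y=0$, the Künneth identity $\iota^!(Y\boxtimes Z)\iso(i^n)^!Y\boxtimes Z$ gives $\iota^!(Y\boxtimes Z)=0$, i.e. $Y\boxtimes Z\in\Im(\jmath_*)$. (ii) $j_*j^*$ preserves $\Im(\jmath_*)$: by base change for the open immersion $j$ one has $j^*\jmath_*\iso\jmath'_*j_0^*$ for the evident maps $j_0,\jmath'$ to and from the intersection $(\Ran_{>n}\times\Ran)_d$, and then $j_*\jmath'_*\iso\jmath_*(j_0)_*$ since $j\circ\jmath'=\jmath\circ j_0$. (iii) $\eta$ is $Shv(\Ran\times\Ran)$-linear, hence commutes with $\iota^!$ and so preserves $\Im(\jmath_*)$; therefore $V:=\eta\big(j_*j^*(Y\boxtimes Z)\big)$ satisfies $\iota^!V=0$. (iv) By base change along the cartesian square given by $u$ and $i^n$, $(i^n)^!u_{!,\Fact(C)}(V)\iso u'_!\,v^!V$ where $v:u^{-1}(\Ran_{\le n})\hookrightarrow\Ran\times\Ran$ and $u':u^{-1}(\Ran_{\le n})\to\Ran_{\le n}$; since $|\mathcal I_1|\le|\mathcal I_1\cup\mathcal I_2|$, the closed subspace $u^{-1}(\Ran_{\le n})$ is contained in $\Ran_{\le n}\times\Ran$, so $v$ factors through $\iota$ and $v^!V=0$. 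Hence $(i^n)^!(Y\otimes^{ch}Z)=0$, as required.

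The main obstacle is assembling the base-change compatibilities of step (iv), and to a lesser extent (ii) and (iii), in the pseudo-indscheme setting: one needs that $u_{!,\Fact(C)}$ (which exists by pseudo-indproperness, cf. Section~\ref{Sect_A.1.11}) satisfies base change against the closed immersion $i^n$ at the level of sheaves of categories, and that $j_*j^*$ and $\eta$ commute with $!$-restriction along the closed immersions arising from the stratification $\Ran_{\le n}\subset\Ran$; all of these are instances of the general formalism of \cite{Ga}, \cite{R}, but the bookkeeping around $u^{-1}(\Ran_{\le n})$ as a sub-pseudo-indscheme of $\Ran\times\Ran$ requires some care. We remark in passing that the same argument shows that $(i^n)^!$ is compatible with the $\star$-product as well, step (ii) being the only place where $\otimes^{ch}$ is used beyond $\star$; we state the result for $\otimes^{ch}$ since that is what is used in the sequel.
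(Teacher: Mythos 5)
Your proposal is correct and follows essentially the same route as the paper: the paper's proof consists precisely of the geometric observation underlying your step (iv), namely that the limit of $(\Ran_{>n}\times\Ran)_d\toup{j^n\times\id}\Ran^2_d\toup{u\comp j}\Ran\getsup{i^n}\Ran_{\le n}$ is empty (equivalently, $u^{-1}(\Ran_{\le n})\subset\Ran_{\le n}\times\Ran$), leaving the recollement identification of $\ker((i^n)^!)$ and the base-change bookkeeping for $j_*j^*$, $\eta$ and $u_{!,\Fact(C)}$ implicit. Your write-up just makes those routine reductions explicit.
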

\begin{proof}
Let $(\Ran_{>n}\times\Ran)_d$ be the preimage of $\Ran^2_d$ under $j^n\times\id: \Ran_{>n}\times\Ran\to \Ran^2$. The limit of the diagram $(\Ran_{>n}\times\Ran)_d\toup{j^n\times\id} \Ran^2_d\toup{u\comp j} \Ran\getsup{i^n} \Ran_{\le n}$ is empty.
\end{proof}

 By Lemma~\ref{Lm_2.8.6_now} combined with (\cite{HA}, 2.2.1.9), $\Fact(C)_{\le n}$ is equipped with a symmetric monoidal structure still denoted $\otimes^{ch}$ such that the functor $(i^n)^!: (\Fact(C),\otimes^{ch})\to (\Fact(C)_{\le n}, \otimes^{ch})$ is non-unital symmetric monoidal. If $n<m\in\NN$ then the similar !-restriction functor $(\Fact(C)_{\le m}, \otimes^{ch})\to (\Fact(C)_{\le n}, \otimes^{ch})$ is non-unital symmetric monoidal. The limit $\underset{n\in\NN^{op}}{\lim} \Fact(C)_{\le n}$ can equivalently be understood in $CAlg^{nu}(Shv(X)-mod)$ or in $Shv(X)-mod$, so the equivalence 
$$
(\Fact(C), \otimes^{ch})\,\iso\, \underset{n\in\NN^{op}}{\lim} (\Fact(C)_{\le n}, \otimes^{ch})
$$
holds in $CAlg^{nu}(Shv(X)-mod)$. We have proved the following.

\begin{Pp} 
\label{Pp_2.8.7}
The $\DG$-category $(\Fact(C), \otimes^{ch})$ is pro-nilpotent in the sense of (\cite{FG}, 4.1.1). \QED 
\end{Pp}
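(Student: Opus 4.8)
The plan is to verify the defining conditions of pro-nilpotence from (\cite{FG}, 4.1.1) against the tower that has already been assembled in the discussion preceding the statement. Recall that, for our purposes, an object $(\cC,\otimes)\in CAlg^{nu}(\DGCat_{cont})$ is pro-nilpotent once it is presented as a sequential limit $\cC\,\iso\,\underset{n\in\NN^{op}}{\lim}\,(\cC_n,\otimes)$ whose transition functors $\cC_{n+1}\to\cC_n$ are non-unital symmetric monoidal and each $(\cC_n,\otimes)$ is nilpotent, i.e.\ some iterated multiplication functor $\cC_n^{\otimes r}\to\cC_n$ takes values in the zero object. So the first step is simply to record that the family $\cC_n=(\Fact(C)_{\le n},\otimes^{ch})$, with the symmetric monoidal structures and symmetric monoidal transition functors furnished above by \lemref{Lm_2.8.6_now} and (\cite{HA}, 2.2.1.9), already realizes $(\Fact(C),\otimes^{ch})$ as such a limit: the equivalence $(\Fact(C),\otimes^{ch})\,\iso\,\underset{n\in\NN^{op}}{\lim}\,(\Fact(C)_{\le n},\otimes^{ch})$ in $CAlg^{nu}(Shv(X)-mod)$ was explained above, using $\Ran\,\iso\,\underset{n}{\colim}\Ran_{\le n}$ and that the forgetful functor $CAlg^{nu}(Shv(X)-mod)\to Shv(X)-mod$ preserves limits.

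The one remaining point, which carries the actual content, is that each $(\Fact(C)_{\le n},\otimes^{ch})$ is nilpotent; concretely I would show that the $(n{+}1)$-fold iterated $\otimes^{ch}$-multiplication functor on $\Fact(C)_{\le n}$ is zero. The plan is to unwind the definition of $\otimes^{ch}$ from \secref{Sect_Non-unital symmetric monoidal structure_Fact(C)(Ran)} (the $j_*j^*$-twisted exterior product followed by $u_{!}$ along the sum map $u$), iterate it $(n{+}1)$ times, and invoke the compatibility of $\otimes^{ch}$ with the localization $(i^n)^!\colon\Fact(C)\to\Fact(C)_{\le n}$ from \lemref{Lm_2.8.6_now}. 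After these manipulations the functor in question is computed by a $j_*j^*$-twisted $!$-pull/$!$-push along a correspondence, and, exactly as in the proof of \lemref{Lm_2.8.6_now}, the relevant limit
$$
(\Ran^{\times(n+1)})_{d}\;\hook{j}\;\Ran^{\times(n+1)}\;\toup{u_{n+1}}\;\Ran\;\getsup{i^n}\;\Ran_{\le n}
$$
is empty: here $u_{n+1}$ is the iterated sum map and $(\Ran^{\times(n+1)})_{d}$ is the open locus of pairwise-disjoint $(n{+}1)$-tuples, and the image under $u_{n+1}$ of such a tuple $(\cI_1,\dots,\cI_{n+1})$ of non-empty finite subsets of $X$ has at least $n{+}1$ elements, hence lies in $\Ran_{>n}$, whose preimage in $\Ran_{\le n}$ is empty. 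Therefore the $(n{+}1)$-fold chiral multiplication on $\Fact(C)_{\le n}$ vanishes, $(\Fact(C)_{\le n},\otimes^{ch})$ is nilpotent, and $(\Fact(C),\otimes^{ch})$ is pro-nilpotent.

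The obstacle I anticipate is not geometric but bookkeeping: one must make precise that the $(n{+}1)$-fold iterate of the $\otimes^{ch}$-structure, as an actual symmetric monoidal functor and not merely at the level of objects, is indeed computed by base change along the displayed correspondence, while keeping track of the open--closed decompositions $Shv(\Ran_{\le n})\leftrightarrows Shv(\Ran)\leftrightarrows Shv(\Ran_{>n})$ and of the associativity/commutativity coherences. A secondary, minor point is to match the precise formulation of pro-nilpotence in (\cite{FG}, 4.1.1) — in particular its indexing and any t-structure hypothesis appearing there — with what is proved here; since the vanishing of an iterated multiplication functor may be checked after applying $\oblv\colon Shv(\Ran)-mod\to\DGCat_{cont}$ and is insensitive to such conventions, this causes no real difficulty.
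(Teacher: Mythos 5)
Your structural half is exactly the paper's: the paper treats the proposition as already established by the preceding discussion, namely \lemref{Lm_2.8.6_now} combined with (\cite{HA}, 2.2.1.9) to produce the symmetric monoidal structures on the $\Fact(C)_{\le n}$ and the symmetric monoidal transition functors, together with the identification of $(\Fact(C),\otimes^{ch})$ with the limit of this tower; and your disjointness/cardinality argument is the same one-line geometry that proves \lemref{Lm_2.8.6_now}.

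The genuine issue is your paraphrase of the definition, which you dismiss as bookkeeping but which changes what has to be checked. Pro-nilpotence in (\cite{FG}, 4.1.1) is not the statement that each term of the tower is nilpotent; its nilpotence clause is a condition on the kernels of the transition functors: one requires $\cO_0=0$, that the transition functors commute with colimits, and that for every $n$ the tensor product on $\cO_n$ vanishes whenever one of the arguments lies in $\ker(\cO_n\to \cO_{n-1})$. Layer-wise nilpotence follows from these clauses by induction (using $\cO_0=0$) but does not imply them: a constant tower on a category with vanishing triple products and nonzero binary product passes your test and fails the cited definition. So what you verify, the vanishing of the $(n{+}1)$-fold $\otimes^{ch}$-multiplication on $\Fact(C)_{\le n}$, is a consequence of pro-nilpotence rather than the definition itself. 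Fortunately your argument repairs this verbatim: $\Fact(C)_{\le 0}=0$ since $\Ran_{\le 0}=\emptyset$ (prepend this term to the paper's tower, which starts at $n=1$); the transition functors are maps in $\DGCat_{cont}$, hence commute with colimits; and an object of $\ker(\Fact(C)_{\le n}\to \Fact(C)_{\le n-1})$, viewed in $\Fact(C)$ via $(i^n)_!$, is supported on $\Ran_{>n-1}$, so its chiral product with any object is, by the same emptiness-of-the-diagram argument as in \lemref{Lm_2.8.6_now}, supported on $\Ran_{>n}$ (a disjoint union of a set with at least $n$ points and a nonempty set has at least $n+1$ points) and therefore dies under $(i^n)^!$. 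With these three clauses recorded, your write-up matches the paper's intended proof.
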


So, (\cite{FG}, Proposition~4.3.3) applies to $(\Fact(C),\otimes^{ch})$ and yields the following (in the notations of \cite{FG}).
\begin{Cor} 
\label{Cor_Koszul_duality}
The Koszul duality functors
$$
C^{ch}: Lie-alg(\Fact(C), \otimes^{ch})\leftrightarrows Com-coalg(\Fact(C), \otimes^{ch}): Prim^{ch}[-1]
$$
are mutually inverse equivalences in $\DGCat_{cont}$. These equivalences identify the full subcategory $\{M\in Lie-alg(\Fact(C), \otimes^{ch})\mid \oblv(M)\in \Fact(C)$ is the extension by zero under $X\to\Ran\}$ with the full subcategory 
$$
Com-coalg^{fact}(\Fact(C), \otimes^{ch})
$$ 
of factorization cocommutative coalgebras in $(\Fact(C), \otimes^{ch})$ in the sense of \cite{FG}. 
\QED
\end{Cor}

\sssec{} For the rest of Section~\ref{Sect_Pro-nilpotence} we will use some notions and notations from (\cite{G}, chapter IV.2). Namely, let $\Sigma$ be the groupoid of finite non-empty sets, let $\cP\in\Vect^{\Sigma}$ be a reduced operad (in the sense of \cite{G}, IV.2, 1.1.2), and $Q=\cP^{\vee}$ be the Koszul dual cooperad (in the sense of \cite{G}, IV.2, 2.3.1).  

 Recall that 
$\id: (\Fact(C), \otimes^{ch})\to (\Fact(C),\star)$ is right-lax non-unital symmetric monoidal. So, it yields the functors 
\begin{equation}
\label{functor_cP-alg(Fact(C))}
\cP-alg^{ch}(\Fact(C))\to \cP-alg^{\star}(\Fact(C))
\end{equation}
and
\begin{equation}
\label{functor_cQ-coalg(Fact(C))}
Q-coalg^{\star, in}(\Fact(C))\to Q-coalg^{ch}(\Fact(C))
\end{equation} 
commuting with the corresponding oblivion functors to $\Fact(C)$. Here we have denoted by $Q-coalg^{\star, in}(\Fact(C))$ the category of $Q$-coalgebras in $(\Fact(C), \star)$ with respect to the action of $\Vect^{\Sigma}$ on $\Fact(C)$ such that $V\in \Vect^{\Sigma}$ sends $c\in\Fact(C)$ to 
$$
\underset{I\in\Sigma}{\colim} (V(I)\otimes x^{\otimes^{\star} I})
$$ 
taken in $\Fact(C)$. The superscript $in$ stands for \select{ind-nilpotent} according to the conventions of (\cite{G}, IV.2, 2.2.1). This superscript is omitted for $(\Fact(C),\otimes^{ch})$, as the latter category is pro-nilpotent. 

 The categories $\cP-alg^{ch}(\Fact(C))$ and $\cP-alg^{\star}(\Fact(C))$ are presentable by (\cite{Ly}, 9.4.12). The functor (\ref{functor_cP-alg(Fact(C))}) preserves limits, so admits a left adjoint denoted $\Ind_{\cP}^{\star\to ch}$. 
 
\begin{Pp} 
\label{Pp_basic_diagram_2.8.10}
The diagram of functors canonically commutes
$$
\begin{array}{ccc}
\cP-alg^{ch}(\Fact(C)) & \toup{coPrym_{\cP}^{enh}} & Q-coalg^{ch}(\Fact(C)\\
\uparrow\lefteqn{\scriptstyle \Ind_{\cP}^{\star\to ch}} && \uparrow\lefteqn{\scriptstyle (\ref{functor_cQ-coalg(Fact(C))})}\\
\cP-alg^{\star}(\Fact(C)) & \toup{coPrym_{\cP}^{enh, in}} & Q-coalg^{\star, in}(\Fact(C)),
\end{array}
$$
where $coPrym_{\cP}^{enh}$ and $coPrym_{\cP}^{enh, in}$ denote the Koszul duality functors from (\cite{G}, IV.2, 2.4.3).
\end{Pp}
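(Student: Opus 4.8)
\textbf{Proof strategy for Proposition~\ref{Pp_basic_diagram_2.8.10}.} The plan is to reduce the statement to the general Koszul-duality formalism of (\cite{G}, IV.2) by checking that the relevant adjunctions and the right-lax symmetric monoidal functor $\id: (\Fact(C), \otimes^{ch})\to (\Fact(C), \star)$ interact with the $coPrym$ construction in the expected way. Recall that by Proposition~\ref{Pp_2.8.7} the category $(\Fact(C), \otimes^{ch})$ is pro-nilpotent, so $coPrym_{\cP}^{enh}$ is the genuine Koszul-duality functor of (\cite{G}, IV.2, 2.4.3) with no ind-nilpotency restriction, while on the $\star$-side one must work with the ind-nilpotent variant $coPrym_{\cP}^{enh, in}$. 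Both vertical functors in the diagram are defined: the left one $\Ind_{\cP}^{\star\to ch}$ is the left adjoint of (\ref{functor_cP-alg(Fact(C))}) constructed just before the statement, and the right one is (\ref{functor_cQ-coalg(Fact(C))}), induced by the same right-lax functor on $Q$-coalgebras.

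\emph{Step 1.} First I would pass to right adjoints and prove the \emph{dual} diagram commutes, i.e. that
$$
\begin{array}{ccc}
Q-coalg^{ch}(\Fact(C)) & \toup{primitives} & \cP-alg^{ch}(\Fact(C))\\
\downarrow\lefteqn{\scriptstyle (\ref{functor_cQ-coalg(Fact(C))})} && \downarrow\lefteqn{\scriptstyle (\ref{functor_cP-alg(Fact(C))})}\\
Q-coalg^{\star, in}(\Fact(C)) & \toup{primitives} & \cP-alg^{\star}(\Fact(C))
\end{array}
$$
commutes, where the horizontal arrows are the inverse Koszul-duality functors $Prim_{\cP}^{enh}$ (resp. its ind-nilpotent analogue), which are equivalences in the pro-nilpotent case and fully faithful on ind-nilpotent objects in general. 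This is the more tractable direction because both vertical functors there are the \emph{strict} ones induced by a right-lax symmetric monoidal functor (they commute with the oblivion functors to $\Fact(C)$), and the $Prim$ functors are built out of the cofree-coalgebra and bar-type constructions which are manifestly natural in right-lax symmetric monoidal functors. Concretely, $Prim_{\cP}^{enh}(Q$-coalgebra$)$ is computed as a totalization/colimit built from the cooperad $Q$ and the symmetric monoidal product; since $\id$ is right-lax symmetric monoidal it maps the $\otimes^{ch}$-version of this construction to the $\star$-version, giving the required commutativity. The passage from the dual diagram back to the original one is then formal: the left adjoints $\Ind_{\cP}^{\star\to ch}$ and $coPrym_{\cP}^{enh}$ of the arrows in the dual diagram assemble into a commuting square by uniqueness of adjoints (using that $coPrym$ is left adjoint to $Prim$, cf. \cite{G}, IV.2, 2.4.3).

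\emph{Step 2.} The subtle point — and what I expect to be the main obstacle — is the bookkeeping of ind-nilpotency. On the $(\Fact(C), \star)$ side the $Q$-coalgebras are only \emph{ind-nilpotent}, so $coPrym_{\cP}^{enh, in}$ lands in $Q-coalg^{\star, in}$ and its universal property is with respect to that subcategory, whereas on the $(\Fact(C), \otimes^{ch})$ side pro-nilpotence makes all $Q$-coalgebras automatically ind-nilpotent and $coPrym_{\cP}^{enh}$ is an honest equivalence. I would handle this by first checking that the functor (\ref{functor_cP-alg(Fact(C))}) composed with $coPrym_{\cP}^{enh}$ automatically produces ind-nilpotent $Q$-coalgebras — this follows because $coPrym$ of anything is built as a colimit of nilpotent pieces, and (\ref{functor_cQ-coalg(Fact(C))}) preserves this filtration — so that the top-then-right and the left-then-bottom composites both genuinely land in $Q-coalg^{\star, in}$ and can be compared there. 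The compatibility of the two bar-constructions with the right-lax structure of $\id$, together with the pro-nilpotent filtration $\Fact(C)\,\iso\,\underset{n}{\lim}\,\Fact(C)_{\le n}$ from Proposition~\ref{Pp_2.8.7} (which lets one reduce many identities to each truncation $\Fact(C)_{\le n}$, where the symmetric monoidal categories are honestly nilpotent and the formalism of \cite{FG} applies directly), then yields the canonical commutativity of the original square, completing the proof.
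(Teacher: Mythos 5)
There is a genuine gap, and it sits at the heart of your Step 1. The functor (\ref{functor_cQ-coalg(Fact(C))}) goes from $Q-coalg^{\star, in}(\Fact(C))$ \emph{to} $Q-coalg^{ch}(\Fact(C))$: it is the functor induced by viewing $\id: (\Fact(C),\star)\to (\Fact(C),\otimes^{ch})$ as colax (equivalently, by the right-lax structure on $\id$ in the other direction), and it is the only coalgebra-level functor that commutes with the oblivion functors to $\Fact(C)$. Your ``dual diagram'' draws an arrow $Q-coalg^{ch}(\Fact(C))\to Q-coalg^{\star,in}(\Fact(C))$ labelled by (\ref{functor_cQ-coalg(Fact(C))}); no such lax-induced functor exists in that direction. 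Consequently the square you propose to prove is not the square of right adjoints of the original one: on the algebra side you correctly replace $\Ind_{\cP}^{\star\to ch}$ by its right adjoint (\ref{functor_cP-alg(Fact(C))}), but on the coalgebra side you keep (or misdirect) the original arrow. Passing to adjoints is only a formal move if \emph{all four} arrows are replaced by their adjoints; since (\ref{functor_cQ-coalg(Fact(C))}) is not shown to admit a right adjoint (and its right adjoint, if it exists, is some abstract functor with no lax-monoidal description), what you actually get from a commuted ``mixed'' square is a mate/Beck--Chevalley transformation whose invertibility is precisely the statement to be proved. Likewise, the assertion that the $Prim$/cobar functors are ``manifestly natural'' under the lax identity is exactly the nontrivial content, not a formality; your Step 2 is also confused about the shape of the diagram, since both composites land in $Q-coalg^{ch}(\Fact(C))$, not in $Q-coalg^{\star,in}(\Fact(C))$, so there is no issue of ``landing in'' the ind-nilpotent category to begin with.

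For comparison, the paper's proof is a direct application of the proof of (\cite{FG}, 6.1.2): one applies (\cite{FG}, Lemma~6.2.6) to the adjoint pair $\id: \Fact(C)\leftrightarrows \Fact(C): \id$ relating $(\Fact(C),\otimes^{ch})$ and $(\Fact(C),\star)$. The substance of that lemma is an argument with the \emph{left} adjoints themselves: $coPrym$ is computed by a bar construction, all functors in the square commute with geometric realizations, and the two composites are identified on free $\cP$-algebras, after which one resolves an arbitrary $\star$-algebra by free ones; pro-nilpotence (Proposition~\ref{Pp_2.8.7}) is what allows the ind-nilpotent superscript to be dropped on the chiral side. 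That reduction to free algebras (or some substitute for it) is the missing idea in your proposal; without it, neither your adjoint-passing step nor your appeal to the truncations $\Fact(C)_{\le n}$ closes the argument.
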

\begin{proof}
The proof of (\cite{FG}, 6.1.2) applies in our situation without changes, that is, we apply (\cite{FG}, Lemma~6.2.6) to the adjoint pair $\id: \Fact(C)\leftrightarrows \Fact(C): \id$.
\end{proof}

\ssec{Filtration on a factorization algebra $\Fact(A)$}
\label{Sect_Filtration_on_Fact(A)}

\sssec{} Let $C\in CAlg^{nu}(\DGCat_{cont})$ and $A\in CAlg^{nu}(C)$. The purpose of this section is, given a filtration on $A$, to get an induced filtration on $\Fact(A)\in\Fact(C)$, where $C(X)=C\otimes Shv(X)$, and calculate its associate graded. 

\sssec{} Let $C^{Fil, \ge 0}=\Fun(\ZZ_+, C)$, here $\ZZ_+=\{0,1,\ldots\}$ is viewed as an ordered set, hence a category. Let $C^{Fil}=\Fun(\ZZ, C)$, where $\ZZ$ is viewed as an ordered set, hence a category.
We have the functor $C^{Fil,\ge 0}\subset C^{Fil}$ given by extending by zero from $\ZZ_+$ to $\ZZ$. Write $C^{gr}=\Fun(\ZZ^{\Spc}, C)$, here $\ZZ^{\Spc}$ is the space underlying $\ZZ$. The functor $ass-gr: C^{Fil}\to C^{gr}$ sends an object $(V_n)_{n\in\ZZ}$ to the object $n\mapsto coFib(V_{n-1}\to V_n)$. The map $ass-gr$ is a morphism in $\DGCat_{cont}$. 

\sssec{} As in (\cite{G}, vol.2, ch. 5, Section 1.3), $C^{Fil}, C^{gr}\in CAlg^{nu}(\DGCat_{cont})$ via the Day convolution. Namely, for $V,W\in C^{Fil}$ we get
$$
(V\times W)_n=\underset{n_1+n_2\le n}{\colim} V_{n_1}\otimes W_{n_2}
$$
For $V,W\in C^{gr}$ we get
$$
(V\otimes W)_n=\underset{n_1,n_2\in\ZZ, n_1+n_2=n}{\oplus} V_{n_1}\otimes W_{n_2}
$$
By (\cite{G}, vol.2, ch. 5, 1.3.5), the functor $ass-gr$ is non-unital symmetric monoidal. 

 In particular, for $A\in C^{Fil}, I\in fSets$ we have 
$$
ass-gr(A^{\otimes I})\,\iso\, (ass-gr(A))^{\otimes I}
$$ 
in $C^{gr}$ canonically. Besides, $ass-gr$ induces a functor $ass-gr: CAlg^{nu}(C^{Fil})\to CAlg^{nu}(C^{gr})$.

\sssec{} The functor $\oblv_{Fil}: C^{Fil}\to C$ sends $f$ to $\underset{n\in \ZZ}{\colim} f(n)$. This functor is non-unital symmetric monoidal. In particular, it induces a functor $CAlg^{nu}(C^{Fil})\to CAlg^{nu}(C)$. 

\sssec{} Let $A\in CAlg^{nu}(C^{Fil})$, set $B=\oblv_{Fil}(A)\in CAlg^{nu}(C)$. We will get a filtration on $\Fact(B)\in\Fact(C)$ and calculate its associated graded.

 Note that $ass-gr(A)\in CAlg^{nu}(C^{gr})$. By the above, we have the $Shv(\Ran)$-module categories $\Fact(C^{Fil})$ and $\Fact(C^{gr})$. Since $ass-gr: C^{Fil}\to C^{gr}$ is non-unital symmetric monoidal, it induces by functoriality the functor
$$
ass-gr: \Fact(C^{Fil})\to \Fact(C^{gr})
$$ 
As in Section~\ref{Sect_Factorization algebras in Fact(C)},
$$
\Fact(A)\,\iso\, \underset{(J\to K)\in \cTw(fSets)}{\colim} A^{\otimes J}\otimes \omega_{X^K}
$$ 
taken in $\Fact(C^{Fil})$. Since $ass-gr: C^{Fil}\to C^{gr}$ is non-unital symmetric monoidal, we get
$$
ass-gr(\Fact(A))\,\iso\, \Fact(ass-gr(A))
$$
in $\Fact(C^{gr})$ by functoriality.

\sssec{} Assume now there is $n>0$ such that $A_n\iso A_{n+1}\iso A_{n+2}\iso\ldots$ and so on are isomorphisms, so $B\,\iso\, A_n$.  

 As above $B^{\otimes J}\in C^{Fil}$ naturally, so $B^{\otimes J}\otimes\omega_{X^J}\in (C^{\otimes J}\otimes Shv(X^K))^{Fil}$ naturally. For a map (\ref{map_in_cTw(fSets)}) in $\cTw(fSets)$ the multiplication morphism $B^{\otimes J_1}\to B^{\otimes J_2}$ is a map in $C^{Fil}$, so the functor
$$
\cF_{\Ran, B}: \cTw(fSets)\to \Fact(C), (J\to K)\mapsto  B^{\otimes J}\otimes\omega_{X^J}
$$
is lifted to a functor
$$
\cF_{\Ran, B}^{Fil}: \cTw(fSets)\to \Fact(C)^{Fil}
$$
by the above.\footnote{The idea of this construction is due to Joakim Faergeman, we are grateful to him for his help.}

 The composition $\cTw(fSets)\toup{\cF_{\Ran, B}^{Fil}}\Fact(C)^{Fil}\toup{ass-gr} \Fact(C)^{gr}$ sends $(J\to K)$ to 
$$
(ass-gr(B))^{\otimes I}\otimes\omega_{X^K} 
$$ 
So, it coinsides with 
$$
\cF_{\Ran, ass-gr(B)}: \cTw(fSets)\to \Fact(C)^{gr}.
$$

Passing to the colimits over $\cTw(fSets)$ we get
\begin{multline*}
ass-gr(\Fact(B))\,\iso\,
\underset{(J\to K)\in \cTw(fSets)}{\colim} ass-gr\comp \cF_{\Ran, A}^{Fil}\,\iso\\\underset{(J\to K)\in \cTw(fSets)}{\colim} \cF_{\Ran, ass-gr(B)}=\Fact(ass-gr(B))
\end{multline*}
in $\Fact(C)^{gr}$.  In particular, 
$$
ass-gr(\Fact(B))\,\iso\, \Fact(ass-gr(B))
$$
in $\Fact(C)$. 

\ssec{Special properties of $\Fact(C)$ in the constructible context}
\label{Sect_special_constr_context}

\sssec{}  For Section~\ref{Sect_special_constr_context} work in the constructible context. Let $C\in CAlg^{nu}(\DGCat_{cont})$ and $C(X)=C\otimes Shv(X)$. Then $Shv(\Ran)$ is equipped with the non-unital $\otimes$-symmetric monoidal structure defined in Section~\ref{Sect_F.2.6}. For $I\in fSets$ and the map $\vartriangle^I: X^I\to\Ran$ the functor $(\vartriangle^I)^*: Shv(\Ran)\to Shv(X^I)$ is defined in Section~\ref{Sect_F.2.2}, and $(\vartriangle^I)^*: (Shv(\Ran),\otimes)\to (Shv(X^I),\otimes)$ is non-unital symmetric monoidal by Lemma~\ref{Lm_F.2.7}

Our $\Fact(C)$ is naturally an object of $(Shv(\Ran), \otimes)-mod$. Indeed, 
$$
\Fact(C)\,\iso\, \underset{(J\to K)\in\cTw(fSets)}{\colim} C^{\otimes J}\otimes Shv(X^K).
$$ 
For each $(J\to K)\in \cTw(fSets)$ view $C^{\otimes J}\otimes Shv(X^K)\in (Shv(\Ran), \otimes)-mod$ via the monoidal functor 
$$
(\vartriangle^K)^*: (Shv(\Ran),\otimes)\to (Shv(X^K),\otimes).
$$ 

 For a map (\ref{map_in_cTw(fSets)}) in $\cTw(fSets)$ the transition functor 
$$
C^{\otimes J_1}\otimes Shv(X^{K_1})\to C^{\otimes J_2}\otimes Shv(X^{K_2})
$$ 
is a map in $(Shv(\Ran),\otimes)-mod$. Indeed, $(\vartriangle)_!: Shv(X^{K_1})\to Shv(X^{K_2})$ is a map in $(Shv(X^{K_2}),\otimes)-mod$. So, the colimit in the definition of $\Fact(C)$ can be calculated in $(Shv(\Ran), \otimes)-mod$.

\begin{Lm} i) For $I\in fSets$ one has canonically
$$
\Fact(C)\otimes_{(Shv(\Ran),\otimes)} Shv(X^I)\,\iso\, \Fact(C)\otimes_{Shv(\Ran)} Shv(X^I)=C_{X^I}.
$$
ii) The functor $\vartriangle_*=\vartriangle_!: C_{X^I}\to \Fact(C)$ has a left adjoint $\vartriangle^*: \Fact(C)\to C_{X^I}$.
\end{Lm}
\begin{proof}
i) Using Lemma~\ref{Lm_functor_cF'_is_equivalence}, we get
\begin{multline*} 
\Fact(C)\otimes_{(Shv(\Ran),\otimes)} Shv(X^I)\,\iso\, \underset{(J\to K)\in\cTw(fSets)}{\colim} C^{\otimes J}\otimes Shv(X^K)\otimes_{(Shv(\Ran),\otimes)} Shv(X^I)\\
\iso\, \underset{(J\to K)\in\cTw(fSets)}{\colim} C^{\otimes J}\otimes Shv(X^K\times_{\Ran} X^I)\,\iso\\ \underset{(J\to K)\in\cTw(fSets)}{\colim} C^{\otimes J}\otimes Shv(X^K)\otimes_{Shv(\Ran)} Shv(X^I),
\end{multline*}
where we used Theorem~\ref{Thm_Kunneth_formula} to get the last isomorphism. The latter expression identifies with $\Fact(C)\otimes_{Shv(\Ran)} Shv(X^I)$.

\medskip\noindent
ii) The adjoint pair $(\vartriangle^I)^*: Shv(\Ran)\leftrightarrows Shv(X^I): (\vartriangle^I)_*$ takes place in $(Shv(\Ran),\otimes)-mod$. Tensoring by $\Fact(C)$, we get an adjoint pair
$$
(\vartriangle^I)^*: \Fact(C)\leftrightarrows \Fact(C)\otimes_{(Shv(\Ran),\otimes)} Shv(X^I): (\vartriangle^I)_*,
$$
where the right adjoint identifies with $(\vartriangle^I)_*: C_{X^I}\to \Fact(C)$ by i).
\end{proof}

\section{Factorization categories attached to cocommutative coalgebras}
\label{Sect_Fact_cat_for_cocom_coalg}

\ssec{} In this section we associate to $C(X)\in CoCAlg^{nu}(Shv(X)-mod)$ (under strong additional assumptions) a factorization sheaf of categories $\Fact^{co}(C)$ on $\Ran$. This is a dual version of the construction from Section~\ref{Sect_Fact_cat_for_com_algebras}. We expect that it produces a factorization sheaf of categories under some weak assumptions, we formulate the corresponding questions.

\sssec{} For a map $(J\toup{\phi} K)$ in $fSets$ recall our notation $C^{\otimes\phi}=\underset{k\in K}{\boxtimes} C^{\otimes J_k}(X)$. Denote by $\com: C^{\otimes K}(X)\to C^{\otimes J}(X)$ the comultiplication map for $C(X)$. 
Denote by
$$
\cG_{\Ran, C}: \cTw(fSets)^{op}\to Shv(\Ran)-mod
$$
the functor sending $(J\toup{\phi} K)$ to $C^{\otimes\phi}$, and sending the map (\ref{map_in_cTw(fSets)}) to the composition
$$
\underset{k\in K_2}{\boxtimes} C^{\otimes (J_2)_k}(X)\toup{\com} \underset{k\in K_2}{\boxtimes} C^{\otimes (J_1)_k}(X)\toup{\vartriangle^!} \underset{k\in K_1}{\boxtimes} C^{\otimes (J_1)_k}(X),
$$
where the first map is the product of comultiplications along $(J_1)_k\to (J_2)_k$ for $k\in K_2$, and the second map is $\vartriangle^!$ for $\vartriangle: X^{K_1}\to X^{K_2}$.  

 Set 
$$
\Fact^{co}(C)=\mathop{\lim}\limits_{\cTw(fSets)^{op}} \cG_{\Ran, C}
$$

\sssec{} Let $I\in fSets$. Consider the composition 
$$
Tw(I)^{op}\to \cTw(fSets)^{op}\toup{\cG_{\Ran,C}} Shv(\Ran)-mod,
$$
where the first functor sends $(I\to J\to K)$ to $(J\to K)$. We may view this composition as taking values in $Shv(X^I)-mod$ naturally. Denote by
$$
\cG_{I,C}: Tw(I)^{op}\to Shv(X^I)-mod
$$
the functor so obtained. Set
$$
C_{X^I, co}=\underset{Tw(I)^{op}}{\lim} \, \cG_{I,C}
$$
in $Shv(X^I)-mod$.

\sssec{Example} For $I=\{1,2\}$ the category $C_{X^I, co}$ is defined by the cartesian square in $Shv(X^I)-mod$
$$
\begin{array}{ccc}
C^{\otimes 2}(X) & \getsup{\vartriangle^!} & \underset{i\in I}{\boxtimes} C(X)\\
\uparrow\lefteqn{\scriptstyle \com}&& \uparrow\\
C(X) & \gets & C_{X^I, co}
\end{array}
$$

\begin{Lm} For $I\in fSets$ one has canonically in $Shv(X^I)-mod$
$$
\Fact^{co}(C)\otimes_{Shv(\Ran)} Shv(X^I)\,\iso\, C_{X^I, co}.
$$
\end{Lm}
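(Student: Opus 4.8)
The plan is to follow the proof of Lemma~\ref{Lm_2.1.2_about_Fact(C)}, systematically replacing colimits by limits. Write $\vartriangle^I\colon X^I\to\Ran$ for the structure map. Since $\Ran$ and $X^I$ are $1$-affine (Lemma~\ref{Lm_2.2.8_Ran_is_1-affine}), proving the statement amounts to identifying $(\vartriangle^I)^!\Fact^{co}(C)\iso\Fact^{co}(C)\otimes_{Shv(\Ran)}Shv(X^I)$ with $C_{X^I, co}=\underset{Tw(I)^{op}}{\lim}\,\cG_{I,C}$.

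The point that needs care is that $-\otimes_{Shv(\Ran)}Shv(X^I)$, being base change along the ring map $Shv(\Ran)\to Shv(X^I)$, is a left adjoint and hence commutes with colimits but not with the limit defining $\Fact^{co}(C)$. To get around this I would pass to left adjoints in the diagram $\cG_{\Ran, C}$: each of its transition functors factors through a functor $\vartriangle^!$ attached to a diagonal, which has left adjoint $\vartriangle_!$, and through the comultiplication $\com$, which admits a left adjoint under our hypotheses on $C(X)$. So $\cG_{\Ran, C}$ gives rise to a functor $\cG^L_{\Ran, C}\colon\cTw(fSets)\to Shv(\Ran)-mod$ with $\Fact^{co}(C)=\underset{\cTw(fSets)^{op}}{\lim}\,\cG_{\Ran, C}\iso\underset{\cTw(fSets)}{\colim}\,\cG^L_{\Ran, C}$; likewise $C_{X^I, co}\iso\underset{Tw(I)}{\colim}\,\cG^L_{I, C}$. (In fact, passing to left adjoints throughout turns $\cG_{\Ran, C}$ into $\cF_{\Ran, C}$ and $\cG_{I, C}$ into $\cF_{I, C}$ for $C(X)$ regarded, via the left adjoint of $\com$, as an object of $CAlg^{nu}(Shv(X)-mod)$; one could equally deduce the lemma from the results of Section~\ref{Sect_Fact_cat_for_com_algebras}.)

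With $\Fact^{co}(C)$ written as a colimit, $-\otimes_{Shv(\Ran)}Shv(X^I)$ passes inside it, and the remainder is exactly the combinatorial heart of Lemma~\ref{Lm_2.1.2_about_Fact(C)}: using $X^K\times_\Ran X^I\iso\underset{I\to\bar K\gets K}{\colim}X^{\bar K}$ and base change for $!$-pullback and $!$-pushforward along the diagonals, one rewrites $\underset{\cTw(fSets)}{\colim}\bigl(\cG^L_{\Ran, C}\otimes_{Shv(\Ran)}Shv(X^I)\bigr)$ as a colimit over the category $\cE$ of diagrams $(I\to\bar K\gets K\gets J)$ in $fSets$; this diagram factors through the projection $\cE\to\cE_0$ onto the full subcategory where $K\to\bar K$ is an isomorphism, that projection is cofinal (the inclusion $\cE_0\subset\cE$ is a left adjoint to it), and the functor $q\colon Tw(I)\to\cE_0$, $(I\to J\to K)\mapsto(I\to K\gets J)$, is cofinal, so the colimit reduces to $\underset{Tw(I)}{\colim}\,\cG^L_{I, C}\iso C_{X^I, co}$.

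Finally one checks that the equivalence so obtained is functorial in $I$ — compatibility with $!$-restriction along $X^{I'}\hookrightarrow X^I$ for a surjection $I\to I'$ — which holds because every ingredient (the base-change formulas, the two cofinality statements, and the passage to left adjoints) is natural in $I$; this upgrades the equivalences into an equivalence of sheaves of categories on $\Ran$. The genuinely delicate step is the reduction of the defining limit to a colimit in the second paragraph; once that is in place, the argument is a transcription of the proof of Lemma~\ref{Lm_2.1.2_about_Fact(C)}.
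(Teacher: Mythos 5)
There is a genuine gap: your reduction of the defining limit to a colimit relies on the comultiplication $\com\colon C(X)\to C^{\otimes 2}(X)$ admitting a ($Shv(X)$-linear continuous) left adjoint, but the lemma is stated for an arbitrary $C(X)\in CoCAlg^{nu}(Shv(X)-mod)$ and no such hypothesis is available. The diagonal maps in $\cG_{\Ran, C}$ do have left adjoints $\vartriangle_!$, but the comultiplication part in general does not, so the functor $\cG^L_{\Ran, C}$ you want to pass to need not exist, and the identification $\Fact^{co}(C)\iso\underset{\cTw(fSets)}{\colim}\,\cG^L_{\Ran, C}$ breaks down. Moreover, in the special case where $\com$ does admit a left adjoint, the paper already observes (Remark~\ref{Rem_3.1.11}) that $\Fact^{co}(C)\iso\Fact(C)$ and the statement follows from Lemma~\ref{Lm_2.1.2_about_Fact(C)}; so your argument, where it is valid, only recovers that special case rather than the general lemma.

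The step you flagged as delicate is handled in the paper without any passage to adjoints: by Lemma~\ref{Lm_2.1.11}, $Shv(X^I)$ is dualizable as an object of $Shv(\Ran)-mod$, hence $-\otimes_{Shv(\Ran)}Shv(X^I)$ commutes with the limit over $\cTw(fSets)^{op}$ defining $\Fact^{co}(C)$. After that one runs exactly the combinatorics of Lemma~\ref{Lm_2.1.2_about_Fact(C)}, but at the level of limits: base change along the diagonals rewrites the term-wise tensor products as $\vartriangle^{(I/\bar K)}_!\vartriangle^{(K/\bar K)!}C^{\otimes\phi}$ indexed by $\cE^{op}$, the functor $l_0\colon\cE\to\cE_0$ (having the left adjoint $r_0$) and the functor $q\colon Tw(I)\to\cE_0$ are cofinal, and the resulting diagram over $Tw(I)^{op}$ is precisely $\cG_{I,C}$, giving $C_{X^I, co}$. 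If you replace your second paragraph by this dualizability argument, the rest of your outline (including the compatibility with restrictions along $X^{I'}\hookrightarrow X^I$) matches the paper's proof.
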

\begin{proof}
By Lemma~\ref{Lm_2.1.11},  $Shv(X^I)$ is dualizable in $Shv(\Ran)-mod$. So,
$$
\Fact^{co}(C)\otimes_{Shv(\Ran)} Shv(X^I)\,\iso\, \underset{(J\toup{\phi} K)\in\cTw(fSets)^{op}}{\lim} C^{\otimes\phi}\otimes_{Shv(\Ran)} Shv(X^I)
$$
in $Shv(X^I)-mod$. Now argue as in Lemma~\ref{Lm_2.1.2_about_Fact(C)}. 
 Recall the category $\cE$ defined in the proof of Lemma~\ref{Lm_2.1.2_about_Fact(C)}. The latter limit identifies with
\begin{equation}
\label{limit_inside_Lm_3.1.3}
\underset{(I\to \bar K\gets K\getsup{\phi} K)\in \cE^{op}}{\lim} \vartriangle^{(I/\bar K)}_!\vartriangle^{(K/\bar K)!} C^{\otimes\phi}
\end{equation}
Write $\bar\phi$ for the composition $J\toup{\phi} K\to \bar K$ for an object of $\cE$. Recall that $\vartriangle^{(K/\bar K)!} C^{\otimes\phi}\,\iso\, C^{\bar\phi}$. 

Recall the adjoint pair 
$$
r_0: \cE_0\leftrightarrows \cE: l_0
$$ 
from Lemma~\ref{Lm_2.1.2_about_Fact(C)}. The diagram defining (\ref{limit_inside_Lm_3.1.3}) is the composition of $l_0^{op}: \cE^{op}\to \cE_0^{op}$ with another functor, and $l_0$ is cofinal. Thus, (\ref{limit_inside_Lm_3.1.3}) identifies with
$$
\underset{(I\to K\getsup{\phi} J)\in \cE_0^{op}}{\lim} \vartriangle^{(I/ K)}_! C^{\otimes\phi}
$$
in $Shv(X^I)-mod$. 

 Recall the functor $q: Tw(I)\to \cE_0$ from Lemma~\ref{Lm_2.1.2_about_Fact(C)}. Since $q$ is cofinal, the latter limit identifies with
$$
\underset{(I\to J\to K)\in Tw(I)^{op}}{\lim} \vartriangle^{(I/ K)}_! C^{\otimes\phi},
$$
and the corresponding diagram identifies with the functor $\cG_{I, C}$ as required.
\end{proof}

\sssec{} Think of $\Fact^{co}(C)$ as a sheaf of categories on $\Ran$. Denote by 
$$
\cG_{fSets, C}: fSets\to Shv(\Ran)-mod
$$ 
the functor sending $I$ to $\Gamma(X^I, \Fact^{co}(C))\,\iso\, C_{X^I, co}$, and $(J\to K)$ to the restriction 
$$
\vartriangle^{(J/K)!}: \Gamma(X^J, \Fact^{co}(C))\to \Gamma(X^K, \Fact^{co}(C))
$$
for $\vartriangle^{(J/K)}: X^K\to X^J$. Let $\cG^L_{fSets, C}: fSets^{op}\to Shv(\Ran)-mod$ be obtained from $\cG_{fSets, C}$ by passing to the left adjoints in $Shv(\Ran)-mod$.
So, 
$$
\Fact^{co}(C)\,\iso\; \underset{fSets}{\lim} \cG_{fSets, C}\;\iso\; \underset{fSets^{op}}{
\colim} \cG^L_{fSets, C}
$$

\sssec{} Now we discuss the factorization structure on $\Fact^{co}(C)$ under additional assumptions. Let $\phi: I\to I'$ be a map in $fSets$. The factorization structure on $\Fact^{co}(C)$ amounts to the equivalences
\begin{equation}
\label{fact_iso_for_C_co}
C_{X^I, co}\mid_{X^I_{\phi, d}}\,\iso\,(\underset{i'\in I'}{\boxtimes} C_{X^{I_{i'}}, co})\mid_{X^I_{\phi, d}}
\end{equation}
together with the compatibilites as in Section~\ref{Sect_Fact_cat_for_com_algebras}.    

 By Section~\ref{Sect_B.1.1}, $Shv(X^I_{\phi, d})$ is canonically self-dual in $Shv(X^I)-mod$. 
So, the LHS of (\ref{fact_iso_for_C_co}) identifies with
$$
\underset{(I\to J\toup{\varphi} K)\in Tw(I)^{op}}{\lim} C^{\otimes\varphi}\otimes_{Shv(X^I)} Shv(X^I_{\phi, d})
$$ 
Recall that the full embedding $Tw(I)_{\phi}\subset Tw(I)$ from Section~\ref{Sect_2.1.11_now} is zero-cofinal. Using Lemma~\ref{Lm_3.2.4_from_chiral_algebras} and Section~\ref{Sect_C.0.2}, one identifies the latter limit with the limit of the same diagram restricted to $Tw(I)_{\phi}^{op}$
$$
\underset{(I\to J\toup{\varphi} K)\in Tw(I)^{op}_{\phi}}{\lim} C^{\otimes\varphi}\otimes_{Shv(X^I)} Shv(X^I_{\phi, d})
$$ 

 For $(I\to J\toup{\varphi} K)\in Tw(I)_{\phi}$, $i'\in I'$ let $(I_{i'}\to J_{i'}\toup{\varphi_{i'}} K_{i'})\in Tw(I_{i'})$ be the corresponding fibre. Then
$$
C^{\otimes\varphi}\,\iso\, \underset{i\in I'}{\boxtimes} C^{\otimes \varphi_{i'}},
$$
in $Shv(X^I)-mod$. So, we get a canonical map in $Shv(X^I_{\phi, d})-mod$
\begin{equation}
\label{map_for_Sect_3.1.5_minterious_iso?}
(\underset{i\in I'}{\boxtimes} C_{X^{I_{i'}}, co})\mid_{X^I_{\phi, d}}\to \underset{(I\to J\toup{\varphi} K)\in Tw(I)^{op}_{\phi}}{\lim} C^{\otimes\varphi}\otimes_{Shv(X^I)} Shv(X^I_{\phi, d})\,\iso\,C_{X^I, co}\mid_{X^I_{\phi, d}}
\end{equation}

\sssec{} 
\label{Sect_3.1.7_stratifications}
For $I\in fSets$ write $\oo{X}{}^I\subset X^I$ for the complement to all the diagonals. The scheme $X^I$ is stratified by locally closed subschemes indexed by $(I\to J)\in Q(I)$. The corresponding subscheme is $\vartriangle^{(I/J)}(\oo{X}{}^J)$ for $\vartriangle: X^J\to X^I$. 

 For our $\phi: I\to I'$ as above the subscheme $X^I_{\phi, d}\subset X^I$ is a union of some of these strata. In this sense $X^I_{\phi, d}$ is also stratified.

 It is easy to see that for each stratum $Y=\vartriangle^{(I/J)}(\oo{X}{}^J)$ of $X^I_{\phi, d}$ the functor (\ref{map_for_Sect_3.1.5_minterious_iso?}) becomes an equivalence after applying 
$$
\cdot\otimes_{Shv(X^I_{\phi, d})} Shv(Y)
$$
 
 We do not know if (\ref{map_for_Sect_3.1.5_minterious_iso?}) is an equivalence in general. The nature of this question is to understand if the corresponding stratum by stratum equivalences  glue to a $Shv(X^I_{\phi,d})$-module in the same way on both sides. 
In this sense $\Fact^{co}(C)$ \select{looks like a factorization category}. 

\begin{Rem} 
\label{Rem_3.1.8}
Suppose that for each $(J\toup{\phi} K)\in \cTw(fSets)$, $C^{\otimes \phi}$ is dualizable in $Shv(X^J)-mod$, and for each $I\in fSets$ the category $C_{X^I, co}$ is dualizable in $Shv(X^I)-mod$. Then the functor (\ref{map_for_Sect_3.1.5_minterious_iso?}) is an equivalence. In this case $\Fact^{co}(C)$ becomes a factorization sheaf of categories on $\Ran$. Besides, by Corollary~\ref{Cor_2.2.9_dualizability}, $\Fact^{co}(C)$ is dualizable in $Shv(\Ran)-mod$. 
\end{Rem}
\begin{Rem} Assume that the LHS of (\ref{map_for_Sect_3.1.5_minterious_iso?}) is ULA over $Shv(X^I_{\phi, d})$, and the functor (\ref{map_for_Sect_3.1.5_minterious_iso?}) preserves the ULA objects. Then (\ref{map_for_Sect_3.1.5_minterious_iso?}) is an equivalence by Proposition~\ref{Pp_3.7.8_devissage_using_ULA_preservation}.
\end{Rem}
 
\sssec{Question} 
\label{Question_3.1.6}
i) Is (\ref{map_for_Sect_3.1.5_minterious_iso?}) an equivalence 
for any $C(X)\in CoCAlg^{nu}(Shv(X)-mod)$?

\smallskip\noindent
ii) Assuming $C(X)$ dualizable in $Shv(X)-mod$, is (\ref{map_for_Sect_3.1.5_minterious_iso?}) an equivalence? For this it suffices to prove that for any $I\in fSets$, $C_{X^I, co}$ is dualizable in $Shv(X^I)-mod$. 

\begin{Rem} 
\label{Rem_3.1.11}
If $\com: C(X)\to C^{\otimes 2}(X)$ admits a left adjoint $\com^L: C^{\otimes 2}(X)\to C(X)$, which is $Shv(X)$-linear, then $(C(X), \com^L)\in CAlg^{nu}(Shv(X)-mod)$. In this case we may pass to left adjoints in the functor $\cG_{\Ran, C}$ and get a functor 
$$
\cG_{\Ran, C}^L: \cTw(fSets)\to Shv(\Ran)-mod,
$$
which identifies with $\cF_{\Ran, C}$ for the algebra $(C(X), \com^L)$. In this case
$$
\Fact^{co}(C)\,\iso\, \underset{\cTw(fSets)}{\colim} \cF_{\Ran, C}\,\iso\, \Fact(C)
$$
is a sheaf of factorization categories on $\Ran$ described in Section~\ref{Sect_Fact_cat_for_com_algebras}. 
\end{Rem}

\sssec{} Assume for the rest of Section~\ref{Sect_Fact_cat_for_cocom_coalg} that $C(X)$ is dualizable in $Shv(X)-mod$. 

 Write $C^{\vee}(X)$ for its dual in $Shv(X)-mod$. Then $C^{\vee}(X)$ with the product $\com^{\vee}: (C^{\vee})^{\otimes 2}(X)\to C^{\vee}(X)$ becomes an object of $CAlg^{nu}(Shv(X)-mod)$.

 We have seen in Lemma~\ref{Lm_3.2.6_for_sheaves_of_cat} i) that for any $(J\toup{\phi} K)\in \cTw(fSets)$, $C^{\otimes\phi}$ is dualizable in $Shv(X^I)-mod$, hence also in $Shv(\Ran)-mod$ by Lemma~\ref{Lm_2.1.11}. 
 So, we may pass to the duals in $Shv(\Ran)-mod$ in the functor $\cG_{\Ran, C}$ and get the functor denoted 
$$
\cG^{\vee}_{\Ran, C}: \cTw(fSets)\to Shv(\Ran)-mod
$$ 
In this case we get an isomorphism of functors
$$
\cF_{\Ran, C^{\vee}}\;\iso\; \cG^{\vee}_{\Ran, C}
$$
from $\cTw(fSets)$ to $Shv(\Ran)-mod$.  

 Similarly, for $I\in fSets$ we get a canonical isomorphism of functors 
$$
\cF_{I, C^{\vee}}\;\iso\; \cG^{\vee}_{I, C}
$$ 
from $Tw(I)$ to $Shv(X^I)-mod$. The canonical pairing in $Shv(\Ran)-mod$
between $\underset{\cTw(fSets)^{op}}{\lim}\cG_{\Ran, C}$ and $\underset{\cTw(fSets)}{\colim}\cG^{\vee}_{\Ran, C}$ is a functor
\begin{equation}
\label{pairing_canonical_Sect_3.1.12}
\Fact(C^{\vee})\otimes_{Shv(\Ran)}\Fact^{co}(C)\to Shv(\Ran)
\end{equation}
Similarly, we have a canonical pairing
$$
(C^{\vee})_{X^I}\otimes_{Shv(X^I)}C_{X^I, co}\to Shv(X^I)
$$

It is not clear if the above two pairings extend to a duality datum. 

\begin{Rem} If in addition $\com: C(X)\to C^{\otimes 2}(X)$ admits a left adjoint $\com^L$, which is $Shv(X)$-linear, then the pairing (\ref{pairing_canonical_Sect_3.1.12}) is a counit of a duality datum. This follows from Proposition~\ref{Pp_2.4.7}.
\end{Rem}

\sssec{} For $I\in fSets$ denote by $\Loc^{co}: C_{X^I, co}\to \underset{i\in I}{\boxtimes} C(X)$ the structure functor for $(I\toup{\id} I\toup{\id} I)\in Tw(I)^{op}$. 

\begin{Lm} If in addition $C(X)\in CoCAlg(Shv(X)-mod)$, that is, $C$ is unital then, for each $I\in fSets$, $\Loc^{co}$ is conservative.
\end{Lm}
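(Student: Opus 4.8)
The plan is to reduce the conservativity of $\Loc^{co}: C_{X^I,co}\to \boxtimes_{i\in I}C(X)$ to a dual of the argument in Lemma~\ref{Lm_3.5.2_generated_under_colim}, using that $C(X)$ is now unital. Recall $C_{X^I,co}=\lim_{Tw(I)^{op}}\cG_{I,C}$, and $\Loc^{co}$ is the leg of this limit at the initial object $(I\toup{\id}I\toup{\id}I)\in Tw(I)^{op}$. Write $j:{^0Tw(I)}\subset Tw(I)$ for the full subcategory spanned by $(I\toup{p}J\to K)$ with $p$ an isomorphism, exactly as in Lemma~\ref{Lm_3.5.2_generated_under_colim}; we have the adjoint pair $j:{^0Tw(I)}\leftrightarrows Tw(I):j^R$ of \eqref{adj_pair_j_and_j^R_Lm_3.5.2}, with $j^R(I\toup{p}J\toup{q}K)=(I\toup{\id}I\toup{qp}K)$. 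Since $j^R$ is a \emph{left} adjoint to $j$, the opposite functor $(j^R)^{op}$ is a right adjoint to $j^{op}:{^0Tw(I)^{op}}\to Tw(I)^{op}$, hence $j^{op}$ is cofinal. Therefore the restriction map
$$
C_{X^I,co}\,\iso\,\underset{Tw(I)^{op}}{\lim}\,\cG_{I,C}\;\to\;\underset{{^0Tw(I)^{op}}}{\lim}\,\cG_{I,C}\comp j^{op}
$$
has a fully faithful partner; more precisely I would argue, dually to the RKE computation in Lemma~\ref{Lm_3.5.2_generated_under_colim} (invoking \cite{Ly}, 2.2.39 and Cor.~2.5.3 in their limit-side forms, or simply passing to opposite categories there), that this map is \emph{conservative}, provided each transition map of $\cG_{I,C}$ restricted along $j^{op}\comp(j^R)^{op}$ is conservative.

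The key local input is the following: for $(I\toup{p}J\toup{q}K)=\Sigma$, the comparison morphism $\cG_{I,C}(\Sigma)\to \cG_{I,C}(j^R\Sigma)$ evaluated via the RKE is, after unwinding the definition of $\cG_{I,C}$, a functor of the form
$$
\underset{k\in K}{\boxtimes}C^{\otimes I_k}(X)\;\to\;\underset{k\in K}{\boxtimes}C^{\otimes J_k}(X)
$$
built from comultiplications $\com: C^{\otimes I_k}(X)\to C^{\otimes J_k}(X)$ along $I_k\to J_k$. I claim this is conservative because $C(X)$ is unital: the counit $C(X)\to Shv(X)$ (equivalently, projection to a tensor factor followed by the co-unit of the coalgebra) provides a \emph{retraction} of $\com$, so $\com$ is a split monomorphism of functors and in particular conservative; taking $\boxtimes_k$ of conservative functors admitting retractions stays conservative. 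Then, exactly as in Lemma~\ref{Lm_3.5.2_generated_under_colim}, the category ${^0Tw(I)^{op}}$ has an initial object $(I\toup{\id}I\toup{\id}I)$, so
$$
\underset{{^0Tw(I)^{op}}}{\lim}\,\cG_{I,C}\comp j^{op}\;\iso\;\underset{i\in I}{\boxtimes}C(X),
$$
and the composite of the two conservative functors is precisely $\Loc^{co}$. Hence $\Loc^{co}$ is conservative.

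The main obstacle I anticipate is the dualization of the RKE/limit argument of Lemma~\ref{Lm_3.5.2_generated_under_colim}: that lemma is phrased on the colimit side with \cite{G}, ch.~I.1, 5.4.3 and \cite{Ly}, Cor.~2.5.3, and here I need its mirror image for limits, i.e. that a limit of a conservative natural transformation of diagrams over $Tw(I)^{op}$ is conservative and that cofinal (here coinitial, via $j^{op}$ cofinal) restriction is harmless. This is routine but must be stated carefully: one passes to opposite categories, replaces ``LKE/cofinal/generation under colimits'' by ``RKE/coinitial/reflection of isomorphisms'', and uses that $\oblv:Shv(X^I)-mod\to\DGCat_{cont}$ preserves limits so the statements can be checked in $\DGCat_{cont}$. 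The only genuinely new ingredient beyond Lemma~\ref{Lm_3.5.2_generated_under_colim} is the unitality-driven observation that each $\com: C^{\otimes I_k}(X)\to C^{\otimes J_k}(X)$ admits a retraction; this replaces the role played there by surjectivity of the left adjoint of the corresponding product map, and is what makes unitality of $C(X)$ indispensable.
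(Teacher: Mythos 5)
Your proposal is correct and follows essentially the same route as the paper's proof: the adjoint pair $j\leftrightarrows j^R$, the objectwise comparison functor given by (exterior products of) comultiplications, its conservativity via the counit-supplied retraction (this is exactly where unitality enters, just as in the paper, which picks a section $J\to I$ and applies the counit on the remaining factors), and the initial object of ${^0Tw(I)^{op}}$ identifying the limit of the restricted diagram with $\underset{i\in I}{\boxtimes}C(X)$. Only minor orientation slips: in (\ref{adj_pair_j_and_j^R_Lm_3.5.2}) $j$ is the \emph{left} adjoint and $j^R$ the right one, so $(j^R)^{op}$ is \emph{left} adjoint to $j^{op}$ and no cofinality claim is needed (one only uses the RKE identification plus objectwise conservativity, exactly as in the limit-side argument already present in Lemma~\ref{Lm_3.5.2_generated_under_colim}), and the comultiplication along $I_k\to J_k$ goes $C^{\otimes J_k}(X)\to C^{\otimes I_k}(X)$, not the other way.
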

\begin{proof}
Argue as in Lemma~\ref{Lm_3.5.2_generated_under_colim}. We have a natural map of functors 
$$
\cG_{I, C}\to \cG_{I, C}\comp j^{op}\comp (j^R)^{op}
$$ 
from $Tw(I)^{op}$ to $Shv(X^I)-mod$, where we used the adjoint pair (\ref{adj_pair_j_and_j^R_Lm_3.5.2}). For each $\Sigma=(I\to J\to K)\in Tw(I)^{op}$ this map evaluated on $\Sigma$ is
$$
\underset{k\in K}C^{\otimes J_k}(X)\to \underset{k\in K}C^{\otimes I_k}(X).
$$
It is given by the exterior product of comultpilications along maps $I_k\to J_k$ for $k\in K$. 
The latter functor is conservative. To see this pick a section $J\to I$ of the surjection $I\to J$. The conservativity then follows from the commutative diagram
$$
\begin{array}{ccc}
\underset{k\in K}C^{\otimes J_k}(X) & \to & \underset{k\in K}C^{\otimes I_k}(X)\\
& \searrow\lefteqn{\scriptstyle\id} & \downarrow\lefteqn{\scriptstyle counit}\\
&& \underset{k\in K}C^{\otimes J_k}(X)
\end{array}
$$

 Applying now (\cite{Ly}, 9.2.67) one gets the desired result.
\end{proof}

\sssec{} For the rest of Section~\ref{Sect_Fact_cat_for_cocom_coalg} we make the assumptions of Remark~\ref{Rem_3.1.8}. So, $\Fact^{co}(C)$ is a factorization sheaf of categories on $\Ran$. Though $\Fact^{co}(C)$ is dualizable in $Shv(\Ran)-mod$ in this case, we can not conclude that (\ref{pairing_canonical_Sect_3.1.12}) extends to a duality datum for $\Fact^{co}(C)$. 

 Then we define the chiral comultiplication as follows. 

Let $I_1, I_2\in fSets$ and $I=I_1\sqcup I_2$. Recall the full embedding (\ref{map_alpha}). 
We have the natural functor
$$
C_{X^I, co}\to \underset{Tw(I_1)^{op}\times Tw(I_2)^{op}}{\lim} \;\cG_{I,C}\comp\alpha^{op}
$$
Under our assumptions the target of this map identifies with $C_{X^{I_1}, co}\boxtimes C_{X^{I_2}, co}$, so we get the morphism
\begin{equation}
\label{map_eta_for_Sect_3.1.15}
\eta_{I_1, I_2}: C_{X^I, co}\to C_{X^{I_1}, co}\boxtimes C_{X^{I_2}, co}
\end{equation}
in $Shv(X^I)-mod$. Moreover, the restriction of $\eta_{I_1, I_2}$ to $(X^{I_1}\times X^{I_2})_d$ is a special case of the factorization isomorphism (\ref{fact_iso_for_C_co}).  

\sssec{} Let now $I_1\to I'_1, I_2\to I'_2$ be maps in $fSets$, let $I'=I'_1\sqcup I'_2$. As in Section~\ref{Sect_2.2.4}, $\eta_{I_1, I_2}$. fits into a commutative diagram  
$$
\begin{array}{ccc}
C_{X^I, co} & \toup{(\ref{map_eta_for_Sect_3.1.15})} & C_{X^{I_1}, co}\boxtimes C_{X^{I_2}, co}\\
\downarrow && \downarrow\\
C_{X^{I'}, co} & \toup{(\ref{map_eta_for_Sect_3.1.15})}  & C_{X^{I'_1}, co}\boxtimes C_{X^{I'_2}, co},
\end{array}
$$
where the vertical arrows are the !-restrictions along $X^{I'}\hook{} X^I$. Passing to the limit over $(I_1, I_2)\in fSets\times fSets$, the above diagram yields the functor 
$$
\eta: \Gamma(\Ran\times\Ran, u^!\Fact^{co}(C))\to \Fact^{co}(C)\boxtimes\Fact^{co}(C)
$$
in $Shv(\Ran\times\Ran)-mod$. Here $u: \Ran\times\Ran\to\Ran$ is the sum map. 

\sssec{} We may define the analog of the $\star$ operation in this case as the composition
$$
\Fact^{co}(C)\toup{u^!_{\Fact^{co}(C)}} \Gamma(\Ran\times\Ran, u^!\Fact^{co}(C))\toup{\eta} \Fact^{co}(C)\boxtimes\Fact^{co}(C)
$$

Note that if our sheaf theory is $\cD$-modules, we may view the latter as a comultiplication on $\Fact^{co}(C)$.  

\section{Factorization categories attached to constant commutative algebras}
\label{Sect_4}

\ssec{} In this section we recall the construction of factorization categories attached to $C\in CAlg^{nu}(\DGCat_{cont})$ by S. Raskin (\cite{Ras2}, Section~6). It uses a different combinatorics (namely, opens of $X^I$ instead of closed parts that we used in Section~\ref{Sect_Fact_cat_for_com_algebras}), and the construction makes sense only for constant commutative algebras along the curve $X$. This construction has appeared in some preliminary form in \cite{Gai_de_Jong}. 

 Our mnemonics is as follows: the functors $\cF, \bar\cF$ are designed to calculate colimits, and $\cG, \bar\cG$ are designed to calculate limits. One more way to remember notations: $\bar\cG,\bar\cF$ use open subsets, while $\cF,\cG$ use closed ones. 

\sssec{} 
\label{Sect_4.1.1_now}
Let $C\in CAlg^{nu}(\DGCat_{cont})$. Let $I\in fSets$. Consider the functor 
$$
\bar\cG_{I, C}: Tw(I)^{op}\to Shv(X^I)-mod
$$
sending $\Sigma=(I\toup{p}J\to K)$ to $C_{\Sigma}:=C^{\otimes K}\otimes Shv(X^I_{p, d})$, and sending the map (\ref{morphism_in_Tw_v2}) to the composition
$$
C^{\otimes K_2}\otimes Shv(X^I_{p_2, d})\to C^{\otimes K_1}\otimes Shv(X^I_{p_2, d})\to C^{\otimes K_1}\otimes Shv(X^I_{p_1, d}),
$$
where the first map comes from the product $C^{\otimes K_2}\to C^{\otimes K_1}$, and the second is the restriction along the open embedding $X^I_{p_1, d}\subset X^I_{p_2, d}$. 
 
 Set 
$$
\bar C_{X^I}=\underset{Tw(I)^{op}}{\lim} \bar\cG_{I,C}
$$
in $Shv(X^I)-mod$. Note that we may view $\bar\cG_{I, C}$ as a functor $Tw(I)^{op}\to CAlg^{nu}(Shv(X^I)-mod)$. We may understand the latter limit in $CAlg^{nu}(Shv(X^I)-mod)$, as 
$$
\oblv: CAlg^{nu}(Shv(X^I)-mod)\to Shv(X^I)-mod
$$ 
preserves limits. In particular, $\bar C_{X^I}\in CAlg^{nu}(Shv(X^I)-mod)$.  
 
\begin{Lm} For a map $f: I\to I'$ in $fSets$ one has a canonical equivalence 
\label{Lm_4.1.2_restrictions}
$$
\bar C_{X^I}\otimes_{Shv(X^I)} Shv(X^{I'})\,\iso\, \bar C_{X^{I'}}
$$
in $CAlg^{nu}(Shv(X^{I'})-mod)$ in a way compatible with compositions, so giving rise to a sheaf of categories $\ov{\Fact}(C)$ on $\Ran$ with $\ov{\Fact}(C)\otimes_{Shv(\Ran)} Shv(X^I)\,\iso\, \bar C_{X^I}$ for all $I\in fSets$.
\end{Lm}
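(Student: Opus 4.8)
The argument runs parallel to Sections~\ref{Sect_2.1.5_now}--\ref{Sect_2.1.7_now}, with colimits replaced by limits and the opens $X^I_{p,d}$ playing the role of the closed diagonals; the one genuinely new point is that the base change along the closed immersion $\vartriangle=\vartriangle^{(I/I')}\colon X^{I'}\hook{}X^I$ must be commuted past the limit defining $\bar C_{X^I}$. For this, note first that $\vartriangle_!\colon Shv(X^{I'})\leftrightarrows Shv(X^I)\colon\vartriangle^!$ is a colocalization in $Shv(X^I)-mod$ ($\vartriangle_!$ fully faithful and $Shv(X^I)$-linear, by the projection formula for closed immersions), so $Shv(X^{I'})$ is dualizable in $Shv(X^I)-mod$ by (\cite{Ly}, 9.2.32), exactly as in the proof of Lemma~\ref{Lm_3.2.6_for_sheaves_of_cat} i) and Lemma~\ref{Lm_2.1.11} ii). Consequently $-\otimes_{Shv(X^I)}Shv(X^{I'})$ is a right adjoint (to $-\otimes_{Shv(X^I)}Shv(X^{I'})^{\vee}$), hence commutes with limits, so that
$$
\bar C_{X^I}\otimes_{Shv(X^I)}Shv(X^{I'})\;\iso\;\underset{Tw(I)^{op}}{\lim}\,\big(\bar\cG_{I,C}(-)\otimes_{Shv(X^I)}Shv(X^{I'})\big).
$$
Since $-\otimes_{Shv(X^I)}Shv(X^{I'})$ is symmetric monoidal and $\oblv\colon CAlg^{nu}(Shv(X^{I'})-mod)\to Shv(X^{I'})-mod$ creates limits, this equivalence may be read in $CAlg^{nu}(Shv(X^{I'})-mod)$.

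Next I identify the functor under this limit. Let $Tw(I)_f\subset Tw(I)$ be the essential image of the full embedding $Tw(I')\subset Tw(I)$ of Section~\ref{Sect_2.1.5_now}, i.e.\ the $(I\toup{p}J\to K)$ for which $p$ factors through $f$ (equivalently $J\in Q(I')$). As in Lemma~\ref{Lm_3.2.4_from_chiral_algebras} and Section~\ref{Sect_3.1.7_stratifications}, one checks that for $\Sigma=(I\toup{p}J\to K)$ one has $X^I_{p,d}\times_{X^I}X^{I'}\iso X^{I'}_{\bar p,d}$ when $\Sigma\in Tw(I)_f$, with $\bar p\colon I'\to J$ the induced surjection, and $X^I_{p,d}\times_{X^I}X^{I'}=\emptyset$ otherwise. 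Since $\bar\cG_{I,C}(\Sigma)\otimes_{Shv(X^I)}Shv(X^{I'})=C^{\otimes K}\otimes Shv(X^I_{p,d}\times_{X^I}X^{I'})$, this functor vanishes for $\Sigma\notin Tw(I)_f$, while its restriction to $Tw(I)_f^{op}\iso Tw(I')^{op}$ is canonically $\bar\cG_{I',C}$ (the product maps $C^{\otimes K_2}\to C^{\otimes K_1}$ are untouched and the $*$-restrictions along $X^I_{p_1,d}\subset X^I_{p_2,d}$ base change to the $*$-restrictions along $X^{I'}_{\bar p_1,d}\subset X^{I'}_{\bar p_2,d}$). Now $Tw(I)_f$ is a cosieve in $Tw(I)$: if $\Sigma'\to\Sigma$ in $Tw(I)$ is given by $J'\to J$ with $p'$ factoring through $f$, then $p=(J'\to J)\circ p'$ factors through $f$ as well. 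Hence $Tw(I)_f^{op}$ is a sieve in $Tw(I)^{op}$, and (Sections~\ref{Def_zero-cofinal} and \ref{Sect_C.0.2}) the canonical map from $\bar\cG_{I,C}(-)\otimes Shv(X^{I'})$ to the right Kan extension of $\bar\cG_{I',C}$ along $Tw(I')^{op}\hook{}Tw(I)^{op}$ is an equivalence, because the comma categories computing that right Kan extension at objects outside $Tw(I)_f$ are empty and thus contribute the zero category. Passing to limits gives
$$
\bar C_{X^I}\otimes_{Shv(X^I)}Shv(X^{I'})\;\iso\;\underset{Tw(I')^{op}}{\lim}\bar\cG_{I',C}\;=\;\bar C_{X^{I'}}
$$
in $CAlg^{nu}(Shv(X^{I'})-mod)$.

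All the constructions above are functorial in $f$, so these equivalences are compatible with compositions of surjections in $fSets$; passing to the limit over $I\in fSets$, they assemble — as in Section~\ref{Sect_2.1.7_now} and Lemma~\ref{Lm_2.2.8_Ran_is_1-affine} — into a sheaf of categories $\ov{\Fact}(C)$ on $\Ran$ with $\ov{\Fact}(C)\otimes_{Shv(\Ran)}Shv(X^I)\iso\bar C_{X^I}$, carrying the $CAlg^{nu}$-structure along. The step I expect to be the main obstacle is the commutation of $-\otimes_{Shv(X^I)}Shv(X^{I'})$ with the defining limit of $\bar C_{X^I}$: extension of scalars does not commute with limits in general, which is exactly why the dualizability of $Shv(X^{I'})$ over $Shv(X^I)$ is required, together with the (mildly delicate but uniform in both sheaf-theoretic contexts) bookkeeping that turns the vanishing of the base-changed diagram outside the cosieve $Tw(I)_f$ into a reduction of the limit to $Tw(I')^{op}$.
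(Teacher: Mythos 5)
Your argument is correct and is essentially the paper's own proof: dualizability of $Shv(X^{I'})$ over $Shv(X^I)$ to commute the base change past the limit, Lemma~\ref{Lm_3.2.4_from_chiral_algebras} to see that the base-changed diagram vanishes off the image of $\Tw(I')$ and restricts there to $\bar\cG_{I',C}$, and your cosieve observation for $\Tw(I')\subset\Tw(I)$, which is precisely the zero-cofinality (Section~\ref{Def_zero-cofinal}, Section~\ref{Sect_C.0.2}) invoked in the paper, to reduce the limit to $\Tw(I')^{op}$. The additional bookkeeping you supply (reading the equivalence in $CAlg^{nu}$ since $\oblv$ creates limits, and functoriality in $f$) corresponds to what the paper summarizes as ``other properties follow from the construction''.
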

\begin{proof} Recall that $Shv(X^{I'})$ is dualizable in $Shv(X^I)-mod$, so 
$$
\bar C_{X^I}\otimes_{Shv(X^I)} Shv(X^{I'})\,\iso\,\underset{(I\toup{p} J\to K)\in \Tw(I)^{op}}{\lim} C^{\otimes K}\otimes Shv(X^I_{p, d})\otimes_{Shv(X^I)} Shv(X^{I'})
$$
By Lemma~\ref{Lm_3.2.4_from_chiral_algebras}, $X^I_{p, d}\times_{X^I} X^{I'}$ is empty unless $J\in Q(I')$, and identifies with $X^{I'}_{p', d}$ in the latter case, where $p$ is written as the compostion $I\toup{f} I'\toup{p'} J$. 

 Recall the full embedding $\Tw(I')\subset \Tw(I)$ from Section~\ref{Sect_2.1.5_now}. It is zero-cofinal. So, the above limit identifies with
$$
\underset{(I'\toup{p'} J'\to K')\in \Tw(I')^{op}}{\lim} C^{\otimes K'}\otimes Shv(X^{I'}_{p', d})\,\iso\, \bar C_{X^{I'}}.
$$
Other properties follow from the construction.
\end{proof} 

\sssec{} 
\label{Sect_4.1.3_now}
Write $\bar\cG_{fSets, C}: fSets\to Shv(\Ran)-mod$ for the functor sending $I$ to $\bar C_{X^I}$ and $(J\to K)$ to the pullback $\vartriangle^{(J/K)!}: \bar C_{X^J}\to \bar C_{X^K}$, here $\vartriangle^{(J/K)}: X^K\to X^J$. Let 
$$
\bar\cG_{fSets, C}^L: fSets^{op}\to Shv(\Ran)-mod
$$ 
be obtained from $\bar\cG_{fSets, C}$ by passing to left adjoints. So,
$$
\ov{\Fact}(C)\,\iso\, \underset{fSets}{\lim} \bar\cG_{fSets, C}\,\iso\, \underset{fSets^{op}}{\colim} \bar\cG_{fSets, C}^L
$$

 Since $\Ran$ is 1-affine for our sheaf theories, for any $E\in Shv(\Ran)-mod$ the natural map
$$
E\otimes_{Shv(\Ran)} \underset{fSets}{\lim} \bar\cG_{fSets, C}\to \underset{I\in fSets}{\lim} (\bar C_{X^I}\otimes_{Shv(\Ran)} E)
$$
is an equivalence.

\begin{Cor} 
\label{Cor_4.1.3}
Let $I\in fSets$ and $f: I\to *$. \\
i) The square is cartesian in $Shv(X^I)-mod$
$$
\begin{array}{ccc}
\underset{(\Tw(I)^{>1})^{op}}{\lim}\; \bar\cG_{I,C} & \gets & \bar C_{X^I}\\
\downarrow && \downarrow\lefteqn{\scriptstyle h}\\
\underset{(\Tw(I)^{>1}\cap \Tw(I)^f)^{op}}{\lim}\; \bar\cG_{I,C} & \gets & 
C\otimes Shv(X^I),
\end{array}
$$
where $h$ is the projection corresponding to $(I\to *\to *)\in \Tw(I)^{op}$, and 
all the limits are calculated in $Shv(X^I)-mod$. \\
ii) Assume $C$ dualizable in $\DGCat_{cont}$. Let $U=X^I-X$. Then the square is cartesian
in $Shv(X^I)-mod$
$$
\begin{array}{ccc}
\bar C_{X^I}\mid_U & \gets & \bar C_{X^I}\\
\downarrow\lefteqn{\scriptstyle h\mid_U} && \downarrow\lefteqn{\scriptstyle h}\\
C\otimes Shv(U) & \gets & C\otimes Shv(X^I),
\end{array}
$$
where all the limits are calculated in $Shv(X^I)-mod$.
\end{Cor}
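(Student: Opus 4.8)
The plan is to obtain (i) as the ``limit version'' of Lemma~\ref{Lm_2.2.12} and to deduce (ii) from (i).

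\emph{Part (i).} By Lemma~\ref{Lm_2.2.12} the square of small categories with corners $\Tw(I)^{>1}\cap\Tw(I)^f$, $\Tw(I)^{>1}$, $\Tw(I)^f$, $\Tw(I)$ is cocartesian; passing to opposite categories (an autoequivalence of $1-\Cat$, hence preserving pushouts) the corresponding square with all four categories replaced by their opposites is again cocartesian. I would then apply \cite{PT} exactly as in the proof of Corollary~\ref{Cor_2.3.3}, but to $\bar\cG_{I,C}$ regarded as a functor $\Tw(I)\to (Shv(X^I)-mod)^{op}$ — the target has all colimits, since $Shv(X^I)-mod$ has all limits. This produces a square which is cocartesian in $(Shv(X^I)-mod)^{op}$, i.e. cartesian in $Shv(X^I)-mod$, with corners $\underset{(\Tw(I)^{>1})^{op}}{\lim}\bar\cG_{I,C}$, $\underset{\Tw(I)^{op}}{\lim}\bar\cG_{I,C}=\bar C_{X^I}$, $\underset{(\Tw(I)^{>1}\cap\Tw(I)^f)^{op}}{\lim}\bar\cG_{I,C}$ and $\underset{(\Tw(I)^f)^{op}}{\lim}\bar\cG_{I,C}$, the maps being those induced by the inclusions of indexing categories. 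Since $(I\to *\to *)$ is a terminal object of $\Tw(I)^f$, it is an initial object of $(\Tw(I)^f)^{op}$, so $\underset{(\Tw(I)^f)^{op}}{\lim}\bar\cG_{I,C}\iso\bar\cG_{I,C}(I\to *\to *)=C\otimes Shv(X^I)$, and the resulting map $\bar C_{X^I}\to C\otimes Shv(X^I)$ is the projection $h$. This is (i).

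\emph{Part (ii).} I would rewrite the cartesian square of (i) after identifying its two ``$\lim$'' corners. For the bottom-left one, identify $\Tw(I)^{>1}\cap\Tw(I)^f$ with the poset of non-constant surjections $p\colon I\to J$, i.e. of non-trivial partitions of $I$. For such $p$ the open $X^I_{p,d}$ lies in $U=X^I-X$, and as $p$ varies these opens form an open cover of $U$ closed under finite intersections, since $X^I_{p,d}\cap X^I_{p',d}=X^I_{r,d}$ with $\ker r$ the common refinement of $\ker p,\ker p'$, again non-trivial (here $\mid I\mid\ge 2$, the case $\mid I\mid=1$ being trivial). By Zariski descent $Shv(U)\iso\underset{p}{\lim}\,Shv(X^I_{p,d})$, and since $C$ is dualizable $C\otimes(-)$ preserves this limit, whence $\underset{(\Tw(I)^{>1}\cap\Tw(I)^f)^{op}}{\lim}\bar\cG_{I,C}\iso C\otimes Shv(U)$, compatibly with the map from $\bar\cG_{I,C}(I\to *\to *)=C\otimes Shv(X^I)$, which becomes the restriction $C\otimes Shv(X^I)\to C\otimes Shv(U)$. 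So (i) already presents $\bar C_{X^I}$ as $L\times_{C\otimes Shv(U)}(C\otimes Shv(X^I))$, with $L:=\underset{(\Tw(I)^{>1})^{op}}{\lim}\bar\cG_{I,C}$ and the right-hand leg equal to restriction to $U$.

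It then remains to identify $L$ with $\bar C_{X^I}\mid_U$. Each $\bar\cG_{I,C}(\Sigma)$ with $\Sigma\in\Tw(I)^{>1}$ is a $Shv(X^I)$-module on which $Shv(X^I)$ acts through $Shv(X^I_{p,d})$, and $X^I_{p,d}\subseteq U$, so it is a $Shv(U)$-module; as $Shv(U)-mod$ is a limit-closed full subcategory of $Shv(X^I)-mod$, the limit $L$ is a $Shv(U)$-module, and the canonical map $\bar C_{X^I}\to L$ factors as $\bar C_{X^I}\to\bar C_{X^I}\mid_U\toup{\theta}L$. Restriction of a sheaf of categories along an open immersion preserves limits (being computed affine-locally), so applying $\cdot\otimes_{Shv(X^I)}Shv(U)$ to the presentation $\bar C_{X^I}\iso L\times_{C\otimes Shv(U)}(C\otimes Shv(X^I))$ gives $\bar C_{X^I}\mid_U\iso L\times_{C\otimes Shv(U)}(C\otimes Shv(U))\iso L$, and one checks this equivalence is $\theta$. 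Feeding the equivalences $L\iso\bar C_{X^I}\mid_U$ and $\underset{(\Tw(I)^{>1}\cap\Tw(I)^f)^{op}}{\lim}\bar\cG_{I,C}\iso C\otimes Shv(U)$ into the cartesian square of (i) yields the square of (ii). The step requiring the most care is the identification of the bottom-left corner: it is precisely there that the geometry (the $X^I_{p,d}$ forming an intersection-closed open cover of $U$, so that descent applies) meets the dualizability of $C$ (needed to commute $C\otimes(-)$ past the corresponding limit); everything else is formal given (i), Lemma~\ref{Lm_2.2.12}, \cite{PT} and $1$-affineness.
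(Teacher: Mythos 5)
Your proof is correct and follows essentially the paper's own route: part (i) is exactly the dual (limit) form of Corollary~\ref{Cor_2.3.3}/Corollary~\ref{Cor_2.2.14}, obtained from the cocartesian square of Lemma~\ref{Lm_2.2.12} via \cite{PT} applied in $(Shv(X^I)-mod)^{op}$, and part (ii) rests, as in the paper, on identifying the lower-left limit with $C\otimes Shv(U)$ by descent along the cover of $U$ by the opens $X^I_{p,d}$ (closed under intersections) together with dualizability of $C$ to commute $C\otimes(-)$ past that limit. The only harmless differences are that the paper phrases the descent step via the \'etale sheafification of $\underset{(I\to J\to *)}{\colim}\, X^I_{p,d}$ being $U$ (citing \cite{Ly}, 10.2.2) rather than Zariski descent over the poset of opens, and that you make explicit the identification of the upper-left corner with $\bar C_{X^I}\mid_U$ (for which the clean justification of "restriction to $U$ preserves limits" is the self-duality of $Shv(U)$ in $Shv(X^I)-mod$ from Section~\ref{Sect_B.1.1}), a step the paper leaves implicit.
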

\begin{proof}
i) is a version of Corollary~\ref{Cor_2.2.14} in our setting. 

\medskip\noindent
ii) follows from i) using the following. First, note that $\underset{(I\toup{p} J\to *)\in \Tw(I)^{>1}\cap \Tw(I)^f}{\cup} \; X^I_{p, d}=U$. Let $L: \PreStk_{lft}\to\PreStk_{lft}$ be the functor of sheafification in the etale topology. By (\cite{Ly}, 10.2.2), the natural map
$$
L(\underset{(I\toup{p} J\to *)\in \Tw(I)^{>1}\cap \Tw(I)^f}{\colim} X^I_{p, d})\to U
$$
is an isomorphism in $\PreStk_{lft}$, where the colimit is understood in $\PreStk_{lft}$. 
By \'etale descent for $Shv$, 
$$
\underset{(I\toup{p} J\to *)\in
(\Tw(I)^{>1}\cap \Tw(I)^f)^{op}}{\lim} Shv(X^I_{p, d})\,\iso\, Shv(U).
$$
Since $C$ is dualizable, tensoring by $C$ commutes with the limit in the latter formula.  
\end{proof} 

\sssec{Commutative chiral product} 
\label{Sect_4.1.5_now}
First, define the commutative product as follows. Pick $\phi: I\to I'$ in $fSets$. Let us define a canonical map
\begin{equation}
\label{functor_product_for_barC_X^I}
\underset{i\in I'}{\boxtimes} \bar C_{X^{I_i}}\to \bar C_{X^I}
\end{equation}
in $CAlg^{nu}(Shv(X^I)-mod)$. 

 Recall the full subcategory $\Tw(I)_{\phi}\,\iso\,\prod_{i\in I'} \Tw(I_i)$ of $\Tw(I)$ from 
Section~\ref{Sect_2.1.11_now}.  
Denote by $\nu: Tw(I)\to \prod_{i\in I'} \Tw(I_{i'})$ the functor
sending $(I\toup{p} J\to K)$ to the collection 
\begin{equation}
\label{collection_for_Sect_4.1.3}
\{(I_i\toup{p_i} J_i\to K_i)\}_{i\in I'},
\end{equation} 
where $J_i$ is the image of $I_i$ under $p$, and $K_i$ is the image of $J_i$ under $J\to K$.
We warn the reader that $\nu$ is not an adjoint (on either side) of the inclusion $\Tw(I)_{\phi}\subset Tw(I)$. 

 For $(I\to J\to K)\in Tw(I)$ let (\ref{collection_for_Sect_4.1.3}) be its image under $\nu$. Then 
\begin{equation}
\label{open_imm_for_Sect_4.1.3}
X^I_{p, d}\subset \prod_{i\in I'} X^{I_i}_{p_i, d}\subset X^I, 
\end{equation}
and we get canonical maps
\begin{equation}
\label{comm_chiral_mult_for_barC_maps}
\underset{i\in I'}{\boxtimes} \bar C_{X^{I_i}} \to (\underset{i\in I'}{\otimes} C^{\otimes K_i})\otimes Shv(\prod_{i\in I'} X^{I_i}_{p_i, d})\to C^{\otimes K}\otimes Shv(X^I_{p, d})
\end{equation}
The first map here comes from the projections (in the definition of the limit). The second map comes from the product in $C$ along the surjection $\sqcup_{i\in I'} K_i\to K$ together with restriction under the open immersion (\ref{open_imm_for_Sect_4.1.3}). The compositions in (\ref{comm_chiral_mult_for_barC_maps}) are compatible with the transition functors in the diagram $\bar\cG_{I,C}$, so by definition of $\lim\bar\cG_{I,C}$ yield the desired functor (\ref{functor_product_for_barC_X^I}). 

 In the case when $\phi: I\to I'$ is $\id: I\to I$ the functor (\ref{functor_product_for_barC_X^I}) is denoted by $\Loc: C^{\otimes I}\otimes Shv(X^I)\to \bar C_{X^I}$ by abuse of notations. 

\sssec{} Now we discuss the factorization structure on $\ov{\Fact}(C)$ under additional assumptions.

A factorization structure $\ov{\Fact}(C)$ now is a property. Namely, $\ov{\Fact}(C)$ is a factorization category iff for each $\phi: I\to I'$ in $fSets$ the restriction of  (\ref{functor_product_for_barC_X^I}) to the open part $X^I_{\phi, d}\subset X^I$ is an equivalence. 

 Recall the stratification of $X^I_{\phi, d}$ introduced in Section~\ref{Sect_3.1.7_stratifications}. It is easy to see that for a map $I\to J$ in $fSets$ giving the corresponding stratum $Y=\vartriangle^{(I/J)}(\oo{X}{}^J)$ of $X^I_{\phi, d}$ the functor (\ref{functor_product_for_barC_X^I}) becomes an equivalence after applying
$$
\cdot\otimes_{Shv(X^I)} Shv(Y)
$$
So, the situation here is similar to that of Section~\ref{Sect_3.1.7_stratifications} (for cocommutative coalgebras). In general, $\ov{\Fact}(C)$ \select{looks like a factorization category} in the same sense as in Section~\ref{Sect_3.1.7_stratifications}. 
An analog of Remark~\ref{Rem_3.1.8} also holds:

\begin{Rem}
\label{Rem_4.1.6}
Suppose that $C$ is dualizable in $\DGCat_{cont}$, and for any $I\in fSets$, $\bar C_{X^I}$ is dualizable in $Shv(X^I)-mod$. Then for each $\phi: I\to I'$ in $fSets$ the restriction of  (\ref{functor_product_for_barC_X^I}) to the open part $X^I_{\phi, d}\subset X^I$ is an equivalence. (This follows from Section~\ref{Sect_4.1.8} below). So, $\ov{\Fact}(C)$ is a factorization category. Besides, by Corollary~\ref{Cor_2.2.9_dualizability}, $\ov{\Fact}(C)$ is dualizable in $Shv(\Ran)-mod$.
\end{Rem}

\sssec{}  Denote by $j: {^{\phi}\Tw(I)}\subset\Tw(I)$ the full subcategory of those $(I\toup{p} J\to K)$ for which $I'\in Q(J)$. If $\phi: I\to I'$ is $\id: I\to I$ then the inclusion ${^{\phi}\Tw(I)}\subset\Tw(I)$ coincides with the left adjoint in (\ref{adj_pair_j_and_j^R_Lm_3.5.2}), this explains our notation. 
 
 For a diagram $J\getsup{p} I\toup{\phi} I'$ in $fSets$ consider the category $Diag(p, \phi)$. Its objects are diagrams
$$
\begin{array}{ccccc}
J & \getsup{p} & I &\toup{\phi} & I'\\
& \nwarrow& \downarrow & \nearrow\\
&& \bar I
\end{array}
$$
in $fSets$ and morphisms are morphisms of diagrams in $fSets$ covariant in $\bar I$, so that we have the projection $Diag(p, \phi)\to fSets_{I/}$. Denote by $\sup(J, I')$ a final object of $Diag(p, \phi)$, it always exists. The functor $j$ has a right adjoint $j^R: \Tw(I)\to {^{\phi}\Tw(I)}$ sending $(I\toup{p} J\to K)$ to $(I\toup{\tau} \sup(J, I')\to K)$. 

\sssec{} 
\label{Sect_4.1.8}
The category $\bar C_{X^I}\mid_{X^I_{\phi, d}}$ identifies with
\begin{equation}
\label{limit_for_Sect_4.1.7}
\underset{(I\toup{p} J\to K)\in \Tw(I)^{op}}{\lim} C^{\otimes K}\otimes Shv(X^I_{p, d})
\otimes_{Shv(X^I)} Shv(X^I_{\phi, d})
\end{equation}  
Note that 
$$
Shv(X^I_{p, d})\otimes_{Shv(X^I)} Shv(X^I_{\phi, d})\,\iso\, Shv(X^I_{p, d}\times_{X^I} X^I_{\phi, d})
$$
For $I\toup{\tau} \sup(J, I')$ one has 
$$
X^I_{p, d}\times_{X^I} X^I_{\phi, d}=X^I_{\tau, d}
$$  
So, (\ref{limit_for_Sect_4.1.7}) is the limit of the composition 
$$
\Tw(I)^{op}\toup{(j^R)^{op}}  {^{\phi}\Tw(I)^{op}}\toup{\mu} Shv(X^I_{\phi,d})-mod,
$$ 
where $\mu$ is the restriction of $\bar\cG_{I,C}$ to the full subcategory $^{\phi}\Tw(I)^{op}$. 
Now $\mu\comp (j^R)^{op}$ is the right Kan extension of $\mu$ along $j^{op}$, because $j^{op}$ is right adjoint to $(j^R)^{op}$. So, (\ref{limit_for_Sect_4.1.7}) identifies with
$$
\underset{^{\phi}\Tw(I)^{op}}{\lim}\mu
$$

 Recall the full subcategory $\Tw(I)_{\phi}\subset {^{\phi}\Tw(I)}$ from Section~\ref{Sect_2.1.11_now}. This inclusion has a left adjoint ${^{\phi}\Tw(I)}\to \Tw(I)_{\phi}$ sending $(I\to J\to K)$ to $(I\to J\to \sup(K, I'))$. Thus, the inclusion $\Tw(I)_{\phi}\subset {^{\phi}\Tw(I)}$ is cofinal. So, 
$$
\underset{^{\phi}\Tw(I)^{op}}{\lim}\mu\;\,\iso\;  \underset{(\Tw(I)_{\phi})^{op}}{\lim}\; \mu
$$

\sssec{Canonical arrow} 
\label{Sect_4.1.10}
Take $C(X)=C\otimes Shv(X)$ viewed now as an object of $CAlg^{nu}(Shv(X)-mod)$. In this section we construct a canonical map
\begin{equation}
\label{map_from_Fact_to_ov_Fact}
\Fact(C)\to \ov{\Fact}(C)
\end{equation}
in $Shv(\Ran)-mod$.

 It amounts to a collection of functors in $Shv(X^I)-mod$
\begin{equation}
\label{functor_from_C_X^I_to_barC_X^I} 
\zeta_I: C_{X^I}\to \bar C_{X^I}
\end{equation} 
compatible with $!$-restrictions under $\vartriangle^{(I/J)}: X^J\to X^I$ for any map $I\to J$ in $fSets$. 
 
 Pick $(I\toup{p} J\to K), (I\to J_1\to K_1)\in Tw(I)$. Recall that $X^I_{p, d}\times_{X^I} X^{K_1}$ is empty unless $J\in Q(K_1)$. First, we define a functor
\begin{equation}
\label{map_from_Dennis_to_Sam} 
 \Shv(X^{K_1})\otimes C^{\otimes J_1}\to Shv(X^I_{p, d})\otimes C^{\otimes K}
\end{equation} 
as follows. It vanishes unless $J\in Q(K_1)$. In the latter case we get a diagram $I\to J_1\to K_1\to J\to K$, hence a map $C^{\otimes J_1}\to C^{\otimes K}$ given by the product along $J_1\to K$. Then (\ref{map_from_Dennis_to_Sam}) is the composition 
$$
\Shv(X^{K_1})\otimes C^{\otimes J_1}\to \Shv(X^{K_1})\otimes C^{\otimes K}\to 
Shv(X^I_{p, d})\otimes C^{\otimes K},
$$
where the second map is the direct image under $X^{K_1}\to X^I$ followed by restriction to $X^I_{p, d}$. These maps are compatible with transition functors in the definitions of $C_{X^I}, \bar C_{X^I}$, hence yield the desired functor (\ref{functor_from_C_X^I_to_barC_X^I}). For each $(I\to J_1\to K_1)\in Tw(I)$ the so obtained map $\Shv(X^{K_1})\otimes C^{\otimes J_1}\to \bar C_{X^I}$ is a morphism in $CAlg^{nu}(Shv(X^I)-mod)$. 

  One checks that $\zeta_I$ are compatible with $!$-restrictions along $\vartriangle^{(I/J)}: X^J\to X^I$ for maps $I\to J$ in $fSets$, so yield the desired functor (\ref{map_from_Fact_to_ov_Fact}).  
  
\sssec{} Let $C, D\in CAlg^{nu}(\DGCat_{cont})$. For $I\in fSets$ let us construct a $Shv(X^I)$-linear functor
\begin{equation}
\label{functor_for_barC_barD_first}
\gamma_I: \bar C_{X^I}\otimes_{Shv(X^I)} \bar D_{X^I}\to \ov{C\otimes D}_{X^I}
\end{equation}

 By definition, we have a natural functor
$$
\bar C_{X^I}\otimes_{Shv(X^I)} \bar D_{X^I}\to \underset{
\substack{(I\toup{p_1} J_1\to K_1)\in \Tw(I)^{op}\\ (I\toup{p_2} J_2\to K_2)\in\Tw(I)^{op}}}
{\lim}  C^{\otimes K_1}\otimes D^{\otimes K_2}\otimes Shv(X^I_{p_1, d})\otimes_{Shv(X^I)} Shv(X^I_{p_2, d})
$$
The latter maps naturally (via restriction to the diagonal) to
$$
 \underset{(I\toup{p} J_1\to K_1)\in \Tw(I)^{op}}{\lim} (C\otimes D)^{\otimes K}\otimes Shv(X^I_{p,d})\,\iso\, \ov{C\otimes D}_{X^I}
$$
Composing, we get the desired functor (\ref{functor_for_barC_barD_first}). 

 As in Lemma~\ref{Lm_4.1.2_restrictions}, the functors (\ref{functor_for_barC_barD_first}) are compatible with $(\vartriangle^{(I/J)})^!$ for $\vartriangle^{(I/J)}: X^J\to X^I$. So, they yield $Shv(\Ran)$-linear morphism
\begin{equation}
\label{functor_sym_mon_str_on_ovFact}
\ov{\Fact}(C)\otimes_{Shv(\Ran)} \ov{\Fact}(D)\to \ov{\Fact}(C\otimes D)
\end{equation}

This equips the functor $CAlg^{nu}(\DGCat_{cont})\to Shv(\Ran)-mod$, $C\mapsto \ov{\Fact}(C)$ with a non-unital right-lax symmetric monoidal structure. One checks also that the functors (\ref{functor_sym_mon_str_on_ovFact}) are compatible with the commutative chiral products. 

\sssec{} 
\label{Sect_4.1.12_now_t-str}
Assume for Section~\ref{Sect_4.1.12_now_t-str} only that 
\begin{itemize}
\item $C$ is compactly generated and equipped with a compactly generated t-structure, that is, $C^{\le 0}$ is generated under filtered colimits by $C^{\le 0}\cap C^c$;
\item the product $m: C^{\otimes 2}\to C$ is t-exact.
\end{itemize}
In this case $\ov{\Fact}(C)$ is equipped with a natural t-structure that we are going to define.

 For $K\in fSets$ we equip $C^{\otimes K}$ with the t-structure via the construction from (\cite{Ly}, 9.3.17). Then using (\cite{Ly}, 9.3.10) for any map $K_2\to K_1$ in $fSets$ the product $C^{\otimes K_2}\to C^{\otimes K_1}$ is t-exact.

 For each $I\in fSets$ equip $\bar C_{X^I}$ with a t-structure as follows. For $(I\toup{p} J\to K)\in\Tw(I)$ we equip $C^{\otimes K}\otimes Shv(X^I_{p, d})$ with the t-structure via the coinstruction from (\cite{Ly}, 9.3.17), where $Shv(X^I_{p, d})$ is equipped with the perverse t-structure. Then the transition maps in the diagram $\bar\cG_{I, C}: \Tw(I)^{op}\to Shv(X^I)-mod$ are t-exact. By (\cite{G}, I.3, 1.5.8), there is a unique t-structure on $\bar C_{X^I}\,\iso\, \underset{\Tw(I)^{op}}{\lim} \bar\cG_{I,C}$ such that each evaluation functor
$$
\bar C_{X^I}\to C^{\otimes K}\otimes Shv(X^I_{p, d})
$$
is t-exact. Since the t-structure on $Shv(X^I_{p, d})$ is compactly generated, the obtained t-structure on $\bar C_{X^I}$ is compatible with filtered colimits by (we used \cite{Ly}, 9.3.5).  

 If $I\to J$ is a map in $fSets$ we have the adjoint pair $\vartriangle_!: \bar C_{X^J}\leftrightarrows \bar C_{X^I}: \vartriangle^!$ in $Shv(\Ran)-mod$ for $\vartriangle: X^J\to X^I$. By construction, the functor $\vartriangle_!: \bar C_{X^J}\to \bar C_{X^I}$ is t-exact. 
  
  Let $\ov{\Fact}(C)^{\le 0}\subset \ov{\Fact}(C)$ be the smallest full subcategory closed under extensions, colimits, and containing for each $I\in fSets$ the image of $\bar C_{X^I}^{\le 0}$. By (\cite{HA}, 1.4.4.11), this is an accessible t-structure on $\ov{\Fact}(C)$.   
  
  An object of $\ov{\Fact}(C)$ is coconnective iff for any $I\in fSets$ its !-restriction under $X^I\to \Ran$ is coconnective in $\bar C_{X^I}$. This shows that the t-structure on $\ov{\Fact}(C)$ is compatible with filtered colimits.  
  
\begin{Rem} 
\label{Rem_4.1.13_now}
In the situation of Section~\ref{Sect_4.1.12_now_t-str} assume that the canonical arrow $\Fact(C)\to\ov{\Fact}(C)$ is an equivalence. For any $(I\to J\to K)\in\Tw(I)$ the category $C^{\otimes J}\otimes Shv(X^K)$ is equipped with a t-structure naturally, and the transition functors in the diagram $\cF_{I,C}: \Tw(I)\to Shv(X^I)-mod$ are t-exact. It is easy to see that for any $(I\to J\to K)\in\Tw(I)$ the composition $C^{\otimes J}\otimes Shv(X^K)\to C_{X^I}\iso \bar C_{X^I}$ is t-exact. 

Let $A\in CAlg^{nu}(C)$ lying in $C^{\heartsuit}$. Then for any $I\in fSets$, the image of $A_{X^I}$ under $C_{X^I}\,\iso\, \bar C_{X^I}$ is contained in $\bar C_{X^I}^{<0}$. Indeed, 
$$
A_{X^I}\,\iso\, \underset{(I\to J\to K)\in\Tw(I)}{\colim} A^{\otimes J}\otimes\omega_{X^K}
$$ 
taken in $C_{X^I}$. For each $(I\to J\to K)\in\Tw(I)$ the image of $A^{\otimes J}\otimes\omega_{X^K}$ under 
$$
C^{\otimes J}\otimes Shv(X^K)\to \bar C_{X^I}
$$ 
is placed in degrees $<0$, and $\bar C_{X^I}^{<0}$ is closed under colimits.
\end{Rem}  

\sssec{} From now on for the rest of Section~\ref{Sect_4} we assume $C$ dualizable in $\DGCat_{cont}$, and $m: C^{\otimes 2}\to C$ has a continuous right adjoint in $\DGCat_{cont}$. 

\begin{Pp} 
\label{Pp_4.1.10_Raskin_dualizability}
For any $I\in fSets$, $\bar C_{X^I}$ is dualizable in $Shv(X^I)-mod$. Moreover, for any $D\in Shv(X^I)-mod$ the canonical map
\begin{equation}
\label{map_for_Pp_4.1.10}
\bar C_{X^I}\otimes_{Shv(X^I)} D\to \underset{(I\toup{p} J\to K)\in \Tw(I)^{op}}
{\lim} C^{\otimes K}\otimes Shv(X^I_{p, d})\otimes_{Shv(X^I)} D
\end{equation}
is an equivalence.
\end{Pp}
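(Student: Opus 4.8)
The plan is to argue by induction on $\mid I\mid$, peeling off the main diagonal of $X^I$ with the cartesian square of Corollary~\ref{Cor_4.1.3}(ii) and then using Zariski descent over a finite open cover of $U=X^I-X$ to reduce dualizability of $\bar C_{X^I}$ to that of the categories $\bar C_{X^{I_1}},\bar C_{X^{I_2}}$ attached to strictly smaller sets. The base case $\mid I\mid=1$ is immediate: $\Tw(I)$ is then the terminal category, $\bar C_X\,\iso\,C\otimes Shv(X)$, this is dualizable over $Shv(X)$ since $C$ is dualizable over $\Vect$, and (\ref{map_for_Pp_4.1.10}) holds on the nose.

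For the inductive step one first observes that it is enough to establish dualizability: once $\bar C_{X^{I'}}$ is known to be dualizable over $Shv(X^{I'})$ for all $I'$ with $\mid I'\mid<\mid I\mid$, the equivalence (\ref{map_for_Pp_4.1.10}) for those $I'$ is automatic, because tensoring over $Shv(X^{I'})$ with a dualizable module commutes with all limits. Now apply Corollary~\ref{Cor_4.1.3}(ii) with $U=X^I-X$: it presents $\bar C_{X^I}$ as the fibre product $(C\otimes Shv(X^I))\times_{C\otimes Shv(U)}\bar C_{X^I}\mid_U$. The functor $\id_C\otimes j^*\colon C\otimes Shv(X^I)\to C\otimes Shv(U)$, for the open immersion $j\colon U\hookrightarrow X^I$, is a localization with continuous fully faithful right adjoint $\id_C\otimes j_*$; pulling it back, the projection $\bar C_{X^I}\to\bar C_{X^I}\mid_U$ is again such a localization, whose kernel coincides with that of $\id_C\otimes j^*$, namely $C\otimes Shv(X)$ (sheaves supported on the diagonal $X\hookrightarrow X^I$). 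So $\bar C_{X^I}$ is a recollement, with continuous adjoints on all sides, of the $Shv(X^I)$-modules $C\otimes Shv(X)$ and $\bar C_{X^I}\mid_U$. Both $C\otimes Shv(X^I)$ and $C\otimes Shv(U)$ are dualizable over $Shv(X^I)$ — $C$ is dualizable over $\Vect$ and $Shv(U)$ is a retract of $Shv(X^I)$ in $Shv(X^I)-mod$ via $j_!,j^*$ — hence so is $C\otimes Shv(X)$, being the kernel of a localization between dualizable modules. Since dualizable objects of $Shv(X^I)-mod$ are stable under recollement extensions with continuous gluing functors, and since $-\otimes_{Shv(X^I)}D$ preserves this recollement (it preserves colimits, hence the fibre $=$ cofibre sequence, and all structure functors have continuous right adjoints), the two assertions about $\bar C_{X^I}$ are reduced to the analogous assertions for $\bar C_{X^I}\mid_U$ over $Shv(X^I)$, equivalently — $Shv(U)$ being a retract of $Shv(X^I)$ — over $Shv(U)$.

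To treat $\bar C_{X^I}\mid_U$, cover $U$ by the finitely many open subsets $U_\phi:=X^I_{\phi,d}$ attached to surjections $\phi\colon I\to I'$ with $\mid I'\mid=2$. By the computation of Section~\ref{Sect_4.1.8}, $\bar C_{X^I}\mid_{U_\phi}$ is a limit over $(\prod_{i\in I'}\Tw(I_i))^{op}$ of the restriction of $\bar\cG_{I,C}$ to that subcategory; since $\bar C_{X^{I_1}},\bar C_{X^{I_2}}$ are dualizable by the inductive hypothesis, one may commute the exterior product past the limits defining them (each term of those diagrams being dualizable over the relevant $Shv(X^{I_j})$), and this identifies $\bar C_{X^I}\mid_{U_\phi}$ with $(\bar C_{X^{I_1}}\boxtimes\bar C_{X^{I_2}})\mid_{U_\phi}$, which is dualizable over $Shv(U_\phi)$ and compatible with base change. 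Finally, $Shv$ and $ShvCat$ on schemes satisfy Zariski descent, so $\bar C_{X^I}\mid_U$ is the limit over the finite \v{C}ech nerve of $\{U_\phi\}$ of the dualizable modules $\bar C_{X^I}\mid_{U_{\phi_1}\cap\dots\cap U_{\phi_r}}$; as the transition functors of that nerve are restrictions along open immersions, hence localizations with continuous right adjoints, the limit is built from iterated recollement squares, and one concludes that $\bar C_{X^I}\mid_U$ is dualizable over $Shv(U)$ and satisfies the formula analogous to (\ref{map_for_Pp_4.1.10}). This closes the induction.

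The main obstacle is exactly this inductive step in the constructible context. Over $\cD$-modules one has $Shv(X^I)\,\iso\,Shv(X)^{\boxtimes I}$ and every category in sight is automatically dualizable, so Raskin's argument is essentially formal; in the constructible setting one must genuinely transport \emph{both} dualizability and the commutation of $-\otimes_{Shv(X^I)}D$ with the defining limits through (a) the recollement along the diagonal and (b) the Zariski gluing over the cover of $U$. What makes this possible is that every transition functor entering the construction — the open restrictions, the restriction away from the diagonal, and (via the hypothesis that $m\colon C^{\otimes2}\to C$ admits a continuous right adjoint) the product maps $C^{\otimes K_2}\to C^{\otimes K_1}$ — admits a continuous right adjoint, so the relevant limits are ``adjointable'' bicartesian squares preserved by the cocontinuous functor $-\otimes_{Shv(X^I)}D$. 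Carefully verifying this adjointability, and the closure of dualizable $Shv(X^I)$-modules under the resulting operations, is the technical heart of the proof, carried out in Appendix~\ref{Sect_proof_Pp_4.1.10_Raskin_dualizability}.
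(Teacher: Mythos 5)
Your overall skeleton---induction on $\mid I\mid$, the cartesian square of Corollary~\ref{Cor_4.1.3}(ii) along the main diagonal, a gluing statement exploiting the continuous right adjoints, and Zariski-locality of dualizability over $U$---is the same as the paper's proof in Appendix~\ref{Sect_proof_Pp_4.1.10_Raskin_dualizability}. But there is a genuine gap at the first step of your reduction: the claim that once $\bar C_{X^{I'}}$ is dualizable, the equivalence (\ref{map_for_Pp_4.1.10}) for $I'$ is ``automatic because tensoring with a dualizable module commutes with all limits''. Dualizability of $M=\bar C_{X^{I'}}$ makes $M\otimes_{Shv(X^{I'})}-$ commute with limits in the \emph{other} variable; in (\ref{map_for_Pp_4.1.10}) the limit is the one defining $M$ itself, while the fixed $D$ is arbitrary and not dualizable. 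In fact, since $M$ and all terms $C_\alpha=C^{\otimes K}\otimes Shv(X^{I'}_{p,d})$ are dualizable, one has $C_\alpha\otimes_{Shv(X^{I'})}D\,\iso\,\Fun_{Shv(X^{I'})}(C_\alpha^{\vee},D)$ and $M\otimes_{Shv(X^{I'})}D\,\iso\,\Fun_{Shv(X^{I'})}(M^{\vee},D)$, so requiring (\ref{map_for_Pp_4.1.10}) for all $D$ amounts to the assertion that the natural map $\underset{\alpha}{\colim}\,C_\alpha^{\vee}\to M^{\vee}$ is an equivalence, i.e. to Corollary~\ref{Cor_4.1.14}(i)---which the paper deduces \emph{from} Proposition~\ref{Pp_4.1.10_Raskin_dualizability}, not conversely. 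This is exactly why the paper's proof, after establishing dualizability via the gluing Lemma~\ref{Lm_D.1.5}, still devotes a separate argument to the ``Moreover'' clause (decomposition of the limit via Corollary~\ref{Cor_2.3.3}, \'etale descent for $D\mid_U$, and a cofinality/induction argument over the opens $X^I_{\phi,d}$), carrying the base-change statement through the induction.

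Because of this, your induction does not close as written: the step identifying $\bar C_{X^I}\mid_{U_\phi}$ with $(\bar C_{X^{I_1}}\boxtimes\bar C_{X^{I_2}})\mid_{U_\phi}$ ``by commuting the exterior product past the limits, each term being dualizable'' is, in the constructible context, precisely an instance of the base-change clause for the smaller sets: one must commute $-\otimes_{Shv(X^{I_j})}(\cdot)$ with the limit defining $\bar C_{X^{I_j}}$ against modules of the form $Shv(X^{I_1}\times X^{I_2})\otimes_{Shv(X^{I_2})}N$, which are not known to be dualizable over the base, so dualizability of the individual terms does not suffice. Two further points need repair or at least argument: the justification ``$-\otimes_{Shv(X^I)}D$ preserves colimits, hence the fibre $=$ cofibre sequence'' is not valid, since $Shv(X^I)-mod$ is not stable---the preservation of the cartesian square under $\otimes_{Shv(X^I)}D$ is the actual content of Lemma~\ref{Lm_D.1.5}(i), proved via the lax-limit (Glue) category, and it also requires the continuous $Shv(X^I)$-linear right adjoint of $\bar C_{X^I}\mid_U\to C\otimes Shv(U)$, which is itself constructed by induction, factorization over $U$ and Zariski descent, a point your proposal does not address; and even granting all of the above, you never assemble the full statement (\ref{map_for_Pp_4.1.10}) for $I$ itself, which requires combining the preserved square with the identification of the two remaining corner terms of the limit over $\Tw(I)^{op}$ (the paper's use of Corollary~\ref{Cor_2.3.3}, \'etale descent for $D\mid_U$, and the cofinality argument of Section~\ref{Sect_4.1.8}).
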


 For $\cD$-modules this is a beautiful result of Raskin (\cite{Ras2}, 6.18.1). The proof given in \select{loc.cit.} does not apply in the constructible context as in (\cite{Ras2}, A.3) it uses the forgetful functor $Shv(S)\to \QCoh(S)$ for $S\in \Sch_{ft}$. Our contribution is to adapt his argument, so that it holds also in the constructible context. The proof of Proposition~\ref{Pp_4.1.10_Raskin_dualizability} is postponed to Appendix~\ref{Sect_proof_Pp_4.1.10_Raskin_dualizability}.  

 By Remark~\ref{Rem_4.1.6} and Proposition~\ref{Pp_4.1.10_Raskin_dualizability}, $\ov{\Fact}(C)$ is a factorization category over $\Ran$, and $\ov{\Fact}(C)$ is dualizable in $Shv(\Ran)-mod$. Moreover, (\ref{map_from_Fact_to_ov_Fact}) is compatible with factorization. 
 
\sssec{} For $I\in fSets$ let $\bar\cG_{I,C}^{\vee}: \Tw(I)\to Shv(X^I)-mod$ be the functor obtained from $\bar\cG_{I,C}$ by passing to the duals in $Shv(X^I)-mod$. Denote by 
$$
\bar\cG_{fSets, C}^{\vee}: fSets^{op}\to Shv(\Ran)-mod
$$ 
the functor obtained from $\bar\cG_{fSets, C}$ by passing to the duals in $Shv(\Ran)-mod$. 

\begin{Cor} 
\label{Cor_4.1.14}
i)  For $I\in fSets$ the canonical pairing between $\underset{\Tw(I)}{\colim}\bar\cG_{I,C}^{\vee}$ and $\underset{\Tw(I)^{op}}{\lim}\bar\cG_{I, C}$ is a duality datum realizing $\underset{\Tw(I)}{\colim}\bar\cG_{I,C}^{\vee}$ as the dual of $\bar C_{X^I}$ in $Shv(X^I)-mod$. 

\smallskip\noindent
ii) The canonical pairing between $\underset{fSets^{op}}{\colim} \bar\cG_{fSets, C}^{\vee}$ and $\underset{fSets}{\lim} \bar\cG_{fSets, C}\,\iso\; \ov{\Fact}(C)$ is a duality datum in $Shv(\Ran)-mod$. 
\end{Cor}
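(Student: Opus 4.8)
The plan is to deduce both statements from \propref{Pp_4.1.10_Raskin_dualizability} by the formalism of (\cite{G}, I.1, 6.3.4), exactly as \propref{Pp_2.4.7}\,ii) was deduced from its part i). The inputs needed are: (a) each term of the diagram is dualizable, (b) the limit is dualizable, (c) the limit commutes with base change $-\otimes D$.

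For i): for $\Sigma=(I\toup{p} J\to K)\in\Tw(I)$ the object $\bar\cG_{I,C}(\Sigma)=C^{\otimes K}\otimes Shv(X^I_{p,d})$ is dualizable in $Shv(X^I)-mod$ --- since $C$ is dualizable in $\DGCat_{cont}$ so is $C^{\otimes K}$, hence $C^{\otimes K}\otimes Shv(X^I)$ is dualizable in $Shv(X^I)-mod$, and $Shv(X^I_{p,d})$ is a dualizable (localization) $Shv(X^I)$-module, so the base change $\bar\cG_{I,C}(\Sigma)=(C^{\otimes K}\otimes Shv(X^I))\otimes_{Shv(X^I)}Shv(X^I_{p,d})$ is dualizable, with dual $\bar\cG^\vee_{I,C}(\Sigma)=(C^\vee)^{\otimes K}\otimes Shv(X^I_{p,d})$. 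The term-wise evaluation maps, composed with the projections from $\bar C_{X^I}=\underset{\Tw(I)^{op}}{\lim}\bar\cG_{I,C}$, assemble into the canonical pairing
$$
\epsilon:\ \Big(\underset{\Tw(I)}{\colim}\,\bar\cG^\vee_{I,C}\Big)\otimes_{Shv(X^I)}\bar C_{X^I}\ \longrightarrow\ Shv(X^I);
$$
dually, the term-wise coevaluations, pushed into the colimit, form a cone over the diagram $\Sigma\mapsto\bar\cG_{I,C}(\Sigma)\otimes_{Shv(X^I)}\underset{\Tw(I)}{\colim}\bar\cG^\vee_{I,C}$, and the \emph{moreover} clause of \propref{Pp_4.1.10_Raskin_dualizability} applied with $D=\underset{\Tw(I)}{\colim}\bar\cG^\vee_{I,C}$ identifies the limit of that diagram with $\bar C_{X^I}\otimes_{Shv(X^I)}D$, producing the copairing $\eta$. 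The two triangle identities are checked after precomposing with the projections $\bar C_{X^I}\to\bar\cG_{I,C}(\Sigma)$, respectively the structure maps $\bar\cG^\vee_{I,C}(\Sigma)\to\underset{\Tw(I)}{\colim}\bar\cG^\vee_{I,C}$, where --- again using the base-change equivalence to slide a tensor factor past the limit --- they reduce to the triangle identities of the term-wise dualities. Since $\bar C_{X^I}$ is dualizable by \propref{Pp_4.1.10_Raskin_dualizability}, $(\epsilon,\eta)$ is a duality datum, so $\underset{\Tw(I)}{\colim}\bar\cG^\vee_{I,C}\,\iso\,(\bar C_{X^I})^\vee$.

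Part ii) is the verbatim analogue over $\Ran$: each $\bar\cG_{fSets,C}(I)=\bar C_{X^I}$ is dualizable in $Shv(X^I)-mod$ by \propref{Pp_4.1.10_Raskin_dualizability}, hence in $Shv(\Ran)-mod$ by Lemma~\ref{Lm_2.1.11}\,ii); $\ov{\Fact}(C)=\underset{fSets}{\lim}\bar\cG_{fSets,C}$ is dualizable in $Shv(\Ran)-mod$ by Corollary~\ref{Cor_2.2.9_dualizability} and \propref{Pp_4.1.10_Raskin_dualizability}; and the role of (c) is played by the base-change equivalence $E\otimes_{Shv(\Ran)}\underset{fSets}{\lim}\bar\cG_{fSets,C}\,\iso\,\underset{I\in fSets}{\lim}(\bar C_{X^I}\otimes_{Shv(\Ran)}E)$ recorded in Section~\ref{Sect_4.1.3_now}, valid since $\Ran$ is $1$-affine. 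The same construction of $\epsilon$ and $\eta$ then gives the asserted duality datum, identifying $\underset{fSets^{op}}{\colim}\bar\cG^\vee_{fSets,C}$ with $\ov{\Fact}(C)^\vee$. The only delicate point --- the single place where \propref{Pp_4.1.10_Raskin_dualizability} is genuinely used rather than soft arguments --- is the assembly of the copairing and of the triangle identities, where \emph{a priori} a limit need not commute with tensoring; this is precisely what the ``moreover'' clause, respectively the $1$-affineness of $\Ran$, repairs.
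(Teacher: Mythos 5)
Your argument is correct and is essentially the paper's own (very terse) proof spelled out: the paper disposes of i) by declaring it ``immediate from Proposition~\ref{Pp_4.1.10_Raskin_dualizability}'' (whose ``moreover'' clause is exactly your input (c), making the termwise coevaluations assemble into a copairing), and of ii) by citing the base-change equivalence of Section~\ref{Sect_4.1.3_now} coming from $1$-affineness of $\Ran$, precisely as you do. You have merely filled in the standard (\cite{G}, I.1, 6.3.4)-type verification that the paper leaves implicit, in the same spirit as its proof of Proposition~\ref{Pp_2.4.7}\,ii).
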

\begin{proof}
i) is immediate from Proposition~\ref{Pp_4.1.10_Raskin_dualizability}.

\smallskip\noindent
ii) follows from Section~\ref{Sect_4.1.3_now}. 
\end{proof}

\begin{Rem} 
\label{Rem_4.1.15_now}
View $C^{\vee}$ as an object of $CoCAlg^{nu}(\DGCat_{cont})$ with the comultiplication $m^{\vee}: C^{\vee}\to (C^{\vee})^{\otimes 2}$. Then the functor $\bar\cG_{I, C}^{\vee}: \Tw(I)\to Shv(X^I)-mod$ identifies canonically with the functor $\bar\cF_{I, C^{\vee}}$ defined in Section~\ref{Sect_5}. Corollary~\ref{Cor_4.1.14} affirms that $\bar C_{X^I}$ and $\ov{C^{\vee}}_{X^I, co}$ are canonically dual to each other.

 Besides, we have canonically $\bar\cG^{\vee}_{fSets, C}\,\iso\, \bar\cF_{fSets, C^{\vee}}$. Thus, $\ov{\Fact}(C)$ and $\ov{\Fact}^{co}(C^{\vee})$ are canonically dual to each other.
\end{Rem}

\sssec{} Under our assumptions, the functor $m^R: C\to C^{\otimes 2}$ defines on $C$ a structure of an object of $CoCAlg^{nu}(\DGCat_{cont})$. Further passing to the duals, $C^{\vee}$ is promoted to an object of $CAlg^{nu}(\DGCat_{cont})$ with the product $(m^R)^{\vee}: (C^{\vee})^{\otimes 2}\to C^{\vee}$, compare with Section~\ref{Sect_2.5.5}. 

 Under our assumptions, we may pass to right adjoints in the functor $\bar\cG_{I, C}$ and get a functor $\bar\cG_{I,C}^R: \Tw(I)\to Shv(X^I)-mod$. 
 In Section~\ref{Sect_5} we will define the functor 
$$
\bar\cF_{I, C}: \Tw(I)\to Shv(X^I)-mod
$$ 
attached to $(C, m^R)\in CoCAlg^{nu}(\DGCat_{cont})$. We have canonically $\bar\cF_{I,C}\,\iso\, \bar\cG_{I,C}^R$. 
  
\sssec{} For the rest of Section~\ref{Sect_4} we assume in addition that $C$ is compactly generated and $C\in CAlg(\DGCat_{cont})$, that is, $C$ is unital.

\begin{Lm} $C(X)=C\otimes Shv(X)$ is ULA over $Shv(X)$. 
\end{Lm}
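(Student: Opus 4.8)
The plan is to verify directly the conditions of Definition~\ref{Def_ULA_module-category} for the $Shv(X)$-module $C(X) = C\otimes Shv(X)$, the module structure being induced from the $\otimes^!$-symmetric monoidal structure on the second factor. Concretely, I will check that $C(X)$ is dualizable in $Shv(X)-mod$ and that it is generated under colimits by objects that are ULA over $Shv(X)$. Dualizability is immediate: $C$ is compactly generated, hence dualizable in $\DGCat_{cont}$ with dual $C^\vee=\Fun(C,\Vect)$, and applying the base change functor $-\otimes Shv(X):\DGCat_{cont}\to Shv(X)-mod$ to a duality datum for $C$ exhibits $C^\vee\otimes Shv(X)$ as the $Shv(X)$-linear dual of $C(X)$. (This is the statement recorded in Remark~\ref{Rem_4.1.15_now} in the case $I=\{*\}$.)

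For the generation, fix a set $\{c_\alpha\}$ of compact generators of $C$ and a set $\{F_\beta\}$ of compact generators of $Shv(X)$ (which exists since $X\in\Sch_{ft}$). The objects $c_\alpha\boxtimes F_\beta\in C(X)$, i.e.\ the images of $c_\alpha\otimes F_\beta$ under $C\otimes Shv(X)$, are compact by Remark~\ref{Rem_3.6.3_now}, and the full cocomplete subcategory of $C(X)$ they generate contains every $c\otimes F$ with $c\in C$, $F\in Shv(X)$ by two applications of (\cite{G}, ch.\ I.1, 7.4.2), hence equals $C(X)$ by (\cite{G}, ch.\ I.1, 8.2.6). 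So it suffices to see that each $c_\alpha\boxtimes F_\beta$ is ULA over $Shv(X)$; this will also give that $C(X)$ is compactly generated by ULA objects, hence the last assertion.

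The crux is therefore this last step, and it is the only non-formal point. Here one uses that $C$ is a \emph{constant} family along $X$: since $X$ is a smooth curve, every compact object of $Shv(X)$, in particular $F_\beta$, is ULA over $\Spec k$ (cf.\ Section~\ref{appendix_some_generalities}), and $c_\alpha$ is compact in $C$; the product/base-change principle for ULA objects from Section~\ref{appendix_some_generalities}, together with Proposition~\ref{Pp_3.6.6}, then yields that $c_\alpha\boxtimes F_\beta$ is ULA over $Shv(X)$. The main obstacle is precisely to set up cleanly the statement that, for $C$ constant along $X$, the ULA-over-$Shv(X)$ objects of $C\otimes Shv(X)$ are generated by external products of compact objects of $C$ with ULA-over-a-point objects of $Shv(X)$ (equivalently, that the unit functor $C\to C(X)$ carries compact objects to ULA ones, and that the $Shv(X)$-action by a compact object preserves ULA-ness); once this product principle is in place, everything above is formal.
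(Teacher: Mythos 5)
Your overall strategy contains a false step at exactly the point you yourself identify as the crux. Definition~\ref{Def_ULA_module-category} asks that $C(X)$ be generated \emph{as a $Shv(X)$-module} by ULA objects, i.e.\ by objects of the form $c\otimes m$ with $c\in C(X)^{ULA}$ and $m\in Shv(X)^c$; it does not ask that these generators be ULA themselves, and it does not involve dualizability (so your first reduction is off-target, and the dualizability discussion is superfluous). Your plan instead is to show that each $c_\alpha\boxtimes F_\beta$, for $F_\beta$ an arbitrary compact generator of $Shv(X)$, is ULA over $Shv(X)$, resting on a ``product principle'' which includes the assertion that the $Shv(X)$-action by a \emph{compact} object preserves ULA-ness. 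That assertion is false: ULA objects are only stable under the action of \emph{dualizable} objects of $(Shv(X),\otimes^!)$ (this is exactly what is noted after Remark~\ref{Rem_3.6.3_now}), and compact constructible objects are generally not $\otimes^!$-dualizable. Concretely, for $C=\Vect$ (which satisfies all the standing hypotheses) your claim would say that every compact object of $Shv(X)$ is ULA over $Shv(X)$ itself; but ULA relative to the identity forces strict $Shv(X)$-linearity of $\und{\HOM}(F,-)$, which fails already for a skyscraper or a $j_!$-extension from a proper open — only lisse-type (dualizable) objects qualify. Note also that Remark~\ref{Rem_3.6.3_now}, which you invoke for compactness of $c_\alpha\boxtimes F_\beta$, presupposes ULA-ness of the first factor, so that citation is circular as written (compactness of $c\otimes F$ for $c,F$ compact is standard and does not need it).

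The statement you actually need — and which you mention parenthetically but defer — is only the first half of your ``principle'': for $z\in C^c$, the object $z\otimes\omega_X\in C(X)$ is ULA over $Shv(X)$. This is immediate: $\und{\HOM}_{C(X)}(z\otimes\omega_X,\, y\otimes M)\,\iso\,\HOM_C(z,y)\otimes M$, which is continuous and strictly $Shv(X)$-linear because $z$ is compact and $\omega_X$ is the $\otimes^!$-unit (this is where constancy of $C$ along $X$ enters, and it needs no base-change machinery or Proposition~\ref{Pp_3.6.6}). Then the objects $z\otimes K=(z\otimes\omega_X)\otimes^! K$ with $z\in C^c$, $K\in Shv(X)^c$ generate $C(X)$, which is precisely the form of generation required by Definition~\ref{Def_ULA_module-category}. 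This is the paper's (two-line) argument; your proposal would be repaired by replacing the false strengthening about $c_\alpha\boxtimes F_\beta$ with this direct verification for $z\otimes\omega_X$.
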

\begin{proof}
If $z\in C^c$ then $z\otimes\omega_X\in C(X)$ is ULA over $Shv(X)$. The objects of the form $z\otimes K\in C(X)$ with $K\in Shv(X)^c, z\in C^c$ generate $C(X)$.
\end{proof}

 So, the assumptions of Section~\ref{Sect_2.5.9} hold for $C(X)\in CAlg(Shv(X)-mod)$.  
 
\begin{Pp} 
\label{Pp_4.1.19}
i) For $I\in fSets$ the functor $\zeta_I: C_{X^I}\to \bar C_{X^I}$ is an equivalence. So, (\ref{map_from_Fact_to_ov_Fact}) is also an equivalence.

\smallskip\noindent
ii) for each $(I\to J\to K)\in Tw(I)$ the projection $\bar C_{X^I}\to Shv(X^I_{p, d})\otimes C^{\otimes K}$ admits a continuous $Shv(X^I)$-linear right adjoint.
\end{Pp}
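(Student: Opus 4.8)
The plan is to prove (ii) first, by induction on $|I|$, and to deduce (i) from it by a dévissage along the stratification of $X^I$ by diagonals. Two inputs will be used throughout: $\bar C_{X^I}$ is dualizable over $Shv(X^I)$ by Proposition~\ref{Pp_4.1.10_Raskin_dualizability}, while $C_{X^I}$ is ULA over $Shv(X^I)$ by Lemma~\ref{Lm_ULA_general_lemma}; and, by Remark~\ref{Rem_4.1.6} together with Proposition~\ref{Pp_4.1.10_Raskin_dualizability}, $\ov{\Fact}(C)$ is a genuine factorization category, so the factorization equivalences $\bar C_{X^I}\mid_{X^I_{\phi, d}}\,\iso\,(\boxtimes_{i\in I'}\bar C_{X^{I_i}})\mid_{X^I_{\phi, d}}$ are available for every $\phi\colon I\to I'$.

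For (ii): if $|I|=1$ the projection is the identity of $C\otimes Shv(X)$. For $|I|\ge 2$ and $\Sigma=(I\toup{p}J\to K)$ I would factor the projection $\bar C_{X^I}\to C^{\otimes K}\otimes Shv(X^I_{p,d})$ as the open localization $\bar C_{X^I}\to \bar C_{X^I}\mid_{X^I_{p,d}}$ followed by a further map; the first has a continuous $Shv(X^I)$-linear right adjoint because $\bar C_{X^I}$ is dualizable and $j_*$ is continuous for the open immersion $j\colon X^I_{p,d}\hookrightarrow X^I$. Applying the factorization equivalence for $\phi=p$ identifies $\bar C_{X^I}\mid_{X^I_{p,d}}$ with $(\boxtimes_{j\in J}\bar C_{X^{I_j}})\mid_{X^I_{p,d}}$, and under it the remaining map becomes the exterior product over $j\in J$ of the projections $\bar C_{X^{I_j}}\to C\otimes Shv(X^{I_j})$ onto the component $(I_j\to *\to *)$, followed by the product map $C^{\otimes J}\to C^{\otimes K}$ along $J\to K$ and restriction to $X^I_{p,d}$. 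The factorwise projections have continuous $Shv(X^{I_j})$-linear right adjoints by the inductive hypothesis ($|I_j|<|I|$), the product map has one since $m$ does, and the restriction is a base change; composing gives (ii).

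For (i): $\zeta_I\colon C_{X^I}\to\bar C_{X^I}$ is $Shv(X^I)$-linear, compatible with $!$-restrictions along diagonals, and by its construction in Section~\ref{Sect_4.1.10} satisfies $\zeta_I\circ\Loc=\Loc$. Over each locally closed stratum $\oo{X}{}^J\hookrightarrow X^I$ with $J\in Q(I)$ it therefore restricts to the identity: $!$-restriction to $X^J$ turns $\zeta_I$ into $\zeta_J$, then $*$-restriction to $\oo{X}{}^J$ turns the two $\Loc$ functors into the respective factorization equivalences, so $\zeta_J\mid_{\oo{X}{}^J}=\id$. Thus $\zeta_I$ is an equivalence on every stratum, and I would conclude by the dévissage Proposition~\ref{Pp_3.7.8_devissage_using_ULA_preservation}, whose remaining hypotheses are that the source $C_{X^I}$ be ULA over $Shv(X^I)$ (Lemma~\ref{Lm_ULA_general_lemma}) and that $\zeta_I$ preserve ULA objects. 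Since the ULA objects of $C_{X^I}$ are generated under colimits, shifts and retracts by objects $\Loc(\boxtimes_{i\in I}c_i)$ with the $c_i$ ULA over $Shv(X)$, twisted by compact sheaves on $X^I$ (proof of Lemma~\ref{Lm_ULA_general_lemma}), and $\zeta_I$ carries these to the analogous objects of $\bar C_{X^I}$, it suffices that $\Loc$ of Section~\ref{Sect_4.1.5_now} preserve ULA objects; by Proposition~\ref{Pp_3.6.6} and the dualizability of $\bar C_{X^I}$ this reduces to showing this $\Loc$ admits a continuous $Shv(X^I)$-linear right adjoint, which is proved by the same induction as in (ii), factoring $\Loc$ through the restrictions to $X^I\setminus X$ and to the small diagonal. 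The statement for $\Fact(C)\to\ov{\Fact}(C)$ then follows, being an equivalence after $\otimes_{Shv(\Ran)}Shv(X^I)$ for all $I$.

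The step I expect to be the main obstacle is exactly this last right-adjointness (equivalently, ULA preservation) for $\Loc$ in the open-subset construction: in the closed-subset setting it is the content of Lemma~\ref{Lm_3.2.6_for_sheaves_of_cat}, but here $\Loc$ is not a coprojection into an obvious colimit presentation, so its right adjoint has to be assembled from the factorization description stratum by stratum — which is where the hypotheses that $m$ has a continuous right adjoint and that $C$ is compactly generated and unital are genuinely used. An alternative that keeps everything in a single induction on $|I|$ is to compare the pushout presentation of $C_{X^I}$ from Corollary~\ref{Cor_2.2.14} with the pullback presentation of $\bar C_{X^I}$ from Corollary~\ref{Cor_4.1.3}: $\zeta_I$ restricts to $\boxtimes_i\zeta_{I_i}$ over $X^I\setminus X$ (equivalences by induction) and is the identity over the small diagonal, and one must check that the pushout square of Corollary~\ref{Cor_2.2.14} is also cartesian — which holds because the structure functor $C(X)\to C_{X^I}$ occurring in it has a continuous right adjoint by Lemma~\ref{Lm_3.2.6_for_sheaves_of_cat}.
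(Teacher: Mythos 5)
Your induction for (ii) breaks down precisely at the essential case. The factorization reduction works only when $\mid J\mid\ge 2$: for $\Sigma=(I\toup{p}J\to K)$ with $\mid J\mid=1$ one has $X^I_{p,d}=X^I$, the unique "factor" is $I_j=I$ itself, and the parenthetical "$\mid I_j\mid<\mid I\mid$" fails, so nothing in your scheme produces a right adjoint to the projection $h\colon \bar C_{X^I}\to C\otimes Shv(X^I)$ at $(I\to *\to *)$. This is not a boundary case one can wave away: it is the crux, and it then undermines your proof of (i) as well, since there you reduce everything to the claim that $\Loc\colon C^{\otimes I}\otimes Shv(X^I)\to\bar C_{X^I}$ of Section~\ref{Sect_4.1.5_now} (equivalently $\zeta_I\comp\Loc$) admits a continuous $Shv(X^I)$-linear right adjoint, and you propose to get this "by the same induction as in (ii)" from the cartesian square of Corollary~\ref{Cor_4.1.3}. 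But a functor into a pullback does not inherit a right adjoint from right adjoints of its components without an extra compatibility (one needs, at the least, a right adjoint to $h\mid_U$ and a gluing statement in the style of Lemma~\ref{Lm_D.1.8}), and you supply no such mechanism. So the step you yourself flag as "the main obstacle" is exactly the step that is not proved. In the paper this point is handled by a direct computation: for $c\in(C^{\otimes I})^c$ one shows $\zeta_I(\Loc(c\otimes\omega))$ is ULA over $Shv(X^I)$ by writing $\und{\HOM}_{\bar C_{X^I}}(\zeta_I\Loc(c\otimes\omega),M)$ as the finite limit over $\Tw(I)^{op}$ of relative inner Homs, commuting $L\otimes^!(-)$ past this finite limit by exactness, and using that $c\otimes\omega$ is ULA; combined with Lemma~\ref{Lm_3.5.2_generated_under_colim} and Proposition~\ref{Pp_3.6.6} this yields the continuous $Shv(X^I)$-linear right adjoint of $\zeta_I$, and the d\'evissage over $U=X^I-X$ and the main diagonal (your outline for this part is correct) then gives (i). Statement (ii) is afterwards deduced from (i), not proved first: under $\zeta_I$ the projection corresponds to the compatible family (\ref{map_from_Dennis_to_Sam}), and passing to right adjoints in $\cF_{I,C}$ (Lemma~\ref{Lm_3.2.6_for_sheaves_of_cat}) produces the adjoint $Shv(X^I_{p,d})\otimes C^{\otimes K}\to\lim\cF^R_{I,C}\iso C_{X^I}\iso\bar C_{X^I}$. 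Your proposed order (ii)$\Rightarrow$(i) is therefore the source of the circularity.

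Your fallback is also not valid as stated: a cocartesian square in $Shv(X^I)$-mod whose arrows admit continuous right adjoints is not thereby cartesian — passing to right adjoints identifies the colimit with the limit of the square of right adjoints, which is a different square — so "the structure functor $C(X)\to C_{X^I}$ has a continuous right adjoint" does not establish what you need; moreover the corners of Corollary~\ref{Cor_2.2.14} (bottom row living over the main diagonal, with corner $C(X)$) do not match those of Corollary~\ref{Cor_4.1.3}~ii) (corner $C\otimes Shv(X^I)$), so $\zeta_I$ does not compare the two squares corner by corner. What is sound in your write-up — the identification $\zeta_I\comp\Loc\iso\Loc$, the factorization reduction in (ii) for $\mid J\mid\ge 2$, and the d\'evissage scheme via Proposition~\ref{Pp_3.7.8_devissage_using_ULA_preservation} — still leaves the ULA statement for $\zeta_I\Loc(c\otimes\omega)$ (equivalently the right adjoint at $(I\to *\to *)$) unproved, and that is exactly where compact generation and unitality of $C$ must enter via an argument such as the inner-Hom computation above.
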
 
\begin{proof} 
For $S\in\Sch_{ft}$, $E\in Shv(S)-mod, x, x'\in E$ write $\und{\HOM}_E(x, x')\in Shv(S)$ for the relative inner hom for the $Shv(S)$-action. 

 By Lemma~\ref{Lm_ULA_general_lemma}, if $I\in fSets$ then $C_{X^I}$ is ULA over $Shv(X^I)$. 
  
\medskip
\noindent 
i) We argue by induction on $\mid I\mid$, the case $\mid I\mid=1$ is evident.

\smallskip\noindent
{\bf Step 1}. We claim that $\zeta_I: \cC_{X^I}\to \bar\cC_{X^I}$ admits a $Shv(X^I)$-linear continuous right adjoint. Recall that 
$$
\Loc: C^{\otimes I}\otimes Shv(X^I)\to C_{X^I}
$$ 
generates $C_{X^I}$ under colimits by Lemma~\ref{Lm_3.5.2_generated_under_colim}. 
Using Proposition~\ref{Pp_3.6.6}, it suffices to show that if $c\in (C^{\otimes I})^c$ then 
$$
\zeta_I(\Loc(c\otimes \omega))\in \bar C_{X^I}
$$ 
is ULA over $Shv(X^I)$. Indeed, the objects of the form $c\otimes \cK$ with $\cK\in Shv(X^I)^c$, $c\in (C^{\otimes I})^c$ generate $C^{\otimes I}\otimes Shv(X^I)$. Let $c\in (C^{\otimes I})^c$. 

 By (\cite{Ly}, 2.4.7), if $L\in \bar C_{X^I}$ is such that for any $(I\toup{p} J\to K)\in Tw(I)$, the image of $L$ in $Shv(X^I_{p, d})\otimes \cC^{\otimes K}$ is compact then $L$ is compact in $\bar C_{X^I}$, because $Tw(I)$ is finite. For $\cK\in Shv(X^I)^c$ the image of $\zeta_I(\Loc(c\otimes \cK))$ in each $Shv(X^I_{p, d})\otimes \cC^{\times K}$ is compact, so $$
\zeta_I(\Loc(c\otimes \cK))\in (\bar C_{X^I})^c.
$$ 
This shows that $\zeta$ admits a continuous right adjoint $\zeta^R$. 
 
  Let $L\in Shv(X^I), M\in\bar C_{X^I}$.  We must show that the natural map
\begin{equation}
\label{map_for_Lm_3.4.2_to_be_isom}
 L\otimes^! \und{\HOM}_{\bar C_{X^I}}(\zeta(\Loc(c\otimes\omega)), M)\to \und{\HOM}_{\bar C_{X^I}}(\zeta(\Loc(c\otimes\omega), L\otimes M)
\end{equation}
 is an isomorphism in $Shv(X^I)$. For $\Sigma=(I\toup{p} J\to K)\in Tw(I)$ write $M_{\Sigma}$ for the projection of $M$ to $Shv(X^I_{p, d})\otimes C^{\otimes K}$, write
$f_{\Sigma}$ for the composition 
$$
Shv(X^I)\otimes C^{\otimes I}\toup{\Loc} C_{X^I}\toup{\zeta_I}\bar C_{X^I}\to Shv(X^I_{p, d})\otimes C^{\otimes K}.
$$ 

 One has 
$$
\und{\HOM}_{\bar C_{X^I}}(\zeta(\Loc(c\otimes\omega)), M)\,\iso\,\underset{(I\toup{p} J\to K)\in Tw(I)^{op}}{\lim} \und{\HOM}_{Shv(X^I_{p, d})\otimes C^{\otimes K}}(f_{\Sigma}(c\otimes\omega), M_{\Sigma})
$$
in $Shv(X^I)$. Clearly, $f_{\Sigma}$ has a $Shv(X^I)$-linear continuous right adjoint $f_{\Sigma}^R$, and
$$
\und{\HOM}_{Shv(X^I_{p, d})\otimes C^{\otimes K}}(f_{\Sigma}(c\otimes\omega), M_{\Sigma})\,\iso\,\und{\HOM}_{Shv(X^I)\otimes C^{\otimes I}}(c\otimes \omega, f_{\Sigma}^R(M_{\Sigma}))
$$

The key point is that the functor  $Shv(X^I)\to Shv(X^I), \; \cdot\mapsto L\otimes^! \cdot$ commutes with finite limits, as this functor is exact. So, the LHS of (\ref{map_for_Lm_3.4.2_to_be_isom}) identifies with
$$
\underset{(I\toup{p} J\to K)\in Tw(I)^{op}}{\lim} L\otimes^! \und{\HOM}_{Shv(X^I)\otimes C^{\otimes I}}(c\otimes \omega, f_{\Sigma}^R(M_{\Sigma}))
$$
Since $c\otimes\omega\in Shv(X^I)\otimes C^{\otimes I}$ is ULA over $Shv(X^I)$, the latter limit becomes
\begin{multline*}
\underset{(I\toup{p} J\to K)\in Tw(I)^{op}}{\lim} \und{\HOM}_{Shv(X^I)\otimes C^{\otimes I}}(c\otimes \omega, L\otimes f_{\Sigma}^R(M_{\Sigma}))\,\iso\\
\underset{(I\toup{p} J\to K)\in Tw(I)^{op}}{\lim} \und{\HOM}_{Shv(X^I)\otimes C^{\otimes I}}(c\otimes \omega, f_{\Sigma}^R(L\otimes M_{\Sigma}))\,\iso\\
\underset{(I\toup{p} J\to K)\in Tw(I)^{op}}{\lim}\und{\HOM}_{Shv(X^I_{p, d})\otimes C^{\otimes K}}(f_{\Sigma}(c\otimes \omega), L\otimes M_{\Sigma})\,\iso\,
\und{\HOM}_{\bar C_{X^I}}(\zeta(\Loc(c\otimes \omega)), L\otimes M)
\end{multline*}

\medskip\noindent
{\bf Step 2} Let $U\subset X^I$ be the complement to the main diagonal $X\hook{} X^I$.
By Proposition~\ref{Pp_3.7.8_devissage_using_ULA_preservation}, it suffices to show now that $\zeta$ becomes an isomorphism after applying $\cdot\otimes_{Shv(X^I)} Shv(X)$ and $\cdot\otimes_{Shv(X^I)} Shv(U)$. But both properties follow from factorization and induction hyppothesis. For the open part, we used that the union of $X^I_{p, d}$ for $p: I\to J$ in $fSets$ with $\mid J\mid>1$ is $U$. We also used Zariski descent: if $\nu: B\to B'$ is a map in $Shv(U)-mod$, which becomes an equvalence after Zariski localization then $\nu$ is an equivalence. 
So, i) is proved.

\medskip\noindent
ii) For any $(I\to J_1\to K_1)\in Tw(I)$ the functor (\ref{map_from_Dennis_to_Sam}) admits a continuous $Shv(X^I)$-linear right adjoint. Recall that each transition functor in the diagram $\cF_{I, C}$ admits a $Shv(X^I)$-linear continuous right adjoint by Lemma~\ref{Lm_3.2.6_for_sheaves_of_cat}. Passing to the right adjoints in $Shv(X^I)-mod$, we get a canonical map 
$$
Shv(X^I_{p, d})\otimes \cC^{\otimes K}\to \underset{Tw(I)^{op}}{\lim} \cF^R_{I, C}\,\iso\, C_{X^I}
$$ 
in $Shv(X^I)-mod$. By (\cite{Ly}, 9.2.6), this is the desired $Shv(X)$-linear continuous right adjoint to the projection $\bar C_{X^I}\to Shv(X^I_{p, d})\otimes \cC^{\otimes K}$.
\end{proof}

\sssec{} By Proposition~\ref{Pp_4.1.19}, we may pass to right adjoints in the functor $\bar\cG_{I,C}: Tw(I)^{op}\to Shv(X^I)-mod$ and get a functor denoted $\bar\cG_{I,C}^R: Tw(I)\to Shv(X^I)-mod$. Moreover, by the above lemma we may pass to right adjoints in the limit diagram $^{\triangleleft}(Tw(I)^{op})\to Shv(X^I)-mod$ of the functor $\bar\cG_{I,C}$, this produces a functor denoted 
$$
(\bar\cG_{I,C}^R)^{\triangleright}: Tw(I)^{\triangleright}\to Shv(X^I)-mod, 
$$
whose value on the final object is $\bar C_{X^I}$. In other words, we constructed a map in $Shv(X^I)-mod$
\begin{equation}
\label{map_another_one_to_barC_X,I}
\underset{Tw(I)}{\colim}  \; \bar\cG_{I,C}^R\to \bar C_{X^I}.
\end{equation}

Viewing $(C, m^R)\in CoCAlg^{nu}(\DGCat_{cont})$, recall that $\bar\cG_{I,C}^R\,\iso\, \bar\cF_{I, C}$ canonically. In the notations of Section~\ref{Sect_5}, (\ref{map_another_one_to_barC_X,I}) is a functor $\bar C_{X^I, co}\to \bar C_{X^I}$. 

\begin{Pp} Assume that the unit $u: \Vect\to C$ has a continuous right adjoint $u^R: C\to \Vect$. Then the functor (\ref{map_another_one_to_barC_X,I}) an equivalence.
\end{Pp}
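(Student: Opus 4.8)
The plan is to reinterpret the source of (\ref{map_another_one_to_barC_X,I}) and then prove the comparison map is an equivalence by dévissage along the main diagonal. Recall that $(C,m^R)$ is a unital cocommutative coalgebra in $\DGCat_{cont}$ with counit $u^R$; via the identification $\bar\cF_{I,C}\iso\bar\cG^R_{I,C}$ of \secref{Sect_5} (with $C$ viewed as a coalgebra through $m^R$) the source of (\ref{map_another_one_to_barC_X,I}) is $\bar C_{X^I,co}$, the restriction to $X^I$ of $\ov{\Fact}^{co}((C,m^R))$, and (\ref{map_another_one_to_barC_X,I}) is the canonical map $\bar C_{X^I,co}\to\bar C_{X^I}$. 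I would argue by induction on $\mid I\mid$, the case $\mid I\mid=1$ being trivial. For $\mid I\mid\ge2$, write $U=X^I\setminus X$ for the complement of the main diagonal $X\hook{}X^I$; the aim is to apply the ULA dévissage of \propref{Pp_3.7.8_devissage_using_ULA_preservation} to this closed--open decomposition of $X^I$.

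First I would note that $\bar C_{X^I,co}$ is dualizable over $Shv(X^I)$: by \remref{Rem_4.1.15_now} and \corref{Cor_4.1.14} applied to $C^\vee$, regarded as a commutative algebra via $(m^R)^\vee$ (whose continuous right adjoint is $m^\vee$, the dual of $m$), $\bar C_{X^I,co}$ is canonically the dual of $\ov{C^\vee}_{X^I}$, which is dualizable by \propref{Pp_4.1.10_Raskin_dualizability}; consequently it also factorizes (an analogue of \remref{Rem_3.1.8} and \remref{Rem_4.1.6}). Then come the two restrictions. Over $U$: since $U$ is the union of the opens $X^I_{\phi,d}$ over the surjections $\phi\colon I\to I'$ with $\mid I'\mid>1$, and both $\bar C_{X^I,co}$ and $\bar C_{X^I}$ factorize (the latter by \remref{Rem_4.1.6} and \propref{Pp_4.1.10_Raskin_dualizability}), the restriction of the comparison map to each $X^I_{\phi,d}$ is $\mathop{\boxtimes}\limits_{i\in I'}$ of the comparison maps for the $I_i$, which are equivalences by the inductive hypothesis since $\mid I_i\mid<\mid I\mid$; hence the comparison is an equivalence over $U$. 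Over $X$: using \corref{Cor_2.3.3} for the colimit defining $\bar C_{X^I,co}$ and \corref{Cor_4.1.3}~i) for $\bar C_{X^I}$, together with the fact that $X^I_{p,d}\cap X=\emptyset$ unless $p$ is constant (so that only the term indexed by $(I\to*\to*)$ survives on $X$), both sides restrict canonically to $C\otimes Shv(X)$ and the comparison map is the identity there.

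The hard part will be verifying the hypotheses of \propref{Pp_3.7.8_devissage_using_ULA_preservation}: that $\bar C_{X^I,co}$ is ULA over $Shv(X^I)$ and that the comparison map preserves ULA objects (that $\bar C_{X^I}\iso C_{X^I}$ is ULA being a consequence of \propref{Pp_4.1.19} and \lemref{Lm_ULA_general_lemma}). This is precisely where the continuity of $u^R$ is used: it makes $1_C\in C$ compact, hence $C\otimes Shv(X^I)$ compactly generated by the objects $c\otimes\cK$ with $c\in C^c$, $\cK\in Shv(X^I)^c$; combined with the continuity of $m^R$ (hence of all products $C^{\otimes K_2}\to C^{\otimes K_1}$ occurring in $\bar\cG^R_{I,C}$) and of the $j_*$'s, every value $C^{\otimes K}\otimes Shv(X^I_{p,d})$ of the diagram $\bar\cG^R_{I,C}$ is compactly generated and the transition maps and structure functors preserve compact objects. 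Re-running, for this colimit-over-opens diagram, the compactness bookkeeping of \lemref{Lm_ULA_general_lemma} and of Step~1 of the proof of \propref{Pp_4.1.19} --- reducing assertions over $X^I$ to their restrictions over $X$ and over $U$ by \corref{Cor_2.3.3}, exactly as in the previous paragraph --- then yields that $\bar C_{X^I,co}$ is ULA over $Shv(X^I)$, that the comparison map has a continuous $Shv(X^I)$-linear right adjoint, and that it preserves ULA objects. Granting this, \propref{Pp_3.7.8_devissage_using_ULA_preservation} and the two restriction computations above finish the proof that (\ref{map_another_one_to_barC_X,I}) is an equivalence.
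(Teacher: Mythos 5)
Your reduction of (\ref{map_another_one_to_barC_X,I}) to a comparison functor $\bar C_{X^I, co}\to \bar C_{X^I}$, and the dévissage skeleton (induction on $\mid I\mid$, factorization over $U=X^I-X$, restriction to the main diagonal, then \propref{Pp_3.7.8_devissage_using_ULA_preservation}) are reasonable and parallel the proof of \propref{Pp_4.1.19}. The gap is exactly in what you call the hard part: the hypotheses of \propref{Pp_3.7.8_devissage_using_ULA_preservation} are never actually verified, and the mechanism you sketch for them fails. You assert that continuity of $u^R$ and $m^R$ forces the transition maps and structure functors of the diagram $\bar\cF_{I,C}\,\iso\,\bar\cG^R_{I,C}$ to preserve compact objects, and that one can then re-run the bookkeeping of \lemref{Lm_ULA_general_lemma} and Step~1 of \propref{Pp_4.1.19}. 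But those transition functors are built from iterated comultiplications $m^R$ and from $j_*$ for open embeddings, and these need not preserve compacts: already in the motivating example $C=\Rep(\check{G})$ the functor $m^R$ sends the unit to $\cO_{\check{G}}\in\Rep(\check{G}\times\check{G})$, which is not compact (and in the $\cD$-module setting $j_*$ need not preserve coherence either). The compactness/ULA arguments of \propref{Pp_4.1.19} work because $\bar C_{X^I}$ is a finite \emph{limit}, where compactness can be tested termwise; for the \emph{colimit} $\bar C_{X^I,co}$ the analogous step would require the insertion functors to admit continuous $Shv(X^I)$-linear right adjoints, i.e. precisely the compact-preservation that fails. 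So neither the ULA property of $\bar C_{X^I,co}$, nor preservation of ULA objects, nor a continuous $Shv(X^I)$-linear right adjoint for the comparison functor is established; note that ULA-ness of $\bar C_{X^I,co}$ is essentially equivalent to the proposition itself (a posteriori it holds because $\bar C_{X^I,co}\iso C_{X^I}$), so a direct attack on it is circular in difficulty.

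The paper circumvents exactly this obstruction by duality, and you already have the relevant duality in hand but use it only to get dualizability of $\bar C_{X^I,co}$. Namely, (\ref{map_another_one_to_barC_X,I}) factors as $\bar C_{X^I, co}\toup{\zeta_{I, co}} C_{X^I, co}\,\iso\, C_{X^I}\toup{\zeta_I}\bar C_{X^I}$, the middle equivalence being \remref{Rem_5.1.9} (pass to left adjoints, using $\com^L=m$). The third functor is an equivalence by \propref{Pp_4.1.19}. For the first, by Section~\ref{Sect_5.1.10} (resting on \remref{Rem_4.1.15_now} and \corref{Cor_4.1.14}), $\zeta_{I,co}$ is dual in $Shv(X^I)-mod$ to the functor $(C^\vee)_{X^I}\to \ov{C^\vee}_{X^I}$ attached to the \emph{unital} commutative algebra $C^\vee$ with product $(m^R)^\vee$ and unit $(u^R)^\vee$ (here is where continuity of $u^R$ enters), whose product has the continuous right adjoint $m^\vee$ and which is compactly generated; so it is again an equivalence by \propref{Pp_4.1.19}, and hence so is $\zeta_{I,co}$. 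This is precisely Theorem~\ref{Thm_5.1.11}, which the paper simply invokes. If you want to keep your direct dévissage instead, you must supply a genuine proof that the comparison functor admits a continuous $Shv(X^I)$-linear right adjoint (or that $\bar C_{X^I,co}$ is ULA and the functor preserves ULA objects); as it stands this is missing.
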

\begin{proof}  
View $C$ as an object of $CoCAlg(\DGCat_{cont})$ with the comultiplication $m^R: C\to C^{\otimes2}$ and counit $u^R: C\to \Vect$. In the notations of Section~\ref{Sect_5} the functor (\ref{map_another_one_to_barC_X,I}) is the composition
$$
\bar C_{X^I, co}\toup{\zeta_{X^I, co}} C_{X^I, co}\,\iso\, C_{X^I}\toup{\zeta_I} \bar C_{X^I},
$$
where the middle equivalence is given by Remark~\ref{Rem_5.1.9}. Our claim follows now from Theorem~\ref{Thm_5.1.11} in Section~\ref{Sect_5}. 
\end{proof}

\section{Factorization categories attached to constant cocommutative coalgebras}
\label{Sect_5}

\ssec{} In this section we assume $C\in CoCAlg^{nu}(\DGCat_{cont})$. We give a construction of a factorization category associated to $C$ in a manner dual to that of S. Raskin (\cite{Ras2}, Section~6), that is, dual to the construction of Section~\ref{Sect_4}. 

\sssec{} Let $I\in fSets$. Consider the functor
$$
\bar\cF_{I, C}: \Tw(I)\to Shv(X^I)-mod
$$
sending $(I\toup{p} J\to K)$ to $C_{\Sigma}=C^{\otimes K}\otimes Shv(X^I_{p, d})$. It sends a map (\ref{morphism_in_Tw_v2}) to the composition
$$
C^{\otimes K_1}\otimes Shv(X^I_{p_1, d})\to C^{\otimes K_1}\otimes Shv(X^I_{p_2, d})\to
C^{\otimes K_2}\otimes Shv(X^I_{p_2, d}), 
$$
where for $j: X^I_{p_1, d}\hook{} X^I_{p_2, d}$ the first map comes from $j_*: Shv(X^I_{p_1, d})\to Shv(X^I_{p_2, d})$, the second one from the comultiplication $\com: C^{\otimes K_1}\to C^{\otimes K_2}$. Set
$$
\bar C_{X^I, co}=\underset{\Tw(I)}{\colim} \; \bar\cF_{I, C}
$$
The subscript $co$ everywhere means that we apply some construction to a coalgebra.

\sssec{Example} For $I=\{1,2\}$, $U=X^I-X$, and the open embedding $j: U\to X^I$ one gets a cocartesian square in $Shv(X^I)-mod$
$$
\begin{array}{ccc}
C\otimes Shv(U) & \toup{j_*} & C\otimes Shv(X^I)\\
\downarrow\lefteqn{\scriptstyle \com} &&\downarrow\\
C^{\otimes 2}\otimes Shv(U) & \to & \bar C_{X^I, co}.
\end{array}
$$

\begin{Lm} For a map $f: I\to I'$ in $fSets$ one has canonically
$$
\bar C_{X^I, co}\otimes_{Shv(X^I)} Shv(X^{I'})\,\iso\, \bar C_{X^{I'}, co}
$$
in $Shv(X^{I'})-mod$ in a way compatible with compositions, so giving rise to a sheaf of categories $\ov{\Fact}^{co}(C)$ on $\Ran$ with $\ov{\Fact}^{co}(C)\otimes_{Shv(\Ran)} Shv(X^I)\,\iso\, \bar C_{X^I, co}$ for all $I\in fSets$.
\end{Lm}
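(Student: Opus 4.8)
The plan is to run the argument dual to that of Lemma~\ref{Lm_4.1.2_restrictions}, interchanging limits and colimits; the colimit version is in fact slightly easier, since the base change functor $-\otimes_{Shv(X^I)} Shv(X^{I'})$ is a left adjoint and hence commutes with all colimits with no dualizability hypothesis needed. So the first step is just to write
$$
\bar C_{X^I, co}\otimes_{Shv(X^I)} Shv(X^{I'})\,\iso\,\underset{(I\toup{p} J\to K)\in \Tw(I)}{\colim}\; C^{\otimes K}\otimes Shv(X^I_{p, d})\otimes_{Shv(X^I)} Shv(X^{I'}).
$$

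Next I would analyse the term indexed by $\Sigma=(I\toup{p} J\to K)$. One has $Shv(X^I_{p, d})\otimes_{Shv(X^I)} Shv(X^{I'})\,\iso\, Shv(X^I_{p,d}\times_{X^I} X^{I'})$, and by Lemma~\ref{Lm_3.2.4_from_chiral_algebras} applied to the diagram $J\getsup{p} I\toup{f} I'$ this fibre product is empty, so the corresponding term is the zero category, unless $J\in Q(I')$, that is, unless $p$ factors as $I\toup{f} I'\toup{p'} J$; in that case the fibre product equals $X^{I'}_{p',d}$, so the term becomes $C^{\otimes K}\otimes Shv(X^{I'}_{p', d})=\bar\cF_{I',C}(I'\toup{p'} J\to K)$. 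The objects of $\Tw(I)$ with $J\in Q(I')$ are precisely the image of the full embedding $\Tw(I')\subset\Tw(I)$ of Section~\ref{Sect_2.1.5_now}, which is zero-cofinal (Section~\ref{Def_zero-cofinal}). Hence, exactly as in Section~\ref{Sect_2.1.11_now} and using Section~\ref{Sect_C.0.2}, the colimit may be restricted to $\Tw(I')$, where the surviving diagram is literally $\bar\cF_{I',C}$, giving
$$
\bar C_{X^I, co}\otimes_{Shv(X^I)} Shv(X^{I'})\,\iso\,\underset{\Tw(I')}{\colim}\,\bar\cF_{I',C}\,\iso\,\bar C_{X^{I'}, co}.
$$

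Finally I would check that this equivalence is natural in $f$, i.e.\ compatible with compositions $I\to I'\to I''$ and with the structure maps $(\vartriangle^{(J/K)})^!$; this is routine bookkeeping on the identifications of terms above, and one may alternatively repeat the cofinality argument as in Sections~\ref{Sect_2.1.5_now}--\ref{Sect_2.1.7_now}. Granting this, since $\Ran\,\iso\,\underset{I\in fSets^{op}}{\colim} X^I$ in $\PreStk_{lft}$ and each $X^I$ is $1$-affine, the compatible family $\{\bar C_{X^I,co}\}$ is a cartesian section of $ShvCat$ over the index category, hence descends to a sheaf of categories $\ov{\Fact}^{co}(C)$ on $\Ran$ with $\ov{\Fact}^{co}(C)\otimes_{Shv(\Ran)} Shv(X^I)\,\iso\,\bar C_{X^I, co}$ for all $I\in fSets$. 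The step most worth care is the passage from the colimit over $\Tw(I)$ to the one over $\Tw(I')$ for a diagram vanishing off $\Tw(I')$ — this rests on the zero-cofinality of $\Tw(I')\subset\Tw(I)$ and the emptiness statement of Lemma~\ref{Lm_3.2.4_from_chiral_algebras} — but both are already recorded in the text and were used in the commutative-algebra and constant-commutative-algebra cases, so no new difficulty arises.
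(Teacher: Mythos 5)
Your proof is correct and follows essentially the same route as the paper's: base-change the colimit termwise, use Lemma~\ref{Lm_3.2.4_from_chiral_algebras} to see that only the terms with $J\in Q(I')$ survive and that they identify with the values of $\bar\cF_{I', C}$, then conclude by zero-cofinality of $\Tw(I')\subset\Tw(I)$, with compatibility with compositions following from the construction. Your extra remark that no dualizability is needed here (base change commutes with colimits) is a valid observation distinguishing this case from Lemma~\ref{Lm_4.1.2_restrictions}, but otherwise the arguments coincide.
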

\begin{proof} One has canonically
$$
\bar C_{X^I, co}\otimes_{Shv(X^I)} Shv(X^{I'})\,\iso\,\underset{(I\to J\to K)\in Tw(I)}{\colim} C^{\otimes K}\otimes Shv(X^I_{p, d}\times_{X^I} X^{I'})
$$
Argue as in Lemma~\ref{Lm_4.1.2_restrictions}. Namely, $X^I_{p, d}\times_{X^I} X^{I'}$ is empty unless $J\in Q(I')$, and in the latter case it identifies with $X^{I'}_{p', d}$, where $p$ is the composition $I\toup{f} I'\toup{p'} J$. Since the embedding $\Tw(I')\subset \Tw(I)$ is zero-cofinal,
the above colimit identifies with
$$
\underset{(I'\toup{p'} J'\to K')\in \Tw(I')}{\colim} C^{\otimes K'}\otimes Shv(X^{I'}_{p', d})\,\iso\, \bar C_{X^{I'}, co}
$$
as desired. Other properties follow from the construction.
\end{proof}

\sssec{} As for any sheaf of categories over $\Ran$, we get
$$
\ov{\Fact}^{co}(C)\,\iso\, \underset{I\in fSets^{op}}{\colim} \bar C_{X^I, co}
$$
in $Shv(\Ran)-mod$. Write 
$$
\bar\cF_{fSets, C}: fSets^{op}\to Shv(\Ran)-mod
$$
for the diagram defining the latter colimit. Let $\bar\cF^R_{fSets, C}: fSets \to Shv(\Ran)-mod$ be obtained from $\bar\cF_{fSets, C}$ by passing to the right adjoints. 

\sssec{Commutative chiral coproduct} First, we define the commutative coproduct as follows. Pick a map $\phi: I\to I'$ in $fSets$. Let us define a canonical map
\begin{equation}
\label{comm_chiral_comproduct_Sect5.1.5}
\bar C_{X^I, co}\to \underset{i\in I'}{\boxtimes} \bar C_{X^{I_i}, co}
\end{equation} 

 Recall the full subcategory $\Tw(I)_{\phi}\,\iso\,\prod_{i\in I'} \Tw(I_i)$ of $\Tw(I)$ from 
Section~\ref{Sect_2.1.11_now} and the functor $\nu$ from Section~\ref{Sect_4.1.5_now}. 
The diagram (\ref{open_imm_for_Sect_4.1.3}) is functorial in $(I\to J\to K)\in \Tw(I)$. 

Consider the functor $\lambda: \Tw(I)_{\phi}\to Shv(X^I)-mod$ defined as the exterior product of the functors $\bar \cF_{I_i, C}$ for $i\in I'$. That is, it sends (\ref{collection_for_Sect_4.1.3})
to
$$
C^{\otimes (\sqcup_{i\in I'} K_i)}\otimes Shv(\prod_{i\in I'} X^{I_i}_{p_i, d})
$$
and the transition maps are given by exterior product of the transition functors for each $\bar\cF_{I_i, C}$. Then there is a natural transformation of functions from $\Tw(I)$ to $Shv(X^I)-mod$
\begin{equation}
\label{natural_transformation_for_Sect_Commutative chiral coproduct}
\bar\cF_{I, C}\to \lambda\comp\nu
\end{equation}
At the level of objects, let $(I\to J\to K)\in \Tw(I)$ and let (\ref{collection_for_Sect_4.1.3}) be its image under $\nu$. The natural transformation (\ref{natural_transformation_for_Sect_Commutative chiral coproduct}) evaluated at $(I\to J\to K)$ is the functor
$$
C^{\otimes K}\otimes Shv(X^I_{p, d})\to C^{\otimes (\sqcup_{i\in I'} K_i)}\otimes Shv(\prod_{i\in I'} X^{I_i}_{p_i, d}).
$$
The latter is the tensor product of two maps: the coproduct map $C^{\otimes K}\to C^{\otimes (\sqcup_{i\in I'} K_i)}$ along $\sqcup_{i\in I'} K_i\to K$, and the functor $j_*$ for the open embedding
$$
j: X^I_{p, d}\hook{} \prod_{i\in I'} X^{I_i}_{p_i, d}
$$
It is defined naturally on morphisms. Passing to the colimit over $\Tw(I)$,  (\ref{natural_transformation_for_Sect_Commutative chiral coproduct}) yields morphisms
$$
\bar C_{X^I, co}\,\iso\,\underset{\Tw(I)}{\colim}\; \bar\cF_{I, C}\to \underset{\Tw(I)}{\colim} \lambda\comp\nu\to \underset{\Tw(I)_{\phi}}{\colim} \lambda\,\iso\, \underset{i\in I'}{\boxtimes} \bar C_{X^{I_i}, co}
$$
in $Shv(X^I)-mod$. The composition is the desired morphism (\ref{comm_chiral_comproduct_Sect5.1.5}). 

 In the case when $\phi: I\to I'$ is $\id: I\to I$ the map (\ref{comm_chiral_comproduct_Sect5.1.5}) is denoted 
$$
\Loc^{co}: \bar C_{X^I, co}\to C^{\otimes I}\otimes Shv(X^I)
$$ 
 
\begin{Lm} 
\label{Lm_5.1.6_now}
Restricting (\ref{comm_chiral_comproduct_Sect5.1.5}) to the open subscheme $X^I_{\phi, d}\subset X^I$, one gets an isomorphism
$$
\bar C_{X^I, co}\mid_{X^I_{\phi, d}}\,\iso\, (\underset{i\in I'}{\boxtimes} \bar C_{X^{I_i}, co})\mid_{X^I_{\phi, d}}
$$
So, $\ov{\Fact}^{co}(C)$ is a factorization sheaf of categories on $\Ran$. 
\end{Lm}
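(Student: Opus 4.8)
The plan is to exploit the feature---absent from the limit-based construction of $\ov{\Fact}(C)$ in Section~\ref{Sect_4}---that all the operations in sight commute with colimits, so that, unlike in Remark~\ref{Rem_4.1.6}, no dualizability hypotheses on $C$ are needed. First I would note that restriction along the open immersion $X^I_{\phi,d}\hookrightarrow X^I$ is the extension of scalars $\cdot\otimes_{Shv(X^I)}Shv(X^I_{\phi,d})$, a left adjoint, hence commutes with colimits, and that $\boxtimes$ is colimit-preserving in each variable. Therefore $\bar C_{X^I,co}\mid_{X^I_{\phi,d}}$ is the colimit over $\Tw(I)$ of $C^{\otimes K}\otimes Shv(X^I_{p,d}\times_{X^I}X^I_{\phi,d})$, the right-hand side $(\underset{i\in I'}{\boxtimes}\bar C_{X^{I_i},co})\mid_{X^I_{\phi,d}}$ is the colimit over $\prod_{i\in I'}\Tw(I_i)$ of $C^{\otimes(\sqcup_i K_i)}\otimes Shv((\prod_i X^{I_i}_{p_i,d})\times_{X^I}X^I_{\phi,d})$, and the restriction of (\ref{comm_chiral_comproduct_Sect5.1.5}) is the map of colimits induced by the functor $\nu$ of Section~\ref{Sect_4.1.5_now} together with the natural transformation (\ref{natural_transformation_for_Sect_Commutative chiral coproduct}).

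Next I would run the combinatorial reduction dual to the one in Section~\ref{Sect_4.1.8}. Recall the full subcategory ${^{\phi}\Tw(I)}\subset\Tw(I)$ of those $(I\toup{p}J\to K)$ with $I'\in Q(J)$, with its right adjoint $j^R$ sending $(I\toup{p}J\to K)$ to $(I\toup{\tau}\sup(J,I')\to K)$. Using $X^I_{\tau,d}\subset X^I_{\phi,d}$ together with the identity $X^I_{p,d}\times_{X^I}X^I_{\phi,d}\,\iso\,X^I_{\tau,d}$ from Section~\ref{Sect_4.1.8}, one sees that the counit $j^R\sigma\to\sigma$ becomes an equivalence after $\cdot\otimes_{Shv(X^I)}Shv(X^I_{\phi,d})$; hence $\bar\cF_{I,C}\mid_{X^I_{\phi,d}}$ is the left Kan extension along $j$ of its own restriction to ${^{\phi}\Tw(I)}$ (left Kan extension along the left adjoint $j$ being restriction along the right adjoint $j^R$), so the colimit over $\Tw(I)$ is computed over ${^{\phi}\Tw(I)}$. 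As in Section~\ref{Sect_4.1.8}, the inclusion $\Tw(I)_{\phi}\subset{^{\phi}\Tw(I)}$ has the left adjoint $(I\to J\to K)\mapsto(I\to J\to\sup(K,I'))$, hence is cofinal, so the colimit is further computed over $\Tw(I)_{\phi}\,\iso\,\prod_{i\in I'}\Tw(I_i)$. Over this subcategory $J$ and $K$ split as $\sqcup_i J_i$, $\sqcup_i K_i$ and $X^I_{p,d}\times_{X^I}X^I_{\phi,d}\,\iso\,(\prod_i X^{I_i}_{p_i,d})\times_{X^I}X^I_{\phi,d}$, so the restricted diagram is exactly $\underset{i\in I'}{\boxtimes}(\bar\cF_{I_i,C}\mid_{X^I_{\phi,d}})$, whose colimit is $(\underset{i\in I'}{\boxtimes}\bar C_{X^{I_i},co})\mid_{X^I_{\phi,d}}$. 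Finally, a trace through the construction of (\ref{comm_chiral_comproduct_Sect5.1.5}) identifies the resulting composite equivalence with the restriction of that map; the factorization structure on $\ov{\Fact}^{co}(C)$, with the compatibilities required as in Section~\ref{Sect_Fact_cat_for_com_algebras}, then follows formally.

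I expect the only real content beyond bookkeeping to be two points: the geometric identities $X^I_{p,d}\times_{X^I}X^I_{\phi,d}\,\iso\,X^I_{\tau,d}$ and $X^I_{p,d}\times_{X^I}X^I_{\phi,d}\,\iso\,(\prod_i X^{I_i}_{p_i,d})\times_{X^I}X^I_{\phi,d}$ over $\Tw(I)_{\phi}$, which are instances of Lemma~\ref{Lm_3.2.4_from_chiral_algebras} and of the analysis in Sections~\ref{Sect_2.1.11_now} and \ref{Sect_4.1.8}; and the verification that the canonical comparison $\underset{\Tw(I)}{\colim}\to\underset{{^{\phi}\Tw(I)}}{\colim}\to\underset{\Tw(I)_{\phi}}{\colim}$ agrees with the map assembled from $\nu$ and (\ref{natural_transformation_for_Sect_Commutative chiral coproduct}), together with the coherence of these equivalences with composition of the $\phi$'s and with the restriction equivalences $\bar C_{X^I,co}\otimes_{Shv(X^I)}Shv(X^{I'})\,\iso\,\bar C_{X^{I'},co}$. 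The conceptual point---that no dualizability of $C$ or of the $\bar C_{X^I,co}$ is needed, in contrast with Section~\ref{Sect_4}---is exactly what makes this colimit construction better behaved than the limit construction.
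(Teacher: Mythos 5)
Your proposal is correct and follows essentially the same route as the paper's proof: restrict the colimit diagram to $X^I_{\phi,d}$, use $X^I_{p,d}\times_{X^I}X^I_{\phi,d}\,\iso\,X^I_{\tau,d}$ to identify the restricted diagram with $\lambda\comp j^R$, i.e.\ the left Kan extension of $\lambda$ along $j:{^{\phi}\Tw(I)}\to\Tw(I)$, and then invoke the cofinality of $\Tw(I)_{\phi}\subset{^{\phi}\Tw(I)}$ and the splitting $\Tw(I)_{\phi}\,\iso\,\prod_{i\in I'}\Tw(I_i)$ exactly as in Section~\ref{Sect_4.1.8}. The extra remarks on compatibility with (\ref{comm_chiral_comproduct_Sect5.1.5}) and on the absence of dualizability hypotheses are consistent with the paper, which leaves these points implicit.
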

\begin{proof} The argument is analogous to that of Section~\ref{Sect_4.1.8}, whose notations we use. Namely, we get
$$
\bar C_{X^I, co}\mid_{X^I_{\phi, d}}\,\iso\,\underset{(I\to J\to K)\in Tw(I)}{\colim} C^{\otimes K}\otimes Shv(X^I_{\tau, d}),
$$
where $(I\toup{\tau} \sup(J, I')\to K)=j^R(I\to J\to K)$. Let $\lambda: {^{\phi}\Tw(I)}\to Shv(X^I_{\phi, d})-mod$ be the restriction of $\bar\cF_{I, C}$ to the full subcategory $^{\phi}\Tw(I)\subset \Tw(I)$. We get
$$
\bar C_{X^I, co}\mid_{X^I_{\phi, d}}\,\iso\, \underset{\Tw(I)}{\colim} \, \lambda\comp j^R
$$
Now $\lambda\comp j^R$ is the left Kan extension of $\lambda$ along $j: {^{\phi}\Tw(I)}\to \Tw(I)$. So, the latter colimit identifies with
$$
\underset{^{\phi}\Tw(I)}{\colim} \, \lambda
$$
Now as in Section~\ref{Sect_4.1.8} we get
$$
\underset{^{\phi}\Tw(I)}{\colim} \, \lambda\,\iso\, \underset{\Tw(I)_{\phi}}{\colim} \lambda\,\iso\, (\underset{i\in I'}{\boxtimes} \bar C_{X^{I_i}, co})\mid_{X^I_{\phi, d}}
$$
as desired.
\end{proof}

\sssec{Canonical arrow} Take $C(X)=C\otimes Shv(X)$. We view it as an object of $CoCAlg^{nu}(Shv(X)-mod)$. In a way dual to that of Section~\ref{Sect_4.1.10} we construct a canonical map
\begin{equation}
\label{arrow_from_ovFact^co_to_Fact^co}
\ov{\Fact}^{co}(C)\to \Fact^{co}(C)
\end{equation}
in $Shv(\Ran)-mod$. It amounts to a collection of functors
\begin{equation}
\label{functor_zeta_I^co}  
\zeta_{I, co}: \bar C_{X^I, co}\to C_{X^I, co} 
\end{equation} 
compatible with !-restrictions under $\vartriangle^{(I/J)}: X^J\to X^I$ for any map $I\to J$ in $fSets$.

 Pick $(I\to J\to K), (I\toup{p_1} J_1\to K_1)\in \Tw(I)$. First, we define a functor
\begin{equation}
\label{functor_for_Sect_5.1.7}
C^{\otimes K_1}\otimes Shv(X^I_{p_1, d})\to C^{\otimes J}\otimes Shv(X^K)
\end{equation} 
as follows. Recall that $X^I_{p_1, d}\times_{X^I} X^K$ is empty unless $J_1\in Q(K)$.  In the latter case we get a diagram in $fSets$
$$
I\to J\to K\to J_1\to K_1,
$$ 
hence the comultipliation $\com: C^{\otimes K_1}\to C^{\otimes J}$ along $J\to K_1$. Then (\ref{functor_for_Sect_5.1.7}) is the composition
$$
C^{\otimes K_1}\otimes Shv(X^I_{p_1, d})\toup{\com} C^{\otimes J}\otimes Shv(X^I_{p_1, d})\to
C^{\otimes J}\otimes Shv(X^K),
$$
here the second map is the $*$-direct image along $X^I_{p_1, d}\to X^I$ followed by $!$-restriction to $X^K$. The maps (\ref{functor_for_Sect_5.1.7}) are compatible with the transition functors in the definitions of $\bar C_{X^I, co}$, $C_{X^I, co}$, so yield the desired functor (\ref{functor_zeta_I^co}).  

 One checks that $\zeta_{I, co}$ are canonically compatible with !-restrictions along $\vartriangle^{(I/J)}: X^J\to X^I$ for maps $I\to J$ in $fSets$, so yield the desired functor (\ref{arrow_from_ovFact^co_to_Fact^co}). 
 
\begin{Lm} Assume $C$ dualizable in $\DGCat_{cont}$, $I\in fSets$, $U=X^I-X$. Let $j: U\hook{} X$ be the inclusion. Then the square is cocartesian in $Shv(X^I)$
$$
\begin{array}{ccc}
\bar C_{X^I, co}\mid_U & \to & \bar C_{X^I, co}\\
\uparrow && \uparrow\\
C\otimes Shv(U) & \toup{j_*} & C\otimes Shv(X^I)
\end{array}
$$
\end{Lm}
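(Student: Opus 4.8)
The plan is to deduce the statement from the decomposition of $\underset{\Tw(I)}{\colim}$ into pieces provided by Corollary~\ref{Cor_2.3.3}, running the colimit-dual of the argument behind Corollary~\ref{Cor_4.1.3}~ii).

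First I would apply Corollary~\ref{Cor_2.3.3} to the functor $h=\bar\cF_{I,C}:\Tw(I)\to Shv(X^I)-mod$. Since $X^I_{p,d}=X^I$ for the constant map $p:I\to *$, one gets a cocartesian square in $Shv(X^I)-mod$ exhibiting $\bar C_{X^I,co}$ as the pushout of
$$
\underset{\Tw(I)^{>1}}{\colim}\;\bar\cF_{I,C}\;\longleftarrow\;\underset{\Tw(I)^{>1}\cap\Tw(I)^f}{\colim}\;\bar\cF_{I,C}\;\longrightarrow\;\bar\cF_{I,C}(I\to *\to *)=C\otimes Shv(X^I).
$$
Write $A$ and $B$ for the first two colimits. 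The key observation is that every object $(I\toup{p} J\to K)\in\Tw(I)^{>1}$ has $\mid J\mid>1$, hence $X^I_{p,d}\subset U$, so $\bar\cF_{I,C}(I\to J\to K)=C^{\otimes K}\otimes Shv(X^I_{p,d})$ is already a $Shv(U)$-module and the transition maps in $\bar\cF_{I,C}\mid_{\Tw(I)^{>1}}$ are $Shv(U)$-linear; therefore $A$ and $B$ are $Shv(U)$-modules, i.e. $A\otimes_{Shv(X^I)}Shv(U)\iso A$ and likewise for $B$.

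Next I would base change the above cocartesian square along $-\otimes_{Shv(X^I)}Shv(U)$, which preserves pushouts. By the previous remark the first two corners are unchanged, while $(C\otimes Shv(X^I))\otimes_{Shv(X^I)}Shv(U)\iso C\otimes Shv(U)$; hence $\bar C_{X^I,co}\mid_U$ is the pushout of $A\leftarrow B\to C\otimes Shv(U)$, the right-hand map being the restriction of the original structure map $B\to C\otimes Shv(X^I)$. Now the transition map of $\bar\cF_{I,C}$ from $(I\to J\to *)\in\Tw(I)^{>1}\cap\Tw(I)^f$ to the terminal object $(I\to *\to *)$ of $\Tw(I)^f$ is $\id_C$ tensored with the $*$-pushforward along $X^I_{p,d}\hookrightarrow X^I$, and this pushforward factors as $X^I_{p,d}\hookrightarrow U$ followed by $j:U\hookrightarrow X^I$; consequently the original structure map $B\to C\otimes Shv(X^I)$ equals the composite $B\to C\otimes Shv(U)\toup{\id_C\otimes j_*}C\otimes Shv(X^I)$, and the comparison map $\bar C_{X^I,co}\mid_U\to\bar C_{X^I,co}$ appearing in the statement is the map of pushouts induced by $\id_C\otimes j_*$ on the third corner and the identity on $A$ and $B$. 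The lemma then follows from the pasting lemma for pushout squares: forming the pushout of $C\otimes Shv(U)\toup{\id_C\otimes j_*}C\otimes Shv(X^I)$ along $C\otimes Shv(U)\to \bar C_{X^I,co}\mid_U=A\sqcup_B(C\otimes Shv(U))$ recovers $A\sqcup_B(C\otimes Shv(X^I))\iso\bar C_{X^I,co}$, which is exactly the assertion that the displayed square is cocartesian.

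I expect the only genuine work to be the coherence bookkeeping in the last step: identifying the map $\bar C_{X^I,co}\mid_U\to\bar C_{X^I,co}$ of the statement (the one induced on $\underset{\Tw(I)}{\colim}$ by the $Shv(X^I)$-linear functor $j_*:Shv(U)\to Shv(X^I)$) with the map of pushouts above, and checking that the left vertical $C\otimes Shv(U)\to\bar C_{X^I,co}\mid_U$ is the structure map of $(I\to *\to *)$; these are routine but must be traced through the diagram $\Tw(I)$ term by term. I also note that dualizability of $C$ is not actually needed here — it enters the analogous Corollary~\ref{Cor_4.1.3}~ii) only because there tensoring by $C$ must commute with a \emph{limit}, whereas $C\otimes(-)$ preserves colimits unconditionally — so I keep that hypothesis merely for parallelism with Section~\ref{Sect_4}.
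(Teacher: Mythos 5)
Your argument is correct, and it shares only its starting point with the paper's proof: both begin by applying Corollary~\ref{Cor_2.3.3} to $h=\bar\cF_{I,C}$, but from there the routes diverge. The paper identifies the two left-hand corners of that cocartesian square explicitly: its Step~1 proves, by Zariski descent for sheaves of categories on $U$, that $\underset{(I\toup{p}J\to *)\in\Tw(I)^{>1}\cap\Tw(I)^f}{\colim} Shv(X^I_{p,d})\iso Shv(U)$ (so the bottom-left corner becomes $C\otimes Shv(U)$, using that $C\otimes(-)$ commutes with colimits), and its Step~2 then shows by a left Kan extension argument that $\bar\cF_{I,C}\mid_U$ is the LKE of its restriction to $\Tw(I)^{>1}$, whence $\underset{\Tw(I)^{>1}}{\colim}\bar\cF_{I,C}\iso\bar C_{X^I,co}\mid_U$; the lemma is then read off from the Corollary~\ref{Cor_2.3.3} square. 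You never make either identification: instead you base-change the whole square to $U$, observe that the $\Tw(I)^{>1}$-corners are already $Shv(U)$-modules (hence unchanged, by idempotency of the localization $Shv(U)$), factor the structure map out of $\underset{\Tw(I)^{>1}\cap\Tw(I)^f}{\colim}\bar\cF_{I,C}$ through $\id_C\otimes j_*$, and conclude by associativity/pasting of pushouts. This is more formal and slightly more economical: it dispenses with the descent input entirely and, as you correctly observe, with the dualizability of $C$ (which the paper's own proof also does not really use here; it is needed only in the limit-type statement of Corollary~\ref{Cor_4.1.3}~ii)). What the paper's route buys in exchange is the explicit description of the pushout corners — in particular the descent statement of its Step~1, which is the pattern reused elsewhere (e.g.\ in the proofs of Proposition~\ref{Pp_4.1.19} and Proposition~\ref{Pp_4.1.10_Raskin_dualizability}) — whereas your proof establishes cocartesianness without telling you what $\underset{\Tw(I)^{>1}}{\colim}\bar\cF_{I,C}$ is. The coherence points you flag (that the base-changed bottom arrow is the first factor of the $j_*$-factorization, and that the induced map of pushouts is the $j_*$-type map $\bar C_{X^I,co}\mid_U\to\bar C_{X^I,co}$) do check out by the standard unit/idempotency argument for the localization $Shv(X^I)\to Shv(U)$, and the paper's proof leaves the analogous identifications equally implicit, so this is not a gap.
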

\begin{proof}
This is an application of Corollary~\ref{Cor_2.3.3}. 

\noindent
{\bf Step 1}.
Let us show that the natural functor
$$
\underset{(I\toup{p} J\to *)\in \Tw(I)^{>1}\cap \Tw(I)^f}{\colim} Shv(X^I_{p, d})\to Shv(U)
$$
is an equivalence, where the transition functors in the latter diagram are given by the $*$-direct images. This follows from Zarizki descent for sheaves of categories over $U$. Namely, for any $(I\toup{p} J\to *)\in \Tw(I)^{>1}\cap \Tw(I)^f$ over the open subset $X^I_{p, d}$ the above arrow becomes an equivalence, hence it is an equivalence.

\medskip
\noindent
{\bf Step 2}. It remains to show that $\underset{\Tw(I)^{>1}}\colim \bar\cF_{I, C}\,\iso\, \bar C_{X^I, co}\mid_U$. We claim that the functor $\bar\cF_{I,C}\mid_U: \Tw(I)\to Shv(U)-mod$ is the left Kan extension of its restriction to $Tw(I)^{>1}$. Indeed, the only object of $Tw(I)$ not lying in $Tw(I)^{>1}$ is $\Sigma=(I\to *\to *)$, and 
$$
\bar\cF_{I, C}(\Sigma)\mid_U\,\iso\, Shv(U)\otimes C
$$
On the other hand, 
$$
\underset{\Tw(I)^{>1}\times_{\Tw(I)} \Tw(I)_{/\Sigma}}{\colim} \bar\cF_{I, C}\mid_U\,\iso\, C\otimes Shv(U)
$$
by Step 1.
\end{proof} 
 
\begin{Rem} 
\label{Rem_5.1.9}
If $\com: C\to C^{\otimes 2}$ admits a left adjoint $\com^L$ then $(C, \com^L)\in CAlg^{nu}(\DGCat_{cont})$ naturally. In this case we may pass to left adjoints in the functor $\bar\cF_{I, C}$ and get a functor $\bar\cF_{I, C}^L: \Tw(I)^{op}\to Shv(X^I)-mod$. Then we have a canonically 
$$
\bar\cF_{I, C}^L\,\iso\,\bar \cG_{I, C}
$$
Note that in this case by Remark~\ref{Rem_3.1.11} we have $\Fact(C)\,\iso\,\Fact^{co}(C)$. 
\end{Rem}

\sssec{} 
\label{Sect_5.1.10}
For the rest of Section~\ref{Sect_5} assume that $\com: C\to C^{\otimes 2}$ admits a left adjoint $\com^L$, and $C$ dualizable in $\DGCat_{cont}$.

 View $C^{\vee}$ as an object of $CAlg^{nu}(\DGCat_{cont})$ with the product $\com^{\vee}: (C^{\vee})^{\otimes 2}\to C^{\vee}$. By Remark~\ref{Rem_4.1.15_now}, $\bar C_{X^I, co}$ and $\ov{C^{\vee}}_{X^I}$ are canonically in duality in $Shv(X^I)-mod$.
Besides, $\ov{\Fact}(C^{\vee})$ and $\ov{\Fact}^{co}(C)$ are canonically in duality in $Shv(\Ran)-mod$. Moreover, by Proposition~\ref{Pp_2.4.7}, $\Fact(C)$ is dualizable in $Shv(\Ran)-mod$, and the functor (\ref{arrow_from_ovFact^co_to_Fact^co}) is dual to the functor
$$
\Fact(C^{\vee})\to \ov{\Fact}(C^{\vee})
$$
given by (\ref{map_from_Fact_to_ov_Fact}) for the category $(C^{\vee}, com^{\vee})\in CAlg^{nu}(\DGCat_{cont})$.

\begin{Thm} 
\label{Thm_5.1.11}
Assume $C\in CoCAlg(\DGCat_{cont})$ such that $\com: C\to C^{\otimes 2}$ has a left adjoint $\com^L: C^{\otimes 2}\to C$, and the counit $C\toup{cou} \Vect$ has a left adjoint $cou^L: \Vect\to C$. Assume $C$ compactly generated. Then one has the diagram of equivalences
$$
\ov{\Fact}^{co}(C)\toup{(\ref{arrow_from_ovFact^co_to_Fact^co})} \Fact^{co}(C)\,\iso\, \Fact(C)\toup{(\ref{map_from_Fact_to_ov_Fact})} \ov{\Fact}(C)
$$
Here the middle equivalence is that of Remark~\ref{Rem_5.1.9}. Here we also view $C$ as an object of $CAlg(\DGCat_{cont})$ with the multiplication $\com^L$ and unit $cou^L$.
\end{Thm}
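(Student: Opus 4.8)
The plan is to deduce the theorem from Proposition~\ref{Pp_4.1.19}, applied once to $C$ (viewed as the commutative algebra $(C,\com^L)$ with unit $cou^L$) and once to its dual $C^{\vee}$, together with the duality bookkeeping of Section~\ref{Sect_5.1.10}. The displayed chain consists of three arrows, and it suffices to check that each of them separately is an equivalence. The middle one, $\Fact^{co}(C)\,\iso\,\Fact(C)$, is formal: since $\com: C\to C^{\otimes 2}$ has a left adjoint $\com^L$, the pair $(C,\com^L)$ lies in $CAlg^{nu}(\DGCat_{cont})$, and after $\otimes\,Shv(X)$ we are in the situation of Remark~\ref{Rem_3.1.11}, so $\Fact^{co}(C)\,\iso\,\underset{\cTw(fSets)}{\colim}\,\cF_{\Ran, C}\,\iso\,\Fact(C)$, which is the equivalence of Remark~\ref{Rem_5.1.9}.

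For the arrow $(\ref{map_from_Fact_to_ov_Fact}): \Fact(C)\to\ov{\Fact}(C)$, I would verify that $(C,\com^L,cou^L)$ meets the standing hypotheses of Section~\ref{Sect_4} preceding Proposition~\ref{Pp_4.1.19}: a compactly generated presentable DG-category is dualizable in $\DGCat_{cont}$, so $C$ is dualizable; the multiplication $\com^L$ of this algebra has the continuous right adjoint $\com$ (this is nothing but the adjunction $\com^L\dashv\com$); $C$ is compactly generated by hypothesis; and $(C,\com^L,cou^L)\in CAlg(\DGCat_{cont})$ is unital precisely because $cou$ admits the left adjoint $cou^L$. Proposition~\ref{Pp_4.1.19}(i) then gives that $\zeta_I: C_{X^I}\to\bar C_{X^I}$ is an equivalence for every $I\in fSets$, hence $(\ref{map_from_Fact_to_ov_Fact})$ is an equivalence.

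For the remaining arrow $(\ref{arrow_from_ovFact^co_to_Fact^co}): \ov{\Fact}^{co}(C)\to\Fact^{co}(C)$ I would argue by duality, applying the previous paragraph to $C^{\vee}$. The dual of a compactly generated category is again compactly generated (hence dualizable); and, being the dual of a counital cocommutative coalgebra, $C^{\vee}$ is a unital commutative algebra with multiplication $\com^{\vee}$ and unit $cou^{\vee}$, whose multiplication $\com^{\vee}$ has the continuous right adjoint $(\com^L)^{\vee}$ obtained by dualizing $\com^L\dashv\com$. Thus Proposition~\ref{Pp_4.1.19} applies to $C^{\vee}$ and makes $(\ref{map_from_Fact_to_ov_Fact}): \Fact(C^{\vee})\to\ov{\Fact}(C^{\vee})$ an equivalence. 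By Section~\ref{Sect_5.1.10}, the functor $(\ref{arrow_from_ovFact^co_to_Fact^co})$ is the dual of this last functor in $Shv(\Ran)-mod$, via the identifications $\ov{\Fact}^{co}(C)^{\vee}\,\iso\,\ov{\Fact}(C^{\vee})$ (Remark~\ref{Rem_4.1.15_now}) and $\Fact^{co}(C)\,\iso\,\Fact(C)$ with $\Fact(C)^{\vee}\,\iso\,\Fact(C^{\vee})$ (Proposition~\ref{Pp_2.4.7}); since all four categories are dualizable in $Shv(\Ran)-mod$ (Corollary~\ref{Cor_4.1.14} and Section~\ref{Sect_4}) and passing to duals is an anti-equivalence on dualizable objects, $(\ref{arrow_from_ovFact^co_to_Fact^co})$ is an equivalence as well.

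The substantial content here is entirely carried by Proposition~\ref{Pp_4.1.19}, and through it by Proposition~\ref{Pp_4.1.10_Raskin_dualizability} (the constructible upgrade of Raskin's dualizability result), established separately in the appendix; granting those, what remains is pure bookkeeping. The one delicate point I would watch is the last step: one must be sure that the duality isomorphisms of Section~\ref{Sect_5.1.10} intertwine $(\ref{map_from_Fact_to_ov_Fact})$ for $C^{\vee}$ with $(\ref{arrow_from_ovFact^co_to_Fact^co})$ for $C$ — equivalently, that the dual of $\zeta_I$ for $C^{\vee}$ is $\zeta_{I,co}$ for $C$ — which is exactly what the dual construction of Section~\ref{Sect_5} is designed to guarantee, and which I would isolate as a short lemma if a fully careful write-up required it.
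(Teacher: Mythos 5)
Your proposal is correct and follows essentially the same route as the paper: the middle equivalence is Remark~\ref{Rem_5.1.9}, the arrow $\Fact(C)\to\ov{\Fact}(C)$ is handled by Proposition~\ref{Pp_4.1.19} applied to $(C,\com^L,cou^L)$, and the arrow $\ov{\Fact}^{co}(C)\to\Fact^{co}(C)$ is obtained by applying Proposition~\ref{Pp_4.1.19} to $C^{\vee}$ and dualizing via Section~\ref{Sect_5.1.10}. Your extra care in verifying the hypotheses for $C$ and $C^{\vee}$ and in isolating the compatibility of $(\ref{arrow_from_ovFact^co_to_Fact^co})$ with the dual of $\zeta_I$ for $C^{\vee}$ only makes explicit what the paper asserts in Section~\ref{Sect_5.1.10}.
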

\begin{proof}
The third functor is an equivalence by Proposition~\ref{Pp_4.1.19}. It is essential here that $C\in CAlg(\DGCat_{cont})$ is unital. 

Note that $C^{\vee}\in CAlg(\DGCat_{cont})$, that is, $C^{\vee}$ is unital. Applying Proposition~\ref{Pp_4.1.19} to $C^{\vee}$ and combining with Section~\ref{Sect_5.1.10}, we see that the first functor is an equivalence. 
\end{proof}

\section{Spreading right-lax symmetric monoidal functors}
\label{Sect_Spreading right-lax symmetric monoidal functors}

\ssec{} The purpose of this section is to generalize a construction of Gaitsgory from \cite{Gai19Ran}, Section~2.6). The construction comes in a packet of basic properties of the sheaves of categories $\Fact(C)$ for $C\in CAlg^{nu}(Shv(X)-mod)$. We expose it in a separate section as it is logically independent of our main Theorem~\ref{Thm_5.1.11}.

 Informally speaking, it allows to spread along the $\Ran$ space certain right-lax symmetric monoidal functors given originally over $X$. It originated from a particular case when $C$ is the category of representations, which explains our related notations and terminology.
 
 We apply it to generalize a result of D.~Gaitsgory (\cite{Gai19Ran}, Proposition~5.4.7), which allowed in \select{loc.cit.} to present the Ran space version of the semi-infinite $\IC$-sheaf $\IC^{\frac{\infty}{2}}_{\Ran}$ as certain colimit. This generalization will be used in our paper in preparation \cite{DL2}.
 
\sssec{} 
\label{Sect_1.12.1_spreading_begins}
Let $\Lambda\in CAlg^{nu}(Sets)$. We think of it as the "monoid of highest weights". For $\lambda_i\in\Lambda$ the operation in $\Lambda$ is denoted $\lambda_1+\lambda_2$. If in addition $\Lambda$ is unital, the neutral object is denoted $0\in\Lambda$. 

Let $(C(X),\otimes^!)\in CAlg^{nu}(Shv(X)-mod)$. Assume given a right-lax non-unital symmetric monoidal functor 
$$
\Irr: \Lambda\to (C(X),\otimes^!), \; \lambda\mapsto V^{\lambda}
$$ 
We write $\Irr_C$, $V^{\lambda}_C$ if we need to express the dependence on $C(X)$. We think of $V^{\lambda}$ as an analog of the irreducible representation of highest weight $\lambda$. 

For $\lambda_i\in\Lambda$ we have by definition a canonical map in $C(X)$
$$
u: V^{\lambda_1}\otimes^! V^{\lambda_2}\to V^{\lambda_1+\lambda_2}.
$$ 

\sssec{} Let $I\in fSets$ and $\und{\lambda}: I\to \Lambda$ be a map. Define the object $V^{\und{\lambda}}\in C_{X^I}$ as follows. We think of it as a "spread representation of highest weight $\und{\lambda}$".

 Recall that $C^{\otimes J}(X)$ is the tensor power of $C(X)$ in $Shv(X)-mod$. 
Given $V_j\in C(X)$, our notation $\underset{j\in J}{\otimes} V_j$ refers to the corresponding object of $C^{\otimes J}(X)$, not to be confused with 
$
\underset{j\in J}{\otimes^!} V_j\in C(X)
$.   

 First, define a functor 
$$
\cF_{\und{\lambda}, \Irr}: Tw(I)\to C_{X^I}
$$ 
as follows. It sends $(I\to J\toup{\phi} K)$ to the image of 
$$
V^{\otimes \phi}:=\underset{k\in K}{\boxtimes} (\underset{j\in J_k}{\otimes} V^{\lambda_j})
$$
under $\underset{k\in K}{\boxtimes} (C^{\otimes J_k}(X))\to C_{X^I}$, where $\lambda_j=\underset{i\in I_j}{\sum} \und{\lambda}(i)$ for $j\in J$. Given a map (\ref{morphism_in_Tw_v2}) in $Tw(I)$, we get the corresponding transition morphism in $\underset{k\in K_2}{\boxtimes} C^{\otimes (J_2)_k}(X)$ and hence in $C_{X^I}$ 
$$
\cF_{\und{\lambda}, \Irr}(I\to J_1\to K_1)\to \cF_{\und{\lambda}, \Irr}(I\to J_2\to K_2)
$$
as follows. First, for the diagram (defining the transition functor for $\cF_{I, C}$)
$$
\underset{k\in K_1}{\boxtimes} C^{\otimes (J_1)_k}(X)\toup{m} \underset{k\in K_1}{\boxtimes} C^{\otimes (J_2)_k}(X)\to \underset{k\in K_2}{\boxtimes} C^{\otimes (J_2)_k}(X)
$$
we get a natural map
\begin{equation}
\label{diag_for_Sect_1.11.23_second}
m(\underset{k\in K_1}{\boxtimes} (\underset{j\in (J_1)_k}{\otimes} V^{\lambda_j}))\,\iso\,
\underset{k\in K_1}{\boxtimes} (\underset{j\in (J_2)_k}{\otimes} (\underset{j'\in (J_1)_j}
{\otimes^!} V^{\lambda_{j'}}))\to \underset{k\in K_1}{\boxtimes} (\underset{j\in (J_2)_k}{\otimes} V^{\lambda_j})
\end{equation}
in $\underset{k\in K_1}{\boxtimes} C^{\otimes (J_2)_k}(X)$. Here the second arrow in (\ref{diag_for_Sect_1.11.23_second}) comes from the right-lax structure on $\Irr$ giving the maps 
$$
\underset{j'\in (J_1)_j}{\otimes^!} V^{\lambda_{j'}}\to V^{\lambda_j}
$$
for $j\in J_2$ with $\lambda_j=\underset{j'\in (J_1)_j}{\sum} \lambda_{j'}
$.
 Further, for $\vartriangle: X^{K_1}\to X^{K_2}$ we have 
$$
\vartriangle^!(\underset{k\in K_2}{\boxtimes} (\underset{j\in (J_2)_k}{\otimes} V^{\lambda_j}))\,\iso\, \underset{k\in K_1}{\boxtimes} (\underset{j\in (J_2)_k}{\otimes} V^{\lambda_j})
$$ 
So, we compose the previous map with
$$
\vartriangle_! \underset{k\in K_1}{\boxtimes} (\underset{j\in (J_2)_k}{\otimes} V^{\lambda_j})\,\iso\, \vartriangle_! \vartriangle^!(\underset{k\in K_2}{\boxtimes} (\underset{j\in (J_2)_k}{\otimes} V^{\lambda_j}))\to \underset{k\in K_2}{\boxtimes} (\underset{j\in (J_2)_k}{\otimes} V^{\lambda_j})
$$
 This concludes the definition of $\cF_{\und{\lambda}, \Irr}$. We write $\cF_{\und{\lambda}, \Irr_C}$ if we need to express the dependence on $C(X)$.  

Set  
$$
V^{\und{\lambda}}=\underset{Tw(I)}{\colim}\; \cF_{\und{\lambda}, \Irr}
$$
in $C_{X^I}$. 

\begin{Rem} In (\cite{Gai19Ran}, Section~2.6) the above objects $V^{\und{\lambda}}$ appeared in the following special case: $\Lambda$ is the set of dominant weights of a reductive group $\check{G}$ defined over $e$, $C(X)=\Rep(\check{G})\otimes Shv(X)$, and $\Irr: \Lambda\to C(X)$ sends $\lambda$ to an irreducible representation of $\check{G}$ of highest weight $\lambda$ tensored by $\omega_X$. 
\end{Rem}

\begin{Lm} Let $I\in fSets$. The object $V^{\und{\lambda}}\in C_{X^I}$ factorizes naturally. Namely, if $\phi: I\to I'$ is a map in $fSets$ then one has canonically
$$
V^{\und{\lambda}}\mid_{X^I_{\phi, d}}\,\iso\, (\underset{i\in I'}{\boxtimes} V^{\und{\lambda}^i})\mid_{X^I_{\phi, d}}
$$
in $(C_{X^I})\mid_{X^I_{\phi, d}}$.
Here for $i\in I'$, $\und{\lambda}^i: I_i\to \Lambda$ is the restriction of $\und{\lambda}$. 
\end{Lm}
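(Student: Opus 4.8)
The plan is to transport, essentially verbatim, the construction of the factorization structure on the objects $C_{X^I}$ given in Section~\ref{Sect_2.1.11_now} to the objects $V^{\und{\lambda}}$. First, fix a map $\phi\colon I\to I'$ in $fSets$ and write $\rho\colon C_{X^I}\to C_{X^I}\mid_{X^I_{\phi,d}}=C_{X^I}\otimes_{Shv(X^I)}Shv(X^I_{\phi,d})$ for the restriction functor, which preserves colimits. Since $V^{\und{\lambda}}=\underset{Tw(I)}{\colim}\,\cF_{\und{\lambda},\Irr}$ in $C_{X^I}$, one gets $V^{\und{\lambda}}\mid_{X^I_{\phi,d}}\iso\underset{Tw(I)}{\colim}\,(\rho\comp\cF_{\und{\lambda},\Irr})$. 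Recall from Section~\ref{Sect_2.1.11_now} the full subcategory $\Tw(I)_{\phi}\subset\Tw(I)$ of those $(I\to J\to K)$ with $I'\in Q(K)$, the equivalence $\Tw(I)_{\phi}\iso\prod_{i\in I'}\Tw(I_i)$, and the fact that $\Tw(I)_{\phi}\hook{}\Tw(I)$ is zero-cofinal. For $\Sigma=(I\to J\to K)\notin\Tw(I)_{\phi}$, Lemma~\ref{Lm_3.2.4_from_chiral_algebras} gives that $X^K\times_{X^I}X^I_{\phi,d}$ is empty, so $\cF_{I,C}(\Sigma)\otimes_{Shv(X^I)}Shv(X^I_{\phi,d})=0$; since $\cF_{\und{\lambda},\Irr}(\Sigma)$ is the image of $V^{\otimes\phi}$ under the $Shv(X^I)$-linear structure map $\cF_{I,C}(\Sigma)\to C_{X^I}$, it follows that $\rho(\cF_{\und{\lambda},\Irr}(\Sigma))=0$. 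By Section~\ref{Sect_C.0.2} the colimit over $\Tw(I)$ may therefore be replaced by the colimit over $\Tw(I)_{\phi}$.

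Next, for $\Sigma=(I\to J\to K)\in\Tw(I)_{\phi}$ the surjection $K\to I'$ induces decompositions $J=\sqcup_{i\in I'}J_i$ and $K=\sqcup_{i\in I'}K_i$ into fibres over $I'$, and for $j\in J_i$ the fibre $I_j$ of $I\to J$ lies in $I_i$, so that $\lambda_j=\sum_{i''\in I_j}\und{\lambda}(i'')$ depends only on $\und{\lambda}^i$. Hence $V^{\otimes\phi}=\underset{k\in K}{\boxtimes}(\underset{j\in J_k}{\otimes}V^{\lambda_j})$ identifies canonically with the exterior product $\underset{i\in I'}{\boxtimes}V^{\otimes\phi_i}$ of the objects attached to the fibres $\phi_i\colon J_i\to K_i$; tracing through the definition of $\cF_{\und{\lambda},\Irr}$, this identification is natural in $\Sigma\in\Tw(I)_{\phi}\iso\prod_{i\in I'}\Tw(I_i)$, intertwining the products $m$, the diagonal maps $\vartriangle_!\vartriangle^!$, and the right-lax structure maps $\underset{j'\in(J_1)_j}{\otimes^!}V^{\lambda_{j'}}\to V^{\lambda_j}$ of $\Irr$, each of which splits fibrewise over $I'$ on $\Tw(I)_{\phi}$. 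Thus, after $\otimes_{Shv(X^I)}Shv(X^I_{\phi,d})$ and using (\ref{iso_after_Lm_3.2.4}), the functor $\cF_{\und{\lambda},\Irr}\mid_{\Tw(I)_{\phi}}$ is identified with the exterior product $\underset{i\in I'}{\boxtimes}\cF_{\und{\lambda}^i,\Irr}$ restricted to $X^I_{\phi,d}$. Since exterior product over $Shv$ of schemes of finite type preserves colimits in each variable,
$$
\underset{\Tw(I)_{\phi}}{\colim}\,(\rho\comp\cF_{\und{\lambda},\Irr})\;\iso\;\left(\underset{i\in I'}{\boxtimes}\;\underset{\Tw(I_i)}{\colim}\,\cF_{\und{\lambda}^i,\Irr}\right)\mid_{X^I_{\phi,d}}\;\iso\;\left(\underset{i\in I'}{\boxtimes}V^{\und{\lambda}^i}\right)\mid_{X^I_{\phi,d}},
$$
which is the asserted equivalence. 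One then checks that these equivalences are mutually compatible with compositions of the $\phi$'s and with the sheaf-of-categories equivalences (\ref{iso_after_Lm_3.2.4}), exactly as for the objects $C_{X^I}$ in Section~\ref{Sect_2.1.11_now}.

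The only genuine work is the naturality claim in the second paragraph: that the object-level identification $V^{\otimes\phi}\iso\boxtimes_{i\in I'}V^{\otimes\phi_i}$ upgrades to an isomorphism of functors $\Tw(I)_{\phi}\to C_{X^I}\mid_{X^I_{\phi,d}}$. I expect this to be bookkeeping rather than a real obstacle — every structure map defining $\cF_{\und{\lambda},\Irr}$ is indexed by the fibres of $J\to I'$ and $K\to I'$, so on $\Tw(I)_{\phi}$ it decomposes as an exterior product of the corresponding maps for the $\cF_{\und{\lambda}^i,\Irr}$, and nothing new arises beyond what is already carried out in Section~\ref{Sect_2.1.11_now}.
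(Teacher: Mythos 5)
Your proposal is correct and follows essentially the same route as the paper's proof: restrict the colimit defining $V^{\und{\lambda}}$ to $X^I_{\phi,d}$, use Lemma~\ref{Lm_3.2.4_from_chiral_algebras} and the zero-cofinality of $\Tw(I)_{\phi}\subset\Tw(I)$ to reduce to a colimit over $\Tw(I)_{\phi}\,\iso\,\prod_{i\in I'}\Tw(I_i)$, and then identify the terms fibrewise as exterior products via (\ref{iso_after_Lm_3.2.4}). The paper states this more tersely, but the content, including the naturality bookkeeping you flag at the end, is the same.
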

\begin{proof}
Recall the full subcategory $\Tw(I)_{\phi}\subset \Tw(I)$ from Section~\ref{Sect_2.1.11_now}. The latter inclusion is zero-cofinal. For $(I\to J\to K)\in \Tw(I)$, $X^K\times_{X^I} X^I_{\phi, d}$ is empty unless $I'\in Q(K)$, so 
$V^{\und{\lambda}}\mid_{X^I_{\phi, d}}$ identifies with
$$
\underset{(I\to J\to K)\in \Tw(I)_{\phi}}{\colim} \underset{i'\in I'}{\boxtimes}(\underset{k\in K_{i'}}{\boxtimes} \underset{j\in (J_{i'})_k}{\otimes} V^{\lambda_j})\mid_{X^I_{\phi, d}}, 
$$
which identifies with the RHS. 
\end{proof}

\sssec{} Write $\Fun^{rlax}(\Lambda, C(X))$ for the category of right-lax non-unital symmetric monoidal functors. Let $I\in fSets$, $\und{\lambda}: I\to \Lambda$ as above. Then the assignment $\Irr\mapsto \cF_{\und{\lambda}, \Irr}$ defines a functor
$$
\Fun^{rlax}(\Lambda, C(X))\to \Fun(\Tw(I), C_{X^I})
$$
In particular, $V^{\und{\lambda}}$ is functorial in $\Irr\in \Fun^{rlax}(\Lambda, C(X))$. 

\sssec{} For $I\in fSets$ view $\Map(I, \Lambda)$ as an object of $CAlg^{nu}(Sets)$ with the pointwise operation. Consider the functor 
$$
fSets\to CAlg^{nu}(Sets), \; I\mapsto \Map(I, \Lambda).
$$ 
It sends $f: I\to I'$ in $fSets$ to the direct image map $f_*: \Map(I, \Lambda)\to\Map(I',\Lambda)$. That is, $(f_*\und{\lambda})(i')=\underset{i\in I_{i'}}{\sum} \und{\lambda}(i)$.
Let $\ov{\Lambda}=\underset{I\in fSets}{\lim} \Map(I,\Lambda)$ be the limit of this functor in 
$CAlg^{nu}(Sets)$ or, equivalently, in $Sets$ or in $\Spc$. 

\sssec{} As in Section~\ref{Sect_2.1.7_now}, one shows the following. If $\phi: I\to I'$ is a map in $fSets$ then for $\vartriangle: X^{I'}\to X^I$ under the equivalence $C_{X^I}\otimes_{Shv(X^I)} Shv(X^{I'})\,\iso\, C_{X^{I'}}$ one has canonically
$$
\vartriangle^! V^{\und{\lambda}}\,\iso\ V^{\und{\lambda'}}
$$
in $C_{X^{I'}}$, where $\und{\lambda'}: I'\to\Lambda$ is given by $\und{\lambda'}=\phi_*\und{\lambda}$. 

 Moreover, for $\ov{\lambda}=\{\und{\lambda}_I\}_{I\in fSets}\in \ov{\Lambda}$ with $\und{\lambda}_I\in \Map(I, \Lambda)$, the collection $\{V^{\und{\lambda}_I}\}_{I\in fSets}$ is equipped with a coherent system of higher compatibilites for the above restrictions, so defines an object of $\underset{I\in fSets}{\lim} C_{X^I}\,\iso\,\Fact(C)$. We denote this section by 
$$
V^{\ov{\lambda}}\in \Fact(C)
$$ 

\sssec{} There is also a direct definition of $V^{\ov{\lambda}}$ as in Section~\ref{Sect_2.1.8_v4}. Namely, one defines a functor 
$$
\cF_{\ov{\lambda}, \Irr}: \cTw(fSets)\to \Fact(C)
$$
sending $(J\to K)\in Tw(fSets)$ to the image of 
$$
\underset{k\in K}{\boxtimes} (\underset{j\in J_k}{\otimes} V^{\und{\lambda}_J(j)}).
$$ 
under $\underset{k\in K}{\boxtimes} (C^{\otimes J_k}(X))\to \Fact(C)$. The transition morphisms are defined in the same way as for $\cF_{\und{\lambda}, \Irr}$. One has canonically
$$
V^{\ov{\lambda}}\,\iso\, \underset{Tw(fSets)}{\colim} \cF_{\ov{\lambda}, \Irr}
$$
in $\Fact(C)$. 

\sssec{Example} Assume for a moment that $\Lambda$ is unital, $C(X)=Shv(X)$, 
and $\Irr$ is right-lax symmetric monoidal (so, preserves units). Let $I\in fSets$ and $\und{\lambda}: I\to \Lambda$ be the constant zero map denoted $\und{0}$. Then $V^{\und{0}}\;\iso\, \omega_{X^I}$. 

\sssec{} 
\label{Sect_1.12.3_images}
Let $C(X)\to D(X)$ be a map in $CAlg^{nu}(Shv(X)-mod)$ and $I\in fSets, \und{\lambda}\in\Map(I,\Lambda)$. Let $\Irr_D$ be the composition $\Lambda\toup{\Irr_C} C(X)\to D(X)$. We get the functor $\cF_{\und{\lambda}, \Irr_D}: Tw(I)\to D_{X^I}$ as above and the corresponding objects 
$$
V^{\und{\lambda}}_D=\underset{Tw(I)}{\colim} \; \cF_{\und{\lambda}, \Irr_D} \in D_{X^I}.
$$ 
Then the image of $V^{\und{\lambda}}_C$ under $C_{X^I}\to D_{X^I}$ identifies canonically with $V^{\und{\lambda}}_D$. 

\sssec{Example} If $\Lambda$ is unital, $C(X)$ is unital, and $\Irr$ is right-lax symmetric monoidal then $V^{\und{0}}\in C_{X^I}$ is the unit of $(C_{X^I}, \otimes^!)$. 

\sssec{} Let us be in the situation of Section~\ref{Sect_1.12.1_spreading_begins}.
Since $\Map(I,\Lambda)$ is a set, we defined a functor 
\begin{equation}
\label{functor_first_for_Sect_6.1.11}
\Map(I,\Lambda)\to (C_{X^I}, \otimes^!), \; \und{\lambda}\mapsto V^{\und{\lambda}}
\end{equation} 
Let us equip this functor with a right-lax non-unital symmetric monoidal structure.
 
 Let $\und{\lambda},\und{\nu}\in \Map(I,\Lambda)$. We define a map 
\begin{equation}
\label{map_rlax_structure_for_Sect_6.1.11}
V^{\und{\lambda}}\otimes^! V^{\und{\nu}}\to V^{\und{\lambda}+\und{\nu}}
\end{equation}
in $C_{X^I}$ as follows. Write $\cF_{\und{\lambda}, \Irr}\otimes \cF_{\und{\nu}, \Irr}: \Tw(I)\to (C\otimes C)_{X^I}$ for the functor
$$
\Tw(I)\toup{\cF_{\und{\lambda}, \Irr}\times \cF_{\und{\nu}, \Irr}} \; C_{X^I}\times C_{X^I}\toup{\otimes} C_{X^I}\otimes_{Shv(X^I)} C_{X^I}
$$
Arguing as in Sections~\ref{Sect_2.2.11_v4}-\ref{Sect_2.2.15_v4}, 
the object 
$$
V^{\und{\lambda}}\otimes V^{\und{\nu}}\in
C_{X^I}\otimes_{Shv(X^I)} C_{X^I}\,\iso\, (C\otimes C)_{X^I}
$$
is identified with the colimit
$$
\underset{(I\to J\to K)\in\Tw(I)}{\colim} \;\underset{k\in K}{\boxtimes} (\underset{j\in J_k}{\otimes} (V^{\lambda_j}\otimes V^{\nu_j}))
$$
calculated in $(C\otimes C)_{X^I}$ of the functor $\cF_{\und{\lambda}, \Irr}\otimes \cF_{\und{\nu}, \Irr}$. The image of this object under 
\begin{equation}
\label{map_for_Sect_6.1.11_v4}
(C\otimes C)_{X^I}\toup{\otimes^!_{X^I}} C_{X^I}
\end{equation}
identifies with
$$
\underset{(I\to J\to K)\in\Tw(I)}{\colim} \;\underset{k\in K}{\boxtimes} (\underset{j\in J}{\otimes} (V^{\lambda_j}\otimes^! V^{\nu_j}))
$$
This is the colimit of the functor $\cF_{\und{\lambda}, \Irr}\otimes^! \cF_{\und{\nu}, \Irr}$ obtained from $\cF_{\und{\lambda}, \Irr}\otimes \cF_{\und{\nu}, \Irr}$ by composing with (\ref{map_for_Sect_6.1.11_v4}).  

Moreover, the maps $u: V^{\lambda_j}\otimes^! V^{\nu_j}\to V^{\lambda_j+\nu_j}$ organize into a morphism of functors 
$$
\cF_{\und{\lambda}, \Irr}\otimes^! \cF_{\und{\nu}, \Irr}\to \cF_{\und{\lambda}+\und{\nu}, \Irr}
$$ 
in $\Fun(Tw(I), C_{X^I})$. Passing to the colimit over $\Tw(I)$ this gives the desired map (\ref{map_rlax_structure_for_Sect_6.1.11}). 

\begin{Rem} If $\Lambda$ is unital, $(C(X),\otimes^!)\in CAlg(Shv(X)-mod)$ and $\Irr$ is right-lax symmetric monoidal then (\ref{functor_first_for_Sect_6.1.11}) is unital.
\end{Rem}

\begin{Rem} The right-lax monoidal structure on (\ref{functor_first_for_Sect_6.1.11}) is compatible with restrictions. That is, for $f: I\to I'$ in $fSets$ the $!$-restriction of (\ref{map_rlax_structure_for_Sect_6.1.11}) under $X^{I'}\to X^I$ is canonically identified with the corresponding map
$$
V^{\und{\lambda}'}\otimes^! V^{\und{\nu}'}\to V^{\und{\lambda}'+\und{\nu}'}
$$
with $\und{\lambda}'=f_*\und{\lambda}$ and $\und{\nu}'=f_*\und{\nu}$. 
\end{Rem}

\sssec{} Equip $\Fun^{rlax}(\Lambda, C(X))$ with a non-unital symmetric monoidal structure as follows. For 
$$
\Irr^V, \Irr^W\in \Fun^{rlax}(\Lambda, C(X)), \lambda\mapsto V^{\lambda}, \lambda\mapsto W^{\lambda}
$$
we define the functor $\Irr^{V}\otimes^! \Irr^W\in \Fun^{rlax}(\Lambda, C(X))$ as the composition
$$
\Lambda\toup{\Irr^V\times \Irr^W} C(X)\times C(X)\toup{\otimes} C^{\otimes 2}(X)\toup{\otimes^!} C(X)
$$
It is naturally equipped with a right-lax non-unital symmetric monoidal structure. 

Let now $\und{\lambda}, \und{\nu}\in \Map(I, \Lambda)$. Denote by $\cF_{\und{\lambda}, \Irr^V}\otimes \cF_{\und{\nu}, \Irr^W}$ the composition
$$
\Tw(I)\toup{\cF_{\und{\lambda}, \Irr^V}\times \cF_{\und{\nu}, \Irr^ W}} C_{X^I}\times C_{X^I}\toup{\otimes} C_{X^I}\otimes_{Shv(X^I)} C_{X^I}
$$
Arguing as in Sections~\ref{Sect_2.2.11_v4}-\ref{Sect_2.2.15_v4}, the object
$$
V^{\und{\lambda}}\otimes W^{\und{\nu}}\in C_{X^I}\otimes_{Shv(X^I)} C_{X^I}\,\iso\, (C\otimes C)_{X^I}
$$
is identified with the colimit
$$
\underset{(I\to J\to K)\in \Tw(I)}{\colim} \underset{k\in K}{\boxtimes} (\underset{j\in J_k}{
\otimes} (V^{\lambda_j}\otimes W^{\nu_j}))
$$
in $(C\otimes C)_{X^I}$ of the functor $\cF_{\und{\lambda}, \Irr^V}\otimes \cF_{\und{\nu}, \Irr^ W}$. 

 Write $\cF_{\und{\lambda}, \Irr^V}\otimes^! \cF_{\und{\nu}, \Irr^W}$ for the composition of $\cF_{\und{\lambda}, \Irr^V}\otimes \cF_{\und{\nu}, \Irr^W}$ with 
$$
\otimes^!_{X^I}: (C\otimes C)_{X^I}\to C_{X^I}.
$$ 

 Then $V^{\und{\lambda}}\otimes^! W^{\und{\nu}}\in C_{X^I}$ identifies canonically with the colimit
$$
\underset{(I\to J\to K)\in \Tw(I)}{\colim} \underset{k\in K}{\boxtimes} (\underset{j\in J_k}{
\otimes} (V^{\lambda_j}\otimes^! W^{\nu_j}))
$$
in $C_{X^I}$ of the functor $\cF_{\und{\lambda}, \Irr^V}\otimes^! \cF_{\und{\nu}, \Irr^W}$. 
Note that 
$$
\cF_{\und{\lambda}, \Irr^V}\otimes^! \cF_{\und{\lambda}, \Irr^W}\,\iso\, \cF_{\und{\lambda}, \Irr^V\otimes^! \Irr^W}.
$$ 
Thus, we get canonically
$$
V^{\und{\lambda}}\otimes^! W^{\und{\lambda}}\in C_{X^I}\,\iso\, 
\underset{Tw(I)}{\colim} \, \cF_{\und{\lambda}, \Irr^V\otimes^! \Irr^W}
$$
We have established the following.

\begin{Lm} 
\label{Lm_6.1.16_v4}
For any $\und{\lambda}: I\to \Lambda$ the functor 
\begin{equation}
\label{functor_for_Lm_6.1.16}
\Fun^{rlax}(\Lambda, C(X))\to (C_{X^I},\otimes^!), \; \Irr\mapsto \underset{Tw(I)}{\colim} \, \cF_{\und{\lambda}, \Irr}
\end{equation}
is non-unital symmetric monoidal. \QED
\end{Lm}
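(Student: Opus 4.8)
The goal is to show that the functor (\ref{functor_for_Lm_6.1.16}), sending a right-lax non-unital symmetric monoidal functor $\Irr$ to $\underset{Tw(I)}{\colim}\,\cF_{\und{\lambda},\Irr}\in(C_{X^I},\otimes^!)$, is itself non-unital symmetric monoidal, and the plan is to assemble this from the pieces already laid out in the preceding paragraphs. The essential content has in fact already been extracted: the paragraph before the lemma exhibits, for $\Irr^V,\Irr^W\in\Fun^{rlax}(\Lambda,C(X))$, a canonical identification
$$
V^{\und{\lambda}}\otimes^! W^{\und{\lambda}}\,\iso\,\underset{Tw(I)}{\colim}\,\cF_{\und{\lambda},\,\Irr^V\otimes^!\Irr^W}
$$
in $C_{X^I}$, obtained by computing $\otimes^!_{X^I}(V^{\und\lambda}\otimes W^{\und\lambda})$ via the equivalence $C_{X^I}\otimes_{Shv(X^I)} C_{X^I}\,\iso\,(C\otimes C)_{X^I}$ of Section~\ref{Sect_2.2.15_v4}, and then using the objectwise identity $\cF_{\und{\lambda},\Irr^V}\otimes^!\cF_{\und{\lambda},\Irr^W}\,\iso\,\cF_{\und{\lambda},\Irr^V\otimes^!\Irr^W}$ of functors $Tw(I)\to C_{X^I}$. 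So the binary compatibility constraint of a symmetric monoidal structure is verified. What remains for a clean proof is to promote this binary statement to a coherent one, i.e.\ to produce the datum of a map of $\infty$-operads rather than just an equivalence on objects.

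The way I would carry this out is to factor the functor (\ref{functor_for_Lm_6.1.16}) as a composite of symmetric monoidal functors, each of which is already known (or easily seen) to be symmetric monoidal, so that no new coherence verification is needed. First, recall from Section~\ref{Sect_2.3.6_now} and the discussion there that $\Fun^{rlax}(\Lambda, C(X))\,\iso\, CAlg^{nu}\big(\Fun(\Lambda, C(X))\big)$ for the Day convolution $\infty$-operad on $\Fun(\Lambda, C(X))$ built from the cocartesian/commutative operad structure on $\Lambda\in CAlg^{nu}(Sets)$ and the $\otimes^!$-symmetric monoidal structure on $C(X)$; this is exactly the mechanism of (\cite{HA}, 2.2.6.8) used repeatedly in Section~\ref{Sect_Fact_cat_for_com_algebras}. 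Next, the assignment $\Irr\mapsto\cF_{\und{\lambda},\Irr}$ is a functor $\Fun^{rlax}(\Lambda, C(X))\to\Fun(Tw(I), C_{X^I})$, and I claim it is symmetric monoidal for the Day convolution structure on the target (relative to the $\otimes^!$-structure on $C_{X^I}$ and, say, the cocartesian structure on $Tw(I)$ pulled through $\nu$-type data, or more simply just the pointwise structure, since what is used is only the pointwise tensor on $Tw(I)$): indeed the objectwise formula $\cF_{\und{\lambda},\Irr^V}\otimes^!\cF_{\und{\lambda},\Irr^W}\,\iso\,\cF_{\und{\lambda},\Irr^V\otimes^!\Irr^W}$ together with compatibility with the transition maps in $Tw(I)$ (which is what is checked by hand in the paragraph before the lemma, using the right-lax structure maps $\underset{j'}{\otimes^!}V^{\lambda_{j'}}\to V^{\lambda_j}$) upgrades to a symmetric monoidal structure by the universal property of Day convolution. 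Finally, $\colim\colon\Fun(Tw(I), C_{X^I})\to C_{X^I}$ is non-unital symmetric monoidal for the Day convolution structure, exactly as invoked in Section~\ref{Sect_2.3.6_now} via (\cite{HA}, Construction 2.2.6.7). Composing the three symmetric monoidal functors gives the desired non-unital symmetric monoidal structure on (\ref{functor_for_Lm_6.1.16}), and the induced binary equivalence is the one already written down, so the two constructions agree.

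Alternatively — and this is probably the most economical route given how much machinery Section~\ref{Sect_2.2.11_v4}--\ref{Sect_2.2.15_v4} already provides — I would observe that the functor $C(X)\mapsto C_{X^I}$ is non-unital symmetric monoidal (\cite{Sect_2.2.15_v4}), hence induces a non-unital symmetric monoidal functor $\Fun^{rlax}(\Lambda, C(X))\to\Fun^{rlax}(\Lambda, C_{X^I})$, and then note that $\cF_{\und\lambda,\Irr}$ and its colimit package the composite $\Lambda\to C_{X^I}$ through the $Tw(I)$-combinatorics in precisely the way that the construction $\Irr\mapsto V^{\ov\lambda}$ packages things in Sections~\ref{Sect_1.12.3_images}; the only thing to check is that forming $\underset{Tw(I)}{\colim}\cF_{\und\lambda,-}$ is symmetric monoidal, which is the colimit-is-symmetric-monoidal statement combined with the objectwise tensor identity. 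The main obstacle, and the only genuinely nontrivial point, is the coherence: verifying that the objectwise equivalence $\cF_{\und{\lambda},\Irr^V}\otimes^!\cF_{\und{\lambda},\Irr^W}\,\iso\,\cF_{\und{\lambda},\Irr^V\otimes^!\Irr^W}$ is compatible with all the transition maps of $Tw(I)$ in a way that assembles into a map of Day convolution algebras. This compatibility reduces, over each morphism (\ref{morphism_in_Tw_v2}), to the interaction of the product maps $m$ in $C$, the $!$-restriction maps $\vartriangle^!$, the exterior products $\boxtimes$, and the right-lax structure maps $u\colon V^{\lambda_1}\otimes^! V^{\lambda_2}\to V^{\lambda_1+\lambda_2}$ — all of which commute strictly or up to the coherences already built into the symmetric monoidal functor $C(X)\mapsto C_{X^I}$ and into $\Irr$ as an $\infty$-operad map. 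I would state this as routine and refer to the analogous (and slightly harder) arguments of Section~\ref{Sect_2.2.11_v4}--\ref{Sect_2.2.15_v4}; no new idea is required, only bookkeeping.
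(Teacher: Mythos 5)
Your proposal is correct and follows essentially the paper's own route: the paper's proof of Lemma~\ref{Lm_6.1.16_v4} is precisely the binary identification $V^{\und{\lambda}}\otimes^! W^{\und{\lambda}}\,\iso\,\underset{Tw(I)}{\colim}\,\cF_{\und{\lambda},\,\Irr^V\otimes^!\Irr^W}$ established in the paragraphs preceding the lemma via the cofinality mechanism of Sections~\ref{Sect_2.2.11_v4}--\ref{Sect_2.2.15_v4}, with the higher coherences packaged exactly as you indicate. The only correction: drop the parenthetical suggestion that the pointwise tensor structure on $\Fun(Tw(I), C_{X^I})$ would suffice, since $\colim$ is not symmetric monoidal for the pointwise structure ($Tw(I)$ is not sifted); the correct packaging is the one you fall back on, namely the $'Tw(I)$ cofinality argument of Lemma~\ref{Lm_2.2.13_v4}.
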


\sssec{} 
\label{Sect_6.1.17_v4}
Let us be in the situation of Section~\ref{Sect_1.12.1_spreading_begins}. Assume in addition that $A\in CAlg^{nu}(C(X))$. Denote by $\cF_{I, A}\otimes \cF_{\und{\lambda}, \Irr}: \Tw(I)\to (C\otimes C)_{X^I}$ the composition
$$
\Tw(I)\toup{\cF_{I, A}\times \cF_{\und{\lambda}, \Irr}} C_{X^I}\times C_{X^I}\toup{\otimes} C_{X^I}\otimes_{Shv(X^I)} C_{X^I}
$$
Arguing as in Sections~\ref{Sect_2.2.11_v4}-\ref{Sect_2.2.15_v4}, the object
$A_{X^I}\otimes V^{\und{\lambda}}\in (C\otimes C)_{X^I}$ is identified with
$$
\underset{\Tw(I)}{\colim} \; \cF_{I, A}\otimes \cF_{\und{\lambda}, \Irr}
$$ 
Further applying $\otimes^!_{X^I}: (C\otimes C)_{X^I}\to C_{X^I}$, we get canonically
$$
A_{X^I}\otimes^! V^{\und{\lambda}}\;\iso\, \underset{(I\to J\to K)\in \Tw(I)}{\colim}\, \underset{k\in K}{\boxtimes} (A^{\otimes J_k}\otimes^! (\underset{j\in J_k}{\otimes} V^{\lambda_j})),
$$
the colimit being calculated in $C_{X^I}$. In the RHS of the latter formula the operation $\otimes^!$ stands for the corresponding product in $C^{\otimes J_k}(X)$. 

\sssec{}  Let $C(X), D(X)\in CAlg(Shv(X)-mod)$.  Let $\Irr_C: \Lambda\to (C(X), \otimes^!)$, $\lambda\mapsto V^{\lambda}$ and $\Irr_D: \Lambda\to (D(X),\otimes^!)$, $\lambda\mapsto U^{\lambda}$ be right-lax non-unital symmetric monoidal functors. Let $I\in fSets$ and $\und{\lambda}\in\Map(I, \Lambda)$. So, we get 
$$
V^{\und{\lambda}}=\underset{\Tw(I)}{\colim} \; \cF_{\und{\lambda},\Irr_C} \in C_{X^I},\;\;\; U^{\und{\lambda}}=\underset{\Tw(I)}{\colim} \; \cF_{\und{\lambda},\Irr_D}\in D_{X^I}.
$$ 
 
  Denote by $\cF_{\und{\lambda},\Irr_C}\otimes \cF_{\und{\lambda},\Irr_D}$ the composition
$$
\Tw(I)\;\toup{\cF_{\und{\lambda},\Irr_C}\times \cF_{\und{\lambda},\Irr_D}} \; C_{X^I}\times D_{X^I}\toup{\otimes} C_{X^I}\otimes_{Shv(X^I)} D_{X^I}\,\iso\, (C\otimes D)_{X^I}
$$
Then as above we get $\underset{\Tw(I)}{\colim} \; \cF_{\und{\lambda},\Irr_C}\otimes \cF_{\und{\lambda},\Irr_D}\,\iso\, V^{\und{\lambda}}\otimes U^{\und{\lambda}}$.  

\ssec{Spreading colimit formulas}
\label{Sect_6.2_v4}

\sssec{} 
In Section~\ref{Sect_6.2_v4} we propose a setting to put in a perspective a result of D.~Gaitsgory (\cite{Gai19Ran}, Proposition~5.4.7). It was used to present the Ran space version of the semi-infinite $\IC$-sheaf $\IC^{\frac{\infty}{2}}_{\Ran}$ in \select{loc.cit.} as certain colimit.

\sssec{}
\label{Sect_6.1.18_v4}
Keep the assumptions of Section~\ref{Sect_6.1.17_v4}. Assume also $C(X)\in CAlg(Shv(X)-mod)$, that is, $C(X)$ is unital, $\Lambda$ is unital, $\Irr: \Lambda\to (C(X), \otimes^!)$ is right-lax symmetric monoidal (so, preserves units), and $A\in CAlg(C(X))$. 
Note that the functor (\ref{functor_for_Lm_6.1.16}) preserves units, so is symmetric monoidal.
 
 Assume the functor $\lambda\to V^{\lambda}$ is dualizable in the symmetric monoidal category $\Fun^{rlax}(\Lambda, (C(X),\otimes^!))$. We write 
\begin{equation}
\label{functor_V^lambda_dual} 
\Irr^*: \lambda\mapsto (V^{\lambda})^*
\end{equation}
for the dual functor in this category. For $I\in fSets$, $\und{\lambda}\in \Map(I, \Lambda)$ we then set 
$$
(V^{\und{\lambda}})^*=\underset{\Tw(I)}{\colim}\, \cF_{\und{\lambda}, \Irr^*}.
$$ 
By Lemma~\ref{Lm_6.1.16_v4}, $V^{\und{\lambda}}$ and $(V^{\und{\lambda}})^*$ are canonically in duality in $(C_{X^I}, \otimes^!)$. 

 Equip $\Lambda$ with a relation: $\lambda_1\le\lambda_2$ iff there is $\lambda\in\Lambda$ with $\lambda_2=\lambda_1+\lambda$. This is not an order relation in general, but it makes $\Lambda$ into a filtered category.
 
  View the constant map $\Lambda\to C(X)$, $\lambda\mapsto A$ as an object of $\Fun^{rlax}(\Lambda, C(X))$, where the right lax non-unital symmetric monoidal structure comes from the algebra structure on $A$. Assume given morphisms 
$$
\tau^{\lambda}: V^{\lambda}\to A,
$$
for $\lambda\in\Lambda$, which are promoted to a map in $\Fun^{rlax}(\Lambda, C(X))$. In other words, for $\lambda,\nu\in\Lambda$ we are given a commutativity datum for the diagrams
$$
\begin{array}{ccc}
V^{\lambda}\otimes^! V^{\nu} &\toup{\tau^{\lambda}\otimes^!\tau^{\nu}} & A\otimes^!A\\
\downarrow && \downarrow\lefteqn{\scriptstyle m_A}\\
V^{\lambda+\mu} & \toup{\tau^{\lambda+\nu}} & A
\end{array}  
$$
together with a coherent system of higher compatibilities. Assume also $\tau^0: V^0\to A$ is the unit of $A$.\footnote{Examples of such functors $\tau$ appear in \cite{Gai19Ran, DL}}. 

Set $\cA=\underset{\lambda\in\Lambda}{\oplus} V^{\lambda}=\underset{\Lambda}{\colim} \Irr$ taken in $C(X)$. By Remark~\ref{Rem_6.2.4_v4} below, $\cA\in CAlg(C(X))$. The product in $\cA$ is given by the maps $V^{\lambda}\otimes^! V^{\mu}\to V^{\lambda+\mu}$ coming from the right lax structure on the functor $\Irr$. Then our assumption on $\tau^{\lambda}$ means that the map $\cA\to A$ (equal to $\tau^{\lambda}$ on $V^{\lambda}\to \cA$) is a map in $CAlg(C(X))$. 

\sssec{} For $E\in C(X)-mod$ we may consider $\cA-mod(E)\in\DGCat_{cont}$. This is the category of objects $e\in E$ together with morphisms $b^{\lambda}:V^{\lambda}\ast e\to e$ for $\lambda\in\Lambda$ such that:
\begin{itemize} 
\item $b^0: V^0\ast e\to e$ is the identity map;
\item For $\lambda,\mu\in \Lambda$ the diagram commutes
$$
\begin{array}{ccc}
V^{\lambda}\ast (V^{\mu}\ast e) & \toup{b^{\mu}} & V^{\lambda}\ast e\\
\downarrow && \downarrow\lefteqn{\scriptstyle b^{\lambda}}\\
V^{\lambda+\mu}\ast e & \toup{b^{\lambda+\mu}} & e.
\end{array}
$$
Here the left vertical arrow is given by the lax structure on $\Irr$. 

\item a system of higher compatibilities is given.  
\end{itemize}

 So, $\cA-mod(E)$ is the category of lax central objects of $E$ in the sense of (\cite{Gai19SI}, 2.7.1). Here the left lax action of $\Lambda$ on $E$ is given by $z\mapsto V^{\lambda}\ast z$, the lax structure being given by the right lax structure on the functor $\Irr$, and the right $\Lambda$-action on $E$ is trivial. 
 
\sssec{} Since $(C(X),\otimes^!)$ is symmetric monoidal, we may equivalently write its action on $e\in E$ on the right. Then $\cA-mod(E)$ can equivalently be described as the category of objects $e\in E$ together with morphisms $a^{\lambda}: e\to e\ast (V^{\lambda})^*$ for $\lambda\in\Lambda$ such that:
\begin{itemize}
\item $a^0$ is the identity; recall that $V^0$ is the unit of $C(X)$.
\item For $\lambda,\mu\in \Lambda$ the diagram commutes
\begin{equation}
\label{diag_two}
\begin{array}{ccc}
e&\toup{a^{\lambda}} & e\ast (V^{\lambda})^*\\
\downarrow\lefteqn{\scriptstyle a^{\lambda+\mu}} && \downarrow\lefteqn{\scriptstyle a^{\mu}}\\
e\ast (V^{\lambda+\mu})^* & \gets & e\ast (V^{\mu})^*\otimes (V^{\lambda})^*.
\end{array}
\end{equation}
The low horizontal arrow is given by the right lax structure on the functor $\Irr^*$;
\item a system of higher compatibilities is given.  
\end{itemize}

 The fact that the first description implies the second one follows from the commutativity for $\lambda,\mu\in\Lambda$ of the diagram
$$
\begin{array}{ccc}
1_C & \toup{u\otimes u} & (V^{\mu})^*\otimes V^{\mu}\otimes (V^{\lambda})^*\otimes V^{\lambda}\\
& \searrow\lefteqn{\scriptstyle u} &\downarrow\\
&& (V^{\lambda+\mu})^*\otimes V^{\lambda+\mu},
\end{array}
$$
where the right vertical arrow comes from the right lax structure on both $\Irr$ and $\Irr^*$, and $u$ everytime denotes the unit of the corresponding duality.

\sssec{} Consider the lax action of $\Lambda$ on $E$ such that $\lambda\in\Lambda$ sends $z\in E$ to $z\ast (V^{\lambda})^*$, the lax structure on the action is the morphisms
$$
(V^{\lambda})^*\otimes (V^{\mu})^*\to (V^{\lambda+\mu})^*
$$
coming from the right lax structure on $\Irr^*$. Consider also the trivial left action of $\Lambda$ on $E$. For these two actions the category of lax central objects of $E$ identifies canonically with $\cA-mod(E)$ by the above.   

\sssec{} For $z\in \cA-mod(E)$ this gives a well-defined functor $(\Lambda,\le)\to E$, $\lambda\mapsto z\ast (V^{\lambda})^*$. Given $\lambda, \lambda_i\in\Lambda$ with $\lambda_2=\lambda+\lambda_1$ the transition map 
$$
z\ast (V^{\lambda_1})^*\to z\ast (V^{\lambda_2})^*
$$
is the composition
$$
z\ast (V^{\lambda_1})^*\toup{a^{\lambda}} z\ast (V^{\lambda})^*\otimes (V^{\lambda_1})^*\to z\ast (V^{\lambda_2})^*,
$$
where the second arrow comes from the right-lax structure on the functor $\Irr^*$. The coherent system of higher compatibiliies for compositions comes automatically from the above data.

\sssec{} Assume $E=C(X)\in C(X)-mod$. Consider now the following two lax actions of $\Lambda$ on $C(X)$. The left action is trivial. The right action of $\lambda\in\Lambda$ sends $c$ to $c\otimes^! (V^{\lambda})^*$. The lax structure on the right action is then given for $\lambda_i\in\Lambda$ by the maps
$$
c\otimes^! (V^{\lambda_1})^*\otimes^! (V^{\lambda_2})^*\to c\otimes^! (V^{\lambda_1+\lambda_2})^*
$$
coming from the right-lax structure on the functor (\ref{functor_V^lambda_dual}). For each $\lambda\in\Lambda$ the composition $V^{\lambda}\otimes^! A\toup{\tau^{\lambda}} A\otimes^! A\toup{m_A} A$ gives by duality a map $\phi(\lambda): A\to A\otimes^! (V^{\lambda})^*$ in $C(X)$. 

 Since $A\in \cA-mod(C(X))$, $A$ together with the collection of maps $\{\phi(\lambda)\}$ becomes a lax central object of $C(X)$ for these actions. Then we get a well-defined functor
\begin{equation}
\label{functor_for_DrPl_formalism}
(\Lambda,\le)\to C(X), \; \lambda\mapsto A\otimes^! (V^{\lambda})^*
\end{equation}
If $\lambda_i,\lambda\in\Lambda$ and $\lambda_2=\lambda+\lambda_1$
then the transition map $A\otimes^! (V^{\lambda_1})^*\to A\otimes^! (V^{\lambda_2})^*$
is the composition
$$
A\otimes^! (V^{\lambda_1})^*\to A\otimes^! V^{\lambda}\otimes^! (V^{\lambda})^*\otimes^! 
(V^{\lambda_1})^*\toup{m_A\comp \tau^{\lambda}}
A\otimes^! (V^{\lambda})^*\otimes^!(V^{\lambda_1})^*\to A\otimes^! (V^{\lambda_2})^*,
$$
here the last arrow comes from the right-lax symmetric monoidal structure on the functor (\ref{functor_V^lambda_dual}). 

\sssec{} Set
$$
B=\underset{\lambda\in (\Lambda, \le)}{\colim} A\otimes^! (V^{\lambda})^*
$$
calculated in $C(X)$. Let us equip $B$ with a structure of an object in $CAlg(A-mod(C(X),\otimes^!))$.
  
   To do so, view $(\Lambda,\le)$ as a symmetric monoidal category with operation $(\lambda_1,\lambda_2)\mapsto \lambda_1+\lambda_2$. View (\ref{functor_for_DrPl_formalism}) as a functor $(\Lambda,\le)\to A-mod(C(X))$. Then it    
is naturally right-lax symmetric monoidal. Namely, for $\lambda,\mu\in\Lambda$ the corresponding morphism
$$
A\otimes (V^{\lambda})^*\otimes^!_A (A\otimes (V^{\mu})^*=A\otimes^! (V^{\mu})^*\otimes^! (V^{\lambda})^*\to A\otimes^! (V^{\lambda+\mu})^*
$$
is given by the right-lax structure on the functor (\ref{functor_V^lambda_dual}). The desired structure on $B$ is now obtained using Remark~\ref{Rem_6.2.4_v4} below. 

\begin{Rem} 
\label{Rem_6.2.4_v4}
Let $K$ be a small symmetric monoidal category, $\cA$ another symmetric monoidal $\infty$-category, which admits colimits. Let $f: K\to \cA$ be a right-lax symmetric monoidal functor. Then $\colim f$ naturally has a structure of an object of $CAlg(\cA)$ provided that the tensor product in $\cA$ commutes with $K$-indexed colimits.
\end{Rem}
 
\sssec{} 
\label{Sect_6.2.5_v4_before_factorization}
The unit map $A\to B$ for $B$ is a morphism in $CAlg(C(X))$. 

Let $M\in (C(X),\otimes^!)-mod$. Consider the functor $A-mod(M)\to B-mod(M)$, $c\mapsto B\otimes_A c$. In the situation of Section~\ref{Sect_6.1.18_v4} we have the following. The functor 
$$
A-mod(M)\to B-mod(M), \; c\mapsto B\otimes_A c
$$ 
identifies canonically with the functor 
$$
c\mapsto \underset{\lambda\in (\Lambda,\le)}{\colim} c\ast (V^{\lambda})^*
$$

\sssec{} Pick $I\in fSets$. Our purpose now is to spread the above isomorphism over $X^I$. 
 
 Equip $\Map(I, \Lambda)$ with the relation $\und{\lambda}_1\le \und{\lambda}_2$ iff there is $\und{\lambda}\in \Map(I,\Lambda)$ with $\und{\lambda}_2=\und{\lambda}+\und{\lambda}_1$. Then $(\Map(I,\Lambda), \le)$ is a filtered category.

\begin{Pp} 
\label{Pp_6.2.7_v4}
Let $M\in (C_{X^I},\otimes^!)-mod$. The functor 
$$
A_{X^I}-mod(M)\to B_{X^I}-mod(M), \; c\mapsto B_{X^I}\otimes_{A_{X^I}} c
$$
identifies canonically with the functor
$$
c\mapsto \underset{\und{\lambda}\in (\Map(I,\Lambda), \le)}{\colim} \;\, c\ast (V^{\und{\lambda}})^*
$$
\end{Pp}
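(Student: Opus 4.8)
The plan is to deduce Proposition~\ref{Pp_6.2.7_v4} from the colimit formula of Section~\ref{Sect_6.2.5_v4_before_factorization}, applied not over $X$ but over $X^I$, with $(C(X),\otimes^!)$ replaced by $(C_{X^I},\otimes^!)$, $\Lambda$ replaced by $\Map(I,\Lambda)$, the functor $\Irr$ replaced by $\underline{\lambda}\mapsto V^{\underline{\lambda}}$, and $A,B$ replaced by $A_{X^I},B_{X^I}$. The functor $\Map(I,\Lambda)\to(C_{X^I},\otimes^!)$, $\underline{\lambda}\mapsto V^{\underline{\lambda}}$, is right-lax symmetric monoidal, and unital when $\Irr$ is, by Lemma~\ref{Lm_6.1.16_v4} and the remarks following it; $A_{X^I}\in CAlg(C_{X^I})$ by the unital version of Section~\ref{Sect_2.4.10_v4}; and by spreading the map $\tau\colon\Irr\to\mathrm{const}_A$ in $\Fun^{rlax}(\Lambda,C(X))$ through the non-unital symmetric monoidal functor of Lemma~\ref{Lm_6.1.16_v4} (which carries $\mathrm{const}_A$, equipped with its $A$-algebra right-lax structure, to $A_{X^I}$, since $\cF_{\underline{\lambda},\mathrm{const}_A}\iso\cF_{I,A}$ of Section~\ref{Sect_2.4.2_v4}) one obtains the maps $\tau^{\underline{\lambda}}\colon V^{\underline{\lambda}}\to A_{X^I}$ promoted to a map of right-lax functors over $\Map(I,\Lambda)$. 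Thus the whole package of Sections~\ref{Sect_6.1.18_v4}--\ref{Sect_6.2.5_v4_before_factorization} is available in this enlarged setting, and its conclusion is literally the statement of the Proposition, provided one proves the identification
$$
B_{X^I}\;\iso\;\underset{\underline{\lambda}\in(\Map(I,\Lambda),\le)}{\colim}\; A_{X^I}\otimes^!(V^{\underline{\lambda}})^*
\qquad\text{in } CAlg\bigl(A_{X^I}\text{-}mod(C_{X^I})\bigr).
\qquad(\star)
$$
Indeed, granting $(\star)$ and using that the colimit there is taken in $A_{X^I}$-modules and that $\,-\otimes_{A_{X^I}}c\,$ is continuous, $B_{X^I}\otimes_{A_{X^I}}c\iso\colim_{\underline{\lambda}}\bigl((A_{X^I}\otimes^!(V^{\underline{\lambda}})^*)\otimes_{A_{X^I}}c\bigr)\iso\colim_{\underline{\lambda}}\,c\ast(V^{\underline{\lambda}})^*$, exactly as in Section~\ref{Sect_6.2.5_v4_before_factorization}.

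To prove $(\star)$, apply Section~\ref{Sect_6.1.17_v4} to the right-lax non-unital symmetric monoidal functor $\Irr^*\colon\lambda\mapsto(V^{\lambda})^*$ (dualizable by the hypothesis of Section~\ref{Sect_6.1.18_v4}, so $(V^{\underline{\lambda}})^*\iso\colim_{\Tw(I)}\cF_{\underline{\lambda},\Irr^*}$). This gives
$$
A_{X^I}\otimes^!(V^{\underline{\lambda}})^*\;\iso\;\underset{(I\to J\to K)\in\Tw(I)}{\colim}\;\underset{k\in K}{\boxtimes}\Bigl(\underset{j\in J_k}{\otimes}\bigl(A\otimes^!(V^{\lambda_j})^*\bigr)\Bigr),\qquad \lambda_j=\underset{i\in I_j}{\sum}\underline{\lambda}(i).
$$
Now take $\colim$ over $\underline{\lambda}\in(\Map(I,\Lambda),\le)$ and interchange it with the $\Tw(I)$-colimit. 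The bracketed functor of $\underline{\lambda}$ factors through the strict monoidal direct-image map $p_*\colon(\Map(I,\Lambda),\le)\to(\Map(J,\Lambda),\le)$, which is cofinal because $p\colon I\to J$ is surjective (so every slice $\underline{\mu}/p_*$ is filtered); hence the inner colimit becomes $\colim_{\underline{\mu}\in\Map(J,\Lambda)}\boxtimes_{k\in K}(\otimes_{j\in J_k}(A\otimes^!(V^{\mu_j})^*))$. Since $\Map(J,\Lambda)=\prod_{j\in J}\Lambda$ and the exterior products $\boxtimes$ and $\otimes$ are continuous in each variable, Fubini for colimits identifies this with $\boxtimes_{k\in K}\bigl(\otimes_{j\in J_k}\colim_{\mu_j\in(\Lambda,\le)}(A\otimes^!(V^{\mu_j})^*)\bigr)=\boxtimes_{k\in K}B^{\otimes J_k}$, where $B=\colim_{(\Lambda,\le)}(A\otimes^!(V^{\lambda})^*)$ is as in Section~\ref{Sect_6.2.5_v4_before_factorization}. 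Therefore $\colim_{\underline{\lambda}}A_{X^I}\otimes^!(V^{\underline{\lambda}})^*\iso\colim_{\Tw(I)}\boxtimes_{k\in K}B^{\otimes J_k}=\colim_{\Tw(I)}\cF_{I,B}=B_{X^I}$. Finally one checks that this chain of equivalences respects the right-lax symmetric monoidal structures on the diagrams valued in $A_{X^I}\text{-}mod(C_{X^I})$ — on the left the one from the enlarged Section~\ref{Sect_6.2.5_v4_before_factorization}, on the right the one induced from the $A$-algebra structure on $B$ by functoriality of $A\mapsto A_{X^I}$ (Lemma~\ref{Lm_2.4.8_v4}) — which upgrades the equivalence to $CAlg(A_{X^I}\text{-}mod(C_{X^I}))$ and yields $(\star)$.

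The colimit gymnastics (cofinality of $p_*$, Fubini, continuity of $\otimes$) are routine; the genuine work — and the point I expect to be most delicate to write cleanly — is the coherence bookkeeping in the reduction: checking that every auxiliary structure in sight (the right-lax monoidal structure on $\underline{\lambda}\mapsto V^{\underline{\lambda}}$, the spread map $\tau^{\underline{\lambda}}$, the $A_{X^I}$-module and algebra structures, and the filtered diagram computing $B_{X^I}$) is obtained from its avatar over $X$ by one and the same spreading procedure, so that Section~\ref{Sect_6.2.5_v4_before_factorization} transports verbatim. This is best handled by exhibiting each of these as the image of the corresponding datum over $X$ under the non-unital symmetric monoidal functor of Lemma~\ref{Lm_6.1.16_v4}, combined with Section~\ref{Sect_6.1.17_v4} and Remark~\ref{Rem_6.2.4_v4}, and observing that these constructions are natural in $I$ and compatible with $!$-restrictions, so the relevant diagrams commute on the nose.
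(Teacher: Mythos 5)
Your reduction coincides with the paper's: both pass to the universal case $M=C_{X^I}$ via $A_{X^I}\text{-}mod(M)\,\iso\,A_{X^I}\text{-}mod(C_{X^I})\otimes_{C_{X^I}}M$ and reduce the Proposition to the single isomorphism $(\star)$: $\underset{\und{\lambda}\in(\Map(I,\Lambda),\le)}{\colim}A_{X^I}\otimes^!(V^{\und{\lambda}})^*\,\iso\,B_{X^I}$ in $A_{X^I}\text{-}mod(C_{X^I})$, followed by the same two-line deduction. Where you genuinely differ is in the proof of $(\star)$. The paper expands $A_{X^I}\otimes^!(V^{\und{\lambda}})^*$ as in Section~\ref{Sect_6.1.17_v4}, builds an explicit comparison map termwise out of the structure morphisms $A\otimes^!(V^{\lambda_j})^*\to B$, passes to colimits, and then verifies it is an equivalence by factorization together with the one-point statement of Section~\ref{Sect_6.2.5_v4_before_factorization} (devissage over the diagonal strata of $X^I$). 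You instead prove $(\star)$ by pure colimit calculus: interchange the $\und{\lambda}$- and $\Tw(I)$-colimits, use cofinality of $p_*\colon(\Map(I,\Lambda),\le)\to(\Map(J,\Lambda),\le)$, and apply Fubini over $\prod_{j\in J}\Lambda$ to identify the inner colimit with $\underset{k\in K}{\boxtimes}B^{\otimes J_k}$, hence the whole thing with $\colim_{\Tw(I)}\cF_{I,B}=B_{X^I}$. Both routes are sound; yours is more formal and avoids the factorization/devissage step, but it shifts the burden onto exactly the coherence bookkeeping you flag: you need $\und{\lambda}\mapsto(\text{the }\Tw(I)\text{-diagram})$ to be a genuine bifunctor with transitions given by the lax-central structure, and you must match the transition maps produced by the interchange with those of $\cF_{I,B}$, whose products come from Remark~\ref{Rem_6.2.4_v4}; the paper only needs the comparison map $\und{\lambda}$-by-$\und{\lambda}$ and then checks the isomorphism stratum by stratum, offloading coherence onto the already-established factorization statements. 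Also, your promotion of $(\star)$ to $CAlg(A_{X^I}\text{-}mod(C_{X^I}))$ is more than is needed: the module-level isomorphism suffices for the deduction, and that is all the paper establishes.
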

\begin{proof} Recall that $A_{X_I}-mod(M)\,\iso\, A_{X^I}-mod(C_{X^I})\otimes_{C_{X^I}} M$ canonically. So, it suffices to do the universal case, and we assume $M=C_{X^I}$ viewed naturally as a module over itself. Moreover, it suffices to establish an isomorphism
\begin{equation}
\label{iso_key_for_Pp_6.2.7_v4}
\underset{\und{\lambda}\in (\Map(I,\Lambda), \le)}{\colim} \; A_{X^I}\otimes^!(V^{\und{\lambda}})^*\,\iso\, B_{X^I}
\end{equation}
in $A_{X^I}-mod(C_{X^I})$. Indeed, it will give for any $c\in A_{X^I}-mod(C_{X^I})$
\begin{multline*}
B_{X^I}\otimes_{A_{X^I}} c\,\iso\, \underset{\und{\lambda}\in (\Map(I,\Lambda), \le)}{\colim} \; (A_{X^I}\otimes^!(V^{\und{\lambda}})^*)\otimes^!_{A_{X^I}} c\,\iso\\
\underset{\und{\lambda}\in (\Map(I,\Lambda), \le)}{\colim} \; c\otimes^! (V^{\und{\lambda}})^*
\end{multline*}

 As in Section~\ref{Sect_6.1.17_v4}, one has canonically
$$
A_{X^I}\otimes^! (V^{\und{\lambda}})^*\,\iso\, \underset{(I\to J\to K)\in \Tw(I)}{\colim}\, \underset{k\in K}{\boxtimes} (A^{\otimes J_k}\otimes^! (\underset{j\in J_k}{\otimes} (V^{\lambda_j})^*))
$$
Let $\und{\lambda}\in \Map(I, \Lambda)$ and $(I\to J\to K)\in \Tw(I)$. For each $k\in K$ we get a morphism 
$$
A^{\otimes J_k}\otimes^! (\underset{j\in J_k}{\otimes} (V^{\lambda_j})^*)\to B^{\otimes J_k}
$$ 
in $C^{\otimes J_k}(X)$ as the $\otimes$-product of the structure morphisms
$$
A\otimes^! (V^{\lambda_j})^*\to B
$$
in $C(X)$. Taking further the exterior product $\underset{k\in K}{\boxtimes}$, we get a morphism
$$
\underset{k\in K}{\boxtimes} (A^{\otimes J_k}\otimes^! (\underset{j\in J_k}{\otimes} (V^{\lambda_j})^*))\to \underset{k\in K}{\boxtimes} B^{\otimes J_k}
$$
in $C_{X^I}$ functorial in $(I\to J\to K)\in \Tw(I)$. Passing to the colimit over $\Tw(I)$, this gives the desired morphism (\ref{iso_key_for_Pp_6.2.7_v4}). The fact that it is an 
isomorphism easily follows from the factorizaton and Section~\ref{Sect_6.2.5_v4_before_factorization}. 
\end{proof}

\section{Application to the Satake functor}
\label{Sect_Application to Satake}

\ssec{} In (\cite{Ras2}, Section~6) a factorizable Satake functor is defined in the setting of $\cD$-modules, and some proofs in \select{loc.cit.} do not apply in the constructible context. In Section~\ref{Sect_Application to Satake} we explain how to modify the argument from \select{loc.cit.} so that it works uniformly for $\cD$-modules and in the constructible context to get the factorizable Satake functor $\Sat_{G,\Ran}$. Our purpose is to fill in what we think was a gap in the definition of the factorizable Satake functor in the constructible setting in (\cite{GLys}, \cite{Gai19Ran}), it is described in Section~\ref{Sect_gap_described}. 
 
\sssec{} Use the following notations from \cite{GLys}. Let $G$ be a connected reductive group over $k$, $\check{G}$ its Langlands dual group over $e$. 

 For $I\in fSets$ let $\Gr_{G, X^I}$ (resp., $\Gr_{G, \Ran}$) be the version of the affine grassmanian of $G$ over $X^I$ (resp., $\Ran$). For $I\in fSets$ one has the group schemes 
$$
\gL^+(G)_{\Ran}\to\Ran, \; \gL^+(G)_I:=X^I\times_{\Ran} \gL^+(G)_{\Ran}\to X^I
$$ 
defined in (\cite{Gai19Ran}, 1.1.3). 
 
\sssec{}  One defines the sheaf of categories 
$\Sph_{G,\Ran}$ over $\Ran$ as a collection of sheaves of categories over $X^I$
$$
\Sph_{G, I}=Shv(\Gr_{G, X^I})^{\gL^+(G)_I}
$$ together with natural equivalences $\Sph_{G, I}\otimes_{Shv(X^I)} Shv(X^J)\,\iso\, Sph_{G, J}$ in $ShvCat(X^J)$ for maps $I\to J$ in $fSets$. 

Our purpose is to precise the definition of the functors
\begin{equation}
\label{functor_Sat_G_I}
\Sat_{G, I}: \Rep(\check{G}))_{X^I}\to \Sph_{G, I}
\end{equation}
and glue them into a functor 
\begin{equation}
\label{functor_Sat_G_Ran}
\Sat_{G,\Ran}: \Fact(\Rep(\check{G}))\to \Sph_{G,\Ran}
\end{equation}
in $ShvCat(\Ran)\,\iso\, Shv(\Ran)-mod$. 

\ssec{Case of $C=\Rep(\Gamma)$} Let $\Gamma$ be an affine algebraic group of finite type. Take $C=\Rep(\Gamma)$. It is compactly generated by irreducible finite-dimensional representations in $C^{\heartsuit}$, it is rigid (by \cite{G}, I.3, 3.7.4). Set $C(X)=C\otimes Shv(X)$. By Proposition~\ref{Pp_4.1.19}, $\zeta_I: C_{X^I}\,\iso\, \bar C_{X^I}$ is an equivalence. We extend some results from \cite{Ras2} to the category $\Fact(C)$ in the constructible setting. 

\sssec{} Recall that for any $D\in \DGCat_{cont}$, $\oblv: \Rep(\Gamma)\otimes D\to D$ is comonadic by (\cite{Ga1}, 5.5.5). The corresponding comonad is the action of the coalgebra $\cO_{\Gamma}\in coAlg(\Vect)$, here $\cO_{\Gamma}$ is the space of functions on $\Gamma$. 

 The map $\oblv: C(X)\to Shv(X)$ is a morphism in $CAlg(Shv(X)-mod)$, so by functoriality yields a morphism $\Fact(C)\to Shv(\Ran)$ in $Shv(\Ran)-mod$. For $I\in fSets$ it manifests itself by the oblivion functor $\oblv_{X^I}: C_{X^I}\to Shv(X^I)$.

\begin{Lm} The functor $\oblv_{X^I}: C_{X^I}\to Shv(X^I)$ has a continuous $Shv(X^I)$-linear right adjoint $\oblv_{X^I}^R$.
\end{Lm}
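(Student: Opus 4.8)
The plan is to recognize $\oblv_{X^I}$ as the functor $L_I$ of Section~\ref{Sect_2.7.2} attached to a suitable adjoint pair over $X$, and then to read off the conclusion from part i) of the (unnumbered) Lemma in that subsection. Take $D(X)=Shv(X)$, viewed as $\Vect\otimes Shv(X)\in CAlg(Shv(X)-mod)$, so that $\oblv: C(X)=\Rep(\Gamma)\otimes Shv(X)\to Shv(X)$ is a map in $CAlg(Shv(X)-mod)$ (the forgetful functor is symmetric monoidal). Dually to the comonadicity recalled just above, the functor $\oblv:\Rep(\Gamma)\to\Vect$ admits a continuous right adjoint, namely coinduction $\coind: W\mapsto \cO_{\Gamma}\otimes W$ with its regular coaction; tensoring with $\id_{Shv(X)}$ produces a continuous $Shv(X)$-linear right adjoint $R:=\coind\otimes\id_{Shv(X)}$ of $\oblv$. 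Thus $(\oblv,R)$ is an adjoint pair in $Shv(X)-mod$ with $\oblv$ a map in $CAlg^{nu}(Shv(X)-mod)$, which is exactly the setup of Section~\ref{Sect_2.7.2}.

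It remains to check the other hypothesis of Section~\ref{Sect_2.7.2}, that $m_{C(X)}$ and $m_{D(X)}$ admit continuous $Shv(X)$-linear right adjoints. For $D(X)=\Vect\otimes Shv(X)$ one has $D^{\otimes 2}(X)\iso Shv(X)$ and $m_{D(X)}=\id$, so there is nothing to prove. For $C(X)$, since $\Rep(\Gamma)$ is rigid (\cite{G}, I.3, 3.7.4) the multiplication $m:\Rep(\Gamma)^{\otimes 2}\to\Rep(\Gamma)$ has a continuous right adjoint $m^R$ in $\DGCat_{cont}$ (this is also what is used to invoke Proposition~\ref{Pp_4.1.19} in the present section), hence $m_{C(X)}=m\otimes\id_{Shv(X)}$ has the continuous $Shv(X)$-linear right adjoint $m^R\otimes\id_{Shv(X)}$. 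Finally one records that $(\Vect\otimes Shv(X))_{X^I}\iso Shv(X^I)$ (the Example with $C=\Vect$), and that under this identification the functor $L_I: C_{X^I}\to(Shv(X))_{X^I}$ of Section~\ref{Sect_2.7.2} coincides with $\oblv_{X^I}$, since both are obtained from $\oblv: C(X)\to Shv(X)$ by the same functoriality $C\mapsto C_{X^I}$.

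By part i) of the Lemma of Section~\ref{Sect_2.7.2}, the right adjoint $R_I$ of $L_I=\oblv_{X^I}$ is a morphism in $Shv(X^I)-mod$; as every morphism in $Shv(X^I)-mod$ is by definition continuous and $Shv(X^I)$-linear, this $R_I$ is the desired $\oblv_{X^I}^R$. The only point requiring slight care is the pair of identifications in the previous paragraph (of $L_I$ with $\oblv_{X^I}$ and of $(\Vect\otimes Shv(X))_{X^I}$ with $Shv(X^I)$); once these are in place the statement is literally part i) of that Lemma. Alternatively, one may argue directly from $C_{X^I}\iso\underset{\Tw(I)^{op}}{\lim}\cF^R_{I,C}$ (Lemma~\ref{Lm_3.2.6_for_sheaves_of_cat}): $\oblv_{X^I}$ is then the limit of the maps $\oblv:\underset{k\in K}{\boxtimes}C^{\otimes J_k}(X)\to\underset{k\in K}{\boxtimes}Shv(X)^{\otimes J_k}(X)$, each of which has a continuous $Shv(X^I)$-linear right adjoint given by coinduction, and one passes to right adjoints in the limit via (\cite{Ly}, 9.2.39); I expect this bookkeeping, rather than any genuine difficulty, to be the only laborious part.
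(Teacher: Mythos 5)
Your proof is correct, but it takes a genuinely different route from the paper's. The paper argues via the ULA formalism: by Lemma~\ref{Lm_ULA_general_lemma} the category $C_{X^I}$ is ULA over $Shv(X^I)$, by Lemma~\ref{Lm_3.5_2_generated_under_colim_placeholder} \emph{(sic: Lemma~\ref{Lm_3.5.2_generated_under_colim})} the functor $\Loc$ generates $C_{X^I}$ under colimits, and then Proposition~\ref{Pp_3.6.6} reduces the existence of a continuous $Shv(X^I)$-linear right adjoint to checking that $\oblv_{X^I}$ sends the ULA generators $\Loc(c\otimes\omega_{X^I})$, $c\in (C^{\otimes I})^c$, to ULA objects, which follows from the identification $\oblv_{X^I}\Loc(c\otimes\omega_{X^I})\iso \oblv(c)\otimes\omega_{X^I}$. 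You instead specialize the ``spreading of adjoint pairs'' lemma following Section~\ref{Sect_2.7.2} to $D(X)=Shv(X)$, with $R=\coind\otimes\id_{Shv(X)}$, $\coind(W)=\cO_{\Gamma}\otimes W$, and verify its hypotheses using rigidity of $\Rep(\Gamma)$ (so $m^R$ is continuous) and triviality of $m_D$; equivalently, your closing alternative is exactly the mechanism of that lemma's proof (passing to right adjoints levelwise in the $\Tw(I)$-diagram as in Lemma~\ref{Lm_3.2.6_for_sheaves_of_cat} and applying (\cite{Ly}, 9.2.39)). Your identifications of $L_I$ with $\oblv_{X^I}$ and of $(Shv(X))_{X^I}$ with $Shv(X^I)$ are the correct bookkeeping, and the hypotheses you need all hold (note also that $\Rep(\Gamma)\otimes Shv(X)$ is dualizable over $Shv(X)$, so even the standing assumptions of Section~\ref{Sect_Further properties} are available if one insists on them). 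What each approach buys: yours avoids invoking the ULA machinery and is essentially formal given Section~\ref{Sect_2.7.2}, and in addition produces $\oblv_{X^I}^R$ explicitly as a limit of levelwise coinduction functors; the paper's route is shorter on the page because the ULA statements (Lemma~\ref{Lm_ULA_general_lemma}, Proposition~\ref{Pp_3.6.6}) are needed anyway in this section, and it moreover records the ULA property of the relevant objects, which is reused in the subsequent t-structure arguments.
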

\begin{proof}
By Lemma~\ref{Lm_ULA_general_lemma}, $C_{X^I}$ is ULA over $Shv(X^I)$. By Lemma~\ref{Lm_3.5.2_generated_under_colim}, $\Loc: C^{\otimes I}\otimes Shv(X^I)\to C_{X^I}$ generates $C_{X^I}$ under colimits. Let $c\in C^{\otimes I}$ be compact. Then $\Loc(c\otimes\omega_{X^I})$ is ULA over $Shv(X^I)$, cf. Lemma~\ref{Lm_ULA_general_lemma}. By Proposition~\ref{Pp_3.6.6}, it suffices to show that $\oblv_{X^I}\Loc(c\otimes\omega_{X^I})$ is ULA over $Shv(X^I)$. We have by functoriality $\oblv_{X^I}\Loc(c\otimes\omega_{X^I})\,\iso\,\oblv(c)\otimes \omega_{X^I}$, where $\oblv: C\to\Vect$ is the forgetful functor. We are done.
\end{proof}
\begin{Lm} For any $D\in Shv(X^I)-mod$, the functor 
\begin{equation}
\label{functor_in_Lm_6.2.3}
\oblv_{X^I}: C_{X^I}\otimes_{Shv(X^I)} D\to D
\end{equation}
is comonadic. 
\end{Lm}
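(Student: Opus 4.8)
The plan is to verify the hypotheses of the comonadic Barr--Beck--Lurie theorem for the functor $(\ref{functor_in_Lm_6.2.3})$, which I will denote by $G$. The existence of a continuous right adjoint is immediate from the previous lemma: $\oblv_{X^I}\colon C_{X^I}\to Shv(X^I)$ has a continuous $Shv(X^I)$-linear right adjoint $\oblv_{X^I}^R$, and base changing along $\cdot\otimes_{Shv(X^I)}D$ produces a continuous right adjoint $\oblv_{X^I}^R\otimes_{Shv(X^I)}D$ to $G$. So it remains to show that $G$ is conservative and that it preserves totalizations of cosimplicial objects of $C_{X^I}\otimes_{Shv(X^I)}D$ that become split in $D$.

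Both remaining points I would reduce to the local statement via the Raskin-type presentation. Since $\zeta_I\colon C_{X^I}\,\iso\,\bar C_{X^I}$ (Proposition~\ref{Pp_4.1.19}), Proposition~\ref{Pp_4.1.10_Raskin_dualizability} gives, writing $D_\Sigma=Shv(X^I_{p,d})\otimes_{Shv(X^I)}D$ for $\Sigma=(I\toup{p}J\to K)$ and using that $\cdot\otimes\Rep(\Gamma)^{\otimes K}$ commutes with the relative tensor product over $Shv(X^I)$, a canonical identification of $C_{X^I}\otimes_{Shv(X^I)}D$ with $\underset{\Tw(I)^{op}}{\lim}\,\Rep(\Gamma)^{\otimes K}\otimes D_\Sigma$; the case $C=\Vect$ of Proposition~\ref{Pp_4.1.10_Raskin_dualizability} identifies $D$ with $\underset{\Tw(I)^{op}}{\lim}D_\Sigma$; and under these identifications $G$ becomes the limit over $\Tw(I)^{op}$ of the forgetful functors $\oblv_\Sigma\colon\Rep(\Gamma)^{\otimes K}\otimes D_\Sigma\to D_\Sigma$. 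Each $\oblv_\Sigma$ is comonadic with continuous right adjoint, by the comonadicity recalled at the start of this section applied to the affine algebraic group $\Gamma^{\times K}$ (using $\Rep(\Gamma^{\times K})\,\iso\,\Rep(\Gamma)^{\otimes K}$, valid since $\Rep(\Gamma)$ is dualizable). Conservativity of $G$ is then automatic, as the projections out of a limit of $\infty$-categories are jointly conservative and each $\oblv_\Sigma$ is conservative.

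The delicate point, which I expect to be the only real obstacle, is preservation of the relevant totalizations: $\oblv_{X^I}$ does \emph{not} admit a left adjoint in general (already $\oblv\colon\Rep(\Gm)\to\Vect$ fails to preserve products), so $G$ does not preserve arbitrary limits and one must check that split totalizations are computed componentwise in the limit presentation. Given a cosimplicial object $x^\bullet$ of $C_{X^I}\otimes_{Shv(X^I)}D$ whose image under $G$ is split, each component $x^\bullet_\Sigma$ in $\Rep(\Gamma)^{\otimes K}\otimes D_\Sigma$ becomes split after $\oblv_\Sigma$ (splittings transport along any functor), so by comonadicity of $\oblv_\Sigma$ the totalization of $x^\bullet_\Sigma$ exists and is preserved by $\oblv_\Sigma$. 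I would then observe that every transition functor in the diagram $\Sigma\mapsto\Rep(\Gamma)^{\otimes K}\otimes D_\Sigma$ preserves such totalizations: it is the tensor product of a restriction along an open immersion $X^I_{p_1,d}\hook{}X^I_{p_2,d}$, which admits both adjoints and hence preserves all limits, with a product functor $\Rep(\Gamma)^{\otimes K_2}\to\Rep(\Gamma)^{\otimes K_1}$, a composition of multiplications $\otimes\colon\Rep(\Gamma)^{\otimes 2}\to\Rep(\Gamma)$; and for fixed $W$ the functor $V\mapsto V\otimes W$ preserves $\oblv$-split totalizations, because tensoring a split cosimplicial vector space by $\oblv(W)$ keeps it split, split totalizations are absolute, and $\oblv$ is conservative. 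Since these transition functors are moreover compatible with the $\oblv_\Sigma$, the totalizations of the $x^\bullet_\Sigma$ glue to an object of the limit which is the totalization of $x^\bullet$, and $G$ carries it to the limit of the $\Tot(\oblv_\Sigma x^\bullet_\Sigma)$, i.e. to the totalization of $G(x^\bullet)$. With all hypotheses of Barr--Beck--Lurie verified, $G$ is comonadic.
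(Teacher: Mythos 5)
Your argument is essentially the paper's proof of this lemma: the same presentation of $C_{X^I}\otimes_{Shv(X^I)}D$ as the limit over $\Tw(I)^{op}$ of the categories $\Rep(\Gamma)^{\otimes K}\otimes Shv(X^I_{p,d})\otimes_{Shv(X^I)}D$ via $\zeta_I$ and Proposition~\ref{Pp_4.1.10_Raskin_dualizability}, componentwise comonadicity of the $\oblv_{\Sigma}$ from (\cite{Ga1}, Lemma 5.5.4), conservativity by joint conservativity of the projections out of the limit, and finally the check that totalizations of $G$-split cosimplicial objects are computed componentwise --- a step the paper disposes of by citing (\cite{Ly}, Lemma 2.2.68(1)) and you argue by hand.

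The one defective point is your justification that the multiplication-type transition functors preserve the relevant totalizations. The functor $m\otimes\id\colon \Rep(\Gamma)^{\otimes K_2}\otimes D_{\Sigma_2}\to \Rep(\Gamma)^{\otimes K_1}\otimes D_{\Sigma_2}$ is not of the form $V\mapsto V\otimes W$ for a fixed $W$, and an $\oblv$-split cosimplicial object of the source need not have that shape, so the ``fixed $W$'' reduction does not apply as written. The claim itself is true and the repair is short: $m$ is restriction along the homomorphism $\Gamma^{K_1}\to\Gamma^{K_2}$, so writing $\oblv_{K_2},\oblv_{K_1}$ for the two forgetful functors to $D_{\Sigma_2}$ one has $\oblv_{K_1}\comp(m\otimes\id)\,\iso\,\oblv_{K_2}$; hence for an $\oblv_{K_2}$-split $x^{\bullet}$ the object $(m\otimes\id)(x^{\bullet})$ is again $\oblv$-split, and applying the conservative functor $\oblv_{K_1}$ to the canonical map from $(m\otimes\id)$ of the totalization of $x^{\bullet}$ to the totalization of $(m\otimes\id)(x^{\bullet})$ identifies it, by comonadicity of both forgetful functors, with a comparison of two computations of the totalization of $\oblv_{K_2}(x^{\bullet})$; so it is an isomorphism. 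With this substitution (the open-restriction factor is handled correctly, since it retains a left adjoint after base change) your proof is complete and coincides in substance with the paper's.
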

\begin{proof}
For $\cD$-modules this is (\cite{Ras2}, 6.23.2). Use the isomorphism $C_{X^I}\,\iso\, \bar C_{X^I}$. By Proposition~\ref{Pp_4.1.10_Raskin_dualizability},
$$
C_{X^I}\otimes_{Shv(X^I)} D\,\iso\, \underset{(I\toup{p} J\to K)\in \Tw(I)^{op}}{\lim} C^{\otimes K}\otimes Shv(X^I_{p, d})\otimes_{Shv(X^I)} D
$$
By (\cite{Ga1}, Lemma~5.5.4), for each $\Sigma=(I\to J\to K)$ the functor
$$
\oblv_{\Sigma}: 
C^{\otimes K}\otimes Shv(X^I_{p, d})\otimes_{Shv(X^I)} D\to  Shv(X^I_{p, d})\otimes_{Shv(X^I)} D
$$
is comonadic, so is conservative and commutes with $\oblv_{\Sigma}$-split totalizations. We used that $C^{\otimes K}\,\iso\,\Rep(\Gamma^K)$. By (\cite{Ly}, 2.5.3), $\oblv_{X^I}$ is conservative. The claim follows now from Lemma~\ref{Lm_7.2.4_now} below.
\end{proof}

\begin{Lm} 
\label{Lm_7.2.4_now}
Let $I\in 1-\Cat$ be small, $I\times[1]\to 1-\Cat$ be a diagram sending $i$ to $(f_i: C_i\to D_i)$. Let $f: C\to D$ be obtained from $f_i$ by passing to the limit over $i\in I$. Assume $f$ has a right adjoint. If each $f_i$ is comonadic then $f$ is also comonadic. \QED
\end{Lm}

\begin{Lm} 
\label{Lm_7.2.5_t-structure}
For $I\in fSets$ there is a natural t-structure on $C_{X^I}$ such that $\oblv_{X^I}: C_{X^I}\to Shv(X^I)$ is t-exact. It is accessible, compatible with filtered colimits, left and right complete. In fact, $C_{X^I}^{\le 0}=\oblv_{X^I}^{-1}(Shv(X^I)^{\le 0})$ and $C_{X^I}^{\ge 0}=\oblv_{X^I}^{-1}(Shv(X^I)^{\ge 0})$.
\end{Lm}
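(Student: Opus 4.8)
The plan is to construct the $t$-structure on $C_{X^I}$ directly by the formula stated, namely by declaring
$$
C_{X^I}^{\le 0}=\oblv_{X^I}^{-1}(Shv(X^I)^{\le 0}), \qquad C_{X^I}^{\ge 0}=\oblv_{X^I}^{-1}(Shv(X^I)^{\ge 0}),
$$
and then verifying the axioms using the comonadicity of $\oblv_{X^I}$ proved in the previous lemma. First I would recall that $\oblv_{X^I}:C_{X^I}\to Shv(X^I)$ admits a continuous $Shv(X^I)$-linear right adjoint $\oblv_{X^I}^R$, and by Lemma~\ref{functor_in_Lm_6.2.3} it is comonadic; dually one checks it admits a continuous left adjoint as well (or, more precisely, one works with the comonad $\oblv_{X^I}\oblv_{X^I}^R$ on $Shv(X^I)$ and identifies $C_{X^I}$ with its category of comodules). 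The key structural input is that the comonad underlying $\oblv_{X^I}$ is, over each $X^I_{p,d}$, the action of the coalgebra $\cO_{\Gamma^K}\in coAlg(\Vect)$, which is concentrated in cohomological degree $0$; hence the comonad is $t$-exact for the standard $t$-structure on $Shv(X^I)$. This is precisely the situation in which a comonad on a category with $t$-structure lifts the $t$-structure to the category of comodules: I would invoke the general statement (as in \cite{Ly}, cf. the discussion of $t$-structures on comodule categories, or equivalently Lurie's criterion) that if $T$ is a $t$-exact comonad on $D$ then $T\text{-comod}(D)$ carries a unique $t$-structure for which $\oblv$ is $t$-exact, and it is given by the stated preimage formula.

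The main steps then are: (1) verify $C_{X^I}^{\le 0}$ is closed under extensions and positive shifts and $C_{X^I}^{\ge 0}$ under negative shifts — immediate since $\oblv_{X^I}$ is exact (preserves limits and colimits, being both a left and right adjoint) and these properties hold downstairs; (2) verify the orthogonality $\Hom(x,y)=0$ for $x\in C_{X^I}^{\le -1}$, $y\in C_{X^I}^{\ge 0}$ — here I would use the comonadic description, writing $\Hom_{C_{X^I}}(x,y)$ via the cobar resolution of $y$ (or the bar resolution of $x$) in terms of $\Hom_{Shv(X^I)}$ of objects whose connectivity is controlled by $t$-exactness of the comonad, and conclude the mapping space is coconnective, hence $\pi_0=0$ by a spectral sequence / totalization argument; (3) construct the truncation triangle: given $c\in C_{X^I}$, form $\tau^{\le 0}$ and $\tau^{\ge 1}$ by lifting the truncations of $\oblv_{X^I}(c)$ along the comonad — since the comonad is $t$-exact, $\tau^{\le 0}$ of a comodule is again naturally a comodule, and one checks functoriality of the comonad coaction is compatible with the truncation functors. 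For accessibility and compatibility with filtered colimits, I would observe that $C_{X^I}^{\le 0}$ is generated under colimits and extensions by $\Loc(c\otimes K)$ with $c\in (C^{\otimes I})^c$ and $K\in Shv(X^I)^{c,\le 0}$ using Lemma~\ref{Lm_3.5.2_generated_under_colim}, and that $\oblv_{X^I}$ commutes with filtered colimits (being continuous); compatibility with filtered colimits of the $t$-structure then follows from the same property downstairs. Left and right completeness follow from left and right completeness of the standard $t$-structure on $Shv(X^I)$ together with $t$-exactness and conservativity of $\oblv_{X^I}$: a Postnikov or Whitehead-tower argument transports completeness across a conservative $t$-exact functor that commutes with the relevant (co)limits.

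The step I expect to be the main obstacle is (3), the construction and naturality of the truncation functors, i.e.\ checking that the truncation $\tau^{\le 0}\oblv_{X^I}(c)$ in $Shv(X^I)$ genuinely lifts to a comodule structure compatibly, so that it computes the truncation in $C_{X^I}$ and the truncation triangle lies in $C_{X^I}$. Concretely one must see that the comonad $T=\oblv_{X^I}\oblv_{X^I}^R$ being $t$-exact forces the coaction map $\oblv_{X^I}(c)\to T\oblv_{X^I}(c)$ to restrict to a coaction on $\tau^{\le 0}$; this is where $t$-exactness of $T$ (as opposed to mere left- or right-exactness) is used, and it is the content of the general lemma on $t$-structures on comodule categories. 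I would phrase the proof as an application of that general statement, checking its hypothesis — $t$-exactness of $T$ — using the explicit identification of the comonad over each stratum $X^I_{p,d}$ with the $\cO_{\Gamma^K}$-coaction (degree $0$) and the fact that $!$-restriction along the open immersions $X^I_{p_1,d}\subset X^I_{p_2,d}$ is $t$-exact, so that the limit over $\Tw(I)^{op}$ defining $C_{X^I}\,\iso\,\bar C_{X^I}$ is a limit of $t$-exact functors and the resulting comonad is $t$-exact. Everything else is then a formal transfer of properties along the conservative $t$-exact functor $\oblv_{X^I}$.
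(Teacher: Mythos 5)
Your plan hinges on lifting the t-structure along the comonad of $\oblv_{X^I}: C_{X^I}\to Shv(X^I)$, and its engine is the claim that the comonad $T=\oblv_{X^I}\oblv_{X^I}^R$ is t-exact on $Shv(X^I)$. That claim is exactly what is \emph{not} available at this stage, and your justification for it does not go through. The comonad of $\oblv_{\Sigma}$ on each $C^{\otimes K}\otimes Shv(X^I_{p,d})$ is indeed given by $\cO_{\Gamma^K}$ in degree zero, but the global comonad on $Shv(X^I)$ is not the ``limit'' of these: the right adjoint of a limit of functors is computed termwise only when the relevant Beck--Chevalley squares commute, and here those squares pair the coinduction functors $\coind_{\Sigma}$ with transition functors built from restriction along diagonal embeddings $\Gamma^{K_1}\hookrightarrow\Gamma^{K_2}$ and from $j_*$ along the open immersions $X^I_{p_1,d}\subset X^I_{p_2,d}$; they do not commute on the nose. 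In the paper the t-exactness of $\oblv_{X^I}^R$ (hence of $T$) is the content of the \emph{next} lemma, proved only after the present one: its left t-exactness is deduced formally from t-exactness of $\oblv_{X^I}$ (so using it here is circular), and its right t-exactness requires a genuinely geometric input --- the identification of the comonad with $\otimes^!\,\oblv_{X^I}(A_{X^I})$ for $A=\cO_{\Gamma}\otimes\omega_X$, the fact that $A_{X^I}$ sits in perverse degree $-|I|$, the lci property of the diagonal, and (\cite{BBD}, 4.1.10) --- none of which appears in your argument. The paper avoids the global comonad altogether: it uses $C_{X^I}\simeq\bar C_{X^I}=\underset{\Tw(I)^{op}}{\lim}\,C^{\otimes K}\otimes Shv(X^I_{p,d})$, equips each term with the t-structure coming from the degree-zero comonad $\cO_{\Gamma^K}$ (via \cite{Ly}, 9.3.23), observes the transition functors are t-exact, and invokes (\cite{G}, I.3, 1.5.8) to get the unique t-structure on the limit with t-exact evaluations; accessibility and compatibility with filtered colimits are read off from presentability of the limit of the connective parts. (The ``in fact'' clause is then formal from t-exactness plus conservativity of $\oblv_{X^I}$, so that part of your framing is fine.)

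The second gap is the completeness statement. Left and right completeness do not ``transport across a conservative t-exact functor'': $\oblv_{X^I}$ is a left adjoint, so it preserves colimits but not the sequential inverse limits occurring in the relevant truncation tower, and a Postnikov-tower transfer argument along it is not available. This is precisely where the constructible context costs real work in the paper: completeness is checked term by term in the limit presentation, using right completeness of $Shv(X^I_{p,d})$ (\cite{Ly4}, 0.0.10) together with (\cite{Ly}, 9.3.23), and for left completeness the identification $Shv(X^I_{p,d})\simeq\Ind(\D^b(\Perv(X^I_{p,d})))$ from (\cite{AGKRRV}, E.1.2), left completeness of $\QCoh(B(\Gamma^K))$, and (\cite{AGKRRV}, E.9.6), after which (\cite{G}, I.3, 1.5.8) passes completeness to the limit. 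Your proposal supplies no substitute for these inputs, so both the construction of the truncations and the completeness claims remain unproved as written.
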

\begin{proof}
For each $\Sigma=(I\toup{p} J\to K)$ the forgetful functor
$$
\oblv_{\Sigma}: C^{\otimes K}\otimes Shv(X^I_{p, d})\to Shv(X^I_{p, d})
$$
is comonadic with the comonad given by the coalgebra $\cO_{\Gamma^K}\in coAlg(\Vect)$. Since $\cO_{\Gamma^K}$ is placed in degree zero, the functor $\cdot\otimes\cO_{\Gamma^K}: \Vect\to \Vect$ is t-exact. We equip $C^{\otimes K}\otimes Shv(X^I_{p, d})$ with a t-structure defined in (\cite{Ly}, 9.3.23). So, both functors in the adjoint pair 
$$
\oblv_{\Sigma}: C^{\otimes K}\otimes Shv(X^I_{p, d})\leftrightarrows Shv(X^I_{p, d}): \coind_{\Sigma}
$$
are t-exact. By definition, $(C^{\otimes K}\otimes Shv(X^I_{p, d}))^{\le 0}$ is the preimage of
$Shv(X^I_{p, d})^{\le 0}$ under $\oblv$, so the t-structure on $C^{\otimes K}\otimes Shv(X^I_{p, d})$ is accessible. Besides, $(C^{\otimes K}\otimes Shv(X^I_{p, d}))^{\ge 0}$ is the preimage of $Shv(X^I_{p, d})^{\ge 0}$ under $\oblv$. So, the t-structure on $C^{\otimes K}\otimes Shv(X^I_{p, d})$ is compatible with filtered colimits.

By (\cite{G}, I.3, 1.5.8), there is a unique t-structure on 
\begin{equation}
\label{limit_defining_barC_X^I}
\underset{(I\toup{p} J\to K)\in \Tw(I)^{op}}{\lim} C^{\otimes K}\otimes Shv(X^I_{p, d})
\end{equation}
such that each evaluation functor to $C^{\otimes K}\otimes Shv(X^I_{p, d})$ is t-exact, because the transition functors in our limit diagram are t-exact. Moreover, the t-structure on 
$C_{X^I}$ is compatible with filtered colimits and accessible, as 
$$
\underset{(I\toup{p} J\to K)\in \Tw(I)^{op}}{\lim} C^{\otimes K}\otimes Shv(X^I_{p, d})^{\le 0}
$$
is presentable. 

 It remains to show that the t-structure on $C_{X^I}$ is left and right complete. For $\cD$-modules this is (\cite{Ras2}, 6.24.1). Assume we are in the constructible context. By (\cite{G}, I.3, 1.5.8), it suffices to show that for each $(I\toup{p} J\to K)\in\Tw(I)$ the t-structure on $C^{\otimes K}\otimes Shv(X^I_{p, d})$ is both left and right complete.
 
 The t-structure on $Shv(X^I_{p, d})$ is right complete by (\cite{Ly4}, 0.0.10). So, the t-structure on $C^{\otimes K}\otimes Shv(X^I_{p, d})$ is right complete by (\cite{Ly}, 9.3.23).
The t-structure on $Shv(X^I_{p, d})$ is left complete by (\cite{AGKRRV}, Theorem~1.1.6). 

 To see that the t-structure on $C^{\otimes K}\otimes Shv(X^I_{p, d})$ is left complete, apply (\cite{AGKRRV}, E.9.6). Namely, $Shv(X^I_{p, d})\,\iso\,\Ind(\D^b(\Perv(X^I_{p, d})))$ by (\cite{AGKRRV}, E.1.2). Now the t-structure on $\QCoh(B(\Gamma^K))$ is left complete by (\cite{G}, I.3, 1.5.7), as $B(\Gamma^K)$ is an Artin stack. Now by (\cite{AGKRRV}, E.9.6), the t-structure on $C^{\otimes K}\otimes Shv(X^I_{p, d})$ is left complete. 

In turn, by (\cite{G}, I.3, 1.5.8) applied to the diagram 
$$
\underset{(I\toup{p} J\to K)\in \Tw(I)^{op}}{\lim} C^{\otimes K}\otimes Shv(X^I_{p, d})
$$
we see that the t-structure on $\bar C_{X^I}$ is left complete.
\end{proof}

\begin{Rem} 
\label{Rem_6.2.5_perverse_degree}
If $A_0\in CAlg(\Vect^{\heartsuit})$, consider $A=A_0\otimes\omega_X\in CAlg(Shv(X))$ and the corresponding factorization algebra $\Fact(A)\in Shv(\Ran)$. For each $I\in fSets$, $A_{X^I}\in Shv(X^I)$ is placed in perverse degree $-\mid I\mid$. Indeed, for $\cD$-modules this is (\cite{Ras2}, 6.24.3), and its proof holds also in the constructible context.
\end{Rem}

\sssec{} Consider $A:=\cO_{\Gamma}\otimes \omega_X\in CAlg(C(X))$, here we view $\cO_{\Gamma}\in \Rep(\Gamma)$ via $\Gamma$-action on itself by left translations. Applying the construction of factorization algebras from Section~\ref{Sect_Factorization algebras in Fact(C)}, for $I\in fSets$ we get $A_{X^I}\in C_{X^I}$ and $\Fact(A)\in \Fact(C)$. 

 The counit map $A\to \omega_X$ in $CAlg(Shv(X))$ gives by functoriality of $\Fact$ a morphism 
\begin{equation}
\label{map_for_Sect_6.2.5}
\oblv_{X^I}(A_{X^I})\to (\omega_X)_{X^I}\,\iso\, \omega_{X^I}
\end{equation} 
in $Shv(X^I)$ compatible with factorizations. As $I\in fSets$ varies, this gives a morphism 
$$
\oblv_{\Ran}(\Fact(A))\to \Fact(\omega_X)\,\iso\,\omega_{\Ran}
$$ 
in $Shv(\Ran)$. Here $\oblv_{\Ran}: \Fact(C)\to Shv(\Ran)$ is the forgetful functor. We may view $\oblv_X(A)$ as an object of $CAlg(Shv(X))$, which in turn gives a factorization algebra $\Fact(\oblv_X(A))\in Shv(\Ran)$. We have 
$$
\Fact(\oblv_X(A))\,\iso\,\oblv_{\Ran}(\Fact(A))
$$ 
in $Shv(\Ran)$. In particular, $(\oblv_X(A))_{X^I}\,\iso\,\oblv_{X^I}(A_{X^I})$ for each $I\in fSets$. 

By adjointness, (\ref{map_for_Sect_6.2.5}) gives a map $A_{X^I}\to \oblv_{X^I}^R(\omega_{X^I})$ in $C_{X^I}$. As in (\cite{Ras2}, 6.22.1), the latter map is an isomorphism. So, the comonad for (\ref{functor_in_Lm_6.2.3}) is 
$$
\oblv_{X^I}(A_{X^I})\in coAlg(Shv(X^I))
$$ 

 Note that $A$ is naturally promoted to an object of $CAlg(coAlg(Shv(X)))$. To unburden the notation, write 
$$
\cF_{I, A}^{Shv}: \Tw(I)\to Shv(X^I)
$$ 
for the functor $\cF^{Shv}_{I, \, \oblv_X(A)}$ from Section~\ref{Sect_Factorization algebras in Fact(C)}, it sends $(I\to J\to K)$ to $\underset{k\in K}{\boxtimes} A^{\otimes J_k}$. It is naturally promoted to a functor
$$
\cF_{I, A}^{Shv, coAlg}: \Tw(I)\to coAlg(Shv(X^I))
$$
The forgetful functor $coAlg(Shv(X^I))\to Shv(X^I)$ preserves colimits, so
$$
(\oblv_X(A))_{X^I}\,\iso\, \underset{(I\to J\to K)\in \Tw(I)}{\colim} \cF_{I, A}^{Shv}
$$
can be understood in $Shv(X^I)$ or equivalently in $coAlg(Shv(X^I))$. This is how $\oblv_{X^I}(A_{X^I})$ becomes a coalgebra in $Shv(X^I)$. 

\begin{Lm} The functor $\oblv_{X^I}^R: Shv(X^I)\to C_{X^I}$ is t-exact.
\end{Lm}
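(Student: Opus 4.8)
The plan is to follow the pattern of the previous lemma and prove left and right t-exactness of $\oblv_{X^I}^R$ separately. Left t-exactness is immediate: since $\oblv_{X^I}$ is t-exact, it is in particular right t-exact, so its right adjoint $\oblv_{X^I}^R$ is left t-exact, i.e. $\oblv_{X^I}^R(Shv(X^I)^{\ge 0})\subseteq C_{X^I}^{\ge 0}$.

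For right t-exactness I would first reduce to a statement inside $Shv(X^I)$. Recall that $\oblv_{X^I}^R$ is $Shv(X^I)$-linear and that the adjunction map $A_{X^I}\to \oblv_{X^I}^R(\omega_{X^I})$ is an isomorphism; hence $\oblv_{X^I}^R(K)\,\iso\, A_{X^I}\otimes^! K$ for every $K\in Shv(X^I)$, and therefore, since $\oblv_{X^I}$ is $Shv(X^I)$-linear and symmetric monoidal, $\oblv_{X^I}\oblv_{X^I}^R(K)\,\iso\, (\oblv_X(A))_{X^I}\otimes^! K$. As the t-structure on $C_{X^I}$ is detected by the t-exact functor $\oblv_{X^I}$, the functor $\oblv_{X^I}^R$ is right t-exact if and only if the endofunctor $(\oblv_X(A))_{X^I}\otimes^!(\cdot)$ of $Shv(X^I)$ is right t-exact. (Equivalently, writing $\oblv_{X^I}\,\iso\,\oblv_{\Sigma}\comp\pr_{\Sigma}$ for the projection attached to $\Sigma=(I\toup{\id}I\toup{\id}I)$, one gets $\oblv_{X^I}^R\,\iso\,\pr_{\Sigma}^R\comp\coind_{\Sigma}$, where $\coind_{\Sigma}$ is t-exact by the proof of the previous lemma and $\pr_{\Sigma}^R$ exists and is $Shv(X^I)$-linear by Proposition~\ref{Pp_4.1.19}ii), so it suffices to see that $\pr_{\Sigma}^R$ is right t-exact; this is handled by the same analysis.)

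To prove right t-exactness of $(\oblv_X(A))_{X^I}\otimes^!(\cdot)$ I would induct on $|I|$ and use the factorization structure of $\Fact(\oblv_X(A))$ together with Remark~\ref{Rem_6.2.5_perverse_degree}, applied to $\oblv_X(A)=\cO_\Gamma\otimes\omega_X\in CAlg(Shv(X))$ with $\cO_\Gamma\in CAlg(\Vect^{\heartsuit})$, which places $(\oblv_X(A))_{X^I}$ in perverse degree $-|I|$. On the open locus $\oo{X}{}^I\subset X^I$ complementary to all diagonals, factorization identifies $(\oblv_X(A))_{X^I}\!\mid_{\oo{X}{}^I}$ with $\cO_\Gamma^{\otimes I}\otimes\omega_{\oo{X}{}^I}$, so over $\oo{X}{}^I$ the functor in question is $\cdot\otimes\cO_\Gamma^{\otimes I}$, which is t-exact because $\cO_\Gamma^{\otimes I}$ sits in cohomological degree $0$; on the smaller diagonal strata one reduces, again by factorization, to the inductive hypothesis. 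Assembling these contributions along the stratification of $X^I$ by the diagonals — by the dévissage argument of the proof of Proposition~\ref{Pp_4.1.19}, using that $(\oblv_X(A))_{X^I}$ is ULA over $Shv(X^I)$ (Lemma~\ref{Lm_ULA_general_lemma}, applied to $C(X)$ after $\oblv$) and the ULA-preservation criterion Proposition~\ref{Pp_3.7.8_devissage_using_ULA_preservation} — yields the claim. The main obstacle is precisely this assembly: since $!$-restriction to a closed stratum is not symmetric monoidal for $\otimes^!$, one cannot simply "restrict and conclude", and must instead run the dévissage at the level of the functor $(\oblv_X(A))_{X^I}\otimes^!(\cdot)$, verifying that it preserves ULA objects and is right t-exact on them stratum by stratum. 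For $\cD$-modules the statement is (\cite{Ras2}, 6.24.x), and the above is its adaptation to the constructible context, the only new input being Proposition~\ref{Pp_4.1.10_Raskin_dualizability}.
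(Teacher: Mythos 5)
Your left t-exactness argument and your reduction of right t-exactness to the endofunctor $(\oblv_X(A))_{X^I}\otimes^!(-)$ of $Shv(X^I)$ (via $\oblv_{X^I}^R(K)\,\iso\, A_{X^I}\otimes^! K$ and the fact that $\oblv_{X^I}$ detects the t-structure) are correct and coincide with the paper's proof. The gap is in the last step: you never actually establish that $(\oblv_X(A))_{X^I}\otimes^!(-)$ is right t-exact. The d\'evissage you sketch does not close: to check the bound $F\otimes^! K\in Shv(X^I)^{\le 0}$ stratum by stratum one needs the $*$-restrictions to the diagonal strata (via the triangle $j_!j^*\to\id\to i_*i^*$), and it is the $*$-restriction, not the $!$-restriction, that fails to be symmetric monoidal for $\otimes^!$ --- your parenthetical remark has this backwards --- so factorization gives no handle on $i^*(F\otimes^! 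K)$; using instead the triangle $i_*i^!\to\id\to j_*j^!$ is compatible with $\otimes^!$ but destroys the upper bound, since $j_*$ is not right t-exact. Moreover, Proposition~\ref{Pp_3.7.8_devissage_using_ULA_preservation} is a criterion for a morphism of $Shv(X^I)$-module categories to be an equivalence; it says nothing about t-exactness of an endofunctor, so invoking it here does not help, and Proposition~\ref{Pp_4.1.10_Raskin_dualizability} is likewise not needed for this lemma.

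The missing step has a one-line proof, which is what the paper does: write $F\otimes^! K\,\iso\,\vartriangle^!(F\boxtimes K)$ for the diagonal $\vartriangle: X^I\to X^I\times X^I$. Since $F=(\oblv_X(A))_{X^I}$ is placed in perverse degree $-\mid I\mid$ (Remark~\ref{Rem_6.2.5_perverse_degree}) and $K\in Shv(X^I)^{\le 0}$, the exterior product $F\boxtimes K$ lies in perverse degrees $\le -\mid I\mid$; and $\vartriangle$ is a locally complete intersection closed immersion of codimension $\mid I\mid$, so by (\cite{BBD}, Corollaire~4.1.10) the functor $\vartriangle^!$ raises perverse degrees by at most $\mid I\mid$. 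Hence $F\otimes^! K\in Shv(X^I)^{\le 0}$, i.e. $\oblv_{X^I}\oblv_{X^I}^R$ is right t-exact, and therefore so is $\oblv_{X^I}^R$; no induction on $\mid I\mid$ or factorization is required.
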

\begin{proof}
Since $\oblv_{X^I}$ is t-exact, its right adjoint is left t-exact. To see that $\oblv_{X^I}^R$ is right t-exact, it suffices to show that $\oblv_{X^I}\oblv_{X^I}^R$ is right t-exact. This follows from the fact that the diagonal $X^I\to X^I\times X^I$ is a locally complete intersection of codimension $\mid I\mid$ combined with (\cite{BBD}, Corollare~4.1.10). Indeed, $A_{X^I}$ is placed in perverse degree $-\mid I\mid$ by Remark~\ref{Rem_6.2.5_perverse_degree}.  
\end{proof}

\begin{Cor} i) The category $(C_{X^I})^{\heartsuit}$ is a Grothendieck abelian category in the sense of (\cite{HA}, 1.3.5.1), in particular it has enough injective objects. 
The canonical functor
$$
\D((C_{X^I})^{\heartsuit})\to C_{X^I}
$$ 
is an equivalence, where $\D((C_{X^I})^{\heartsuit})$ is the left completion of $\D((C_{X^I})^{\heartsuit})^+$.

\smallskip\noindent
ii) The category $C_{X^I}$ is of finite cohomological dimension.  
\end{Cor}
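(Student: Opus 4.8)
The plan is to deduce both statements from the structural results already established for $C_{X^I}\iso\bar C_{X^I}$: it carries an accessible t-structure that is left and right complete, compatible with filtered colimits, whose heart is controlled via the t-exact conservative pair $\oblv_{X^I}\colon C_{X^I}\leftrightarrows Shv(X^I)\colon\oblv_{X^I}^R$, with $\oblv_{X^I}^R$ also t-exact. For i), I would first check that $(C_{X^I})^{\heartsuit}$ is Grothendieck. It is presentable (being the heart of an accessible t-structure on a presentable $\DG$-category), filtered colimits are exact because the t-structure is compatible with filtered colimits, so the axioms of (\cite{HA}, 1.3.5.1) hold; hence it has enough injectives by the standard Grothendieck argument. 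Alternatively, one can produce injectives directly: $\oblv_{X^I}^R$ sends an injective object of $Shv(X^I)^{\heartsuit}$ to an injective of $(C_{X^I})^{\heartsuit}$ (its left adjoint $\oblv_{X^I}$ restricted to hearts is exact, since $\oblv_{X^I}$ is t-exact), and $\oblv_{X^I}^R$ is conservative on hearts, so every object embeds into one of these, giving enough injectives.

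For the equivalence $\D((C_{X^I})^{\heartsuit})\iso C_{X^I}$, I would invoke the general criterion (\cite{HA}, 1.3.3.7 / Lurie's bounded-realization plus left completion): since $C_{X^I}$ has a left and right complete t-structure compatible with filtered colimits and Grothendieck heart, the canonical functor $\D^+((C_{X^I})^{\heartsuit})\to C_{X^I}^+$ is an equivalence on bounded-below parts, and then passing to left completions (using left completeness of the target) yields $\D((C_{X^I})^{\heartsuit})\iso C_{X^I}$. Concretely: the right-completeness and the filtered-colimit compatibility let one reduce to the bounded situation, and the bounded case is the classical statement that the derived category of a Grothendieck abelian category with enough injectives computes $\Ext$'s correctly. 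One should record that $\D((C_{X^I})^{\heartsuit})$ in the statement is \emph{by definition} the left completion of $\D((C_{X^I})^{\heartsuit})^+$, so there is no mismatch.

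For ii), finite cohomological dimension, I would argue through the limit presentation $C_{X^I}\iso\underset{(I\to J\to K)\in\Tw(I)^{op}}{\lim}C^{\otimes K}\otimes Shv(X^I_{p,d})$ with t-exact transition functors and t-exact evaluation functors. Each term $C^{\otimes K}\otimes Shv(X^I_{p,d})\iso\Rep(\Gamma^K)\otimes Shv(X^I_{p,d})$ has finite cohomological dimension: $\Rep(\Gamma^K)$ does since $B(\Gamma^K)$ is a finite-type Artin stack, $Shv(X^I_{p,d})$ does since $X^I_{p,d}$ is a finite-type scheme, and the tensor product of two categories of finite cohomological dimension over $\Vect$ again has finite cohomological dimension (one bounds $\Ext$'s between hearts by a spectral sequence / Künneth argument, cf. the t-structure on $C\otimes D$ from (\cite{Ly}, 9.3.17) and its properties). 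Since $\Tw(I)$ is a finite category, a finite limit of categories with uniformly bounded cohomological dimension again has finite cohomological dimension: an $\Ext^n$ in $C_{X^I}$ between objects of the heart is computed by the totalization over $\Tw(I)^{op}$ of the $\Ext$-complexes in the terms, and a finite totalization shifts cohomological amplitude by at most $\dim\Tw(I)$. Equivalently, one can use that $\oblv_{X^I}$ and $\oblv_{X^I}^R$ are both t-exact and $\oblv_{X^I}$ is conservative, so the cohomological dimension of $C_{X^I}$ is bounded by that of $Shv(X^I)$ plus the cohomological dimension of the comonad $\cO_{\Gamma}\otimes\omega_X$ — which is finite.

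The main obstacle I anticipate is the bookkeeping in ii): making precise that a finite limit (totalization over $\Tw(I)^{op}$) of $\DG$-categories of finite cohomological dimension, with t-exact structure functors, inherits finite cohomological dimension, and that the bound is genuinely finite rather than merely each piece being bounded. This is where I would be most careful to cite the right statement — the finiteness of $\Tw(I)$ is essential, and one should phrase the argument either via the $\Ext$-totalization spectral sequence or, more cleanly, via the conservative t-exact pair $(\oblv_{X^I},\oblv_{X^I}^R)$ and the explicit comonad $\oblv_{X^I}(A_{X^I})$, reducing the whole question to finite cohomological dimension of $Shv(X^I)$ (a finite-type scheme) together with the fact that $\cO_{\Gamma}$ lies in a single degree.
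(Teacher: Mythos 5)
Your argument is essentially the paper's own proof: for i) the chain accessible, filtered-colimit-compatible t-structure $\Rightarrow$ Grothendieck heart (the paper cites \cite{HA}, 1.3.5.23), enough injectives, the bounded-below realization equivalence $\D((C_{X^I})^{\heartsuit})^+\to C_{X^I}^+$ (the paper cites \cite{G}, I.3, 2.4.5), then left completeness; and for ii) exactly the presentation of $C_{X^I}$ as a finite limit over $\Tw(I)^{op}$ of the categories $C^{\otimes K}\otimes Shv(X^I_{p,d})$, each of finite cohomological dimension (for $C^{\otimes K}=\Rep(\Gamma^K)$ the paper cites \cite{HR}), along t-exact transition functors. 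The one caveat is your proposed ``cleaner'' comonadic variant of ii): t-exactness and conservativity of $(\oblv_{X^I},\oblv_{X^I}^R)$ together with $\cO_{\Gamma}$ sitting in degree zero do not by themselves bound cohomological dimension (the cobar resolution is infinite, cf. representations of a finite group in positive characteristic), so the finiteness really must be fed in termwise via \cite{HR} and $Shv(X^I_{p,d})$ as in your main route.
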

\begin{proof}
i) The first claim follows from (\cite{HA}, 1.3.5.23), since the t-structure on $C_{X^I}$ is accessible and compatible with filtered colimits. Then $(C_{X^I})^{\heartsuit}$ has enough injective objects by (\cite{HA}, Corollary 1.3.5.7), so one has the canonical functor 
$$
\D((C_{X^I})^{\heartsuit})^+\to C_{X^I}^+
$$
We invoke (\cite{G}, I.3, Lemma~2.4.5) to conclude that the latter functor is an equivalence. 
The rest follows, because the t-structure on $C_{X^I}$ is left complete. 

\smallskip\noindent
ii) If $K\in fSets$ then $C^{\otimes K}$ is of finite cohomological dimension by \cite{HR}. Now $C_{X^I}$ is written as the limit (\ref{limit_defining_barC_X^I}) under t-exact functors between $\DG$-categories of finite cohomological dimension.
\end{proof}

\begin{Rem} 
\label{Rem_Loc_is_t-exact}
The functor $\Loc: C^{\otimes I}\otimes Shv(X^I)\to C_{X^I}$ is t-exact.
\end{Rem}

\ssec{Chiral Hecke algebra} 
\label{Sect_Chiral Hecke algebra}
In Section~\ref{Sect_Chiral Hecke algebra} we recall the construction of the chiral Hecke algebra from (\cite{Gai_de_Jong}, 7.5).

\sssec{} Set $\cO=k[[t]]\subset F=k((t))$. Denote by 
$\Perv(\Gr_{G, X^I})^{\gL^+(G)_I}$ the abelian category of $\gL^+(G)_I$-equivariant perverse sheaves on $\Gr_{G, X^I}$. Recall the convolution diagram from (\cite{MV}, Section 5, diagram (5.2))
\begin{equation}
\label{diag_convolution_for_X^2}
\Gr_{G,X}\times \Gr_{G, X}\getsup{p} \wt{\Gr_{G,X}\times\Gr_{G,X}} \toup{q} \Gr_{G,X}\ttimes \Gr_{G,X}\toup{m} \Gr_{G, X^2}
\end{equation} 

For $I$ consisting of one element, write $\gL^+(G)_X=\gL^+(G)_I$. Given $\cB_i\in \Perv(\Gr_{G, X})^{\gL^+(G)_X}$ one defines their convolution 
$$
\cB_1\ast_X \cB_2\in Shv(\Gr_{G, X^2})^{\gL^+(G)_I}
$$ 
for $I=\{1,2\}$ by the formula (5.6) from \cite{MV}. Namely, 
$$
\cB_1\ast_X \cB_2=m_*(\cB_1\tboxtimes \cB_2)
$$
where $q^*(\cB_1\tboxtimes \cB_2)\,\iso\, p^*(\cB_1\boxtimes\cB_2)$. Let 
\begin{equation}
\label{functor_tau^0_first}
\tau^0: \Perv(\Gr_G)^{G(\cO)}\to \Perv(\Gr_{G,X})^{\gL^+(G)_X}
\end{equation} 
be the functor defined as in (\cite{MV}, Remark 5.1). Let $j: U\hook{} X^2$ be the complement to the diagonal. The base change of $m$ by $j: U\hook{} X^2$ becomes canonically the identity map 
$$
\id: (\Gr_{G,X}\times \Gr_{G,X})\mid_U\to (\Gr_{G,X}\times \Gr_{G,X})\mid_U
$$
in view of the factorization structure of $\Gr_{G, X^I}$. By abuse of notations, we also write 
$$
(\Gr_{G,X}\times \Gr_{G,X})\mid_U\hook{j} \Gr_{G,X}\ttimes \Gr_{G,X}\getsup{i} (\Gr_{G,X}\ttimes \Gr_{G,X})\times_{X^2} X
$$
for the corresponding closed immersion and its complement.

\sssec{} Write 
$$
\Sat: \Rep(\check{G})\to Shv(\Gr_G)^{G(\cO)}
$$
for the Satake functor at one point of the curve $X$. Recall the convolution diagram
$$
\Gr_G\times\Gr_G\getsup{p} G(F)\times \Gr_G \toup{q} G(F)\times^{G(\cO)}\Gr_G \toup{m} \Gr_G
$$
at one point of the curve. For $\cS_1,\cS_2\in \Perv(\Gr_G)^{G(\cO)}$ let similarly $\cS_1\tboxtimes\cS_2$ be the perverse sheaf on $G(F)\times^{G(\cO)}\Gr_G$ equipped with 
$$
q^*(\cS_1\tboxtimes \cS_2)\,\iso\, p^*(\cS_1\boxtimes \cS_2).
$$ 
So, the usual convolution on $\Gr_G$ is defined by 
$$
\cS_1\ast\cS_2=m_*(\cS_1\tboxtimes\cS_2).
$$ 

For $V\in\Rep(\check{G})^{\heartsuit}$ set $\cT_V=\tau^0\Sat(V)$. Write also by abuse of notations
$$
\tau^0: \Perv(G(F)\times^{G(\cO)}\Gr_G)^{G(\cO)}\to \Perv((\Gr_{G,X}\ttimes \Gr_{G,X})\times_{X^2} X)^{\gL^+(G)_X}
$$
for the functor analogous to (\ref{functor_tau^0_first}). 

\sssec{} 
\label{Sect_7.3.3_now}
Denote by 
$$
(\Gr_{G,X}\times \Gr_{G,X})\mid_U\hook{\bar j} \Gr_{G,X^2} \getsup{\bar i} \Gr_{G, X}
$$
the corresponding closed immersion and its complement. Recall that $\cT_V\ast_X \cT_W$ is perverse, the intermediate extension under $\bar j$ by \cite{MV}. 

 For $V,W\in \Rep(\check{G})^{\heartsuit}$ one has canonically 
$$
i^!(\cT_V\tboxtimes \cT_W)\,\iso\, \tau^0(\Sat(V)\tboxtimes \Sat(W))[-1].
$$ 
The fibre sequence
$$
i_*i^!(\cT_V\tboxtimes \cT_W)\to \cT_V\tboxtimes \cT_W\to j_*j^*(\cT_V\tboxtimes \cT_W)
$$
on $\Gr_{G,X}\ttimes \Gr_{G,X}$ becomes an exact sequence of perverse sheaves 
$$
0\to \cT_V\tboxtimes \cT_W\to j_*j^*(\cT_V\tboxtimes \cT_W)\to \tau^0(\Sat(V)\tboxtimes \Sat(W))\to 0.
$$

Applying $m_*$ it yields an exact sequence of perverse sheaves on $\Gr_{G, X^2}$
\begin{equation}
\label{eq_seq_perv_sheaves_for_Sect_1.11.4}
0\to\cT_V\ast_X \cT_W\to  \bar j_*(\cT_V\boxtimes \cT_W)\to \bar i_*\cT_{V\otimes W}\to 0.
\end{equation}

\sssec{} As in \cite{BD_chiral}, for $I\in fSets$ set $\lambda_I=(\underset{i\in I}{\boxtimes} e[1])[-\mid I\mid]$, so $\lambda_I\,\iso\, e$, but the group $\Aut_I$ acts on it by the sign character. This line is introduced to avoid signs problems in the construction of the chiral Hecke algebra.

 Let $A\in CAlg^{nu}(\Rep(\check{G}))$ lying in $\Rep(\check{G})^{\heartsuit}$. The product on $A$ gives a map $\cT_{A\otimes A}\to \cT_A$ on $\Gr_{G, X}$. Composing with the second map from (\ref{eq_seq_perv_sheaves_for_Sect_1.11.4}) one gets the chiral multiplication map
\begin{equation}
\label{map_ch_mult_for_cR_X}
\bar j_*(\cT_A\boxtimes \cT_A)\to \bar i_*\cT_A
\end{equation}
on $\Gr_{G, X^2}$. In fact, the definition of (\ref{map_ch_mult_for_cR_X}) depends on the order of elements in $\{1,2\}$, as the prestack $\Gr_{G,X}\ttimes \Gr_{G, X}$ is not symmetric. What we get canonically is rather the map
$$
\bar j_*(\cT_A^{\boxtimes I})\otimes \lambda_I \to \bar i_*\cT_A 
$$
for a set $I$ of two elements. 

\sssec{Jacobi identity} It is well known that (\ref{map_ch_mult_for_cR_X}) satisfies the Jacobi identity. Let us sketch a proof for the convenience of the reader.

 For $I=\{1,\ldots, n\}$ write $(\Gr_{G,X})^{\ttimes I}$ for the corresponding version of the convolution diagram. The linear order on $I$ is used for the definition of the latter prestack.
We still denote by $m: (\Gr_{G,X})^{\ttimes I}\to \Gr_{G, X^I}$ the convolution map.  
 For $V_i\in\Rep(\check{G})^{\heartsuit}$ we similarly get the perverse sheaf $\cT_{V_1}\tboxtimes\ldots\tboxtimes\cT_{V_n}$ on $(\Gr_{G,X})^{\ttimes I}$.
Consider the diagram
$$
(\Gr_{G,X})^{\ttimes I}\times_{X^I} X\hook{i} (\Gr_{G,X})^{\ttimes I}\getsup{\, j} \Gr_{G, X}^I\mid_{(X^I-X)},
$$ 
where $j$ is the complement open to the closed immersion $i$. We get
$$
i^!(\cT_{V_1}\tboxtimes\ldots\tboxtimes\cT_{V_n})\,\iso\, \tau^0(\Sat(V_1)\tboxtimes\ldots\tboxtimes\Sat(V_n))[1-n]
$$

 Applying $m_*$ to the fibre sequence
$$
i_*i^!(\cT_{V_1}\tboxtimes\ldots\tboxtimes\cT_{V_n})\to (\cT_{V_1}\tboxtimes\ldots\tboxtimes\cT_{V_n})\to j_*(\underset{i\in I}{\boxtimes}
\cT_{V_i})
$$
we get the fibre sequence 
$$
\bar i_*\cT_{V_1\otimes\ldots\otimes V_n}[1-n]\to \cT_{V_1}\ast_X\ldots\ast_X \cT_{V_n}\to\bar j_*(\underset{i\in I}{\boxtimes}\cT_{V_i})
$$
for the similar diagram
$$
(\Gr_{G,X}^I)\mid_{X^I-X}\hook{\bar j} \Gr_{G, X^I} \getsup{\bar i} \Gr_{G,X}
$$

 This is sufficient to get the exactness of the Cousin complex on $\Gr_{G, X^I}$ for the stratification coming from the diagonal stratification of $X^I$. 
 
 In details, the diagonal stratification of $X^I$ is as follows. For $d\ge 0$ let
$$
\bar Y_d=\underset{T\in Q(I), \, \mid T\mid=n-d}{\cup}\, \vartriangle^{(I/T)}(X^T)
$$  
Here $\vartriangle^{(I/T)}: X^T\to X^I$. Let $Y_d=\bar Y_d-\bar Y_{d+1}$. So, $Y_d$ is smooth of dimension $n-d$. The inclusion $Y_d\hook{} X^I$ is affine. 
  
 Let $Z_d=Y_d\times_{X^I} \Gr_{G, X^I}$. Let $j_d: Z_d\hook{} \Gr_{G, X^I}$ be the inclusion. Apply Section~\ref{Sect_B.1.12} to the perverse sheaf $\cT_{V_1}\ast_X\ldots\ast_X \cT_{V_n}$ on $\Gr_{G, X^I}$ and the stratification $\{Z_d\}$ of $\Gr_{G, X^I}$. Since $j_d^!(\cT_{V_1}\ast_X\ldots\ast_X \cT_{V_n})$ is placed in perverse degree $d$ for all $d\ge 0$, we get an exact sequence of perverse sheaves on $\Gr_{G, X^I}$
\begin{equation}
\label{ex_seq_for_Sect_1.11.5}
\cT_{V_1}\ast_X\ldots\ast_X \cT_{V_n}\to \cF_0\to \cF_1\to \cF_2\ldots
\end{equation}
with 
$$
\cF_d=(j_d)_*j_d^!(\cT_{V_1}\ast_X\ldots\ast_X \cT_{V_n})[d]
$$   
The Jacobi identity for $\cT_A$ follows from the fact that the square of the differential in (\ref{ex_seq_for_Sect_1.11.5}) vanishes for $n=3$ with $V_1=V_2=V_3=A$ and the functoriality of the construction.

 Thus, $\cT_A$ is a chiral algebra on $\Gr_{G, X}$. 
 
\sssec{} According to the construction from (\cite{BD_chiral}, 3.4.11), one forms the Chevalley-Cousin complex $\cC(\cT_A)$ of $\cT_A$, it is a collection $\cC(\cT_A)_{X^I}
\in \Sph_{G, I}$ for each $I\in fSets$ together with compatible system of isomorphisms 
\begin{equation}
\label{compatibility_isom_restrictions_for_cC(cT_A)}
\vartriangle^{(I/J)!}\cC(\cT_A)_{X^I}\,\iso\, \cC(\cT_A)_{X^J}
\end{equation} 
for $I\to J$ in $fSets$. Here $\vartriangle^{(I/J)}: X^J\to X^I$ is the corresponding diagonal.   

  Moreover, if the chiral algebra $\cT_A$ is unital then $\cC(\cT_A)_{X^I}$ is placed in the perverse degree $-\mid I\mid$ only, and the corresponding perverse sheaf lies in $\Perv(\Gr_{G, X^I})^{\gL^+(G)_I}$. Let $\oo{X}{}^I\hook{} X^I$ be the complement to all the diagonals. Recall the factorization isomorphism 
$$
\Gr_{G,X^I}\times_{X^I} \oo{X}{}^I\,\iso\, (\Gr_{G,X})^I\times_{X^I} \oo{X}{}^I.
$$
In this case there is a canonical injective $\cL^+(G)_I$-equivariant map of perverse sheaves
$$
\H^{-\mid I\mid}(\cC(\cT_A)_{X^I})\hook{}\bar j^{(I)}_*(\underset{i\in I}{\boxtimes} \cT_A)\otimes \lambda_I,
$$
where  
$$
\bar j^{(I)}: (\Gr_{G,X})^I\times_{X^I} \oo{X}{}^I\hook{} \Gr_{G, X^I}
$$
is the open immersion. We used that $\bar j^{(I)}$ is an affine open embedding. For example, if $I=\{*\}$ then $\cC(\cT_A)_X\,\iso\, \cT_A[1]$. In general,
$$
\cC(\cT_A)_{X^I}=\underset{T\in Q(I)}{\oplus} \vartriangle_*^{(I/T)}
\bar j^{(T)}_*(\bar j^{(T)})^*((\cT_A[1])^{\boxtimes T})
$$
with a differential discribed in (\cite{BD_chiral}, 3.4.11). Here $\vartriangle^{(I/T)}: \Gr_{G, X^T}\to \Gr_{G, X^I}$.

   Besides, $\cC(\cT_A)$ is equipped with the factorization isomorphisms: for any $\phi: I\to I'$ in $fSets$ we have
\begin{equation}
\label{iso_fact_for_C(T_A)}
(\underset{i\in I'}{\boxtimes} \cC(\cT_A)_{X^{I_i}})\mid_{X^I_{\phi, d}} \,\iso\, (\cC(\cT_A)_{X^I})\mid_{X^I_{\phi, d}}
\end{equation}
as in (\cite{BD_chiral}, 3.4.11). 

 Write $\cC(\cT_A)\in \Sph_{G,\Ran}$ for the global section of this sheaf of categories on $\Ran$ defined by the above collection $\cC(\cT_A)_{X^I}, I\in fSets$. 

\sssec{} 
\label{Sect_7.3.7_now}
Set $C=\Rep(\check{G})$. We view it as an object of $CAlg(\DGCat_{cont})$.
The construction of the chiral Hecke algebra from (\cite{Gai_de_Jong}, Section 7) uses the categories $\ov{\Fact}(C)$ and $\bar C_{X^I}$ with $I\in fSets$ of Section~\ref{Sect_4}. By Proposition~\ref{Pp_4.1.19}, 
$$
\zeta_I: C_{X^I}\to \bar C_{X^I}
$$ 
and $\Fact(C)\to \ov{\Fact}(C)$ are equivalences. 

   Take $A=\cO_{\check{G}}$, the algebra of functions on $\check{G}$. View it first as an object of $CAlg(C)$ via the $\check{G}$-action on itself by left translations. By the above, we get $\cC(\cT_A)\in \Sph_{G,\Ran}$. To define (\ref{functor_Sat_G_I}), one needs for $I\in fSets$ to promote $\cC(\cT_A)_{X^I}$ to an object of
$$
\bar C_{X^I}\otimes_{Shv(X^I)} \Sph_{G, I}
$$   
For $\cD$-modules this is done in (\cite{Ras2}, Section~6.33). However, the argument from \select{loc.cit.} does not work in the constructible context. We explain this below, and also explain how to argue instead in a way uniform for all our sheaf theories.
   
\begin{Lm} If $I\in fSets$ then $\cC(\cT_A)_{X^I}$ is naturally an object of 
$$   
\underset{(I\toup{p} J\to K)\in \Tw(I)^{op}}{\lim} C^{\otimes K}\otimes Shv(X^I_{p, d})\otimes_{Shv(X^I)} \Sph_{G, I},
$$
where the transition maps in the diagram are the same as for the functor $\bar\cG_{I,C}$, cf. Section~\ref{Sect_4.1.1_now}.  
\end{Lm}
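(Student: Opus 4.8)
The plan is to observe first that $\cC(\cT_A)_{X^I}$ is not merely an object of $\Sph_{G,I}$ but carries, by the very construction of the chiral Hecke algebra recalled in Section~\ref{Sect_Chiral Hecke algebra}, a canonical enhancement to an object of $C_{X^I}\otimes_{Shv(X^I)}\Sph_{G,I}$; one then rewrites this target using Proposition~\ref{Pp_4.1.10_Raskin_dualizability} (whose availability in the constructible context is the new ingredient) and Proposition~\ref{Pp_4.1.19}, which together identify $C_{X^I}\otimes_{Shv(X^I)}\Sph_{G,I}\,\iso\,\bar C_{X^I}\otimes_{Shv(X^I)}\Sph_{G,I}$ with exactly the limit appearing in the statement. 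The enhancement in the first step comes from the fact that for $A=\cO_{\check{G}}$ (with its left-regular $\check{G}$-action) the chiral Hecke sheaf decomposes, via the commuting right-regular action, as $\cT_A\,\iso\,\underset{V}{\oplus}\,\cT_V\otimes V^*$, so that $\cT_A$ is naturally an object --- indeed a commutative chiral algebra --- of $\Rep(\check{G})\otimes\Sph_{G,X}$; the Chevalley--Cousin construction of (\cite{BD_chiral}, 3.4.11), being functorial, then produces the $X^I$-component of the associated factorization object inside $\Fact(\Rep(\check{G}))_{X^I}\otimes_{Shv(X^I)}\Sph_{G,I}\,\iso\,C_{X^I}\otimes_{Shv(X^I)}\Sph_{G,I}$.

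To match the statement literally one must make the components of this object in the limit presentation explicit. For $\Sigma=(I\toup{p}J\toup{\phi}K)\in\Tw(I)$, the $\Sigma$-component of $\cC(\cT_A)_{X^I}$ is its image in $C^{\otimes K}\otimes Shv(X^I_{p,d})\otimes_{Shv(X^I)}\Sph_{G,I}$, which I would describe as the restriction of $\cC(\cT_A)_{X^I}$ to the open subset $X^I_{p,d}\subset X^I$ --- where the clusters indexed by $K$ are pairwise disjoint --- together with the $\Rep(\check{G}^{K})$-equivariance read off from the factorization isomorphism (\ref{iso_fact_for_C(T_A)}), $\cC(\cT_A)_{X^I}\mid_{X^I_{p,d}}\,\iso\,(\underset{k\in K}{\boxtimes}\cC(\cT_A)_{X^{I_k}})\mid_{X^I_{p,d}}$, combined with the single copy of $\Rep(\check{G})$ carried by each $\cC(\cT_A)_{X^{I_k}}$ through the $\cO_{\check{G}}$-coefficient. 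The transition maps along morphisms of $\Tw(I)$ are then forced to agree with those of $\bar\cG_{I,C}$ (Section~\ref{Sect_4.1.1_now}): the covariant-in-$J$ moves correspond to further restriction to a smaller disjointness locus $X^I_{p_1,d}\subset X^I_{p_2,d}$, and the contravariant-in-$K$ moves, i.e. the products $C^{\otimes K_2}\to C^{\otimes K_1}$, are realized on the Hecke side by the chiral-multiplication exact sequences of the form (\ref{eq_seq_perv_sheaves_for_Sect_1.11.4}), the maps $\bar j_*(\cT_V\boxtimes\cT_W)\to\bar i_*\cT_{V\otimes W}$, which in the limit are exactly what the $\cO_{\check{G}}$-multiplication $\cO_{\check{G}}\otimes\cO_{\check{G}}\to\cO_{\check{G}}$ induces.

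The higher coherences of this whole assignment --- that the collection $\{\cC(\cT_A)_{\Sigma}\}_{\Sigma\in\Tw(I)^{op}}$ is an honest object of the limit, not just a term-by-term compatible family --- I would obtain from the $\infty$-categorical functoriality of $\Fact$ applied to the commutative algebra $\cO_{\check{G}}\otimes\omega_X\in CAlg(\Rep(\check{G})\otimes Shv(X))$, whose images under the forgetful functors to $\Vect$ and to $\Sph_{G,X}$ are $A$ and the chiral Hecke sheaf $\cT_A$ respectively; alternatively, and perhaps more transparently, from the spreading formalism of Section~\ref{Sect_Spreading right-lax symmetric monoidal functors}, taking $\Lambda$ to be the monoid of dominant weights of $\check{G}$ and $\Irr$ the right-lax symmetric monoidal functor $\lambda\mapsto\cT_{V^{\lambda}}$, for which the objects $V^{\und{\lambda}}$ produced there are precisely the building blocks of $\cC(\cT_A)$.

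The main obstacle is exactly this coherence bookkeeping together with its interaction with the limit presentation. In the $\cD$-module setting of (\cite{Ras2}, 6.33) one computes in quasi-coherent sheaves and exploits the equivalence $Shv(X)^{\boxtimes I}\iso Shv(X^I)$, which makes the interplay between the diagonal combinatorics of $X^I$ and the combinatorics of $\check{G}^{K}$ essentially invisible; neither simplification is available in the constructible context, so one must run the argument entirely through the colimit/limit presentations of $C_{X^I}$ and $\bar C_{X^I}$ and through the dualizability statement of Proposition~\ref{Pp_4.1.10_Raskin_dualizability}. Once the functor $\Sigma\mapsto\cC(\cT_A)_{\Sigma}$ has been set up, the identification of its limit with $\cC(\cT_A)_{X^I}$ is formal.
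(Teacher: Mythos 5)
Your second and third paragraphs do contain the germ of the paper's actual argument: the component at $\Sigma=(I\toup{p}J\to K)$ is simply the restriction $\cC(\cT_A)_{X^I}\mid_{X^I_{p,d}}$ (disjointness here is indexed by $J$, not $K$) equipped with the $\check{G}^J$-equivariance read off from (\ref{iso_fact_for_C(T_A)}) and then restricted along the diagonal $\check{G}^K\to\check{G}^J$; the only thing to verify is that these structures are compatible over $\Tw(I)^{op}$, which follows because $A\in CAlg(\Rep(\check{G}\times\check{G}))$ makes every stage of the Beilinson--Drinfeld construction, and in particular the factorization isomorphisms, equivariant for the diagonal copies of $\check{G}$ --- this is exactly how the paper argues. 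However, your overall plan inverts the logic and begs the question: the claim that $\cC(\cT_A)_{X^I}$ carries ``by the very construction'' an enhancement to an object of $C_{X^I}\otimes_{Shv(X^I)}\Sph_{G,I}$ is not available at this point. The construction recalled in Section~\ref{Sect_Chiral Hecke algebra} yields a complex of perverse sheaves on $\Gr_{G,X^I}$ with a single residual $\check{G}$-action, i.e.\ an object of $C\otimes\Sph_{G,I}$; the enhancement over $X^I$ is precisely the output of this lemma combined with Proposition~\ref{Pp_4.1.10_Raskin_dualizability} (through the equivalence (\ref{map_for_Sect_6.2.9})) and Proposition~\ref{Pp_4.1.19} in Section~\ref{Sect_gap_described}. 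So you may not feed it in as an input, and the detour through those propositions is both unavailable and unnecessary for the lemma, whose whole point is that the construction naturally produces an object of the limit (open-subset) presentation.

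Two of your supporting steps are also wrong. The transition maps in the diagram of the statement are those of $\bar\cG_{I,C}$ (Section~\ref{Sect_4.1.1_now}): for a morphism (\ref{morphism_in_Tw_v2}) they are (restriction of equivariance along the diagonal homomorphism $\check{G}^{K_1}\to\check{G}^{K_2}$) tensored with (restriction to the smaller open $X^I_{p_1,d}\subset X^I_{p_2,d}$); the underlying sheaf never changes, and the chiral-multiplication sequences (\ref{eq_seq_perv_sheaves_for_Sect_1.11.4}) play no role here --- they are internal to $\cC(\cT_A)$ (they build its Cousin differential). Multiplication maps of $A$ appear as transition data only in the closed-subset/colimit presentation of factorization algebras as in Section~\ref{Sect_Factorization algebras in Fact(C)}, which is not the presentation in the statement. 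Likewise, your proposed sources of coherence do not produce $\cC(\cT_A)$: there is no forgetful functor $\Rep(\check{G})\otimes Shv(X)\to\Sph_{G,X}$ sending $\cO_{\check{G}}\otimes\omega_X$ to $\cT_A$ (such a functor is essentially the pointwise Satake functor whose factorizable extension is being constructed), $\Fact(-)$ of a commutative algebra is not the Chevalley--Cousin complex of a chiral algebra, and the spreading formalism of Section~\ref{Sect_Spreading right-lax symmetric monoidal functors} builds the objects $V^{\und{\lambda}}$ out of $\otimes^!$-products and closed diagonals, not out of the $*$-extensions from opens that constitute $\cC(\cT_A)_{X^I}$. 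The coherence you actually need is much cheaper and is the one the paper uses: equivariance of the factorization isomorphisms (\ref{iso_fact_for_C(T_A)}) and of their compatibilities under the diagonal $\check{G}$-actions coming from the second factor of $\check{G}\times\check{G}$ acting on $A$.
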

\begin{proof}
Note that $A\in CAlg(\Rep(\check{G}\times\check{G}))$. Namely, for the diagonal map $q: B(\check{G})\to B(\check{G}\times \check{G})$ the functor $q_*: \QCoh(B(\check{G}))\to \QCoh(B(\check{G}\times \check{G}))$ is right-lax symmetric monoidal, so sends the commutative algebra $e$ to the commutative algebra $A$. 
 
  For this reason for $I\in fSet$ the shifted perverse sheaf $\cC(\cT_A)_{X^I}$ is equipped with an action of $\check{G}$, so it is an object of $C\otimes \Sph_{G, I}$. Now for $\phi: I\to I'$ the restriction $(\cC(\cT_A)_{X^I})\mid_{X^I_{\phi, d}}$ is equipped with an action of $\check{G}^{\times I'}$, the natural one on the LHS of (\ref{iso_fact_for_C(T_A)}). Moreover, the restriction of this action to the diagonal $\check{G}\to \check{G}^{\times I'}$ makes the isomorphism (\ref{iso_fact_for_C(T_A)}) $\check{G}$-equivariant. This observation generalizes to any morphism (\ref{morphism_in_Tw_v2}) in $\Tw(I)^{op}$. 
\end{proof}  

\sssec{} 
\label{Sect_gap_described}
To proceed, one needs to know that the natural functor
\begin{equation}
\label{map_for_Sect_6.2.9}
\bar C_{X^I}\otimes_{Shv(X^I)} \Sph_{G, I}\to  \underset{(I\toup{p} J\to K)\in \Tw(I)^{op}}{\lim} C^{\otimes K}\otimes Shv(X^I_{p, d})\otimes_{Shv(X^I)} \Sph_{G, I}
\end{equation}
is an equivalence. In the case of $\cD$-modules this follows from (\cite{Ras2}, Lemma~6.18.1) or (\cite{Ras2}, Remark~6.18.3). However, both these argument do not work literally in the constructible context.

 Now our Proposition~\ref{Pp_4.1.10_Raskin_dualizability} garantees that (\ref{map_for_Sect_6.2.9}) is an equivalence for all our sheaf theories, so 
$$
\cC(\cT_A)_{X^I}\in \bar C_{X^I}\otimes_{Shv(X^I)} \Sph_{G, I}
$$ 

\sssec{} Recall that $C$ is rigid, so canonically self-dual. By formula (\ref{isom_after_Lm_2.5.7}), we get canonically
$$
\Fun_{Shv(X^I)}(C_{X^I}, \Sph_{G, I})\,\iso\, C_{X^I}\otimes_{Shv(X^I)} \Sph_{G,I}
$$
Thus, 
$$
\Sat_{G,I}:=\cC(\cT_A)_{X^I}\in \Fun_{Shv(X^I)}(C_{X^I}, \Sph_{G, I})
$$ 
is the desired functor (\ref{functor_Sat_G_I}). The isomorphisms (\ref{compatibility_isom_restrictions_for_cC(cT_A)}) imply that $\Sat_{G,I}$ glue to give a functor (\ref{functor_Sat_G_Ran}). 

\sssec{} Recall that $\Sph_{G, \Ran}$ is a factorization sheaf of monoidal categories over $\Ran$ (\cite{GLys}, 9.1.1) in the case of $\cD$-modules.\footnote{In the constructible context it has a weak factorization structure in the sense of Section~\ref{Sect_B2_factorization}. We do not know if this structure is actually strict in this case.} In particular, each $\Sph_{G, I}\in Alg(Shv(X^I)-mod)$. One has to pay attention to shifts in the definition of the monoidal structure on $\Sph_{G, I}$. It can be formulated as follows.

 For $I, J\in fSets$ similarly to the diagram (\ref{diag_convolution_for_X^2}) one defines a diagram
$$
\Gr_{G, X^I}\times \Gr_{G, X^J}\getsup{p}  \wt{\Gr_{G,X^I}\times\Gr_{G,X^J}} \toup{q} \Gr_{G,X^I}\ttimes \Gr_{G,X^J}\toup{m} \Gr_{G, X^{I\sqcup J}}
$$
One similarly has the exterior convolution functor
$$
\Sph_{G, I}\otimes \Sph_{G, J}\to \Sph_{G, I\sqcup J}
$$
It sends a pair $\cB\in \Sph_{G, I}, \cB'\in\Sph_{G, J}$ to 
$$
m_*(\cB\tboxtimes \cB'),
$$
where $\cB\tboxtimes \cB'$ is given informally by $q^*(\cB\tboxtimes \cB')\,\iso\, p^*(\cB\boxtimes \cB')$. A formal definition is found in Section~\ref{Sect_ext_convolution_precisely}. The exterior convolution is t-exact. 

The monoidal product on $\Sph_{G,I}$ is defined by
$$
\cB\ast_I \cB'=\bar i^!m_*(\cB\tboxtimes \cB'),
$$
where $\bar i: \Gr_{X^I}\hook{} \Gr_{G, I\sqcup I}$ is the closed immersion corresponding to 
the diagonal $X^I\to X^I\times X^I$. We underline that there is no shift in the definition of $\ast_I$.
 
 For example, if $G=\Spec k$ then $\Gr_{G, X^I}=X^I$ and $\Sph_{G, I}=Shv(X^I)$. The $\ast_I$-monoidal structure that we get on $\Sph_{G,I}$ is just the $\otimes^!$-monoidal structure. 

\sssec{}
Recall that $\bar C_{X^I}\in CAlg(Shv(X^I)-mod)$, so 
$$
C_{X^I}\otimes_{Shv(X^I)} \Sph_{G,I}\in Alg(Shv(X^I)-mod),
$$ 
cf. Section~\ref{Sect_C.0.3} for the corresponding structure on $\Fun_{Shv(X^I)}(C_{X^I}, \Sph_{G, I})$. 

 The structure on $A$ of an algebra in $C$ gives for $\cC(\cT_A)_{X^I}$ a structure of an algebra in $C_{X^I}\otimes_{Shv(X^I)} \Sph_{G,I}$. 

 For example, for $I$ consisting of one element we get $\cC(\cT_A)_X=\cT_A[1]$. The product on $\cC(\cT_A)_X$ is the composition
$$
\cC(\cT_A)_X\ast_I \cC(\cT_A)_X\,\iso\; \bar i^!(\cT_A\ast_X\cT_A)[2]\,\iso\, \cT_{A\otimes A}[1]\toup{mult} \cT_A[1]=\cC(\cT_A)_X,
$$   
where $mult$ comes by functoriality from the product map $A\otimes A\to A$.  

 According to Section~\ref{Sect_C.0.3}, this equips the functor (\ref{functor_Sat_G_I}) with a right-lax monoidal structure. By (\cite{Ras2}, 6.33.3) this right-lax structure is strict. 
 
\begin{Lm} 
\label{Lm_6.3.13}
View $\cC(\cT_A)_{X^I}$ as a mere object of $\Sph_{G, I}$. The functor $\Sph_{G, I}\to \Sph_{G, I}$, $\cB\mapsto \cB\ast_I \cC(\cT_A)_{X^I}$ is t-exact.
\end{Lm}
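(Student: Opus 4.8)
The statement is about t-exactness of convolution with the fixed object $\cC(\cT_A)_{X^I}\in\Sph_{G,I}$ on the right. The plan is to reduce to known t-exactness properties established in the excerpt for the building blocks, namely the t-exactness of the exterior convolution $\Sph_{G,I}\otimes\Sph_{G,J}\to\Sph_{G,I\sqcup J}$, and the fact that $\cC(\cT_A)_{X^I}$ is placed in a single perverse degree $-\mid I\mid$. Concretely, for $\cB\in\Sph_{G,I}$ the product $\cB\ast_I\cC(\cT_A)_{X^I}=\bar i^!\, m_*(\cB\tboxtimes\cC(\cT_A)_{X^I})$, where $\bar i\colon\Gr_{G,X^I}\hook{}\Gr_{G,X^{I\sqcup I}}$ is the diagonal closed immersion and $m$ the convolution map to $\Gr_{G,X^{I\sqcup I}}$. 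So I would analyze the right-exactness and left-exactness of each factor in this composition separately.

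\textbf{Key steps.} First, I would record that the exterior convolution $\cB\mapsto \cB\tboxtimes\cC(\cT_A)_{X^I}$ is t-exact in $\cB$ (this is asserted in the excerpt right after the definition of $\tboxtimes$: ``the exteriour convolution is t-exact''), so that $m_*(\cB\tboxtimes\cC(\cT_A)_{X^I})$ is a shifted complex whose perversity is controlled by that of $\cB$ together with the dimension shift coming from $m$. Second, one observes that $m$ is proper (ind-proper) and semismall in the relevant sense so that $m_*$ of a perverse sheaf supported suitably stays in nonnegative perverse degrees (right t-exactness of $m_*$), while the left t-exactness needed to conclude comes from $\bar i^!$ raising perverse degree by at most the codimension $\mid I\mid$ of the diagonal, combined with the fact that $\cC(\cT_A)_{X^I}$ contributes a shift by $-\mid I\mid$ — so the two shifts cancel. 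Third, I would assemble these: for $\cB\in\Sph_{G,I}^{\heartsuit}$, the object $\cB\tboxtimes\cC(\cT_A)_{X^I}$ lies in perverse degree $\le 0$ by t-exactness of $\tboxtimes$ together with $\cC(\cT_A)_{X^I}\in$ degree $-\mid I\mid$ relative to the $\Gr_{G,X^I}$-factor, then $m_*$ preserves $\le 0$ (ind-properness plus the stratified structure), then $\bar i^!$ lands in $[\,-\mid I\mid,\,\mid I\mid\,]$-range shifted back by the built-in shift so that $\bar i^! m_*(\cB\tboxtimes\cC(\cT_A)_{X^I})$ is in degree $0$; the lower bound is handled dually, either by Verdier duality (using that $A$ and hence $\cT_A$ is self-dual up to the relevant twist, and $\bar i^!$ versus $\bar i^*$) or by running the same estimate with $\bar i^*$. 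The cleanest route is to invoke the analogue over one point: $\cC(\cT_A)_X=\cT_A[1]$ and convolution with $\cT_A[1]$ on $\Gr_{G,X}$ is t-exact by \cite{MV} since $\cT_A=\tau^0\Sat(A)$ and convolution with geometric Satake sheaves is t-exact, and then propagate along $X^I$ using factorization and the compatibility isomorphisms (\ref{iso_fact_for_C(T_A)}), (\ref{compatibility_isom_restrictions_for_cC(cT_A)}), reducing a general point of $X^I$ to the open stratum $\oo{X}{}^I$ (where everything factorizes into a product of one-point convolutions, hence is t-exact) and the diagonal strata (handled by induction on $\mid I\mid$), exactly as in the dévissage arguments of Section~\ref{Sect_4} and \cite{BD_chiral}, 3.4.11.

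\textbf{Main obstacle.} The delicate point will be bookkeeping the perverse shifts: $\cC(\cT_A)_{X^I}$ sits in perverse degree $-\mid I\mid$, the diagonal $X^I\to X^I\times X^I$ is a locally complete intersection of codimension $\mid I\mid$, so $\bar i^!$ raises perverse amplitude by $\mid I\mid$ while $\bar i^*$ lowers it by $\mid I\mid$ (cf.\ \cite{BBD}, 4.1.10, already used in the proof that $\oblv_{X^I}^R$ is t-exact), and the claim that convolution is t-exact on the nose depends on these exactly cancelling after the $m_*\bar i^!$ composition. I would therefore spend the bulk of the argument verifying, on each diagonal stratum $Z_d\subset\Gr_{G,X^I}$, that $j_d^!$ of $\cB\ast_I\cC(\cT_A)_{X^I}$ is perverse, using the exact sequences of the Cousin/Chevalley type (\ref{ex_seq_for_Sect_1.11.5}) together with the t-exactness at one point; the combinatorial compatibility of these stratumwise statements — i.e.\ that they glue — is the same kind of gluing issue flagged in Section~\ref{Sect_3.1.7_stratifications}, but here it is harmless because $\cC(\cT_A)_{X^I}$ and $\Sph_{G,I}$ are both honestly factorization objects, so descent along the stratification is automatic.
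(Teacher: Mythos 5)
There is a genuine gap, and it is exactly at the point the paper flags. Your shift bookkeeping gives only one of the two required bounds: the exterior convolution is t-exact and $\cC(\cT_A)_{X^I}$ sits in perverse degree $-\mid I\mid$, so for $\cB$ perverse the object $m_*(\cB\tboxtimes \cC(\cT_A)_{X^I})$ sits in degree $-\mid I\mid$, and since $\bar i$ is an lci closed embedding of codimension $\mid I\mid$, (\cite{BBD}, 4.1.10) gives $\bar i^!$ amplitude $[0,\mid I\mid]$; hence $\cB\ast_I\cC(\cT_A)_{X^I}\in {}^pD^{[-\mid I\mid,0]}$. This proves right t-exactness, but the lower bound you obtain is $-\mid I\mid$, not $0$, so left t-exactness does not come from "the shifts cancelling". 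Your proposed repairs do not close this: running the estimate with $\bar i^*$ bounds a different functor (the $*$-restricted convolution, not $\ast_I$), and the Verdier duality route is blocked because $A=\cO_{\check G}$ is an ind-object, so $\cT_A$ and $\cC(\cT_A)_{X^I}$ are infinite direct sums of IC sheaves and are not Verdier self-dual (duality turns the sum into a product); moreover the dual functor again involves $\bar i^*$ rather than $\bar i^!$, and identifying the two is essentially the statement one is trying to prove.

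The stratification/factorization dévissage is likewise not "automatic". Factorization identifies everything over the open strata, where the one-point Satake t-exactness of \cite{MV} applies, but on the diagonal strata $\bar i^!(\cB\ast_I\cC(\cT_A)_{X^I})$ is not the convolution of restrictions: computing the $!$-restriction of the convolution across the diagonal is precisely the nearby-cycles/central-sheaf phenomenon, and the t-exactness of convolution with such specialized objects is a theorem (\cite{Ga_central}, Proposition~6), not a formal consequence of the Cousin exact sequences — indeed, to even write the Cousin resolution for $\cB\ast_I\cC(\cT_A)_{X^I}$ you need the stratumwise degree bounds you are trying to establish, so your step 3 is circular. The paper's own proof is a citation: the argument is carried out as in (\cite{Ras2}, Lemma~6.34.2), with (\cite{Ga_central}, Proposition~6) invoked exactly for the left t-exactness; that external input (equivalently, the ULA/central structure of $\cC(\cT_A)_{X^I}$ along $X^I$) is the missing idea in your proposal.
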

\begin{proof}
As in (\cite{Ras2}, Lemma~6.34.2). One should also invoke (\cite{Ga_central}, Proposition~6) when proving that this functor is left t-exact.
\end{proof}

For example, if $u: X^I\hook{}\Gr_{G, X^I}$ is the unit section then 
$$
(u_*\IC_{X^I})\ast_I \cC(\cT_A)_{X^I}\,\iso\, \cC(\cT_A)_{X^I}[-\mid I\mid]
$$ 

\sssec{} Write 
\begin{equation}
\label{functor_coun_Sect_6.3.14} 
 \counit: C_{X^I}\otimes_{Shv(X^I)} C_{X^I}\to Shv(X^I)
\end{equation} 
for the counit coming from the self-diality on $C$, hence also on $C_{X^I}$. For example, over $\oo{X}{}^I$ this is the functor 
\begin{equation}
\label{functor_coun_over_open}
C^{\otimes I}\otimes Shv(\oo{X}{}^I)\otimes_{Shv(X^I)} C^{\otimes I}\otimes Shv(\oo{X}{}^I)\to Shv(\oo{X}{}^I)
\end{equation} 
given by taking the invariants under the diagonal action of $\check{G}^I$ and tensoring by  $\id: Shv(\oo{X}{}^I)\to Shv(\oo{X}{}^I)$. The following is obtained as in (\cite{Ras2}, 6.34.1). 
\begin{Lm} 
\label{Lm_6.3.14}
The functor $\Sat_{G, I}: C_{X^I}\to \Sph_{G, I}$ is t-exact. It sends $\cF\in C_{X^I}$ to $\counit(\cF\otimes_{Shv(X^I)} \cC(\cT_A)_{X^I})$. 
\end{Lm}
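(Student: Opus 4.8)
The plan is to prove the displayed formula by unwinding definitions and the t-exactness by d\'evissage along the diagonal stratification of $X^I$, by induction on $\mid I\mid$.

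For the formula: by construction (Section~\ref{Sect_gap_described}), $\Sat_{G,I}$ is the functor corresponding to $\cC(\cT_A)_{X^I}$ under
$$
\Fun_{Shv(X^I)}(C_{X^I},\Sph_{G,I})\,\iso\,(C_{X^I})^{\vee}\otimes_{Shv(X^I)}\Sph_{G,I}\,\iso\, C_{X^I}\otimes_{Shv(X^I)}\Sph_{G,I},
$$
where the second equivalence uses the self-duality of $C_{X^I}$ coming from the rigidity of $C=\Rep(\check G)$ via $(C_{X^I})^{\vee}\,\iso\,(C^{\vee})_{X^I}\,\iso\, C_{X^I}$ (Proposition~\ref{Pp_2.4.7} and (\ref{isom_after_Lm_2.5.7})). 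Under this chain the tautological evaluation pairing is transported precisely to $(\counit\otimes\id_{\Sph_{G,I}})(\,\cdot\otimes_{Shv(X^I)}\cdot\,)$, so evaluating on $\cC(\cT_A)_{X^I}$ gives $\Sat_{G,I}(\cF)=\counit(\cF\otimes_{Shv(X^I)}\cC(\cT_A)_{X^I})$. This step is purely formal.

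For t-exactness, both $C_{X^I}$ (with the t-structure for which $\oblv_{X^I}$ is t-exact, constructed above) and $\Sph_{G,I}$ (with the perverse t-structure on $\Gr_{G,X^I}$) carry the relevant t-structures, and $\Sat_{G,I}$ is continuous and $Shv(X^I)$-linear. Right t-exactness I would get from the facts that $\Loc$ is t-exact (Remark~\ref{Rem_Loc_is_t-exact}) and generates $C_{X^I}$ under colimits (Lemma~\ref{Lm_3.5.2_generated_under_colim}), so it suffices to bound $\Sat_{G,I}(\Loc(c\otimes\omega_{X^I}))$ for $c$ in the heart of $C^{\otimes I}$. For t-exactness proper I would proceed by induction on $\mid I\mid$ and d\'evissage along the diagonal stratification $X^I=\bigsqcup_{J\in Q(I)}\vartriangle^{(I/J)}(\oo{X}{}^J)$, pulled back to $\Gr_{G,X^I}$ as in Section~\ref{Sect_Chiral Hecke algebra}: using the recollement for the perverse t-structure, t-exactness of $\Sat_{G,I}$ is detected by the $*$- and $!$-restrictions of $\Sat_{G,I}(\cF)$ to the locally closed pieces $\Gr_{G,X^J}\times_{X^J}\oo{X}{}^J$. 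On the open stratum $\oo{X}{}^I$ the factorization isomorphism for $\Gr_{G,X^I}$ together with $\cC(\cT_A)_{X^I}\mid_{\oo{X}{}^I}\,\iso\,(\mathop{\boxtimes}\limits_{i\in I}\cT_A[1])\mid_{\oo{X}{}^I}$ and $(V\otimes\cO_{\check G})^{\check G}\,\iso\, V$ identify $\Sat_{G,I}\mid_{\oo{X}{}^I}$ with the restriction of $\mathop{\boxtimes}\limits_{i\in I}\Sat_{G,X}$, which is t-exact by geometric Satake, t-exactness of $\tau^{0}$ up to shift, and t-exactness of exterior convolution recalled in Section~\ref{Sect_Application to Satake}. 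On a deeper stratum ($J\in Q(I)$, $J\neq I$) compatibility of the $\Sat_{G,\bullet}$ with $!$-restriction along the diagonal identifies the corresponding restriction of $\Sat_{G,I}$ with $\Sat_{G,J}$ applied to $\vartriangle^{(I/J)!}(\cdot)$ restricted to $\oo{X}{}^J$, where $\Sat_{G,J}$ is t-exact by the inductive hypothesis; the key input is that $\vartriangle^{(I/J)!}\cC(\cT_A)_{X^I}\,\iso\,\cC(\cT_A)_{X^J}$ is again concentrated in the single perverse degree $-\mid J\mid$ (no shift, by (\ref{compatibility_isom_restrictions_for_cC(cT_A)}) and in the spirit of Remark~\ref{Rem_6.2.5_perverse_degree}), so the restriction behaves cleanly rather than merely within the a priori amplitude of a $!$-restriction of codimension $\mid I\mid-\mid J\mid$. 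Assembling the stratumwise estimates via recollement gives t-exactness of $\Sat_{G,I}$.

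The main obstacle is exactly this clean-restriction estimate on the closed strata, i.e. the left t-exactness: the naive amplitude bound for $\vartriangle^{(I/J)!}$ overshoots, and one must use both the single-degree structure of $\cC(\cT_A)_{X^I}$ and the affineness of the open embeddings $\bar j^{(I)}$ of Section~\ref{Sect_Chiral Hecke algebra}. This is the content of (\cite{Ras2}, 6.34.1), whose argument applies verbatim once Proposition~\ref{Pp_4.1.19} is available; as in the proof of Lemma~\ref{Lm_6.3.13} one invokes (\cite{Ga_central}, Proposition~6). I expect this to be the only step requiring genuine work.
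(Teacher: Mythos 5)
Your derivation of the formula is fine and formally equivalent to the paper's (the paper writes $\Sat_{G,I}$ as the composite of $\cF\mapsto\cF\otimes_{Shv(X^I)}u_*\omega_{X^I}$ with $\cdot\ast_I\cC(\cT_A)_{X^I}$ and $\counit$, which is the same evaluation under the self-duality of $C_{X^I}$). For left t-exactness your route is genuinely different from the paper's: the paper proves that the counit functor (\ref{functor_coun_Sect_6.3.14}) is left t-exact by the diagonal d\'evissage (using that $j_*$ and the counit of $\Rep(\check G)^{\otimes 2}\to\Vect$ are left t-exact) and combines this with Lemma~\ref{Lm_6.3.13}, whereas you propose induction on $\mid I\mid$ plus recollement and factorization. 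Your version is workable for that direction, precisely because membership in perverse degrees $\ge 0$ is detected by $!$-restrictions to the strata, $\Sat_{G,\bullet}$ is compatible with $\vartriangle^{(I/J)!}$ by $Shv(X^I)$-linearity, and $\vartriangle^{(I/J)!}$ is left t-exact on both source and target; note in particular that there is no ``overshoot'' problem in this direction, so the obstacle you single out at the end is misplaced.

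The genuine gap is in right t-exactness. Your recollement scheme cannot produce the bound by perverse degrees $\le 0$, since that bound is detected by $*$-restrictions to the strata, and no compatibility of $\Sat_{G,I}$ with $*$-restriction along diagonals is available (the construction only gives $!$-compatibility); so everything rests on your generator reduction, and that reduction is incomplete. The coconnective part $C_{X^I}^{\le 0}$ is generated under colimits and extensions by $\Loc(V\otimes K)$ with $V\in (C^{\otimes I})^{\le 0}$ compact and $K\in Shv(X^I)^{\le 0}$ compact, not by $\Loc(c\otimes\omega_{X^I})$ alone, and bounding $\Sat_{G,I}\Loc(V\otimes\omega_{X^I})$ in degrees $\le 0$ does not handle the $K$-twists. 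What the paper actually uses is the sharper fact, extracted from the proof of (\cite{Ras2}, 6.34.1), that $\Sat_{G,I}\Loc(V\otimes\omega_{X^I})$ is concentrated in perverse degree $-\mid I\mid$; then, by $Shv(X^I)$-linearity, $\Sat_{G,I}\Loc(V\otimes K)\,\iso\, K\otimes_{Shv(X^I)}\Sat_{G,I}\Loc(V\otimes\omega_{X^I})$ lies in degrees $\le 0$ because the graph of $\Gr_{G,X^I}\to X^I$ is a locally complete intersection closed embedding of codimension $\mid I\mid$, so the $!$-tensor raises perverse degree by at most $\mid I\mid$. This twisting step, the need for the $-\mid I\mid$ concentration rather than a mere $\le 0$ bound, and the codimension argument are absent from your proposal; moreover, you attribute the essential input (\cite{Ras2}, 6.34.1) to the left t-exact direction, where it is in fact not needed.
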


For example, for $I=\{*\}$ and $V\in\Rep(\check{G})$ one gets
$\Sat_{G, I}(V\otimes\IC_X)\,\iso\, (V\otimes \cT_A)^{\check{G}}$. 

\begin{proof}[Proof of Lemma~\ref{Lm_6.3.14}]
The functor $\Sat_{G, I}$ writes as the composition
$$
C_{X^I} \to C_{X^I}\otimes_{Shv(X^I)} \Sph_{G, I}\,\toup{\cdot\ast_I \cC(\cT_A)_{X^I}}\; C_{X^I}\otimes_{Shv(X^I)} C_{X^I}\otimes_{Shv(X^I)}\Sph_{G, I}\toup{\counit} \Sph_{G, I},
$$
where the first map is $\cF\mapsto \cF\otimes_{Shv(X^I)} u_*\omega_{X^I}$. Here $u: X^I\to \Gr_{G, X^I}$ is the unit section, and $u_*\omega_{X^I}$ is the unit of the monoidal category $\Sph_{G, I}$. 

 The functor (\ref{functor_coun_Sect_6.3.14}) is left t-exact. Indeed, we may stratify $X^I$ by $\vartriangle^{(I/J)}(\oo{X}{}^J)$ for $J\in Q(I)$ and argue by devissage. By factorization, the claim follows from the fact that (\ref{functor_coun_over_open}) is left t-exact. Indeed, the counit $C\otimes C\to \Vect$ is left t-exact, and $Shv(\oo{X}{}^I)\otimes_{Shv(X^I)} Shv(\oo{X}{}^I)\to Shv(X^I)$ is the functor $j_*: Shv(\oo{X}{}^I)\to Shv(X^I)$ for the open immersion $j: \oo{X}{}^I\to X^I$, which is also left t-exact. Using Lemma~\ref{Lm_6.3.13}, this shows that $\Sat_{G, I}$ is left t-exact. 
 
  By Remark~\ref{Rem_Loc_is_t-exact} and Lemma~\ref{Lm_3.5.2_generated_under_colim}, $C_{X^I}^{\le 0}\subset C_{X^I}$ is the smallest full full subcategory containing $\Loc(V\otimes K)$ for $V\in (C^{\otimes I})^c\cap (C^{\otimes I})^{\le 0}$, $K\in Shv(X^I)^c\cap Shv(X^I)^{\le 0}$, closed under extensions and colimits. So, it suffices to show that for such $V, K$ one has $\Sat_{G, I}\Loc(V\otimes K)\in (\Sph_{G, I})^{\le 0}$. By devissage, we may assume $V\in (C^{\otimes I})^{\heartsuit}$. The proof of (\cite{Ras2}, 6.34.1) shows that $\Sat_{G, I}\Loc(V\otimes\omega_{X^I})$ is concentrated in cohomological degree $-\mid I\mid$. The claim follows, as the tensor product $K\otimes_{Shv(X^I)} \Sat_{G, I}\Loc(V\otimes\omega_{X^I})$ is then placed in perverse degrees $\le 0$. Indeed, the graph $\Gr_{G, X^I}\to \Gr_{G, X^I}\times X^I$ of the projection $\Gr_{G, X^I}\to X^I$ is a locally complete intersection of codimension $\mid I\mid$. 
\end{proof} 

\begin{Lm} The commutative chiral product on $\Fact(C)$ is compatible with the exterior convolution on $\Sph_{G,\Ran}$. Namely, for a map $I\to J$ in $fSets$, once a linear order on $J$ is chosen, one has a commutativity datum for the diagram
$$
\begin{array}{ccc}
\underset{j\in J}{\boxtimes} C_{X^{I_j}} & \to & C_{X^I}\\
\downarrow\lefteqn{\scriptstyle \underset{j\in J}{\boxtimes}\Sat_{G, I_j}} && \downarrow\lefteqn{\scriptstyle \Sat_{G, I}}\\
\underset{j\in J}{\boxtimes}  \Sph_{G, I_j} & \toup{c_{ext}} & \Sph_{G, I},
\end{array}
$$
where $c_{ext}$ is the corresponding exterior convolution. 
\end{Lm}
\begin{proof}
Since the functor $\Loc: \underset{i\in I}{\boxtimes} C(X)\to C_{X^I}$ generates $C_{X^I}$ under colimits by Lemma~\ref{Lm_3.5.2_generated_under_colim}, it suffices to prove our claim in the particular case of $\id: I\to I=J$. Recall that $C^{\otimes I}$ is generated by $\underset{i\in I}{\boxtimes} V_i$, where $V_i\in C^{\heartsuit}$ are irreducible finite dimensional representations of $\check{G}$. Let $V_i$ for $i\in I$ be such representations. It suffices to check that both compositions in the diagram
$$
\begin{array}{ccc}
\underset{i\in I}{\boxtimes} C_X & \toup{\Loc} & C_{X^I}\\
\downarrow\lefteqn{\scriptstyle \underset{i\in I}{\boxtimes}\Sat_{G, X}} && \downarrow\lefteqn{\scriptstyle \Sat_{G, I}}\\
\underset{i\in I}{\boxtimes}  \Sph_{G, X} & \toup{c_{ext}} & \Sph_{G, I}
\end{array}
$$
are isomorphic when evaluated on $\underset{i\in I}{\boxtimes} (V_i\otimes\IC_X)$. We have $\Sat_{G, X}(V_i\otimes \IC_X)\,\iso\, \cT_{V_i}$. Write $\bar j: \Gr_{G, X^I}\times_{X^I} \oo{X}{}^I\hook{} \Gr_{G, X^I}$ for the open immersion. As in Section~\ref{Sect_7.3.3_now}, $c_{ext}(\underset{i\in I}{\boxtimes} \cT_{V_i})$ is the intermediate extension under $\bar j$. Both morphisms of sheaves of categories $\Loc$, $c_{ext}$ becomes equvalences after the base change by $\oo{X}{}^I\hook{} X^I$. 

 The object $\Loc(\underset{i\in I}{\boxtimes} (V_i\otimes\IC_X))\in C_{X^I}$ lies in $C_{X^I}$ and is the intermediate extension of its restriction under $\bar j$. This follows from the description of the t-structure in Lemma~\ref{Lm_7.2.5_t-structure}. Indeed, $\oblv_{X^I}(\Loc(\underset{i\in I}{\boxtimes} (V_i\otimes\IC_X))$ identifies with $\IC_{X^I}$ tensored by some non-zero vector space. 
 
 Thus, it suffices to establish the desired isomorphism over $\Gr_{G, X^I}\times_{X^I} \oo{X}{}^I$. Over that locus it follows from the compatibility of $\Sat_{G, I}$ with factorization.
\end{proof}

\sssec{} Let $\epsilon\in Z(\check{G})$ be the image of $-1$ under $2\check{\rho}:\Gm\to \check{T}$. Recall that the center $Z(\check{G})$ of $\check{G}$ acts on $C=\Rep(\check{G})$ by automorphisms of the identity functor (as a symmetric monoidal functor). We associate to it the category $C^{\epsilon}\in CAlg(\DGCat_{cont})$ as in Section~\ref{Sect_Twist}. 

 Note that $\Fact(C)\,\iso\,\Fact(C^{\epsilon})$ in $Shv(\Ran)-mod$. The latter equivalence does not respect the factorization structures on these categories. 

The description of $\Sat_{G, I}$ in Lemma~\ref{Lm_6.3.14} gives the following (see also \cite{GLys}, 9.2). The functor $Sat_{G, \Ran}$ preserves the weak factorization structures. Namely, for a map $I\toup{\phi} J$ in $fSets$ the diagram commutes
$$
\begin{array}{ccc}
C^{\epsilon}_{X^I}\mid_{X^I_{\phi, d}} & \overset{\sim}{\gets} & (\underset{j\in J}{\boxtimes} C^{\epsilon}_{X^{I_j}})\mid_{X^I_{\phi, d}}\\
\downarrow\lefteqn{\scriptstyle  Sat_{G, I}} && \downarrow\lefteqn{\scriptstyle \underset{j\in I}{\boxtimes} Sat_{G, I_j}}\\
\Sph_{G, I}\mid_{X^I_{\phi, d}} & \gets & (\underset{j\in J}{\boxtimes} \Sph_{G, I_j})\mid_{X^I_{\phi, d}}  
\end{array}
$$
In the case of $\cD$-modules the low horizontal arrow in this square is an equivalence (and
in the constructible context this is maybe not true). 

\sssec{} 
\label{Sect_ext_convolution_precisely}
For the convenience of the reader, here is a precise definition of the exterior convolution on $\Sph_{G,\Ran}$. 

 For $S\in\Sch^{aff}$ and $\cJ\in \Ran(S)$ write $\Gamma_{\cJ}$ for the union of the graphs $\Gamma_i\subset S\times X$ for $i\in \cJ$. Write $\hat D_{\cJ}$ for the formal completion of $S\times X$ along $\Gamma_{\cJ}$ viewed as a formal scheme. Let $D_{\cJ}$ be the affine scheme obtained from $\hat D_{\cJ}$, the image of $\hat D_{\cJ}$ under $\colim: \Ind(\Sch^{aff})\to \Sch^{aff}$. Write also $\oo{D}_{\cJ}=D_{\cJ}-\Gamma_{\cJ}$.
 
 Recall that $\gL^+(G)_{\Ran}\backslash \Gr_{G,\Ran}$ is the prestack sending $S\in\Sch^{aff}$ to the groupoid of collections: $\cJ\in\Ran(S)$, $G$-torsors $\cF_G, \cF'_G$ on $D_{\cJ}$ and an isomorphism $\cF_G\,\iso\, \cF'_G\mid_{\oo{D}_{\cJ}}$. 
 
 Write $\Conv_{G,\Ran^2}$ for the prestack sending $S\in\Sch^{aff}$ to the collection $\cJ,\cJ'\in\Ran(S)$, $\cG$-torsors $\cF_G, \cF'_G, \cF''_G$ on $D_{\cJ\cup\cJ'}$ with isomorphisms $\beta: \cF_G\,\iso\,\cF'_G\mid_{D_{\cJ\cup\cJ'}-\Gamma_{\cJ}}$, $\beta': \cF'_G\,\iso\,\cF''_G\mid_{D_{\cJ\cup\cJ'}-\Gamma_{\cJ'}}$. We have the diagram
$$
(\gL^+(G)_{\Ran}\backslash \Gr_{G,\Ran})\times (\gL^+(G)_{\Ran}\backslash \Gr_{G,\Ran})\getsup{p}\Conv_{G,\Ran^2} \toup{m} \gL^+(G)_{\Ran}\backslash \Gr_{G,\Ran},
$$
where the map $p$ sends the above point to the collection: 
$$
(\cJ, \cF_G\mid_{D_{\cJ}}, \cF_G\mid_{D_{\cJ}}, \bar\beta: \cF_G\,\iso\, \cF'_G\mid_{\oo{D}_{\cJ}})\in \gL^+(G)_{\Ran}\backslash \Gr_{G,\Ran},
$$
$$
(\cJ', \cF'_G\mid_{D_{\cJ'}}, \cF''_G\mid_{D_{\cJ'}}, \bar\beta': \cF_G\,\iso\, \cF'_G\mid_{\oo{D}_{\cJ'}})\in \gL^+(G)_{\Ran}\backslash \Gr_{G,\Ran}
$$
Here $\bar\beta$ is the restriction of $\beta$, and $\bar\beta'$ is the restriction of $\beta'$. 

 The map $m$ sends the above point to 
$$
(\cJ\cup\cJ', \cF_G, \cF''_G, \; \beta'\beta: \cF_G\,\iso\, \cF'_G\mid_{\oo{D}_{\cJ\cup \cJ'}}).
$$ 
The map $m$ is schematic and proper, so $m_*=m_!$. The exterior convolution sends $K, K'\in \Sph_{G,\Ran}$ to  
$$
K\star K'=m_*p^!(K\boxtimes K').
$$
This defines a non-unital monoidal category $(\Sph_{G,\Ran},\star)$.

\sssec{} Write $(Shv(\Ran),\star)$ for the convolution monoidal structure on $Shv(\Ran)$. Consider the diagram
$$
\Ran\;\getsup{q}\; \gL^+(G)_{\Ran}/\Ran\;\toup{i} \;\gL^+(G)_{\Ran}\backslash\Gr_{G,\Ran},
$$
where $i$ is the closed immersion of the unit section. It is easy to see that the functor $i_*q^!: (Shv(\Ran),\star)\to (\Sph_{G,\Ran},\star)$ is non-unital monoidal.

\appendix
\section{Sheaves of categories}
\label{Sect_Sheaves of categories}

\ssec{} In this appendix we recall some basics of the theory of sheaves of categories attached to any of our sheaf theory from \cite{GLys}, which is a right-lax symmetric monoidal functor  $Shv: (\Sch^{aff}_{ft})^{op}\to \DGCat_{cont}$, $S\mapsto Shv(S)$, $(S_1\toup{f} S_2)\mapsto (f^!: Shv(S_2)\to Shv(S_1))$. 

\sssec{} The right Kan extension of $Shv$ along $(\Sch^{aff}_{ft})^{op}\hook{} \PreStk_{lft}^{op}$ is also denoted $Shv: \PreStk_{lft}^{op}\to\DGCat_{cont}$. The latter functor preserves limits. 

  Consider the functor 
$$
(\Sch^{aff}_{ft})^{op}\to 1-\Cat, \; S\mapsto Shv(S)-mod
$$ 
It sends $S'\to S$ to the functor
$$
Shv(S)-mod\to Shv(S')-mod, \; M\mapsto M\otimes_{Shv(S)} Shv(S')
$$
  
  We right-Kan-extend it along $(\Sch^{aff}_{ft})^{op}\subset (\PreStk_{lft})^{op}$ to a functor \begin{equation}
\label{def_ShvCat}
ShvCat: (\PreStk_{lft})^{op}\to 1-\Cat, \; Y\mapsto ShvCat(Y).
\end{equation} 
For a morphism $f: Y'\to Y$ in $\PreStk_{lft}$ this gives a restriction functor denoted 
\begin{equation}
\label{functor_f^!_for_sheaves_of_cat}
f^!: ShvCat(Y)\to ShvCat(Y')
\end{equation} 
or by $C\mid_{Y'}$ for $C\in ShvCat(Y)$. Note that (\ref{def_ShvCat}) preserves limits. 

 So, for $C\in ShvCat(Y)$ and $S\in(\Sch^{aff}_{ft})_{/Y}$ we get $\Gamma(S, C)\in Shv(S)-mod$, and for $S'\to S$ in $\Sch_{ft}^{aff}$ the equivalence 
$$
\Gamma(S, C)\otimes_{Shv(S)} Shv(S')\,\iso\, \Gamma(S', C)
$$ 
is a part of data of $C$. 

 For $Y\in\PreStk_{lft}$, $ShvCat(Y)$ admits colimits. In the case of $\cD$-modules it also admits limits, because for $T\to S$ in $\Sch_{ft}$, $Shv(T)$ is dualizable as a $Shv(S)$-module category in this case. In the constructible context this is not clear. 
 
\sssec{}  The category $ShvCat(Y)$ carries a symmetric monoidal structure given by the compo\-nent-wise tensor product. Namely, the limit
$$
ShvCat(Y)\,\iso\,  \lim_{(S\to Y)\in ((\Sch^{aff}_{ft})/Y)^{op}} Shv(S)-mod
$$ 
can be understood in $CAlg(1-\Cat)$. Write $Shv_{/Y}$ for the unit of $ShvCat(Y)$. This is the sheaf of categories given by $\Gamma(S, Shv_{/Y}))=Shv(S)$ for $S\in(\Sch^{aff}_{ft})_{/Y}$. 
 
\sssec{} 
\label{Sect_A.1.3_now}
The global sections are defined in the usual way. Namely, given $Y\in\PreStk_{lft}$ and $C\in ShvCat(Y)$, the right Kan extension of the functor 
$$
(\Sch^{aff}_{/Y})^{op}\to \DGCat_{cont}, \; (S\to Y)\mapsto \Gamma(S, C)
$$ 
along $(\Sch^{aff}_{/Y})^{op}\hook{} ({\PreStk_{lft}}_{/Y})^{op}$ defines a functor 
$$
\Gamma(\cdot, C): ({\PreStk_{lft}}_{/Y})^{op}\to\DGCat_{cont}, (Z\to Y)\mapsto \Gamma(Z, C)
$$

 For $h: Z\to Z'$ in ${\PreStk_{lft}}_{/Y}$ we also write $h^!_C: \Gamma(Z', C)\to \Gamma(Z, h^!C)$ for the corresponding restriction functor. The functor $\Gamma(\cdot, C)$ preserves limits.
 
 
\sssec{} The functor $\Gamma(Y, \cdot): ShvCat(Y)\to \DGCat_{cont}$ is right-lax symmetric monoidal, so naturally upgrades to a functor 
$$
\Gamma^{enh}_Y: ShvCat(Y)\to Shv(Y)-mod
$$ 
The latter functor is also right-lax symmetric monoidal. 
  
  Namely, for $F,F'\in ShvCat(Y)$, 
$$
\Gamma(Y, F\otimes F')\,\iso\, \lim_{S\to Y}\Gamma(S, F\otimes F')\,\iso\, \lim_{S\to Y} \Gamma(S, F)\otimes_{Shv(S)} \Gamma(S, F')
$$
For each $S\to Y$ in $(\Sch^{aff}_{ft})_{/Y}$ we have the pojection
$$
\Gamma(Y, F)\otimes \Gamma(Y, F')\to \Gamma(S, F)\otimes\Gamma(S, F')\to  \Gamma(S, F)\otimes_{Shv(S)} \Gamma(S, F')
$$
they are compatible with the transition maps, so yield the desired morphism 
$$
\Gamma(Y, F)\otimes \Gamma(Y, F')\to \Gamma(Y, F\otimes F').
$$ 
It factors through $\Gamma(Y, F)\otimes_{Shv(Y)} \Gamma(Y, F')\to \Gamma(Y, F\otimes F')$.

\sssec{} 
\label{Sect_A.1.5}
The functor $\Gamma^{enh}_Y$ has a left adjoint 
$$
\Loc_Y: Shv(Y)-mod\to ShvCat(Y)
$$ 
sending $C$ to the sheaf of categories whose sections over $S\to Y$ are $\Gamma(S, \Loc_Y(C))=C\otimes_{Shv(Y)} Shv(S)$. The functor $\Loc_Y$ is symmetric monoidal. 

 We say that $Y\in\PreStk_{lft}$ is 1-affine (for our sheaf theory!) if $\Loc_Y$ is an equivalence. It is known that each ind-scheme of ind-finite type is 1-affine. 
 
 One may show also that for $S, T\in \Sch_{ft}$ and a Zariski cover $T\to S$ the base change functor $Shv(S)-mod\to Shv(T)-mod$ is conservative. 

\sssec{}  Let $Y\in \PreStk_{lft}$, $C\in ShvCat(Y)$. One may show that the functor 
$$
\Gamma(\cdot, C): (\Sch_{ft}/Y)^{op}\to \DGCat_{cont}
$$ 
satisfies Zarizki and proper descent, hence h-descent. Recall that $Shv: (\PreStk_{lft})^{op}\to \DGCat_{cont}$ satisfies the proper descent, it also satisfies the \'etale descent (as in \cite{Ga1}). 

\sssec{} If $C\in Shv(Y)-mod$ is dualizable in $Shv(Y)-mod$ then the natural map $C\to \Gamma(Y, \Loc_Y(C))$ is an isomorphism, as in (\cite{Ga1}, 1.3.5). 

\sssec{} For $Y, Z\in \PreStk_{lft}$ we have the functor 
$$
ShvCat(Y)\times ShvCat(Z)\to ShvCat(Y\times Z)
$$ 
sending $(C,D)$ to $C\boxtimes D:=\pr_1^!C\otimes \pr_2^!D$ for 
$$
Y\getsup{\pr_1} Y\times Z\toup{\pr_2} Z
$$    

\sssec{} Let $C\in ShvCat(Y), D\in ShvCat(Z)$.  We have a natural functor 
\begin{equation}
\label{ext_product_sheaves}
\Gamma(Y, C)\otimes \Gamma(Z, D)\to \Gamma(Y\times Z, C\boxtimes D)
\end{equation}
Indeed, $Y\,\iso\,\mathop{\colim}\limits_{S\to Y} S$ in $\PreStk_{lft}$, the colimit taken over $(\Sch^{aff}_{ft})_{/Y}$. So, 
$$
Y\times Z\,\iso\, \mathop{\colim}\limits_{(S\to Y, T\to Z)} S\times T
$$ 
in $\PreStk_{lft}$, the colimit taken over $(\Sch^{aff}_{ft})_{/Y}\times (\Sch^{aff}_{ft})_{/Z}$. So, 
$$
\Gamma(Y\times Z, C\boxtimes D)\,\iso\, \lim_{(S\to Y, T\to Z)} \Gamma(S\times T, C\boxtimes D)
$$
Now $\Gamma(S\times T, C\boxtimes D)\,\iso\, (\Gamma(S, C)\otimes\Gamma(T, D))\otimes_{Shv(S)\otimes Shv(T)} Shv(S\times T)$, and we have a natural map 
$$
\Gamma(Y, C)\otimes \Gamma(Z, D)\to \Gamma(S, C)\otimes \Gamma(T, D)\to \Gamma(S\times T, C\boxtimes D)
$$ 
These maps are compatible with the transition maps, so define the morphism (\ref{ext_product_sheaves}). 

\sssec{} For a morphism $f: Y'\to Y$ in $\PreStk_{lft}$ it is not clear in general if (\ref{functor_f^!_for_sheaves_of_cat}) has a right adjoint. 

If $Y\in\PreStk_{lft}$ is 1-affine then this right adjoint exist and is denoted 
$$
coind_f: ShvCat(Y')\to ShvCat(Y)
$$ 
Indeed, $f^!$ factors as $Shv(Y)-mod\to Shv(Y')-mod\toup{\Loc_{Y'}} ShvCat(Y')$, where the first functor sends $C$ to $Shv(Y')\otimes_{Shv(Y)} C$. So its right adjoint is the composition
$$
ShvCat(Y')\toup{\Gamma^{enh}_{Y'}}\; Shv(Y')-mod\to Shv(Y)-mod,
$$
where the second arrow is the restriction of scalars along $Shv(Y)\toup{f^!} Shv(Y')$. 

 For $S\in(\Sch_{ft})_{/Y}$ we have a natural map 
$$
\Gamma(Y', C)\otimes_{Shv(Y)} Shv(S)\to \Gamma(Y'\times_Y S, C).
$$ 
This is not an equivalence in general. Already in the case $Y=\Spec k$ and $C=Shv_{/Y'}$ the above map $Shv(Y')\otimes Shv(S)\to Shv(Y'\times S)$ is not an equivalence in the constructible setting.

\sssec{} 
\label{Sect_A.1.11}
Let $f: Y\to Z$ be a map in $\PreStk_{lft}$ and $C\in ShvCat(Z)$. Assume $Y, Z$ are pseudo-indschemes, and $f: Y\to Z$ is pseudo-indproper in the sense of (\cite{R}, 7.15.1). Then $f^!_C$ admits a left adjoint $f_{!, C}: \Gamma(Y, f^! C)\to \Gamma(Z, C)$ in $\DGCat_{cont}$.

Namely, let $Z=\colim_{j\in J} Z_j$, where the transition maps $\alpha: Z_j\to Z_{j'}$ for $j\to j'$ in $J$ are proper, and each $Z_i\in \Sch_{ft}$ is separated. Recall that $Z_i$ is 1-affine, and we have the adjoint pair $\alpha_!: Shv(Z_j)\rightleftarrows Shv(Z_{j'}): \alpha^!$ in $Shv(Z_{j'})-mod$. Tensoring by $\Gamma(Z_{j'}, C)$, we get an adjoint pair 
$$
\alpha_{!, C}: \Gamma(Z_j, C)\rightleftarrows \Gamma(Z_{j'}, C): \alpha^!_C
$$ 
Assume now $I\to J$ is a diagram, and $Y=\colim_{i\in I} Y_i$, here $Y_i\in \Sch_{ft}$ is  separated, and the transition maps $Y_i\to Y_{i'}$ are proper. Then $\Gamma(Y, C)\,\iso\colim_{i\in I} \Gamma(Y_i, C)$. The desired functor $f_{!, C}$ is obtained from the compatible system of functors 
$\beta_{!, C}: \Gamma(Y_i, C)\to \Gamma(Z_{j(i)}, C)$. Here the corresponding morphism $\beta: Y_i\to Z_{j(i)}$ is proper. 

 Compare with (\cite{Ly}, 9.2.21).

\section{Generalities about sheaf theories}
\label{appendix_some_generalities}

\ssec{} We collect in this section some properties of any sheaf theorie $Shv$ from \cite{GLys}. 

\sssec{} 
\label{Sect_B.1.1}
Let $S\in\Sch_{ft}$. Recall that $Shv(S)$ is compactly generated. We view it by default as equipped with $\otimes^!$-symmetric monoidal structure.

Consider a closed immersion $i: Y\to Z$ in $\Sch_{ft}$. Then $Shv(Y)$ is dualizable and canonically self-dual in $Shv(Z)-mod$ by (\cite{Ly}, 3.1.10), it is a retract of $Shv(Y)$. 
If in addition $Z'\to Z$ is a map in $\PreStk_{lft}$ and $Y'=Y\times_Z Z'$ then the natural map $Shv(Z')\otimes_{Shv(Z)} Shv(Y)\to Shv(Y')$ is an equivalence by (\cite{Ly4}, 0.3.1). 

Consider an open embedding $j: U\to Z$ in $\Sch_{ft}$. Then $Shv(U)$ is dualizable and canonically self-dual in $Shv(Z)-mod$ by (\cite{Ly}, 3.1.10). For a map $Z'\to Z$ in $\PreStk_{lft}$ and $U'=U\times_Z Z'$ the natural map $Shv(U)\otimes_{Shv(Z)} Shv(Z')\to Shv(U')$ is an equivalence by (\cite{Ly4}, 0.3.2). 

\sssec{} 
\label{Sect_B.1.2}
Let $C\in Shv(S)-mod$, $U\subset S$ be an open subscheme. Then one has canonically 
$$
\Fun_{Shv(S)}(C, Shv(S))\otimes_{Shv(S)} Shv(U)\,\iso\,\Fun_{Shv(U)}(C\otimes_{Shv(S)} Shv(U), Shv(U))
$$

\begin{Lm} 
\label{Lm_dualizability_is_local}
Let $C\in Shv(S)-mod$.  Then $C$ is dualizable in $Shv(S)-mod$ iff it is dualizable locally on $S$ is Zariski topology.
\end{Lm}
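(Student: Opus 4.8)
The plan is to prove Lemma~\ref{Lm_dualizability_is_local} by reducing to the affine case and using Zariski descent for sheaves of categories. One direction is trivial: if $C$ is dualizable in $Shv(S)-mod$, then for any open $U\subset S$ the base change $C\otimes_{Shv(S)} Shv(U)$ is dualizable in $Shv(U)-mod$, since base change along a (symmetric monoidal) functor of symmetric monoidal categories preserves dualizable objects. So the content is in the converse: assuming $C$ is dualizable over a Zariski cover, show it is dualizable over $S$.

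First I would fix a finite affine Zariski cover $S=\bigcup_{a} U_a$ (using that $S\in\Sch_{ft}$ is quasi-compact), write $C_a=C\otimes_{Shv(S)} Shv(U_a)$, and similarly $C_{a_0\cdots a_p}$ for the categories over the (affine, since $S$ is separated-enough locally, or otherwise one covers the intersections too) iterated intersections. By hypothesis each $C_a$ is dualizable in $Shv(U_a)-mod$; by the easy direction each $C_{a_0\cdots a_p}$ is then dualizable in $Shv(U_{a_0\cdots a_p})-mod$. The key geometric input, recalled in Section~\ref{Sect_B.1.1}, is that for an open embedding $j: U\to S$ the module $Shv(U)$ is dualizable and canonically self-dual in $Shv(S)-mod$; consequently the adjoint pair $j^*: Shv(S)\leftrightarrows Shv(U): j_*$ (obtained by restriction/pushforward) gives, after tensoring with $C$, an adjoint pair relating $C$ and $C_a$ inside $Shv(S)-mod$, and more importantly the whole Čech diagram $\{Shv(U_{a_0\cdots a_p})\}$ is a diagram in $Shv(S)-mod$ of dualizable objects. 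By Zariski descent for $Shv$ (Section~\ref{Sect_A.1.5}, and its consequence that $Shv(S)-mod\to\prod_a Shv(U_a)-mod$ is conservative, together with the sheaf property) one has $C\,\iso\,\lim_{[p]\in\bfitDelta} \prod_{a_0\cdots a_p} C_{a_0\cdots a_p}$, a finite limit (it is a Čech complex for a finite cover, hence the limit may be computed as a finite totalization — or one truncates using that $S$ has finite Zariski cohomological dimension).

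The main step is then to show that this finite limit of dualizable $Shv(S)$-modules, taken along a diagram all of whose transition functors admit continuous right adjoints that are themselves morphisms of $Shv(S)$-modules, is again dualizable, with dual computed as the corresponding colimit of the duals. This is exactly the type of statement packaged in the results of \cite{Ly} invoked repeatedly in the paper (e.g.\ the passage ``$\colim\cF_I\,\iso\,\lim\cF^R_I$'' and (\cite{Ly}, 3.1.10)) used in the proof of Lemma~\ref{Lm_3.2.6_for_sheaves_of_cat}: one passes to right adjoints in the limit diagram, identifies the limit with a colimit of the dual modules $C_{a_0\cdots a_p}^{\vee}$, and checks that the canonical evaluation/coevaluation furnished termwise glue to a duality datum for $C$. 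Concretely, set $C^{\vee}:=\colim_{[p]\in\bfitDelta^{op}} \prod_{a_0\cdots a_p} C_{a_0\cdots a_p}^{\vee}$; the unit $Shv(S)\to C^{\vee}\otimes_{Shv(S)} C$ and counit $C\otimes_{Shv(S)} C^{\vee}\to Shv(S)$ are assembled from the corresponding data over the $U_{a_0\cdots a_p}$ using that restriction $\cdot\otimes_{Shv(S)} Shv(U_{a_0\cdots a_p})$ carries duality data to duality data, and the triangle identities hold because they hold after restricting to the cover, which is conservative.

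The hard part will be the bookkeeping in this last step: verifying that ``$\lim$ of dualizable modules with right-adjointable transition maps is dualizable'' in the precise form needed, i.e.\ that the termwise dualities are compatible with the simplicial structure so that they descend, and that the resulting $C^{\vee}$ really is a two-sided dual in $Shv(S)-mod$ rather than merely a candidate. I expect this to follow formally from (\cite{Ly}, 3.1.10) and (\cite{Ly}, 9.2.32) applied to the colocalizations $j_!: Shv(U)\leftrightarrows Shv(S): j^!$ for the open pieces — exactly the mechanism already used in the proof of Lemma~\ref{Lm_3.2.6_for_sheaves_of_cat}i) — so the proof should be short, essentially: ``dualizability is preserved by finite limits along right-adjointable diagrams, $Shv$ satisfies Zariski descent, the cover is finite, apply the easy direction to the pieces.'' One should also remark that the `only if' direction together with the `if' direction gives the claim for arbitrary (not necessarily finite) open covers by refining to a finite subcover, which exists by quasi-compactness of $S\in\Sch_{ft}$.
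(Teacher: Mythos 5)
Your route is genuinely different from the paper's. The paper's proof is two lines: dualizability of $C$ in $Shv(S)-mod$ is equivalent to the canonical map $D\otimes_{Shv(S)} C\to \Fun_{Shv(S)}(\Fun_{Shv(S)}(C, Shv(S)), D)$ being an equivalence for every test module $D$; both sides commute with restriction to an open $U\subset S$ (this is exactly Section~\ref{Sect_B.1.2}), and equivalences in $Shv(S)-mod$ are detected after base change to a Zariski cover (the conservativity statement in Section~\ref{Sect_A.1.5}). So no descent for $C$ and no analysis of (co)limits of dualizable modules is needed at all. Your \v{C}ech/gluing approach is workable and is in fact the mechanism the paper deploys elsewhere (the two-open gluing with dualizability of the fiber product is precisely Lemma~\ref{Lm_D.1.5}, and the "colimit along right-adjointable transitions of dualizables is dualizable" device is what is used in Lemma~\ref{Lm_3.2.6_for_sheaves_of_cat}); what it buys is an explicit description of the dual as a glued object, at the cost of substantially more machinery.

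Three points in your write-up need repair. First, the identification $C\iso\lim_{\Delta}\prod C_{a_0\cdots a_p}$ is not a consequence of "Zariski descent for $Shv$ plus conservativity": descent for the unit does not give descent for an arbitrary module, and conservativity alone never produces a limit diagram. The correct justification inside this paper is Zariski descent for sections of sheaves of categories applied to $\Loc_S(C)$ (Appendix~\ref{Sect_Sheaves of categories}), or, for a two-element cover, the gluing statement of Lemma~\ref{Lm_D.1.5}~i), after which one inducts on the number of affine opens. Second, the totalization over $\bfitDelta$ is not a finite limit, and finite cohomological dimension of $S$ does not let you truncate a limit of $\DG$-categories; fortunately finiteness is not needed — either induct with genuine fiber products, or use the adjointable-diagram mechanism, which has no finiteness hypothesis. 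Third, to convert the descent limit into a colimit you must pass to the \emph{left} adjoints $j_!\otimes\id_C$ of the restriction functors (the colocalizations you mention at the end), since $\lim_I G\iso\colim_{I^{op}} G^L$; passing to right adjoints of a limit diagram, as written in your main step, goes the wrong way. The statement you then invoke (colimit of dualizable $Shv(S)$-modules along transition functors admitting continuous $Shv(S)$-linear right adjoints is dualizable, with dual the corresponding colimit of duals) applies to that colimit, since the right adjoints of the $j_!\otimes\id_C$ are the original restrictions. With these corrections your argument closes, but the paper's pointwise criterion is the shorter path.
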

\begin{proof}
Consider the canonical pairing $C\otimes_{Shv(S)}\Fun_{Shv(S)}(C, Shv(S))\to Shv(S)$. For each $D\in Shv(S)-mod$ it induces a map
$$
D\otimes_{Shv(S)} C\to \Fun_{Shv(S)}(\Fun_{Shv(S)}(C, Shv(S)), D)
$$
Recall that $C$ is dualizable in $Shv(S)-mod$ iff the latter map is an equivalence for any $D\in Shv(S)-mod$. This condition is local in Zariski topology, cf. Section~\ref{Sect_B.1.2}.
\end{proof}

\sssec{} In the case of $\cD$-modules the following holds. Given $C\in Shv(S)-mod$, $C$ is dualizable in $Shv(S)-mod$ iff it is dualizable in $\DGCat_{cont}$. This property is not known in the constructible context.

\sssec{} 
Let $S\in\Sch_{ft}$. Let $C\in Shv(S)-mod$. Following S. Raskin (\cite{Ras2}, B.5.1), we adapt the following.

\begin{Def} An object $c\in C$ is ULA iff the functor $\und{\HOM}_C(c,-): C\to Shv(S)$ is continuous and its right-lax $Shv(S)$-structure is strict. Here $\und{\HOM}_C$ denotes the inner hom with respect to action of $Shv(S)$. Since $Shv(S)$ is presentable, this inner hom automatically exists. Moreover, for any $x\in C$, $K\in Shv(S)$ we have a canonical map 
\begin{equation}
\label{map_for_ULA}
K\otimes^! \und{\HOM}_C(c,x)\to \und{\HOM}_C(c, x\otimes K)
\end{equation}
coming from the natural morphism $K\otimes^! \und{\HOM}_C(c,x)\otimes c\to x\otimes K$. The above strictness requirement means that (\ref{map_for_ULA}) is an isomorphism for any $K\in Shv(S)$. 
\end{Def}

The following is obtained as in (\cite{Ras2}, B.5.1).
\begin{Rem} 
\label{Rem_3.6.3_now}
i) Let $C\in Shv(S)-mod$, $c\in C$. If $c$ is ULA then for any $M\in Shv(S)-mod$ and $m\in M^c$, the product $c\boxtimes_{Shv(S)} m$ is compact in $C\otimes_{Shv(S)} M$. \\
ii) If $L: \cC\to \cD$ is a map in $Shv(S)-mod$ admitting a $Shv(S)$-linear continuous right adjoint then $L$ sends ULA objects to ULA objects.
\end{Rem}

\begin{Lm} Let $j: U\hook{} S$ be an open immersion, $S\in\Sch_{ft}$, $C\in Shv(S)-mod$, $F\in C$ be ULA over $S$. Let $G\in Shv(U)$ such that $j_! G$ is defined. Then 
$$
j_!(G)\otimes^! F\,\iso\, j_!(G\otimes^! j^!F).
$$ 
In particular, for the partially defined left adjoint $j_!: C_U:=C\otimes_{Shv(S)} Shv(U)\to C$ to $j^!$ the object $j_!j^!F$ is defined. 
\end{Lm}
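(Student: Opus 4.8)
The plan is to establish the identity $j_!(G)\otimes^! F\,\iso\, j_!(G\otimes^! j^!F)$ by reducing it to the defining property of ULA objects. First I would consider the endofunctor $\cdot\otimes^! F: C\to C$; since $F$ is ULA over $S$, one knows that $\und{\HOM}_C(F, -)$ is continuous and $Shv(S)$-linear in the strict sense, i.e. (\ref{map_for_ULA}) is an isomorphism. I would want to turn this statement about the inner hom into the dual statement about the action, via adjunction: for any $x\in C$ and $K\in Shv(S)$ the map $K\otimes^!(x\otimes^! F)\to (K\otimes^! x)\otimes^! F$ is automatically an isomorphism (this is just associativity of the $Shv(S)$-action, so trivially true), but the content we need is that $\cdot\otimes^! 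F$ intertwines $j_!$ on both sides.

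The key step is the projection-formula-type argument. Recall that $j^!: Shv(S)\to Shv(U)$ has a partially defined left adjoint $j_!$, and more generally for $C\in Shv(S)-mod$ one gets $j_!: C_U\to C$ partially defined, left adjoint to $j^!: C\to C_U$. I would argue as follows: the functor $\cdot\otimes^! F: Shv(S)\to C$ admits the continuous right adjoint $\und{\HOM}_C(F,-): C\to Shv(S)$ (continuity is exactly the ULA hypothesis), and by the strictness part of the ULA condition this right adjoint is $Shv(S)$-linear. Hence by Remark~\ref{Rem_3.6.3_now}(ii) applied in the appropriate module category, or directly by a Beck--Chevalley / base-change argument, the square relating $\cdot\otimes^! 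F$ over $S$ and its analogue $\cdot\otimes^! j^!F$ over $U$ is left-adjointable: one has $j_!\comp(\cdot\otimes^! j^!F)\,\iso\,(\cdot\otimes^! F)\comp j_!$ as partially defined functors $C_U\to C$. Concretely, for $G\in Shv(U)$ for which $j_!G$ is defined, one checks that $j_!(G)\otimes^! F$ corepresents the same functor as $j_!(G\otimes^! j^!F)$: for $x\in C$,
$$
\und{\HOM}_C(j_!(G)\otimes^! F, x)\,\iso\,\und{\HOM}_{Shv(S)}(j_!G, \und{\HOM}_C(F,x))\,\iso\,\und{\HOM}_{Shv(U)}(G, j^!\und{\HOM}_C(F,x)),
$$
and then $j^!\und{\HOM}_C(F,x)\,\iso\,\und{\HOM}_{C_U}(j^!F, j^!x)$ because $j^!$ is symmetric monoidal and the ULA/strictness property localizes (as in Section~\ref{Sect_B.1.2}), giving $\und{\HOM}_{Shv(U)}(G\otimes^! j^!F, j^!x)\,\iso\,\und{\HOM}_C(j_!(G\otimes^! j^!F), x)$. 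Comparing the two sides yields the claimed isomorphism, and in particular shows $j_!(G\otimes^! j^!F)$ is defined whenever $j_!G$ is.

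The last sentence of the statement, that $j_!j^!F$ is defined for the partially defined left adjoint $j_!: C_U\to C$, then follows by taking $G=j^!F$ and $j_!G = j_!j^!F$: actually one must be slightly careful here since a priori it is $j_!(\omega_U)$ or $j_!$ of something that one controls; the cleanest route is to note that $j_! : Shv(U)\to Shv(S)$ is defined on all compact objects, run the argument with $G$ compact, and deduce the general case by the fact that ULA objects are preserved and the relevant functors are continuous. The main obstacle I anticipate is the bookkeeping around \emph{partial} definedness of $j_!$: one has to make sure the chain of adjunction isomorphisms above is valid on the nose (i.e. both sides are genuinely defined and the comparison map is the canonical one), which is where the strictness of the $Shv(S)$-linear structure on $\und{\HOM}_C(F,-)$ — rather than mere continuity — is essential, and where I would lean on the localization statement in Section~\ref{Sect_B.1.2} together with Remark~\ref{Rem_3.6.3_now} to propagate the ULA property to $j^!F$ over $U$.
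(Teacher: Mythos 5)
Your overall strategy coincides with the paper's: both arguments compute $\HOM_C(j_!G\otimes^! F, \tilde F)$ by the chain of adjunctions and identify it with $\HOM_{C_U}(G\otimes^! j^!F, j^!\tilde F)\,\iso\,\HOM_C(j_!(G\otimes^! j^!F), \tilde F)$, so that the two sides corepresent the same functor. The problem is that the one step where the ULA hypothesis actually does its work, namely the isomorphism $j^!\und{\HOM}_C(F,x)\,\iso\,\und{\HOM}_{C_U}(j^!F, j^!x)$, is asserted rather than proved: "because $j^!$ is symmetric monoidal and the ULA/strictness property localizes" is not an argument, and Section~\ref{Sect_B.1.2} is a statement about the functor category $\Fun_{Shv(S)}(C, Shv(S))$, not about inner homs of objects; likewise, knowing that $j^!F$ is ULA over $Shv(U)$ does not by itself compare $\und{\HOM}_{C_U}(j^!F,-)$ with $j^!\und{\HOM}_C(F,-)$. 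The paper derives this step concretely: by the projection formula in $Shv(S)$ one has $j_*j^!\und{\HOM}_C(F,K)\,\iso\,(j_*\omega_U)\otimes^!\und{\HOM}_C(F,K)$; by the strictness of (\ref{map_for_ULA}) applied to the (non-dualizable) object $j_*\omega_U$ this is $\und{\HOM}_C(F, (j_*\omega_U)\otimes^! K)\,\iso\,\und{\HOM}_C(F, j_*j^!K)\,\iso\,j_*\und{\HOM}_{C_U}(j^!F, j^!K)$, and one concludes by full faithfulness of $j_*$. It is exactly the evaluation of the strict $Shv(S)$-linearity at $j_*\omega_U$ that is needed; you should supply this (or an equivalent) argument rather than appeal to localization.

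Separately, your treatment of the last sentence of the statement is off: you may not take $G=j^!F$, since the lemma requires $G\in Shv(U)$ while $j^!F\in C_U$, and the proposed detour through compact objects and continuity is beside the point. The correct specialization is $G=\omega_U$, for which $j_!G$ is defined in all the sheaf theories under consideration; since $\omega_U\otimes^! j^!F\,\iso\,j^!F$, the lemma then says precisely that $j_!j^!F$ (for the partially defined $j_!: C_U\to C$) is defined and $j_!j^!F\,\iso\,j_!(\omega_U)\otimes^! F$.
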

\begin{proof} As in \cite{Ras2}, B.5.3. For $K\in C$, 
$$
j_*j^!\und{\HOM}_C(F, K)\,\iso\, (j_*\omega_U)\otimes^! \und{\HOM}_C(F,K)\,\iso\,  \und{\HOM}_C(F, (j_*\omega_U)\otimes^! K)
$$ 
in $C$. This gives $j^!\und{\HOM}_C(F, K)\,\iso\, \und{\HOM}_{C_U}(j^! F, j^! K)$.
Now for any $\tilde F\in C$, 
\begin{multline*}
\HOM_C(j_!G\otimes^! F, \tilde F)\,\iso\, \HOM_{Shv(S)}(j_!G, \und{\HOM}_C(F, \tilde F))\,\iso\,
\HOM_{Shv(U)}(G, j^!\und{\HOM}_C(F, \tilde F))\\ \iso\, \HOM_{Shv(U)}(G, \und{\HOM}_{C_U}(j^!F, j^!\tilde F))\,\iso\, \HOM_{C_U}(G\otimes^! j^!F, j^!\tilde F)\,\iso\,\HOM_C(j_!(G\otimes^! j^!F), \tilde F)
\end{multline*}
as desired. 
\end{proof}

 For $C\in Shv(S)-mod$ write $C^{ULA}\subset C$ for the full subcategory of ULA objects. This is a stable subcategory of $C$, closed under the action of $Shv(S)^{dualizable}$. Here $Shv(S)^{dualizable}\subset Shv(S)$ is the full subcategory of dualizable objects. Inspired by (\cite{Ras2}, B.6.1), we adapt the following.
\begin{Def}
\label{Def_ULA_module-category}
Let $S\in \Sch_{ft}$, $C\in Shv(S)-mod$. Say that $C$ is ULA if $C$ is generated as a $Shv(S)$-module category by ULA objects. That is, $C$ is generated by objects of the form $c\otimes m$ with $c\in C^{ULA}$ and $m\in Shv(S)^c$.
\end{Def}

 We have $C^{ULA}\subset C^c$. Indeed, for $\cD$-modules this is (\cite{Ras2}, B.4.2), and in the constructible context this follows from the fact that $\omega_S$ is compact. Moreover, If $C$ is ULA over $Shv(S)$ then for any $c\in C^{ULA}, K\in Shv(S)^c$, $K\otimes c\in C^c$. We see also that if $C$ is ULA then $C$ is compactly generated.  

\begin{Pp} 
\label{Pp_3.6.6}
Let $\cC\in Shv(S)-mod$ be ULA and $F: \cC\to\cD$ be a map in $Shv(S)-mod$. Then $F$ has a $Shv(S)$-linear continuous right adjoint iff $F(\cC^{ULA})\subset \cD^{ULA}$.

 More generally, assume $C_0\subset \cC^{ULA}$ is a full subcategory such that the objects of the form $c\otimes_{Shv(S)} F$ for $c\in C_0, F\in Shv(S)^c$ generate $\cC$. If $F(C_0)\subset \cD^{ULA}$ then $F$ has a $Shv(S)$-linear continuous right adjoint.
\end{Pp}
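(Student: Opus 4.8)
\textbf{Proof proposal for Proposition~\ref{Pp_3.6.6}.} The plan is to reduce the statement to the defining property of ULA objects, using that $\cC$ is generated under the $Shv(S)$-action by the subcategory $C_0$ (or by $\cC^{ULA}$ in the first assertion). First I would record the easy direction: if $F$ admits a $Shv(S)$-linear continuous right adjoint $F^R$, then by Remark~\ref{Rem_3.6.3_now}ii), $F$ preserves ULA objects, so $F(\cC^{ULA})\subset\cD^{ULA}$. For the substantive direction, assume $F(C_0)\subset\cD^{ULA}$. Since $\cC$ is compactly generated by the objects $c\otimes_{Shv(S)} K$ with $c\in C_0$, $K\in Shv(S)^c$, and each such object is compact in $\cC$ (as $c\in\cC^{ULA}\subset\cC^c$ and $K$ is compact, so $c\otimes K\in\cC^c$), to produce a continuous right adjoint $F^R$ it suffices to show that $F$ sends these generators to compact objects of $\cD$; this is exactly (\cite{G}, ch.~I.1, 5.5.3) or the standard criterion that a continuous functor from a compactly generated category has a continuous right adjoint iff it preserves compactness of a set of compact generators. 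But $F(c\otimes K)\,\iso\, F(c)\otimes^! K$ by $Shv(S)$-linearity, and $F(c)\in\cD^{ULA}$, so $F(c)\otimes^! K\in\cD^c$ because ULA objects stay compact after tensoring with a compact object of $Shv(S)$ (as noted just before the Proposition). Hence $F^R$ exists and is continuous.

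It remains to upgrade $F^R$ to a $Shv(S)$-linear functor, i.e.\ to verify the projection formula $F^R(d)\otimes^! K\,\iso\, F^R(d\otimes^! K)$ for $d\in\cD$, $K\in Shv(S)$. Here I would test against the generators: for $c\in C_0$, $L\in Shv(S)^c$,
$$
\Hom_{\cC}(c\otimes^! L, F^R(d)\otimes^! K)\;\iso\;\Hom_{Shv(S)}\!\big(L, \und{\HOM}_{\cC}(c, F^R(d)\otimes^! K)\big),
$$
and since $c$ is ULA, $\und{\HOM}_{\cC}(c, F^R(d)\otimes^! K)\,\iso\, K\otimes^!\und{\HOM}_{\cC}(c, F^R(d))$. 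By adjunction and the fact that $F(c)$ is ULA in $\cD$, $\und{\HOM}_{\cC}(c, F^R(d))\,\iso\,\und{\HOM}_{\cD}(F(c), d)$, and then $K\otimes^!\und{\HOM}_{\cD}(F(c), d)\,\iso\,\und{\HOM}_{\cD}(F(c), d\otimes^! K)\,\iso\,\und{\HOM}_{\cC}(c, F^R(d\otimes^! K))$, where the last step again uses $F\dashv F^R$. Reassembling, $\Hom_{\cC}(c\otimes^! L, F^R(d)\otimes^! K)\,\iso\,\Hom_{\cC}(c\otimes^! L, F^R(d\otimes^! K))$ compatibly in $c$, $L$; since such $c\otimes^! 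L$ generate $\cC$, the natural map $F^R(d)\otimes^! K\to F^R(d\otimes^! K)$ is an equivalence. This establishes the strict $Shv(S)$-linearity of $F^R$, and hence of $F$ as a morphism in $Shv(S)-mod$.

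The first, more general-looking assertion is the special case $C_0=\cC^{ULA}$ together with the remark that $\cC^{ULA}$ does generate $\cC$ when $\cC$ is ULA, so nothing extra is needed there; and the converse in the first assertion is the trivial direction recorded above. I expect the main obstacle to be purely bookkeeping: checking that all the adjunction and inner-hom identities above are natural in the object $c\in C_0$ and the compact $K, L\in Shv(S)^c$, so that the resulting equivalences glue to an honest natural transformation of functors rather than a mere pointwise isomorphism — this is where one must be careful to work $\infty$-categorically (e.g.\ testing on the full subcategory of generators and invoking that a natural transformation between continuous functors out of a compactly generated category is an equivalence iff it is so on compact generators, as in (\cite{Ly}, 2.5.3) or its colimit analogue). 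No genuinely new idea beyond the ULA formalism of \cite{Ras2} is required.
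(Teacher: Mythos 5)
Your proposal is correct, and it is essentially the argument the paper relies on: the paper's own proof is just the citation of (\cite{Ras2}, B.7.1) with the remark that Raskin's proof goes through in the constructible context, and your reconstruction — $c\otimes K$ compact for $c$ ULA and $K\in Shv(S)^c$, hence a continuous right adjoint $F^R$, then testing the lax $Shv(S)$-linearity of $F^R$ on the generators via the ULA inner-Hom identities and the identification $\und{\HOM}_{\cC}(c,F^R(d))\,\iso\,\und{\HOM}_{\cD}(F(c),d)$ — is exactly that argument. The compatibility you flag (that your chain of identifications is induced by the canonical map $K\otimes F^R(d)\to F^R(K\otimes d)$) is indeed the only point needing care, and it holds because every map in the chain is built from the (co)units of $F\dashv F^R$, the strict linearity of $F$, and the ULA structure maps, so the canonical map is an equivalence after $\Hom(c\otimes L,-)$ for all generators and hence an equivalence.
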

\begin{proof} For $\cD$-modules this is (\cite{Ras2}, B.7.1), and the proof of \select{loc.cit.} holds for constructible context also.
\end{proof}

\begin{Pp} 
\label{Pp_3.7.8_devissage_using_ULA_preservation}
i) Let $S\in\Sch_{ft}$, $j: U\hook{} S$ be an open subscheme, the complement to the closed immersion $i: Z\to S$. Let $f: C\to D$ be a morphism in $Shv(S)-mod$. Assume $f$ admits a $Shv(S)$-linear continuous right adjoint. Then $f$ is an isomorphism iff $f$ induces equivalences
$$
C\otimes_{Shv(S)} Shv(U)\to D\otimes_{Shv(S)} Shv(U), \;\;  C\otimes_{Shv(S)} Shv(Z)\to D\otimes_{Shv(S)} Shv(Z)
$$

\smallskip\noindent
ii) If $C$ is ULA over $Shv(S)$ and $f$ preserves ULA objects then the conclusion of i) holds.
\end{Pp}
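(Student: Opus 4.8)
The statement is a local-to-global devissage principle, and the plan is to reduce (ii) to (i) and then prove (i) by a standard recollement argument. For part (i), recall that we have the short exact sequence of $Shv(S)$-module categories attached to the open-closed decomposition $U \overset{j}{\hookrightarrow} S \overset{i}{\hookleftarrow} Z$: tensoring the adjoint pairs $j^!: Shv(S)\leftrightarrows Shv(U): j_*$ and $i_!: Shv(Z)\leftrightarrows Shv(S): i^!$ by $C$ (resp. $D$) over $Shv(S)$, one gets for any $K\in C$ a fibre sequence $i_!i^!K\to K\to j_*j^!K$ in $C$, compatible with $f$ since $f$ is $Shv(S)$-linear. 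Thus $f$ fits into a morphism of fibre sequences, and by the two-out-of-three property for fibre sequences in stable categories it suffices to check that $f$ induces an equivalence on the $i_!i^!$-part and the $j_*j^!$-part. The $j_*j^!$-part is controlled by $C\otimes_{Shv(S)}Shv(U)\to D\otimes_{Shv(S)}Shv(U)$ since $j^!$ is (the extension of scalars along $Shv(S)\to Shv(U)$ composed with) the corresponding localization, and $j_*$ is conservative on its essential image; here one uses that $f$ admits a $Shv(S)$-linear continuous right adjoint so that the right adjoint also commutes with $j_*,j^!$ and the relevant base changes. The $i_!i^!$-part is controlled by $C\otimes_{Shv(S)}Shv(Z)\to D\otimes_{Shv(S)}Shv(Z)$ via the base-change isomorphism $Shv(Z)\otimes_{Shv(S)}C\,\iso\, C\otimes_{Shv(S)}Shv(Z)$ (Section~\ref{Sect_B.1.1}), using that $i_!$ is fully faithful. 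Combining, $f$ is an equivalence.

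For part (ii), the only extra input needed is the existence of a $Shv(S)$-linear continuous right adjoint to $f$, after which (i) applies verbatim. This is exactly the content of Proposition~\ref{Pp_3.6.6}: since $C$ is ULA over $Shv(S)$, it is generated as a $Shv(S)$-module by its ULA objects, and the hypothesis that $f$ preserves ULA objects means $f(C^{ULA})\subset D^{ULA}$; hence $f$ has a $Shv(S)$-linear continuous right adjoint. One should also record that the hypotheses of (ii) pass to the open and closed strata: $C\otimes_{Shv(S)}Shv(U)$ is ULA over $Shv(U)$ and $f\otimes_{Shv(S)}Shv(U)$ preserves ULA objects (ULA objects restrict to ULA objects under $\cdot\otimes_{Shv(S)}Shv(U)$ by Remark~\ref{Rem_3.6.3_now} ii), applied to the colocalization $j^!$, and similarly for the closed part via the proper pushforward $i_!$), but this is only needed if one wants to iterate the statement along a stratification; for the bare assertion (ii) it is enough to invoke (i).

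\textbf{Main obstacle.} The delicate point is the treatment of the open part: in the constructible context $Shv(U)\otimes_{Shv(S)}Shv(U)\to Shv(U\times_S U)$ and, more generally, base change for $Shv$-modules along $j$ need not be an equivalence, so one must argue with the recollement \emph{internal} to $C$ and $D$ rather than by naively tensoring. The clean way is to work with the fibre sequence $i_!i^!K\to K\to j_*j^!K$ directly inside $C$ (which is legitimate since $C\in Shv(S)$-mod and $Shv(S)$ carries this recollement), and to observe that $j^!$ and $j_*$ on $C$ are obtained from the $Shv(S)$-linear recollement structure, so that $f$ being $Shv(S)$-linear (with a $Shv(S)$-linear right adjoint) automatically intertwines all four functors $i_!,i^!,j^!,j_*$. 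With that observation in place the argument is formal, and I expect no further difficulty. I would therefore spend most of the write-up making precise that the induced map on $j_*j^! C\to j_*j^! D$ is an equivalence exactly when $C\otimes_{Shv(S)}Shv(U)\to D\otimes_{Shv(S)}Shv(U)$ is, citing Section~\ref{Sect_B.1.1} and Section~\ref{Sect_B.1.2} for the relevant self-duality and localization compatibilities.
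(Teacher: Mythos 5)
Your overall strategy is the intended one: the paper's proof of (ii) is exactly your reduction — Proposition~\ref{Pp_3.6.6} produces the $Shv(S)$-linear continuous right adjoint from the ULA hypotheses, after which (i) applies — and for (i) the paper simply invokes the recollement devissage of (\cite{Ras2}, B.8.1), which is the argument you sketch.

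One step, as written, does not quite parse and obscures where the right adjoint actually does its work: you say that ``$f$ fits into a morphism of fibre sequences, and by two-out-of-three it suffices to check that $f$ induces an equivalence on the $i_!i^!$-part and the $j_*j^!$-part.'' Since $f$ goes between two different categories, there is no single fibre sequence to apply two-out-of-three to; and even the (correct) observation that $f$ restricts to equivalences $i_!C_Z\to i_!D_Z$ and $j_*C_U\to j_*D_U$ — which needs only $Shv(S)$-linearity of $f$, because $f$ then intertwines $i_!,i^!,j^!,j_*$ — is not by itself sufficient: one must still compare the cross mapping spaces such as $\Map_C(j_*u,\, i_!a)$ with $\Map_D(j_*f_U u,\, i_!f_Z a)$, and this is precisely where the hypothesis on the right adjoint enters. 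The clean implementation is to run the devissage on the unit $\id_C\to f^Rf$ and the counit $ff^R\to \id_D$: these are morphisms of exact continuous $Shv(S)$-linear endofunctors, so by the fibre sequence $i_!i^!K\to K\to j_*j^!K$ (valid in any $Shv(S)$-module, as you note) it suffices to check they are isomorphisms on objects of the form $i_!x$ and $j_*y$; there, using the $Shv(S)$-linearity of both $f$ and $f^R$, they identify with $i_!$ (resp.\ $j_*$) applied to the unit/counit of the tensored adjunctions $(f\otimes\id_{Shv(Z)}, f^R\otimes\id_{Shv(Z)})$ and $(f\otimes\id_{Shv(U)}, f^R\otimes\id_{Shv(U)})$, which are equivalences by hypothesis. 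With this fix your write-up coincides with the paper's (i.e.\ Raskin's) argument; the reduction of (ii) to (i) via Proposition~\ref{Pp_3.6.6} is exactly right, and your caution about not naively tensoring along $j$ in the constructible context is reasonable but is not the main delicacy here.
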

\begin{proof}
as in (\cite{Ras2}, B.8.1)
\end{proof}

\sssec{} Let $C\in Shv(S)-mod$. Since $C^{ULA}\subset C$ is closed under the action of $\Vect^c$, $\Ind(C^{ULA})\in\DGCat_{cont}$. Let $\epsilon: \Ind(C^{ULA})\to C$ be the continuous functor extending the inclusion $C^{ULA}\hook{} C$. Note that $\epsilon$ is fully faithful. Let $h: Shv(S)\otimes\Ind(C^{ULA})\to C$ be the $Shv(S)$-linear functor sending $\omega_S\boxtimes M$ to $\epsilon(M)$. 

 Note that $Shv(S)\otimes \Ind(C^{ULA})$ is compactly generated by objects of the form $K\boxtimes c$ with $K\in Shv(S)^c, c\in C^{ULA}$, and $h(K\boxtimes c)=K\otimes c\in C^c$. So, $h$ has a continuous right adjoint $h^R$. 
 
 Write $\Fun_{ex, e}((C^{ULA})^{op}, Shv(S))$ for the category of exact $\Vect^c$-linear functors from $(C^{ULA})^{op}$ to $Shv(S)$. One has canonically
\begin{multline*}
Shv(S)\otimes \Ind(C^{ULA})\,\iso\,\Fun_{cont, e}(\Ind((C^{ULA})^{\vee}, Shv(S))\\
\iso\, \Fun_{ex, e}((C^{ULA})^{op}, Shv(S)),
\end{multline*}
here $\Fun_{cont, e}$ denotes the category of exact continious $\Vect$-linear functors.  
The functor 
$$
h^R: C\to \Fun_{ex, e}((C^{ULA})^{op}, Shv(S))
$$ 
sends $c$ to the functor $c_1\mapsto \und{\HOM}_C(c_1, c)$, it is $Shv(S)$-linear. Note that $C$ is ULA over $Shv(S)$ iff $h^R$ is conservative.

 Applying $\Fun_{Shv(S)}(?, Shv(S))$ to the adjoint pair $(h, h^R)$ in $Shv(S)-mod$, we get an adjoint pair
$$
\bar h: \Ind((C^{ULA})^{op})\otimes Shv(S)\leftrightarrows \Fun_{Shv(S)}(C, Shv(S)): \bar h^R
$$
 
 Note that if $c\in C^{ULA}$ then $\bar h(c\boxtimes \omega_S)$ is the functor $c_1\mapsto\und{\HOM}_C(c, c_1)$. It is easy to see that $\bar h(c\boxtimes \omega_S)$ is a ULA object of $\Fun_{Shv(S)}(C, Shv(S))$. 
 
\begin{Lm} 
\label{Lm_C^vee(X)_is_still_ULA}
If $C$ is ULA over $Shv(S)$ then $\Fun_{Shv(S)}(C, Shv(S))$ is also ULA over $Shv(S)$.
\end{Lm}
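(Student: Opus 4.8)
\textbf{Proof plan for Lemma~\ref{Lm_C^vee(X)_is_still_ULA}.}
The goal is to show that $\Fun_{Shv(S)}(C, Shv(S))$ is generated as a $Shv(S)$-module category by ULA objects, knowing that the same holds for $C$. The plan is to produce an explicit family of ULA objects of $\Fun_{Shv(S)}(C, Shv(S))$ and show they generate. The natural candidates are exactly the objects $\bar h(c\boxtimes\omega_S)$ already introduced just above the statement, namely the functors $c_1\mapsto \und{\HOM}_C(c, c_1)$ for $c\in C^{ULA}$, together with their twists by compact objects of $Shv(S)$. Two things must be checked: first, that each such object is ULA over $Shv(S)$ (this is asserted in the paragraph preceding the lemma, and I would record its short proof: for $c\in C^{ULA}$ the functor $\und{\HOM}_{\Fun_{Shv(S)}(C,Shv(S))}(\bar h(c\boxtimes\omega_S), -)$ evaluated on a functor $\Phi\in\Fun_{Shv(S)}(C, Shv(S))$ is $\Phi(c)$, since $\bar h(c\boxtimes\omega_S)$ corepresents ``evaluation at $c$'', and $\Phi\mapsto\Phi(c)$ is manifestly continuous and strictly $Shv(S)$-linear because the $Shv(S)$-action on $\Fun_{Shv(S)}(C, Shv(S))$ is pointwise); second, that these objects generate $\Fun_{Shv(S)}(C, Shv(S))$ as a $Shv(S)$-module.

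For the generation step I would argue via the adjoint pair $\bar h\colon \Ind((C^{ULA})^{op})\otimes Shv(S)\leftrightarrows \Fun_{Shv(S)}(C, Shv(S))\colon \bar h^R$ constructed above, obtained by applying $\Fun_{Shv(S)}(-, Shv(S))$ to the pair $(h, h^R)$. Since $C$ is ULA over $Shv(S)$, the right adjoint $h^R\colon C\to \Fun_{ex,e}((C^{ULA})^{op}, Shv(S))$ is conservative, as noted in the text. Dualizing, $\bar h^R$ is then conservative as well: passing to $Shv(S)$-linear duals turns the counit/unit conditions for $(h, h^R)$ into those for $(\bar h, \bar h^R)$, and conservativity of $h^R$ forces $h$ to generate $C$ under colimits, which after dualizing says $\bar h^R$ is conservative (equivalently $\bar h$ generates $\Fun_{Shv(S)}(C, Shv(S))$ under colimits). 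Concretely I would invoke (\cite{G}, ch.~I.1, 5.4.3): $\bar h$ generates the target under colimits iff $\bar h^R$ is conservative. Hence every object of $\Fun_{Shv(S)}(C, Shv(S))$ is a colimit of objects in the essential image of $\bar h$, and the essential image of $\bar h$ is generated under the $Shv(S)$-action and colimits by the objects $\bar h(c\boxtimes\omega_S)$ with $c\in C^{ULA}$ (since $K\boxtimes c$ for $K\in Shv(S)^c$, $c\in C^{ULA}$ generate $\Ind((C^{ULA})^{op})\otimes Shv(S)$ and $\bar h(K\boxtimes c)=K\otimes\bar h(c\boxtimes\omega_S)$). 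Therefore the ULA objects $\bar h(c\boxtimes\omega_S)$ generate $\Fun_{Shv(S)}(C, Shv(S))$ as a $Shv(S)$-module category, which is the definition of $\Fun_{Shv(S)}(C, Shv(S))$ being ULA over $Shv(S)$.

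The main obstacle I anticipate is the bookkeeping around dualization: one must be careful that applying $\Fun_{Shv(S)}(-, Shv(S))$ to the adjoint pair $(h, h^R)$ really does yield an adjoint pair $(\bar h, \bar h^R)$ with $\bar h$ the functor described, and that conservativity of $h^R$ is correctly transported to conservativity of $\bar h^R$. This is exactly the kind of statement handled by (\cite{Ly}, 9.2.32) and (\cite{Ly}, 9.2.39) together with (\cite{G}, ch.~I.1, 6.3.4) on duality of functor categories; since $\Ind(C^{ULA})$ and $Shv(S)$ are dualizable in $\DGCat_{cont}$ and $Shv(S)$-modules respectively, the relevant categories are dualizable and the duality is well-behaved, so no finiteness pathology arises. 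The remaining verifications — that the $Shv(S)$-action on $\Fun_{Shv(S)}(C, Shv(S))$ is pointwise, that $\bar h(c\boxtimes\omega_S)$ corepresents evaluation at $c$, and that ``evaluation at $c$'' is strictly $Shv(S)$-linear — are routine unwindings of definitions, and following (\cite{Ras2}, B.5--B.7) they go through verbatim in the constructible context since only the formal properties of $\und{\HOM}$ and of compact/dualizable generation are used.
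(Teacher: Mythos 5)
Your proposal is correct and follows essentially the same route as the paper: exhibit the objects $\bar h(c\boxtimes\omega_S)$, $c\in C^{ULA}$, as ULA generators and reduce the lemma to conservativity of $\bar h^R$ via the generation-under-colimits criterion. The only (cosmetic) divergence is that the paper checks conservativity directly — identifying $\bar h^R(f)$ with the restriction of $f$ to $C^{ULA}$, so that $\bar h^R(f)=0$ forces $f=0$ because $f$ is continuous and $Shv(S)$-linear and $C$ is generated by the objects $K\otimes c$ — whereas your appeal to ``dualizing'' is a slightly loose shortcut (conservativity is not literally transported along duals, and $C$ need not be dualizable in $Shv(S)-mod$), but it unwinds to exactly this direct check, so no real gap remains.
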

\begin{proof}
It suffices to show that $\bar h$ generates the target under colimits, that is, $\bar h^R$ is conservative. Rewrite $\bar h^R$ as a functor $\Fun_{Shv(S)}(C, Shv(S))\to \Fun_{ex, e}(C^{ULA}, Shv(S))$ sending $f: C\to Shv(S)$ to its restriction to $C^{ULA}$. 
Let now $f\in \Fun_{Shv(S)}(C, Shv(S))$ such that $\bar h^R(f)=0$, that is, $f(c)=0$ for any $c\in C^{ULA}$. Since $C$ is generated by objects of the form $K\otimes c$ with $K\in Shv(S), c\in C^{ULA}$, we see that $f=0$. Thus, $\bar h^R$ is conservative.
\end{proof} 
 
\sssec{} 
\label{Sect_B.1.12}
We need the following fact. Let $Y$ be an ind-scheme of ind-finite type. Assume given a filtration by closed ind-subschemes $\ldots \bar Y_2\hook{}\bar Y_1\hook{}\bar Y_0=Y$. Set $Y_m=\bar Y_m-\bar Y_{m+1}$ for $m\ge 0$. Assume $\bar Y_m$ is the closure of $Y_m$ for all $m\ge 0$. So, $Y_0$ is open and dense in $Y$. Assume the inclusion $j_m: Y_m\to Y$ is affine. 

 We equip $Shv(Y)$ with the perverse t-structure. In details, assume $Y\,\iso\, \colim_{i\in I} Y_i$, where $I$ is small filtered, $Y_i\in\Sch_{ft}$ and for $i\to j$ the map $Y_i\to Y_j$ is a closed immersion. Then $Shv(Y)^{\le 0}\subset Shv(Y)$ is the smallest full subcategory containing $Shv(Y_i)^{\le 0}$ for all $i$, closed under extensions and colimits. 

Set $Y_{[m-1,m]}=\bar Y_{m-1}-\bar Y_{m+1}$. Let $\cF\in Shv(Y)^{\heartsuit}$ such that $j_m^!\cF[m]$ is in the heart of the t-structure for any $m\ge 0$. Set 
$$
\cF_m=(j_m)_*j_m^!\cF[m]\in Shv(Y)^{\heartsuit}
$$ 
The exact triangle 
$$
(j_m)_*j_m^!\cF\to \cF\mid^!_{Y_{[m-1,m]}} \to (j_{m-1})_*j_{m-1}^!\cF,
$$
which we extend to $Y$ by $*$-extension, gives the transition maps in the following sequence
$$
0\to \cF\to \cF_0\to \cF_1\to\ldots
$$
in $Shv(Y)^{\heartsuit}$. Then the latter sequence is exact.

\sssec{} 
\label{Sect_B.1.15}
Let $Y\toup{f} Z\gets Y'$ be a diagram in $\Sch_{ft}$, where $f$ is finite. Then by (\cite{Ly4}, Theorem~0.4.18), the natural functor $Shv(Y)\otimes_{Shv(Z)} Shv(Y')\to Shv(Y\times_Z Y')$ is an equivalence (compare also with Theorem~\ref{Thm_Kunneth_formula}). This is easier for $\cD$-modules than in the constructible context. This implies formally that the same K\"unneth formula holds if we only assume $Y'\in\PreStk_{lft}$. 

\sssec{} 
\label{Sect_B.1.16}
Let $f: Y\to Z$ be a finite morphism in $\Sch_{ft}$. Then $Shv(Y)$ is canonically self-dual in $Shv(Z)-mod$. 

 The counit is the functor $c: Shv(Y)\otimes_{Shv(Z)} Shv(Y)\to Shv(Z)$ sending $K_1, K_2\in Shv(Y)$ to $f_*(K_1\otimes^! K_2)$. The unit is the functor 
$$
u: Shv(Z)\to Shv(Y\times_Z Y)\,\iso\, Shv(Y)\times_{Shv(Z)} Shv(Y)
$$ 
sending $K$ to $(q^!K)\otimes^!\!\vartriangle_*\!\omega$, where $q: Y\times_Z Y\to Z$ is the projection, and $\vartriangle: Y\to Y\times_Z Y$ is the diagonal. This is well-known for $\cD$-modules, and in the constructible context this follows from the previous subsection. 

\ssec{Factorization categories} 
\label{Sect_B2_factorization}

\sssec{} For the convenience of the reader, we recall the notion of a (weak) factorization sheaf of categories on $\Ran$. 

 For $J\in fSets$ let $\Ran^J_d\subset \Ran$ be the open locus defined as follows. For $S\in\Sch_{ft}$ an $S$-point of $\Ran^J$ given by $I_j\subset \Map(S, X), j\in J$ belongs to $\Ran^J_d$ if for every $j_1\ne j_2$ and any $i_1\in I_{j_1}, i_2\in I_{j_2}$ the corresponding two maps $S\to X$ have non-intersecting images.
 
 For $I\in fSets$ we have the sum map $u_I: \Ran^I_d\to \Ran$. For a map $I\to J$ in $fSets$ we have the commutative diagram
$$
\begin{array}{ccc} 
\Ran^I_d & \toup{\prod_j u_{I_j}} & \Ran^J_d\\
& \searrow\lefteqn{\scriptstyle u_I} & \downarrow\lefteqn{\scriptstyle u_J}\\
&& \Ran
\end{array}
$$
Let $C$ be a sheaf of categories on $\Ran$. We say that it is equipped with a \select{weak factorization structure} if we are given 
\begin{itemize}
\item for any $J\in fSets$ a functor in $ShvCat(\Ran^J_d)$
\begin{equation}
\label{functor_def_of_weak_fact_cat}
\underset{j\in J}{\boxtimes} C\mid_{\Ran^J_d}\to u_J^!C
\end{equation} 
\item for a map $(I\to J)$ in $fSets$ a commutativity datum for the diagram
$$
\begin{array}{ccc}
\underset{i\in I}{\boxtimes} C\mid_{\Ran^I_d} & \to & ((\prod_j u_{I_j})^!(\underset{j\in J}\boxtimes C))\mid_{\Ran^I_d}\\
& \searrow &\downarrow\\
&& u_I^! C\mid_{\Ran^I_d}
\end{array}
$$
in $ShvCat(\Ran^I_d)$;
\item a homotopy-coherent system of compatibilities for higher order compositions.
\end{itemize}
(So, our weak factorization structure is not unital). A more precise definition is given in (\cite{R}, Section~6). 

 We say that a weak factrorization structure as above is a \select{factorization structure} if for any $J\in fSets$ the map (\ref{functor_def_of_weak_fact_cat}) is an isomorphism. 
 
\sssec{} Let $Z\to\Ran$ be a factorization prestack over $\Ran$ in the sense of (\cite{GLys}, 2.2.1) with $Z\in\PreStk_{lft}$. We define a sheaf of categories $Shv(Z)_{/\Ran}$ by a compatible system of objects $Shv(X^I\times_{\Ran} Z)\in Shv(X^I)-mod$ for $I\in fSets$. Since $\Ran\,\iso\, \underset{I\in fSets^{op}}{\colim} X^I$, and for any map $(I\to J)$ in $fSets$ the transition map $X^J\to X^I$ is a closed immersion, this is a compatible system by Section~\ref{Sect_B.1.1}. We used that
$$
ShvCat(\Ran)\,\iso\, \underset{I\in fSets}{\lim} ShvCat(X^I)
$$
and $Shv(X^I)-mod\,\iso\, ShvCat(X^I)$. 

\sssec{} The sheaf $Shv(Z)_{/\Ran}$ has a natural weak factorization structure. Namely, for $J\in fSets$ we have the factorization isomorphisms
\begin{equation}
\label{iso_factorization_of_Z_prestack}
Z^J\times_{\Ran^J} \Ran^J_d\,\iso\, Z\times_{\Ran} \Ran^J_d
\end{equation}
Now for any $S\in\Sch_{ft}$ and $S\to\Ran^J_d$ given by $I_j\subset \Map(S,X), j\in J$
we get the functor
$$
\underset{j, Shv(S)}{\otimes} Shv(S\times_{I_j, \Ran} Z)\to Shv(\underset{j, S}{\prod} (S\times_{I_j, \Ran} Z))= Shv(S\times_{\Ran^J} Z^J)\toup{(\ref{iso_factorization_of_Z_prestack})} Shv(S\times_{I,\Ran} Z).
$$
They are equipped with the corresponding higher compatibilities. 

\sssec{} In (\cite{GLys}, 2.2.3) we noticed the following. If in addition the sheaf theory is that of $\cD$-modules, assume that for any $I\in fSets$, $Shv(X^I\times_{\Ran} Z)$ is dualizable in $\DGCat_{cont}$. Then the above weak factorization structure on $Shv(Z)_{/\Ran}$ is strict. This does not seem to be the case in the constructible context.

\section{Some categorical notions}
\label{Sect_Some categorical notions}

\begin{Def} 
\label{Def_zero-cofinal}
Let $j: C^0\subset C$ be a full embedding in $1-\Cat$. We say that $j$ is \select{zero-cofinal} if for any $c\in C$ not lying in $C^0$ the category $C^0\times_C (C_{/c})$ is empty. 
\end{Def}

\sssec{} 
\label{Sect_C.0.2}
Let $E\in 1-\Cat$ admit an initial object $0\in E$. Then for any $h^0: C^0\to E$ the left Kan extension $h$ of $h^0$ along $j$ exists. Moreover, for $c\in C$ not lying in $C^0$ one has $h(c)\,\iso\, 0$. 

So, if $f: C\to E$ is a functor such that for each $c\in C$ not lying in $C^0$ one has $f(c)\,\iso\, 0$ then the natural map $\underset{c\in C^0}{\colim} f(c)\to \underset{c\in C}{\colim} f(c)$ is an isomorphism (wherever either side exists).  


\begin{Lm} 
\label{Lm_C.0.3}
Let $C\in 1-\Cat$, $C_0, C_1\subset C$ be full suncategories and $C_{01}=C_0\cap C_1$. Assume each object of $C$ lies either in $C_0$ or in $C_1$. Assume the following: given a map $c\to c'$ in $C$ if $c'\in C_0$ (resp., $c'\in C_1$) then $c\in C_0$ (resp., $c\in C_1$). Then $C$ is the colimit of the diagram $C_0\gets C_{01}\to C_1$ in $1-\Cat$.
\end{Lm}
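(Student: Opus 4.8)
The statement is a purely $1$-categorical gluing assertion: given $C_0,C_1\subset C$ full, covering $C$ on objects, and each downward-closed (i.e.\ a morphism into $C_i$ has source in $C_i$), then $C\,\iso\,C_0\cup_{C_{01}}C_1$ in $1\text{-}\Cat$. The plan is to check the universal property of the pushout directly against an arbitrary test category $E$, i.e.\ to show that the natural functor
$$
\Fun(C,E)\to \Fun(C_0,E)\times_{\Fun(C_{01},E)}\Fun(C_1,E)
$$
is an equivalence of $\infty$-categories. Since this is a statement about the functor being fully faithful and essentially surjective, I would prove these two properties separately, exploiting the hypothesis to build an inverse by hand.

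First I would construct the inverse on objects. Given a pair $(F_0,F_1)$ together with an identification $F_0|_{C_{01}}\,\iso\,F_1|_{C_{01}}$, define $F\colon C\to E$ by $F(c)=F_i(c)$ whenever $c\in C_i$; the hypothesis that every object lies in $C_0$ or $C_1$ and that the restrictions agree on $C_{01}$ makes this well defined on objects. For a morphism $\alpha\colon c\to c'$, the key observation is the downward-closedness: if $c'\in C_i$ then $c\in C_i$ as well, so the \emph{entire arrow} $\alpha$ lives in $C_i$, and we may set $F(\alpha)=F_i(\alpha)$. The same argument applies to any finite string of composable arrows ending at a common target, and more generally to any functor from a category with a terminal object; this is what upgrades the assignment to a bona fide functor of $\infty$-categories rather than just a map of homotopy categories. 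Concretely, I would phrase this by noting that the simplices of the nerve $N(C)$ are covered by $N(C_0)$ and $N(C_1)$, and a nondegenerate $n$-simplex $\sigma\colon[n]\to C$ has its final vertex in some $C_i$, hence (by downward-closedness applied along the arrows $\sigma(k)\to\sigma(n)$) factors through $C_i$; thus $N(C)=N(C_0)\cup N(C_1)$ with $N(C_0)\cap N(C_1)=N(C_{01})$ \emph{as simplicial sets}, so $N(C)$ is literally the pushout $N(C_0)\sqcup_{N(C_{01})}N(C_1)$ in simplicial sets.

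Given that last reduction, the cleanest route is: once one has $N(C)\,\iso\,N(C_0)\sqcup_{N(C_{01})}N(C_1)$ in $\on{Set}_\Delta$, one invokes that $N(C_{01})\hookrightarrow N(C_i)$ are monomorphisms of simplicial sets, hence cofibrations in the Joyal model structure, so this ordinary pushout of simplicial sets computes the homotopy pushout, i.e.\ the pushout in $1\text{-}\Cat=\Cat_\infty$. I would present the proof in this order: (i) verify $N(C)=N(C_0)\cup N(C_1)$ and $N(C_0)\cap N(C_1)=N(C_{01})$ inside $N(C)$ as sub-simplicial-sets, using the two hypotheses on the final vertex of a simplex; (ii) conclude the square of nerves is a pushout square of simplicial sets; (iii) observe the horizontal (or vertical) maps are monomorphisms, hence cofibrations in the Joyal structure, so the square is a homotopy pushout and therefore a pushout in $\Cat_\infty$; (iv) translate back through the (fully faithful, colimit-respecting after this identification) nerve functor to get the pushout statement in $1\text{-}\Cat$ as stated.

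The main obstacle is step (i): one must be careful that the decomposition of simplices is exactly on the nose, and in particular that there is no issue with degenerate simplices or with the two ``halves'' overlapping in more than $N(C_{01})$. The content there is precisely the downward-closedness hypothesis — without it a simplex with final vertex in $C_0$ could traverse objects outside $C_0$, and the decomposition $N(C)=N(C_0)\cup N(C_1)$ would fail. A secondary subtlety is making sure that ``pushout in $1\text{-}\Cat$'' in the sense used in the paper (which throughout works with $\infty$-categories) agrees with the homotopy pushout in the Joyal model structure; this is standard (e.g.\ via \cite{HA} or the cited \cite{PT}, which is exactly the statement that nerves carry such pushouts of simplicial sets along cofibrations to pushouts of $\infty$-categories), so I would simply cite it rather than reprove it. I do not expect any essential difficulty beyond bookkeeping once (i) is in place.
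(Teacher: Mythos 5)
Your proof is correct, but it takes a genuinely different route from the paper's. You work in the quasi-category model: the two hypotheses force every simplex of $C$ to lie in $C_0$ or in $C_1$ (look at its final vertex, use downward-closedness along the edges to that vertex, then fullness), with overlap exactly $C_{01}$, so $C$ is the strict pushout $C_0\sqcup_{C_{01}}C_1$ of simplicial sets; the inclusions are monomorphisms, hence cofibrations for the Joyal model structure, so the strict pushout is a homotopy pushout and therefore a pushout in $1-\Cat$. The paper argues model-independently: the hypotheses say exactly that $C_0, C_1, C_{01}$ are sieves in $C$, i.e.\ cartesian fibrations over $C$ with fibres $*$ or $\emptyset$; under straightening $\Cart_{/C}\,\iso\,\Fun(C^{op},1-\Cat)$ the pushout is computed pointwise, where at each object it is a span of copies of $*$ and $\emptyset$ with colimit $*$ (this is where ``every object lies in $C_0$ or $C_1$'' enters), and the forgetful functors $\Cart_{/C}\to (1-\Cat)_{/C}\to 1-\Cat$ preserve this contractible colimit. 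Your route is more elementary and concrete but model-dependent; the paper's avoids point-set input at the price of invoking straightening. Three small repairs to your write-up: since $C\in 1-\Cat$ is an $\infty$-category in this paper's conventions, drop the nerve and argue directly with the underlying simplicial set (the vertex argument applies to all simplices, degenerate or not); the fact needed in your step (iii) is left properness of the Joyal model structure plus the identification of colimits in $1-\Cat$ with homotopy colimits, for which you should cite \cite{HTT} rather than \cite{PT} (the latter is used in the paper only for decomposing colimits); and the by-hand definition of $F$ on objects and arrows in your first paragraph does not by itself produce a functor of $\infty$-categories --- it is your simplicial-set decomposition that actually carries the proof.
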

\begin{proof} The argument is due to Nick Rozenblyum\footnote{We are grateful to N. Rozenblyum for explaining us this proof.}. Write $\Cart_{/C}$ for the $\infty$-category, whose objects are cartesian fibrations $Y\to C$ over $C$, and morphisms from $Y_1\to C$ to $Y_1\to C$ are functors $Y_1\to Y_2$ over $C$ sending a $C$-cartesian arrow to a $C$-cartesian arrow. By our assumption, the inclusions $C_0\to C, C_1\to C, C_{01}\to C$ are cartesian fibrations. The forgetful functor $1-\Cat_{/C}\to C$ preserves all limits and contractible colimits. The forgetful functor $\Cart_{/C}\to 1-\Cat_{/C}$ preserves all limits and colimits. So, it suffices to show that $C$ is the colimit of the diagram $C_0\gets C_{01}\to C_1$ in $\Cart_{/C}$. 

 By straightening, one has an equivalence $\Cart_{/C}\,\iso\, \Fun(C^{op}, 1-\Cat)$. The functor $C_0\hook{} C$ corresponds to the functor $C^{op}\to 1-\Cat$ sending $c$ to $*$ (resp., to $\emptyset$) for  $c\in C_0$ (resp., $c\notin C_0$), and similarly for $C_1$. Our claim follows.
\end{proof}

\begin{Lm} 
\label{Lm_binary_products_give_contractibility}
Let $C\in 1-\Cat$ admit binary products. Then $C$ is contractible.
\end{Lm}
\begin{proof}
First, the geometric realization functor $1-\Cat\to \Spc$ extends to the 2-category $1-\Cat$ as defined in (\cite{G}, A.1, 2.4). Let $z\in C$. Consider the functor $h: C\to C$, $x\mapsto x\times z$. The projections $x\gets x\times z\to z$ define natural transofrmations $\id\gets h\to h_z$, where $h_z$ is the contant functor with value $z$. This implies our claim.
\end{proof}

\sssec{Generalization of the Day convolution} 
\label{Sect_C.0.3}
Let $A\in CAlg(\DGCat_{cont})$. Let $B,D\in \Alg(A-mod)$. Assume that the product map $m: B\otimes_A B\to B$ admits a right adjoint $m^R$ in $A-mod$, and $1_B: A\to B$ admits a right adjoint in $A-mod$. Assume $B$ 
dualizable in $A-mod$, write $B^{\vee}$ for its dual in $A-mod$. 

 Then $B$ is naturally an object of $CoCAlg(A-mod)$ with the coproduct $m^R: B\to B\otimes_A B$ and the counit $1_B^R: B\to A$. In turn, $B^{\vee}\in Alg(A-mod)$ with the product and the unit
$$
(m^R)^{\vee}: B^{\vee}\otimes_A B^{\vee}\to B^{\vee},\;\;\;\;\; (1_B^R)^{\vee}: A\to B^{\vee}.
$$ 
Since $Alg(A-mod)$ is symmetric monoidal, 
$$
\Fun_A(B, D)\,\iso\, B^{\vee}\otimes_A D\in Alg(A-mod).
$$ 

 This structure looks like the Day convolution on $\Fun_A(B,D)$. Namely, given $f_i\in \Fun_A(B,D)$ their product $f_1\ast f_2$ is obtained from $B\otimes_A B\toup{f_1\otimes f_2} D\otimes_A D\toup{m_D} D$ by applying the functor 
$$
\Fun_A(B\otimes_A B, D)\to \Fun_A(B, D)
$$ 
left adjoint to the functor $\Fun_A(B, D)\to \Fun_A(B\otimes_A B, D)$ given by composing with the product $m_B: B\otimes_A B\to B$. 
 
 The unit of $\Fun_A(B,D)$ is obtained from $1_D: A\to D$ by applying the functor 
$$
\Fun_A(A, D)\to \Fun_A(B, D)
$$
 left adjoint to the functor $\Fun_A(B, D)\to \Fun_A(A, D)$ given by the composition with $1_B: A\to B$.
 
  An important observation here is that 
$$ 
\Fun_A^{rlax}(B, D)\,\iso\, \Alg(\Fun_A(B, D)),
$$
where $\Fun_A^{rlax}(B, D)$ is the category of those $A$-linear exact continuous functors, which are right-lax monoidal.   

\ssec{Twist of a symmetric monoidal category}
\label{Sect_Twist}

\sssec{} Let $A$ be a finite abelian group, $C\in CAlg(\DGCat_{cont})$. Write $\Fun_{e, cont}^{\otimes}(C,C)$ for the category of continuous exact $e$-linear symmetric monoidal endo-functors on $C$. Recall that an action of $A$ on $C$ by automoprhisms of the identity functor is, by definition, a monoidal functor $B(A)\to \Fun_{e, cont}^{\otimes}(C,C)$.

 Let $\epsilon\in A$ be a 2-torsion element. In Section~\ref{Sect_Twist} we reformulate the construction of the category $C^{\epsilon}\in CAlg(\DGCat_{cont})$ from (\cite{GLys}, 8.2.4). 

\sssec{} The category $\DGCat_{cont}$ being cocomplete, it is tensored over $\Spc$ in the terminology of \cite{HA}. By (\cite{Ly}, 9.2.20) the functor $\Spc\to \DGCat_{cont}$, $Z\mapsto Z\otimes\Vect$ is symmetric monoidal and preserves colimits. So, it extends to a functor $CAlg(\Spc)\to CAlg(\DGCat_{cont})$. 

\sssec{} Set $\Gamma=\Hom(A, e^*)$. Viewing $B(A)$ as a scheme over $e$, write $\QCoh_e(B(A))$ for the corresponding category of quasi-coherent sheaves. We get $\QCoh_e(B(A))\,\iso\,\Vect^{\Gamma}$, where $\Vect^{\gamma}=\underset{\gamma\in\Gamma}\oplus \Vect\otimes\gamma$ is the category of $\Gamma$-graded complexes. Write $*$ for the convolution monoidal structure on $\QCoh_e(B(A))$. It is given by $\gamma\ast \gamma\,\iso\,\gamma$, and $\gamma\ast\gamma'=0$ for $\gamma\ne\gamma'$. 

The above $A$-action on $C$ extends to a monoidal functor $B(A)\otimes\Vect\to \Fun_{e, cont}^{\otimes}(C,C)$. It is easy to see that $B(A)\otimes\Vect\in CAlg(\DGCat_{cont})$ identifies with $(\Vect^{\Gamma}, *)$. Each $c\in C$ has a canonical grading 
$$
c\,\iso\, \underset{\gamma\in\Gamma}{\oplus} c_{\gamma}, 
$$
where $c_{\gamma}$ is the result of the action of $\gamma$ on $c$. We refer to it as $\Gamma$-grading. 

\sssec{} Consider the object $\cC\in CAlg(\Spc)$ defined in (\cite{GLys}, 4.8.2). Recall that, as a monoidal groupoid, $\cC=(\ZZ/2\ZZ)\times B(\ZZ/2\ZZ)$. The brading on $\cC$ is given by the unique bilinear form $b'$ on $\ZZ/2\ZZ$ with valued in $\ZZ/2\ZZ$ such that $b'(1,1)=1$. 

 Consider $\cC\otimes\Vect\in CAlg(\DGCat_{cont})$. We get 
$$
\cC\,\iso\, \underset{\ZZ/2\ZZ}{\oplus} B(\ZZ/2\ZZ)\otimes\Vect,
$$
we refer to this decomposition as `parity' (given by isomorphism classes of objects of $\cC$). In the other hand, we have an action of $\ZZ/2\ZZ$ by automorphisms of the identity functor on $\cC$, hence by functoriality, an action of $\ZZ/2\ZZ$ by automorphisms of the identity functor on $\cC\otimes\Vect$. 

 Let $\Vect^{s}\subset \cC\otimes\Vect$ be the full subcategory of those objects, on which the parity concides with the value of the $\ZZ/2\ZZ$-action by the automorphisms of the identity functor. We refer to it as the \select{$\DG$-category of super-vector spaces}. It inherits a symmetric monoidal structure from $\cC\otimes\Vect$. Our $\Vect^{s}$ inherits a grading $\Vect^{\epsilon} \,\iso\, \underset{\ZZ/2\ZZ}{\oplus} \Vect$ by parity. The commutativity constraint on $\Vect^{s}$ is the usual one for the super vector spaces. 
 
\sssec{} Consider $\Vect^s\otimes_{\Vect} C\in CAlg(\DGCat_{cont})$. It has a parity grading (coming from that of $\Vect^s$) and also the $\Gamma$-grading coming from that on $C$. Let $C^{\epsilon}\subset \Vect^s\otimes_{\Vect} C$ be the full subcategory of those objects on which the two gradings are compatible, that is, $\gamma(\epsilon)$ coincides with the parity of an object. It inherits the symmetric monoidal structure from $\Vect^s\otimes_{\Vect} C$, so $C^{\epsilon}\in CAlg(\DGCat_{cont})$. 
 
 Note that $C\,\iso\, C^{\epsilon}$ in $Alg(\DGCat_{cont})$, so we only changed the commutativity constraint. 

\section{Proof of Proposition~\ref{Pp_4.1.10_Raskin_dualizability}}
\label{Sect_proof_Pp_4.1.10_Raskin_dualizability}

\ssec{Preliminaries} 
\label{Sect_D.1}

\sssec{} In Section~\ref{Sect_D.1} we assume $A\in CAlg(\DGCat_{cont})$. We start with some lemmas.

\begin{Lm} 
\label{Lm_D.1.2}
Let $C, E\in A-mod$. There is a canonical equivalence in $A-mod$
\begin{equation}
\label{map_for_Sect_9.2.71}
\Fun_A(\Fun([1], C), E)\to \Fun([1], \Fun_A(C, E))
\end{equation} 
\end{Lm}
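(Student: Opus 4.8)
The statement is a categorical identity about the interaction of the arrow-category functor $\Fun([1],-)$ with the relative internal hom $\Fun_A(-,-)$ in $A\text{-}\mathrm{mod}$. The plan is to reduce the construction of the equivalence (\ref{map_for_Sect_9.2.71}) to two well-known facts: first, that $\Fun([1],-)$ is computed by tensoring with a suitable ``cofree'' module built from the poset $[1]$, and second, that $\Fun_A(-,-)$ turns colimits in the first variable into limits and is itself exact and $A$-linear. Concretely, since $[1]$ is a finite category, for any $C\in A\text{-}\mathrm{mod}$ one has $\Fun([1],C)\,\iso\, C\otimes_e \Fun([1],e)$ computed in $A\text{-}\mathrm{mod}$, and $\Fun([1],e)$ is the dualizable object of $\DGCat_{cont}$ with dual $\Fun([1],e)$ again (the arrow category of $\Vect$ is self-dual as a plain DG-category). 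Then one writes
$$
\Fun_A(\Fun([1],C),E)\;\iso\;\Fun_A(C\otimes_e\Fun([1],e),\,E)\;\iso\;\Fun_A(C,E)\otimes_e\Fun([1],e)^{\vee}\;\iso\;\Fun([1],\Fun_A(C,E)),
$$
where the middle equivalence uses dualizability of $\Fun([1],e)$ in $\DGCat_{cont}$ together with $A$-linearity of $\Fun_A(C,-)$, and the last equivalence again unwinds $\Fun([1],-)$ as tensoring with $\Fun([1],e)$.

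\textbf{Key steps, in order.} First I would recall (or cite the relevant lemma from \cite{Ly}, chapter~9, on limits and colimits of $A$-modules) that for a finite $1$-category $\cD$ and $C\in A\text{-}\mathrm{mod}$, the category $\Fun(\cD, C)$, formed in $A\text{-}\mathrm{mod}$, is canonically $C\otimes_e\Fun(\cD,e)$, and in the case $\cD=[1]$ this agrees with the naive arrow category because $[1]$ has finitely many objects and morphisms. Second, I would record that $\Fun([1],e)\in\DGCat_{cont}$ is dualizable (indeed compact, generated by the two representable diagrams) and identify its dual; the identity functor on $[1]$ gives a self-duality, but any fixed identification suffices. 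Third, I would invoke the fact that $\Fun_A(C,-)\colon A\text{-}\mathrm{mod}\to A\text{-}\mathrm{mod}$ preserves limits and is $A$-linear, hence commutes with tensoring by a dualizable object $P$ of $\DGCat_{cont}$: $\Fun_A(C, E\otimes_e P)\,\iso\,\Fun_A(C,E)\otimes_e P$ and dually $\Fun_A(C\otimes_e P, E)\,\iso\,\Fun_A(C,E)\otimes_e P^{\vee}$. Fourth, I would assemble these into the displayed chain above and check that the composite, traced through on objects, sends a functor $F\colon\Fun([1],C)\to E$ to the arrow $F(\mathrm{id})\Rightarrow F$ evaluated appropriately --- i.e. that the abstract equivalence agrees with the evident ``evaluate on the two vertices and on the universal arrow'' map, which is what one wants to call canonical. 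Finally I would note that every functor in sight is $A$-linear, so the equivalence lives in $A\text{-}\mathrm{mod}$ as claimed.

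\textbf{Main obstacle.} The only genuine subtlety is bookkeeping the $A$-linear structures: $\Fun([1],C)$ carries an $A$-action pointwise, $\Fun_A(C,E)$ carries one from the $A$-module structure on $E$, and one must be sure that the dualizability argument --- ``$\Fun_A(C,-)$ commutes with $\otimes_e P$ for $P$ dualizable'' --- is applied with the correct variance and that the resulting identifications are compatible with these actions rather than merely with the underlying DG-categories. In the $\cD$-module world this is routine, but in the constructible context $A$ need not be rigid, so I would be careful to use only that $P=\Fun([1],e)$ is dualizable \emph{in $\DGCat_{cont}$} (which is unconditional, $P$ being a finite diagram category over $\Vect$), never that $C$ or $E$ is dualizable. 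A secondary, purely cosmetic point is to pin down the self-duality of $\Fun([1],e)$ explicitly enough that the final identification with $\Fun([1],\Fun_A(C,E))$ is the expected one and not its ``op''; I would fix the convention that $\Fun([1],e)^{\vee}\,\iso\,\Fun([1],e)$ via the pairing sending a pair of arrows to the composite evaluation, which matches the transition maps used later in the proof of Proposition~\ref{Pp_4.1.10_Raskin_dualizability}.
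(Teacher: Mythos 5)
Your argument is correct, but it proves the lemma by a genuinely different route than the paper. The paper constructs the equivalence by hand inside the arrow category: for $c\in C$ it uses the fibre sequence $(0,c,0)\to(c,c,\id)\to(c,0,0)\toup{\delta}(0,c[1],0)$ in $\Fun([1],C)$, extracts from a functor $\Theta$ the pair $\xi_0(c)=\Theta(c,0,0)$, $\xi_1(c)=\Theta(0,c[1],0)$ and the transformation $\bar\eta$ obtained by applying $\Theta$ to $\delta$, and then exhibits an explicit inverse reconstructing $\Theta(c_0,c_1,\eta)$ as the fibre of $\bar\eta(c_1)\circ\xi_0(\eta)$. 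You instead reduce to abstract nonsense: $\Fun([1],C)\,\iso\,C\otimes_e\Fun([1],e)$ with $\Fun([1],e)$ compactly generated, hence dualizable and self-dual in $\DGCat_{cont}$, together with the enrichment adjunction $\Fun_A(C\otimes_e P,E)\,\iso\,\Fun_e(P,\Fun_A(C,E))\,\iso\,P^{\vee}\otimes_e\Fun_A(C,E)$; your care to use only dualizability of $P$ in $\DGCat_{cont}$, and not any rigidity of $A$ or dualizability of $C,E$, is exactly the right point in the constructible context. What each approach buys: yours makes Corollary~\ref{Cor_D.1.3} immediate (tensoring with a fixed dualizable category visibly commutes with $\otimes_A D$), whereas the paper's explicit formulas are precisely what get generalized in Lemma~\ref{Lm_D.1.8}, where the source category $\Glue$ is \emph{not} of the form $C\otimes_e P$ for a fixed dualizable $P$, so the hands-on construction is needed later anyway. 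One small imprecision on your side: the paper's canonical identification evaluates $\Theta$ on $(c,0,0)$ and on $(0,c[1],0)$ (note the shift), not ``on the two vertices,'' so if you insist that your abstract equivalence agree with a concrete one you should match it against these formulas; since the lemma only asserts existence of a canonical equivalence, this does not affect correctness.
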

\begin{proof}
Write an object of $\Fun([1], C)$ as $(c_0, c_1, \eta)$, where $c_i\in C$ and $\eta:  c_0\to c_1$ in $C$. For $c\in C$ we have a fibre sequence in $\Fun([1], C)$
$$
(0, c, 0)\to (c, c,\id)\to (c, 0,0)\toup{\delta} (0, c[1], 0),
$$
here $\delta$ is the boundary morphism. Let $\Theta: \Fun([1], C)\to E$ be a continuous $A$-linear functor. It gives the functors $\xi_i: C\to E$ in $\Fun_A(C, E)$ for $i=0,1$ given by $\xi_0(c)=\Theta(c,0,0)$, $\xi_1(c)=\Theta(0, c[1], 0)$ and a map $\bar\eta: \xi_0\to \xi_1$ in $\Fun_A(C,E)$ given by applying $\Theta$ to $\delta$. We used that $\delta$ is a map in $\Fun_A(C, \Fun([1], C))$. 

 The inverse map to (\ref{map_for_Sect_9.2.71}) is as follows. For any object $(c_0, c_1, \eta)\in \Fun([1], C)$ we have a commutative diagram, where the arrows are fibre sequences in $\Fun([1], C)$
$$
\begin{array}{ccccccc}
(0, c_1, 0) & \to & (c_0, c_1, \alpha) & \to & (c_0, 0,0) & \to & (0, c_1[1], 0)\\
\downarrow\lefteqn{\scriptstyle\id} && \downarrow\lefteqn{\scriptstyle\eta\times\id} &&\downarrow\lefteqn{\scriptstyle\eta} &&\downarrow\lefteqn{\scriptstyle\id}\\
(0, c_1, 0) & \to & (c_1, c_1, \id) & \to & (c_1, 0,0) & \toup{\delta} & (0, c_1[1], 0) 
\end{array}
$$

 Now $\eta$ yields maps 
$$
\xi_0(c_0)\toup{\xi_0(\eta)}\xi_0(c_1)\toup{\bar\eta(c_1)} \xi_1(c_1), 
$$
and $\Theta(c_0, c_1,\eta)\in E$ is recovered as the fibre in $E$ of
$\bar\eta(c_1)\xi_0(\eta): \xi_0(c_0)\to \xi_1(c_1)$.  
\end{proof}

\begin{Cor} 
\label{Cor_D.1.3}
For $D, C\in A-mod$ the natural map 
$$
\Fun([1], C)\otimes_A D\to \Fun([1], C\otimes_A D)
$$ 
is an equivalence in $A-mod$.
\end{Cor}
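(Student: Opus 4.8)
The statement to prove is Corollary~\ref{Cor_D.1.3}: for $D, C\in A-mod$ the natural map $\Fun([1], C)\otimes_A D\to \Fun([1], C\otimes_A D)$ is an equivalence in $A-mod$. The idea is to deduce this from Lemma~\ref{Lm_D.1.2}, which provides the equivalence $\Fun_A(\Fun([1], C), E)\,\iso\,\Fun([1], \Fun_A(C, E))$, by testing against an arbitrary $E\in A-mod$. Concretely, for $E\in A-mod$ one has the chain of natural equivalences
$$
\Fun_A(\Fun([1], C)\otimes_A D, E)\,\iso\,\Fun_A(D, \Fun_A(\Fun([1], C), E))\,\iso\,\Fun_A(D, \Fun([1], \Fun_A(C, E))),
$$
using the tensor-hom adjunction in $A-mod$ and Lemma~\ref{Lm_D.1.2}. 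On the other hand $\Fun([1], \Fun_A(C, E))\,\iso\,\Fun_A(C, \Fun([1], E))$ (the functor category $\Fun([1], -)$ commutes with $A$-linear Hom because $[1]$ is a fixed finite category and limits of $A$-linear functors are computed pointwise), so the right-hand side becomes $\Fun_A(D, \Fun_A(C, \Fun([1], E)))\,\iso\,\Fun_A(C\otimes_A D, \Fun([1], E))\,\iso\,\Fun([1], \Fun_A(C\otimes_A D, E))$, and applying Lemma~\ref{Lm_D.1.2} once more with $C\otimes_A D$ in place of $C$ gives $\Fun_A(\Fun([1], C\otimes_A D), E)$.

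First I would assemble these identifications carefully and check that the resulting natural equivalence $\Fun_A(\Fun([1], C)\otimes_A D, E)\,\iso\,\Fun_A(\Fun([1], C\otimes_A D), E)$ is induced by precomposition with the canonical map $\Fun([1], C)\otimes_A D\to \Fun([1], C\otimes_A D)$ of the statement; this is the point requiring genuine care, since one must track the unit/counit structure through each adjunction step rather than just match objects. Granting that, the equivalence holds after applying $\Fun_A(-, E)$ for every $E\in A-mod$, hence the map itself is an equivalence in $A-mod$ by the (enriched) Yoneda lemma: an $A$-linear functor inducing an equivalence on all corepresented functors is an equivalence. Alternatively, and perhaps more cleanly, one can observe that $\Fun([1], C)$ fits in a split fibre sequence $C\to\Fun([1],C)\to C$ in $A-mod$ (evaluation at the two endpoints, with the constant-diagram section), so $\Fun([1],C)\,\iso\, C\oplus C$ as an object of $A-mod$ — more precisely it is an iterated extension of copies of $C$ — and since $-\otimes_A D$ is exact and continuous it preserves such (co)fibre sequences; then one checks the natural map is compatible with the two evaluation functors, reducing to the tautological equivalence $C\otimes_A D\,\iso\, C\otimes_A D$ on each associated graded piece.

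The main obstacle is the same in both approaches: verifying that the abstractly produced equivalence is the \emph{canonical} map and not merely some equivalence. In the Yoneda approach this means chasing the naturality of Lemma~\ref{Lm_D.1.2} (whose proof already exhibits an explicit inverse via the fibre-sequence diagram) through the tensor-hom adjunctions; in the devissage approach it means checking the canonical map respects the filtration of $\Fun([1],C)$ by the sub/quotient copies of $C$ coming from $(0,c_1,0)\to(c_0,c_1,\eta)\to(c_0,0,0)$. I expect the devissage route to be the shorter write-up, since exactness and continuity of $-\otimes_A D$ in $\DGCat_{cont}$ (and hence in $A-mod$, as the forgetful functor is conservative and preserves the relevant colimits) is standard, and the compatibility with evaluation functors is immediate from the construction of the canonical map. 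Accordingly I would present the proof as: (i) recall $\Fun([1], C)\,\iso\, \mathrm{fib}$ of the two projections to $C$, realizing it as an extension of $C$ by $C$ in $A-mod$; (ii) apply the exact continuous functor $-\otimes_A D$; (iii) identify the canonical comparison map with the induced map of extensions, which on the two pieces is the identity on $C\otimes_A D$; (iv) conclude by the five-lemma / stability of fibre sequences that it is an equivalence.
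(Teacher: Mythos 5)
Your first route is exactly the paper's proof: the paper also tests the canonical map against an arbitrary $E\in A-mod$, rewrites $\Fun_A(\Fun([1],C\otimes_A D),E)$ via Lemma~\ref{Lm_D.1.2} and rewrites $\Fun_A(\Fun([1],C)\otimes_A D,E)\,\iso\,\Fun_A(\Fun([1],C),\Fun_A(D,E))\,\iso\,\Fun([1],\Fun_A(C,\Fun_A(D,E)))$ by the tensor-hom adjunction, and concludes that both sides agree; your worry about checking that the resulting identification is restriction along the canonical map applies just as much to the paper's own write-up (which elides it), and is handled by noting that the paper's argument is set up from the start as an analysis of the precomposition functor, not of some abstract equivalence. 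Your preferred devissage route is genuinely different from the paper and can be made to work, but as written it is looser than the Yoneda route: $\Fun([1],C)$ is \emph{not} equivalent to $C\oplus C$ in $A-mod$ (the arrow category is a semiorthogonal gluing of two copies of $C$, not their direct sum), and ``exactness and continuity of $-\otimes_A D$ preserves (co)fibre sequences'' is a statement about sequences of \emph{objects}, whereas here you need preservation of a split localization sequence of \emph{categories} $C\to\Fun([1],C)\to C$; that does hold, because the inclusion $c\mapsto(0,c,0)$ and the evaluations admit continuous $A$-linear adjoints and the whole adjunction data is carried along by the functor $-\otimes_A D$, but spelling this out is essentially the gluing analysis the paper performs later in Lemma~\ref{Lm_D.1.5} (using the present corollary as input), so the Yoneda argument via Lemma~\ref{Lm_D.1.2} is both shorter and avoids any appearance of circularity.
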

\begin{proof} For $E\in A-mod$ we check that the composition
$$
\Fun_A(\Fun([1], C\otimes_A D), E)\to \Fun_A(\Fun([1], C)\otimes_A D, E)
$$
is an equivalence. By Lemma~\ref{Lm_D.1.2}, the LHS identifies with
$$
\Fun([1], \Fun_A(C\otimes_A D, E)),
$$
and the RHS identifies with
$$
\Fun_A((\Fun([1], C), \Fun_A(D, E))\,\iso\, \Fun([1], \Fun_A(C, \Fun_A(D, E))
$$ 
Both sides are the same.
\end{proof}

\sssec{} The purpose of  Section~\ref{Sect_D.1} is to prove the following.

\begin{Lm} 
\label{Lm_D.1.5}
Consider a diagram 
$$
\begin{array}{ccc}
C_3 & \getsup{m} & C_2\\
\uparrow\lefteqn{\scriptstyle j^*}\\
C_1
\end{array}
$$
in $A-mod$, where $m$ (resp., $j^*$) has a continuous $A$-linear right adjoint $m^R$ (resp., $j_*$). Assume $j_*$ fully faithful. Let $C=C_1\times_{C_3} C_2$. Then

\smallskip
\noindent
i) For any $D\in A-mod$ the canonical functor
\begin{equation}
\label{functor_for_Lm_D.1.5}
C\otimes_A D\to (C_1\otimes_A D)\times_{C_3\otimes_A D} (C_2\otimes_A D)
\end{equation}
is an equivalence.

\smallskip\noindent
ii) If, in addition, each $C_i$ is dualizable in $A-mod$ then $C$ is also dualizable in $A-mod$, and $C^{\vee}$ identifies canonically with the colimit of the dual diagram 
$C_1^{\vee}\gets C_3^{\vee}\to C_2^{\vee}$. 
\end{Lm}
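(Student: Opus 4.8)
The statement is a "base change commutes with tensoring and duals" lemma for a pullback square in $A\text{-}mod$ where one leg ($m$) admits a continuous $A$-linear right adjoint and the other ($j^*$) admits a fully faithful continuous $A$-linear right adjoint $j_*$. The strategy is to reduce the pullback to an explicit equalizer-type limit that manifestly commutes with $\otimes_A D$, using the two adjunctions, and then dualize. I would organize the proof as follows.

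\textbf{Step 1: Rewrite the pullback as a limit over $[1]$ or a short cosimplicial-type diagram.} The standard move: since $j_*$ is fully faithful, $C_1$ is a colocalization (or localization) of... — more precisely, the fibre product $C_1\times_{C_3}C_2$ can be identified with the limit of the diagram $C_1\xrightarrow{j^*} C_3 \xleftarrow{m} C_2$, and using the right adjoint $m^R$ one can convert the $\leftarrow$ into a "$\rightarrow$" and reorganize. Concretely I would show $C\,\iso\, \Fun([1],C_3)\times_{C_3\times C_3}(C_1\times C_2)$ where the two maps to $C_3\times C_3$ are $(\mathrm{ev}_0,\mathrm{ev}_1)$ on one side and $(j^*,m)$ on the other; this is just the usual presentation of a fibre product as a pullback along the path object $\Fun([1],C_3)\to C_3\times C_3$. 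This presentation is the key device because $\Fun([1],-)$ commutes with $\otimes_A D$ by Corollary~\ref{Cor_D.1.3}, and products and pullbacks of dualizable objects along such maps are controlled.

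\textbf{Step 2: Prove i).} With the presentation of Step 1, the functor (\ref{functor_for_Lm_D.1.5}) is obtained by applying $\otimes_A D$ to a limit diagram built from $\Fun([1],C_3)$, $C_1$, $C_2$, $C_3\times C_3$. A general limit need not commute with $\otimes_A D$, so the real content is to exhibit $C$ as a limit of a diagram in which \emph{all} the transition functors admit continuous $A$-linear right adjoints — then $\otimes_A D$ commutes with the limit by the standard argument (pass to the right-adjoint diagram, which is a colimit, and use that $\otimes_A D$ preserves colimits; cf. the manipulations in Lemma~\ref{Lm_3.2.6_for_sheaves_of_cat} and (\cite{Ly}, 9.2.39)). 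The hypotheses are exactly tailored for this: $m$ has $m^R$, $j^*$ has $j_*$, and the evaluation/diagonal functors for $\Fun([1],C_3)$ and $C_3\times C_3$ have obvious adjoints. So I would: (a) spell out the finite index category for the limit; (b) check every arrow has a continuous $A$-linear right adjoint; (c) conclude by passing to the adjoint (colimit) diagram, where commutation with $\otimes_A D$ is automatic; (d) identify the resulting colimit with the fibre product on the right-hand side of (\ref{functor_for_Lm_D.1.5}) using the same presentation applied to $C_i\otimes_A D$ (here one uses Corollary~\ref{Cor_D.1.3} to know $\Fun([1],C_3)\otimes_A D\,\iso\,\Fun([1],C_3\otimes_A D)$).

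\textbf{Step 3: Prove ii).} Assume each $C_i$ dualizable in $A\text{-}mod$. Since $m, j^*$ have continuous $A$-linear right adjoints, dualizing turns the diagram $C_1\xrightarrow{j^*}C_3\xleftarrow{m}C_2$ into $C_1^\vee \xleftarrow{(j^*)^\vee} C_3^\vee \xrightarrow{m^\vee} C_2^\vee$ with $(j^*)^\vee = (j_*)^{\text{left adj'd}}$ etc., so the dual diagram is a span and its colimit $P:=C_1^\vee\sqcup_{C_3^\vee}C_2^\vee$ makes sense in $A\text{-}mod$. The claim is $C^\vee\,\iso\,P$, equivalently that the pairing $C\otimes_A P\to A$ obtained from the limit/colimit presentations is a duality datum. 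I would verify this by the criterion: $C$ is dualizable with dual $P$ iff for every $D\in A\text{-}mod$ the natural map $D\otimes_A C\to \Fun_A(P,D)$ is an equivalence. Using i) for the left side and the dual statement ($\Fun_A(\operatorname{colim},D)=\lim\Fun_A(-,D)$, which is the content of Lemma~\ref{Lm_D.1.2}/Corollary~\ref{Cor_D.1.3} applied to $\Fun([1],-)$) for the right side, both become the same pullback $(D\otimes_A C_1)\times_{D\otimes_A C_3}(D\otimes_A C_2)$, identifying the pairs of functors via the dualizability of the $C_i$. This is formal bookkeeping once Step 2 is in place.

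\textbf{Main obstacle.} The crux is Step 2(b)--(c): verifying that $C$ can be presented as a \emph{finite} limit all of whose structure functors admit continuous $A$-linear right adjoints, so that $-\otimes_A D$ (which is a left adjoint, hence a priori only commutes with colimits) commutes with it. The subtlety is that the naive diagram $C_1\to C_3\leftarrow C_2$ has the arrow $j^*$ pointing the "wrong way" for this, so one genuinely needs the path-object reformulation of Step 1 together with both adjunctions to turn the fibre product into a limit of a diagram that is simultaneously computable as a colimit of the adjoint diagram. Once that reformulation is nailed down, everything else — including ii) — follows from the already-available machinery on dualizability and passage to adjoint diagrams. I expect the full bookkeeping of the index category and the compatibility of the various canonical maps to be routine but slightly tedious, and I would keep it terse, citing Corollary~\ref{Cor_D.1.3} and the relevant statements from \cite{Ly}.
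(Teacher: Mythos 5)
Your overall architecture (use $\Fun([1],C_3)$, Corollary~\ref{Cor_D.1.3}, and the adjoints $m^R,j_*$) points in the same direction as the paper, but Step 1 contains a genuine error that the rest of the proposal leans on. The identification
$C\,\iso\,\Fun([1],C_3)\times_{C_3\times C_3}(C_1\times C_2)$ is false: the right-hand side is the \emph{lax} (oriented) fibre product, classifying triples $(c_1,c_2,\eta)$ with $\eta$ an arbitrary morphism, and $C$ is only the full subcategory where $\eta$ is invertible. Already for $C_1=C_2=C_3=\Vect$ with $j^*=m=\id$ (which satisfies all hypotheses) your right-hand side is $\Fun([1],\Vect)\ne\Vect=C$. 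The ``path object'' presentation of an $\infty$-categorical pullback uses the cotensor by the contractible space $\Delta^1$, not the arrow category $\Fun([1],-)$, so it does not produce the object you want. Relatedly, the mechanism you invoke in Step 2 is backwards: a limit can be rewritten as a colimit of the \emph{left}-adjoint diagram, whereas the hypotheses only provide right adjoints to $j^*$ and $m$; if ``transition functors admit continuous right adjoints'' sufficed to commute $\otimes_A D$ with the limit, the lemma would hold without the full faithfulness of $j_*$, which it does not.

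What is missing is precisely the content of the paper's argument. One sets $\Glue=C_2\times_{C_3}\Fun([1],C_3)\times_{C_3}C_1$ (your lax pullback) and proves two nontrivial facts: first (Lemma~\ref{Lm_D.1.7_right_adjoint}), the full embedding $C\hookrightarrow\Glue$ admits a continuous $A$-linear right adjoint, constructed using the full faithfulness of $j_*$ (given $(\cG,\cF,\eta)$ one corrects $\cF$ by the pullback against $j_*m(\cG)\to j_*j^*\cF$); second (Lemma~\ref{Lm_D.1.8}, which is where $m^R$ enters), an explicit computation of $\Fun_A(\Glue,E)$ showing, together with Corollary~\ref{Cor_D.1.3}, that $\Glue\otimes_A D\,\iso\,\Glue_D$. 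Then i) follows by tensoring the colocalization $C\rightleftarrows\Glue$ with $D$, doing the same for the tensored diagram, and checking the two idempotents on $\Glue_D$ agree — a step absent from your plan. Your Step 3 for ii) (test against arbitrary $D$, using i) and $\Fun_A(\colim,-)=\lim\Fun_A(-,-)$ together with dualizability of the $C_i$) is correct and is essentially how the paper deduces ii) from i).
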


 In the rest of Section~\ref{Sect_D.1} we prove Lemma~\ref{Lm_D.1.5}. 

\sssec{} Set 
$$
\Glue=C_2\times_{C_3} \Fun([1], C_3)\times_{C_3} C_1, 
$$
the limit taken in $A-mod$. So, $\Glue\in A-mod$ classifies $\cF\in C_1, \cG\in C_2$ and a map $\eta: m(\cG)\to j^*(\cF)$ in $C_3$. We write $(\cG, \cF, \eta)\in\Glue$. 

\begin{Lm} 
\label{Lm_D.1.7_right_adjoint}
The full embedding $C\hook{} \Glue$ is a map in $A-mod$. It has a continuous right adjoint in $A-mod$.
\end{Lm}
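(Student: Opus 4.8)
The statement to prove is Lemma~\ref{Lm_D.1.7_right_adjoint}: the full embedding $C = C_1\times_{C_3} C_2 \hook{} \Glue$ is a map in $A\text{-mod}$ admitting a continuous right adjoint in $A\text{-mod}$.

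Here is how I would proceed.

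\medskip

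\textbf{Setup and the candidate right adjoint.} The embedding $C\hook{}\Glue$ identifies $C$ with the full subcategory of triples $(\cG,\cF,\eta)$ for which $\eta\colon m(\cG)\to j^*(\cF)$ is an isomorphism; this is manifestly $A$-linear since limits of $A$-linear functors are computed in $A\text{-mod}$ and the condition ``$\eta$ invertible'' is stable under the $A$-action (each object $a\cdot(\cG,\cF,\eta)=(a\cG,a\cF,a\eta)$ still has invertible structure map). For the right adjoint I would build a functor $R\colon\Glue\to C$ as follows. Given $(\cG,\cF,\eta)\in\Glue$, I want to ``correct'' $\cF$ so that after applying $j^*$ it agrees with $m(\cG)$. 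The natural device is the fiber product along $j_*$: set
$$
\cF' \;=\; \cF \times_{j_* j^* \cF} j_* m(\cG),
$$
the limit taken in $C_1$, where the map $j_*m(\cG)\to j_*j^*\cF$ is $j_*$ applied to $\eta$, and $\cF\to j_*j^*\cF$ is the unit of the adjunction $(j^*,j_*)$. Then $R(\cG,\cF,\eta):=(\cG,\cF',\eta')$ where $\eta'\colon m(\cG)\to j^*\cF'$ is the canonical map; the key point, using that $j_*$ is fully faithful (so the unit $\id\to j^*j_*$ is an equivalence and $j^*$ preserves the relevant fiber product suitably), is that $\eta'$ is an \emph{isomorphism}, so $R$ indeed lands in $C$.

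\medskip

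\textbf{Verifying the adjunction and continuity.} I would check the unit/counit triangle identities, or more efficiently verify the defining mapping-space equivalence: for $(\cG,\cF,\eta)\in C$ (so $\eta$ invertible) and $(\cG_1,\cF_1,\eta_1)\in\Glue$, a map $(\cG,\cF,\eta)\to R(\cG_1,\cF_1,\eta_1)$ in $C$ is the same as a map in $\Glue$ to $(\cG_1,\cF_1,\eta_1)$ --- this reduces, via the universal property of the fiber product defining $\cF'$ in $C_1$ and the adjunction $(j^*, j_*)$, to the elementary observation that when $\eta$ is invertible the datum $m(\cG)\to j^*\cF_1$ factoring a given $\cF\to\cF_1$ is automatic. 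Continuity of $R$ follows because $R$ is assembled from the functors $m$, $j^*$, $j_*$ (all continuous, with $m$, $j^*$ continuous by hypothesis and $j_*$ continuous by hypothesis) together with finite limits in $C_1$, which commute with filtered colimits there; concretely $R$ is a composite of continuous functors and a pullback functor $\Fun([1]\times[1],-)\to(-)$ of the appropriate shape. $A$-linearity of $R$ is similarly inherited, since all the building blocks are $A$-linear and the fiber product defining $\cF'$ is an $A$-linear limit.

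\medskip

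\textbf{Main obstacle.} The delicate step is showing that $\eta'\colon m(\cG)\to j^*\cF'$ is an equivalence, i.e.\ that applying $j^*$ to the pullback square $\cF'=\cF\times_{j_*j^*\cF}j_*m(\cG)$ and using full faithfulness of $j_*$ produces exactly $m(\cG)$ back. This requires care because $j^*$ is a left adjoint and need not preserve the limit defining $\cF'$ on the nose; the resolution is that the map $\cF\to j_*j^*\cF$ becomes an equivalence after applying $j^*$ (since $j^*j_*\iso\id$ by full faithfulness of $j_*$), so the pullback square becomes, after $j^*$, a pullback along an equivalence, forcing $j^*\cF'\iso j^*j_*m(\cG)\iso m(\cG)$. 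I would spell this out as the one genuine computation, perhaps invoking Corollary~\ref{Cor_D.1.3} to handle the $\Fun([1],-)$ factors compatibly. Everything else is formal.
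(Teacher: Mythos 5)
Your construction is exactly the paper's: the right adjoint sends $(\cG,\cF,\eta)$ to $(\cG,\tilde\cF,\epsilon)$ with $\tilde\cF=\cF\times_{j_*j^*\cF}j_*m(\cG)$, and the identification $j^*\tilde\cF\iso m(\cG)$ follows from full faithfulness of $j_*$. Your worry that $j^*$ might not preserve this limit is moot — in the stable setting any exact (in particular continuous) functor preserves pullback squares — so your argument is correct and essentially identical to the paper's proof.
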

\begin{proof}
The right adjoint is constructed as follows. Given $(\cG, \cF, \eta)\in \Glue$, define $\tilde\cF\in C_1$ by the cartesian square in $C_1$
$$
\begin{array}{ccc}
\tilde\cF & \to & j_*m(\cG)\\
\downarrow && \downarrow{\lefteqn{\scriptstyle j_*(\eta)}}\\
\cF & \to & j_*j^*(\cF)
\end{array}
$$
Applying $j^*$ to the above square one gets an isomorphism $\epsilon: j^*(\tilde\cF)\,\iso\, m(\cG)$, as $j_*$ is fully faithful. The desired right adjoint sends $(\cG, \cF, \eta)$ to $(\cG, \tilde\cF, \epsilon)\in C$. This functor is $A$-linear by construction.
\end{proof} 

\begin{Lm} 
\label{Lm_D.1.8}
For $E\in A-mod$ one has canonically in $A-mod$
\begin{equation}
\label{map_for_LmD.1.8_description}
\Fun_A(\Glue, E)\,\iso\, \Fun_A(C_2, E)\times_{\Fun_A(C_1, E)} \Fun([1], \Fun_A(C_1, E)),
\end{equation}
where the map $\Fun_A(C_2, E)\to \Fun_A(C_1, E)$ is the composition with $m^Rj^*$, and $$
\Fun([1], \Fun_A(C_1, E))\to \Fun_A(C_1, E)
$$ 
is the evaluation at $0\in [1]$. 
\end{Lm}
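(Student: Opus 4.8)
The plan is to unwind the definition of $\Glue$ as an iterated fiber product and compute $\Fun_A(-,E)$ termwise, exploiting the fact that $\Fun_A(-,E)$ sends colimits in $A-mod$ to limits. Recall $\Glue=C_2\times_{C_3}\Fun([1],C_3)\times_{C_3}C_1$. The first step is to rewrite this pushout-style gluing as a genuine colimit: since $j_*\colon C_3\to C_1$ is fully faithful with left adjoint $j^*$, and $m^R\colon C_3\to C_2$ is the right adjoint of $m$, one checks that $\Glue$ is the colimit in $A-mod$ of the diagram whose shape encodes "$C_1$ and $C_2$ glued along a morphism in $C_3$". More precisely, I would first establish that for any $F\in A-mod$,
$$
\Fun_A(\Glue,E)\,\iso\,\Fun_A(C_2,E)\times_{\Fun_A(C_3,E)}\Fun([1],\Fun_A(C_3,E))\times_{\Fun_A(C_3,E)}\Fun_A(C_1,E),
$$
simply by applying $\Fun_A(-,E)$ to the limit defining $\Glue$ and using Corollary~\ref{Cor_D.1.3} (which identifies $\Fun([1],C_3)\otimes_A(-)$ with $\Fun([1],C_3\otimes_A(-))$, dually giving $\Fun_A(\Fun([1],C_3),E)\iso\Fun([1],\Fun_A(C_3,E))$ via Lemma~\ref{Lm_D.1.2}), together with the fact that $\Fun_A(-,E)$ converts the defining colimit presentation of the fiber product into a limit. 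Here the two legs $C_3\to C_2$ and $C_3\to \Fun([1],C_3)$ (resp. their images after applying $\Fun_A(-,E)$) are the ones induced by $m^R$ and by the inclusion of constant arrows / evaluation.

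The second step is to collapse the $C_3$-indexed fiber product on the right-hand side. The two maps $\Fun([1],\Fun_A(C_3,E))\to\Fun_A(C_3,E)$ are evaluation at $0$ and at $1$; the map from $\Fun_A(C_2,E)$ is precomposition with $m^R$, and the map from $\Fun_A(C_1,E)$ is precomposition with $j^*$. Because $m$ has right adjoint $m^R$ and $j^*$ has right adjoint $j_*$, precomposition with $m^R$ on $\Fun_A(C_2,E)$ and precomposition with $j^*$ on $\Fun_A(C_1,E)$ have explicit partial adjoints, and one can use the standard "straighten one leg of a fiber product of arrow categories" manipulation: a fiber product $X\times_Z\Fun([1],Z)\times_Z Y$ (evaluation at $0$ on the left, at $1$ on the right) is equivalent to the comma/arrow category $\Fun([1], -)$ built from the composite $X\to Z\to Y$-type data. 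Concretely, I expect to identify the right-hand side with
$$
\Fun_A(C_2,E)\times_{\Fun_A(C_1,E)}\Fun([1],\Fun_A(C_1,E)),
$$
where now $\Fun_A(C_2,E)\to\Fun_A(C_1,E)$ is precomposition with $m^Rj^*\colon C_1\to C_3\to C_2$, and $\Fun([1],\Fun_A(C_1,E))\to\Fun_A(C_1,E)$ is evaluation at $0$. This collapse uses that composing with $j^*$ followed by the unit $\id\to j_*j^*$ lets one absorb the $C_3$-leg coming from $j^*$ into the arrow, reducing everything to $\Fun_A(C_1,E)$; the $m^R$ on the other side then becomes $m^Rj^*$ after this absorption.

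The main obstacle will be carrying out this second "leg-absorption" step coherently at the $\infty$-categorical level rather than just on homotopy categories: one must produce the equivalence of fiber products naturally in $E$ and compatibly with the $A$-module structure, which really amounts to a Bousfield–Kan/cofinality argument about the indexing diagram shapes (a pushout of $[1]$'s along a point), and to checking that the relevant adjunctions ($m\dashv m^R$, $j^*\dashv j_*$, with $j_*$ fully faithful) make the two presentations equivalent. I would handle this by first proving the dual statement about colimits in $A-mod$: namely that $\Glue\iso C_1\amalg_{C_3}^{\mathrm{lax}}C_2$ in the appropriate sense where the gluing uses the arrow category, and that under the full-faithfulness of $j_*$ this lax pushout simplifies; then apply $\Fun_A(-,E)$ once at the end. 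The bookkeeping of which functor ($m$, $m^R$, $j^*$, $j_*$, or their composites) appears on which leg after the simplification is the delicate part, but it is forced by matching adjoints, and the final answer is exactly the displayed formula \eqref{map_for_LmD.1.8_description}.
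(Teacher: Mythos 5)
Your strategy is genuinely different from the paper's proof, which is a direct construction: for $\cF\in C_1$ one uses the fibre sequence $(0,\cF,0)\to (m^Rj^*(\cF),\cF,\eta_{\cF})\to (m^Rj^*(\cF),0,0)\toup{\delta}(0,\cF[1],0)$ in $\Glue$, sets $\xi_0(\cG)=\Theta(\cG,0,0)$, $\xi_1(\cF)=\Theta(0,\cF[1],0)$, lets $\bar\eta$ be the image of $\delta$ under $\Theta$, and recovers $\Theta(\cG,\cF,\eta)$ in the inverse direction as the fibre of $\bar\eta(\cF)\comp\xi_0(\eta):\xi_0(\cG)\to\xi_1(\cF)$ (exactly as in Lemma~\ref{Lm_D.1.2}). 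Your route, by contrast, has a genuine gap already at Step 1: $\Fun_A(-,E)$ is contravariant and takes \emph{colimits} in $A-mod$ to limits; it does not take the limit defining $\Glue$ to the termwise fibre product you write down. To legitimize that step you would have to re-express $\Glue$ as a colimit, which requires \emph{left} adjoints to the functors $m$ and $j^*$ appearing in the defining cospan; the hypotheses only give the right adjoints $m^R$ and $j_*$. The ``lax pushout''/recollement presentation of $\Glue$ you appeal to is precisely the nontrivial input that needs proof, and establishing it amounts to the same fibre-sequence analysis the paper carries out directly, so in your write-up Step 1 is simply asserted, not proved.

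Step 2 is also not the harmless bookkeeping you describe. Your intermediate fibre product has as gluing datum a natural transformation $\xi_0\comp m^R\to\xi_1\comp j_*$ of functors on $C_3$, while the target (\ref{map_for_LmD.1.8_description}) has a natural transformation $\xi_0\comp m^R\comp j^*\to\xi_1$ of functors on $C_1$, and these are not interchangeable ``by matching adjoints''. Since $j^*\dashv j_*$, on functor categories one has $(-)\comp j_*$ left adjoint to $(-)\comp j^*$, so mapping \emph{out} of $\phi\comp j^*$ is governed by a left adjoint $j_!$ of $j^*$ (when it exists), not by $j_*$; concretely, the absorption you propose needs a transformation $\xi_1\comp j_*\comp j^*\to\xi_1$, i.e.\ a map $j_*j^*\to\id_{C_1}$, whereas the adjunction only supplies the unit $\id_{C_1}\to j_*j^*$ (it is the counit $j^*j_*\to\id_{C_3}$ that is invertible, by full faithfulness of $j_*$). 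So the comparison between your intermediate fibre product and the asserted answer does not exist in the direction you need, and in general the two gluing data differ. The argument therefore cannot be completed as proposed; the efficient fix is to argue as the paper does and construct the two mutually inverse functors explicitly via the fibre sequences in $\Glue$.
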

\begin{proof}
In the case when both $j^*, m$ are equivalences this is precisely Lemma~\ref{Lm_D.1.2}, whose proof we are going to generalize. 

 For $\cF\in C_1$ one has a fibre sequence in $\Glue$
$$
(0, \cF,0)\to (m^Rj^*(\cF), \cF, \eta_{\cF})\to (m^Rj^*(\cF), 0, 0)\toup{\delta} (0, \cF[1],0),  
$$
here $\eta_{\cF}: mm^R(j^*\cF)\to j^*\cF$ is the natural map, and $\delta$ is the boundary morphism.

Let $\Theta\in \Fun_A(\Glue, E)$. The map (\ref{map_for_LmD.1.8_description}) sends $\Theta$ to the collection: $\xi_0\in\Fun_A(C_2, E), \xi_1\in \Fun_A(C_1, E)$ together with a morphism
$$
\bar\eta: \xi_0 m^Rj^*\to \xi_1
$$
in $\Fun_A(C_1, E)$ defined as follows. We set $\xi_0(\cG)=\Theta(\cG, 0,0)$ for $\cG\in C_2$. Set $\xi_1(\cF)=\Theta(0, \cF[1], 0)$ for $\cF\in C_1$. Now $\bar\eta$ is obtained by applying $\Theta$ to $\delta$. We used that $\delta$ is a map in $\Fun_A(C_1, \Glue)$. 

 The inverse map to (\ref{map_for_LmD.1.8_description}) is constructed as follows. For any $(\cG,\cF,\eta)\in\Glue$, we have a commutative diagram, where the arrows are fibre sequences in $\Glue$
$$
\begin{array}{ccccccc}
(0, \cF,0) &\to & (\cG, \cF,\eta) & \to & (\cG, 0,0) & \to & (0, \cF[1], 0)\\
\downarrow\lefteqn{\scriptstyle \id} && \downarrow && \downarrow\lefteqn{\scriptstyle \eta} && \downarrow\\
(0, \cF,0) &\to &(m^Rj^*(\cF), \cF, \eta_{\cF}) & \to & (m^Rj^*(\cF), 0, 0)& \toup{\delta} & (0, \cF[1],0)\\ 
\end{array}
$$
It yields morphisms
$$
\xi_0(\cG)\toup{\xi_0(\eta)} \xi_0(m^Rj^*(\cF))\toup{\bar\eta(\cF)} \xi_1(\cF)
$$

The inverse map to (\ref{map_for_LmD.1.8_description}) sends a collection $(\xi_0,\xi_1,\bar\eta)$ to the functor $\Theta$, where $\Theta(\cG, \cF,\eta)$ is the 
fibre in $E$ of $\bar\eta(\cF)\xi_0(\eta): \xi_0(\cG)\to \xi_1(\cF)$.
\end{proof} 

\sssec{} For $D\in A-mod$ define $\Glue_D\in A-mod$ using the diagram
\begin{equation}
\label{diag_for_Glue_tensored_by_D}
\begin{array}{ccc}
C_3\otimes_A D & \getsup{m\otimes\id} C_2\otimes_A D\\
\uparrow\lefteqn{\scriptstyle j^*\otimes\id}\\
C_1\otimes_A D
\end{array}
\end{equation}
by 
$$
\Glue_D=(C_2\otimes_A D)\times_{C_3\otimes_A D}\Fun([1], C_3\otimes_A D)\times_{C_3\otimes_A D} (C_1\otimes_A D),
$$
the limit taken in $A-mod$. By Corollary~\ref{Cor_D.1.3}, $\Fun([1], C_3)\otimes_A D\,\iso\, \Fun([1], C_3\otimes_A D)$, and we have a canonical functor
\begin{equation}
\label{functor_between_Glues}
\Glue\otimes_A D\to \Glue_D
\end{equation}

\begin{Lm} The functor (\ref{functor_between_Glues}) is an equivalence.
\end{Lm}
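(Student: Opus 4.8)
The plan is to reduce the claim to a situation where both legs of the defining diagram are equivalences, and then invoke the already-established Corollary~\ref{Cor_D.1.3} together with Lemma~\ref{Lm_D.1.2}. Concretely, I would first use Lemma~\ref{Lm_D.1.8} (in the untensored case) to identify $\Fun_A(\Glue, E)$ with the iterated fibre product $\Fun_A(C_2, E)\times_{\Fun_A(C_1, E)} \Fun([1], \Fun_A(C_1, E))$, where the map from $\Fun_A(C_2, E)$ is composition with $m^R j^*$. The point is that this description only involves genuine (not partially defined) functors, so it behaves well under tensoring.

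Next I would produce the analogous description for $\Glue_D$. Because $m$ and $j^*$ have continuous $A$-linear right adjoints, so do $m\otimes\id$ and $j^*\otimes\id$ (their right adjoints are $m^R\otimes\id$ and $j_*\otimes\id$; $j_*\otimes\id$ is again fully faithful since tensoring up an idempotent comonad stays idempotent). Hence Lemma~\ref{Lm_D.1.8} applies verbatim to the diagram (\ref{diag_for_Glue_tensored_by_D}) and gives, for any $E\in A-mod$,
$$
\Fun_A(\Glue_D, E)\,\iso\, \Fun_A(C_2\otimes_A D, E)\times_{\Fun_A(C_1\otimes_A D, E)} \Fun([1], \Fun_A(C_1\otimes_A D, E)),
$$
with the structure map now composition with $(m^R\otimes\id)(j^*\otimes\id) = (m^R j^*)\otimes\id$.

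Then I would rewrite the right-hand side using the standard adjunction $\Fun_A(C_i\otimes_A D, E)\,\iso\,\Fun_A(C_i, \Fun_A(D, E))$, and Corollary~\ref{Cor_D.1.3} (which commutes $\Fun([1],-)$ past $\otimes_A D$, equivalently past the internal hom) to move the $D$ out of the $\Fun([1],-)$ term. Under these identifications the fibre product describing $\Fun_A(\Glue_D, E)$ becomes exactly the fibre product describing $\Fun_A(\Glue, \Fun_A(D, E))$, i.e.\ $\Fun_A(\Glue\otimes_A D, E)$; one checks that the comparison map (\ref{functor_between_Glues}) induces precisely this equivalence on $\Fun_A(-, E)$ for all $E$, which by Yoneda gives that (\ref{functor_between_Glues}) is an equivalence. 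I expect the main obstacle to be purely bookkeeping: verifying that the structure maps in the two fibre products match up after applying Corollary~\ref{Cor_D.1.3} and the hom--tensor adjunction, i.e.\ that the equivalence of Lemma~\ref{Lm_D.1.8} is natural enough in the input diagram to be compatible with $\otimes_A D$. This naturality is implicit in the explicit inverse constructed in the proof of Lemma~\ref{Lm_D.1.8} (it is built from fibre sequences in $\Glue$ that are manifestly preserved by $-\otimes_A D$), so the argument should go through without surprises.
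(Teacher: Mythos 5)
Your proposal is correct and follows essentially the same route as the paper: the paper also reduces to showing $\Fun_A(\Glue_D,E)\to\Fun_A(\Glue\otimes_A D,E)$ is an equivalence for every $E\in A-mod$, and deduces this from Lemma~\ref{Lm_D.1.8} applied to both gluing categories together with the fact that $\Fun_A(D,E)$ is the inner hom in $A-mod$. Your extra remarks (full faithfulness of $j_*\otimes\id$, the explicit matching via Corollary~\ref{Cor_D.1.3}) are harmless bookkeeping that the paper leaves implicit.
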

\begin{proof}
It suffices to show that for $E\in A-mod$ the canonical map 
$$
\Fun_A(\Glue_D, E)\to \Fun_A(\Glue\otimes_A D, E)
$$
is an equivalence. This follows immediately from Lemma~\ref{Lm_D.1.8} and the fact that for $D, E\in A-mod$, $\Fun_A(D, E)\in A-mod$ is the inner hom in the symmetric monoidal category $A-mod$. 
\end{proof}

\sssec{Proof of Lemma~\ref{Lm_D.1.5}} i) Tensoring the adjoint pair of Lemma~\ref{Lm_D.1.7_right_adjoint} by $D$, one gets an adjoint pair 
$$
l\otimes\id: C\otimes_A D\leftrightarrows \Glue\otimes_A D: r\otimes\id
$$ 
in $A-mod$, where $l\otimes\id$ is fully faithful. Applying Lemma~\ref{Lm_D.1.7_right_adjoint} to the diagram (\ref{diag_for_Glue_tensored_by_D}), one gets an adjoint pair in $A-mod$
$$
l_D: (C_1\otimes_A D)\times_{C_3\otimes_A D} (C_2\otimes_A D)\leftrightarrows \Glue_D: r_D,
$$
where $l_D$ is fully faithful. The diagram commutes
$$
\begin{array}{ccc}
C\otimes_A D & \to &\Glue\otimes_A D\\
\downarrow\lefteqn{\scriptstyle (\ref{functor_for_Lm_D.1.5})} && \downarrow\lefteqn{\scriptstyle (\ref{functor_between_Glues})}\\
(C_1\otimes_A D)\times_{C_3\otimes_A D} (C_2\otimes_A D) &\toup{l_D} &\Glue_D
\end{array}
$$
So, (\ref{functor_for_Lm_D.1.5}) is fully faithful. We claim that the composition
$$
\Glue_D\,\iso\, \Glue\otimes_A D\toup{r\otimes\id} C\otimes_A D\toup{(\ref{functor_for_Lm_D.1.5})} (C_1\otimes_A D)\times_{C_3\otimes_A D} (C_2\otimes_A D)\toup{l_D}\Glue_D
$$
is isomorphic to $l_Dr_D$. Indeed,both these functors send a collection $(\cG,\cF,\eta)\in\Glue_D$ to $(\cG, \tilde\cF,\epsilon)$, where $\tilde\cF\in C_1\otimes_A D$ is defined by the cartesian square
$$
\begin{array}{ccc}
\tilde \cF & \to & (j\otimes\id)_*(m\otimes\id)(\cG)\\
\downarrow && \downarrow\lefteqn{\scriptstyle (j\otimes\id)_*(\eta)}\\
\cF & \to & (j\otimes\id)_*(j\otimes\id)^*(\cF)
\end{array}
$$
in $C_1\otimes_A D$, and $\epsilon: (j\otimes\id)^*(\tilde\cF)\,\iso\, (m\otimes\id)(\cG)$ is obtained by applying $(j\otimes\id)^*$ to the above square. 

\smallskip\noindent
ii) is immediate from i).
\QED

\ssec{Proof of Proposition~\ref{Pp_4.1.10_Raskin_dualizability}} This is a version of (\cite{Ras2}, Lemma~6.18.1). We make induction by $\mid I\mid$, the case of $\mid I\mid=1$ is evident. Let $U=X^I-X$, let $j: U\hook{} X^I$ be the open embedding.
Consider the cartesian square 
$$
\begin{array}{ccc}
\bar C_{X^I}\mid_U &\gets & \bar C_{X^I}\\
\downarrow\lefteqn{\scriptstyle m} && \downarrow\\
C\otimes Shv(U) & \getsup{j^*} & C\otimes Shv(X^I)
\end{array}
$$
from Corollary~\ref{Cor_4.1.3} ii). The functor $j^*$ has a continuous $Shv(X^I)$-linear fully faithful right adjoint $j_*$. 

 We claim that $m$ admits a continuous $Shv(X^I)$-linear right adjoint $m^R$. Indeed, by induction hypothesis,  over each open subset $X^I_{\phi, d}\subset U$ for $(I\toup{\phi} I')\in \cTw(fSets)$ with $\mid I'\mid >1$ the map (\ref{functor_product_for_barC_X^I}) is an equivalence, because for any $\tilde I\in fSets$ the category $\bar C_{X^{\tilde I}}$ is dualizable. The existence of the desired right adjoint over $X^I_{\phi, d}$ follows from the fact that the product  $C^{\otimes J}\to C$ admits a right adjoint in $\DGCat_{cont}$. Now conclude the construction of $m^R$ using Zariski descent for sheaves of categories.

 Now apply Lemma~\ref{Lm_D.1.5} with $A=Shv(X^I)$. The category $\bar C_{X^I}\mid_U$ is dualizable in $Shv(X^I)-mod$ by Lemma~\ref{Lm_dualizability_is_local}, as it is dualizable locally in Zariski topology by induction hypothesis. We conclude that $\bar C_{X^I}$ is dualizable in $Shv(X^I)-mod$, and for any $D\in Shv(X^I)-mod$ the square is cartesian
$$
\begin{array}{ccc}
(\bar C_{X^I}\mid U)\otimes_{Shv(X^I)} D & \gets & \bar C_{X^I}\otimes_{Shv(X^I)} D\\
\downarrow && \downarrow\\
C\otimes Shv(U)\otimes_{Shv(X^I)} D & \gets & C\otimes D
\end{array}
$$

 To see that (\ref{map_for_Pp_4.1.10}) is an equivalence, first apply Corollary~\ref{Cor_2.3.3}. It shows that the RHS of (\ref{map_for_Pp_4.1.10}) identifies with the limit in $Shv(X^I)-mod$ of the diagram
$$
\begin{array}{ccc}
\underset{(I\toup{p} J\to K)\in (\Tw(I)^{>1})^{op}}{\lim} C^{\otimes K}\otimes (D\mid_{X^I_{p, d}})\\
\downarrow\\
\underset{(I\toup{p} J\to *)\in (\Tw(I)^{>1}\cap \Tw(I)^f)^{op}}{\lim} C\otimes (D\mid_{X^I_{p, d}})& 
\gets &C\otimes D
\end{array}
$$
It suffices to show that the latter diagram identifies with
$$
\begin{array}{ccc}
\bar C_{X^I}\mid_U\otimes_{Shv(X^I)} D\\
\downarrow\lefteqn{\scriptstyle m\otimes\id} && \downarrow\\
C\otimes (D\mid_U) & \getsup{j^*\otimes\id} & C\otimes D
\end{array}
$$
The canonical map
$$
D\otimes_{Shv(X^I)} \underset{(I\toup{p} J\to *)\in (\Tw(I)^{>1}\cap \Tw(I)^f)^{op}}{\lim} Shv(X^I_{p, d})\to \underset{(I\toup{p} J\to *)\in (\Tw(I)^{>1}\cap \Tw(I)^f)^{op}}{\lim} D\mid_{X^I_{p, d}}
$$
is an equivalence, by etale descent for the sheaf of categories $D\mid_U$ on $U$.  
Since $C$ is dualizable in $\DGCat_{cont}$, we see that the left down corners in the two diagrams are the same.

 Finally, the canonical map 
$$
\bar C_{X^I}\mid_U\otimes_{Shv(X^I)} D\to \underset{(I\toup{p} J\to K)\in (\Tw(I)^{>1})^{op}}{\lim} C^{\otimes K}\otimes (D\mid_{X^I_{p, d}})
$$
is also an equivalence. Indeed, it suffices to show this after restriction to any $X^I_{\phi, d}$ for $(I\toup{\phi} I')\in\cTw(fSets)$ with $\mid I'\mid >1$. Over that locus it follows from the induction hypothesis, factorization (already established over $U$) and cofinality argument as in Section~\ref{Sect_4.1.8}. \QED

\section{Graded version of $\Fact(C)$}
\label{Sect_E}

\sssec{}
\label{Sect_E.0.1}
 In some applications the construction of this section is useful (for example, cf. \cite{Gai19Ran}, Section 2.5.4(ii)). It also plays a key role in our construction of the so called \select{graded Hecke functors} in \cite{DL2}. 
 
 Let $C(X)\in CAlg(Shv(X)-mod)$. Let $\Lambda$ be a semigroup isomorphic to $\ZZ_+^n$ for some $n>0$. Assume that $C(X)$ is $\Lambda$-graded in a way compatible with the symmetric monoidal structure $C(X)\,\iso\, \underset{\lambda\in\Lambda}{\oplus} C(X)_{\lambda}$. 

In this case $\Fact(C)$ carries an additional structure that we describe below.

\sssec{} For $0\ne \lambda\in\Lambda$ let $X^{\lambda}$ be the moduli scheme of $\Lambda-\{0\}$-valued divisors on $X$ of degree $\lambda$. For $\lambda=0$ we let $X^0=\Spec k$. 

Recall the functor $\cF_{\Ran, C}: \cTw(fSets)\to Shv(\Ran)-mod$ given by (\ref{functor_cF_Ran_C}). For each $(J\to K)\in \cTw(fSets)$, $\underset{k\in K}{\boxtimes} C^{\otimes J_k}(X)$ carries a $\Lambda$-grading coming from that of $C(X)$ via the composition
$$
\underset{k\in K}{\boxtimes} C^{\otimes J_k}(X)\toup{\vartriangle^!} C^{\otimes J}(X)\toup{m} C(X)
$$ 
for $\vartriangle: X\to X^K$. Here $m$ is the product map. This grading is respected by the transition maps in $\cF_{\Ran, C}$. Thus, $\underset{\cTw(fSets)}{\colim} \cF_{\Ran, C}\,\iso\, \Fact(C)$ inherits a $\Lambda$-grading in $Shv(\Ran)-mod$
$$
\Fact(C)\,\iso\,\underset{\lambda\in\Lambda}{\oplus} \Fact(C)_{\lambda}.
$$  

\sssec{}  Let us describe the component $\Fact(C)_{\lambda}$. Write $\cTw(fSets)_{\lambda}$ for the category, whose objects are collections: a map $J\toup{\phi} K$ in $fSets$, a map $\und{\lambda}: J\to \Lambda$ with $\sum_j \und{\lambda}(j)=\lambda$. A map $\tau$ from $(\und{\lambda}^1, J_1\to K_1)$ to $(\und{\lambda}^2, J_2\to K_2)$ is a map from $(J_1\to K_1)$ to $(J_2\to K_2)$ in $\cTw(fSets)$ with $\phi_*\und{\lambda}^1=\und{\lambda}^2$, here $\phi: J_1\to J_2$ is the corresponding map. 

 Given $(\und{\lambda}, J\to K)$ in $\cTw(fSets)_{\lambda}$, we get the corresponding direct summand 
$$
(C^{\otimes\phi})_{\und{\lambda}}=(\underset{k\in K}{\boxtimes} C^{\otimes J_k}(X))_{\und{\lambda}}\subset\underset{k\in K}{\boxtimes} C^{\otimes J_k}(X).
$$ 
For a map $\tau$ as above the corresponding transition morphism
$$
\underset{k\in K_1}{\boxtimes} C^{\otimes (J_1)_k}(X)\to \underset{k\in K_2}{\boxtimes} C^{\otimes (J_2)_k}(X)
$$
for $\cF_{\Ran, C}$ yields a morphism
$$
(\underset{k\in K_1}{\boxtimes} C^{\otimes (J_1)_k}(X))_{\und{\lambda}^1}\to (\underset{k\in K_2}{\boxtimes} C^{\otimes (J_2)_k}(X))_{\und{\lambda}^2}
$$
so defining a functor $\cF_{\Ran, C, \lambda}: \cTw(fSets)_{\lambda}\to Shv(\Ran)-mod$.
We get
$$
\Fact(C)_{\lambda}\,\iso\, \underset{\cTw(fSets)_{\lambda}}{\colim}\cF_{\Ran, C,\lambda}.
$$

\sssec{} Let $\Theta: \Lambda\to\ZZ_+$ be the morphism of semigroups sending each generator of $\Lambda$ to $1$. For $d\in\ZZ_+$ write $X^{(d)}$ for the $d$-th symmetric power of $X$.
For $\lambda\in\Lambda$ and $d=\Theta(\lambda)$ the map $\Theta$ yields a morphism $X^{\lambda}\to X^{(d)}$, which we denote also by $D\mapsto \Theta(D)$. For $S\in\Sch^{aff}$ and an $S$-point $D\in\Map(S, X^{\lambda})$ we denote by $S\times X-\supp(D)$ the scheme $S\times X-\supp(\Theta(D))$. Here $\supp(\Theta(D))$ is the pullback under $\Theta(D)\times\id: S\times X\to X^{(d)}\times X$ of the incidence divisor.

\sssec{} 
\label{Sect_E.0.5}
For $S\in\Sch^{aff}$ and an $S$-point $\cI$ of $\Ran$ given $i\in \cI$ we write $\Gamma_i\subset S\times X$ for the graph of the corresponding map, and $\Gamma_{\cI}$ for the union of these $\Gamma_i$ for $i\in\cI$. 

 For $\lambda\in\Lambda$ define $(X^{\lambda}\times\Ran)^{\subset}$ as the prestack sending $S\in\Sch^{aff}$ to the set to those $S$-points $(D, \cI)$ of $X^{\lambda}\times\Ran$ for which $(S\times X)-\Gamma_{\cI}\subset (S\times X)-\supp(D)$. 

\sssec{} 
\label{Sect_E.0.5_now}
For $(\und{\lambda}, J\toup{\phi} K)\in \cTw(fSets)_{\lambda}$ let $K^*=\{k\in K\mid \sum_{j\in J_k} \und{\lambda}(j)\ne 0\}$. 
Consider the map $X^K\to (X^{\lambda}\times\Ran)^{\subset}$ sending $(x_k)$ to $(D, (x_k))$ with
$$
D=\underset{k\in K*}{\sum} x_k\underset{j\in J_k}{\sum}\und{\lambda}(j).
$$ 
In this way we view $\underset{k\in K}{\boxtimes} C^{\otimes J_k}(X)_{\und{\lambda}}\in Shv((X^{\lambda}\times\Ran)^{\subset})-mod$. For a map $\tau$ as above the corresponding transition map for $\cF_{\Ran, C,\lambda}$ is $Shv((X^{\lambda}\times\Ran)^{\subset})$-linear. So,
$\underset{\cTw(fSets)_{\lambda}}{\colim}\cF_{\Ran, C,\lambda}$ can be understood in $Shv((X^{\lambda}\times\Ran)^{\subset})-mod$. Finally, $\Fact(C)_{\lambda}$ is naturally an object of $Shv((X^{\lambda}\times\Ran)^{\subset})-mod$. 

\sssec{} The commutative chiral product is compatible with the $\Lambda$-grading of $\Fact(C)$. Namely, for $\mu_1,\mu_2\in\Lambda$ the commutative chiral product yields a functor 
\begin{equation}
\label{chiral_product_graded_Sect_E.0.7}
\Fact(C)_{\mu_1}\boxtimes \Fact(C)_{\mu_2}\to u^!\Fact(C)_{\mu}
\end{equation} 
in $ShvCat(X^{\mu_1}\times X^{\mu_2})$, where $\mu=\mu_1+\mu_2$ and
$u: X^{\mu_1}\times X^{\mu_2}\to X^{\mu}$ is the sum map.

   Indeed, we have the functor $\cTw(fSets)_{\mu_1}\times \cTw(fSets)_{\mu_2}\to \cTw(fSets)_{\mu}$ sending $(\und{\mu}^1, J_1\to K_1), (\und{\mu}^2, J_2\to K_2)$ to $(\und{\mu}, J\to K)$, where $J=J_1\sqcup J_2, K=K_1\sqcup K_2$, and $\und{\mu}$ is obtained from $\und{\mu}_i$. For each $(\und{\mu}^1, J_1\to K_1), (\und{\mu}^2, J_2\to K_2)$ as above we have
$$
(\underset{k\in K_1}{\boxtimes} C^{\otimes (J_1)_k}(X)_{\und{\mu}^1})\boxtimes (\underset{k\in K_2}{\boxtimes} C^{\otimes (J_2)_k}(X)_{\und{\mu}^2})\iso \underset{k\in K}{\boxtimes} C^{\otimes J_k}(X)_{\und{\mu}}
$$
an isomorphism in $Shv(X^{\mu_1}\times X^{\mu_2})-mod$. The colimit over $\cTw(fSets)_{\mu_1}\times \cTw(fSets)_{\mu_2}$ in the above maps yields the morphism $\Fact(C)_{\mu_1}\boxtimes \Fact(C)_{\mu_2}\to \Fact(C)_{\mu}$ in $Shv(X^{\mu})-mod$, where on the LHS it acts through $u^!: Shv(X^{\mu})\to Shv(X^{\mu_1}\times X^{\mu_2})$. By Section~\ref{Sect_B.1.16}, $Shv(X^{\mu_1}\times X^{\mu_2})$ is canonically self-dual in $Shv(X^{\mu})-mod$. So, by adjointness given !n (\cite{Ly}, 3.2.1), this yields the desired morphism (\ref{chiral_product_graded_Sect_E.0.7}). 

This implies that the non-unital symmetric monoidal structure $\star$ on $\Gamma(\Ran, \Fact(C))$ defined in Section~\ref{Sect_Non-unital symmetric monoidal structure_Fact(C)(Ran)} is compatible with the $\Lambda$-grading of $\Fact(C)$. 

\sssec{} For $I\in fSets$ let us describe the category $\Fact(C)_{\lambda}\otimes_{Shv(\Ran)} Shv(X^I)$. 

 Let $\Tw(I)_{\lambda}$ be the category, whose objects are diagrams $(I\to J\to K)\in\Tw(I)$ and $\und{\lambda}: J\to\Lambda$ with $\sum_{j\in J} \und{\lambda}(j)=\lambda$. A maps from the first object to the second is given by a morphism
$$
\begin{array}{ccccc}
I & \to & J_1 & \to & K_1\\
\Vert && \downarrow\lefteqn{\scriptstyle \tau} && \uparrow\\
I & \to & J_2 & \to & K_2
\end{array}
$$
in $\Tw(I)$ such that $\tau_*\und{\lambda}^1=\und{\lambda}^2$. 

 Consider the functor 
$$
\cF_{I, C,\lambda}: \Tw(I)_{\lambda}\to Shv((X^{\lambda}\times X^I)^{\subset})-mod
$$ 
sending $(I\to J\toup{\phi} K, \und{\lambda})$ to $\vartriangle_!^{(I/K)}(C^{\otimes\phi})_{\und{\lambda}}$. The transition maps are exactly those as for the functor $\cF_{I, C}$ from Section~\ref{Sect_2.1.3_now}.  
\begin{Lm} If $\lambda\in\Lambda, I\in fSets$ then 
$$
\Fact(C)_{\lambda}\otimes_{Shv(\Ran)} Shv(X^I)\,\iso\, \underset{\Tw(I)_{\lambda}}{\colim} \cF_{I, C,\lambda}
$$ 
canonically. In particular, $\Fact(C)_{\lambda}\otimes_{Shv(\Ran)} Shv(X)\,\iso\, C(X)_{\lambda}$ canonically in $Shv(X)-mod$.
\end{Lm}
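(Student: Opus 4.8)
The plan is to imitate the proof that $\Fact(C)$ is a sheaf of categories on $\Ran$ (Sections~\ref{Sect_2.1.4_now}--\ref{Sect_2.1.7_now}), now working in the graded setting. Recall that $\Fact(C)_{\lambda}\,\iso\,\underset{\cTw(fSets)_{\lambda}}{\colim}\cF_{\Ran, C,\lambda}$, and, as for any sheaf of categories on $\Ran$ (cf. the proof of Lemma~\ref{Lm_2.2.8_Ran_is_1-affine}), $\Fact(C)_{\lambda}\otimes_{Shv(\Ran)} Shv(X^I)$ is computed by base change along $\vartriangle^I: X^I\to \Ran$. Since $Shv(X^I)$ is dualizable in $Shv(\Ran)-mod$ by Lemma~\ref{Lm_2.1.11}, tensoring commutes with the defining colimit, so
$$
\Fact(C)_{\lambda}\otimes_{Shv(\Ran)} Shv(X^I)\,\iso\,\underset{(\und{\lambda}, J\to K)\in\cTw(fSets)_{\lambda}}{\colim}\; (C^{\otimes\phi})_{\und{\lambda}}\otimes_{Shv(\Ran)} Shv(X^I).
$$
Using the description of $X^K\times_{\Ran} X^I$ as $\underset{I\twoheadrightarrow\bar K\twoheadleftarrow K}{\colim} X^{\bar K}$ exactly as in the proof of Lemma~\ref{Lm_2.1.2_about_Fact(C)}, this colimit is reindexed over the graded analogue $\cE_{\lambda}$ of the category $\cE$ from that proof, whose objects are diagrams $(I\to\bar K\gets K\getsup{\phi} J)$ together with $\und{\lambda}: J\to\Lambda$ of total degree $\lambda$.

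The remaining work is to run the same sequence of cofinality reductions as in Lemma~\ref{Lm_2.1.2_about_Fact(C)}, all of which are compatible with the extra datum $\und{\lambda}$ since the functors $l_0$ and $q$ there do not touch $J$. First, the adjoint pair $r_0\dashv l_0$ lifts to an adjoint pair between $\cE_{0,\lambda}\subset\cE_{\lambda}$ and $\cE_{\lambda}$ (the subcategory where $K\to\bar K$ is an isomorphism), so $l_0$ is cofinal and the colimit is computed over $\cE_{0,\lambda}$. Then the functor $q: Tw(I)\to\cE_0$ lifts to $q_{\lambda}: Tw(I)_{\lambda}\to\cE_{0,\lambda}$, sending $((I\to J\to K),\und{\lambda})$ to $((I\to K\gets J),\und{\lambda})$; it is cofinal by the same argument (it admits a left or right adjoint that merely rearranges the surjections and carries $\und{\lambda}$ along unchanged). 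Composing with $q_{\lambda}$ identifies the diagram with $\cF_{I, C,\lambda}$ as defined in the statement, since the objectwise value $\vartriangle^{(I/\bar K)}_!\vartriangle^{(K/\bar K)!}(C^{\otimes\phi})_{\und\lambda}$ becomes $\vartriangle^{(I/K)}_!(C^{\otimes\phi})_{\und\lambda}$ after the first reduction, and the transition maps match those of $\cF_{I,C}$ restricted to the graded components by construction of $\cF_{\Ran, C,\lambda}$. This yields the claimed equivalence
$$
\Fact(C)_{\lambda}\otimes_{Shv(\Ran)} Shv(X^I)\,\iso\,\underset{\Tw(I)_{\lambda}}{\colim}\cF_{I, C,\lambda}.
$$

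For the last assertion, take $I=\{\ast\}$. Then $\Tw(\{\ast\})_{\lambda}$ consists of diagrams $(\{\ast\}\to J\to K,\und{\lambda})$; one checks directly that it has a terminal object, namely $(\{\ast\}\toup{\id}\{\ast\}\toup{\id}\{\ast\})$ with the unique $\und{\lambda}$ of value $\lambda$, exactly as $^0Tw(\{\ast\})$ has an initial object in Lemma~\ref{Lm_3.5.2_generated_under_colim} — more precisely, the subcategory $^0Tw(\{\ast\})_{\lambda}$ is cofinal and terminal, so the colimit collapses to the value of $\cF_{\{\ast\}, C,\lambda}$ on this object, which is $C(X)_{\lambda}$. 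The compatibility with the $Shv(X)$-module structure is immediate since over $X^{\{\ast\}}=X$ the prestack $(X^{\lambda}\times X)^{\subset}$ maps to $X$ and the relevant transition maps were already seen to be $Shv(X)$-linear.

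The main obstacle is bookkeeping rather than anything conceptual: one must verify that each cofinality statement from the proof of Lemma~\ref{Lm_2.1.2_about_Fact(C)} genuinely lifts to the graded categories, i.e.\ that the relevant adjunctions are compatible with the $\und{\lambda}$-labels (which they are, because pushforward of a $\Lambda$-valued function along a surjection is functorial and the adjoints in question act only on the surjection data, leaving the weight total fixed), and that the identification of transition maps with those of $\cF_{I,C}$ is faithful to the grading — this follows from the definition of $\cF_{\Ran, C,\lambda}$ as the restriction of $\cF_{\Ran, C}$ to graded summands in Section~\ref{Sect_E.0.1}.
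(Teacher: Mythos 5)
Your overall route is the same as the paper's: base change along $\vartriangle^I$ using the description of $X^K\times_{\Ran}X^I$, reindexing over the graded category $\cE_{\lambda}$, cofinality of $l_0$ via the lifted adjunction $r_0\dashv l_0$, and finally cofinality of $q_{\lambda}: \Tw(I)_{\lambda}\to \cE^0_{\lambda}$, after which the diagram is identified with $\cF_{I,C,\lambda}$; the treatment of the case $I=\{*\}$ is also fine. The one genuine gap is your justification of the last cofinality: you assert that $q_{\lambda}$ ``admits a left or right adjoint that merely rearranges the surjections.'' A right adjoint would be irrelevant (it would give contractibility of the over-comma categories, i.e.\ finality for limits, not cofinality for colimits), and a left adjoint does not exist. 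Concretely, take $\Lambda=\ZZ_+$, $\lambda=2$, $I=\{1,2\}$, and $c=(I\to *\gets J,\und{\lambda})\in\cE^0_{\lambda}$ with $J=\{1,2\}$ and $\und{\lambda}\equiv 1$. In the comma category $\Tw(I)_{\lambda}\times_{\cE^0_{\lambda}}(\cE^0_{\lambda})_{c/}$ consider the two objects given by $t=(I\toup{\id}I\toup{\id}I)$ with $\tau=\id$ and with $\tau$ the swap of $J$. Any object mapping to either of them must have middle term $I$ (a quotient $J_{1,0}$ of $I$ equal to $*$ admits no map to $I$ under $I$), the map $J_{1,0}=I\to I$ must be the identity, and then the compatibility forces $\tau_0=\tau$; hence no object maps to both, the comma category has no initial object, and $q_{\lambda}$ has no left adjoint.

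So the conclusion you need (cofinality of $q_{\lambda}$) is true but requires an actual argument: the paper devotes Lemma~\ref{Lm_E.0.10} to it, showing each comma category $\Tw(I)_{\lambda}\times_{\cE^0_{\lambda}}(\cE^0_{\lambda})_{c/}$ is weakly contractible in two steps — an adjunction retracts it onto the full subcategory where the component $\eta: K_1\to K$ is an isomorphism (this only gives an equivalence of geometric realizations, not an initial object), and that subcategory is then seen to have a final object. Replacing your parenthetical claim by this comma-category argument (or a citation of it) closes the gap; the rest of your sketch matches the paper's proof.
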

\begin{proof}
Consider the category $\cE_{\lambda}$ whose objects are: a diagram in $fSets$
 $$
 (I\to \bar K\gets K\getsup{\phi} J)
 $$
 and $\und{\lambda}: J\to \Lambda$ with $\sum_{j\in J} \und{\lambda}(j)=\lambda$. A map from the first object to the second is given by a commutative diagram in $fSets$
 $$
 \begin{array}{ccccccc}
 I & \to & \bar K_1 & \gets & K_1 & \getsup{\phi_1} & J_1\\
 & \searrow & \uparrow && \uparrow && \downarrow\lefteqn{\scriptstyle \tau}\\
 && \bar K_2 & \gets & K_2 & \getsup{\phi_2} & J_2
\end{array}
$$ 
for which $\tau_*\und{\lambda}^1=\und{\lambda}^2$. As in Section 2.1.12 of the paper, we get
\begin{equation}
\label{category_Fact(C)_lambda_base_changed}
\Fact(C)_{\lambda}\otimes_{Shv(\Ran)} Shv(X^I)\,\iso\, \underset{(I\to \bar K\gets K\getsup{\phi} J, \und{\lambda})\in \cE_{\lambda}}{\colim} \vartriangle^{(I/\bar K)}_!\vartriangle^{(K/\bar K)!} (C^{\otimes\phi})_{\und{\lambda}}
\end{equation}
taken in $Shv(X^I)-mod$. Here $X^I \getsup{\vartriangle^{(I/\bar K)}} X^{\bar K} \toup{\vartriangle^{(K/\bar K)}} X^K$ are the corresponding maps. 

  Write $\bar\phi$ for the composition $J\toup{\phi} K\to \bar K$ for an object of $\cE_{\lambda}$. Then 
$$
{\vartriangle^{(K/\bar K)!}(C^{\otimes\phi})_{\und{\lambda}}}\,\iso\, (C^{\otimes\bar\phi})_{\und{\lambda}}.
$$   

 Let $r_0: \cE^0_{\lambda}\subset \cE_{\lambda}$ be the full subcategory given by the property that $K\to \bar K$ is an isomorphism. Think of $\cE^0_{\lambda}$ as the category classifying $(I\to \bar K\gets J, \und{\lambda}: J\to\Lambda)$, where 
$$
(J\to \bar K, \und{\lambda})\in \cTw(fSets)_{\lambda}.
$$  

 Let $l_0: \cE_{\lambda}\to \cE^0_{\lambda}$ be the functor sending $(I\to \bar K\gets K\gets J, \und{\lambda})$ to $(I\to \bar K\gets J, \und{\lambda})$. Then $r_0$ is left adjoint to $l_0$, so $l_0$ is cofinal. Now (\ref{category_Fact(C)_lambda_base_changed}) identifies with 
\begin{equation}
\label{expression_2}
\underset{(I\to K\getsup{\phi} J, \und{\lambda})\in \cE^0_{\lambda}}{\colim}  \vartriangle^{(I/K)}_! (C^{\otimes\phi})_{\und{\lambda}}
\end{equation}

Consider the functor $q_{\lambda}: \Tw(I)_{\lambda}\to \cE^0_{\lambda}$ sending $(I\toup{a} J\toup{\phi} K, \und{\lambda})$ to $(I\toup{\phi\comp a} K\gets J, \und{\lambda})$. 
Our claim follows now from Lemma~\ref{Lm_E.0.10} below.
\end{proof}
 
\begin{Lm} 
\label{Lm_E.0.10}
The functor $q_{\lambda}: \Tw(I)_{\lambda}\to \cE^0_{\lambda}$ is cofinal.
\end{Lm}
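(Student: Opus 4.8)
\textbf{Proof plan for Lemma~\ref{Lm_E.0.10}.} The plan is to mimic the proof that $q: Tw(I)\to \cE_0$ is cofinal used in Lemma~\ref{Lm_2.1.2_about_Fact(C)}, adding the bookkeeping of the $\Lambda$-grading. By Joyal's version of Quillen's Theorem A (as used throughout the paper, e.g. via \cite{G}, I.1, 2.2.3), it suffices to show that for every object $(I\to K\getsup{\phi} J,\und{\lambda})\in\cE^0_{\lambda}$ the comma category $\Tw(I)_{\lambda}\times_{\cE^0_{\lambda}} (\cE^0_{\lambda})_{(I\to K\gets J,\und{\lambda})/}$ is weakly contractible. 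First I would unwind what this comma category is: its objects are diagrams $(I\toup{a} J'\toup{\phi'} K',\und{\lambda}')$ in $\Tw(I)_{\lambda}$ together with a morphism in $\cE^0_{\lambda}$ from $(I\to K\gets J,\und{\lambda})$ to $q_{\lambda}(I\to J'\to K',\und{\lambda}')=(I\to K'\gets J',\und{\lambda}')$; the grading-compatibility is the extra condition that the relevant map $J\to J'$ (or $J'\to J$, depending on variance) intertwines $\und{\lambda}$ and $\und{\lambda}'$.

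The key step is to exhibit an initial object of this comma category, exactly as in the ungraded case. In Lemma~\ref{Lm_2.1.2_about_Fact(C)} the functor $q$ is shown cofinal because the comma categories have initial objects built from the given data $(I\to K\gets J)$ itself — one takes $J'=J$, $K'=K$, $a$ the composition $I\to K$ factored appropriately, or the minimal such completion. Here the same candidate carries a canonical grading: since $\und{\lambda}: J\to\Lambda$ is part of the datum and $J'$ is built directly from $J$ (via identity or a pushout along $I\gets I\to J$-type diagrams), $\und{\lambda}$ transports canonically to $\und{\lambda}'$ with the correct total $\sum\und{\lambda}'=\lambda$, because the transport maps in $\cE^0_\lambda$ are required to push $\und\lambda$ forward and pushing forward preserves the total sum. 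Thus the initial object of the ungraded comma category lifts uniquely to an initial object of the graded one, and uniqueness of the lift is forced by the condition $\tau_*\und\lambda^1=\und\lambda^2$. Contractibility follows.

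Concretely I would organize it as: (i) recall that $q_\lambda$ sits over $q: Tw(I)\to \cE_0$ via the forgetful functors $\Tw(I)_\lambda\to Tw(I)$ and $\cE^0_\lambda\to\cE_0$, both of which are cartesian fibrations (or at least discrete fibrations in the relevant direction, since a grading summing to a fixed $\lambda$ is extra \emph{property}-like structure once the pushforward maps are fixed); (ii) observe that the comma category for $q_\lambda$ over a fixed object maps to the comma category for $q$ over its image, and this map is itself a right fibration with weakly contractible (in fact, punctual or at worst connected) fibers because specifying the grading along a pushforward is unique; (iii) conclude by the already-known contractibility of the comma categories for $q$. Alternatively, and perhaps cleaner, one shows directly that $\Tw(I)_\lambda\to\cE^0_\lambda$ admits a right adjoint (as $q$ does in the ungraded setting — the right adjoint sends $(I\to K\gets J,\und\lambda)$ to $(I\to J\to K,\und\lambda)$ suitably interpreted), and a functor with a right adjoint is cofinal.

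The main obstacle I anticipate is purely notational: keeping straight the variances of the maps $J\to J'$ versus $J'\to J$ in $\Tw(I)_\lambda$ (covariant in the middle set, contravariant in $K$) and checking that the grading condition $\tau_*\und\lambda^1=\und\lambda^2$ is compatible with the adjunction unit/counit — i.e. that the candidate right adjoint actually lands in $\Tw(I)_\lambda$ with the correct grading and that the triangle identities hold at the level of gradings. Since pushforward of $\Lambda$-valued functions along surjections is functorial and sum-preserving, no genuine difficulty arises; the argument is a routine enhancement of Lemma~\ref{Lm_2.1.2_about_Fact(C)}, and I would present it in a few lines by citing the ungraded cofinality and noting the fibers of $\Tw(I)_\lambda\to\cE^0_\lambda$ over the relevant comma categories are contractible.
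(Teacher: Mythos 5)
Your proof has a genuine gap at its central step. The comma category $\Tw(I)_{\lambda}\times_{\cE^0_{\lambda}} (\cE^0_{\lambda})_{c/}$ does \emph{not} have an initial object, and the candidate you propose ("take $J'=J$, $K'=K$") is not even an object of $\Tw(I)_{\lambda}$: an object $c=(I\to K\getsup{\phi} J,\und{\lambda})$ of $\cE^0_{\lambda}$ carries no surjection $I\to J$, so $(I\to J\to K)$ makes no sense as an object of the twisted arrow category over $I$. This is exactly the difficulty the construction has to overcome. A concrete check: for $I=\{1,2\}$ and $c=(I\to \ast\gets\{j\},\und{\lambda}(j)=\lambda)$ the comma category is the zigzag $(I\to I\to I)_x\gets (I\to I\to\ast)_x\to (I\to\ast\to\ast)\gets (I\to I\to\ast)_y\to (I\to I\to I)_y$ (the subscript marking where the grading sits), which is contractible but has neither an initial nor a final object. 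Your fallback argument fails for two independent reasons: the proposed "right adjoint" $(I\to K\gets J,\und{\lambda})\mapsto (I\to J\to K,\und{\lambda})$ is ill-defined for the same reason as above, and in any case a functor admitting a \emph{right} adjoint is coinitial, not cofinal; cofinality follows from having a \emph{left} adjoint (this is the form used throughout the paper), and $q_{\lambda}$ has no left adjoint precisely because its comma categories lack initial objects. Your reading of the ungraded case is also inaccurate: Lemma~\ref{Lm_2.1.2_about_Fact(C)} merely asserts that $q$ is cofinal, with no initial-object argument to imitate.

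The actual proof is a two-step argument you would need to supply. First one passes to the full subcategory $C$ of the comma category consisting of objects whose structure map has $\eta: K_1\to K$ an isomorphism; the inclusion $j$ admits a right adjoint $j^R$ (replace $(I\to J_1\toup{\phi_1} K_1)$ by $(I\to J_1\toup{\eta\phi_1} K)$), which yields only a weak homotopy equivalence of classifying spaces, not an equivalence of categories. Second, $C$ does have a final object, namely $(I\to K\toup{\id} K)$ with grading $\phi_*\und{\lambda}$ and structure map $(\id_K,\phi)$, whence $C$ and hence the comma category are contractible. The one sound observation in your write-up is step (ii): the grading on the target is uniquely determined by pushforward along $\tau$, so the graded comma category is equivalent to the ungraded one; but this only reduces Lemma~\ref{Lm_E.0.10} to the (unproved in the paper) ungraded cofinality of $q$, and your argument for that statement is the same flawed one, so the reduction does not close the gap.
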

\begin{proof} Pick an object $c=(I\to K\gets J, \und{\lambda}: J\to\Lambda)\in \cE^0_{\lambda}$. We show that the category $\Tw(I)_{\lambda}\times_{\cE^0_{\lambda}} (\cE^0_{\lambda})_{c/}$ is contractible. An object of the latter category is given by $(I\toup{a_1} J_1\toup{\phi_1} K_1, \und{\lambda}^1: J_1\to\Lambda)\in \Tw(I)_{\lambda}$ together with a commutative diagram in $fSets$
$$
\begin{array}{ccccc}
I &\to &K &\gets &J\\
\Vert && \uparrow\lefteqn{\scriptstyle\eta} && \downarrow\lefteqn{\scriptstyle \tau}\\
I &\toup{\phi_1a_1} & K_1 & \getsup{\phi_1} & J_1
\end{array}
$$
such that $\tau_*\und{\lambda}=\und{\lambda}^1$. 

 Let $j: C\hook{} \Tw(I)_{\lambda}\times_{\cE^0_{\lambda}} (\cE^0_{\lambda})_{c/}$ be the full subcategory given by the property that $\eta$ is an isomorphism. Denote by $j^R: \Tw(I)_{\lambda}\times_{\cE^0_{\lambda}} (\cE^0_{\lambda})_{c/}\to C$ the functor sending a point of the source as above to $(I\toup{a_1} J_1\toup{\eta\phi_1} K, \und{\lambda}^1)\in \Tw(I)_{\lambda}$ together with the diagram
$$
\begin{array}{ccccc}
I &\to &K &\gets &J\\
\Vert && \uparrow\lefteqn{\scriptstyle\id} && \downarrow\lefteqn{\scriptstyle\tau}\\
I &\toup{\eta\phi_1a_1} & K & \getsup{\eta\phi_1} & J_1
\end{array}
$$
Then $j$ is left adjoint to $j^R$. So, they yield equiavlences $\mid C\mid\,\iso\, \mid \Tw(I)_{\lambda}\times_{\cE^0_{\lambda}} (\cE^0_{\lambda})_{c/}\mid$. It remains to show that $C$ is contractible.
 
 An object of $C$ is a commutative diagram in $fSets$
$$
\begin{array}{ccccc}
I &\to &K & \gets & J\\
& \searrow &\uparrow & \swarrow\lefteqn{\scriptstyle \tau}\\
&& J_1,
\end{array}
$$
so that $\und{\lambda}^1=\tau_*\und{\lambda}$. The category $C$ has a final object given by $(J_1\to K)$ equal to $(K\toup{\id} K)$.
\end{proof}  
 
\sssec{Graded factorization algebras} 
\label{Sect_Graded factorization algebras}
Let now $A\in CAlg^{nu}(C(X))$. Assume we have a grading $A=\underset{\lambda\in\Lambda}{\oplus} A_{\lambda}$, $A_{\lambda}\in C(X)_{\lambda}$. Then $\Fact(A)\in \Fact(C)$ inherits a $\Lambda$-grading $\Fact(A)\,\iso\, \underset{\lambda\in\Lambda}{\oplus} \Fact(A)_{\lambda}$, where $\Fact(A)_{\lambda}$ is as follows.

 Recall the functor $\cF_{\Ran, A}: \cTw(fSets)\to\Fact(C)$ from Section~\ref{Sect_2.4.4_now}. For $\lambda\in\Lambda$ define similarly the functor 
$$
\cF_{\Ran, A,\lambda}: \cTw(fSets)_{\lambda}\to \Fact(C)_{\lambda}
$$ 
as follows. It sends $(\und{\lambda}, J\toup{\phi} K)$ to the image of $\underset{k\in K}{\boxtimes} (\underset{j\in J_k}{\otimes} A_{\und{\lambda}(j)})$ under $\underset{k\in K}{\boxtimes} (\underset{j\in J_k}{\otimes} C(X)_{\und{\lambda}(j)})\to \Fact(C)_{\lambda}$. It sends a map $\tau$ from $(\und{\lambda}^1, J_1\to K_1)$ to $(\und{\lambda}^1, J_1\to K_1)$ to the image of the composition
$$
m(\underset{k\in K_1}{\boxtimes} (\underset{j\in (J_1)_k}{\otimes} A_{\und{\lambda}^1(j)}))\to \underset{k\in K_1}{\boxtimes} (\underset{j\in (J_2)_k}{\otimes} A_{\und{\lambda}^2(j)})\toup{\vartriangle_!} \underset{k\in K_2}{\boxtimes} (\underset{j\in (J_2)_k}{\otimes} A_{\und{\lambda}^2(j)})
$$
under the functor $\underset{k\in K_2}{\boxtimes} (\underset{j\in (J_2)_k}{\otimes} C(X)_{\und{\lambda}^2(j)})\to \Fact(C)_{\lambda}$. Here we used the transition map
$$
\underset{k\in K_1}{\boxtimes} (\underset{j\in (J_1)_k}{\otimes} C(X)_{\und{\lambda}^1(j)})\toup{m} 
\underset{k\in K_1}{\boxtimes} (\underset{j\in (J_2)_k}{\otimes} C(X)_{\und{\lambda}^2(j)})\toup{\vartriangle_!} \underset{k\in K_2}{\boxtimes} (\underset{j\in (J_2)_k}{\otimes} C(X)_{\und{\lambda}^2(j)})
$$
attached to $\tau$ by $\cF_{\Ran, C,\lambda}$. 

\ssec{Non-unital version}
\label{Sect_E1}

\sssec{} 
\label{Sect_E.1.1}
Assume $C(X)\in CAlg^{nu}(Shv(X)-mod)$. Assume $C(X)=\underset{\lambda\in\Lambda^*} C(X)_{\lambda}$ is graded by $\Lambda^*:=\Lambda-\{0\}$ in a way compatible with the symmetric monoidal structure. One has the evident non-unital version of the above construction.

 Namely, for $(J\to K)\in \cTw(fSets)$, $\underset{k\in K}{\boxtimes} C^{\otimes J_k}(X)$ carries a similar grading by $\Lambda^*$ respected by the transition maps in $\cF_{\Ran, C}$. Thus, $\colim \cF_{\Ran, C}\,\iso\, \Fact(C)$ inherits a $\Lambda^*$-grading $\Fact(C)\,\iso\,\underset{\lambda\in \Lambda^*}{\oplus}\Fact(C)_{\lambda}$ in $Shv(\Ran)-mod$.
 
\sssec{} 
\label{Sect_E.1.2_now}
For $\lambda\in\Lambda^*$ the component $\Fact(C)_{\lambda}$ is described as follows.

For $\lambda\in\Lambda^*$ write $\cTw(fSets)^{\lambda}$ for the category, whose objects are collections: a map $(J\toup{\phi} K)$ in $fSets$ and a map $\und{\lambda}: J\to \Lambda^*$ with $\sum_j \und{\lambda}(j)=\lambda$. A map $\tau$ in $\cTw(fSets)^{\lambda}$ from $(\und{\lambda}^1, J_1\to K_1)$ to $(\und{\lambda}^2, J_2\to K_2)$ is a map (\ref{map_in_cTw(fSets)}) in $\cTw(fSets)$ with $\phi_*\und{\lambda}^1=\und{\lambda}^2$, here $\phi: J_1\to J_2$ is the corresponding map. 

 Given $(\und{\lambda},  J\to K)\in \cTw(fSets)^{\lambda}$, we get the corresponding direct summand
\begin{equation}
\label{summand_for_Sect_E.1.2}
\underset{k\in K}{\boxtimes} C^{\otimes J_k}(X)_{\und{\lambda}}
\end{equation}
in $\underset{k\in K}{\boxtimes} C^{\otimes J_k}(X)$. One similarly gets a functor $\cF_{\Ran, C}^{\lambda}: \cTw(fSets)^{\lambda}\to Shv(\Ran)-mod$ sending $(\und{\lambda}, J\to K)$ to (\ref{summand_for_Sect_E.1.2}). We get
$$
\Fact(C)_{\lambda}\,\iso\, \underset{\cTw(fSets)^{\lambda}}{\colim} \cF_{\Ran, C}^{\lambda}.
$$

\sssec{} 
\label{Sect_E.1.3}
For $(\und{\lambda},  J\to K)\in \cTw(fSets)^{\lambda}$ we have the map $X^K\to (X^{\lambda}\times\Ran)^{\subset}$ sending $(x_k)$ to $(D, (x_k))$ with
\begin{equation}
\label{divisor_D_for_Sect_E.1.3}
D=\sum_{k\in K} x_k\sum_{j\in J_k}\und{\lambda}(j).
\end{equation} 
This way we view (\ref{summand_for_Sect_E.1.2}) as an object of $Shv((X^{\lambda}\times\Ran)^{\subset})-mod$. For a map $\tau$ as above the corresponding transition map in $\cF_{\Ran, C}^{\lambda}$ is $Shv((X^{\lambda}\times\Ran)^{\subset})$-linear, so $\underset{\cTw(fSets)^{\lambda}}{\colim} \cF_{\Ran, C}^{\lambda}$ can be understood in $Shv((X^{\lambda}\times\Ran)^{\subset})-mod$, and $\Fact(C)_{\lambda}\in Shv((X^{\lambda}\times\Ran)^{\subset})-mod$.

\sssec{} Write $fSets^{\lambda}$ for the category, whose objects are pairs $(\und{\lambda}, K)$, where $K\in fSets$ and $\und{\lambda}: K\to \Lambda^*$ is a map such that $\sum_k \und{\lambda}(k)=\lambda$. A map from $(K_1, \und{\lambda}^1)$ to $(K_2, \und{\lambda}^2)$ in $fSets^{\lambda}$  is a map $\tau: K_1\to K_2$ in $fSets$ such that $\tau_*\und{\lambda}^1=\und{\lambda}^2$. 

\begin{Lm} 
\label{Lm_E.1.5}
For $\lambda\in\Lambda^*$ one has canonically $\Fact(C)_{\lambda}\otimes_{Shv(X^{\lambda})} Shv(X)\,\iso\, C(X)_{\lambda}$ in $Shv(X)-mod$.
\end{Lm}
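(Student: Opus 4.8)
The statement asserts that for $\lambda\in\Lambda^*$ one has a canonical equivalence
$\Fact(C)_\lambda\otimes_{Shv(X^\lambda)}Shv(X)\,\iso\, C(X)_\lambda$ in $Shv(X)-mod$. My plan is to follow the same template as in the unital graded case (\lemref{Lm_E.0.10} and the lemma preceding it, i.e. the computation of $\Fact(C)_\lambda\otimes_{Shv(\Ran)}Shv(X^I)$), but now for the non-unital $\Lambda^*$-grading and base-changing along $X\hook{}X^\lambda$ rather than $X\to\Ran$. First I would unwind the definitions: by Section~\ref{Sect_E.1.2_now}, $\Fact(C)_\lambda\,\iso\,\underset{\cTw(fSets)^\lambda}{\colim}\cF^\lambda_{\Ran, C}$, viewed (by Section~\ref{Sect_E.1.3}) in $Shv((X^\lambda\times\Ran)^{\subset})-mod$. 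The base change $\cdot\otimes_{Shv(X^\lambda)}Shv(X)$ selects the locus where the colored divisor $D$ of degree $\lambda$ is supported ``set-theoretically'' at a single point of $X$; concretely, restricting the structure map $X^K\to(X^\lambda\times\Ran)^{\subset}$ along the small diagonal $X\to X^\lambda$ forces all the points $x_k$ with $\und\lambda$ nonzero mass to collide at one point, hence forces the map $J\to K$ (restricted to the nonzero part) to factor through a single element.

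The key combinatorial step is therefore to identify the resulting indexing category. After base change, the only surviving objects $(\und\lambda, J\toup\phi K)\in\cTw(fSets)^\lambda$ are those for which the transition diagram restricted to $X$ is nonempty, and on that locus the relevant sheaf theory contribution collapses: by \lemref{Lm_3.2.4_from_chiral_algebras}-type reasoning (the fibre product $X^K\times_{X^\lambda}X$ being the appropriate small diagonal), $\underset{k\in K}{\boxtimes}C^{\otimes J_k}(X)_{\und\lambda}$ restricted to $X$ becomes $C^{\otimes J}(X)_{\und\lambda}$, the $\und\lambda$-graded summand of the $J$-th tensor power over the single point. One then shows that the colimit over $\cTw(fSets)^\lambda$ of these restricted terms is computed by a cofinal subcategory: the full subcategory of $(\und\lambda, J\toup\phi K)$ with $K=\{*\}$, which has an initial object, or equivalently one checks that $\cTw(fSets)^\lambda$ restricted to $X$ retracts onto $fSets^\lambda$ (the category of Section preceding \lemref{Lm_E.1.5}) via an adjunction, exactly as $\beta$ and $\xi^f$ worked in Section~\ref{Sect_2.1.7_now}. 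Then the colimit over $fSets^\lambda$ of the $\und\lambda$-summands of $C^{\otimes K}(X)$, with transition maps given by the multiplication in $C(X)$ along surjections $K_1\to K_2$, computes precisely $C(X)_\lambda$ — this is just the $\lambda$-graded piece of the statement $C_{X}\,\iso\, C(X)$ for $I$ a single point (the colimit over $Tw(\{*\})$ being trivial since $Tw(\{*\})$ has a terminal object $(*\to *\to *)$), refined by the grading.

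Concretely I would carry out: (1) rewrite $\Fact(C)_\lambda\otimes_{Shv(X^\lambda)}Shv(X)$ as a colimit over a category $\cE^\lambda$ of diagrams $(X\to\bar K\gets K\gets J,\und\lambda)$ exactly mimicking the proof of the lemma before \lemref{Lm_E.0.10}, using the description of $X^K\times_{(X^\lambda\times\Ran)^{\subset}}(X\times X^?)$; (2) reduce via the adjoint pair $(r_0,l_0)$ to the subcategory where $K\to\bar K$ is an isomorphism; (3) reduce via a further cofinal functor to $fSets^\lambda$ or directly to the terminal object $(\und\lambda, J=K=\{*\})$; (4) observe the surviving term is $C^{\otimes\{*\}}(X)_\lambda=C(X)_\lambda$. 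The main obstacle I anticipate is step (1)–(2): being careful about what the prestack $(X^\lambda\times\Ran)^{\subset}$ restricts to over the small diagonal $X\to X^\lambda$, and verifying that the relevant fibre products of the $X^K$'s collapse in the way claimed so that the cofinality arguments of Section~\ref{Sect_2.1.7_now} and \lemref{Lm_E.0.10} apply verbatim. Once the indexing category is correctly identified as $fSets^\lambda$ (which has an initial object given by $K=\{*\}$ with $\und\lambda(*)=\lambda$, provided $\lambda\in\Lambda^*$ so that this is a legitimate object — here the non-unitality, i.e. $\lambda\ne 0$, is what makes $fSets^\lambda$ nonempty and well-behaved), the conclusion is immediate.
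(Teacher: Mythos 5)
Your proposal follows essentially the same route as the paper: base change along the closed immersion $X\to X^{\lambda}$ collapses each term $\underset{k\in K}{\boxtimes}C^{\otimes J_k}(X)_{\und{\lambda}}$ to $\underset{j\in J}{\otimes}C(X)_{\und{\lambda}(j)}$ (using that all values of $\und{\lambda}$ lie in $\Lambda^*$), and the resulting diagram on $\cTw(fSets)^{\lambda}$ factors through $fSets^{\lambda}$, where the colimit is evaluated at the one-element object. Two directional slips should be corrected, though. First, the full subcategory $\{K=*\}\cong fSets^{\lambda}$ is \emph{not} cofinal in $\cTw(fSets)^{\lambda}$: for an object $(\und{\lambda},J\to K)$ with $\mid K\mid\ge 2$ there is no morphism in $\cTw(fSets)$ to any $(J'\to *)$ (one would need $J\to K$ to factor through a point), so the relevant comma categories are empty; your ``or equivalently'' alternative is the correct mechanism and is what the paper uses: the restricted diagram is $e\comp r$ for the retraction $r:\cTw(fSets)^{\lambda}\to fSets^{\lambda}$, which is cofinal because it admits the left adjoint $l$ (equivalently, $e\comp r$ is the left Kan extension of $e$ along $l$, so the colimits agree). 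Second, the singleton $(\{*\},\lambda)$ is the \emph{final} object of $fSets^{\lambda}$, not an initial one; finality is exactly what lets you identify the colimit with its value $C(X)_{\lambda}$ there. Finally, your step (1) detour through an $\cE$-type category is unnecessary: since the base change is along $X\hook{}X^{\lambda}$ rather than along $X^I\to\Ran$, one only needs $X^K\times_{X^{\lambda}}X\,\iso\,X$ (the small diagonal) and the fact that $\cdot\otimes_{Shv(X^{\lambda})}Shv(X)$ commutes with colimits, which is how the paper gets the displayed colimit ``by definition.''
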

\begin{proof} We have the adjoint pair $l: (fSets)^{\lambda}\leftrightarrows \cTw(fSets)^{\lambda}: r$, where $r(\und{\lambda}, J\to K)=(\und{\lambda}, J)$, and $l$ sends $(\und{\lambda}, J)$ to $(\und{\lambda}, J\to *)$. By definition, 
$$
\Fact(C)_{\lambda}\otimes_{Shv(X^{\lambda})} Shv(X)\,\iso\,\underset{(\und{\lambda}, J\to K)\in\cTw(fSets)^{\lambda}}{\colim} (\underset{j\in J}{\otimes} C(X)_{\und{\lambda}(j)})
$$ 
in $Shv(X)-mod$. For any functor $e: fSets^{\lambda}\to Shv(X)-mod$ the composition $e\comp r$ is the left Kan extension of $e$ under $l$ by (\cite{Ly}, 2.2.39). So, the above colimit identifies with
$$
\underset{(\und{\lambda}, J)\in fSets^{\lambda}}{\colim} (\underset{j\in J}{\otimes} C(X)_{\und{\lambda}(j)})\,\iso\, C(X)_{\lambda},
$$
as $fSets^{\lambda}$ has a final object.  
\end{proof}

\begin{Rem} 
\label{Rem_E.1.6}
If $\lambda$ is one of generators of $\Lambda$ then $X^{\lambda}\,\iso\, X$. In this case Lemma~\ref{Lm_E.1.5} specifies to an isomorphism $\Fact(C)_{\lambda}\,\iso\, C(X)_{\lambda}$.
\end{Rem}

\sssec{} As in the unital case, the commutative chiral product on $\Fact(C)$ is compatible with the $\Lambda^*$-grading. 

\sssec{} For $\lambda\in\Lambda^*$ let $\Tw(I)^{\lambda}$ be the category of collections $(I\to J\to K)\in \Tw(I)$ and a map $\und{\lambda}: J\to \Lambda^*$. A morphism from $(I\to J_1\to K_1, \und{\lambda}^1)$ to $(I\to J_2\to K_2, \und{\lambda}^2)$ is a morphism (\ref{morphism_in_Tw_v2}) in $\Tw(I)$ such that $\phi_*\und{\lambda}^1=\und{\lambda}^2$ for the underlying map $\phi: J_1\to J_2$. 

 For $I\in fSets$. The category $C_{X^I}\,\iso\, \underset{\lambda\in\Lambda^*}{\oplus} C_{X^I, \lambda}$ inherits a grading from that of $\Fact(C)$. For $\lambda\in\Lambda^*$ we get a functor 
$$
\cF_{I, C}^{\lambda}: \Tw(I)^{\lambda}\to Shv((X^{\lambda}\times X^I)^{\subset})
$$ 
sending $(I\to J\to K, \und{\lambda})$ to $\underset{k\in K}{\boxtimes} (\underset{j\in J_k}{\otimes} C(X)_{\und{\lambda}(j)})$, the transition maps being defined as in Section~\ref{Sect_2.1.3_now} for $\cF_{I,C}$. Here we used the map $X^K\to (X^{\lambda}\times X^I)^{\subset}$ sending $(x_k)$ to $(D, \vartriangle_!^{(I/K)}(x_k))$, where $D$ is given by (\ref{divisor_D_for_Sect_E.1.3}). Then $C_{X^I, \lambda}\,\iso\, \underset{\Tw(I)^{\lambda}}{\colim} \cF_{I, C}^{\lambda}$.

\sssec{Graded factorization algebras: non-unital case} 
\label{Sect_Gr_fact_alg_non-unital}
Let $A\in CAlg^{nu}(C(X))$. Assume given a grading $A=\underset{\lambda\in\Lambda^*}{\oplus} A_{\lambda}$ with $A_{\lambda}\in C(X)_{\lambda}$. As in Section~\ref{Sect_Graded factorization algebras}, $\Fact(A)$ inherits a grading 
$$
\Fact(A)=\underset{\lambda\in\Lambda^*}{\oplus} \Fact(A)_{\lambda}
$$ 
with $\Fact(A)_{\lambda}\in \Fact(C)_{\lambda}$ described as follows. For $\lambda\in\Lambda^*$ we define the functor
$$
\cF^{\lambda}_{\Ran, A}: \cTw(fSets)^{\lambda}\to \Fact(C)_{\lambda}
$$
as follows. It sends $(\und{\lambda}, J\to K)$ to the image of $\underset{k\in K}{\boxtimes} (\underset{j\in J_k}{\otimes} A_{\und{\lambda}(j)})$ under the functor $\underset{k\in K}{\boxtimes} (\underset{j\in J_k}{\otimes} C(X)_{\und{\lambda}(j)})\to \Fact(C)_{\lambda}$ coming from the definition of $\cF^{\lambda}_{\Ran, C}$ in Section~\ref{Sect_E.1.2_now}. 
It sends a map $\tau$ in $\cTw(fSets)^{\lambda}$ from $(\und{\lambda}^1, J_1\to K_1)$ to $(\und{\lambda}^2, J_2\to K_2)$ to the image of the composition
$$
m(\underset{k\in K_1}{\boxtimes} (\underset{j\in (J_1)_k}{\otimes} A_{\und{\lambda}^1(j)}))\to \underset{k\in K_1}{\boxtimes} (\underset{j\in (J_2)_k}{\otimes} A_{\und{\lambda}^2(j)})\toup{\vartriangle_!} \underset{k\in K_2}{\boxtimes} (\underset{j\in (J_2)_k}{\otimes} A_{\und{\lambda}^2(j)})
$$
under the functor $\underset{k\in K_2}{\boxtimes} (\underset{j\in (J_2)_k}{\otimes} C(X)_{\und{\lambda}^2(j)})\to \Fact(C)_{\lambda}$. Here we used the transition map
$$
\underset{k\in K_1}{\boxtimes} (\underset{j\in (J_1)_k}{\otimes} C(X)_{\und{\lambda}^1(j)})\toup{m} 
\underset{k\in K_1}{\boxtimes} (\underset{j\in (J_2)_k}{\otimes} C(X)_{\und{\lambda}^2(j)})\toup{\vartriangle_!} \underset{k\in K_2}{\boxtimes} (\underset{j\in (J_2)_k}{\otimes} C(X)_{\und{\lambda}^2(j)})
$$
attached to $\tau$ by $\cF_{\Ran, C}^{\lambda}$. Then $\Fact(A)_{\lambda}\,\iso\, \underset{\cTw(fSets)^{\lambda}}{\colim} \; \cF^{\lambda}_{\Ran, A}$. 

For $I\in fSets$ the object $A_{X^I}\in C_{X^I}$ inherits a grading $A_{X^I}\,\iso\, \underset{\lambda\in\Lambda^*}{\oplus} A_{X^I, \lambda}$ from that of $\Fact(A)$. Here $A_{X^I, \lambda}\in C_{X^I, \lambda}$ is described as follows. We get a functor $\cF_{I, A}^{\lambda}: \Tw(I)^{\lambda}\to C_{X^I,\lambda}$ sending $(I\to J\to K, \und{\lambda})$ to the image of $\underset{k\in K}{\boxtimes}(\underset{j\in J_k}{\otimes} A_{\und{\lambda}(j)})$ under $\underset{k\in K}{\boxtimes}(\underset{j\in J_k}{\otimes} C(X)_{\und{\lambda}(j)})\to C_{X^I, \lambda}$. Then $A_{X^I, \lambda}\,\iso\, \underset{\Tw(I)^{\lambda}}{\colim} \cF^{\lambda}_{I, A}$ in $C_{X^I,\lambda}$. 

\sssec{Example} The following result appeared as (\cite{Gai19Ran}, 2.5.4(ii)) without a proof.

\begin{Pp} 
\label{Pp_E.1.5}
Take $C(X)=\underset{\lambda\in\Lambda^*}{\oplus} Shv(X)\in CAlg^{nu}(Shv(X)-mod)$, the category of $\Lambda^*$-graded sheaves on $X$. For each $\lambda\in\Lambda^*$ there is a canonical equivalence in $Shv(X^{\lambda})-mod$
$$
\Fact(C)_{\lambda}\,\iso\, Shv(X^{\lambda}).
$$ 
\end{Pp}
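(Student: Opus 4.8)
The strategy is to compute $\Fact(C)_{\lambda}$ directly from the colimit presentation of Section~\ref{Sect_E.1.2_now}, namely $\Fact(C)_{\lambda}\,\iso\, \underset{\cTw(fSets)^{\lambda}}{\colim}\, \cF_{\Ran, C}^{\lambda}$, taken in $Shv((X^{\lambda}\times\Ran)^{\subset})$-mod, and to identify this colimit with $Shv(X^{\lambda})$ by a cofinality argument. Since here $C(X)_{\mu}\,\iso\, Shv(X)$ for every $\mu\in\Lambda^*$ with the $\otimes^!$-monoidal structure, for $(\und{\lambda}, J\to K)\in \cTw(fSets)^{\lambda}$ the summand $\underset{k\in K}{\boxtimes} C^{\otimes J_k}(X)_{\und{\lambda}}$ is simply $Shv(X^K)$, and the transition map attached to $\tau$ (a map (\ref{map_in_cTw(fSets)}) covariant in $J$, contravariant in $K$) is the composition of the $!$-pullback $Shv(X^{K_2})\to Shv(X^{K_1})$ with the $!$-direct image $\vartriangle_!$ along the closed embedding $X^{K_1}\to X^{K_2}$ — wait, more precisely the colimit-transition map of $\cF_{\Ran,C}$ goes $Shv(X^{K_1})\to Shv(X^{K_2})$ via multiplication then $\vartriangle_!$. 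So the diagram $\cF_{\Ran,C}^{\lambda}$ is, up to the $(X^{\lambda}\times\Ran)^{\subset}$-module structure, the diagram $(\und{\lambda}, J\to K)\mapsto Shv(X^K)$ on $\cTw(fSets)^{\lambda}$ with transition maps the appropriate $\vartriangle_!$'s.

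First I would pass from $\cTw(fSets)^{\lambda}$ to the subcategory where $J\to K$ is an isomorphism, i.e. essentially to $fSets^{\lambda}$, exactly as in Lemma~\ref{Lm_E.1.5}: the functor $l: fSets^{\lambda}\to \cTw(fSets)^{\lambda}$, $(\und{\lambda},J)\mapsto(\und{\lambda}, J\to *)$ has a right adjoint $r$, but I actually want the other adjunction — I would use instead the inclusion $fSets^{\lambda}\hookrightarrow \cTw(fSets)^{\lambda}$, $(\und{\lambda},K)\mapsto(\und{\lambda}, K\toup{\id} K)$, and show it admits a left adjoint sending $(\und{\lambda}, J\to K)$ to $(\phi_*\und{\lambda}, K)$; then this inclusion is cofinal, and the colimit reduces to $\underset{(\und{\lambda},K)\in fSets^{\lambda}}{\colim} Shv(X^K)$, with transition maps the $\vartriangle_!$ along $X^{K_1}\to X^{K_2}$ associated to surjections $\tau: K_1\to K_2$. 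Now this is precisely the colimit presentation of $Shv(X^{\lambda})$: indeed $X^{\lambda}\,\iso\,\underset{(\und{\lambda},K)\in (fSets^{\lambda})^{op}}{\colim} X^K$ in $\PreStk_{lft}$ — an $S$-point of $X^{\lambda}$ is a $\Lambda^*$-valued divisor of degree $\lambda$, which away from a partition of its support into distinct points is recorded by $(\und{\lambda},K)$ and a point of $X^K$ — and then $Shv(X^{\lambda})\,\iso\,\underset{fSets^{\lambda}}{\colim}\, Shv(X^K)$ by the same mechanism as in Lemma~\ref{Lm_2.2.8_Ran_is_1-affine} (passing to left adjoints $\vartriangle_!$ in the limit presentation of $Shv$ along a pseudo-proper colimit diagram). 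Finally I would check the identification is $Shv((X^{\lambda}\times\Ran)^{\subset})$-linear and, restricting along $X^{\lambda}\to(X^{\lambda}\times\Ran)^{\subset}$, an equivalence in $Shv(X^{\lambda})$-mod.

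\textbf{Main obstacle.} The routine parts are the two adjunctions giving cofinality; the genuine content is the geometric statement that $X^{\lambda}\,\iso\,\underset{(fSets^{\lambda})^{op}}{\colim} X^K$ as prestacks, together with the compatibility of this colimit with the functor $Shv(-)$ (which fails to send colimits to colimits in general, but does here because the transition maps $X^K\to X^{K'}$ are closed immersions, hence pseudo-proper, so one may dualize the limit as in Lemma~\ref{Lm_2.1.11} and Lemma~\ref{Lm_2.2.8_Ran_is_1-affine}). One must be slightly careful that the $X^K$ appearing are finite disjoint-type schemes and that the description of $X^{\lambda}$ as such a colimit is literally the standard "Ran-space over the divisor" presentation adapted to colored divisors; with that in hand, matching the transition maps ($\vartriangle_!$ on the sheaf side against the diagonal immersions on the space side) is straightforward. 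I expect no real difficulty once the colimit presentation of $X^{\lambda}$ is pinned down, and I would state it as a preliminary lemma before proving the proposition.
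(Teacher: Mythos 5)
Your reduction of the colimit over $\cTw(fSets)^{\lambda}$ to a colimit over $(fSets^{\lambda})^{op}$ is correct in substance (it is Lemma~\ref{Lm_E.1.7}), but not by the mechanism you propose: the subcategory of $\cTw(fSets)^{\lambda}$ spanned by the identity arrows $(\und{\lambda}, K\toup{\id} K)$ is not a copy of $fSets^{\lambda}$ — a map $(K_1\toup{\id}K_1)\to (K_2\toup{\id}K_2)$ in the twisted arrow category consists of surjections $u\colon K_1\to K_2$ and $v\colon K_2\to K_1$ with $vu=\id$, so only bijections occur — and the left adjoint you describe does not exist. The paper instead proves directly that the projection $h\colon \cTw(fSets)^{\lambda}\to (fSets^{\lambda})^{op}$, $(\und{\lambda}, J\toup{\phi} K)\mapsto (K,\phi_*\und{\lambda})$, is cofinal, by identifying the relevant comma categories with products $\prod_{k}\cTw(fSets)^{\mu_k}$ and showing each $\cTw(fSets)^{\mu}$ is contractible (Lemma~\ref{Lm_E.1.8}).

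The genuine gap is your ``preliminary lemma'' $X^{\lambda}\,\iso\,\underset{(fSets^{\lambda})^{op}}{\colim}\,X^K$ in $\PreStk_{lft}$: it is false. That colimit is the prestack $\und{X}^{\lambda}$ whose $S$-points are pairs consisting of a finite subset $\cI\subset\Map(S,X)$ and a weighting $\cI\to\Lambda^*$ of total weight $\lambda$; the canonical map $\und{X}^{\lambda}\to X^{\lambda}$ is far from an isomorphism (the paper remarks it only becomes one after sheafification in the topology generated by finite surjective maps). This is exactly where the analogy with $\Ran$ breaks down: $\Ran$ is \emph{defined} as the colimit of the $X^I$, whereas for $X^{\lambda}$ the equivalence $Shv(X^{\lambda})\,\iso\,\underset{(fSets^{\lambda})^{op}}{\colim}\,Shv(X^K)$ is the actual content of the proposition and cannot be obtained by dualizing a limit as in Lemma~\ref{Lm_2.2.8_Ran_is_1-affine}. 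The paper's proof supplies what is missing: it builds the comparison functor $F$ from the pseudo-proper maps $s^{\und{\lambda}}\colon X^J\to X^{\lambda}$ together with its $Shv(X^{\lambda})$-linear right adjoint $F^R$; it stratifies $X^{\lambda}$ by the loci $\oo{X}{}^{\gU(\lambda)}$; on each stratum it computes $X^J\times_{X^{\lambda}}\oo{X}{}^{\gU(\lambda)}$ as a disjoint union of $\oo{X}{}^{\tilde K}$'s (Lemma~\ref{Lm_E.1.14_now}), reduces by a further cofinality argument to the subcategory $fSets^{\gU(\lambda)}_0$, and identifies the stratum-wise colimit with $Shv(\oo{X}{}^{\gU(\lambda)})$ by descent along the finite Galois covers $\oo{X}{}^{\tilde K}\to\oo{X}{}^{\gU(\lambda)}$ with group $\prod_k S_{d_k}$, using $Shv(Y/G)\,\iso\,\underset{B(G)}{\colim}\,Shv(Y)$; finally it glues the strata via Proposition~\ref{Pp_3.7.8_devissage_using_ULA_preservation}, which is where the $Shv(X^{\lambda})$-linearity of the pair $(F,F^R)$ is used. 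This symmetric-group descent plus d\'evissage step is absent from your plan and cannot be bypassed: especially in the constructible context there is no a priori reason that $!$-pullback along $\und{X}^{\lambda}\to X^{\lambda}$ is an equivalence of sheaf categories without such an argument.
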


In Sections~\ref{Sect_E.1.9}-\ref{Sect_E.1.19} we prove Proposition~\ref{Pp_E.1.5}. We underline that it is a surprise that $Shv(X^{\lambda})$ is a module over $Shv(\Ran)$. 

\sssec{} 
\label{Sect_E.1.9}
 We have the functor 
\begin{equation}
\label{functor_h_Sect_E.1.9}
h: \cTw(fSets)^{\lambda}\to (fSets^{\lambda})^{op}
\end{equation} 
sending $(\und{\lambda}, J\toup{g} K)$ to $(K, g_*\und{\lambda})$. Consider the functor 
$$
\eta: (fSets^{\lambda})^{op}\to\DGCat_{cont}
$$ 
sending $(\und{\lambda}, K)$ to $Shv(X^K)$. It sends a map $\tau: K_1\to K_2$ as above to $\vartriangle_!: Shv(X^{K_2})\to Shv(X^{K_1})$ for the diagonal $\vartriangle: X^{K_2}\to X^{K_1}$. We are calculating  
$$
\Fact(C)_{\lambda}\,\iso\,\underset{\cTw(fSets)^{\lambda}}{\colim} \eta\comp h.
$$
 
\begin{Lm} 
\label{Lm_E.1.7}
The functor $h$ is cofinal. So,
$$
\Fact(C)_{\lambda}\,\iso\, \underset{(fSets^{\lambda})^{op}}{\colim} \eta.
$$
\end{Lm}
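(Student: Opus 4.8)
\textbf{Proof plan for Lemma~\ref{Lm_E.1.7} (cofinality of $h$).}
The plan is to apply the Quillen-type cofinality criterion (as in \cite{G}, ch.~I.1, 2.2.3, or Lurie): it suffices to show that for each object $(\und{\lambda}, K)\in (fSets^{\lambda})^{op}$ the comma category $\cTw(fSets)^{\lambda}\times_{(fSets^{\lambda})^{op}} ((fSets^{\lambda})^{op})_{(\und{\lambda},K)/}$ is contractible. Unwinding the definition of $h$ together with the direction of the morphisms in $(fSets^{\lambda})^{op}$, an object of this comma category consists of a datum $(\und{\mu}, J\toup{g} K')\in\cTw(fSets)^{\lambda}$ together with a surjection $\sigma: K'\to K$ in $fSets$ such that $\sigma_*(g_*\und{\mu})=\und{\lambda}$, i.e. a factorization $J\toup{g} K'\toup{\sigma} K$ of some map $J\to K$, with $\und{\mu}: J\to\Lambda^*$ summing to $\lambda$ fiberwise over $K$ in a way compatible with $\und{\lambda}$. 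This is exactly the pattern of the contractibility arguments already carried out in the excerpt (Lemma~\ref{Lm_E.0.10}, and earlier Lemma~\ref{Lm_2.1.2_about_Fact(C)}), so I would follow that template closely.

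The key steps, in order: first, spell out the comma category $\cC:=\cTw(fSets)^{\lambda}\times_{(fSets^{\lambda})^{op}} ((fSets^{\lambda})^{op})_{(\und{\lambda},K)/}$ explicitly as above. Second, identify a convenient full subcategory $j: \cC_0\hook{}\cC$ — namely the one where the comparison map $\sigma: K'\to K$ is an isomorphism — and construct an adjoint $j^R:\cC\to\cC_0$ sending $(\und{\mu}, J\toup{g} K'\toup{\sigma} K)$ to $(\und{\mu}, J\toup{\sigma g} K)$ (with $\sigma$ now absorbed). One checks $j\dashv j^R$ (or $j^R\dashv j$, whichever orientation makes $j$ cofinal), hence $|\cC_0|\,\iso\,|\cC|$. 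Third, observe that $\cC_0$ is the category of pairs: a surjection $g: J\to K$ together with $\und{\mu}: J\to\Lambda^*$ lifting $\und{\lambda}$ along $g$ (meaning $g_*\und{\mu}=\und{\lambda}$); this category has an \emph{initial} object, given by $J=K$, $g=\id$, $\und{\mu}=\und{\lambda}$, because any such $(g,\und{\mu})$ receives a unique map from it. An initial object makes $\cC_0$ contractible, so $\cC$ is contractible, and $h$ is cofinal. The displayed formula $\Fact(C)_{\lambda}\,\iso\,\underset{(fSets^{\lambda})^{op}}{\colim}\eta$ then follows from $\Fact(C)_{\lambda}\,\iso\,\underset{\cTw(fSets)^{\lambda}}{\colim}\eta\comp h$ established in Section~\ref{Sect_E.1.2_now}.

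The main obstacle I anticipate is purely bookkeeping: verifying that the adjunction $j\dashv j^R$ is genuine at the $\infty$-categorical level — i.e. that the unit/counit are equivalences on the relevant hom-spaces — since the morphisms in $\cTw(fSets)^{\lambda}$ are covariant in $J$ and contravariant in $K$, and one must track that the grading-compatibility conditions $\tau_*\und{\mu}^1=\und{\mu}^2$ are preserved under the adjunction without imposing extra constraints. This is the same subtlety that appears in Lemma~\ref{Lm_E.0.10}, so I would either invoke that lemma's argument essentially verbatim or reduce directly to it. A secondary (minor) point is to make sure the degeneracy conventions $X^0=\Spec k$ and $\Lambda^*=\Lambda-\{0\}$ cause no trouble: since every $\und{\mu}$ takes values in $\Lambda^*$, all fibers $J_k$ are nonempty once we work over $fSets$, so no empty-set pathologies arise.
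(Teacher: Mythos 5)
Your route is sound and close in spirit to the paper's, but it is not the same reduction. The paper's proof of Lemma~\ref{Lm_E.1.7} does not pass to the subcategory where the comparison map is an isomorphism; instead it observes that the comma category at $\xi=(K',\und{\mu})$ splits as a product $\prod_{k\in K'}\cTw(fSets)^{\und{\mu}(k)}$ (an object being determined by its fibres over $K'$), and then invokes Lemma~\ref{Lm_E.1.8}: each factor $\cTw(fSets)^{\mu}$ is contractible via the adjoint pair $l: fSets^{\mu}\leftrightarrows \cTw(fSets)^{\mu}: r$ and the final object of $fSets^{\mu}$. Your single adjunction $j\dashv j^R$ onto $\cC_0$ followed by a (co)terminal-object argument does the same job in one step and is, if anything, more direct; the paper's product decomposition has the advantage of reusing the already-stated Lemma~\ref{Lm_E.1.8} and of matching the template of Lemma~\ref{Lm_2.1.2_about_Fact(C)}. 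Your adjunction does hold with the orientation $j\dashv j^R$: the compatibility $\sigma_1\rho=\sigma_2$ with the comparison maps forces the $K$-component of any morphism out of $j(c_0)$, and one gets $\Hom_{\cC}(j(c_0),c)\,\iso\,\Hom_{\cC_0}(c_0,j^R(c))$ naturally.

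One correction is needed in your last step: with the variance of $\cTw(fSets)^{\lambda}$ (covariant in $J$, contravariant in $K$), the object $(K\toup{\id}K,\und{\lambda})$ of $\cC_0$ is \emph{terminal}, not initial. A morphism out of it to $(J\toup{g}K,\und{\mu})$ would be a surjection $\tau:K\to J$ with $g\tau=\id_K$, which forces $g$ to be a bijection, so in general no such map exists; on the other hand every $(J\toup{g}K,\und{\mu})$ admits the unique map to $(K,\id,\und{\lambda})$ given by $\tau=g$ (the grading condition $g_*\und{\mu}=\und{\lambda}$ holding by definition of $\cC_0$). Since a terminal object makes $\cC_0$ contractible just as well, your argument goes through after flipping this direction, but as written the justification "receives a unique map from it" is false.
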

\begin{proof}
Given $\xi:=(K', \und{\mu}: K'\to \Lambda^*)\in fSets^{\lambda}$, consider the category 
$$
\cTw(fSets)^{\lambda}\times_{(fSets^{\lambda})^{op}} ((fSets^{\lambda})^{op})_{\xi/}.
$$ 
It suffices to show that the latter category of contractible. It classifies an object $(\und{\lambda}, J\toup{\phi} K)\in \cTw(fSets)^{\lambda}$ together with a surjection $\phi': K\to K'$ such that $\phi'_*\phi_*\und{\lambda}=\und{\mu}$. This category identifies with 
$$
\prod_{k\in K'} \cTw(fSets)^{\und{\mu}(k)}.
$$ 

 Namely, for each $k\in K'$ the fibre over $k$ is an object $(J_k\to K_k, \und{\lambda}^k)\in \cTw(fSets)^{\und{\mu}(k)}$. So, our claim follows from the lemma below.
\end{proof} 

\begin{Lm} 
\label{Lm_E.1.8}
The category $\cTw(fSets)^{\lambda}$ is contractible. 
\end{Lm}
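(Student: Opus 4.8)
\textbf{Proof strategy for Lemma~\ref{Lm_E.1.8}.}
The plan is to exhibit an explicit point of $\cTw(fSets)^{\lambda}$ which is initial, or at least to show the classifying space $|\cTw(fSets)^{\lambda}|$ is contractible by a zig-zag of (co)final functors. The natural candidate for a distinguished object is the one built from the maximal decomposition of $\lambda$: since $\Lambda\cong\ZZ_+^n$, write $\lambda=\sum_{a\in A}\mu_a$ where $A$ is the (finite) multiset of ``coordinate unit vectors'' occurring in $\lambda$, i.e.\ $|A|=\Theta(\lambda)$ and each $\mu_a$ is a generator of $\Lambda$. Set $\Sigma_{\max}=(\und{\lambda}_{\max},\, A\toup{\id}A\toup{} *)$ where $\und{\lambda}_{\max}\colon A\to\Lambda^*$ sends $a\mapsto\mu_a$. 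First I would check whether $\Sigma_{\max}$ is initial in $\cTw(fSets)^{\lambda}$; if so the lemma is immediate. A morphism $\Sigma_{\max}\to(\und{\lambda},J\to K)$ in $\cTw(fSets)^{\lambda}$ amounts to a map $A\to J$ in $fSets$ compatible with the weight data and a map $*\gets K$; one sees the map $A\to J$ is forced to be the unique surjection refining $\und{\lambda}$ through $\und{\lambda}_{\max}$, and the map $K\to *$ is unique, so $\Sigma_{\max}\to\Sigma$ exists, but uniqueness of the morphism fails in general because $K\to *$ being contravariant the comparison lives over different $K$'s --- so $\Sigma_{\max}$ is most likely \emph{not} strictly initial.

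Consequently the cleaner route is a cofinality/devissage argument paralleling the proof of Lemma~\ref{Lm_E.0.10} and Lemma~\ref{Lm_E.1.7}. Concretely: let $j\colon \cD\hook{}\cTw(fSets)^{\lambda}$ be the full subcategory of those $(\und{\lambda},J\toup{\phi}K)$ for which $\phi$ is an isomorphism, i.e.\ objects of the form $(\und{\lambda},K\toup{\id}K)$. I would first show the inclusion $j$ admits a left adjoint $j^L$ sending $(\und{\lambda},J\toup{\phi}K)$ to $(\phi_*\und{\lambda},K\toup{\id}K)$ --- the unit and counit are built from $\phi$ and $\id_K$ exactly as in the construction of $j^R$ in \ref{Sect_2.1.7_now} and in the proof of Lemma~\ref{Lm_E.0.10}. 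Hence $j$ is cofinal, so $|\cTw(fSets)^{\lambda}|\,\iso\,|\cD|$. But $\cD$ is equivalent to $(fSets^{\lambda})^{op}$ via $(\und{\lambda},K\toup{\id}K)\mapsto(K,\und{\lambda})$, since in $\cD$ a morphism is a surjection $K_1\gets K_2$ compatible with weights (covariance in $J$ collapses when $J\iso K$, contravariance in $K$ survives). Therefore $|\cTw(fSets)^{\lambda}|\,\iso\, |(fSets^{\lambda})^{op}|\,\iso\, |fSets^{\lambda}|$.

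It remains to prove $fSets^{\lambda}$ is contractible. Here I would use that $fSets^{\lambda}$ has a terminal object, namely $(K_{\max},\und{\lambda}_{\max})$ with $K_{\max}=A$ the multiset of generators in the maximal decomposition of $\lambda$ and $\und{\lambda}_{\max}(a)=\mu_a$: given any $(\und{\lambda},K)\in fSets^{\lambda}$, each weight $\und{\lambda}(k)\in\Lambda^*$ decomposes canonically (uniquely, since $\Lambda\cong\ZZ_+^n$ is free commutative) as a sum of generators, which yields a unique surjection $K\to A$ refining $\und{\lambda}$ into $\und{\lambda}_{\max}$; this is the unique morphism $(\und{\lambda},K)\to(K_{\max},\und{\lambda}_{\max})$ in $fSets^{\lambda}$. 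A category with a terminal object is contractible, which finishes the proof.

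\textbf{Main obstacle.} The delicate point is the claim that $j\colon\cD\hook{}\cTw(fSets)^{\lambda}$ is cofinal: one must genuinely check that $j^L$ as defined is a functor (functoriality on the morphisms \eqref{map_in_cTw(fSets)}, including the compatibility with the weight maps under $\phi_*$) and that the adjunction $(j^L,j)$ holds, rather than merely noting a bijection on objects. This is the same type of bookkeeping as in \ref{Sect_2.1.7_now} and Lemma~\ref{Lm_2.2.13_v4}, so it is routine but is the step where sign/variance errors would hide; everything downstream (equivalence $\cD\iso(fSets^{\lambda})^{op}$, terminal object of $fSets^{\lambda}$, freeness of $\Lambda$) is formal.
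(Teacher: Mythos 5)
Your overall plan (reduce $\cTw(fSets)^{\lambda}$ to a category of weighted finite sets by a (co)finality argument and then exhibit a terminal object) has the right shape, but both pivotal steps fail. First, the full subcategory $\cD\subset\cTw(fSets)^{\lambda}$ of objects $(\und{\lambda},J\toup{\phi}K)$ with $\phi$ an isomorphism is a groupoid, not a copy of $(fSets^{\lambda})^{op}$: a morphism in $\cTw(fSets)$ from $(J_1\toup{\phi_1}K_1)$ to $(J_2\toup{\phi_2}K_2)$ is a pair of surjections $a\colon J_1\to J_2$, $b\colon K_2\to K_1$ with $\phi_1=b\circ\phi_2\circ a$, and when $\phi_1,\phi_2$ are bijections this forces $a$ and $b$ to be bijections. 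Consequently your proposed left adjoint $j^L$ does not exist: take $\Sigma=(\und{\lambda},J\toup{\phi}K)$ with $\phi$ non-injective and $D=(\und{\lambda},J\toup{\id}J)$; then $\Hom_{\cTw(fSets)^{\lambda}}(\Sigma,D)\ne\emptyset$ (use $a=\id_J$, $b=\phi$), while $\Hom_{\cD}(j^L\Sigma,D)=\emptyset$ because $\mid K\mid<\mid J\mid$. Moreover $\mid\cD\mid$ is disconnected whenever $\lambda$ admits two essentially different decompositions (e.g. $\lambda=2\mu$ with $\mu$ a generator gives the non-isomorphic objects $(\{*\},\lambda)$ and $(\{1,2\},(\mu,\mu))$), so $\cD\hookrightarrow\cTw(fSets)^{\lambda}$ cannot be cofinal. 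Second, the variance in $fSets^{\lambda}$ is the opposite of what you use at the end: a morphism $(K_1,\und{\lambda}^1)\to(K_2,\und{\lambda}^2)$ is a surjection $\tau$ with $\tau_*\und{\lambda}^1=\und{\lambda}^2$, so it \emph{coarsens} decompositions (weights are summed over fibres). Hence the finest decomposition $(A,\und{\lambda}_{\max})$ into generators is not terminal: nothing coarser maps to it (there is no surjection $\{*\}\to A$ once $\Theta(\lambda)\ge 2$), and maps out of it are not unique. The actual final object of $fSets^{\lambda}$ is the one-point set with weight $\lambda$.

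The paper's argument avoids the subcategory $\cD$ altogether: it uses the adjoint pair $l\colon fSets^{\lambda}\leftrightarrows\cTw(fSets)^{\lambda}\colon r$, where $r(\und{\lambda},J\to K)=(J,\und{\lambda})$ forgets $K$ and $l(J,\und{\lambda})=(\und{\lambda},J\to *)$; the adjunction is immediate because the commutativity constraint in $\cTw(fSets)$ is vacuous when the target of the arrow is $*$, and an adjoint pair induces a homotopy equivalence $\mid fSets^{\lambda}\mid\,\iso\,\mid\cTw(fSets)^{\lambda}\mid$. Since $fSets^{\lambda}$ has the final object $(\{*\},\lambda)$, it is contractible. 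Your argument can be repaired by replacing $\cD$ with the essential image of $l$ (objects with $K=*$) and the claimed terminal object with $(\{*\},\lambda)$, which recovers essentially the paper's proof.
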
 
\begin{proof}
We have an adjoint pair $l: fSets^{\lambda}\leftrightarrows \cTw(fSets)^{\lambda}: r$, where $l$ sends 
$
(J, \und{\lambda})
$ 
to 
$$
(J\to *, \und{\lambda})\in \cTw(fSets)^{\lambda},
$$ 
and $r$ sends $(J\to K, \und{\lambda})$ to $(J, \und{\lambda})$. By (\cite{Ly}, 2.2.106), $l$ and $r$ induce isomorphisms between the geometric realizations $\mid fSets^{\lambda}\mid\,\iso\, \mid \cTw(fSets)^{\lambda}\mid$.  The category $fSets^{\lambda}$ has a final object, so is contractible.
\end{proof} 

\sssec{} 
\label{Sect_E.1.12_now}
We may view $\eta$ as a functor still denoted $\eta: (fSets^{\lambda})^{op}\to Shv(X^{\lambda})-mod$ by abuse of notations. We have the natural functor 
$$
F: \underset{(fSets^{\lambda})^{op}}{\colim} \eta\to Shv(X^{\lambda})
$$ 
in $Shv(X^{\lambda})-mod$. Namely, for $(\und{\lambda}, J)\in fSets^{\lambda}$ let $s^{\und{\lambda}}: X^J\to X^{\lambda}$ be the map sending $(x_j)_{j\in J}$ to $\sum_{j\in J} \und{\lambda}(j)x_j$. The functors $(s^{\und{\lambda}})_!: Shv(X^J)\to Shv(X^{\lambda})$ are compatible with the transition functors in the diagram $\eta$, hence give rise to the desired functor $F$. We will show that $F$ is an equivalence. 

\sssec{} One may pass to right adjoints in $\eta$ and get a functor $\eta^R: fSets^{\lambda}\to Shv(X^{\lambda})-mod$. So, $\colim \eta\,\iso\, \lim \eta^R$, where both are calculated in $Shv(X^{\lambda})-mod$. 

 From (\cite{Ly}, 9.2.6) we derive the following. For $(\und{\lambda}, J)\in fSets^{\lambda}$ the functors $(s^{\und{\lambda}})^!: Shv(X^{\lambda})\to Shv(X^J)$ are compatible with the transition functors in $\eta^R$, so yield a functor $F^R: Shv(X^{\lambda})\to \lim \eta^R$ in $Shv(X^{\lambda})-mod$, which is the right adjoint to $F$. So, the adjoint pair
\begin{equation}
\label{adj_pair_Sect_E.1.10}
F: \underset{(fSets^{\lambda})^{op}}{\colim} \eta\leftrightarrows Shv(X^{\lambda}): F^R
\end{equation}
takes place in $Shv(X^{\lambda})-mod$.  

\sssec{} 
\label{Sect_E.1.16_now}
Write $\gU(\lambda)$ for a way to write $\lambda=\sum_{k\in K} d_k\lambda_k$, where $K\in fSets$, $d_k>0$ and $\lambda_k\in \Lambda^*$ are pairwise distinct. Recall that $X^{\lambda}$ is stratified by locally closed subschemes $\oo{X}{}^{\gU(\lambda)}$. Namely, let $X^{\gU(\lambda)}=\prod_{k\in K} X^{(d_k)}$. Then $\oo{X}{}^{\gU(\lambda)}\subset X^{\gU(\lambda)}$ is the complement to all the diagonals. 

 For $J\in fSets$ write $\oo{X}{}^J\subset X^J$ for the complement to all the diagonals. 

\sssec{} Write $Q(J)$ for the set of equivalence relations on a finite nonempty set $J$. We think of an element of $Q(J)$ as a map $J\toup{\phi} K$ in $fSets$. We identify such objects $(J\toup{\phi} K)$ and $(J\toup{\phi'} K')$ if there is a compatible bijection $K\,\iso\, K'$. 

\sssec{} Let $(\und{\lambda}, J)\in fSets^{\lambda}$, pick $\gU(\lambda)$ as above. Write $Q(J)^{\gU(\lambda)}\subset Q(J)$ for the subset of equivalence relations $\phi: J\to \tilde K$ such that $\phi_*\und{\lambda}$ is of type $\gU(\lambda)$. This means that there is a surjection $\tilde K\to K$ such that for $k\in K$ the set $\tilde K_k$ consists of $d_k$ elements, and the value of $\phi_*\und{\lambda}$ on $\tilde K_k$ is constant equal to $\lambda_k$. 

For an element $(J\toup{\phi} \tilde K)$ of $Q(J)^{\gU(\lambda)}$ the composition $\oo{X}{}^{\tilde K}\to X^J\toup{s^{\und{\lambda}}} X^{\lambda}$ factors uniquely through $\oo{X}{}^{\gU(\lambda)}\hook{} X^{\lambda}$. In the diagram
$$
\begin{array}{ccc}
X^J & \toup{s^{\und{\lambda}}} & X^{\lambda}\\
\uparrow && \uparrow\\
\oo{X}{}^{\tilde K} & \toup{s^{\phi}} & \oo{X}{}^{\gU(\lambda)}
\end{array}
$$
the map $s^{\phi}$ is a Galois covering with Galois group $\prod_{k\in K} S_{d_k}$, where $S_r$ is the symmetric group on $r$ elements. 

\begin{Lm} 
\label{Lm_E.1.14_now}
Given $\gU(\lambda)$ and $(\und{\lambda}, J)\in fSets^{\lambda}$, the fibre product $X^J\times_{X^{\lambda}} \oo{X}{}^{\gU(\lambda)}$ identifies with 
$$
\underset{(J\toup{\phi} \tilde K)\in Q(J)^{\gU(\lambda)}}{\sqcup} \oo{X}{}^{\tilde K}.
$$
We used the map $s^{\und{\lambda}}$ to form the fibred product. The corresponding map to $\oo{X}{}^{\gU(\lambda)}$ is given by $s^{\phi}$ on $\oo{X}{}^{\tilde K}$ for any $(J\toup{\phi} \tilde K)\in Q(J)^{\gU(\lambda)}$.
\QED
\end{Lm}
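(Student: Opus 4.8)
\textbf{Proof plan for Lemma~\ref{Lm_E.1.14_now}.} The statement is a concrete scheme-theoretic identification of the fibre product $X^J\times_{X^\lambda}\oo{X}{}^{\gU(\lambda)}$, and the plan is to verify it on $R$-points for $R$ a test ring, since all schemes involved are of finite type and the claimed isomorphism is a morphism of schemes over $\oo{X}{}^{\gU(\lambda)}$. An $R$-point of $X^J$ is a tuple $(x_j)_{j\in J}$ of $R$-points of $X$, and its image in $X^\lambda$ under $s^{\und\lambda}$ is the divisor $\sum_{j\in J}\und\lambda(j)\,x_j$. Requiring this image to land in $\oo{X}{}^{\gU(\lambda)}$ means precisely that, after grouping, the multiset of points-with-multiplicities has exactly the combinatorial shape prescribed by $\gU(\lambda)=\{d_k,\lambda_k\}_{k\in K}$: there are $d_k$ pairwise distinct points carrying weight $\lambda_k$ for each $k\in K$, and points belonging to different $k$'s are also distinct from one another. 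First I would unwind this condition to say that the point $(x_j)$ determines a partition of $J$ — two indices $j,j'$ lie in the same block iff $x_j=x_{j'}$ — and that this partition, viewed as $\phi\colon J\to\tilde K$, must satisfy $\phi_*\und\lambda$ of type $\gU(\lambda)$, i.e.\ $\phi\in Q(J)^{\gU(\lambda)}$; conversely, a point with coincidence pattern exactly $\phi$ lies on the open stratum $\oo{X}{}^{\tilde K}$ of $X^{\tilde K}\hookrightarrow X^J$ (the locus where all $\tilde K$-coordinates are pairwise distinct), and its image is $s^\phi$ of that point.

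The second step is to check that this set-theoretic bijection is in fact an isomorphism of schemes and that the decomposition into the pieces $\oo{X}{}^{\tilde K}$ is a disjoint union (open and closed). This is where some care is needed: the subschemes $\{x_j=x_{j'}\}$ for various pairs are closed conditions, so fixing the partition $\phi$ exactly — equalities inside blocks and inequalities across blocks — carves out a locally closed subscheme of $X^J$, namely the image of the locally closed immersion $\oo{X}{}^{\tilde K}\hookrightarrow X^{\tilde K}\hookrightarrow X^J$ (diagonal along $\phi$, then complement of diagonals). Over the target $\oo{X}{}^{\gU(\lambda)}$, however, the ``across blocks'' inequalities are automatic once we have pulled back, because the points $\lambda_k$ are pairwise distinct and the image divisor already has the separated shape; so each such piece is open in the fibre product, and as $\phi$ ranges over $Q(J)^{\gU(\lambda)}$ these opens are pairwise disjoint and cover, whence the coproduct decomposition. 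I would phrase this as: the fibre product is covered by the finitely many opens indexed by $Q(J)^{\gU(\lambda)}$, they are disjoint because they impose contradictory coincidence patterns, and on each the projection to $\oo{X}{}^{\gU(\lambda)}$ is $s^\phi$ by construction.

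Finally, one records the covering-space assertion already stated (and used in Lemma~\ref{Lm_E.1.14_now}'s surrounding discussion), namely that $s^\phi\colon\oo{X}{}^{\tilde K}\to\oo{X}{}^{\gU(\lambda)}$ is finite \'etale Galois with group $\prod_{k\in K}S_{d_k}$: indeed $s^\phi$ factors as the product over $k\in K$ of the maps $\oo{X}{}^{\tilde K_k}\to\oo{X}{}^{(d_k)}$ from the open configuration scheme to the open symmetric power, each of which is the standard $S_{d_k}$-Galois cover, and \'etaleness/finiteness is preserved by products and restriction to the open stratum. This last point is standard and I would state it without belaboring it. \emph{The main obstacle} I anticipate is purely bookkeeping: making the passage between ``coincidence partition of $J$'' and ``element $\phi$ of $Q(J)^{\gU(\lambda)}$ up to the identification of isomorphic $(J\to\tilde K)$'' precise, and checking that the pieces really are open-and-closed in the fibre product rather than merely locally closed — which, as noted, uses crucially that the weights $\lambda_k$ are pairwise distinct so that no further coincidences can occur over $\oo{X}{}^{\gU(\lambda)}$. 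Once that is set up, the scheme isomorphism and the $\square$ follow formally. \QED
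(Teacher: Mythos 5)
Your overall route — decompose the fibre product according to the exact coincidence pattern of the coordinates $(x_j)$, observe that the patterns which can occur over $\oo{X}{}^{\gU(\lambda)}$ are precisely the elements of $Q(J)^{\gU(\lambda)}$, and conclude that the pieces $\oo{X}{}^{\tilde K}$ are pairwise disjoint, cover, and map by $s^{\phi}$ — is exactly the argument the paper treats as evident (the lemma is stated with no proof), and at the level of reduced schemes it is complete. One small repair: the cleanest reason the pieces are open \emph{and} closed in the fibre product is not the pairwise distinctness of the $\lambda_k$ by itself, but the fact that every element of $Q(J)^{\gU(\lambda)}$ has exactly $\sum_{k\in K} d_k$ blocks, so no element of $Q(J)^{\gU(\lambda)}$ is a strict coarsening of another; hence the closure of one piece inside the fibre product cannot meet a different piece.

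There is, however, a genuine gap in the first step as you state it: you cannot ``verify it on $R$-points for $R$ a test ring'' and conclude an isomorphism of schemes, because the scheme-theoretic fibre product is in general non-reduced while the right-hand side is reduced. Take $X=\AA^1$, $\Lambda=\ZZ_+$, $J=\{1,2\}$, $\und{\lambda}\equiv 1$, $\lambda=2$, and $\gU(\lambda)$ the presentation $2=1\cdot 2$, so that $\oo{X}{}^{\gU(\lambda)}=X$ maps to $X^{\lambda}=X^{(2)}$ by $x\mapsto 2x$. Then $X^J\times_{X^{\lambda}}\oo{X}{}^{\gU(\lambda)}=\Spec k[x_1,x_2,x]/(x_1+x_2-2x,\; x_1x_2-x^2)$, which is cut out (after eliminating $x$, say in characteristic zero) by $(x_1-x_2)^2$: its reduction is the diagonal $\oo{X}{}^{\tilde K}\cong X$, but for $R=k[\epsilon]/(\epsilon^2)$ the point $(x_1,x_2,x)=(0,\epsilon,\epsilon/2)$ is an $R$-point of the fibre product that does not lie on the diagonal. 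So your coincidence-pattern analysis of points is valid only for reduced (e.g.\ field-valued) $R$, and the pieces $\oo{X}{}^{\tilde K}$ are open subschemes of the \emph{reduction} of the fibre product, not of the fibre product itself. The statement should therefore be read, and is in fact only used (in the proof of the next lemma), up to nilpotents: the map $\sqcup_{\phi}\oo{X}{}^{\tilde K}\to X^J\times_{X^{\lambda}}\oo{X}{}^{\gU(\lambda)}$ induces an isomorphism on underlying reduced schemes, which suffices because $Shv(-)$ does not distinguish a scheme from its reduction. With that adjustment — run your argument on geometric points, identify the reductions, and invoke the insensitivity of the sheaf theory to nilpotent thickenings — your proof goes through; the concluding observation that $s^{\phi}$ is a $\prod_k S_{d_k}$-Galois covering agrees with the discussion preceding the lemma.
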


\begin{Lm} 
\label{Lm_E.1.21_now}
Pick $\gU(\lambda)$. The functor $F$ becomes an equivalence after the base change $Shv(\oo{X}{}^{\gU(\lambda)})\otimes_{Shv(X^{\lambda})}\cdot$. 
\end{Lm}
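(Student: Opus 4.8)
The plan is to reduce the claim to the disjoint (open) situation by using the stratification of $X^\lambda$ and the explicit description of the fibre products from Lemma~\ref{Lm_E.1.14_now}. Fix a datum $\gU(\lambda)$ with $\lambda=\sum_{k\in K} d_k\lambda_k$ and let $\oo{X}{}^{\gU(\lambda)}\subset X^\lambda$ be the corresponding stratum, with inclusion $\iota$. Since $\colim \eta \iso \lim \eta^R$ in $Shv(X^\lambda)-mod$ (Section~\ref{Sect_E.1.12_now}), base change along $\iota$ commutes with this colimit only if the relevant categories are dualizable; here it is cleaner to base-change the \emph{colimit} presentation directly: for $(\und{\lambda}, J)\in fSets^\lambda$ we have
$$
Shv(X^J)\otimes_{Shv(X^\lambda)} Shv(\oo{X}{}^{\gU(\lambda)})\,\iso\, Shv(X^J\times_{X^\lambda}\oo{X}{}^{\gU(\lambda)}),
$$
using that $\oo{X}{}^{\gU(\lambda)}\hook{}X^\lambda$ is locally closed and the base-change property of Section~\ref{Sect_B.1.1}. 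By Lemma~\ref{Lm_E.1.14_now} this is $\underset{(J\toup{\phi}\tilde K)\in Q(J)^{\gU(\lambda)}}{\oplus} Shv(\oo{X}{}^{\tilde K})$.

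First I would identify the base-changed diagram $\eta\otimes_{Shv(X^\lambda)} Shv(\oo{X}{}^{\gU(\lambda)})$ on $(fSets^\lambda)^{op}$ with a diagram built out of the covers $\oo{X}{}^{\tilde K}\toup{s^\phi}\oo{X}{}^{\gU(\lambda)}$, and then compute its colimit. The key point is that the index category for the pieces appearing over the stratum reorganizes: an object $(\und{\lambda},J)$ contributes the sum over $Q(J)^{\gU(\lambda)}$, and a morphism $\tau: J_1\to J_2$ in $fSets^\lambda$ induces, on each summand, the $!$-pushforward along the corresponding diagonal of $\oo{X}{}^{\tilde K}$'s. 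I would then exhibit a cofinal functor from $(fSets^\lambda)^{op}$ (restricted/fibred over the stratum data) onto the single category indexing the Galois cover $s: \oo{X}{}^{\gU(\lambda)_{\mathrm{univ}}}\to \oo{X}{}^{\gU(\lambda)}$ where the universal cover is $\oo{X}{}^{K}$ with $K$ the set of "colours" — more precisely, the colimit collapses to $Shv(\oo{X}{}^{\gU(\lambda)})$ because the pushforward along a Galois covering followed by restriction back realizes $Shv(\oo{X}{}^{\gU(\lambda)})$ as a colimit of $Shv$ of the finite covers indexed by a contractible category (the relevant slice categories $\cTw(fSets)^{\und{\mu}(k)}$ are contractible by Lemma~\ref{Lm_E.1.8}, exactly as in the proof of Lemma~\ref{Lm_E.1.7}). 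Concretely: the functor $F$ restricted to the stratum is the colimit of the $(s^\phi)_!$ over $Q(J)^{\gU(\lambda)}$ as $(\und{\lambda},J)$ varies, and one checks $(s^\phi)_! (s^\phi)^!\iso\id$ on $Shv(\oo{X}{}^{\gU(\lambda)})$ after summing over the Galois group — i.e. the cover descends — so $F\otimes_{Shv(X^\lambda)}Shv(\oo{X}{}^{\gU(\lambda)})$ is an equivalence by a cofinality argument pinpointing a final object.

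Having proved $F$ is an equivalence stratum by stratum, I would then globalize. The scheme $X^\lambda$ is the union of the finitely many strata $\oo{X}{}^{\gU(\lambda)}$, which are locally closed; one argues by descending induction on the closure order (largest strata, i.e. deepest diagonals, first) using the recollement/dévissage for sheaves of categories. For this, the functor $F$ must be shown to admit a $Shv(X^\lambda)$-linear continuous right adjoint — which it does, namely $F^R$ from \eqref{adj_pair_Sect_E.1.10} — so that Proposition~\ref{Pp_3.7.8_devissage_using_ULA_preservation}(i) applies: $F$ is an equivalence iff it becomes one after $\cdot\otimes_{Shv(X^\lambda)} Shv(Z)$ and $\cdot\otimes_{Shv(X^\lambda)} Shv(U)$ for a closed stratum $Z$ and its open complement $U$, and we induct on $U$.

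\textbf{Main obstacle.} The technically delicate step is the cofinality/contractibility argument over a single stratum: one must carefully match the combinatorial index category $Q(J)^{\gU(\lambda)}$ (with its $fSets^\lambda$-functoriality) against the descent colimit for the Galois cover $s^\phi$, keeping track of the actions of $\prod_k S_{d_k}$, and verify that the comparison functor identifying $\colim\eta|_{\oo{X}{}^{\gU(\lambda)}}$ with $Shv(\oo{X}{}^{\gU(\lambda)})$ is the one induced by $F$. This is exactly the place where the surprising fact "$Shv(X^\lambda)$ is a $Shv(\Ran)$-module" becomes concrete, and it is essentially the content of Lemmas~\ref{Lm_E.1.7} and~\ref{Lm_E.1.8} applied fibrewise; the bookkeeping, rather than any conceptual gap, is the hard part.
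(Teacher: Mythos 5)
Your overall route is the same as the paper's: you base-change the colimit presentation of $\colim\eta$ (correctly noting that tensoring commutes with the colimit), use Lemma~\ref{Lm_E.1.14_now} to identify each term $Shv(X^J)\otimes_{Shv(X^{\lambda})}Shv(\oo{X}{}^{\gU(\lambda)})$ with $\underset{(J\toup{\phi}\tilde K)\in Q(J)^{\gU(\lambda)}}{\oplus} Shv(\oo{X}{}^{\tilde K})$, reindex over a category recording the pair $((\und{\lambda},J),\phi)$, and reduce by a cofinality argument; this is exactly what the paper does with $fSets^{\gU(\lambda)}$ and the retraction $\bar h$ onto the full subcategory $fSets^{\gU(\lambda)}_0$ of bijective $\phi$'s.

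The gap is in your justification of the final collapse. After the cofinal reduction the index category is $(fSets^{\gU(\lambda)}_0)^{op}$, which is a groupoid equivalent to $B(\prod_{k\in K} S_{d_k})$: it is \emph{not} contractible and has no final object that collapses the colimit (its single isomorphism class of objects carries nontrivial automorphisms). Your appeal to Lemma~\ref{Lm_E.1.8} (contractibility of $\cTw(fSets)^{\lambda}$) and to the argument of Lemma~\ref{Lm_E.1.7} is to the wrong lemmas: those concern the earlier reduction from $\cTw(fSets)^{\lambda}$ to $(fSets^{\lambda})^{op}$, which happens before the present lemma. The input actually needed here is finite-group quotient descent, $\underset{B(G)}{\colim}\, Shv(Y)\,\iso\, Shv(Y/G)$ for a finite group $G$ acting on $Y\in\Sch_{ft}$ (the paper invokes \cite{Ly}, 2.7.24), applied to $\prod_k S_{d_k}$ acting on $\oo{X}{}^{\tilde K}$ with quotient $\oo{X}{}^{\gU(\lambda)}$. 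Your substitute statement that ``$(s^{\phi})_!(s^{\phi})^!\iso\id$ after summing over the Galois group'' is not correct as phrased: for a connected Galois cover one has $(s^{\phi})_!(s^{\phi})^!K\,\iso\,K\otimes^!(s^{\phi})_!\omega$, and what must be proved is the identification of $Shv(\oo{X}{}^{\gU(\lambda)})$ with the colimit over $B(\prod_k S_{d_k})$ of $Shv(\oo{X}{}^{\tilde K})$, not a splitting of $(s^{\phi})_!(s^{\phi})^!$. Note that if the index category really were contractible with the transition functors equivalences, the colimit would come out as $Shv(\oo{X}{}^{\tilde K})$, i.e.\ the cover rather than the base, which is wrong unless all $d_k=1$. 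Finally, your closing paragraph on globalizing over the strata via Proposition~\ref{Pp_3.7.8_devissage_using_ULA_preservation} is not part of this lemma; it is the subsequent step in the proof of Proposition~\ref{Pp_E.1.5} (carried out there with the adjunction (\ref{adj_pair_Sect_E.1.10})), though including it does no harm.
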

\begin{proof}
Write $fSets^{\gU(\lambda)}$ for the category, whose objects are collections: $(\und{\lambda}, J)\in fSets^{\lambda}$, $(J\toup{\phi} \tilde K)\in Q(J)^{\gU(\lambda)}$. A maps from the first object to the second in $fSets^{\gU(\lambda)}$ is given by a morphism $\tau: J_1\to J_2$ in $fSets$ such that $\tau_*\und{\lambda}^1=\und{\lambda}^2$ and the composition $J_1\toup{\tau} J_2\toup{\phi_2} \tilde K_2$ coincides with $(J_1\toup{\phi_1}\tilde K_1)$ in the set $Q(J_1)^{\gU(\lambda)}$. Note that there is a unique isomorphism $\tilde\tau$ in $fSets$ making the diagram below commutative
\begin{equation}
\label{diag_with_tilde}
\begin{array}{ccc}
J_1 & \toup{\phi_1} & \tilde K_1\\
\downarrow\lefteqn{\scriptstyle\tau} && \downarrow\lefteqn{\scriptstyle\tilde\tau}\\
J_2 & \toup{\phi_2} & \tilde K_2
\end{array}
\end{equation}

 Using (\cite{Ly4}, Section~0.3), we get
\begin{equation}
\label{iso_1_for_Pp_answer}
(\colim \eta)\otimes_{Shv(X^{\lambda})} Shv(\oo{X}{}^{\gU(\lambda)})\,\iso\, \underset{(\und{\lambda}, J, J\toup{\phi} \tilde K)\in (fSets^{\gU(\lambda)})^{op}}{\colim} Shv(\oo{X}{}^{\tilde K})
\end{equation}

  Consider the full subcategory $fSets^{\gU(\lambda)}_0\subset fSets^{\gU(\lambda)}$ spanned by triples $(\und{\lambda}, J, J\toup{\phi} \tilde K)$ such that $\phi$ is a bijection. We may also think of $fSets^{\gU(\lambda)}_0$ as a full subcategory of $fSets^{\lambda}$ spanned by those $(\und{\lambda}, J)$ for which $\und{\lambda}$ is of type $\gU(\lambda)$. 

 Consider the functor 
$$
\bar h: fSets^{\gU(\lambda)}\to fSets^{\gU(\lambda)}_0
$$ 
sending $(\und{\lambda}, J, J\toup{\phi} \tilde K)$ to $(\tilde K, \phi_*\und{\lambda})$. It sends a map 
in $fSets^{\gU(\lambda)}$ given by (\ref{diag_with_tilde}) to the morphism $\tilde\tau: \tilde K_1\,\iso\,\tilde K_2$. 

 Consider the functor 
$$
\bar\eta: (fSets^{\gU(\lambda)}_0)^{op}\to Shv(\oo{X}{}^{\gU(\lambda)})-mod
$$ 
sending $(\und{\lambda}, J)$ to $Shv(\oo{X}{}^J)$. It sends an isomorphism $\tilde\tau: J_1\,\iso\, J_2$ in $fSets^{\gU(\lambda)}_0$ to $(\vartriangle_{\tilde\tau})_!: Shv(\oo{X}{}^{J_2})\,\iso\, Shv(\oo{X}{}^{J_1})$, here $\vartriangle_{\tilde\tau}: \oo{X}{}^{J_2}\,\iso\, \oo{X}{}^{J_1}$ is the corresponding map. 

 Now (\ref{iso_1_for_Pp_answer}) identifies with $\underset{(fSets^{\gU(\lambda)})^{op}}{\colim} \bar\eta \comp \bar h$. We claim that 
\begin{equation}
\label{functor_bar_h_proof_Lm_E.1.21}
\bar h: (fSets^{\gU(\lambda)})^{op}\to (fSets^{\gU(\lambda)}_0)^{op}
\end{equation}
is cofinal. Indeed, Let $\xi=(\und{\lambda}^0, \tilde K_0)\in fSets^{\gU(\lambda)}_0$. We must show that 
$$
fSets^{\gU(\lambda)}\times_{fSets^{\gU(\lambda)}_0} (fSets^{\gU(\lambda)}_0)_{/\xi}
$$
is contractible. This category has a final object given by $\xi\in fSets^{\gU(\lambda)}$ together with the isomorphism $\id: \bar h(\xi)\,\iso\,\xi$. We used the fact that the full embedding $fSets^{\gU(\lambda)}_0\subset fSets^{\gU(\lambda)}$ is a section of $\bar h$. 
Thus, $\bar h$ is cofinal. 

 Now (\ref{iso_1_for_Pp_answer}) identifies with
$$
\underset{(fSets^{\gU(\lambda)}_0)^{op}}{\colim} \bar\eta.
$$

By (\cite{Ly}, 2.7.24), for a finite group $G$ acting on $Y\in \Sch_{ft}$ one has $\colim_{B(G)} Y\,\iso\, Y/G$, here $Y/G$ is the prestack quotient. This gives 
$$
Shv(Y/G)\,\iso\, \lim_{B(G)^{op}} Shv(Y)\,\iso\, \underset{B(G)}{\colim} Shv(Y),
$$ 
where in the latter formula we passed to the left adjoints. 

This gives finally 
$$
\underset{(fSets^{\gU(\lambda)}_0)^{op}}{\colim} \bar\eta\;\iso\; Shv(\oo{X}{}^{\gU(\lambda)}).
$$
\end{proof}

\sssec{End of the proof of Proposition~\ref{Pp_E.1.5}} 
\label{Sect_E.1.19}
Since the adjoint pair (\ref{adj_pair_Sect_E.1.10}) takes place in $Shv(X^{\lambda})-mod$, and $F$ is an isomorphism after restriction to each stratum $\oo{X}{}^{\gU(\lambda)}$, our claim follows from Proposition~\ref{Pp_3.7.8_devissage_using_ULA_preservation}. \QED

\begin{Pp} 
\label{Pp_E.1.23_now}
Let $C(X)$ be as in Proposition~\ref{Pp_E.1.5}. Set $A=\underset{\lambda\in\Lambda^*}{\oplus} \omega_X\in CAlg^{nu}(C(X))$ graded by $\Lambda^*$ compatibly with the non-unital commutative algebra structure. Then for $\lambda\in\Lambda^*$ one has canonically $\Fact(A)_{\lambda}\,\iso\,\omega_{X^{\lambda}}$ in $Shv(X^{\lambda})$.
\end{Pp}
\begin{proof} The argument is similar to that of Proposition~\ref{Pp_E.1.5}. As in Section~\ref{Sect_Gr_fact_alg_non-unital}, 
$$
\Fact(A)_{\lambda}\,\iso\, \underset{(\und{\lambda}, J\to K)\in  \cTw(fSets)^{\lambda}}{\colim} \omega_{X^K}
$$ 
taken in $\Fact(C)_{\lambda}$. Consider the functor $\eta_A: (fSets^{\lambda})^{op}\to Shv(X^{\lambda})$ sending $(K, \und{\lambda})$ to $s^{\und{\lambda}}_!\omega_{X^K}$, here $s^{\und{\lambda}}: X^K\to X^{\lambda}$ is defined in Section~\ref{Sect_E.1.12_now}. 
It sends a morphism $(\und{\lambda}^1, K_1)\to (\und{\lambda}^2, K_2)$ in $fSets^{\lambda}$ to the map 
$$
s^{\und{\lambda}^1}_!\vartriangle_!\omega_{X^{K_2}}\to s^{\und{\lambda}^1}_!\omega_{X^{K_1}},
$$
here $\vartriangle: X^{K_2}\to X^{K_1}$ is the closed immersion. Since (\ref{functor_h_Sect_E.1.9}) is cofinal, we get
$$
\Fact(A)_{\lambda}\,\iso\, \underset{(fSets^{\lambda})^{op}}{\colim}\eta_A
$$ 

 For $(\und{\lambda}, K)\in fSets^{\lambda}$ we have the natural map 
$s^{\und{\lambda}}_!\omega_{X^K}\to \omega_{X^{\lambda}}$ in $Shv(X^{\lambda})$. These maps form a compatible system for $(\und{\lambda}, K)\in (fSets^{\lambda})^{op}$, so give rise to a morphism $b: \Fact(A)_{\lambda}\to \omega_{X^{\lambda}}$. We check that $b$ is an isomorphism. 

 Pick $\gU(\lambda)$ as in Section~\ref{Sect_E.1.16_now}. It suffices to show that the $!$-restriction of $b$ under $\oo{X}{}^{\gU(\lambda)}\hook{} X^{\lambda}$ is an isomorphism. The latter is the map 
$$
\underset{(\und{\lambda}, J, J\toup{\phi} \tilde K)\in (fSets^{\gU(\lambda)})^{op}}{\colim} s^{\phi}_!\omega_{\oo{X}{}^{\tilde K}}\to \omega_{\oo{X}{}^{\gU(\lambda)}}
$$
As in the proof of Proposition~\ref{Pp_E.1.5}, it identifies with
$$
\underset{(\und{\lambda}, J)\in (fSets^{\gU(\lambda)}_0)^{op}}{\colim} s^{\phi}_!\omega_{\oo{X}{}^{\tilde K}}\,\iso\, \omega_{\oo{X}{}^{\gU(\lambda)}}.
$$
\end{proof}

\sssec{} 
\label{Sect_E.1.24_now}
For $\lambda\in\Lambda^*$ set $\und{X}^{\lambda}=\underset{(J, \und{\lambda})\in (fSets^{\lambda})^{op}}{\colim} X^J$ taken in $\PreStk$. This is a finitary pseudo-scheme in the sense of (\cite{Ga}, 7.4.6). 

\begin{Lm} For $S\in \Sch^{aff}$ the groupoid $\Map(S, \und{X}^{\lambda})$ is the set of pairs $(\cI, \und{\lambda})$, where $\cI\subset \Map(S, X)$ is a nonempty finite subset, and $\und{\lambda}: \cI\to\Lambda^*$ is a map such that $\sum_{i\in \cI} \und{\lambda}(i)=\lambda$.
\end{Lm}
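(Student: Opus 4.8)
The plan is to unwind the defining colimit of $\und{X}^{\lambda}$ on $S$-points and match it with the asserted set by means of the category of elements. First, since colimits in $\PreStk$ are computed pointwise, one has
$$
\Map(S,\und{X}^{\lambda})\,\iso\, \underset{(J,\und{\lambda})\in (fSets^{\lambda})^{op}}{\colim}\Map(S,X^J)\,\iso\,\underset{(J,\und{\lambda})\in (fSets^{\lambda})^{op}}{\colim}\Map(S,X)^J,
$$
where a morphism of $fSets^{\lambda}$ given by a surjection $\tau\colon K_1\to K_2$ with $\tau_*\und{\lambda}^1=\und{\lambda}^2$ induces, in the opposite category, the transition map $(y_j)_{j\in K_1}\mapsto(y_{\tau(k)})_{k\in K_2}$. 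Because $X$ is a scheme and $S$ is affine, $\Map(S,X)$ is a set, so this is a colimit over the ordinary category $(fSets^{\lambda})^{op}$ of a $\Sets$-valued functor $F$, computed in $\Spc$; and such a colimit is the classifying space $|\cE|$ of the category of elements $\cE:=\int_{(fSets^{\lambda})^{op}}F$. Its objects are triples $(J,\und{\lambda},(x_j)_{j\in J})$ with $\und{\lambda}\colon J\to\Lambda^*$, $\sum_{j}\und{\lambda}(j)=\lambda$ and $x_j\in\Map(S,X)$, and a morphism $(J_1,\und{\lambda}^1,(x^1_j))\to(J_2,\und{\lambda}^2,(x^2_j))$ is a surjection $\sigma\colon J_2\to J_1$ in $fSets$ with $\sigma_*\und{\lambda}^2=\und{\lambda}^1$ and $x^2_k=x^1_{\sigma(k)}$ for all $k$.

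Next I would introduce the set $T$ of pairs $(\cI,\und{\mu})$ as in the statement and produce a functor $p\colon\cE\to T$ (with $T$ viewed as a discrete category) by $(J,\und{\lambda},(x_j))\mapsto(\cI,\und{\mu})$, where $\cI=\{x_j\mid j\in J\}\subset\Map(S,X)$ is the set of values and $\und{\mu}=\pi_*\und{\lambda}$ for $\pi\colon J\to\cI$, $j\mapsto x_j$. Here one uses that $\Lambda\iso\ZZ_+^n$, so that a nonempty finite sum of elements of $\Lambda^*=\Lambda-\{0\}$ again lies in $\Lambda^*$, whence indeed $\und{\mu}\colon\cI\to\Lambda^*$ and $\sum_{i\in\cI}\und{\mu}(i)=\lambda$; one checks routinely that $p$ respects morphisms. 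Then $\cE=\bigsqcup_{(\cI,\und{\mu})\in T}p^{-1}(\cI,\und{\mu})$, and the fibre $p^{-1}(\cI,\und{\mu})$ has the initial object $(\cI,\und{\mu},\iota_{\cI})$ with $\iota_{\cI}$ the tautological $\cI$-tuple $i\mapsto i$: for any object $(J,\und{\lambda},(x_j))$ of that fibre, $\pi$ is the unique morphism $(\cI,\und{\mu},\iota_{\cI})\to(J,\und{\lambda},(x_j))$ in $\cE$. Hence each fibre is contractible, $|\cE|\iso T$, and $\Map(S,\und{X}^{\lambda})\iso T$, as claimed; forgetting $\und{\mu}$ one recovers the familiar description of $\Map(S,\Ran)$ as the set of nonempty finite subsets of $\Map(S,X)$.

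The one point that really requires the argument above, rather than a one-line computation, is the discreteness of the colimit: the indexing category $fSets^{\lambda}$ has nontrivial automorphisms (for instance permuting indices carrying equal $\Lambda^*$-labels), so a priori $|\cE|$ could carry higher homotopy; the decomposition of $\cE$ into fibres each having an initial object is exactly what kills it. The remaining verifications — functoriality of $p$ on morphisms, and the compatibility of the transition maps with the variance of $fSets^{\lambda}$ versus $(fSets^{\lambda})^{op}$ — are straightforward bookkeeping.
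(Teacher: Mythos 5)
Your proof is correct and is essentially the paper's own argument: the paper proves this lemma by reference to Lemma~\ref{Lm_about_Ran_A}, whose proof is precisely your route — identify the colimit in $\Spc$ with the classifying space of the cocartesian fibration (category of elements), decompose it into pieces indexed by the pairs $(\cI,\und{\lambda})$, and observe that each piece has an initial object, hence is contractible. One tiny slip: your displayed description of the transition map for $\tau\colon K_1\to K_2$ has the indices transposed (it should send a $K_2$-tuple $(y_k)$ to the $K_1$-tuple $(y_{\tau(j)})_{j\in K_1}$), but this does not affect the argument, since the description of morphisms in $\cE$ that you actually use is the correct one.
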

\begin{proof}
Similar to Lemma~\ref{Lm_about_Ran_A}. 
\end{proof}
\begin{Rem} In (\cite{Ga_Eis_and_Quantum_groups}, 4.1.3) it is claimed that the natural map 
$\und{X}^{\lambda}\to X^{\lambda}$ induces an isomorphism of sheafifications in the topology generated by finite surjective maps.
\end{Rem}

\begin{Rem} 
\label{Rem_E.1.25_now}
Let $C(X)$ be as in Section~\ref{Sect_E.1.1}. We have the canonical map $\und{X}^{\lambda}\to (X^{\lambda}\times\Ran)^{\subset}$ in $\PreStk$ as in Section~\ref{Sect_E.1.3}. The $!$-pullback under this map is a symmetric monoidal functor 
\begin{equation}
\label{functor_for_Rem_E.1.22}
Shv((X^{\lambda}\times\Ran)^{\subset}, \otimes^!)\to Shv(X^{\lambda}, \otimes^!).
\end{equation} 
We may add now that for $\lambda\in\Lambda^*$ the $Shv((X^{\lambda}\times\Ran)^{\subset})$-module structure on $\Fact(C)_{\lambda}$ is obtained from the structure of a $Shv(X^{\lambda})$-module via restriction of scalars with respect to (\ref{functor_for_Rem_E.1.22}). 
\end{Rem}

\sssec{} Our immediate purpose is to generalize Proposition~\ref{Pp_E.1.5} as follows. Let $E(X)\in CAlg^{nu}(Shv(X)-mod)$. Set $C(X)=\underset{\lambda\in\Lambda^*}{\oplus} E(X)$, which we view as $\Lambda^*$-graded non-unital commutative algebra in $Shv(X)-mod$.

\begin{Pp} 
\label{Pp_E.1.22}
Let $\lambda\in\Lambda^*$. One has canonically 
\begin{equation}
\label{iso_of_Pp_E.1.22}
\Fact(E)\otimes_{Shv(\Ran)} Shv(X^{\lambda})\,\iso\, \Fact(C)_{\lambda}.
\end{equation}
\end{Pp}
\begin{proof}
{\bf Step} 1. By Proposition~\ref{Pp_E.1.5}, $Shv(X^{\lambda})\,\iso\, \underset{(I, \und{\lambda})\in (fSets^{\lambda})^{op}}{\colim} Shv(X^I)$, where the transition maps are the !-direct images. So, 
\begin{multline}
\label{complex_first_rewriting_AppE}
\Fact(E)\otimes_{Shv(\Ran)} Shv(X^{\lambda})\,\iso\, \underset{(I, \und{\lambda})\in (fSets^{\lambda})^{op}}{\colim} \Fact(E)\otimes_{Shv(\Ran)} Shv(X^I),\iso\,\\
\underset{(I, \und{\lambda})\in (fSets^{\lambda})^{op}}{\colim} E_{X^I}\,\iso\, \underset{(I, \und{\lambda})\in (fSets^{\lambda})^{op}}{\colim}\;\; \underset{(I\to J\to K)\in \Tw(I)}{\colim} \underset{k\in K}{\boxtimes} E^{\otimes J_k}(X)
\end{multline}

 Write $\cB^{\lambda}$ for the category, whose objects are collections: a diagram $I\toup{p} J\to K$ in $fSets$ and $\und{\lambda}: I\to\Lambda^*$ with $\sum_{i\in I} \und{\lambda}(i)=\lambda$. A morphism from the first object to the second is given by a commutative diagram in $fSets$
$$
\begin{array}{ccccc}
I_1 & \to & J_1 & \to & K_1\\
\uparrow\lefteqn{\scriptstyle \nu} && \downarrow && \uparrow\\
I_2 & \to & J_2 & \to & K_2   
\end{array}
$$
such that $\nu_*\und{\lambda}^2=\und{\lambda}^1$.

 Let $\alpha: \cB^{\lambda}\to (fSets^{\lambda})^{op}$ be the functor sending an object $(I\toup{p} J\to K, \und{\lambda})$ to $(I,\und{\lambda})$. One checks that $\alpha$ is a cocartesian fibration, so we apply (\cite{G}, ch. I.1, 2.2.4) to calculate the LKE along $\alpha$. This shows that (\ref{complex_first_rewriting_AppE}) identifies with
\begin{equation}
\label{complex_second_rewriting_AppE}
\underset{(I\toup{p} J\to K, \,\und{\lambda})\in \cB^{\lambda}}{\colim} \;\underset{k\in K}{\boxtimes} E^{\otimes J_k}(X)
\end{equation}

 Let $b: \cB^{\lambda}\to \cTw(fSets)^{\lambda}$ be the functor sending the above collection to $(p_*\und{\lambda}, {J\to K)}$. Recall the functor $\cF^{\lambda}_{\Ran, C}: \cTw(fSets)^{\lambda}\to Shv(X^{\lambda})-mod$ from Section~\ref{Sect_E.1.2_now}. Now (\ref{complex_second_rewriting_AppE}) is 
$$
\underset{\cB^{\lambda}}{\colim} \; \cF^{\lambda}_{\Ran, C}\comp b.
$$ 

\noindent
{\bf Step} 2. It suffices to show that the LKE of $\cF^{\lambda}_{\Ran, C}\comp b$ along $b$ identifies with $\cF^{\lambda}_{\Ran, C}$.

 One checks that $b$ is a cocartesian fibration. An arrow 
 from $(I_1\to J_1\to K_1, \und{\lambda}^1)$ to $(I_2\to J_2\to K_2, \und{\lambda}^2)$ in $\cB^{\lambda}$ is cocartesian iff $\nu$ is an isomorphism. Now by (\cite{G}, ch. I.1, 2.2.4) for any $\xi=(J\to K, \und{\mu})\in \cTw(fSets)^{\lambda}$ the natural functor
$$
(\cB^{\lambda})_{\xi}\to \cB^{\lambda}\times_{\cTw(fSets)^{\lambda}} (\cTw(fSets)^{\lambda})_{/\xi}
$$
is cofinal, where $(\cB^{\lambda})_{\xi}$ denotes the fibre of $\cB^{\lambda}$ over $\xi$. 
So, the value of the LKE of $\cF^{\lambda}_{\Ran, C}\comp b$ along $b$ at $\xi$ is
$$
\underset{(I\toup{p} J\to K, \und{\lambda})\in (\cB^{\lambda})_{\xi}}{\colim} \; \cF^{\lambda}_{\Ran, C}\comp b
$$
In turn, $(\cB^{\lambda})_{\xi}\to \mid (\cB^{\lambda})_{\xi}\mid$ is cofinal. The category $(\cB^{\lambda})_{\xi}$ has an initial object $(J\toup{\id} J\to K, \und{\mu})$, hence $(\cB^{\lambda})_{\xi}$ is contractible. Our claim follows.
\end{proof}

\sssec{} 
\label{Sect_E.1.28_now}
Assume for a moment that $C\in CAlg^{nu}(\DGCat_{cont})$ is $\Lambda^*$-graded compatibly with the non-unital symmetric monoidal structure $C=\underset{\lambda\in\Lambda^*}{\oplus} C_{\lambda}$. Assume that $C(X)=C\otimes Shv(X)$. As in Section~\ref{Sect_E.1.1}, $\ov{\Fact}(C)$ inherits a $\Lambda^*$-grading, and the canonical arrow $\Fact(C)\to \ov{\Fact}(C)$ is $\Lambda^*$-graded.

 To describe it let $I\in fSets, \lambda\in\Lambda^*$. We get
\begin{equation}
\label{equivalence_for_Sect_E.1.30_now}
\bar C_{X^I, \lambda}\,\iso\, \underset{(I\toup{p} J\to K)\in \Tw(I)^{op}}{\lim} (C^{\otimes K})_{\lambda}\otimes Shv(X^I_{p, d}),
\end{equation}
where $(C^{\otimes K})_{\lambda}$ denotes the $\lambda$-component of $C^{\otimes K}$, the transition functors being defined as for the functor $\bar\cG_{I, C}$ in Section~\ref{Sect_4.1.1_now}. Then $\bar C_{X^I}\,\iso\, \underset{\lambda\in\Lambda^*}{\oplus}\bar C_{X^I, \lambda}$ in $Shv(X^I)-mod$.  
 
 As in Lemma~\ref{Lm_4.1.2_restrictions}, the above gradings are compatible for a map $I\to I'$ in $fSets$ with the !-restrictions under $X^{I'}\to X^I$. So, for $\lambda\in\Lambda^*$ we get the sheaf of categories $\ov{\Fact}(C)_{\lambda}$ on $\Ran$ given by the compatible collection $\{\bar C_{X^I, \lambda}\}$ for $I\in fSets$. One gets canonically
$$
\ov{\Fact}(C)\,\iso\,\underset{\lambda\in\Lambda^*}{\oplus} \ov{\Fact}(C)_{\lambda}.
$$
\begin{Rem} Let $_{\lambda}\Tw(I)$ be the category whose objects are collections $(I\to J\to K, \und{\lambda})$, where $(I\to J\to K)\in \Tw(I)$ and $\und{\lambda}: K\to\Lambda^*$. A map from $(I\to J_1\to K_1, \und{\lambda}^1)$ to $(I\to J_2\to K_2, \und{\lambda}^2)$ is a morphism from $(I\to J_1\to K_1)$ to $(I\to J_2\to K_2)$ in $\Tw(I)$ such that $\phi_*\und{\lambda}^2=\und{\lambda}^1$ for the underlying map $\phi: K_2\to K_1$. The projection $\xi: {_{\lambda}\Tw(I)}\to \Tw(I)$ is a cartesian fibration. We get a functor $_{\lambda}\bar\cG_{I,C}: {_{\lambda}\Tw(I)^{op}}\to Shv(X^I)-mod$ sending $(I\toup{p} J\to K, \und{\lambda})$ to 
$$
Shv(X^I_{p,d})\otimes (\underset{k\in K}{\otimes} C_{\und{\lambda}(k)}),
$$ 
the transition maps being defined as for $\bar\cG_{I, C}$. It is not true that
$\underset{_{\lambda}\Tw(I)^{op}}{\lim} {_{\lambda}\bar\cG_{I,C}}\,\iso\, \bar C_{X^I, \lambda}$. The reason is that $\xi$ is not cocartesian in general.
\end{Rem}

\begin{Rem} In the situation of Section~\ref{Sect_E.1.28_now} assume also that $C$ is dualizable in $\DGCat_{cont}$, and $m: C^{\otimes 2}\to C$ has a continuous right adjoint. Then a graded version of Proposition~\ref{Pp_4.1.10_Raskin_dualizability} holds. Namely, for $\lambda\in\Lambda^*$, $I\in fSets, D\in Shv(X^I)-mod$ the canonical arrow
$$
\bar C_{X^I, \lambda}\otimes_{Shv(X^I)} D\to \underset{(I\toup{p} J\to K)\in \Tw(I)^{op}}{\lim} (C^{\otimes K})_{\lambda}\otimes Shv(X^I_{p, d})\otimes_{Shv(X^I)} D
$$
is an equivalence in $Shv(X^I)-mod$. Here $(C^{\otimes K})_{\lambda}$ is the $\lambda$-component of the corresponding grading on $C^{\otimes K}$. This follows from (\cite{Ly}, 9.2.77).
\end{Rem}

\sssec{} 
\label{Sect_E.1.33}
Assume in the situation of Section~\ref{Sect_E.1.28_now} that for any $\lambda\in\Lambda^*$ the category $C_{\lambda}$ is compactly generated and equipped with a compactly generated t-structure (cf. Section~\ref{Sect_4.1.12_now_t-str}). We equip $C$ with the t-structure given by $C^{\le 0}=\prod_{\lambda\in\Lambda^*} C_{\lambda}^{\le 0}$. Assume for any $\lambda,\lambda'\in\Lambda^*$ the product $C_{\lambda}\otimes C_{\lambda'}\to C_{\lambda+\lambda'}$ is t-exact. 

 Recall that for $I\in fSets$ the categories $\bar C_{X^I}$ and $\ov{\Fact}(C)$ are equipped with t-structures as in Section~\ref{Sect_4.1.12_now_t-str}. These t-structures are compatible with gradings of $\bar C_{X^I}$ and $\ov{\Fact}(C)$ respectively, so each $\bar C_{X^I,\lambda}$ and $\ov{\Fact}(C)_{\lambda}$ inherits a t-structure. All these t-structures are compatible with filtered colimits. 
  
\begin{Rem} In the situation of Section~\ref{Sect_E.1.33} assume that the canonical arrow $\Fact(C)\to\ov{\Fact}(C)$ is an equivalence. Let $A=\underset{\lambda\in\Lambda^*}{\oplus} A_{\lambda}\in CAlg^{nu}(C)$ with $A_{\lambda}\in C_{\lambda}$. Assume that for any $\lambda\in\Lambda^*$ one has $A_{\lambda}\in C_{\lambda}^{\heartsuit}$. Recall that for 
$I\in fSets$ the object $A_{X^I}\,\iso\, \underset{\lambda\in\Lambda^*}{\oplus} A_{X^I, \lambda}$ is $\Lambda^*$-graded in $\bar C_{X^I}$. 

 Then, as in Remark~\ref{Rem_4.1.13_now}, for $\lambda\in\Lambda^*$ the image of $A_{X^I, \lambda}$ in $\bar C_{X^I,\lambda}$ is placed in degrees $<0$. 
\end{Rem} 

\begin{Lm} Let $C(X)$ be as in Proposition~\ref{Pp_E.1.5}. The canonical arrow $\Fact(C)\to\ov{\Fact}(C)$ is an equivalence (compatible with $\Lambda^*$-gradings). 
\end{Lm}
\begin{proof}
{\bf Step 1}. Set $C=\underset{\lambda\in \Lambda^*}{\oplus} \Vect$ and $D=\underset{\lambda\in\Lambda}{\oplus}\Vect\in CAlg(Shv(X)-mod)$.
We claim that the natural arrow $\Fact(C)\to \Fact(D)$ has a continuous $Shv(\Ran)$-linear right adjoint.  Indeed, we have a functor $a: \cTw(fSets)\times[1]\to Shv(\Ran)-mod$ sending $(J\to K)$ to the natural map 
$$
C^{\otimes J}\otimes Shv(X^K)\to D^{\otimes J}\otimes Shv(X^K).
$$ 
This is a natural transformation $\cF_{\Ran, C}\to \cF_{\Ran, D}$ of functors $\cTw(fSets)\to Shv(\Ran)-mod$. We may pass to right adjoints in the diagram $a$ and get a functor $a^R: (\cTw(fSets)\times[1])^{op}\to Shv(\Ran)-mod$. Our claim follows by (\cite{Ly}, 9.2.39). 

\smallskip\noindent
{\bf Step 2}. The natural functor $\ov{\Fact}(C)\to \ov{\Fact}(D)$ is $Shv(\Ran)$-linear and fully faithful. Indeed, it is obtained by passing to the limit over $I\in fSets$
in the functors, say $\gamma_I: \bar C_{X^I}\to \bar D_{X^I}$. So, it suffices to show that $\gamma_I$ is fully faithful. The functor $\gamma_I$ is obtained by passing to the limit over $(I\toup{p} J\to K)\in \Tw(I)^{op}$ in the fully faithful functors 
$$
C^{\otimes K}\otimes Shv(X^I_{p, d})\to D^{\otimes K}\otimes Shv(X^I_{p, d}),
$$ 
so $\gamma_I$ is fully faithful.

\smallskip\noindent
{\bf Step 3} The functor $\Fact(D)\to \ov{\Fact}(D)$ is an equivalence by Proposition~\ref{Pp_4.1.19}. So, the composition $\Fact(C)\to \ov{\Fact}(C)\hook{}\ov{\Fact}(D)$ has a continuous $Shv(\Ran)$-linear right adjoint. Our claim follows now from Lemma~\ref{Lm_C.3.6_now_cont_right_adjoint} below.
\end{proof}
\begin{Lm} 
\label{Lm_C.3.6_now_cont_right_adjoint}
Let $A\in CAlg(\DGCat_{cont})$ and 
$D\toup{L^0} C^0\hook{j} C$ be a diagram in $A-mod$. Set $L=j\comp L^0$. Assume $L$ has a right adjoint $R: C\to D$ in $A-mod$, and $j$ is fully faithful. Then $Rj$ is the right adjoint of $L^0$ in $A-mod$, so $L^0$ admits a right adjoint in $A-mod$.
\end{Lm}
\begin{proof} Let $j^R: C\to C^0$ be the right adjoint to $j$, $R^0: C^0\to D$ be the right adjoint to $L^0$. Then $R^0\comp j^R\,\iso\, R$ and $R\comp j\,\iso\, R^0\comp j^R\comp j\,\iso\, R^0$.
\end{proof}

\sssec{} Let $C(X)$ be as in Proposition~\ref{Pp_E.1.5}. The functor $C(X)\to Shv(X)$ forgetting the grading yields by functoriality the functor $o: \Fact(C)\to Shv(\Ran)$ in $Shv(\Ran)-mod$. For $\lambda\in\Lambda$ write $o_{\lambda}: \Fact(C)_{\lambda}\to Shv(\Ran)$ for its restriction to the summand $\Fact(C)_{\lambda}$.

\begin{Lm} The functor $o: \Fact(C)\to Shv(\Ran)$ is conservative.
\end{Lm}
\begin{proof}
It is obtained by passing to the limit over $I\in fSets$ in the functors $\bar C_{X^I}\to Shv(X^I)$. It remains to show that $\bar C_{X^I}\to Shv(X^I)$ is conservative. The latter is obtained by passing to the limit over $(I\toup{p} J\to K)\in \Tw(I)^{op}$ in the conservative functors $C^{\otimes K}\otimes Shv(X^I_{p, d})\to \Vect^{\otimes K}\otimes Shv(X^I_{p, d})$. By (\cite{Ly}, 2.5.3), we are done.
\end{proof}

\begin{Lm} Let $C=\underset{\lambda\in \Lambda^*}{\oplus} \Vect$, it is equipped with a natural t-structure. For $\lambda\in\Lambda^*$ equip
$\ov{\Fact}(C)_{\lambda}$ with a t-structure as in Section~\ref{Sect_E.1.33}. Then this t-structure coincides with the perverse t-structure on $\Fact(C)_{\lambda}\,\iso\, Shv(X^{\lambda})$, where the latter isomorphism is that of Proposition~\ref{Pp_E.1.5}.
\end{Lm}
\begin{proof}
{\bf Step 1}. Recall the equivalence (\ref{equivalence_for_Sect_E.1.30_now}). For any $(I\toup{p}  J\to K)\in\Tw(I)$ consider the natural map 
$$
(I\toup{p} J\to *)\to (I\toup{p}  J\to K)
$$ 
in $\Tw(I)$, it gives the transition map 
$$
\gamma: \underset{\und{\lambda}: K\to \Lambda^*, \sum_k \und{\lambda}(k)=\lambda}{\oplus} Shv(X^I_{p, d})\,\iso\, (C^{\otimes K})_{\lambda}\otimes Shv(X^I_{p, d})\to C_{\lambda}\otimes Shv(X^I_{p, d})=Shv(X^I_{p, d})
$$ 
in the above diagram. Here $\gamma$ comes from the coproduct over $\und{\lambda}$ as above of the identity maps. For $\cK\in (C^{\otimes K})_{\lambda}\otimes Shv(X^I_{p, d})$, the object $\cK$ is connective/coconnective iff $\gamma(\cK)$ is connective/coconnective. 

 Thus, $\cK\in \bar C_{X^I, \lambda}$ is connective/coconnective iff for any $(I\toup{p} J\to *)\in\Tw(I)$ its image in $C_{\lambda}\otimes Shv(X^I_{p, d})=Shv(X^I_{p, d})$ is connective/coconnective (cf. \cite{Ly}, 10.1.6). 

 For any $(I\toup{p} J\to *)\in\Tw(I)$ we have a canonical map $(I\toup{p} J\to *)\to (I\to *\to *)$ in $\Tw(I)$, which gives the transition map $C_{\lambda}\otimes Shv(X^I)\to C_{\lambda}\otimes Shv(X^I_{p, d})$ given by restriction to this open subset. Thus, $\cK\in \bar C_{X^I, \lambda}$ is connective/coconnective iff its image in $C_{\lambda}\otimes Shv(X^I)$ is connective/coconnective. 
 
\smallskip\noindent
{\bf Step 2}. Given a map $I\to I'$ in $fSets$, the diagram commutes naturally
$$
\begin{array}{ccc}
\bar C_{X^I,\lambda} & \toup{\vartriangle^!} & \bar C_{X^{I'}, \lambda}\\
\downarrow && \downarrow\\
C_{\lambda}\otimes Shv(X^I) & \toup{\vartriangle^!} & C_{\lambda}\otimes Shv(X^{I'}),
\end{array}
$$
here $\vartriangle: X^{I'}\to X^I$. They organize into a morphism $\bar C_{X^I,\lambda}\to C_{\lambda}\otimes Shv(X^I)$ functorial in $I\in fSets$. Passing to the limit over $I\in fSets$, this gives the functor 
$$
o_{\lambda}: Shv(X^{\lambda})\,\iso\,\Fact(C)_{\lambda}\to Shv(\Ran).
$$ 

 Now $\cK\in (\Fact(C)_{\lambda})^{\ge 0}$ iff for any $I\in fSets$, its image in $\bar C_{X^I,\lambda}$ is placed in degrees $\ge 0$, that is, its image in $Shv(X^I)$ is placed in degrees $\ge 0$. So, $\cK\in (\Fact(C)_{\lambda})^{\ge 0}$ iff $o_{\lambda}(\cK)\in Shv(\Ran)^{\ge 0}$. 

  Recall the prestack $\und{X}^{\lambda}$ define in Section~\ref{Sect_E.1.24_now}. Consider the map $\eta_{\lambda}: \und{X}^{\lambda}\to \Ran$ defined as in Section~\ref{Sect_E.1.3}. Namely, for $(J, \und{\lambda})\in fSets^{\lambda}$ the composition $X^J\to \und{X}^{\lambda}\toup{\eta_{\lambda}} \Ran$ is the natural map $X^J\to\Ran$. Note that $\eta_{\lambda}$ is pseudo-proper. The functor $o_{\lambda}$ identifies with $(\eta_{\lambda})_*$. 

 Finally, one checks $\cK\in Shv(X^{\lambda})$ is placed in perverse degrees $\ge 0$ iff $(\eta_{\lambda})_*\cK$ is placed in perverse degrees $\ge 0$. Indeed, recall  the stratification of $X^{\lambda}$ by locally closed subschemes $\oo{X}{}^{\gU(\lambda)}$ from Section~\ref{Sect_E.1.16_now}. Use that $\cK\in Shv(X^{\lambda})$ is placed in perverse degrees $\ge 0$ iff its !-restriction to each stratum is placed in perverse degrees $\ge 0$.  
\end{proof}

\sssec{} We finish this subsection with an analog of Proposition~\ref{Pp_E.1.22} for commutative factorization algebras.

\begin{Pp} Let $\lambda\in\Lambda^*$. Under the assumptions of Proposition~\ref{Pp_E.1.22} let $A\in CAlg^{nu}(E(X))$. Set $B=\underset{\lambda\in\Lambda^*}{\oplus} A$ viewed as a $\Lambda^*$-graded object of $CAlg^{nu}(C(X))$. The image of $\Fact(A)\boxtimes \omega_{X^{\lambda}}$ under the equivalence (\ref{iso_of_Pp_E.1.22}) identifies canonically with $\Fact(B)_{\lambda}$. 
\end{Pp}
\begin{proof}
As in the proof of Proposition~\ref{Pp_E.1.23_now} 
one has
\begin{equation}
\label{expression_1_addendum_for_Pp_E.1.29}
\Fact(A)\boxtimes \omega_{X^{\lambda}}\,\iso\, \underset{(\und{\lambda}, I)\in (fSets^{\lambda})^{op}}{\colim} \Fact(A)\boxtimes s^{\und{\lambda}}_!\omega_{X^I}\,\iso\, \underset{(\und{\lambda}, I)\in (fSets^{\lambda})^{op}}{\colim} s^{\und{\lambda}}_! A_{X^I}
\end{equation}
Recall from Section~\ref{Sect_2.4.2_v4} that 
$$
A_{X^I}\,\iso\, \mathop{\colim}\limits_{(I\to J\to K)\in Tw(I)} \; (\mathop{\boxtimes}\limits_{k\in K} A^{\otimes J_k})
$$
in $\Fact(E)$. So, (\ref{expression_1_addendum_for_Pp_E.1.29}) becomes
$$
\underset{(\und{\lambda}, I)\in (fSets^{\lambda})^{op}}{\colim} \mathop{\colim}\limits_{(I\to J\to K)\in Tw(I)} \; s^{\und{\lambda}}_! (\mathop{\boxtimes}\limits_{k\in K} A^{\otimes J_k})
$$
in $\Fact(C)_{\lambda}$. As in the proof of Proposition~\ref{Pp_E.1.22}, 
the latter expression identifies with
$$
\underset{(I\toup{p} J\to K, \und{\lambda})\in \cB^{\lambda}}{\colim} s^{\und{\lambda}}_! (\mathop{\boxtimes}\limits_{k\in K} A^{\otimes J_k})
\,\iso\, \underset{\cB^{\lambda}}{\colim} \; \cF^{\lambda}_{\Ran, B}\comp b\,\iso\,
\colim \cF^{\lambda}_{\Ran, B}\,\iso\, \Fact(B)_{\lambda},
$$
here $\cF^{\lambda}_{\Ran, B}: \cTw(fSets)^{\lambda}\to \Fact(C)_{\lambda}$.
\end{proof}

\ssec{Unital version}
\label{Sect_E2}

\sssec{} Assume we are in the situation of Section~\ref{Sect_E.0.1}, so in the unital setting.
 
\begin{Pp} 
\label{Pp_E.2.3}
Take $C(X)=\underset{\lambda\in\Lambda}{\oplus} Shv(X)$, the category of $\Lambda$-graded sheaves on $X$. For $\lambda\in\Lambda$ one has a canonical equivalence $\Fact(C)_{\lambda}\,\iso\, Shv((X^{\lambda}\times\Ran)^{\subset})$
in $Shv(X^{\lambda})-mod$.
\end{Pp}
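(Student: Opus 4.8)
The plan is to mirror the non-unital argument from Proposition~\ref{Pp_E.1.5} and Proposition~\ref{Pp_E.1.22}, but now accounting for the fact that $\lambda$-valued divisors may have zero components, which is precisely what forces the target to be $Shv((X^{\lambda}\times\Ran)^{\subset})$ rather than $Shv(X^{\lambda})$. First I would unwind the definition from Sections~\ref{Sect_E.0.1}--\ref{Sect_E.0.5}: by the description of $\Fact(C)_{\lambda}$ via the functor $\cF_{\Ran, C, \lambda}: \cTw(fSets)_{\lambda}\to Shv((X^{\lambda}\times\Ran)^{\subset})-mod$, and using $C(X)_{\mu}\iso Shv(X)$ for every $\mu\in\Lambda$, one gets
$$
\Fact(C)_{\lambda}\,\iso\,\underset{(\und{\lambda}, J\to K)\in\cTw(fSets)_{\lambda}}{\colim}\, Shv(X^K),
$$
where for a map $\tau$ the transition functor is the appropriate $\vartriangle_!$ (diagonal push-forward along the induced map $X^{K_1}\to X^{K_2}$ coming from $K_1\to K_2$, built from the product structure as in $\cF_{\Ran,C}$). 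As in Lemma~\ref{Lm_E.1.7}, I would argue that the functor $\cTw(fSets)_{\lambda}\to(fSets_{\lambda}')^{op}$ sending $(\und{\lambda}, J\to K)$ to $(K, \text{induced datum})$ is cofinal, replacing the colimit by one indexed over a category of "$K$ together with a $\Lambda$-valued (not $\Lambda^*$-valued) labelling summing to $\lambda$"; the contractibility of the relevant comma fibres should reduce, as in Lemmas~\ref{Lm_E.1.8} and~\ref{Lm_E.1.14_now}, to the contractibility of $\cTw(fSets)_{\mu}$ for pieces $\mu$, which one checks exactly as in Lemma~\ref{Lm_E.1.8} using the adjoint pair between $fSets_{\mu}$ and $\cTw(fSets)_{\mu}$ and (\cite{Ly}, 2.2.106).

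Next I would build the comparison functor $F: \Fact(C)_{\lambda}\to Shv((X^{\lambda}\times\Ran)^{\subset})$ directly: for each object of the indexing category, a labelled finite set $K$ with $\und{\lambda}: K\to\Lambda$ summing to $\lambda$ gives a map $X^K\to (X^{\lambda}\times\Ran)^{\subset}$, $(x_k)\mapsto(\sum_k\und{\lambda}(k)x_k,\,(x_k))$ — note that the $\Ran$-point records \emph{all} the points $x_k$, even those carrying label $0$, which is what makes the target $(X^{\lambda}\times\Ran)^{\subset}$ rather than $X^{\lambda}$ — and the $!$-direct images along these maps are compatible with the transition functors, hence assemble into $F$. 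Then, as in Section~\ref{Sect_E.1.12_now}, $F$ has a $Shv((X^{\lambda}\times\Ran)^{\subset})$-linear right adjoint $F^R$ obtained by passing to right adjoints in the (left Kan extension) colimit diagram and using (\cite{Ly}, 9.2.6), so that the adjoint pair $(F, F^R)$ lives in $Shv((X^{\lambda}\times\Ran)^{\subset})-mod$.

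The main work, and the main obstacle, is the stratum-by-stratum check that $F$ is an equivalence, after which Proposition~\ref{Pp_3.7.8_devissage_using_ULA_preservation} finishes the proof since the adjoint pair is $Shv((X^{\lambda}\times\Ran)^{\subset})$-linear. Here I would stratify $(X^{\lambda}\times\Ran)^{\subset}$ according to (i) the combinatorial type $\gU(\lambda)$ of the divisor $D$ — as in Section~\ref{Sect_E.1.19} — and (ii) the finite set of \emph{extra} points of $\cI$ not in $\supp(D)$, i.e.\ a locally closed decomposition indexed by pairs (a partition/type of $D$, an auxiliary disjoint reduced divisor of "zero-labelled" points). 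Over each such locally closed piece, the fibre product of $X^K$ with the stratum decomposes, exactly as in Lemma~\ref{Lm_E.1.14_now}, into a disjoint union of open subschemes of $X^{\tilde K}$ indexed by compatible equivalence relations on $K$; one then recognises the restricted colimit as $\colim_{B(G)} Shv(Y)\iso Shv(Y/G)$ for the relevant product of symmetric groups $G$ (using (\cite{Ly}, 2.7.24) as in the proof of Proposition~\ref{Pp_E.1.5}), and $Y/G$ is precisely that stratum. The bookkeeping is heavier than in the $\Lambda^*$-graded case because the "empty label" points of the $\Ran$-coordinate interact with the étale/Galois descent, but structurally it is the same devissage; once each stratum matches, Proposition~\ref{Pp_3.7.8_devissage_using_ULA_preservation} applies. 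Finally, the $Shv(X^{\lambda})$-module statement in the proposition is then just the restriction of scalars along the symmetric monoidal $!$-pullback $Shv((X^{\lambda}\times\Ran)^{\subset})\to Shv(X^{\lambda})$ of Remark~\ref{Rem_E.1.22}, compatibly with Section~\ref{Sect_E.0.5}. \QED
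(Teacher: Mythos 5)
Your first two steps coincide with the paper's: the cofinality of $(\und{\lambda}, J\to K)\mapsto (K,\phi_*\und{\lambda})$ reducing $\Fact(C)_{\lambda}$ to $\underset{(J,\und{\lambda})\in(fSets_{\lambda})^{op}}{\colim}Shv(X^J)$, and the comparison functor assembled from the maps $X^J\to(X^{\lambda}\times\Ran)^{\subset}$ together with its $Shv((X^{\lambda}\times\Ran)^{\subset})$-linear right adjoint (the paper additionally checks pseudo-properness of these maps so that the $!$-pushforwards to the prestack $(X^{\lambda}\times\Ran)^{\subset}$ are actually defined via (\cite{Ga}, 1.5.4); you should not skip this, though it is a minor point).

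The genuine gap is in your devissage step. Proposition~\ref{Pp_3.7.8_devissage_using_ULA_preservation} applies to a $Shv(S)$-linear adjoint pair for a finite type scheme $S$ with an open/closed decomposition, and in the paper it is run only over $S=X^{\lambda}$ with the finitely many diagonal strata $\oo{X}{}^{\gU(\lambda)}$; the $\Ran$-coordinate is never stratified. Your proposed refinement (ii), cutting $(X^{\lambda}\times\Ran)^{\subset}$ further by the configuration of the extra zero-labelled points of $\cI$, is not a decomposition to which this devissage applies: it has infinitely many pieces, the ambient object is a prestack rather than a finite type scheme, and sheaves on the $\Ran$ direction are a limit over the $X^I$ and are not glued from ``strata'' indexed by cardinality of $\cI\setminus\supp(D)$. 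Relatedly, your claim that over each piece the restricted colimit is $\underset{B(G)}{\colim}\,Shv(Y)\iso Shv(Y/G)$ for a product of symmetric groups is what happens in the non-unital Proposition~\ref{Pp_E.1.5}, but it is false stratum-wise here: after base change to $\oo{X}{}^{\gU(\lambda)}$ alone the answer is the genuinely Ran-like category $Shv(\oo{X}{}^{\gU(\lambda)}\times_{X^{\lambda}}(X^{\lambda}\times\Ran)^{\subset})$, because the zero-labelled points contribute the factors $X^{J-J^*}$ which survive in the colimit. The correct replacement for your step is the paper's Lemma~\ref{Lm_E.2.13_almost_last}: over each $\oo{X}{}^{\gU(\lambda)}$ one decomposes $X^J\times_{X^{\lambda}}\oo{X}{}^{\gU(\lambda)}\iso X^{J-J^*}\times(X^{J^*}\times_{X^{\lambda}}\oo{X}{}^{\gU(\lambda)})$ using Lemma~\ref{Lm_E.1.14_now}, reduces the index category $fSets_{\gU(\lambda)}$ to the full subcategory $fSets^0_{\gU(\lambda)}$ via the left adjoint $\bar h^u$ (a left Kan extension argument, not a $B(G)$-quotient), and identifies the resulting colimit of prestacks with the fibre of $(X^{\lambda}\times\Ran)^{\subset}$ over the stratum; only then does the finite open/closed induction over $X^{\lambda}$ via Proposition~\ref{Pp_3.7.8_devissage_using_ULA_preservation} finish the proof.
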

In Sections~\ref{Sect_E.2.3_now}-\ref{Lm_E.2.13_almost_last} we prove Proposition~\ref{Pp_E.2.3}. 
 
\sssec{} 
\label{Sect_E.2.3_now}
Fix $\lambda\in\Lambda$. Write $fSets_{\lambda}$ for the category whose objects are pairs $(J, \und{\lambda}: J\to \Lambda)$ with $\sum_{j} \und{\lambda}(j)=\lambda$. A map from $(J_1, \und{\lambda}^1)$ to $(J_2,\und{\lambda}^2)$ is a map $\phi: J_1\to J_2$ in $fSets$ with $\phi_*\und{\lambda}^1=\und{\lambda}^2$. Write 
$$
h^u: \cTw(fSets)_{\lambda}\to (fSets_{\lambda})^{op}
$$ 
for the functor sending $(J\toup{\phi} K, \und{\lambda}: J\to \Lambda)$ to $(K, \phi_*\und{\lambda})$. As in Lemma~\ref{Lm_E.1.7} one shows that $h^u$ is cofinal. As in as in Lemma~\ref{Lm_E.1.8} one checks that $\cTw(fSets)_{\lambda}$ is contractible.

 Consider the functor 
$$
\eta^u: (fSets_{\lambda})^{op}\to \DGCat_{cont}, \;\; (J, \und{\lambda})\mapsto Shv(X^J),
$$ 
where the transition functors are given by $!$-direct images. By the above,
$$
\Fact(C)_{\lambda}\,\iso\, \underset{(J, \und{\lambda})\in (fSets_{\lambda})^{op}}{\colim} Shv(X^J)=\colim \eta^u. 
$$ 

\sssec{} For $(J, \und{\lambda})\in fSets_{\lambda}$ consider the map $\pi^{\und{\lambda}}: X^J\to (X^{\lambda}\times\Ran)^{\subset}$ sending $\{x_j\}_{j\in J}\in X^J$ to $\cI=\{x_j\}\in\Ran$, $D\in X^{\lambda}$ with 
$$
D=\underset{j\in J^*}{\sum} x_j \und{\lambda}(j)
$$
These maps form a compatible system yielding the map denoted
$$
\pi: \underset{(J, \und{\lambda})\in (fSets_{\lambda})^{op}}{\colim} X^J\to (X^{\lambda}\times\Ran)^{\subset}.
$$ 
Here the colimit is taken in $\PreStk$, and $J^*=\{j\in J\mid \und{\lambda}(j)\ne 0\}$. 

We will show that the functor 
$$
\pi^!: Shv((X^{\lambda}\times\Ran)^{\subset})\to Shv(\underset{(J, \und{\lambda})\in (fSets_{\lambda})^{op}}{\colim} X^J)\,\iso\,\underset{(J, \und{\lambda})\in (fSets_{\lambda})}{\lim} Shv(X^J)
$$ 
is an equivalence. Here the limit is calculated in $\DGCat_{cont}$. 

\begin{Lm} 
\label{Lm_E.2.5_now}
For $(J,\und{\lambda})\in fSets_{\lambda}$ the map $\pi^{\und{\lambda}}$ is pseudo-proper.
\end{Lm}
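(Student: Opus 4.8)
The plan is to reduce pseudo-properness of $\pi^{\und\lambda}$ to pseudo-properness of the structure map $\vartriangle^J: X^J\to\Ran$ together with properness of the finite symmetrization maps onto $X^\lambda$. Recall from the proof of Lemma~\ref{Lm_2.1.11} that $\Ran$ is a pseudo-scheme, so each $\vartriangle^J: X^J\to\Ran$ is pseudo-proper by (\cite{Ga}, 7.4.2); similarly $X^\lambda$ is a scheme of finite type, and the (partially defined, stratawise) symmetrization $X^{J^*}\to X^{\und\lambda(J^*)}\subset X^\lambda$ is finite, hence proper. The target $(X^\lambda\times\Ran)^\subset$ is a closed subfunctor of $X^\lambda\times\Ran$, so it is again a pseudo-scheme, and pseudo-properness of a map into a pseudo-scheme is checked after base change by $Y\to(X^\lambda\times\Ran)^\subset$ for $Y\in\Sch_{ft}$, i.e.\ by exhibiting $Y\times_{(X^\lambda\times\Ran)^\subset} X^J$ as a colimit of schemes $Y_i\in\Sch_{ft}$ along closed (more precisely, proper) transition maps, compatibly with the structure maps to $Y$ being proper.

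First I would unwind the fibre product. Given $S\in\Sch^{aff}$ and an $S$-point $(D,\cI)\in(X^\lambda\times\Ran)^\subset$, an $S$-point of $X^J$ mapping to it is a tuple $(x_j)_{j\in J}$ with $\{x_j\}=\cI$ (set-theoretically, after a surjection $J\to\cI$) and $\sum_{j\in J^*} x_j\,\und\lambda(j)=D$. Writing $\cI=\underset{K\twoheadleftarrow J}{\colim}$ over the surjections $J\to K$ recording which points coincide, and decomposing $D$ accordingly, one sees that $X^J\times_{(X^\lambda\times\Ran)^\subset}(X^\lambda\times X^I)$ (for any $I\in fSets$, using the presentation $\Ran\,\iso\,\underset{I\in fSets^{op}}{\colim}X^I$) is a finite colimit — indexed by the combinatorial data of factorizations of the two surjections — of schemes of finite type, with transition maps that are closed immersions composed with finite maps, hence proper. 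This is the same pattern as the description of $X^I\times_\Ran X^K$ recalled before Lemma~\ref{Lm_2.1.11}, now decorated by the divisor-datum; I would make it precise by an explicit category of diagrams analogous to $\cE$ from the proof of Lemma~\ref{Lm_2.1.2_about_Fact(C)}.

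Then I would invoke (\cite{Ga}, 1.5) once more: a map is pseudo-proper iff after any such base change to a scheme $Y\in\Sch_{ft}$ the result is a colimit of $Y$-schemes of finite type along proper transition maps with proper projections to $Y$. The displayed finite colimit does exactly this, since all the component schemes are finite-type and proper over the relevant $X^\lambda\times X^I$, and properness is stable under composition, base change, and finite colimits of the indexing diagram. The one genuine point to be careful about — and the main obstacle — is that the symmetrization map $X^{J^*}\to X^\lambda$ is only \emph{partially} defined / stratawise: on the open stratum where all points with distinct $\und\lambda$-values are distinct it is the obvious finite map, but one must check the gluing over the diagonal strata of $X^\lambda$ produces an honest scheme of finite type proper over $X^\lambda$, not just a prestack. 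This is handled by the same stratification of $X^\lambda$ by the $\oo X{}^{\gU(\lambda)}$ used in Section~\ref{Sect_E.1.9}ff., together with the fact (Section~\ref{Sect_B.1.1}) that pullback of sheaves along closed immersions behaves well; concretely, $X^{J^*}\to X^\lambda$ is the normalization-type finite map attached to the equivalence-relation data in $Q(J^*)$, which is finite and surjective onto its (closed) image. With that in hand, $\pi^{\und\lambda}$ is pseudo-proper.
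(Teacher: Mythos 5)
Your proof is essentially the paper's: after base changing along a test scheme $S\to (X^{\lambda}\times X^I)^{\subset}$ (the $\Ran$-coordinate factoring through some $X^I$), one uses $X^J\times_{\Ran}X^I\,\iso\,\underset{(I\to I'\gets J)}{\colim}\,X^{I'}$ and observes that each $X^{I'}\times_{X^{\lambda}\times X^I}S$ is proper (in fact closed) over $S$, which is exactly the criterion of (\cite{Ga}, 1.5). The only caveats are that the ``main obstacle'' you identify is illusory --- the map $X^{J^*}\to X^{\lambda}$, $(x_j)\mapsto\sum_j\und{\lambda}(j)x_j$, is an everywhere-defined finite morphism of schemes, so no stratification of $X^{\lambda}$ and no sheaf-theoretic input is needed --- and that your assertion that $(X^{\lambda}\times\Ran)^{\subset}$ is a closed subfunctor, hence a pseudo-scheme, is both unjustified and unnecessary, since pseudo-properness is tested on arbitrary finite-type scheme points of the target (only the fact that $(X^{\lambda}\times\Ran)^{\subset}\to X^{\lambda}\times\Ran$ is a monomorphism is used, to compute the fibre product over the ambient prestack).
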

\begin{proof} Fix $I\in fSets$ and consider the base change of $\pi^{\und{\lambda}}$ by $X^I\to \Ran$. Recall that $X^J\times_{\Ran} X^I\,\iso\, \underset{(I\to I'\gets J)}{\colim} X^{I'}$, where the colimit is over $(fSets_{I/}\times_{fSets} fSets_{J/})^{op}$. 

 Let now $S\in\Sch^{aff}$, and $S\to (X^{\lambda}\times X^I)^{\subset}$ be given. Then the prestack 
$$
X^J\times_{(X^{\lambda}\times\Ran)^{\subset}} S
$$ 
identifies with 
$$
\underset{(I\to I'\gets J)}{\colim} X^{I'}\times_{X^{\lambda}\times X^I} S
$$
taken over the same category. Each scheme $X^{I'}\times_{X^{\lambda}\times X^I} S$ is proper over $S$.
\end{proof}

\sssec{} By (\cite{Ga}, 1.5.4), the functors $\pi^{\und{\lambda}}_!: Shv(X^J)\to Shv((X^{\lambda}\times\Ran)^{\subset})$ are defined. They form a compatible family for $(J, \und{\lambda})\in fSets_{\lambda}$, hence give rise to a functor 
$$
\pi_!: \underset{(J,\und{\lambda})\in fSets_{\lambda}}{\colim} Shv(X^J)\to Shv((X^{\lambda}\times\Ran)^{\subset}).
$$ 
in $\DGCat_{cont}$. From (\cite{Ly}, 9.2.6) we see that we get an adjoint pair
\begin{equation}
\label{adj_pair_Sect_E_1.1.7}
\pi_!: \underset{(J,\und{\lambda})\in fSets_{\lambda}}{\colim} Shv(X^J)\leftrightarrows Shv((X^{\lambda}\times\Ran)^{\subset}): \pi^!
\end{equation}
in $Shv((X^{\lambda}\times\Ran)^{\subset})-mod$. 

\sssec{Marked Ran space} 
\label{Sect_Marked Ran space}
The following definition has appeared in (\cite{Ga_contractibility_Ran}, 3.5.2). Let $A$ be a finite nonempty set. Consider the category $fSets_A$ whose objects are finite nonempty sets $I$ equipped with a map $\alpha: A\to I$. A map from $(\alpha_1, I_1)$ to $(\alpha_2, I_2)$ is a morphism $\phi: I_1\to I_2$ in $fSets$ compatible with $\alpha_i$, that is, $\alpha_2=\phi\alpha_1$.

 Set $\Ran_A=\underset{(I,\alpha)\in (fSets_A)^{op}}{\colim} X^I$ in $\PreStk$. 
 
\begin{Lm} 
\label{Lm_about_Ran_A}
$\Ran_A$ is the prestack sending $S\in\Sch^{aff}$ to the set of pairs $(\alpha, \cI)$, where $\alpha: A\to \cI$ is a map, and $\cI\subset \Map(S, X)$ is a finite nonempty subset.
\end{Lm}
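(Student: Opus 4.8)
The statement to prove is Lemma~\ref{Lm_about_Ran_A}, which identifies the colimit $\Ran_A=\underset{(I,\alpha)\in (fSets_A)^{op}}{\colim} X^I$ in $\PreStk$ with the functor $S\mapsto \{(\alpha,\cI): \alpha\colon A\to\cI \text{ a map},\ \cI\subset\Map(S,X)\text{ finite nonempty}\}$. This is entirely parallel to the standard computation $\Ran\,\iso\,\underset{I\in fSets^{op}}{\colim} X^I$ as the prestack of finite nonempty subsets of $\Map(S,X)$, just with the bookkeeping of the marking $\alpha$ carried along. So the plan is to reduce to that known description and add the marking.

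First I would recall that colimits in $\PreStk=\Fun((\Sch^{aff}_{ft})^{op},\Spc)$ are computed pointwise, so for fixed $S\in\Sch^{aff}$ one has $\Map(S,\Ran_A)\,\iso\,\underset{(I,\alpha)\in (fSets_A)^{op}}{\colim}\Map(S,X^I)=\underset{(I,\alpha)\in (fSets_A)^{op}}{\colim}\Map(S,X)^I$ in $\Spc$. Next, observe that the indexing category $(fSets_A)^{op}$ is cofiltered: given two objects $(I_1,\alpha_1),(I_2,\alpha_2)$, the pushout $I_1\sqcup_A I_2$ (amalgamating along the images of $A$, or rather the pushout of $I_1\gets A\to I_2$ in $fSets$ — which exists as $A$ is nonempty) with its evident marking maps to both; and coequalizers of parallel pairs are handled the same way as for $fSets$. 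For a cofiltered colimit of sets along a diagram of surjections, the colimit is the quotient identifying tuples that agree after pushing forward, which is exactly the colimit computation underlying $\Ran$; concretely, a point of $\underset{(I,\alpha)^{op}}{\colim}\Map(S,X)^I$ is represented by a pair $((I,\alpha),(x_i)_{i\in I})$ with $(x_i)$ a map $I\to\Map(S,X)$, modulo the relation generated by $((I,\alpha),(x_i))\sim((I',\alpha'),(x'_{i'}))$ whenever there is $\phi\colon I\to I'$ in $fSets_A$ with $\phi_*(x_i)=(x'_{i'})$. I would then exhibit the comparison map: send such a representative to $(\bar\alpha,\cI)$ where $\cI=\{x_i : i\in I\}\subset\Map(S,X)$ is the (finite nonempty) image and $\bar\alpha\colon A\to\cI$ is the composite $A\toup{\alpha} I\to\cI$; this is well-defined on equivalence classes because the relations are generated by surjections in $fSets_A$, under which both the image subset and the induced marking are preserved.

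Then I would check this map is a bijection of sets (so an equivalence in $\Spc$, both sides being discrete since $X^I$ is a scheme and the transition maps are the diagonal monomorphisms restricted appropriately — actually one should note the relevant spaces $\Map(S,X)^I$ are sets, and filtered colimits of sets are computed as usual). Injectivity: given $(\bar\alpha,\cI)$ in the image, the canonical representative is $(( \cI,\bar\alpha),(\iota_i)_{i\in\cI})$ with $\iota$ the tautological inclusion, and any representative $((I,\alpha),(x_i))$ with the same image admits the surjection $I\to\cI$, $i\mapsto x_i$, which is a morphism in $fSets_A$ carrying $(x_i)$ to $(\iota)$, hence the two are identified in the colimit. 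Surjectivity is immediate from the canonical representative. Finally the whole construction is manifestly functorial in $S$, so it upgrades to an isomorphism of prestacks. I would close by remarking, as the lemma's use in Section~\ref{Sect_E.1.1} suggests, that the case $A=\{*\}$ (or the evident variant indexed over $fSets^{\lambda}$) gives the description of $\und{X}^\lambda$ referenced just above.

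\textbf{Main obstacle.} There is no deep obstacle; the one point requiring a little care is the cofinality/cofilteredness bookkeeping — verifying that $(fSets_A)^{op}$ is cofiltered (using nonemptiness of $A$ so that the relevant pushouts in $fSets$ exist and stay in $fSets_A$) and that the colimit of the set-valued diagram is therefore the naive one. Everything else is a transcription of the standard $\Ran$ computation with the marking $\alpha$ tracked through; one may simply cite the proof of $\Ran\,\iso\,\underset{fSets^{op}}{\colim}X^I$ and indicate the modifications, which is presumably why the paper writes ``Similar to Lemma~\ref{Lm_about_Ran_A}'' elsewhere rather than giving details.
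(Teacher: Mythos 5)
The substance of this lemma is not the $\pi_0$ bookkeeping but the discreteness of the colimit: $\Ran_A$ is a colimit in $\PreStk=\Fun((\Sch^{aff}_{ft})^{op},\Spc)$, computed pointwise in $\Spc$, and a colimit of sets over a non-filtered index category need not be $0$-truncated. Your reduction to the naive set-level colimit rests on the claim that $(fSets_A)^{op}$ is (co)filtered, and this is false. First, the pushout $I_1\sqcup_A I_2$ you invoke receives maps from $I_1$ and $I_2$ which are in general not surjective, so they are not even morphisms of $fSets_A$; the pairwise cone condition is instead met by $(I_1\times I_2,(\alpha_1,\alpha_2))$ with the projections. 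More seriously, the condition on parallel pairs fails: if $f\ne g:(I_1,\alpha_1)\to(I_2,\alpha_2)$, any surjection $h:I_3\to I_1$ with $fh=gh$ would have image inside $\{i\in I_1\mid f(i)=g(i)\}\subsetneq I_1$, a contradiction (e.g. $A=\{*\}$, $I_1=\{a,b,c\}$, $\alpha_1(*)=a$, $I_2=\{x,y\}$, $f:(a,b,c)\mapsto(x,y,x)$, $g:(a,b,c)\mapsto(x,y,y)$). So $(fSets_A)^{op}$ is not filtered, just as $fSets^{op}$ is not, and ``filtered colimits of sets are computed as usual'' cannot be quoted. Your injectivity/surjectivity argument only computes $\pi_0$ of the colimit; the vanishing of the higher homotopy is exactly what remains unproved, and it does not follow from the transition maps being monomorphisms (a pushout of a set along two maps to points is a suspension, hence not discrete).

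The paper closes precisely this gap by a different argument: for fixed $S$, with $\cA=\Map(S,X)$, the colimit of $\kappa(I,\alpha)=\Map(I,\cA)$ over $(fSets_A)^{op}$ is the geometric realization of the associated cocartesian fibration $\cC\to (fSets_A)^{op}$ (the category of elements); $\cC$ decomposes as a disjoint union of subcategories $\cC_{(\cI,\beta)}$ indexed by a finite nonempty subset $\cI\subset\cA$ and a map $\beta: A\to\cI$ (these are invariants of connected components), and each $\cC_{(\cI,\beta)}$ has an initial object, given by $(\cI,\beta)$ with the tautological inclusion $\cI\hookrightarrow\cA$, hence is contractible. This yields at once the discreteness of the colimit and the asserted description of its points. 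Your ``canonical representative'' is essentially that initial object, so the proof can be repaired by replacing the filteredness claim with this contractibility-of-components argument; as written, however, it has a genuine gap.
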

\begin{proof}
We have $\Map(S, \Ran_A)\,\iso\,\underset{(I,\alpha)\in (fSets_A)^{op}}{\colim} \Map(I, \Map(S, X))$ taken in $\Spc$. Here $\cA=\Map(S, X)$ is a set.
 
 For any set $\cA$, $\underset{(I,\alpha)\in (fSets_A)^{op}}{\colim} \Map(I, \cA)$ identifies with the set of pairs $(\beta, \cI)$, where $\cI\subset \cA$ is a finite subset, and $\beta: A\to \cI$ is a map.
 
 Indeed, consider the functor $\kappa: (fSets_A)^{op}\to\Spc$ sending $(I,\alpha)$ to $\Map(I, \cA)$. Let $\cC\to (fSets_A)^{op}$ be the cocartesian fibration in spaces attached to $\kappa$. Recall that $\colim\kappa$ identifies with the geometric realization of $\cC$.
The category $\cC$ is the disjoint union of categories $\cC_{(\cI,\beta)}$ indexed by pairs: a finite nonempty subset $\cI\subset \cA$ and a map $\beta: A\to \cI$. Namely, $\cC_{(\cI,\beta)}$ is the category classifying $(I, \alpha)\in (fSets_A)^{op}$ and a surjection $\phi: I\to \cI$ such that $\phi\alpha=\beta$.  The category $\cC_{(\cI,\beta)}$ has an initial object, so is contractible. 
\end{proof}

We have the projection $\Ran_A\to X^A$. 

\sssec{Example} 
\label{Sect_Example_E.2.9}
If $\lambda\in\Lambda$ is one of the generators of the semi-group 
$\Lambda$ then the category $fSets_{\lambda}$ is the category $fSets_*$ for $A=\{*\}$ consisting of one element. So, 
$$
\underset{(J,\und{\lambda})\in (fSets_{\lambda})^{op}}{\colim} X^J\,\iso\, \Ran_*
$$
for $A=*$. In this case the natural map $\pi: \Ran_*\to (X\times\Ran)^{\subset}$ induces an isomorphism $\pi^!: Shv((X\times\Ran)^{\subset})\,\iso\, Shv(\Ran_*)$, though $\pi$ is not an isomorphism of prestacks. 

\sssec{Proof of Proposition~\ref{Pp_E.2.3}} Since the adjoint pair (\ref{adj_pair_Sect_E_1.1.7}) takes place in $Shv(X^{\lambda})-mod$, we are reduced in view of Proposition~\ref{Pp_3.7.8_devissage_using_ULA_preservation} to Lemma~\ref{Lm_E.2.13_almost_last} below. \QED

\sssec{}  Pick $\gU(\lambda)$ given by $\lambda=\sum_{k\in K} d_k\lambda_k$, where $K\in fSets$, $d_k>0$, and $\lambda_k\in\Lambda^*$ are pairwise distinct.

\begin{Lm} 
\label{Lm_E.2.13_almost_last}
After the base change $Shv(\oo{X}{}^{\gU(\lambda)})\otimes_{Shv(X^{\lambda})}\cdot$ the functors (\ref{adj_pair_Sect_E_1.1.7}) become equivalences.
\end{Lm}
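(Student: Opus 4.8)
The plan is to prove Lemma~\ref{Lm_E.2.13_almost_last}, to which the proof of Proposition~\ref{Pp_E.2.3} has just been reduced. Fix $\gU(\lambda)$, say $\lambda=\sum_{k\in K}d_{k}\lambda_{k}$ with the $\lambda_{k}\in\Lambda^{*}$ pairwise distinct, and set $A_{\lambda}=\bigsqcup_{k\in K}\underline{d_{k}}$ with colour map $c\colon A_{\lambda}\to K$ and $G_{\lambda}=\prod_{k\in K}S_{d_{k}}$, acting on $A_{\lambda}$ by permuting the blocks. As recalled before Lemma~\ref{Lm_E.1.14_now}, $\oo{X}{}^{A_{\lambda}}\to\oo{X}{}^{\gU(\lambda)}$ is a Galois covering with group $G_{\lambda}$, so $\oo{X}{}^{\gU(\lambda)}\iso\oo{X}{}^{A_{\lambda}}/G_{\lambda}$. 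Using the $B(G)$-argument from the proof of Proposition~\ref{Pp_E.1.5} (for a finite group $G$ acting on $Y$ one has $Shv(Y/G)\iso\colim_{B(G)}Shv(Y)$), the base change of Section~\ref{Sect_B.1.1}, and the $Shv(X^{\lambda})$-linearity of the pair (\ref{adj_pair_Sect_E_1.1.7}), both its source and its target, base changed to $\oo{X}{}^{\gU(\lambda)}$, are identified with $B(G_{\lambda})$-colimits of their base changes to $\oo{X}{}^{A_{\lambda}}$, compatibly with $(\pi_{!},\pi^{!})$. Hence it suffices to prove that $\pi_{!}$ (equivalently $\pi^{!}$, one of an adjoint pair being an equivalence iff the other is) becomes an equivalence after the further base change $Shv(\oo{X}{}^{A_{\lambda}})\otimes_{Shv(X^{\lambda})}\cdot$.

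I would then identify the two sides over $\oo{X}{}^{A_{\lambda}}$. The target $(X^{\lambda}\times\Ran)^{\subset}\times_{X^{\lambda}}\oo{X}{}^{A_{\lambda}}$ classifies a pairwise distinct family $(y_{a})_{a\in A_{\lambda}}$ together with a finite $\cI$ containing all the $y_{a}$; since the $y_{a}$ are distinct the tautological marking $A_{\lambda}\to\cI$ is injective, so this prestack is $\Ran_{A_{\lambda}}\times_{X^{A_{\lambda}}}\oo{X}{}^{A_{\lambda}}$, the base change to $\oo{X}{}^{A_{\lambda}}$ of the marked Ran space of Section~\ref{Sect_Marked Ran space}, and, exactly as for $\Ran$ itself, its sheaf category is $\colim_{(fSets_{A_{\lambda}})^{op}}Shv(X^{I}\times_{X^{A_{\lambda}}}\oo{X}{}^{A_{\lambda}})$. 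For the source, given $(J,\und{\lambda})\in fSets_{\lambda}$ put $J^{*}=\{\,j:\und{\lambda}(j)\ne 0\,\}$; factoring the map $X^{J}\to X^{\lambda}$, $(x_{j})\mapsto\sum_{j\in J^{*}}\und{\lambda}(j)\,x_{j}$, through $X^{J^{*}}$, applying Lemma~\ref{Lm_E.1.14_now} to $(\und{\lambda}|_{J^{*}},J^{*})\in fSets^{\lambda}$, and base changing the coverings $s^{\phi}$ occurring there (each with group $G_{\lambda}$) to $\oo{X}{}^{A_{\lambda}}$, one gets
$$
X^{J}\times_{X^{\lambda}}\oo{X}{}^{A_{\lambda}}\;\iso\;\bigsqcup_{\psi}\bigl(\oo{X}{}^{A_{\lambda}}\times X^{J\setminus J^{*}}\bigr),
$$
the disjoint union over surjections $\psi\colon J^{*}\twoheadrightarrow A_{\lambda}$ with $\sum_{j\in\psi^{-1}(a)}\und{\lambda}(j)=\lambda_{c(a)}$ for every $a\in A_{\lambda}$, the free factor $X^{J\setminus J^{*}}$ recording the $\und{\lambda}$-zero points (which enter $\cI$ but not the divisor). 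Consequently the source, base changed to $\oo{X}{}^{A_{\lambda}}$, is the colimit of $((J,\und{\lambda}),\psi)\mapsto Shv(\oo{X}{}^{A_{\lambda}}\times X^{J\setminus J^{*}})$ over the Grothendieck construction $\cD$ over $(fSets_{\lambda})^{op}$ of the assignment $(J,\und{\lambda})\mapsto\{\psi\text{ as above}\}$.

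Finally I would introduce the functor $\Phi\colon\cD\to(fSets_{A_{\lambda}})^{op}$ sending $((J,\und{\lambda}),\psi)$ to $\bigl(I=(J^{*}/\psi)\sqcup(J\setminus J^{*}),\ \beta\colon A_{\lambda}\iso J^{*}/\psi\hookrightarrow I\bigr)$; the diagonal $X^{J}\to X^{I}$ attached to $\psi$ identifies $\oo{X}{}^{A_{\lambda}}\times X^{J\setminus J^{*}}$ with $X^{I}\times_{X^{A_{\lambda}}}\oo{X}{}^{A_{\lambda}}$, this identification being the base change of $\pi$, so the source functor is $\Phi$ followed by $(I,\beta)\mapsto Shv(X^{I}\times_{X^{A_{\lambda}}}\oo{X}{}^{A_{\lambda}})$. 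Now $\Phi$ lands in the full subcategory $(fSets_{A_{\lambda}})^{\mathrm{inj}}\subset fSets_{A_{\lambda}}$ of pairs with injective marking, which is zero-cofinal in $fSets_{A_{\lambda}}$ in the sense of Definition~\ref{Def_zero-cofinal} (a non-injective marking admits no map to an injective one in $fSets_{A_{\lambda}}$), while the target functor vanishes outside it; so by Section~\ref{Sect_C.0.2} it is enough that $\Phi\colon\cD\to((fSets_{A_{\lambda}})^{\mathrm{inj}})^{op}$ be cofinal. For an object with injective $\beta$ the relevant comma category has an initial object, obtained by taking $J=A_{\lambda}\sqcup(I\setminus\beta(A_{\lambda}))$, $\und{\lambda}$ equal to $a\mapsto\lambda_{c(a)}$ on $A_{\lambda}$ and $0$ elsewhere, $\psi=\id$, and the surjection $\beta\sqcup\id\colon A_{\lambda}\sqcup(I\setminus\beta(A_{\lambda}))\to I$; hence that comma category is contractible and $\Phi$ is cofinal. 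Since all the identifications above are $G_{\lambda}$-equivariant, taking $B(G_{\lambda})$-colimits recovers the statement over $\oo{X}{}^{\gU(\lambda)}$, which is Lemma~\ref{Lm_E.2.13_almost_last}. I expect the main obstacle to be the bookkeeping of the middle and last steps — correctly describing $X^{J}\times_{X^{\lambda}}\oo{X}{}^{A_{\lambda}}$ (combining Lemma~\ref{Lm_E.1.14_now} on the non-zero part with the free Ran-type factor on the zero part), matching $\cD$ with $(fSets_{A_{\lambda}})^{op}$ through $\Phi$, and handling non-injective markings via zero-cofinality; none of this is deep, but it requires the same care as the cofinality lemmas elsewhere in Appendix~\ref{Sect_E}.
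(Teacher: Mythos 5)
Your source-side computation is essentially a valid ordered-cover version of what the paper does: passing to the Galois cover $\oo{X}{}^{A_\lambda}\to \oo{X}{}^{\gU(\lambda)}$ and descending via $Shv(Y/G)\,\iso\,\underset{B(G)}{\colim}\,Shv(Y)$ is the device already used in the proof of Proposition~\ref{Pp_E.1.5}; your decomposition of $X^J\times_{X^{\lambda}}\oo{X}{}^{A_\lambda}$ into $\psi$-components is the base change of Lemma~\ref{Lm_E.1.14_now}; and your category $\cD$ together with $\Phi$ and the zero-cofinality of injective markings plays the role of the paper's passage from $fSets_{\gU(\lambda)}$ to $fSets^0_{\gU(\lambda)}$ (the paper uses the left adjoint $\bar h^u$ and a Kan-extension argument where you use a comma-category cofinality check; these are interchangeable, modulo the bookkeeping you acknowledge).

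The genuine gap is your identification of the base-changed \emph{target}. The assertion that $(X^{\lambda}\times\Ran)^{\subset}\times_{X^{\lambda}}\oo{X}{}^{A_{\lambda}}$ classifies distinct points $(y_a)$ together with a finite $\cI$ ``containing all the $y_a$'', hence ``is'' $\Ran_{A_{\lambda}}\times_{X^{A_{\lambda}}}\oo{X}{}^{A_{\lambda}}$, is false: by Section~\ref{Sect_E.0.5} the condition is the set-theoretic containment $\supp(\Theta(D))\subset\Gamma_{\cI}$, not membership of the marked points in $\cI$. For example, over $S=\Spec k[\epsilon]$, with $\cI=\{0,\epsilon\}$ and one marked point $y=2\epsilon$ (take $\lambda$ a generator), the support containment holds although $y\notin\cI$; so the natural map from the marked Ran space to the $(\cdot)^{\subset}$-model is not an isomorphism of prestacks, and the distinctness of the marked points does not cure this, since the members of $\cI$ may still collide over nilpotents. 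The paper flags exactly this point in Example~\ref{Sect_Example_E.2.9}, where the fact that $\pi^!$ is nevertheless an equivalence of sheaf categories is recorded as a \emph{consequence} of Proposition~\ref{Pp_E.2.3}, i.e.\ of the lemma you are proving, not as an available input. What your argument actually needs at this step is an equivalence of $Shv$-categories between the marked model and the $(\cdot)^{\subset}$-model over the open stratum, and you give no argument for it; in the generator case it literally is the statement being proved. Note that the paper's own proof does not lean on any prestack-level identification of the target: it computes the base-changed source colimit directly over the unordered stratum and only at the very end asserts an identification of sheaf categories with $Shv(\oo{X}{}^{\gU(\lambda)}\times_{X^{\lambda}}(X^{\lambda}\times\Ran)^{\subset})$, whereas your proposal both claims a false prestack isomorphism and uses it to import the colimit description of the target's sheaf category. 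Until you supply that $Shv$-level comparison, the proof is incomplete at its essential point; the cofinality bookkeeping you single out as the main obstacle is not where the difficulty lies.
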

\begin{proof} By (\cite{Ly4}, 0.3) the category 
$$
\underset{(J,\und{\lambda})\in (fSets_{\lambda})^{op}}{\colim} Shv(X^J)\otimes_{Shv(X^{\lambda})} Shv(\oo{X}{}^{\gU(\lambda)})\,\iso\, \underset{(J,\und{\lambda})\in (fSets_{\lambda})^{op}}{\colim} Shv(X^J\times_{X^{\lambda}} \oo{X}{}^{\gU(\lambda)})
$$
is the category of sheaves on 
\begin{equation}
\label{colimit_for_proof_of_Lm_E_1.1.12_now}
\underset{(J,\und{\lambda})\in (fSets_{\lambda})^{op}}{\colim} X^J\times_{X^{\lambda}} \oo{X}{}^{\gU(\lambda)},
\end{equation}
where the colimit is taken in $\PreStk$. Note that 
$$
X^J\times_{X^{\lambda}} \oo{X}{}^{\gU(\lambda)}\,\iso\, X^{J-J^*}\times (X^{J^*}\times_{X^{\lambda}} \oo{X}{}^{\gU(\lambda)}),
$$ 
and 
$$
X^{J^*}\times_{X^{\lambda}} \oo{X}{}^{\gU(\lambda)}\,\iso\,
\underset{(J^*\toup{\phi}\tilde K)\in Q(J^*)^{\gU(\lambda)}}{\sqcup} \oo{X}{}^{\tilde K}
$$
by Lemma~\ref{Lm_E.1.14_now}.  

  Write $fSets_{\gU(\lambda)}$ for the category whose objects are collections: $(\und{\lambda}, J)\in fSets_{\lambda}$, $(J^*\toup{\phi} \tilde K)\in Q(J^*)^{\gU(\lambda)}$. A map from the first object to the second in $fSets_{\gU(\lambda)}$ is given by a morphism $\tau: J_1\to J_2$ in $fSets$ such that $\tau_*\und{\lambda}^1=\und{\lambda}^2$, and the composition $J_1^*\toup{\tau} J_2^*\toup{\phi_2} \tilde K_2$ coincides with $J_1^*\toup{\phi_1} \tilde K_1$ in the set $Q(J_1^*)^{\gU(\lambda)}$. Then there is a unique isomorphism $\tilde\tau: \tilde K_1\,\iso\, \tilde K_2$ making the diagram commutative
$$
\begin{array}{ccccc}
J_1 & \supset & J_1^* & \toup{\phi_1} & \tilde K_1\\
\downarrow\lefteqn{\scriptstyle\tau} &&
\downarrow\lefteqn{\scriptstyle\tau} && \downarrow\lefteqn{\scriptstyle \tilde\tau}\\
J_1 & \supset & J_2^* & \toup{\phi_2} & \tilde K_2
\end{array}
$$
 
 The projection $fSets_{\gU(\lambda)}\to fSets_{\lambda}$ is a cartesian fibration. Using (\cite{Ly}, 2.7.24),  
one identifies (\ref{colimit_for_proof_of_Lm_E_1.1.12_now}) with
\begin{equation}
\label{colim__for_proof_of_Lm_E_1.1.12_now_2}
\underset{(\und{\lambda}, J, \; J^*\toup{\phi}\tilde K)\in (fSets_{\gU(\lambda)})^{op}}{\colim} X^{J-J^*}\times \oo{X}{}^{\tilde K}
\end{equation}

 Consider the full subcategory $fSets_{\gU(\lambda)}^0\subset fSets_{\gU(\lambda)}$ spanned by those objects for which the map $\phi: J^*\to\tilde K$ is bijective. We may also view $fSets_{\gU(\lambda)}^0$ as a full subcategory of $fSets_{\lambda}$ spanned by those $(J, \und{\lambda})$ for which $(J^*, \und{\lambda}^*)$ is of type $\gU(\lambda)$. Here $\und{\lambda}^*: J^*\to \Lambda^*$ is the restriction of $\und{\lambda}$. 
 
  The above embedding admits a left adjoint $\bar h^u: fSets_{\gU(\lambda)}\to fSets_{\gU(\lambda)}^0$ sending $(\und{\lambda}, J, \phi: J^*\to\tilde K)$ to $(J',\und{\lambda}')$, where $J'=(J-J^*)\sqcup \tilde K$, and $\und{\lambda}': J'\to \Lambda$ is the map $\phi_*\und{\lambda}^*: \tilde K\to \Lambda$ extended by zero from $\tilde K$ to $J'$. Given a map from $(\und{\lambda}^1, J_1, \phi_1: J_1^*\to\tilde K_1)$ to $(\und{\lambda}^2, J_2, \phi_2: J_2^*\to\tilde K_2)$ in $fSets_{\gU(\lambda)}$ as above, the composition $J_1\toup{\tau} J_2\to J'_2$ factors uniquely through a morphism $\tau': J'_1\to J'_2$. The functor $\bar h^u$ sends the map $\tau$ to $\tau'$. 
  
  Consider the functor 
$$
\mu: (fSets_{\gU(\lambda)}^0)^{op}\to \PreStk_{lft}
$$ 
obtained by restricting the diagram (\ref{colim__for_proof_of_Lm_E_1.1.12_now_2}) to this full subcategory. Then (\ref{colim__for_proof_of_Lm_E_1.1.12_now_2}) is
$$
\underset{(fSets_{\gU(\lambda)})^{op}}{\colim} \mu (\bar h^u)^{op}
$$
The functor $\mu (\bar h^u)^{op}$ is the LKE of $\mu$ along the inclusion $(fSets_{\gU(\lambda)}^0)^{op}\subset (fSets_{\gU(\lambda)})^{op}$ by (\cite{Ly}, 2.2.39). 
So, (\ref{colim__for_proof_of_Lm_E_1.1.12_now_2}) identifies with
$$
\underset{(fSets_{\gU(\lambda)}^0)^{op}}{\colim}  \mu.
$$
The latter colimit is the prestack sending $S\in\Sch^{aff}$ to the set of collections: a finite subset $\cI\subset \Map(S,X)$ given by points $(x_i)_{i\in \cI}$ together with subsets $\cI_k\subset \cI$ for $k\in K$ such that $\sum_{i\in \cI_k} x_i\in \Map(S, \oo{X}{}^{(d_k)})$, and the collection $((x_i)_{i\in \cI_k}$ for $k\in K)$ is a point of $\oo{X}{}^{\gU(\lambda)}$.  
The category of sheaves on the latter prestack identifies with 
$$
Shv(\oo{X}{}^{\gU(\lambda)}\times_{X^{\lambda}} (X^{\lambda}\times\Ran)^{\subset}).
$$
\end{proof}

\sssec{} Let $\cA$ be a set. One has the following version of Lemma~\ref{Lm_about_Ran_A}. For $(J,\und{\lambda})\in fSets_{\gU(\lambda)}^0$ write
$\Map(J, \cA)_0=\{f: J\to \cA\mid $ the restriction of $f$ to $J^*$ is injective$\}$. 

\begin{Lm} The colimit $\underset{(J,\und{\lambda})\in (fSets_{\gU(\lambda)}^0)^{op}}{\colim} \Map(J, \cA)_0$ in $\Spc$ identifies with the set of collections: a finite subset $\cI\subset \cA$, and a collection of disjoint subsets $\cI_k\subset \cJ$ for $k\in K$ such that $\cJ_k$ contains $d_k$ elements. \QED
\end{Lm}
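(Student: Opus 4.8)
The final statement to prove is the computation of the colimit
$$
\underset{(J,\und{\lambda})\in (fSets_{\gU(\lambda)}^0)^{op}}{\colim} \Map(J, \cA)_0
$$
in $\Spc$, identifying it with the set of collections $(\cI; (\cI_k)_{k\in K})$ where $\cI\subset\cA$ is a finite subset and the $\cI_k$ are disjoint subsets of $\cI$ with $\mid\cI_k\mid=d_k$.

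The plan is to imitate the proof of Lemma~\ref{Lm_about_Ran_A} verbatim, using the Grothendieck construction. First I would form the functor $\kappa: (fSets_{\gU(\lambda)}^0)^{op}\to\Spc$ sending $(J,\und{\lambda})$ to $\Map(J,\cA)_0$, where on morphisms one uses composition; note that precomposition by a surjection $\tau: J_1\to J_2$ with $\tau_*\und{\lambda}^1=\und{\lambda}^2$ indeed preserves the injectivity-on-$J^*$ condition, since $\tau$ restricts to a bijection on the supports $J_1^*\to J_2^*$ for objects of $fSets_{\gU(\lambda)}^0$ (here it is essential that in $fSets_{\gU(\lambda)}^0$ the map $J^*\to\tilde K$ is bijective, so $(J^*,\und{\lambda}^*)$ is genuinely of type $\gU(\lambda)$). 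Let $\cC\to (fSets_{\gU(\lambda)}^0)^{op}$ be the cocartesian fibration in spaces attached to $\kappa$; then $\colim\kappa$ is the geometric realization $\mid\cC\mid$.

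Next I would decompose $\cC$ into a disjoint union over the combinatorial data. An object of $\cC$ is $(J,\und{\lambda})\in fSets_{\gU(\lambda)}^0$ together with $f: J\to\cA$ injective on $J^*$. To such an object associate the finite subset $\cI:=f(J)\subset\cA$ together with, for each $k\in K$, the subset $\cI_k:=f$ of the fibre of $\und{\lambda}^*$ over $\lambda_k$ — these are disjoint subsets of $\cI$, each of cardinality $d_k$ (using bijectivity of $f$ on each such fibre, which follows from injectivity on $J^*$). This assignment is visibly functorial, so $\cC=\bigsqcup_{(\cI;(\cI_k))} \cC_{(\cI;(\cI_k))}$, the union over all admissible data. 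It remains to show each $\cC_{(\cI;(\cI_k))}$ is contractible. The category $\cC_{(\cI;(\cI_k))}$ classifies $(J,\und{\lambda})\in fSets_{\gU(\lambda)}^0$ and a map $f: J\to\cA$, injective on $J^*$, whose image is $\cI$ and which on the $\lambda_k$-fibre of $J^*$ lands bijectively onto $\cI_k$; equivalently, $f$ induces a surjection $J\twoheadrightarrow\cI$ carrying $J^*$ bijectively onto $\bigsqcup_k\cI_k$ compatibly with the markings. One checks this category has an initial object, namely $J=\cI$ with $J^*=\bigsqcup_k\cI_k$, $\und{\lambda}$ the evident map (value $\lambda_k$ on $\cI_k$, zero elsewhere), and $f=\id_{\cI}$: any other object receives a unique such structure map by the universal property of the surjection. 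Hence $\mid\cC_{(\cI;(\cI_k))}\mid\simeq *$, and $\colim\kappa\simeq\bigsqcup_{(\cI;(\cI_k))} *$, which is the asserted set.

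The only point requiring care — and the step I expect to be the main (mild) obstacle — is verifying that the passage to $fSets_{\gU(\lambda)}^0$ (as opposed to $fSets_{\gU(\lambda)}$, or to the ungraded $fSets_{\lambda}$) is exactly what forces $f$ to be injective on supports and to induce bijections on each $\lambda_k$-fibre onto $\cI_k$, so that the cardinality constraints $\mid\cI_k\mid=d_k$ emerge and the comma categories $\cC_{(\cI;(\cI_k))}$ acquire an initial object rather than merely a weakly contractible shape. This is the same mechanism already used in Lemma~\ref{Lm_E.2.13_almost_last}, and the argument is parallel; everything else is a routine repetition of Lemma~\ref{Lm_about_Ran_A}.
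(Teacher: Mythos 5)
Your proof is correct and follows exactly the route the paper intends: the lemma is stated as a "version of Lemma~\ref{Lm_about_Ran_A}" with the proof omitted, and your argument is precisely the adaptation of that proof (category of elements of the set-valued functor, decomposition into components indexed by $(\cI,(\cI_k)_{k\in K})$, contractibility via an initial object), with the key verification — that morphisms in $fSets_{\gU(\lambda)}^0$ restrict to marking-preserving bijections on supports, so precomposition preserves $\Map(J,\cA)_0$ and the invariants $(\cI,(\cI_k))$ — carried out correctly. Nothing is missing.
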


\ssec{Adding and removing units}
\label{Sect_Adding and removing units}

\sssec{} Let us be in the setting of Section~\ref{Sect_E.0.1}. So, $C(X)\in CAlg(Shv(X)-mod)$ is $\Lambda$-graded. Assume the natural map $Shv(X)\to C(X)_0$ is an isomorphism. 
Set 
$$
C_{>0}=\underset{\lambda\in\Lambda^*}{\oplus} C(X)_{\lambda}\in CAlg^{nu}(Shv(X)-mod).
$$ 
By functoriality, we have a canonical map $\Fact(C_{>0})\to \Fact(C)$ in $Shv(\Ran)-mod$. 
compatible with the commutative chiral products. For $\lambda\in\Lambda^*$ it restricts to a map
\begin{equation}
\label{map_for_E.2.15}
\Fact(C_{>0})_{\lambda}\to \Fact(C)_{\lambda}
\end{equation}
in $Shv((X^{\lambda}\times\Ran)^{\subset})-mod$. The full embedding $j: \cTw(fSets)^{\lambda}\hook{} \cTw(fSets)_{\lambda}$ is zero-cofinal (cf. Section~\ref{Def_zero-cofinal}), so (\ref{map_for_E.2.15}) is the natural map
$$
\underset{\cTw(fSets)^{\lambda}}{\colim} \cF_{\Ran, C,\lambda}\comp j\to 
\underset{\cTw(fSets)_{\lambda}}{\colim} \cF_{\Ran, C,\lambda}.
$$

 From Proposition~\ref{Pp_E.2.3} we see that $\Fact(C)_0\,\iso\, Shv(\Ran)$ canonically. 

\sssec{Example} 
\label{Sect_E.3.2}
Recall the canonical projection $\pr: \Ran_*\to X$. If $\lambda$ is one of generators of $\Lambda$ then 
$$
\Fact(C)_{\lambda}\,\iso\, C(X)_{\lambda}\otimes_{Shv(X)} Shv(\Ran_*),
$$ 
where we used the $Shv(X)$-action on $Shv(\Ran_*)$ via $\pr^!: Shv(X)\to Shv(\Ran_*)$.
 
 Indeed, $h^u: \cTw(fSets)_{\lambda}\to (fSets_{\lambda})^{op}$ is cofinal (cf. Section~\ref{Sect_E.2.3_now}). So, 
$$
\Fact(C)_{\lambda}\,\iso\, \underset{(K, *)\in (fSets_*)^{op}}{\colim} C(X)_{\lambda}\otimes_{Shv(X)} Shv(X^K),
$$
where $Shv(X)$ acts on $Shv(X^K)$ by $\pr_K^!$ for the projection $\pr_K: X^K\to X$ attached to $*\in K$. For a map $(K_1, *)\to (K_2, *)$ in $fSets_*$ the corresponding transition map is $\id\otimes\vartriangle_!$ for $\vartriangle: X^{K_2}\to X^{K_1}$.  

 In this case the map (\ref{map_for_E.2.15}) idenifies with $\id\otimes q_!: C(X)_{\lambda}\to 
C(X)_{\lambda}\otimes_{Shv(X)} Shv(\Ran_*)$, where $q: X\to\Ran_*$ sends $x\in X$ to the pointed set $\{x\}$.  

\sssec{} For $\lambda\in\Lambda^*$ we have the diagram of functors
$$
\begin{array}{ccc}
\cTw(fSets)^{\lambda} & \hook{j} & \cTw(fSets)_{\lambda}\\
 & \searrow\lefteqn{\scriptstyle \id} & \downarrow\lefteqn{\scriptstyle \xi}\\
 && \cTw(fSets)^{\lambda}, 
\end{array}
$$ 
where $\xi$ sends $(\und{\lambda}, J\toup{\phi} K)$ to $(\und{\lambda}^*, J^*\to K^*)$. Here $J^*=\{j\in J\mid\und{\lambda}(j)\ne 0\}$, $K^*=\phi(J^*)$, and $\und{\lambda}^*$ is the restriction of $\und{\lambda}$.

\begin{Lm} 
\label{Lm_xi_is_cofinal}
The functor $\xi$ is cofinal.
\end{Lm}
\begin{proof}
Let $a=(\und{\mu}, \bar J\to \bar K)\in \cTw(fSets)^{\lambda}$. We must show that
$$
D:=\cTw(fSets)_{\lambda}\times_{\cTw(fSets)^{\lambda}} (\cTw(fSets)^{\lambda})_{a/}
$$ 
is contractible.

 Write $\cH_a$ for the category classifying $J\in fSets$ and a map $\phi: \bar J\to J$ such that for $J^*:=Im(\phi)$ there is a (automatically unique) surjection $J^*\to \bar K$ making the diagram commutative
$$
\begin{array}{ccc}
\bar J & \toup{\phi} & J^*\\
& \searrow & \downarrow\\
&& \bar K.
\end{array}
$$ 
In other words, we require that $\bar K\le J^*$ as elements of $Q(\bar J)$. The morphisms in $\cH_a$ are surjections $J_1\to J_2$ compatible with the maps $\phi_i: \bar J\to J_i$.

For an object of $\cH_a$ as above we let $\und{\lambda}=\phi_*\und{\mu}$.  We have the functor 
$$
\nu: D\to \cH_a
$$ 
sending $(\und{\lambda}, J\toup{\tau} K, \bar J\toup{\phi} J^*\to K^*\to \bar K)$ to $\bar J\toup{\phi} J$. We claim that $\nu$ is a cartesian fibration. 
Indeed, let $(J_1\toup{\gamma_J} J_2)$ be a map in $\cH$ from $(\bar J\toup{\phi_1} J_1)$ to $(\bar J\toup{\phi_2} J_2)$. Assume the object $(\bar J\toup{\phi_2} J_2)$ is lifted to an object 
$$
(\und{\lambda}^2, J_2\toup{\tau_2} K_2, \bar J\toup{\phi_2} J_2^*\to K_2^*\to \bar K)$$
of $D$. We get an object $(\und{\lambda}^1, J_1\toup{\tau_1} K_1, \bar J\toup{\phi_1} J_1^*\to K_1^*\to \bar K)\in D$, where $K_1=K_2$, and $\tau_1$ is the composition $J_1\toup{\gamma_J} J_2\toup{\tau_2} K_2$, so $K_2^*=K_1^*$, and the composition $\bar J\to J_1^*\to K_1^*\to \bar K$ is the original map $\bar J\to \bar K$. We get the morphism $\gamma$ in $D_a$ given by the diagram
$$
\begin{array}{ccccc}
\bar J & \toup{\phi_1} & J_1 & \toup{\tau_1} & K_1\\
 & \searrow\lefteqn{\scriptstyle \phi_2} & \downarrow\lefteqn{\scriptstyle\gamma_J} && \uparrow\lefteqn{\scriptstyle\id}\\
 && J_2 & \toup{\tau_2} & K_2
\end{array}
$$
over $\gamma_J$. Then $\gamma$ is a $\cH$-cartesian arrow. 
   
 Let us show that $\nu$ is cofinal. By (\cite{HTT}, 4.1.3.2) , it suffices to show that the fibre of $\nu$ over $(J^*\to J)$ is contractible. This fibre classifies a surjection $\tau: J\to K$ such that $(\bar J\to \bar K)$ factors through 
$\bar J\to J^*\toup{\tau^*} K^*$. Here $K^*=\tau(J^*)$, and $\tau^*$ is the restriction of $\tau$. This fibre has an initial object, so is contractible.
 
 Now it suffices to show that $\cH_a$ is contractible. The category $\cH_a$ admits binary products. Namely, given $\phi_i: \bar J\to J_i$ their product is $(\phi_1, \phi_2): \bar J\to J_1\times J_2$. So, $\cH_a$ is contractible by Lemma~\ref{Lm_binary_products_give_contractibility}. We are done.
\end{proof}

\sssec{} Let us construct a similar diagram of functors in $Shv(X^{\lambda})-mod$
\begin{equation}
\label{diag_for_Sect_E.2.17}
\begin{array}{ccc}
\Fact(C_{>0})_{\lambda} &\toup{(\ref{map_for_E.2.15})}& \Fact(C)_{\lambda}\\
& \searrow\lefteqn{\scriptstyle\id} & \downarrow\lefteqn{\scriptstyle \xi_{C,\lambda}}\\
&& \Fact(C_{>0})_{\lambda} 
\end{array}
\end{equation}

 For $(\und{\lambda}, J\to K)\in \cTw(fSets)_{\lambda}$ consider the functor
\begin{equation}
\label{functor_for_Sect_E.3.3}
(\underset{k\in K^*}{\boxtimes} (\underset{j\in J^*_k}{\otimes} C(X)_{\und{\lambda}(j)}))\otimes_{Shv(X^{K^*})} Shv(X^K)\to \underset{k\in K^*}{\boxtimes} (\underset{j\in J^*_k}{\otimes} C(X)_{\und{\lambda}(j)})
\end{equation}
given by the direct image $Shv(X^K)\to Shv(X^{K^*})$ under the projection $X^K\to X^{K^*}$. These functors are compatible with the corresponding transition functors, so organize into a morphism 
$$
\cF_{\Ran, C,\lambda}\to  \cF^{\lambda}_{\Ran, C_{>0}}\comp\xi
$$ 
of functors from $\cTw(fSets)_{\lambda}$ to $Shv(X^{\lambda})-mod$. Passing to the colimit over $\cTw(fSets)_{\lambda}$ we obtain the desired functor $\xi_{C,\lambda}$. By construction, the diagram (\ref{diag_for_Sect_E.2.17}) commutes. The functor $\xi_{C,\lambda}$ is not $Shv(\Ran)$-linear. 
 
 For example, for $C(X)=\underset{\lambda\in\Lambda}{\oplus} Shv(X)$ the functor $\xi_{C,\lambda}$ identifies with 
$$
(\pr_{\lambda})_!: Shv((X^{\lambda}\times\Ran)^{\subset})\to Shv(X^{\lambda}),
$$ 
where $\pr_{\lambda}: (X^{\lambda}\times\Ran)^{\subset}\to X^{\lambda}$ is the projection. 
 


\sssec{} Let us construct a natural $Shv((X^{\lambda}\times\Ran)^{\subset})$-linear functor
$$
\bar\xi_{C,\lambda}^L: \Fact(C)_{\lambda}\to \Fact(C_{>0})_{\lambda}\otimes_{Shv(X^{\lambda})} Shv((X^{\lambda}\times\Ran)^{\subset})
$$
For $(\und{\lambda}, J\to K)\in\cTw(fSets)_{\lambda}$
let 
$$
\Gamma_{\und{\lambda}}: X^K\to X^{K^*}\times_{X^{\lambda}}(X^{\lambda}\times\Ran)^{\subset}
$$ 
be the map, whose first component is the projection and the second one is the map $X^K\to (X^{\lambda}\times\Ran)^{\subset}$ from Section~\ref{Sect_E.0.5_now}. 
From Lemma~\ref{Lm_E.2.5_now} one easily derives that $\Gamma_{\und{\lambda}}$ is pseudo-proper, so 
$$
(\Gamma_{\und{\lambda}})_!: Shv(X^K)\to Shv(X^{K^*}\times_{X^{\lambda}}(X^{\lambda}\times\Ran)^{\subset})
$$ 
is well-defined.

 For $(\und{\lambda}, J\to K)\in\cTw(fSets)_{\lambda}$ consider the functors
\begin{multline*}
\id\otimes(\Gamma_{\und{\lambda}})_!:  
(\underset{k\in K^*}{\boxtimes} (\underset{j\in J^*_k}{\otimes} C(X)_{\und{\lambda}(j)}))\otimes_{Shv(X^{K^*})} Shv(X^K)\to \\  (\underset{k\in K^*}{\boxtimes} (\underset{j\in J^*_k}{\otimes} C(X)_{\und{\lambda}(j)}))\otimes_{Shv(X^{K^*})} Shv(X^{K^*}\times_{X^{\lambda}}(X^{\lambda}\times\Ran)^{\subset})
\end{multline*}
They are functorial in $\cTw(fSets)_{\lambda}$ and $Shv((X^{\lambda}\times\Ran)^{\subset})$-linear.

 The \select{key observation} is that, since $X^{K^*}\to X^{\lambda}$ is finite, from Section~\ref{Sect_B.1.15} we get canonically
$$
Shv(X^{K^*}\times_{X^{\lambda}}(X^{\lambda}\times\Ran)^{\subset})\,\iso\, Shv(X^{K^*})\otimes_{Shv(X^{\lambda})} Shv((X^{\lambda}\times\Ran)^{\subset}).
$$
We get a diagram
\begin{multline*}
\id\otimes(\Gamma_{\und{\lambda}})_!:  
(\underset{k\in K^*}{\boxtimes} (\underset{j\in J^*_k}{\otimes} C(X)_{\und{\lambda}(j)}))\otimes_{Shv(X^{K^*})} Shv(X^K)\to \\ 
(\underset{k\in K^*}{\boxtimes} (\underset{j\in J^*_k}{\otimes} C(X)_{\und{\lambda}(j)}))\otimes_{Shv(X^{\lambda})} Shv((X^{\lambda}\times\Ran)^{\subset})
\end{multline*} 
in $\Fun(\cTw(fSets)_{\lambda}, \, Shv((X^{\lambda}\times\Ran)^{\subset}))$. Passing to the colimit over $\cTw(fSets)_{\lambda}$ and using the cofinality of $\xi$, we get the desired morphism $\bar\xi_{C,\lambda}^L$.

\sssec{} For the rest of Section~\ref{Sect_Adding and removing units} assume in addition that for each $(I, \und{\lambda})\in fSets^{\lambda}$ the product functor $\underset{i\in I}{\otimes} C(X)_{\und{\lambda}(i)}\to C(X)_{\lambda}$ admits a $Shv(X)$-linear continuous right adjoint. 

 Then we may pass to right adjoints in the functor $\cF_{\Ran, C, \lambda}$ and get the functor 
$$
\cF_{\Ran, C, \lambda}^R: (\cTw(fSets)_{\lambda})^{op}\to Shv(X^{\lambda})-mod.
$$ 
So, $\Fact(C)_{\lambda}\,\iso\,\underset{(\cTw(fSets)_{\lambda})^{op}}{\lim} \cF_{\Ran, C, \lambda}^R$. Similarly, we may pass to the right adjoints in the functor $\cF^{\lambda}_{\Ran, C_{>0}}$ and get the functor 
$$
\cF^{\lambda, R}_{\Ran, C_{>0}}: (\cTw(fSets)^{\lambda})^{op}\to Shv(X^{\lambda})-mod.
$$ 
So, $\Fact(C_{>0})_{\lambda}\,\iso\,\underset{(\cTw(fSets)^{\lambda})^{op}}{\lim} \cF^{\lambda, R}_{\Ran, C}$.

 Now we may pass to the $Shv(X^{\lambda})$-linear continuous right adjoints in the diagram (\ref{functor_for_Sect_E.3.3}) viewed as a diagram of functors from $\cTw(fSets)_{\lambda}$ to $Shv(X^{\lambda})-mod$. Passing to the limit over $(\cTw(fSets)_{\lambda})^{op}$ the right adjoints to (\ref{functor_for_Sect_E.3.3}) provide the functor $\xi_{C, \lambda}^R: \Fact(C_{>0})_{\lambda}\to \Fact(C)_{\lambda}$ in $Shv(X^{\lambda})-mod$. By (\cite{Ly}, 9.2.39), $\xi_{C, \lambda}^R$ is the right adjoint to $\xi_{C,\lambda}$. 
 
  If we view $\Fact(C)_{\lambda}$ as a $Shv((X^{\lambda}\times\Ran)^{\subset})$-module then the structure of a $Shv(X^{\lambda})$-module on $\Fact(C)_{\lambda}$ is obtained via restriction of scalars through $\pr_{\lambda}^!: Shv(X^{\lambda})\to Shv((X^{\lambda}\times\Ran)^{\subset})$. So, by adjointness the functor $\xi_{C, \lambda}^R$ gives rise to the functor
$$
\bar\xi_{C,\lambda}: \Fact(C_{>0})_{\lambda}\otimes_{Shv(X^{\lambda})}Shv((X^{\lambda}\times\Ran)^{\subset}) \to \Fact(C)_{\lambda},
$$
where the $Shv(X^{\lambda})$-module structure on $Shv((X^{\lambda}\times\Ran)^{\subset})$ is given by $\pr_{\lambda}^!$.

\begin{Rem} In fact,  $\Fact(C_{>0})_{\lambda}\otimes_{Shv(X^{\lambda})}Shv((X^{\lambda}\times\Ran)^{\subset})$ has two different $Shv((X^{\lambda}\times\Ran)^{\subset})$-module structures. The first comes from the $Shv((X^{\lambda}\times\Ran)^{\subset})$-action on the second factor. The second comes from the $Shv((X^{\lambda}\times\Ran)^{\subset})$-action on the factor $\Fact(C_{>0})_{\lambda}$. The functor $\bar\xi_{C,\lambda}$ is $Shv((X^{\lambda}\times\Ran)^{\subset})$-linear if we use the first $Shv((X^{\lambda}\times\Ran)^{\subset})$-module structure on the source.
\end{Rem}

\sssec{} Equivalently, $\bar\xi_{C,\lambda}$ is obtained as follows. Given $(\und{\lambda}, J\to K)\in \cTw(fSets)_{\lambda}$, consider the maps
\begin{multline*}
(\underset{k\in K^*}{\boxtimes} (\underset{j\in J^*_k}{\otimes} C(X)_{\und{\lambda}(j)}))\otimes_{Shv(X^{\lambda})} Shv((X^{\lambda}\times\Ran)^{\subset})\to\\ (\underset{k\in K^*}{\boxtimes} (\underset{j\in J^*_k}{\otimes} C(X)_{\und{\lambda}(j)}))\otimes_{Shv(X^{K^*})} Shv(X^K)
\end{multline*}
given by the !-pullbacks $Shv((X^{\lambda}\times\Ran)^{\subset})\to Shv(X^K)$ and $Shv(X^{\lambda})\to Shv(X^{K^*})$ for the diagram
$$
\begin{array}{ccc}
X^K & \to & (X^{\lambda}\times\Ran)^{\subset}\\
\downarrow && \downarrow\lefteqn{\scriptstyle \pr_{\lambda}}\\
X^{K^*} & \to & X^{\lambda},
\end{array}
$$
where the vertical arrows are the projections. These maps are functorial in $(\und{\lambda}, J\to K)\in (\cTw(fSets)_{\lambda})^{op}$. Passing to the limit over $(\cTw(fSets)_{\lambda})^{op}$ they yield $\bar\xi_{C,\lambda}$.

 By (\cite{Ly}, 9.2.39), $(\bar\xi_{C,\lambda}^L, \bar\xi_{C,\lambda})$ is an adjoint pair in $Shv((X^{\lambda}\times\Ran)^{\subset})-mod$. 

\sssec{} By (\cite{Ly}, 9.2.6), the functor (\ref{map_for_E.2.15}) has a continuous $Shv(X^{\lambda})$-linear right adjoint, which is the natural map $\underset{\cTw(fSets)_{\lambda}^{op}}{\lim}\cF^R_{\Ran, C, \lambda}\to \underset{(\cTw(fSets)^{\lambda})^{op}}{\lim}\cF^R_{\Ran, C, \lambda}\comp j^{op}$. 

\sssec{} The following is one of our main technical results, its proof is given in Sections~\ref{Sect_E.3.13_begin_proof_Thm_Con_E.3.8}-\ref{Lm_the_functor_zeta_is_cofinal_for_Con}.

\begin{Thm} 
\label{Con_E.3.8}
For any $\lambda\in\Lambda^*$ the functor $\bar\xi_{C,\lambda}$ is an equivalence.
\end{Thm}

\begin{Rem} i) For $C(X)$ as in Proposition~\ref{Pp_E.2.3} Theorem~\ref{Con_E.3.8} is evident.\\
ii) If $\lambda$ is one of generators of $\Lambda$ then Theorem~\ref{Con_E.3.8} follows by combining Remark~\ref{Rem_E.1.6} and Example~\ref{Sect_E.3.2}. 
\end{Rem} 

\sssec{Proof of Theorem~\ref{Con_E.3.8}} 
\label{Sect_E.3.13_begin_proof_Thm_Con_E.3.8}
Since the adjoint pair $(\bar\xi_{C,\lambda}^L, \bar\xi_{C,\lambda})$ takes place in $Shv((X^{\lambda}\times\Ran)^{\subset})-mod$, we are reduced in view of Proposition~\ref{Pp_3.7.8_devissage_using_ULA_preservation} to Proposition~\ref{Pp_E.3.14_for_Thm_Con_E.3.8} below. \QED

\sssec{} 
\label{Sect_E.3.15_now}
Pick $\lambda\in\Lambda^*$ and $\gU(\lambda)$ given by $\lambda=\sum_{m\in M} d_m\lambda_m$ with $d_m>0$ and $\lambda_m\in\Lambda^*$ pairwise distinct, here $M\in fSets$. 

\begin{Pp} 
\label{Pp_E.3.14_for_Thm_Con_E.3.8}
After the base change $Shv(\oo{X}{}^{\gU(\lambda)})\otimes_{Shv(X^{\lambda})}$ the functor $\bar\xi^L_{C,\lambda}$ becomes an equvalence.
\end{Pp}
\begin{proof}
Recall the full embedding $fSets_{\gU(\lambda)}^0\subset fSets_{\gU(\lambda)}$ and its left adjoint $\bar h^u$ from the proof of Lemma~\ref{Lm_E.2.13_almost_last}. 
Set 
$$
\cTw(fSets)_{\gU(\lambda)}=\cTw(fSets)_{\lambda}\times_{(fSets_{\lambda})^{op}} (fSets_{\gU(\lambda)})^{op},
$$
where we used the map $h^u$ to form the fibred product. So, and object of this category is a collection $(\und{\lambda}, J\toup{\tau} K)\in \cTw(fSets)_{\lambda}$ and $(K^*\toup{\phi} \tilde M)\in Q(K^*)^{\gU(\lambda)}$. That is, if we write $\und{\mu}$ for the restriction of $\tau_*\und{\lambda}$ to $K^*$ then $\phi_*\und{\mu}$ is required to be of type $\gU(\lambda)$. A map from $(\und{\lambda}^1, J_1\to K_1, \phi_1)$ to $(\und{\lambda}^2, J_2\to K_2, \phi_2)$ is given by a map $(\und{\lambda}^1, J_1\to K_1)\to (\und{\lambda}^2, J_2\to K_2)$ in $\cTw(fSets)_{\lambda}$ such that the composition $K_2^*\to K_1^*\to \tilde M_1$ coincides with $K_2^*\to\tilde M_2$ in the set $Q(K_2^*)^{\gU(\lambda)}$. For such a map there is a unique isomorphism $\tilde\tau$ making the diagram commutative
$$
\begin{array}{ccc}
K_1^* & \to & \tilde M_1\\
\uparrow && \uparrow\lefteqn{\scriptstyle \tilde\tau}\\
K_2^* & \to & \tilde M_2
\end{array}
$$
 
 The projection $\cTw(fSets)_{\gU(\lambda)}\to \cTw(fSets)_{\lambda}$ is a cocartesian fibration. We get
\begin{multline}
\label{first_expression_proof_Con_E.3.9}
\Fact(C)_{\lambda}\otimes_{Shv(X^{\lambda})} Shv(\oo{X}{}^{\gU(\lambda)})\,\iso\\ 
\underset{(\und{\lambda}, J\to K)\in\cTw(fSets)_{\lambda}}{\colim}
(\underset{k\in K^*}{\boxtimes}(\underset{j\in J^*_k}{\otimes} C(X)_{\und{\lambda}(j)}))\otimes_{Shv(X^{K^*})} Shv(X^K)\otimes_{Shv(X^{\lambda})}Shv(\oo{X}{}^{\gU(\lambda)})\,\iso\,\\
\underset{(\und{\lambda}, J\to K, K^*\toup{\phi}\tilde M)\in \cTw(fSets)_{\gU(\lambda)}}{\colim} (\underset{k\in K^*}{\boxtimes}(\underset{j\in J^*_k}{\otimes} C(X)_{\und{\lambda}(j)}))\otimes_{Shv(X^{K^*})} Shv(X^K)\otimes_{Shv(X^{K^*})} Shv(\oo{X}{}^{\tilde M})\\
\iso\, \underset{(\und{\lambda}, J\to K, K^*\toup{\phi}\tilde M)\in \cTw(fSets)_{\gU(\lambda)}}{\colim} (\underset{m\in \tilde M}{\boxtimes} (\underset{j\in J^*_m}{\otimes} C(X)_{\und{\lambda}(j)}))\otimes_{Shv(X^{\tilde M})} Shv(\oo{X}{}^{\tilde M}\times X^{K-K^*})
\end{multline}

 Let
$$
\cTw(fSets)_{\gU(\lambda)}^0=\cTw(fSets)_{\lambda}\times_{(fSets_{\lambda})^{op}} (fSets_{\gU(\lambda)}^0)^{op} 
$$
Then $\cTw(fSets)_{\gU(\lambda)}^0\subset \cTw(fSets)_{\lambda}$ is the full subcategory of those objects $(\und{\lambda}, J\toup{\tau} K)$ for which $(\und{\mu}, K^*)$ is of type $\gU(\lambda)$. We have an adjoint pair
$$
\cTw(fSets)_{\gU(\lambda)}^0\leftrightarrows \cTw(fSets)_{\gU(\lambda)},
$$
where the right adjoint sends $(\und{\lambda}, J\to K, K^*\to \tilde M)$ to $(\und{\lambda}, J\to K')$, where $K'$ is the colimit of the diagram $(\tilde M\gets K^*\to K)$ in the diagram of sets. So, as in the proof of Lemma~\ref{Lm_E.2.13_almost_last}, 
(\ref{first_expression_proof_Con_E.3.9}) identifies with 
$$
\underset{(\und{\lambda}, J\to K)\in \cTw(fSets)_{\gU(\lambda)}^0}{\colim}(\underset{k\in K^*}{\boxtimes} (\underset{j\in J^*_k}{\otimes} C(X)_{\und{\lambda}(j)}))\otimes_{Shv(X^{K^*})} Shv(\oo{X}{}^{K^*}\times X^{K-K^*})
$$

 Write
$$
r: {^0\cTw(fSets)_{\gU(\lambda)}}\subset \cTw(fSets)_{\gU(\lambda)}^0
$$ 
for the full subcategory of those objects $(\und{\lambda}, J\to K)$ for which the restriction $J^*\to K^*$ is an isomorphism. This embedding admits a left adjoint 
$$
l: \cTw(fSets)_{\gU(\lambda)}^0\to {^0\cTw(fSets)_{\gU(\lambda)}}
$$ 
sending $(\und{\lambda}, J\to K)$ to the collection $(\und{\lambda}', J'\to K)$. Here $J'$ is the push-out of the diagram $K^*\gets J^*\to J$ in the category of sets, and $\und{\lambda}'$ is the direct image of $\und{\lambda}$ along $J\to J'$. 

 We have used here the following. Given a morphism from $(\und{\lambda}^1, J_1\to K_1)$ to $(\und{\lambda}^2, J_2\to K_2)$ in $\cTw(fSets)_{\gU(\lambda)}^0$
the corresponding map $K_2^*\to K_1^*$ is an isomorphism, and its \select{inverse} fits into a commutative diagram
$$
\begin{array}{ccccc}
K_1^* & \gets &J_1^*&\to &J_1\\
\downarrow && \downarrow && \downarrow\\
K_2^* & \gets &J_2^*&\to &J_2,
\end{array}
$$
which gives a natural map $J'_1\to J'_2$. For this reason $l$ is well-defined. 
 
  By (\cite{G}, I.1, 2.2.3), $r$ is cofinal. So, (\ref{first_expression_proof_Con_E.3.9}) identifies with
\begin{equation}
\label{second_expression__proof_Con_E.3.9}
\underset{(\und{\lambda}, J\to K)\in {^0\cTw(fSets)_{\gU(\lambda)}}}{\colim}(\underset{k\in K^*}{\boxtimes} C(X)_{\und{\lambda}(k)})\otimes_{Shv(X^{K^*})} Shv(\oo{X}{}^{K^*}\times X^{K-K^*})  
\end{equation}

 Consider the functor 
$$
h^{\gU(\lambda)}: {^0\cTw(fSets)_{\gU(\lambda)}}\to (fSets_{\gU(\lambda)}^0)^{op}
$$ 
sending $(\und{\lambda}, J\toup{\tau} K)$ to $(K, \tau_*\und{\lambda})$. By Lemma~\ref{Lm_first_for_proof_Con_E.3.9} below it is cofinal. So, (\ref{second_expression__proof_Con_E.3.9}) identifies with
$$
\underset{(\und{\lambda}, K)\in (fSets_{\gU(\lambda)}^0)^{op}}{\colim} 
(\underset{k\in K^*}{\boxtimes} C(X)_{\und{\lambda}(k)})\otimes_{Shv(X^{K^*})} Shv(\oo{X}{}^{K^*}\times X^{K-K^*})  
$$

 Consider now
\begin{multline}
\label{third_expression__proof_Con_E.3.9}
\Fact(C_{>0})\otimes_{Shv(X^{\lambda})} Shv((X^{\lambda}\times\Ran)^{\subset})\otimes_{Shv(X^{\lambda})} Shv(\oo{X}{}^{\gU(\lambda)})\,\iso\\
(\underset{(\und{\lambda}, J\to K)\in\cTw(fSets)^{\lambda}}{\colim} 
(\underset{k\in K}{\boxtimes}(\underset{j\in J_k}{\otimes} C(X)_{\und{\lambda}(j)})))\otimes_{Shv(X^{\lambda})} Shv((X^{\lambda}\times\Ran)^{\subset})\otimes_{Shv(X^{\lambda})} Shv(\oo{X}{}^{\gU(\lambda)})
\end{multline}

 Recall the category $fSets^{\gU(\lambda)}$ defined in the proof of Lemma~\ref{Lm_E.1.21_now}.
Set
$$
\cTw(fSets)^{\gU(\lambda)}=\cTw(fSets)^{\lambda}\times_{(fSets^{\lambda})^{op}} (fSets^{\gU(\lambda)})^{op},
$$
where we used the map (\ref{functor_h_Sect_E.1.9}) to form the fibred product. An objects of the latter category is $(\und{\lambda}, J\to K\to \tilde K)$ with $(\und{\lambda}, J\to K)\in \cTw(fSets)^{\lambda}$ and $(K\to \tilde K)\in Q(K)^{\gU(\lambda)}$. As above, (\ref{third_expression__proof_Con_E.3.9}) identifies with
\begin{multline*}
(\underset{(\und{\lambda}, J\to K\to \tilde K)\in\cTw(fSets)^{\gU(\lambda)}}{\colim} 
(\underset{k\in K}{\boxtimes}(\underset{j\in J_k}{\otimes} C(X)_{\und{\lambda}(j)})))\otimes_{Shv(X^K)} Shv(\oo{X}{}^{\tilde K})\otimes_{Shv(X^{\lambda})}  
Shv((X^{\lambda}\times\Ran)^{\subset})\\
\iso\, (\underset{(\und{\lambda}, J\to K\to \tilde K)\in\cTw(fSets)^{\gU(\lambda)}}{\colim} 
(\underset{k\in K}{\boxtimes}(\underset{j\in J_k}{\otimes} C(X)_{\und{\lambda}(j)})))\otimes_{Shv(X^K)} Shv((\oo{X}{}^{\tilde K}\times\Ran)^{\subset}),
\end{multline*}
where we have set $(\oo{X}{}^{\tilde K}\times\Ran)^{\subset}=\oo{X}{}^{\tilde K}\times_{X^{\lambda}} (X^{\lambda}\times\Ran)^{\subset}$.

 The latter colimit identifies with
\begin{equation}
\label{third_half_expression_proof_Con_E.3.9}
(\underset{(\und{\lambda}, J\to K\to \tilde K)\in\cTw(fSets)^{\gU(\lambda)}}{\colim} 
(\underset{k\in \tilde K}{\boxtimes} (\underset{j\in J_k}{\otimes} C(X)_{\und{\lambda}(j)})))\otimes_{Shv(X^{\tilde K})} Shv((\oo{X}{}^{\tilde K}\times\Ran)^{\subset})
\end{equation}
Recall the full embedding $fSets_0^{\gU(\lambda)}\subset fSets^{\gU(\lambda)}$ from the proof of Lemma~\ref{Lm_E.1.21_now}. Set
$$
\cTw(fSets)^{\gU(\lambda)}_0=\cTw(fSets)^{\lambda}\times_{(fSets^{\lambda})^{op}} (fSets_0^{\gU(\lambda)})^{op}.  
$$
An object of the latter category is $(\und{\lambda}, J\toup{\tau} K)\in \cTw(fSets)^{\lambda}$ such that $(\tau_*\und{\lambda}, K)$ is of type $\gU(\lambda)$. Consider the functor
$$
\tilde h: \cTw(fSets)^{\gU(\lambda)}\to \cTw(fSets)^{\gU(\lambda)}_0
$$
sending $(\und{\lambda}, J\to K\to \tilde K)$ to $(\und{\lambda}, J\to \tilde K)$. We have an adjoint pair
$$
\cTw(fSets)^{\gU(\lambda)}_0\leftrightarrows \cTw(fSets)^{\gU(\lambda)}: \tilde h,
$$
where the left adjoint is the inclusion. So, $\tilde h$ is cofinal, and 
(\ref{third_half_expression_proof_Con_E.3.9}) identifies with
$$
(\underset{(\und{\lambda}, J\to K)\in\cTw(fSets)^{\gU(\lambda)}_0}{\colim} 
(\underset{k\in K}{\boxtimes} (\underset{j\in J_k}{\otimes} C(X)_{\und{\lambda}(j)})))\otimes_{Shv(X^K)} Shv((\oo{X}{}^K\times\Ran)^{\subset})
$$

 The full subcategory of $\cTw(fSets)^{\gU(\lambda)}_0$ spanned by those $(\und{\lambda}, J\toup{\tau} K)$ for which $\tau$ is an isomorphism identifies with $(fSets^{\gU(\lambda)}_0)^{op}$. We get an adjoint pair
$$
\cTw(fSets)^{\gU(\lambda)}_0\leftrightarrows (fSets^{\gU(\lambda)}_0)^{op},
$$
where the left adjoint sends $(\und{\lambda}, J\toup{\tau} K)$ to $(\tau_*\und{\lambda}, K)$, and the right adjoint is the above inclusion. Thus, the right adjoint being cofinal, (\ref{third_half_expression_proof_Con_E.3.9}) rewites as
$$
(\underset{(\und{\lambda}, K)\in (fSets^{\gU(\lambda)}_0)^{op}} {\colim} 
(\underset{k\in K}{\boxtimes}  C(X)_{\und{\lambda}(k)})))\otimes_{Shv(X^K)} Shv((\oo{X}{}^K\times\Ran)^{\subset})
$$

 Now we have the functor $\zeta: fSets^0_{\gU(\lambda)}\to fSets^{\gU(\lambda)}_0$ sending $(\und{\lambda}, K)$ to $(\und{\lambda}^*, K^*)$, here $\und{\lambda}^*$ is the restriction of $\und{\lambda}$. For $(\und{\lambda}, K)\in fSets^0_{\gU(\lambda)}$ we have the morphism
\begin{multline}
\label{forth_expression__proof_Con_E.3.9}
(\underset{k\in K^*}{\boxtimes} C(X)_{\und{\lambda}(k)})\otimes_{Shv(X^{K^*})} Shv(\oo{X}{}^{K^*}\times X^{K-K^*})\to \\
(\underset{k\in K^*}{\boxtimes}  C(X)_{\und{\lambda}(k)})))\otimes_{Shv(X^{K^*})} Shv((\oo{X}{}^{K^*}\times\Ran)^{\subset})
\end{multline}
given by the direct image along the map $\oo{X}{}^{K^*}\times X^{K-K^*}\to (\oo{X}{}^{K^*}\times\Ran)^{\subset}$ over $\oo{X}{}^{K^*}$. The map (\ref{forth_expression__proof_Con_E.3.9}) is functorial in $(fSets^0_{\gU(\lambda)})^{op}$. 

 By Lemma~\ref{Lm_the_functor_zeta_is_cofinal_for_Con}, $\zeta$ is cofinal. Now the desired map from (\ref{first_expression_proof_Con_E.3.9}) to (\ref{third_expression__proof_Con_E.3.9}) is obtained from (\ref{forth_expression__proof_Con_E.3.9}) by passing to the colimit over $(fSets^0_{\gU(\lambda)})^{op}$. The colimit over $(fSets^0_{\gU(\lambda)})^{op}$ of the maps (\ref{forth_expression__proof_Con_E.3.9}) is an isomorphism, this is similar to the argument in Lemma~\ref{Lm_E.2.13_almost_last}. We are done.
\end{proof}

\begin{Lm} 
\label{Lm_first_for_proof_Con_E.3.9}
The functor $h^{\gU(\lambda)}$ is cofinal.
\end{Lm}
\begin{proof}
Pick $\xi=(\und{\nu}, K)\in fSets_{\gU(\lambda)}^0$. We show that the category
$$
{^0\cTw(fSets)_{\gU(\lambda)}}\times_{(fSets_{\gU(\lambda)}^0)^{op}} ((fSets_{\gU(\lambda)}^0)^{op})_{\xi/}
$$
is contractible. It classifies an object $(\und{\lambda}^1, J_1\toup{\tau} K_1)\in {^0\cTw(fSets)_{\gU(\lambda)}}$ and a map $\eta: (K_1 \tau_*\und{\lambda}^1)\to (K, \und{\nu})$ in $fSets_{\gU(\lambda)}^0$. This category identifies with
$$
(\underset{k\in K-K^*}{\prod} \cTw(fSets))\times (\underset{k\in K^*}{\prod} \cTw(fSets_*))
$$
Here we have denoted by $fSets_*$ the category of pointed finite sets and surjective morphisms. Each of the categories $\cTw(fSets)$, $\cTw(fSets_*)$ is contractible, for the second one this is a particular case of a claim from Section~\ref{Sect_E.2.3_now}. 
\end{proof}

\begin{Lm} 
\label{Lm_the_functor_zeta_is_cofinal_for_Con}
The functor $\zeta: (fSets^0_{\gU(\lambda)})^{op}\to (fSets^{\gU(\lambda)}_0)^{op}$ is cofinal.
\end{Lm}
\begin{proof}
The argument here is essentially the one that has already appeared in Lemma~\ref{Lm_xi_is_cofinal}. Given $\xi:=(\und{\mu}, T)\in fSets^{\gU(\lambda)}_0$, we must show that 
$$
fSets^0_{\gU(\lambda)}\times_{fSets^{\gU(\lambda)}_0} (fSets^{\gU(\lambda)}_0)_{/\xi}
$$ 
is contractible. An object of the latter category is a datum of $(\und{\lambda}, K)\in fSets^0_{\gU(\lambda)}$ together with an isomorphism $(\und{\lambda}^*, K^*)\,\iso\, \xi$ in $fSets^{\gU(\lambda)}_0$. This category admits binary products, so is contractible by Lemma~\ref{Lm_binary_products_give_contractibility}.
\end{proof} 

\sssec{} In fact, $\xi_{C,\lambda}$ is the composition of $\bar\xi^L_{C,\lambda}$ with 
$$
(\pr_{\lambda})_!: \Fact(C_{>0})\otimes_{Shv(X^{\lambda})} Shv((X^{\lambda}\times\Ran)^{\subset})\to \Fact(C_{>0}).
$$ 

\sssec{Adding units for graded factorization algebras} 
\label{Sect_Adding units for graded factorization algebras}
Let $A\in CAlg(C(X))$ with a grading $A=\underset{\lambda\in\Lambda}{\oplus} A_{\lambda}$, where $A_{\lambda}\in C(X)_{\lambda}$. Assume that the canonical map $\omega_X\to A_0$ is an isomorphism in $C(X)_0\,\iso\, Shv(X)$. 

Set $A_{>0}=\underset{\lambda\in\Lambda^*}{\oplus} A_{\lambda}$. Then $A_{>0}\in CAlg^{nu}(C_{>0})$. Let $\lambda\in\Lambda^*$. Let us construct a canonical morphism 
\begin{equation}
\label{map_from_Fact(A)_lambda_strengthened_trace}
\bar\xi^L_{C, \lambda}(\Fact(A)_{\lambda})\to \pr_{\lambda}^! \Fact(A_{>0})
\end{equation}
where by abuse of notations $\pr_{\lambda}^!$ denotes the functor
$$
\Fact(C_{>0})\to \Fact(C_{>0})\otimes_{Shv(X^{\lambda})} Shv((X^{\lambda}\times\Ran)^{\subset})
$$
given by the !-pullback along $\pr_{\lambda}: (X^{\lambda}\times\Ran)^{\subset}\to X^{\lambda}$. 

\sssec{} Let $(\und{\lambda}, J\to K)\in \cTw(fSets)_{\lambda}$. Set
$$
(X^{K^*}\times\Ran)^{\subset}=X^{K^*}\times_{X^{\lambda}}(X^{\lambda}\times\Ran)^{\subset},
$$ 
so $\Gamma_{\und{\lambda}}: X^K\;\to (X^{K^*}\times\Ran)^{\subset}$. 
 For $M\in Shv(X^{K^*})-mod$ we have a canonical map 
$$
(\id\otimes (\Gamma_{\und{\lambda}})_!: M\otimes_{Shv(X^{K^*})} Shv(X^K)\to M\otimes_{Shv(X^{K^*})} Shv(X^{K^*}\times_{X^{\lambda}}(X^{\lambda}\times\Ran)^{\subset})
$$
and for $K\in M$ a canonical morphism 
$$
(\id\otimes \Gamma_{\und{\lambda}})_!(\id\otimes \Gamma_{\und{\lambda}})^!\pr_{\lambda}^! K\to \pr_{\lambda}^! K
$$
in $M\otimes_{Shv(X^{K^*})} Shv(X^{K^*}\times_{X^{\lambda}}(X^{\lambda}\times\Ran)^{\subset})$. We apply this to 
$M= \underset{k\in K^*}{\boxtimes}(\underset{j\in J^*_k}{\otimes} C(X)_{\und{\lambda}(j)})$ and 
$$
K=\underset{k\in K^*}{\boxtimes}(\underset{j\in J^*_k}{\otimes} A_{\und{\lambda}(j)}))
$$
The result is a canonical map
\begin{equation}
\label{map_for_graded_fact_alg_adding_units_1}
(\id\otimes \Gamma_{\und{\lambda}})_!(\underset{k\in K}{\boxtimes}(\underset{j\in J_k}{\otimes} A_{\und{\lambda}(j)}))\to \pr_{\lambda}^!(\underset{k\in K^*}{\boxtimes}(\underset{j\in J^*_k}{\otimes} A_{\und{\lambda}(j)}))
\end{equation}
in 
$$
(\underset{k\in K^*}{\boxtimes} (\underset{j\in J^*_k}{\otimes} C(X)_{\und{\lambda}(j)}))\otimes_{Shv(X^{K^*})} Shv((X^{K^*}\times\Ran)^{\subset}).
$$
It is formal to check that the maps (\ref{map_for_graded_fact_alg_adding_units_1}) are functorial in $(\und{\lambda}, J\to K)\in\cTw(fSets)_{\lambda}$. Passing to the colimit over $(\und{\lambda}, J\to K)\in\cTw(fSets)_{\lambda}$ the maps (\ref{map_for_graded_fact_alg_adding_units_1}) give the desired morphism (\ref{map_from_Fact(A)_lambda_strengthened_trace}), here we used the fact that $\xi$ is cofinal by Lemma~\ref{Lm_xi_is_cofinal}. 

\begin{Pp} In the setting of Section~\ref{Sect_Adding units for graded factorization algebras} the map (\ref{map_from_Fact(A)_lambda_strengthened_trace}) is an isomorphism.
\end{Pp}
\begin{proof} It is immediately reduced to Lemma~\ref{Lm_map_from_Fact(A)_lambda_strengthened_trace_over_stratum} below.
\end{proof}

\begin{Lm} 
\label{Lm_map_from_Fact(A)_lambda_strengthened_trace_over_stratum}
Pick $\gU(\lambda)$ as in Section~\ref{Sect_E.3.15_now}. After the base change by $\oo{X}{}^{\gU(\lambda)}\to X^{\lambda}$ the map (\ref{map_from_Fact(A)_lambda_strengthened_trace}) becomes an isomorphism in 
$$
\Fact(C_{>0})\otimes_{Shv(X^{\lambda})} Shv((X^{\lambda}\times\Ran)^{\subset})\otimes_{Shv(X^{\lambda})} Shv(\oo{X}{}^{\gU(\lambda)})
$$
\end{Lm}
\begin{proof}
We follow the pattern of the proof of Proposition~\ref{Pp_E.3.14_for_Thm_Con_E.3.8}. The LHS of (\ref{map_from_Fact(A)_lambda_strengthened_trace}) in 
$$
\Fact(C_{>0})\otimes_{Shv(X^{\lambda})} Shv((X^{\lambda}\times\Ran)^{\subset}).
$$
is given by
$$
\underset{(\und{\lambda}, J\to K)\in\cTw(fSets)_{\lambda}}{\colim} 
(\id\otimes \Gamma_{\und{\lambda}})_!((\underset{k\in K^*}{\boxtimes}(\underset{j\in J^*_k}{\otimes} A_{\und{\lambda}(j)}))\boxtimes \omega_{X^{K-K^*}}).
$$  

 Recall that for $(\und{\lambda}, J\to K)\in \cTw(fSets)_{\lambda}$, 
$$
X^{K^*}\times_{X^{\lambda}} \oo{X}{}^{\gU(\lambda)}\,\iso\, \underset{(K^*\toup{\phi} \tilde M)\in Q(K^*)^{\gU(\lambda)}}{\sqcup} \oo{X}{}^{\tilde M}
$$
For an object $(\und{\lambda}, J\to K, K^*\toup{\phi}\tilde M)\in \cTw(fSets)_{\gU(\lambda)}$ let 
$$
\Gamma_{\und{\lambda}, \phi}: \oo{X}{}^{\tilde M}\times X^{K-K^*}\to (\oo{X}{}^{\tilde M}\times\Ran)^{\subset}
$$
be the base change of $\Gamma_{\und{\lambda}}$ by $\oo{X}{}^{\tilde M}\to X^{K^*}$. 

 The $!$-restriction of the LHS of (\ref{map_from_Fact(A)_lambda_strengthened_trace}) under $\oo{X}{}^{\gU(\lambda)}\to X^{\lambda}$ is
$$
\underset{(\und{\lambda}, J\to K, K^*\toup{\phi}\tilde M)\in \cTw(fSets)_{\gU(\lambda)}}{\colim}
(\Gamma_{\und{\lambda}, \phi})_! ((\underset{m\in \tilde M}{\boxtimes}(\underset{j\in J^*_m}{\otimes} A_{\und{\lambda}(j)}))\mid_{\oo{X}{}^{\tilde M}}\boxtimes \omega_{X^{K-K^*}})
$$
As in the proof of Theorem~\ref{Con_E.3.8}, the latter expression identifies using the adjoint pair
$$
\cTw(fSets)_{\gU(\lambda)}^0\leftrightarrows \cTw(fSets)_{\gU(\lambda)}
$$
with
$$
\underset{(\und{\lambda}, J\to K)\in \cTw(fSets)_{\gU(\lambda)}^0}{\colim}
(\oo{\Gamma}_{\und{\lambda}})_! ((\underset{k\in K^*}{\boxtimes}(\underset{j\in J^*_k}{\otimes} A_{\und{\lambda}(j)}))\mid_{\oo{X}{}^{K^*}}
\boxtimes \omega_{X^{K-K^*}}),
$$
where we have denoted by $\oo{\Gamma}_{\und{\lambda}}: \oo{X}{}^{K^*}\times X^{K-K^*}\to (\oo{X}{}^{K^*}\times\Ran)^{\subset}$ the restriction of $\Gamma_{\und{\lambda}}$. 

 As in the proof of Theorem~\ref{Con_E.3.8}, the latter expression identifies with
$$
\underset{(\und{\lambda}, J\to K)\in {^0\cTw(fSets)_{\gU(\lambda)}}}{\colim}(\oo{\Gamma}_{\und{\lambda}})_! ((\underset{k\in K^*}{\boxtimes}(A_{\und{\lambda}(k)}))\mid_{\oo{X}{}^{K^*}}
\boxtimes \omega_{X^{K-K^*}}),
$$ 
In turn, by the cofinality of $h^{\gU(\lambda)}$, the latter expression identifies with
\begin{equation}
\label{expressions_LHS_of_map_from_Fact(A)_lambda_strengthened_trace}
\underset{(\und{\lambda}, K)\in (fSets_{\gU(\lambda)}^0)^{op}}{\colim}(\oo{\Gamma}_{\und{\lambda}})_! ((\underset{k\in K^*}{\boxtimes}(A_{\und{\lambda}(k)}))\mid_{\oo{X}{}^{K^*}}
\boxtimes \omega_{X^{K-K^*}}),
\end{equation}  

 Let us now analyze the !-restriction under $\oo{X}{}^{\gU(\lambda)}\to X^{\lambda}$ of the RHS of (\ref{map_from_Fact(A)_lambda_strengthened_trace}). By definition, this is
$$
\underset{(\und{\lambda}, J\to K)\in\cTw(fSets)^{\lambda}}{\colim} \pr_{\lambda}^!(\underset{k\in K}{\boxtimes} 
(\underset{j\in J_k}{\otimes} A_{\und{\lambda}(j)}))\mid_{\oo{X}{}^{\gU(\lambda)}}
$$
over $(\oo{X}{}^{\gU(\lambda)}\times\Ran)^{\subset}$. As in the proof of Theorem~\ref{Con_E.3.8}, the latter expression identifies with
$$
\underset{(\und{\lambda}, J\to K\toup{\phi} \tilde K)\in\cTw(fSets)^{\gU(\lambda)}}{\colim} \pr_{\tilde K}^!((\underset{k\in \tilde K}{\boxtimes} 
(\underset{j\in J_k}{\otimes} A_{\und{\lambda}(j)}))\mid_{\oo{X}{}^{\tilde K}})
$$
where we have denoted by $\pr_{\tilde K}: (\oo{X}{}^{\tilde K}\times\Ran)^{\subset}\to \oo{X}{}^{\tilde K}$ the projection. As in the proof of Theorem~\ref{Con_E.3.8}, the latter expression identifies with
$$
\underset{(\und{\lambda}, J\to K)\in\cTw(fSets)^{\gU(\lambda)}_0}{\colim}\pr_K^!((\underset{k\in K}{\boxtimes} 
(\underset{j\in J_k}{\otimes} A_{\und{\lambda}(j)}))\mid_{\oo{X}{}^K})
$$
and in turn with
\begin{equation}
\label{expressions2_LHS_of_map_from_Fact(A)_lambda_strengthened_trace} 
\underset{(\und{\lambda}, K)\in (fSets_0^{\gU(\lambda)})^{op}}{\colim}
\pr_K^!((\underset{k\in K}{\boxtimes} A_{\und{\lambda}(k)})\mid_{\oo{X}{}^K})
\end{equation}

 Recall the functor $\zeta: fSets^0_{\gU(\lambda)}\to fSets^{\gU(\lambda)}_0$ sending $(\und{\lambda}, K)$ to $(\und{\lambda}^*, K^*)$. For $(\und{\lambda}, K)\in fSets^0_{\gU(\lambda)}$ we have the natural morphism
\begin{equation}
\label{expressions3_LHS_of_map_from_Fact(A)_lambda_strengthened_trace} 
(\oo{\Gamma}_{\und{\lambda}})_! ((\underset{k\in K^*}{\boxtimes}(A_{\und{\lambda}(k)}))\mid_{\oo{X}{}^{K^*}}
\boxtimes \omega_{X^{K-K^*}})\to \pr_{K^*}^!((\underset{k\in K^*}{\boxtimes} A_{\und{\lambda}(k)})\mid_{\oo{X}{}^{K^*}})
\end{equation}
functorial in $(\und{\lambda}, K)\in (fSets^0_{\gU(\lambda)})^{op}$. 

 Finally the desired morphism from (\ref{expressions_LHS_of_map_from_Fact(A)_lambda_strengthened_trace}) to (\ref{expressions2_LHS_of_map_from_Fact(A)_lambda_strengthened_trace}) is obtained from (\ref{expressions3_LHS_of_map_from_Fact(A)_lambda_strengthened_trace}) by passing to the colimit over $(fSets^0_{\gU(\lambda)})^{op}$, the latter colimit is an isomorphism. We are done. 
\end{proof}

\ssec{Applications}
\label{Sect_E.3}

\sssec{} 
\label{Sect_E.3.1}
Let $G$ be a connected reductive group over $k$. Let $\Lambda$ be as in Section~\ref{Sect_E.0.1}. Fix $\lambda\in\Lambda^*$. Denote by $\Gr_{G, X^{\lambda}}$ the prestack classifying a $G$-torsor $\cF_G$ on $X$, $D\in X^{\lambda}$ and a trivilization $\cF_G\,\iso\, \cF^0_G\mid_{X-\supp(D)}$.

For $(J, \und{\lambda})\in fSets^{\lambda}$ consider the map $s^{\und{\lambda}}_{\Gr}: \Gr_{G, X^J}\to \Gr_{G, X^{\lambda}}$ over the map $s^{\und{\lambda}}: X^J\to X^{\lambda}$ from Section~\ref{Sect_E.1.12_now}. It sends $(\cF_G, \cI, \beta: \cF_G\,\iso\,\cF^0_G\mid_{X-\Gamma_{\cI}})$ to $(\cF_G, D, \beta: \cF_G\,\iso\,\cF^0_G\mid_{X-\supp(D)}$. Here $D$ is the image of $\cI$ under $s^{\und{\lambda}}$, and we used the equality $X-\supp(D)=X-\Gamma_{\cI}$. 

 The maps $s^{\und{\lambda}}_{\Gr}$ form a compatible system of maps for $(J, \und{\lambda})\in fSets^{\lambda}$, so yield a map $s_{\Gr}: \underset{(J, \und{\lambda})\in (fSets^{\lambda})^{op}}{\colim} \Gr_{G, X^J}\to \Gr_{G, X^{\lambda}}$. 
  
\begin{Lm} 
\label{Lm_E.3.2}
The functor
$
(s_{\Gr})_!: \underset{(J, \und{\lambda})\in (fSets^{\lambda})^{op}}{\colim} Shv(\Gr_{G, X^J})\to Shv(\Gr_{G, X^{\lambda}})
$ 
is an equivalence.
\end{Lm}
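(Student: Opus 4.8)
The strategy is to deduce the statement from the corresponding result for the base, namely Proposition~\ref{Pp_E.1.5} (the equivalence $\Fact(C)_\lambda \iso Shv(X^\lambda)$ for $C(X) = \underset{\nu\in\Lambda^*}{\oplus} Shv(X)$, or more precisely its proof, which shows $\underset{(J,\und\lambda)\in(fSets^\lambda)^{op}}{\colim} Shv(X^J)\iso Shv(X^\lambda)$ via the $!$-direct-image functors $(s^{\und\lambda})_!$). First I would verify that for every $(J,\und\lambda)\in fSets^\lambda$ the map $s^{\und\lambda}_{\Gr}\colon \Gr_{G,X^J}\to\Gr_{G,X^\lambda}$ is pseudo-proper, exactly as in the argument for the map $\pi^{\und\lambda}$ in Section~\ref{Sect_E2}: one bases change by $X^I\to\Ran$ (or directly works over $X^{\lambda}$ using that $X^{\lambda}$ is an ind-scheme), uses the standard presentation $X^J\times_{X^\lambda}\bar Y \iso$ a colimit of schemes each proper over $\bar Y$, and notes that on the affine Grassmannian side $\Gr_{G,X^J}\times_{X^\lambda} S$ is still an ind-proper ind-scheme over $S$ when one restricts to a bounded substack. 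Hence by (\cite{Ga}, 1.5.4) the left adjoints $(s^{\und\lambda}_{\Gr})_!$ are defined, they assemble into a compatible family, and by (\cite{Ly}, 9.2.6) one obtains an adjoint pair
$$
(s_{\Gr})_!\colon \underset{(J,\und\lambda)\in(fSets^\lambda)^{op}}{\colim} Shv(\Gr_{G,X^J})\leftrightarrows Shv(\Gr_{G,X^\lambda}) : (s_{\Gr})^!
$$
taking place in $Shv(X^\lambda)$-mod (using the projections $\Gr_{G,X^\lambda}\to X^\lambda$ to equip both sides with a $Shv(X^\lambda)$-module structure).

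Next I would reduce to the stratum-by-stratum statement using Proposition~\ref{Pp_3.7.8_devissage_using_ULA_preservation}: since the adjoint pair is $Shv(X^\lambda)$-linear, it suffices to show $(s_{\Gr})_!$ becomes an equivalence after $\cdot\otimes_{Shv(X^\lambda)} Shv(\oo X{}^{\gU(\lambda)})$ for each type $\gU(\lambda)$. Over such a stratum I would invoke the factorization structure of $\Gr_{G,X^I}$ together with Lemma~\ref{Lm_E.1.14_now}: the fibre product $\Gr_{G,X^J}\times_{X^\lambda}\oo X{}^{\gU(\lambda)}$ decomposes, via $\Gr_{G,X^J}\times_{X^\lambda}\oo X{}^{\gU(\lambda)}\iso \Gr_{G,X^{J-J^*}}\times (\Gr_{G,X^{J^*}}\times_{X^\lambda}\oo X{}^{\gU(\lambda)})$ and the disjoint-union description of $X^{J^*}\times_{X^\lambda}\oo X{}^{\gU(\lambda)}$ indexed by $Q(J^*)^{\gU(\lambda)}$, and by factorization $\Gr_{G,X}$ on each component this becomes a product of copies of $\Gr_{G,X}$ over the $\oo X{}^{\tilde K}$ and copies of $\Gr_{G,X}$ over $X^{J-J^*}$. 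Then I would run the same cofinality bookkeeping as in the proof of Lemma~\ref{Lm_E.1.15} (the lemma right after (\ref{iso_1_for_Pp_answer})): replace the indexing category by $fSets^{\gU(\lambda)}$, show the functor $\bar h$ to the subcategory $fSets^{\gU(\lambda)}_0$ of bijective $\phi$ is cofinal, and finally compute the colimit over $(fSets^{\gU(\lambda)}_0)^{op}$ using that for a finite group $G$ acting on $Y$ one has $\underset{B(G)}{\colim} Shv(Y)\iso Shv(Y/G)$ (from (\cite{Ly}, 2.7.24)), yielding $Shv(\Gr_{G,\oo X{}^{\gU(\lambda)}})$, which is precisely $Shv(\Gr_{G,X^\lambda})\otimes_{Shv(X^\lambda)} Shv(\oo X{}^{\gU(\lambda)})$.

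The main obstacle I anticipate is the pseudo-properness verification and, relatedly, making the factorization/stratum identification completely precise on the affine Grassmannian rather than on $X^I$ itself: one must be careful that $\Gr_{G,X^\lambda}$ and the colimit $\underset{(J,\und\lambda)}{\colim}\Gr_{G,X^J}$ really are pseudo-indschemes in the appropriate sense and that the partially-defined $!$-pushforwards behave as required, and that the finite-group-quotient computation for $Shv$ applies to ind-schemes like $\Gr_{G,X}$ and not only to finite-type schemes (this uses that $Shv(\cdot)$ sends the relevant colimits of prestacks to limits of categories and that the group actions are by automorphisms over the base). Once the pseudo-proper bookkeeping is in place, the cofinality arguments are formally identical to those already carried out for $X^\lambda$ in Appendix~\ref{Sect_E}, and the devissage via Proposition~\ref{Pp_3.7.8_devissage_using_ULA_preservation} closes the argument. \QED
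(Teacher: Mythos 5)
Your proposal is correct and follows essentially the same route as the paper, whose proof of Lemma~\ref{Lm_E.3.2} is literally ``similar to Proposition~\ref{Pp_E.1.5}'': a $Shv(X^{\lambda})$-linear adjoint pair coming from the (pseudo-)properness of the maps $s^{\und{\lambda}}_{\Gr}$, devissage via Proposition~\ref{Pp_3.7.8_devissage_using_ULA_preservation} to the strata $\oo{X}{}^{\gU(\lambda)}$, and there the factorization of $\Gr_{G,X^I}$ together with the same cofinality and $\underset{B(G)}{\colim}\,Shv(Y)\,\iso\,Shv(Y/G)$ bookkeeping as in the proof of Proposition~\ref{Pp_E.1.5}. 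Two small simplifications to note: since $\und{\lambda}$ takes values in $\Lambda^*$ one has $J^*=J$ (the factors indexed by $J-J^*$ only occur in the unital setting of Section~\ref{Sect_E2}), and $s^{\und{\lambda}}_{\Gr}$ is just the base change of the finite morphism $s^{\und{\lambda}}:X^J\to X^{\lambda}$ along $\Gr_{G,X^{\lambda}}\to X^{\lambda}$, so the existence and $Shv(X^{\lambda})$-linearity of $(s^{\und{\lambda}}_{\Gr})_!$ is immediate without passing through $\Ran$.
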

\begin{proof}
Similar to Proposition~\ref{Pp_E.1.5}.
\end{proof}

\begin{Cor} 
\label{Cor_E.3.3}
For $\lambda\in\Lambda^*$ one has $Shv(\Gr_{G,\Ran})\otimes_{Shv(\Ran)} Shv(X^{\lambda})\,\iso\, Shv(\Gr_{G, X^{\lambda}})$ canonically.
\end{Cor}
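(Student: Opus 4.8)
The claim $Shv(\Gr_{G,\Ran})\otimes_{Shv(\Ran)} Shv(X^\lambda)\,\iso\, Shv(\Gr_{G,X^\lambda})$ should be deduced by combining Lemma~\ref{Lm_E.3.2} with the presentation of $Shv(X^\lambda)$ as a $Shv(\Ran)$-module from Proposition~\ref{Pp_E.1.5} (and its constituent Lemma~\ref{Lm_E.1.7}). Concretely, I would first recall that $\Gr_{G,\Ran}$ is a factorization prestack over $\Ran$, so $Shv(\Gr_{G,\Ran})=Shv(\Gr_{G,\cdot})_{/\Ran}$ is the sheaf of categories with $Shv(\Gr_{G,\Ran})\otimes_{Shv(\Ran)} Shv(X^I)\,\iso\, Shv(\Gr_{G,X^I})$ for each $I\in fSets$ (this uses that $X^I\to\Ran$ is a closed-immersion-type map, via Section~\ref{Sect_B.1.1}, exactly as in the construction recalled in Appendix~\ref{Sect_B2_factorization}). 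In particular $Shv(\Gr_{G,\Ran})$, viewed as an object of $Shv(\Ran)-mod$, is recovered as $\underset{I\in fSets^{op}}{\colim} Shv(\Gr_{G,X^I})$ with transition functors the $!$-direct images.

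\textbf{Main steps.} The plan is: (i) Use Proposition~\ref{Pp_E.1.5} together with Lemma~\ref{Lm_E.1.7}, or rather directly Lemma~\ref{Lm_E.1.7}, to write $Shv(X^\lambda)\,\iso\,\underset{(J,\und{\lambda})\in (fSets^{\lambda})^{op}}{\colim} Shv(X^J)$ in $Shv(\Ran)-mod$, where the transition functors are $!$-direct images along the diagonals $X^J\to X^{J'}$ induced by maps in $fSets^{\lambda}$, and the $Shv(\Ran)$-module structure on $Shv(X^J)$ is the evident one through $X^J\to\Ran$. (ii) Tensor this colimit over $Shv(\Ran)$ with $Shv(\Gr_{G,\Ran})$; since $\cdot\otimes_{Shv(\Ran)} Shv(\Gr_{G,\Ran})$ preserves colimits, this gives
$$
Shv(\Gr_{G,\Ran})\otimes_{Shv(\Ran)} Shv(X^\lambda)\,\iso\,\underset{(J,\und{\lambda})\in (fSets^{\lambda})^{op}}{\colim}\; Shv(\Gr_{G,\Ran})\otimes_{Shv(\Ran)} Shv(X^J)\,\iso\,\underset{(J,\und{\lambda})\in (fSets^{\lambda})^{op}}{\colim} Shv(\Gr_{G,X^J}),
$$
using the factorization-prestack identity $Shv(\Gr_{G,\Ran})\otimes_{Shv(\Ran)} Shv(X^J)\,\iso\, Shv(\Gr_{G,X^J})$ for each $J$. (iii) Check that under these identifications the transition functors in the colimit are precisely the $!$-direct images $(\vartriangle_{\Gr})_!: Shv(\Gr_{G,X^J})\to Shv(\Gr_{G,X^{J'}})$ along the induced diagonals $\Gr_{G,X^{J'}}\to \Gr_{G,X^J}$ of affine-Grassmannian prestacks; this is a compatibility at the level of the factorization structure and follows by base change, since $\Gr_{G,X^{J'}}\,\iso\, \Gr_{G,X^J}\times_{X^J} X^{J'}$. (iv) Finally invoke Lemma~\ref{Lm_E.3.2}, which identifies exactly this colimit $\underset{(J,\und{\lambda})\in (fSets^{\lambda})^{op}}{\colim} Shv(\Gr_{G,X^J})$ with $Shv(\Gr_{G,X^\lambda})$ via $(s_{\Gr})_!$. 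Composing the equivalences yields the claim.

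\textbf{Anticipated obstacle.} The delicate point is step (iii): one must verify that the diagram defining the colimit $\underset{(J,\und{\lambda})\in (fSets^{\lambda})^{op}}{\colim} Shv(\Gr_{G,X^J})$ obtained by base-changing the $Shv(X^\lambda)$-diagram along $\cdot\otimes_{Shv(\Ran)} Shv(\Gr_{G,\Ran})$ genuinely coincides, as a functor $(fSets^{\lambda})^{op}\to Shv(\Ran)-mod$, with the diagram $(J,\und{\lambda})\mapsto Shv(\Gr_{G,X^J})$ whose colimit Lemma~\ref{Lm_E.3.2} computes; this amounts to the compatibility of the base-change identifications $Shv(\Gr_{G,\Ran})\otimes_{Shv(\Ran)} Shv(X^J)\,\iso\, Shv(\Gr_{G,X^J})$ with the transition maps, which is again the statement that $\Gr_{G,\cdot}$ is a factorization (hence in particular compatible) prestack over $\Ran$, together with the base-change identity for $!$-direct images along closed immersions from Section~\ref{Sect_B.1.1}. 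Everything else is formal manipulation of colimits in $Shv(\Ran)-mod$. The proof therefore reduces to citing Lemma~\ref{Lm_E.3.2}, Lemma~\ref{Lm_E.1.7} (via Proposition~\ref{Pp_E.1.5}), and the compatibility of the sheaf of categories $Shv(\Gr_{G,\cdot})_{/\Ran}$ with restriction to the $X^J$'s.
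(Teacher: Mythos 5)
Your proposal is correct and follows essentially the same route as the paper: write $Shv(X^{\lambda})$ as the colimit over $(fSets^{\lambda})^{op}$ of $Shv(X^J)$ (Proposition~\ref{Pp_E.1.5}), commute the tensor $Shv(\Gr_{G,\Ran})\otimes_{Shv(\Ran)}(-)$ through the colimit, identify each term with $Shv(\Gr_{G,X^J})$, and conclude by Lemma~\ref{Lm_E.3.2}. The only cosmetic difference is that the paper justifies the termwise identification $Shv(\Gr_{G,\Ran})\otimes_{Shv(\Ran)} Shv(X^J)\,\iso\, Shv(\Gr_{G,X^J})$ by the 1-affineness of $\Ran$ (Lemma~\ref{Lm_2.2.8_Ran_is_1-affine}), which is the precise statement underlying the sheaf-of-categories compatibility you invoke.
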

\begin{proof} One has
\begin{multline*}
Shv(\Gr_{G,\Ran})\otimes_{Shv(\Ran)} Shv(X^{\lambda})\,\iso\, \underset{(J, \und{\lambda})\in (fSets^{\lambda})^{op}}{\colim} Shv(\Gr_{G,\Ran})\otimes_{Shv(\Ran)} Shv(X^J)\\
\iso\,\underset{(J, \und{\lambda})\in (fSets^{\lambda})^{op}}{\colim} Shv(\Gr_{G, X^J}),
\end{multline*}
where for the second isomorphisms we used the fact that $\Ran$ is 1-affine. Our claim follows now from Lemma~\ref{Lm_E.3.2}.
\end{proof}

\sssec{} For $S\in\Sch^{aff}$, $I\in fSets$, $\cI\in \Map(S, X^I)$ denote by $\hat\cD_{\cI}$ the formal completion of $S\times X$ along $\Gamma_{\cI}$ viewed as a formal scheme. Let $\cD_{\cI}$ denote the affine scheme obtained from $\hat\cD_{\cI}$, the image of $\hat\cD_{\cI}$ under $\colim: \Ind(\Sch^{aff})\to \Sch^{aff}$. Set $\oo{\cD}_{\cI}=\cD_{\cI}-\Gamma_{\cI}$. 

 Let $\gL^+(G)_{\Ran}$ be the group scheme over $\Ran$ whose $S$-points are
$\cI\in\Map(S,\Ran)$ and a map $\cD_{\cI}\to G$. For $I\in fSets$ let $\gL^+(G)_I=\gL^+(G)_{\Ran}\times_{\Ran} X^I$. 

 For $\lambda\in\Lambda^*$ and an $S$-point $D\in\Map(S, X^{\lambda})$ denote by $\hat\cD_D$ the formal completion of $S\times X$ along $\supp(\Theta(D))$. Write $\cD_D$ for the affine scheme, the image of $\hat\cD_D$ under $\colim: \Ind(\Sch^{aff})\to \Sch^{aff}$. 
 
 We get the group scheme $\gL^+(G)_{\lambda}$ on $X^{\lambda}$ whose $S$-points are $D\in\Map(S, X^{\lambda})$ and a section $\cD_D\to G$. 
 
\sssec{} For $I\in fSets$ write $\gL^+(G)_I\backslash \Gr_{G, X^I}$ for the corresponding stack quotient (for the etale topology). It classifies $\cI\in X^I$, $G$-torsors $\cF_G, \cF'_G$ on $\cD_{\cI}$ and a trivialization $\cF_G\,\iso\,\cF'_D\mid_{\oo{\cD}_{\cI}}$. 

 Similarly, we consider the stack quotient $\gL^+(G)_{\lambda}\backslash\Gr_{G, X^{\lambda}}$. As in Section~\ref{Sect_E.3.1}, one gets the morphism
$$
s_{\Gr}^{eq}: \underset{(J, \und{\lambda})\in (fSets^{\lambda})^{op}}{\colim} \gL^+(G)_J\backslash\Gr_{G, X^J}\to \gL^+(G)_{\lambda}\backslash\Gr_{G, X^{\lambda}},   
$$
here $eq$ stands for `equivariant'. As in Lemma~\ref{Lm_E.3.2}, one gets the following.  
\begin{Lm} The functor 
$$
(s_{\Gr}^{eq})_!: \underset{(J, \und{\lambda})\in (fSets^{\lambda})^{op}}{\colim} Shv(\Gr_{G, X^J})^{\gL^+(G)_J}\to Shv(\Gr_{G, X^{\lambda}})^{\gL^+(G)_{\lambda}}
$$
is an equivalence. \QED
\end{Lm}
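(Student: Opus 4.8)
The plan is to transcribe the proof of Proposition~\ref{Pp_E.1.5} (equivalently Lemma~\ref{Lm_E.3.2}), carrying the $\gL^+(G)$-equivariance along throughout. The first step is to produce the relevant adjoint pair inside $Shv(X^{\lambda})-mod$. For $(J,\und{\lambda})\in fSets^{\lambda}$ the map $s^{eq}_{\Gr,\und{\lambda}}\colon \gL^+(G)_J\backslash\Gr_{G,X^J}\to\gL^+(G)_{\lambda}\backslash\Gr_{G,X^{\lambda}}$ lies over $s^{\und{\lambda}}\colon X^J\to X^{\lambda}$, and exactly as for $s^{\und{\lambda}}$ in Lemma~\ref{Lm_E.3.2} (base change to $X^I$ for $I\in fSets$, using $X^J\times_{\Ran}X^I\,\iso\,\underset{I\to I'\gets J}{\colim}X^{I'}$ and ind-properness of $\Gr_{G}$ over the $X^{I'}$) one sees $s^{eq}_{\Gr,\und{\lambda}}$ is pseudo-proper; hence $(s^{eq}_{\Gr,\und{\lambda}})_!$ is defined, the family over $(fSets^{\lambda})^{op}$ is compatible, and these glue to $(s^{eq}_{\Gr})_!$. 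By (\cite{Ly}, 9.2.6) the right adjoint $(s^{eq}_{\Gr})^!$, which is the limit of the $!$-pullbacks along the $s^{eq}_{\Gr,\und{\lambda}}$, is $Shv(X^{\lambda})$-linear, so the adjoint pair $\bigl((s^{eq}_{\Gr})_!,(s^{eq}_{\Gr})^!\bigr)$ takes place in $Shv(X^{\lambda})-mod$.

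Next I would run the dévissage of Section~\ref{Sect_E.1.19}. Since the above adjoint pair is in $Shv(X^{\lambda})-mod$, Proposition~\ref{Pp_3.7.8_devissage_using_ULA_preservation} i) applied inductively along the diagonal stratification of $X^{\lambda}$ by the locally closed subschemes $\oo{X}{}^{\gU(\lambda)}$ reduces the claim to showing that $(s^{eq}_{\Gr})_!$ becomes an equivalence after applying $Shv(\oo{X}{}^{\gU(\lambda)})\otimes_{Shv(X^{\lambda})}(-)$, for every choice of $\gU(\lambda)$. Fix such a $\gU(\lambda)$ given by $\lambda=\sum_{k\in K}d_k\lambda_k$. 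Over this stratum I would invoke Lemma~\ref{Lm_E.1.14_now}, which gives $X^J\times_{X^{\lambda}}\oo{X}{}^{\gU(\lambda)}\,\iso\,\underset{(J\to\tilde K)\in Q(J)^{\gU(\lambda)}}{\sqcup}\oo{X}{}^{\tilde K}$, together with the factorization structure of $\Gr_{G,X^{\tilde K}}$ over pairwise distinct points: restricted to $\oo{X}{}^{\tilde K}$ the quotient $\gL^+(G)_J\backslash\Gr_{G,X^J}$ is identified with $(\gL^+(G)_X\backslash\Gr_{G,X})^{\tilde K}$ restricted there. Here one uses that over disjoint support the formal disc $\cD_D$ breaks up into a disjoint union, so $\gL^+(G)_{\lambda}$ restricted to $\oo{X}{}^{\gU(\lambda)}$ is a product of copies of $\gL^+(G)_X$, compatibly with the trivializations. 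Then, repeating the cofinality argument of the lemma following Lemma~\ref{Lm_E.1.14_now} (the full subcategory $fSets^{\gU(\lambda)}_0\subset fSets^{\gU(\lambda)}$ of triples with $\phi$ bijective is a section of a cofinal functor $\bar h$), the left-hand colimit restricted to $\oo{X}{}^{\gU(\lambda)}$ collapses to a colimit over $(fSets^{\gU(\lambda)}_0)^{op}$, i.e.\ a colimit over $B\bigl(\prod_{k\in K}S_{d_k}\bigr)$ of $\gL^+(G)$-equivariant sheaf categories on the symmetric-power strata. Applying the equivalence $\underset{B(\Gamma)}{\colim}Shv(Y)^{\cdots}\,\iso\,Shv(Y/\Gamma)^{\cdots}$ for a finite group $\Gamma$ (as (\cite{Ly}, 2.7.24) is used in \emph{loc.\ cit.}) then recovers $Shv(\Gr_{G,X^{\lambda}})^{\gL^+(G)_{\lambda}}\otimes_{Shv(X^{\lambda})}Shv(\oo{X}{}^{\gU(\lambda)})$, and one checks the comparison functor is precisely the restriction of $(s^{eq}_{\Gr})_!$.

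The main obstacle I anticipate is the bookkeeping of the $\gL^+(G)$-equivariant structures through the dévissage: one must know that the assignment $Y\mapsto Shv(Y)^{\gL^+(G)}$ (equivalently, sheaves on the quotient stacks $\gL^+(G)_J\backslash\Gr_{G,X^J}$) is compatible both with the base change $Shv(\oo{X}{}^{\gU(\lambda)})\otimes_{Shv(X^{\lambda})}(-)$ and with the colimit over $B\bigl(\prod_{k}S_{d_k}\bigr)$, and that the group schemes $\gL^+(G)_J$ and $\gL^+(G)_{\lambda}$ match up under the factorization isomorphism of $\Gr_{G}$ over disjoint points. Once these compatibilities are established, the remainder is a word-for-word transcription of the argument of Section~\ref{Sect_E.1.19}, replacing $X^J$, $X^{\lambda}$, $\oo{X}{}^{\tilde K}$ by their $\gL^+(G)$-equivariant affine-Grassmannian counterparts. \QED
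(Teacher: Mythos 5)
Your proposal is correct and follows exactly the route the paper intends: the paper gives no argument beyond ``as in Lemma~\ref{Lm_E.3.2}'', whose proof is in turn ``similar to Proposition~\ref{Pp_E.1.5}'', i.e.\ precisely the transcription you carry out (adjoint pair in $Shv(X^{\lambda})-mod$ via pseudo-properness, d\'evissage via Proposition~\ref{Pp_3.7.8_devissage_using_ULA_preservation} along the strata $\oo{X}{}^{\gU(\lambda)}$, and over each stratum the cofinality argument with $fSets^{\gU(\lambda)}_0$ together with factorization of $\Gr_G$ and the $B\bigl(\prod_k S_{d_k}\bigr)$-quotient identification). The equivariance bookkeeping you flag is exactly what the paper leaves implicit, so there is nothing further to add.
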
 

 Set $\Sph_{G,\Ran}=Shv(\gL^+(G)_{\Ran}\backslash \Gr_{G,\Ran})$.
\begin{Cor}
\label{Cor_E.3.7}
For $\lambda\in\Lambda^*$ one has canonically
$$
\Sph_{G,\Ran}\otimes_{Shv(\Ran)} Shv(X^{\lambda})\,\iso\, Shv(\gL^+(G)_{\lambda}\backslash\Gr_{G, X^{\lambda}}).
$$
\end{Cor}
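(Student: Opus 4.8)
The plan is to run the same argument as in the proof of Corollary~\ref{Cor_E.3.3}, with $Shv(\Gr_{G,\Ran})$ replaced by $\Sph_{G,\Ran}$ and Lemma~\ref{Lm_E.3.2} replaced by its $\gL^+(G)$-equivariant refinement established just above the statement.

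First I would invoke that $\Ran$ is 1-affine (Lemma~\ref{Lm_2.2.8_Ran_is_1-affine}), so that the base change $-\otimes_{Shv(\Ran)} Shv(X^{\lambda})$ commutes with colimits of $Shv(\Ran)$-modules. Combining Proposition~\ref{Pp_E.1.5} with Lemma~\ref{Lm_E.1.7} gives the presentation $Shv(X^{\lambda})\iso \underset{(J,\und{\lambda})\in (fSets^{\lambda})^{op}}{\colim}\, Shv(X^J)$ in $Shv(\Ran)-mod$, the transition functors being the $!$-direct images along the diagonals $X^{K_2}\to X^{K_1}$ attached to surjections $K_1\to K_2$. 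This yields
$$
\Sph_{G,\Ran}\otimes_{Shv(\Ran)} Shv(X^{\lambda})\,\iso\, \underset{(J,\und{\lambda})\in (fSets^{\lambda})^{op}}{\colim}\; \Sph_{G,\Ran}\otimes_{Shv(\Ran)} Shv(X^J)\,\iso\, \underset{(J,\und{\lambda})\in (fSets^{\lambda})^{op}}{\colim}\; Shv(\Gr_{G, X^J})^{\gL^+(G)_J},
$$
where the second identification uses the defining sheaf-of-categories structure on $\Sph_{G,\Ran}$ recalled in Section~\ref{Sect_Application to Satake}. It then remains to check that under these identifications the transition functors become the $!$-direct images $Shv(\Gr_{G,X^{K_2}})^{\gL^+(G)_{K_2}}\to Shv(\Gr_{G,X^{K_1}})^{\gL^+(G)_{K_1}}$ along the morphisms $\gL^+(G)_{K_2}\backslash\Gr_{G,X^{K_2}}\to \gL^+(G)_{K_1}\backslash\Gr_{G,X^{K_1}}$ induced by the diagonals; this is precisely the diagram occurring in the equivariant lemma above, so $(s_{\Gr}^{eq})_!$ exhibits that colimit as $Shv(\gL^+(G)_{\lambda}\backslash\Gr_{G,X^{\lambda}})$, which is the claim.

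The only step requiring genuine care is this last compatibility — verifying that the equivalences $\Sph_{G,\Ran}\otimes_{Shv(\Ran)} Shv(X^J)\iso \Sph_{G,J}$ intertwine the two systems of $!$-direct-image transition functors. This rests on base change for the maps $\Gr_{G,X^{K_2}}\to\Gr_{G,X^{K_1}}$ being compatible with passage to $\gL^+(G)$-equivariant categories, which is routine, but it is where all the bookkeeping lives, exactly as in Corollary~\ref{Cor_E.3.3}. Everything else is formal once Lemma~\ref{Lm_2.2.8_Ran_is_1-affine} and the equivariant lemma are in hand.
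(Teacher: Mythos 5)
Your proposal is correct and coincides with the paper's own proof: the paper simply says ``argue as in Corollary~\ref{Cor_E.3.3},'' using 1-affineness of $\Ran$ to identify $\Sph_{G,\Ran}\otimes_{Shv(\Ran)} Shv(X^J)$ with $Shv(\gL^+(G)_J\backslash\Gr_{G,X^J})$, the colimit presentation of $Shv(X^{\lambda})$ over $(fSets^{\lambda})^{op}$, and the equivariant lemma on $(s_{\Gr}^{eq})_!$ to conclude. The only slight misplacement is that 1-affineness is needed for that identification of the $X^J$-sections (commutation of $-\otimes_{Shv(\Ran)}Shv(X^{\lambda})$ with colimits is automatic for the Lurie tensor product), but this does not affect the argument.
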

\begin{proof}
Argue as in Corollary~\ref{Cor_E.3.3}. We use here the fact that for $J\in fSets$, 
$$
Shv(\gL^+(G)_{\Ran}\backslash \Gr_{G,\Ran})\otimes_{Shv(\Ran)} Shv(X^J)\,\iso\, Shv(\gL^+(G)_J\backslash \Gr_{G, X^J}),
$$ 
as $\Ran$ is 1-affine for our sheaf theories. 
\end{proof}

\sssec{} 
\label{Sect_E.4.8}
Let $\Conf$ be the moduli scheme of $\Lambda^*$-valued divisors on $X$, so $\Conf=\underset{\lambda\in\Lambda^*}{\sqcup} X^{\lambda}$. Here $\Conf$ stands for the configurations space. Set $\und{\Conf}=\underset{\lambda\in\Lambda^*}{\sqcup} \und{X}^{\lambda}$. The canonical map $\und{\Conf}\to \Conf$ induces $Shv(Conf)\,\iso\, Shv(\und{\Conf})$. 

Write $\check{G}$ for the Langlands dual group of $G$ over $e$. Let
$$
E(X)=\Rep(\check{G})\otimes Shv(X)\in CAlg(Shv(X)-mod)
$$ 
and $C(X)=\underset{\lambda\in\Lambda^*}{\oplus} E(X)$ viewed as a $\Lambda^*$-graded non-unital commutative algebra in $Shv(X)-mod$. By Proposition~\ref{Pp_E.1.22},
$\Fact(E)\otimes_{Shv(\Ran)} Shv(\Conf)\,\iso\, \Fact(C)$ canonically in $Shv(\Conf)-mod$. 

 Set 
$$
\Sph_{G,\Conf}=\prod_{\lambda\in\Lambda^*} Shv(\gL^+(G)_{\lambda}\backslash\Gr_{G, X^{\lambda}}).
$$
Using Corollary~\ref{Cor_E.3.7} apply the base change $\cdot\otimes_{Shv(\Ran)} Shv(\Conf)$ to the Satake functor 
$$
\Sat_{G,\Ran}: \Fact(\Rep(\check{G}))\to \Sph_{G,\Ran}
$$ 
given by (\ref{functor_Sat_G_Ran}). The result is the functor
\begin{equation}
\label{Satake_functor_Conf}
\Sat_{G,\Conf}: \Fact(C)\to \Sph_{G,\Conf},
\end{equation}
which is a version of the Satake functor for the configuration space. 

\sssec{} Recall that $(Shv(\Ran), \otimes^{ch})$ is an object of $CAlg^{nu}(\DGCat_{cont})$, and similarly for $Shv(\Conf)$. 

 Namely, for $I\in fSets$ write $\Ran^I_d\subset\Ran^I$ for the open part classifying collections $\cI_i, i\in I$ such that if $i\ne i'\in I$ then $\cI_i$ and $\cI_{i'}$ are disjoint. The subscript $d$ stands for `disjoint'. Write $\can:\und{\Conf}\to\Ran$ for the natural map defined  as in Remark~\ref{Rem_E.1.25_now}. Set 
$$
\und{\Conf}^I_d= \und{\Conf}^I\times_{\Ran^I} \Ran^I_d.
$$ 
For $I\in fSets$ we have the commutative diagram
$$
\begin{array}{ccccc}
\Ran^I & \getsup{v} & \Ran^I_d &\toup{u} & \Ran\\
\uparrow\lefteqn{\scriptstyle\can^I} && \uparrow && \uparrow\lefteqn{\scriptstyle\can}\\
\und{\Conf}^I & \getsup{v_c} & \und{\Conf}^I_d & \toup{u_c} & \und{\Conf},
\end{array}
$$
where both squares are cartesian. Here $u$ is the union of sets, $u_c$ is the sum of divisors, and $v, v_c$ are the natural inclusions (the subscript $c$ stands for configurations). 

 For $K_i\in Shv(\Ran)$, $i\in I$ their chiral product is given by $\underset{i\in I}{\otimes}^{ch} K_i=u_*v^!(\underset{i\in I}{\boxtimes} K_i)$. For $F_i\in Shv(\und{\Conf})$ their chiral product is given by $\underset{i\in I}{\otimes}^{ch} F_i=(u_c)_*(v_c)^!(\underset{i\in I}{\boxtimes} F_i)$. We see that $\can^!(\underset{i\in I}{\otimes}^{ch} K_i)\,\iso\, \underset{i\in I}{\otimes}^{ch}(\can^! K_i)$ canonically. This yields the following.
 
\begin{Lm} The functor $\can^!: (Shv(\Ran), \otimes^{ch})\to (Shv(\Conf), \otimes^{ch})$ is non-unital symmetric monoidal. \QED
\end{Lm}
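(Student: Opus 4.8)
The plan is to equip $\can^!$ with a (necessarily strict) right-lax symmetric monoidal structure directly from the geometric correspondences defining the two chiral products, and then to upgrade the resulting family of isomorphisms to the full homotopy-coherent datum using the Day-convolution formalism of Section~\ref{Sect_2.3.6_now}. Concretely, a non-unital symmetric monoidal structure on $\can^!$ amounts to a natural isomorphism $\can^!(\underset{i\in I}{\otimes}^{ch} K_i)\,\iso\,\underset{i\in I}{\otimes}^{ch}(\can^! K_i)$ for each $I\in fSets$, together with a coherent system of compatibilities for compositions of surjections $I\to J$.

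First I would build the pointwise isomorphism. For $I\in fSets$ one has $\Ran^I=\Ran^{\times I}$, $\und{\Conf}^I=\und{\Conf}^{\times I}$ and $\can^I=\can^{\times I}$, so $!$-pullback along $\can^I$ is compatible with exterior products: $(\can^I)^!(\underset{i\in I}{\boxtimes} K_i)\,\iso\,\underset{i\in I}{\boxtimes}\can^! K_i$ naturally in $K_i\in Shv(\Ran)$. Since the left square in the displayed diagram commutes and $v$, $v_c$ are open immersions, one gets $(\can^I_d)^!\comp v^!\,\iso\,v_c^!\comp(\can^I)^!$, where $\can^I_d\colon\und{\Conf}^I_d\to\Ran^I_d$ denotes the restriction of $\can^I$. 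Finally, as the right square is cartesian and the union map $u$ (resp. the sum-of-divisors map $u_c$) is pseudo-proper, proper base change gives $\can^!\comp u_*\,\iso\,(u_c)_*\comp(\can^I_d)^!$. Composing the three yields a canonical isomorphism
\[
\can^!(\underset{i\in I}{\otimes}^{ch} K_i)=\can^! u_* v^!(\underset{i\in I}{\boxtimes} K_i)\,\iso\,(u_c)_* v_c^!(\underset{i\in I}{\boxtimes}\can^! K_i)=\underset{i\in I}{\otimes}^{ch}(\can^! K_i).
\]

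It then remains to organize these isomorphisms into a non-unital symmetric monoidal structure, i.e. to supply the coherences for compositions of surjections and all higher compatibilities. I would do this exactly as in Section~\ref{Sect_2.3.6_now}: both $(Shv(\Ran),\otimes^{ch})$ and $(Shv(\Conf),\otimes^{ch})$ arise by applying the non-unital symmetric monoidal functor $\colim\colon\Fun(\cTw(fSets),\DGCat_{cont})\to\DGCat_{cont}$, with $\Fun(\cTw(fSets),\DGCat_{cont})$ carrying the Day convolution for the operad $\Surj$, to explicit functors built from the correspondences $\Ran^\bullet_d\hookrightarrow\Ran^\bullet$, $\Ran^\bullet_d\to\Ran$ (resp. $\und{\Conf}^\bullet_d\hookrightarrow\und{\Conf}^\bullet$, $\und{\Conf}^\bullet_d\to\und{\Conf}$). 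The triple $(\can,\can^\bullet_d,\can^\bullet)$, with the three compatibilities above, defines a morphism between the associated right-lax non-unital symmetric monoidal functors; applying $CAlg^{nu}(\Fun(\cTw(fSets),\DGCat_{cont}))\to CAlg^{nu}(\DGCat_{cont})$ to this morphism produces the desired non-unital symmetric monoidal structure on $\can^!$, with structure maps the isomorphisms just constructed.

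The main obstacle — and the only point requiring genuine care — is the naturality of the base-change isomorphism $\can^!\comp u_*\,\iso\,(u_c)_*\comp(\can^I_d)^!$ in $I\in\cTw(fSets)$, that is, its compatibility with $!$-restriction along the closed diagonals and with the transition maps of the defining diagrams; once this bookkeeping is set up the verification is formal and parallels the proof of \lemref{Lm_2.1.2_about_Fact(C)} and the constructions of Section~\ref{Sect_Non-unital symmetric monoidal structure_Fact(C)(Ran)}.
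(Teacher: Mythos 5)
Your proof is correct and follows essentially the same route as the paper: the paper obtains the lemma from exactly the pointwise identification $\can^!(\underset{i\in I}{\otimes}^{ch} K_i)\,\iso\,\underset{i\in I}{\otimes}^{ch}(\can^! K_i)$ read off from the displayed diagram whose right square is cartesian, just as in your first paragraph. The paper leaves the higher coherences implicit, so your additional Day-convolution bookkeeping (in the style of Section~\ref{Sect_2.3.6_now}) is a harmless elaboration rather than a different argument.
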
 
 
\sssec{} Recall the definition of the chiral product on $\Sph_{G,\Ran}$. View  $\gL^+(G)_{\Ran}\backslash \Gr_{G,\Ran}$ as the stack classifying $\cI\in \Ran$, $G$-torsors $\cF_G, \cF'_G$ on $\cD_{\cI}$ together with an isomorphism 
$$
\cF_G\,\iso\, \cF'_G\mid_{\oo{\cD}_{\cI}}. 
$$

 Let $I\in fSets$. Set 
$$
(\gL^+(G)_{\Ran}\backslash\Gr_{G,\Ran})^I_d=(\gL^+(G)_{\Ran}\backslash\Gr_{G,\Ran})^I\times_{\Ran^I} \Ran^I_d.
$$ 
Consider the diagram
$$
(\gL^+(G)_{\Ran}\backslash\Gr_{G,\Ran})^I\;\getsup{v}\; (\gL^+(G)_{\Ran}\backslash\Gr_{G,\Ran})^I_d\;\toup{u} \; \gL^+(G)_{\Ran}\backslash\Gr_{G,\Ran},
$$
where by abuse of notations, we write $v$ for the natural map, and $u$ is the morphism sending 
$$
(\cF^i_G, {\cF'}^i_G, \cI_i, \; \beta_i: \cF^i_G\,\iso\, {\cF'}^i_G\mid_{\oo{\cD}_{\cI_i}})_{i\in I}
$$ 
to the collection
$(\cI, \cF_G, \cF'_G, \beta: \cF_G\,\iso\,\cF'_G\mid_{\oo{\cD}_{\cI}})$, where $\cI=\cup_{i\in I} \cI_i$ so that $\cD_{\cI}\,\iso\, \sqcup_{i\in I} \cD_{\cI_i}$ and 
$$
\oo{\cD}_{\cI}\,\iso\, \sqcup_{i\in I} \oo{\cD}_{\cI_i}.
$$ 
Here $\cF_G$ (resp., $\cF'_G$) is the $G$-torsor on $\cD_{\cI}$ whose restriction to $\cD_{\cI_i}$ is $\cF^i_G$ (resp., ${\cF'}^i_G$), and $\beta\mid_{\oo{\cD}_{\cI_i}}=\beta_i$ for $i\in I$.  Given $K_i\in \Sph_{G, \Ran}$ for $i\in I$ their chiral product is defined as 
$$
\underset{i\in I}{\otimes}^{ch} \, K_i=u_*v^!(\underset{i\in I}{\boxtimes} K_i).
$$ 

One similarly defines the chiral non-unital symmetric monoidal structure $\otimes^{ch}$ on the category 
$$
\Shv(\gL^+(G)_{\und{\Conf}}\backslash \Gr_{G, \und{\Conf}})\,\iso\, \Sph_{G,\Conf}.
$$
 
  Denote by
$$
\can_{\Gr}: \gL^+(G)_{\und{\Conf}}\backslash \Gr_{G, \und{\Conf}}\to \gL^+(G)_{\Ran}\backslash\Gr_{G,\Ran}
$$ 
the map obtained from $\can: \und{\Conf}\to \Ran$ by the base change $\gL^+(G)_{\Ran}\backslash\Gr_{G,\Ran}\to\Ran$.
 
\begin{Pp} 
\label{Pp_E.4.12}
The functor $\can^!_{\Gr}: (\Sph_{G,\Ran}, \otimes^{ch})\to (\Sph_{G,\Conf}, \otimes^{ch})$ is nonunital symmetric monoidal.
\end{Pp}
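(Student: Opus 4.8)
The plan is to run the argument behind the lemma preceding Proposition~\ref{Pp_E.4.12} (for $\can^!$ on $Shv$) one level up, over the affine Grassmannians. Recall that $\can_{\Gr}\colon \gL^+(G)_{\und{\Conf}}\backslash \Gr_{G,\und{\Conf}}\to \gL^+(G)_{\Ran}\backslash\Gr_{G,\Ran}$ is, by its definition in Section~\ref{Sect_E.3}, the base change of $\can\colon \und{\Conf}\to\Ran$ along the structure map $\gL^+(G)_{\Ran}\backslash\Gr_{G,\Ran}\to\Ran$; in particular $\can_{\Gr}^!$ is just $!$-pullback along a morphism of prestacks and $\Sph_{G,\Conf}\,\iso\,\Shv(\gL^+(G)_{\und{\Conf}}\backslash\Gr_{G,\und{\Conf}})$. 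The chiral products on both sides are defined, for $I\in fSets$, by $\underset{i\in I}{\otimes}^{ch}K_i=u_*v^!(\underset{i\in I}{\boxtimes}K_i)$ (resp.\ $u_{c\,*}v_c^!$), with $v,u$ and $v_c,u_c$ the legs of the spans recalled just before the statement. So it suffices to produce, naturally in the $K_i$ and compatibly with the surjections $I\to I'$, canonical isomorphisms $\can_{\Gr}^!\big(\underset{i\in I}{\otimes}^{ch}K_i\big)\,\iso\,\underset{i\in I}{\otimes}^{ch}\big(\can_{\Gr}^!K_i\big)$, together with the coherences upgrading this family to a non-unital symmetric monoidal structure on $\can_{\Gr}^!$.

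The key step I would carry out first is to check that, for each $I$, the two spans are base changes of one another along $\can_{\Gr}$: namely that the square with horizontal arrows $u$, $u_c$ and vertical arrows $\can_{\Gr}$, $(\can_{\Gr})^{\times I}$ is cartesian, and likewise for $v$, $v_c$. Both reductions are immediate from the diagram displayed just before Proposition~\ref{Pp_E.4.12}, whose right-hand square ($u$, $u_c$, $\can$, $\can^I$) is cartesian, after base change along $\gL^+(G)_{\Ran}\backslash\Gr_{G,\Ran}\to\Ran$; the $v$-square is cartesian because $\und{\Conf}^I_d=\und{\Conf}^I\times_{\Ran^I}\Ran^I_d$ and the Grassmannian versions $(\gL^+(G)_{\und\Conf}\backslash\Gr_{G,\und\Conf})^I_d$, $(\gL^+(G)_{\Ran}\backslash\Gr_{G,\Ran})^I_d$ are defined by the analogous fibre products. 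Granting this, ordinary base change — using that $u$, $u_c$ are pseudo-proper (\cite{Ga}, 1.5), exactly as in the preceding lemma — gives $\can_{\Gr}^!u_*\,\iso\,u_{c\,*}\can_{\Gr}^!$ over $\Ran^I_d$; combining it with $\can_{\Gr}^!v^!\,\iso\,v_c^!(\can_{\Gr})^{\times I\,!}$ and with the external Künneth identity $(\can_{\Gr})^{\times I\,!}(\underset{i\in I}{\boxtimes}K_i)\,\iso\,\underset{i\in I}{\boxtimes}\can_{\Gr}^!K_i$ yields the desired isomorphisms.

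To assemble these into a genuine non-unital symmetric monoidal functor I would invoke the formalism of (\cite{R}, Section~6): the chiral symmetric monoidal structures on $\Sph_{G,\Ran}$ and on $\Sph_{G,\Conf}$ are instances of the structure attached functorially to a pseudo-indscheme over $\Ran$ (resp.\ over $\und{\Conf}$) carrying a union (resp.\ sum) operation, and $\can_{\Gr}$ over $\can$ is a morphism of such data; the cartesianness checked above is precisely the hypothesis under which $\can_{\Gr}^!$ inherits a right-lax symmetric monoidal structure, strict because those squares are cartesian rather than merely commutative. The equivariant case reduces to the non-equivariant one by passing to the stack quotients by $\gL^+(G)_{\Ran}$ and its pullback, as throughout Section~\ref{Sect_E.3}. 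I expect the main obstacle to be purely organisational: tracking the pseudo-proper base change and the external Künneth isomorphism through the group quotients and through the homotopy-coherent system of compatibilities for compositions $I\to I'$, and — if one prefers to be self-contained rather than cite \cite{R} — reproving in this setting that a $\can_{\Gr}$-compatible system of cartesian span squares produces a lax (here strict) symmetric monoidal $!$-pullback. There is no new geometric content beyond the cartesian squares, and the argument is formally identical to the one behind the lemma preceding Proposition~\ref{Pp_E.4.12}.
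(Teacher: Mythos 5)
Your argument is correct and is essentially the paper's own proof: the paper likewise exhibits the commutative diagram over $\can_{\Gr}$ whose right ($u$-)square is cartesian and deduces $\can_{\Gr}^!(\underset{i\in I}{\otimes}^{ch}K_i)\,\iso\,\underset{i\in I}{\otimes}^{ch}(\can_{\Gr}^!K_i)$, leaving the base-change and coherence bookkeeping implicit. The only cosmetic difference is your additional claim that the $v$-square is cartesian, which is true but not needed — commutativity of that square, together with pseudo-proper base change against the cartesian $u$-square and the external K\"unneth compatibility, already suffices, exactly as in your second paragraph.
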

\begin{proof} Let $I\in fSets$. We have the commutative diagram, where the right square is cartesian
$$
\begin{array}{ccccc}
(\gL^+(G)_{\Ran}\backslash\Gr_{G,\Ran})^I & \getsup{v}&  (\gL^+(G)_{\Ran}\backslash\Gr_{G,\Ran})^I_d & \toup{u} & \gL^+(G)_{\Ran}\backslash\Gr_{G,\Ran}\\
\uparrow && \uparrow && \uparrow\lefteqn{\scriptstyle\can_{\Gr}}\\
(\gL^+(G)_{\und{\Conf}}\backslash\Gr_{G,\und{\Conf}})^I & \getsup{v_c} & (\gL^+(G)_{\und{\Conf}}\backslash\Gr_{G,\und{\Conf}})^I_d &\toup{u_c} & \gL^+(G)_{\und{\Conf}}\backslash\Gr_{G,\und{\Conf}}
\end{array}
$$
So for $K_I\in \Sph_{G,\Ran}$ one has canonically 
$$
\can_{\Gr}^!
(\underset{i\in I}{\otimes}^{ch} K_i)\,\iso\, \underset{i\in I}{\otimes}^{ch}(\can_{\Gr}^! K_i).
$$ 
\end{proof}

\sssec{} Let us describe the version of the Chevalley-Cousin complex for $\Gr_{G,\und{\Conf}}$.
Let $B\in \Perv(\Gr_{G, X})^{\gL^+(G)_X}$ be $\Lambda^*$-graded $B=\oplus_{\lambda\in\Lambda^*} B_{\lambda}$. Assume $B$ is a chiral algebra on $\Gr_{G, X}$ in a way compatible with $\Lambda^*$-grading. So, for a set $I$ of two elements, we have for the diagram 
$$
(\Gr_{G, X})^I\times_{X^I} \oo{X}{}^I\,\hook{\bar j^{(I)}}\, \Gr_{G, X^I}\,\getsup{\vartriangle} \,\Gr_{G, X}
$$ 
the chiral pairing 
$$
\bar j^{(I)}_*((B[1])^{\boxtimes I}\mid_{(\Gr_{G, X})^I\times_{X^I} \oo{X}{}^I})\to \vartriangle_*B[2]
$$
compatible with $\Lambda^*$-grading and satisfying the Jacobi identity.   

 To this datum one associates the graded version of the Chevalley-Cousin complex $\cC(B)_{\und{\Conf}}$ on $\Gr_{G, \und{\Conf}}$ as follows. This is a collections of objects $\cC(B)_{X^{\lambda}}\in \Sph_{G, X^{\lambda}}$ for $\lambda\in\Lambda^*$. Since 
$$
\Gr_{G, \und{X}^{\lambda}}\,\iso\, \underset{(J, \und{\lambda})\in (fSets^{\lambda})^{op}}{\colim} \Gr_{G, X^J}, 
$$
the object $\cC(B)_{X^{\lambda}}$ is given by a compatible collection of objects $\cC(B)_{J, \und{\lambda}}\in \Sph_{G, J}$ for $(J, \und{\lambda})\in fSets^{\lambda}$. So, for maps $\phi: (J_1,\und{\lambda}^1)\to (J_2,\und{\lambda}^2)$ in $fSets^{\lambda}$ we must be given a compatible system of isomorphisms 
$$
(\vartriangle^{(J_1/J_2)})^! \cC(B)_{J_1, \und{\lambda}^1}\,\iso\, \cC(B)_{J_2, \und{\lambda}^2}.
$$ 
in $\Sph_{G, J_2}$. Given $(J, \und{\lambda})\in fSets^{\lambda}$, we set
\begin{equation}
\label{Chevalley-Cousin complex_Conf}
\cC(B)_{J, \und{\lambda}}=\underset{(I\toup{\phi} T)\in Q(I)}{\oplus} \vartriangle^{(J/T)}_*\bar j^{(T)}_*((\underset{t\in T}{\boxtimes} B_{\und{\mu}(t)}[1])\mid_{(\Gr_{G, X})^T\times_{X^T} \oo{X}{}^T})
\end{equation}
where $\und{\mu}=\phi_*\und{\lambda}$, and the corresponding maps are
$$
(\Gr_{G, X})^T\times_{X^T} \oo{X}{}^T\;\toup{\bar j^{(T)}}\;
\Gr_{G, X^T}\;\hook{\vartriangle^{(J/T)}} \;\Gr_{G, X^J}.
$$ 

 The differential in (\ref{Chevalley-Cousin complex_Conf}) is defined exactly as for the usual
Chevalley-Cousin complex in (\cite{BD_chiral}, 3.4.11). This concludes the construction of $\cC(B)_{X^{\lambda}}$.

\sssec{} Now using the notations of Section~\ref{Sect_E.4.8} take $B$ such that for each $\lambda\in\Lambda^*$, $B_{\lambda}=\cT_A$ with $A=\cO(\check{G})$ as in Section~\ref{Sect_7.3.7_now}. The resulting object $\cC(B)_{\und{\Conf}}\in\Sph_{G, \und{\Conf}}$ is the version of the chiral Hecke algebra for $\Gr_{G, \und{\Conf}}$. 

 Recall that the chiral Hecke algebra $\cC(\cT_A)\in \Fact(\Rep(\check{G}))\otimes_{Shv(\Ran)} \Sph_{G,\Ran}$ for $\Gr_{G,\Ran}$ is defined in 
Section~\ref{Sect_gap_described}. 

 We conclude that $\can_{\Gr}^! \cC(\cT_A)\in\Sph_{G,\Conf}$ identifies canonically with $\cC(B)_{\Conf}$. It gives rise to the version of the Satake functor (\ref{Satake_functor_Conf}) for $\Conf$ described above.
 
\sssec{} Similarly to Section~\ref{Sect_ext_convolution_precisely}, we define the exteriour convolution for $\Sph_{G,\Conf}$ as follows. 

 Consider the diagram
$$
(\gL^*(G)_{\Conf}\backslash \Gr_{G, \Conf})\times (\gL^*(G)_{\Conf}\backslash \Gr_{G, \Conf})\;\getsup{p}\; \Conv_{G,\Conf^2}\;\toup{m}\; \gL^*(G)_{\Conf}\backslash \Gr_{G, \Conf}
$$
Here the stack $\Conv_{G,\Conf^2}$ classifies $D, D'\in \Conf$, $G$-torsors $\cF_G,\cF'_G,\cF''_G$ on $\cD_{D+D'}$ together with isomorphisms 
$$
\beta: \cF_G\,\iso\,\cF'_D\mid_{\cD_{D+D'}-\supp(D)}, \;\;\; \beta': \cF'_G\,\iso\,\cF''_D\mid_{\cD_{D+D'}-\supp(D')}
$$
The map $m$ sends this point to $D+D'\in \Conf$, $G$-torsors $\cF_G,\cF''_G$ on $\cD_{D+D'}$ together with the isomorphism $\beta'\beta: \cF_G\,\iso\,\cF''_D\mid_{\oo{\cD}_{D+D'}}$. The map $p$ sends the above point to
$$
(D, \cF_G\mid_{\cD_D}, \cF'_G\mid_{\cD_D}, \bar\beta: \cF_G\,\iso\,\cF'_D\mid_{\oo{\cD}_D})\in \gL^*(G)_{\lambda}\backslash \Gr_{G, X^{\lambda}},
$$
$$
(D', \cF'_G\mid_{\cD_{D'}}, \cF''_G\mid_{\cD_{D'}}, \bar\beta: \cF'_G\,\iso\,\cF''_D\mid_{\oo{\cD}_{D'}})\in \gL^*(G)_{\lambda'}\backslash \Gr_{G, X^{\lambda'}}.
$$

 The convolution of $K, K'\in \Sph_{G,\Conf}$ is defined as
$$
K\star K'=m_*p^!(K\boxtimes K'). 
$$
One may also replace $\Conf$ by $\und{\Conf}$ in the above definition.

\begin{Pp} 
\label{Pp_E.4.16}
The functor $\can^!_{\Gr}: (\Sph_{G,\Ran}, \star)\to (\Sph_{G,\Conf}, \star)$ is nonunital monoidal.
\end{Pp}
\begin{proof}
The base change of 
$$
\Conv_{G,\Ran^2}\toup{m} \gL^+(G)_{\Ran}\backslash \Gr_{G,\Ran}
$$
by $\can: \und{\Conf}\to\Ran$ is denoted
$$
\Conv_{G,\und{\Conf}^2}\toup{m} \gL^+(G)_{\und{\Conf}}\backslash\Gr_{G,\und{\Conf}}
$$
Then the base change of
$$
\Conv_{G,\Ran^2}\toup{p}
(\gL^+(G)_{\Ran}\backslash \Gr_{G,\Ran})\times (\gL^+(G)_{\Ran}\backslash \Gr_{G,\Ran})
$$
by $\can^2: \und{\Conf}^2\to\Ran^2$ identifies canonically with
$$
\Conv_{G,\und{\Conf}^2}\toup{p} (\gL^+(G)_{\und{\Conf}}\backslash\Gr_{G,\und{\Conf}})\times (\gL^+(G)_{\und{\Conf}}\backslash\Gr_{G,\und{\Conf}})
$$
For $K, K'\in\Sph_{G,\Ran}$ one gets $\can^!_{\Gr}(K\star K')\,\iso\, (\can^!_{\Gr}K)\star (\can^!_{\Gr} K')$ canonically. 
\end{proof}
 
\ssec{Factorization coalgebras}
\label{Sect_Factorization coalgebras}

\sssec{} Let $C\in CAlg^{nu}(\DGCat_{cont})$ and $C(X)=C\otimes Shv(X)$. Let $A\in CoCAlg^{nu}(C)$. Consider the functor 
$$
\cF_{A,\Ran}: \cTw(fSets)^{op}\to \Fact(C)
$$ 
defined as follows. 

 For $J\in fSets$ we write $A^{\otimes J}\in C^{\otimes J}$ for the exteriour power of $A$. For a map $I\to J$ in $fSets$ we get the coproduct map $com: A^{\otimes J}\to m(A^{\otimes I})$ in $C^{\otimes J}$. Now $\cF_{A,\Ran}$ sends $(J\to K)\in \cTw(fSets)$ to the image of $A^{\otimes J}\boxtimes e_{X^K}$ under $C^{\otimes J}\otimes Shv(X^K)\to \Fact(C)$. Given a morphism (\ref{map_in_cTw(fSets)}) in $\cTw(fSets)$ recall the transition map in $\cF_{\Ran, C}$ given as 
$$
m\otimes \vartriangle_*: C^{\otimes J_1}\otimes Shv(X^{K_1})\to C^{\otimes J_2}\otimes Shv(X^{K_2})
$$ 
for $\vartriangle: X^{K_1}\to X^{K_2}$. We get a morphism in $C^{\otimes J_2}\otimes Shv(X^{K_2})$ given as the composition
$$
A^{\otimes J_2}\boxtimes e_{X^{K_2}}\to 
A^{\otimes J_2}\boxtimes \vartriangle_*e_{X^{K_1}}\toup{com} m(A^{\otimes J_1})\boxtimes \vartriangle_*e_{X^{K_1}},
$$  
hence also a morphism in $\Fact(C)$. Set
$$
\Fact^{coalg}(A)=\underset{(J\to K)\in\cTw(fSets)^{op}}{\lim} \cF_{A,\Ran}
$$ 
taken in $\Fact(C)$. 

\sssec{Example} 
\label{Sect_example_constant_sheaf}
Take $C=\Vect$ and $A=e\in C$ with the evident structure of a coalgebra. The projection $\cTw(fSets)\to fSets^{op}$, $(J\to K)\mapsto K$ is cofinal. So, we get 
$$
\Fact^{coalg}(e)\,\iso\, \underset{K\in fSets}{\lim} e_{X^K},
$$
in $Shv(\Ran)$, where the functor $fSets\to \Fact(C)$ is as follows. If $K\to K'$ is a map in $fSets$ then for $\vartriangle: X^{K'}\to X^K$ the corresponding ransition map is $e_{X^K}\to \vartriangle_*e_{X^{K'}}$. 

\sssec{} Let $I\in fSets$. Consider the functor $\cF_{A, I}: \Tw(I)^{op}\to C_{X^I}$ defined as follows. It sends $(I\to J\to K)$ to the image of $A^{\otimes J}\boxtimes e_{X^K}$ under $C^{\otimes J}\otimes Shv(X^K)\to C_{X^I}$. Here we view $C_{X^I}$ as $\underset{\Tw(I)}{\colim} \cF_{I, C}$. The transition maps are defined as for $\cF_{A,\Ran}$.

 Set $A_{X^I}=\underset{Tw(I)^{op}}{\lim} \cF_{A, I}$. 

\sssec{} Let $f: I\to I'$ be a map in $fSets$ and $\vartriangle: X^{I'}\to X^I$ the corresponding diagonal. Let us construct a natural morphism 
$$
A_{X^I}\to \vartriangle_* A_{X^{I'}}
$$ 
in $C_{X^I}$. 
 We have the full embedding $\Tw(I')\subset \Tw(I)$ sending $(I'\to J'\to K')$ to $(I\to J'\to K')$ as in Section~\ref{Sect_2.1.5_now}. Since $\vartriangle_*: C_{X^{I'}}\to C_{X^I}$ is an exact functor, we get a canonical morphism
$$
A_{X^I}\to \underset{(I'\to J'\to K')\in\Tw(I')}{\lim} \vartriangle_*(A^{\otimes J'}\boxtimes e_{X^{K'}})\,\iso\,\vartriangle_* A_{X^{I'}}
$$ 
in $C_{X^I}$ by functoriality of right Kan extension. 

 As a result, we get a functor 
$$
\cF_{A, fSets}: fSets\to \Fact(C)
$$ 
sending $I$ to the image of $A_{X^I}$ under $C_{X^I}\to \Fact(C)$. It sends a morphism $f: I\to I'$ to the above map $A_{X^I}\to \vartriangle_* A_{X^{I'}}$.

\begin{Lm} There is a canonical isomoprhism
\begin{equation}
\label{map_from_Fact^coalg(A)}
\Fact^{coalg}(A)\,\iso\, \underset{fSets}{\lim} \cF_{A, fSets}
\end{equation}
in $\Fact(C)$. 
\end{Lm}
\begin{proof}  This is similar to Lemma~\ref{Lm_2.1.2_about_Fact(C)}. As in the proof of Lemma~\ref{Lm_2.1.2_about_Fact(C)}, we have the cartesian fibration $\zeta^{op}: \cY_{Tw}^{op}\to \cTw(fSets)^{op}$. The right Kan extension of $\cF_{A,\Ran}\comp \zeta^{op}$ along $\zeta^{op}$ identifies canonically with $\cF_{A,\Ran}$, so 
$$
\underset{\cY_{Tw}^{op}}{\lim} \cF_{A,\Ran}\comp \zeta^{op}\,\iso\, \underset{\cTw(fSets)}{\lim}  \cF_{A, \Ran}=\Fact^{coalg}(A). 
$$

Since $\cY_{Tw}\to fSets^{op}$, $(I\to J\to K)\mapsto I$ is a cocartesian fibration, we get using (\cite{G}, I.1, 2.2.4)
$$
\underset{\cY_{Tw}^{op}}{\lim} \, \cF_{A,\Ran}\comp \zeta^{op}\,\iso\, \underset{I\in fSets}{\lim}\;\;\,  \underset{(I\to J\to K)\in \Tw(I)^{op}}{\lim} \cF_{A,\Ran}(J\to K)
$$
as desired.
\end{proof}

\begin{Rem} Let $\phi: I\to I'$ be a map in $fSets$. Arguing as in Section~\ref{Sect_2.1.11_now}, one shows that $A_{X^I}\mid_{X^I_{\phi, d}}$ factorizes canonically, that is, one has a canonical isomorphism
$$
A_{X^I}\mid_{X^I_{\phi, d}}\,\iso\, (\underset{i'\in I'}{\boxtimes} A_{X^{I_{i'}}})\mid_{X^I_{\phi, d}}.
$$
in $C_{X^I}\mid_{X^I_{\phi, d}}$. 
\end{Rem}

\sssec{} In (\cite{Ga}, Section 7) Gaitsgory introduced the Verdier duality functor 
$$
\DD:Shv(\Ran)\to Shv(\Ran)^{op},
$$ 
which works for both constructible context and $\cD$-modules. This functor preserves colimits.

  For $I\in fSets$ let $\vartriangle^I: X^I\to\Ran$ be the structure map, so $\omega_{\Ran}\,\iso\, \underset{I\in fSets^{op}}{\colim} (\vartriangle^I)_!\omega_{X^I}$. Recall that $\Ran$ is a pseudo-scheme with finitary diagonal in the sense of (\cite{Ga}, 7.4.9), so (\cite{Ga}, 7.5.2) applies and yields an isomorphism
$$
\DD(\omega_{\Ran})\,\iso\, \underset{I\in fSets}{\lim} (\vartriangle^I)_! e_{X^I}.
$$
So, the object $\Fact^{coalg}(e)$ from Section~\ref{Sect_example_constant_sheaf} identifies with $\DD(\omega_{\Ran})$.  By (\cite{Ga}, Section 0.4.1, footnote 2), $\DD(\omega_{\Ran})=0$. So, in the example of Section~\ref{Sect_example_constant_sheaf} we get $\Fact^{coalg}(e)=0$. 
 
\sssec{} The commutative chiral coproduct for $\Fact^{coalg}(A)$ in this generality does not seem to exist. 

 In details, let us equip $\cTw(fSets)$ with the non-unital symmetric monoidal structure sending $(J_1\to K_1), (J_2\to K_2)$ to $(J\to K)$ with $J=J_1\sqcup J_2, K=K_1\sqcup K_2$. Let us equip $\Fact(C)$ with the $\star$-nonunital symmetric monoidal structure from Section~\ref{Sect_2.3.6_now}. Then the functor $\cF_{A,\Ran}$ is non-unital symmetric monoidal with respect to these structures. So, $\cF_{A,\Ran}^{op}: \cTw(fSets)\to \Fact(C)^{op}$ is also non-unital symmetric monoidal for the induced structures. 
 
 We can not conclude that $\colim \cF_{A,\Ran}^{op}$ is naturally an object of $CAlg^{nu}(\Fact(C)^{op},\star)$, as we need to know that the $\star$-product in $\Fact(C)^{op}$ preserves $\cTw(fSets)$-indexed colimits separately in each variable. 
 
  We think $\Fact^{coalg}(A)$ in this generality is not a reasonable object, there seems no reason for it to factorize over $\Ran$. 
  
\sssec{} 
\label{Sect_E.5.9_now}
For the rest of Section~\ref{Sect_Factorization coalgebras} assume that $C\in CAlg^{nu}(\DGCat_{cont})$ is graded $C=\underset{\lambda\in\Lambda^*}{\oplus} C_{\lambda}$ by $\Lambda^*$ in a way compatible with the non-unital symmetric monoidal structure. Our purpose is to show that under this additional assumption for $A\in CoCAlg^{nu}(C)$ the object $\Fact^{coalg}(A)\in\Fact(C)$ is reasonable. Namely, we equip it with a structure of a factorization coalgebra in $\Fact(C)$ in the sense of \cite{FG}. 

\sssec{} Write $A=\underset{\lambda\in\Lambda^*}{\oplus} A_{\lambda}$ with $A_{\lambda}\in C_{\lambda}$. Then $\Fact^{coalg}(A)$ inherits a $\Lambda^*$-grading. Namely, for $\lambda\in\Lambda^*$ we get the functor 
$$
^{\lambda}\cF_{A,\Ran}: \cTw(fSets)^{op}\to \Fact(C)_{\lambda}
$$ 
sending $(J\to K)$ to 
$$
\underset{\und{\lambda}: J\to \Lambda^*, \sum_j \und{\lambda}(j)=\lambda}{\oplus} (\underset{j\in J}{\otimes} A_{\und{\lambda}(j)})\boxtimes e_{X^K}.
$$ 
This is precisely the composition $\cTw(fSets)^{op}\to \Fact(C)\toup{\pr}\Fact(C)_{\lambda}$, where $\pr$ is the projection on the $\lambda$-component. Now
$$
\Fact^{coalg}(A)_{\lambda}\,\iso\,\underset{(J\to K)\in\cTw(fSets)}{\lim} {^{\lambda}\cF_{A,\Ran}}.
$$

\sssec{} For $\lambda\in\Lambda^*$ recall the category $\cTw(fSets)^{\lambda}$ from Section~\ref{Sect_E.1.2_now}. Define the functor
$$
\cF^{\lambda}_{A,\Ran}: (\cTw(fSets)^{\lambda})^{op}\to \Fact(C)_{\lambda}
$$
sending $(\und{\lambda}, J\to K)$ to $(\underset{j\in J}{\otimes} A_{\und{\lambda}(j)})\boxtimes e_{X^K}$. The transition map for a morphism $(\und{\lambda}^1, J_1\to K_1)\to (\und{\lambda}^2, J_2\to K_2)$ is  the composition
$$
(\underset{j\in J_2}{\otimes} A_{\und{\lambda}^2(j)})\otimes e_{X^{K_2}}\to (\underset{j\in J_2}{\otimes} A_{\und{\lambda}^2(j)})\otimes \vartriangle_*e_{X^{K_1}}
\to m(\underset{j\in J_1}{\otimes} A_{\und{\lambda}^1(j)})\otimes \vartriangle_*e_{X^{K_1}},
$$
where the second map is the comultiplication composed with the projection on the summand attached to $\und{\lambda}^1$ in $m(A^{\otimes J_1})$. 

\begin{Lm} One has canonically
$$
\Fact^{coalg}(A)_{\lambda}\,\iso\, \underset{(\cTw(fSets)^{\lambda})^{op}}{\lim} \cF^{\lambda}_{A,\Ran}.
$$
\end{Lm}
\begin{proof}
The projection $\cTw(fSets)^{\lambda}\to \cTw(fSets)$ forgetting $\und{\lambda}$ is a cocartesian fibration. So, by (\cite{G}, ch. I.1, 2.2.4), for any 
$(J\to K)\in\cTw(fSets)$ the functor 
$$
(\cTw(fSets)^{\lambda})_{(J\to K)}\to \cTw(fSets)^{\lambda}\times_{\cTw(fSets)} \cTw(fSets)_{/(J\to K)}
$$ 
is cofinal, where the LHS stands for the fibre. This shows that the right Kan extension of $\cF^{\lambda}_{A,\Ran}$ along $(\cTw(fSets)^{\lambda})^{op}\to \cTw(fSets)^{op}$ is the functor sending $(J\to K)$ to
$$
\underset{\und{\lambda}: J\to \Lambda^*, \sum_j \und{\lambda}(j)=\lambda}{\oplus} (\underset{j\in J}{\otimes} A_{\und{\lambda}(j)})\boxtimes e_{X^K}=(A^{\otimes J})_{\lambda}\boxtimes e_{X^K}.
$$
Thus, the right Kan extension of $\cF^{\lambda}_{A,\Ran}$ along $(\cTw(fSets)^{\lambda})^{op}\to \cTw(fSets)^{op}$ identifies with $^{\lambda}\cF_{A,\Ran}$. 
\end{proof}

\sssec{Example} 
\label{Sect_E.5.13_now}
Take $C=\underset{\lambda\in\Lambda^*}{\oplus} \Vect$. For $\lambda\in\Lambda^*$ let $X^{\lambda}$ be the moduli scheme of $\Lambda^*$-valued divisors of degree $\lambda$. By Proposition~\ref{Pp_E.1.5}, $\Fact(C)_{\lambda}\,\iso\, Shv(X^{\lambda})$ canonically. 

 Take $A=\underset{\lambda\in\Lambda^*}{\oplus} e\in C$. We may think of $A$ as the space of functions on the semigroup $\Lambda^*$. The product $\Lambda^*\times\Lambda^*\to \Lambda^*$ yields via restriction the cocommutative comultiplication map $\com: A\to A\otimes A$, so $A\in CoCAlg^{nu}(C)$. 
 
\begin{Lm} For $\lambda\in\Lambda^*$ in the example of Section~\ref{Sect_E.5.13_now} one has canonically
$$
\Fact^{coalg}(A)_{\lambda}\,\iso\, e_{X^{\lambda}}
$$ 
in $Shv(X^{\lambda})$. 
\end{Lm}
\begin{proof} Recall the functor $h: \cTw(fSets)^{\lambda}\to (fSets^{\lambda})^{op}$ given by (\ref{functor_h_Sect_E.1.9}). By Lemma~\ref{Lm_E.1.7}, $h$ is cofinal. For $(\und{\mu}, K)\in fSets^{\lambda}$ recall the map $s^{\und{\mu}}: X^K\to X^{\lambda}$ from Section~\ref{Sect_E.1.12_now}. 

 For $\lambda\in\Lambda^*$ we get
$$
\Fact^{coalg}(A)\,\iso\, \underset{(\und{\lambda}, J\toup{\phi} K)\in (\cTw(fSets)^{\lambda})^{op}}{\lim} (s^{\phi_*\und{\lambda}})_!e_{X^K}\,\iso\,
\underset{(\und{\mu}, K)\in fSets^{\lambda}}{\lim} s^{\und{\mu}}_!e_{X^K}
$$ 
in $Shv(X^{\lambda})$. The category $fSets^{\lambda}$ is finite. For $(\und{\mu}, K)\in fSets^{\lambda}$, 
$$
(s^{\und{\mu}})_!e_{X^K}\,\iso\, \DD(s^{\und{\mu}}_*\omega_{X^K})
$$ 
in $Shv(X^{\lambda})^c$. Thus, 
$$
\Fact^{coalg}(A)\,\iso\, \DD(\underset{(\und{\mu}, K)\in (fSets^{\lambda})^{op}}
{\colim} s^{\und{\mu}}_!\omega_{X^K}).
$$
in $Shv(X^{\lambda})^c$. By 
Proposition~\ref{Pp_E.1.23_now}, $\underset{(\und{\mu}, K)\in (fSets^{\lambda})^{op}}
{\colim} s^{\und{\mu}}_!\omega_{X^K}\,\iso\, \omega_{X^{\lambda}}$.
\end{proof}

\sssec{Commutative chiral coproduct} The big advantage of the graded setting is that for $\lambda\in\Lambda^*$ the category $\cTw(fSets)^{\lambda}$ is finite, and exact functors preserve finite limits. 

In the setting of Section~\ref{Sect_E.5.9_now} define the commutative chiral coproduct on $\Fact^{coalg}(A)$ as follows. Given $\lambda\in\Lambda^*$ we need to define a morphism
$$
\Fact^{coalg}(A)_{\lambda}\to \underset{\lambda_1+\lambda_2=\lambda, \lambda_i\in\Lambda^*}{\oplus} \Fact^{coalg}(A)_{\lambda_1}\star \Fact^{coalg}(A)_{\lambda_2},
$$
where $\star$ is the non-unital symmetric monoidal structure on $\Fact(C)$ from Section~\ref{Sect_2.3.6_now}. 

 In a stable category a finite coproduct is the finite product, so we may replace in the latter formula $\oplus$ by $\prod$. For $\lambda_i\in\Lambda^*$ and $\lambda=\lambda_1+\lambda_2$ it suffices to define the morphism
\begin{equation}
\label{chiral_coproduct_for_coalgebras_map}
\Fact^{coalg}(A)_{\lambda}\to \Fact^{coalg}(A)_{\lambda_1}\star \Fact^{coalg}(A)_{\lambda_2}.
\end{equation}

Since $\star$ preserves finite limits separately in each variable,
\begin{equation}
\label{formula_for_star_product_Sect_E.5.15}
\Fact^{coalg}(A)_{\lambda_1}\star \Fact^{coalg}(A)_{\lambda_2}\,\iso\, 
\underset{
\begin{array}{cc}
\scriptstyle{(\und{\lambda}^1, J_1\to K_1)\in \cTw(fSets)^{\lambda_1, op}}\\
\scriptstyle{(\und{\lambda}^2, J_2\to K_2)\in \cTw(fSets)^{\lambda_2, op}}
\end{array}}
{\lim}
(\underset{j\in J}{\otimes} A_{\und{\lambda}(j)})\boxtimes e_{X^K}
\end{equation}
taken in $\Fact(C)$, where we have set $J=J_1\sqcup J_2, K=K_1\sqcup K_2$, $\und{\lambda}=\und{\lambda}^1\sqcup \und{\lambda}^2$. 

 Now
$$
\Fact^{coalg}(A)_{\lambda}=\underset{(\und{\lambda}, J\to K)\in \cTw(fSets)^{\lambda, op}}{\lim} (\underset{j\in J}{\otimes} A_{\und{\lambda}(j)})\boxtimes e_{X^K}
$$
We have the functor $\cTw(fSets)^{\lambda_1}\times \cTw(fSets)^{\lambda_2}\to \cTw(fSets)^{\lambda}$ sending 
$$
((\und{\lambda}^1, J_1\to K_1), (\und{\lambda}^2, J_2\to K_2))\mapsto (\und{\lambda}, J\to K)
$$ 
given by the disjoint union $J=J_1\sqcup J_2, K=K_1\sqcup K_2$, and $\und{\lambda}=\und{\lambda}_1\sqcup\und{\lambda}_2$. So, we get the desired morphism (\ref{chiral_coproduct_for_coalgebras_map}) by functoriality of the limit. 
This equips $\Fact^{coalg}(A)$ with a structure of an object of $CoCAlg^{nu}(\Fact(C),\star)$. 

\sssec{} Consider the open subscheme $(X^{\lambda_1}\times X^{\lambda_2})_d\subset X^{\lambda_1}\times X^{\lambda_2}$ given by the property that the divisors have disjoint supports. The sum map $m_d: (X^{\lambda_1}\times X^{\lambda_2})_d\to X^{\lambda}$ is \'etale, here $\lambda=\lambda_1+\lambda_2$.
The map (\ref{chiral_coproduct_for_coalgebras_map}) yields a morphism
\begin{equation}
\label{factorization_map_coalg}
m_d^* \Fact^{coalg}(A)_{\lambda}\to \Fact^{coalg}(A)_{\lambda_1}\boxtimes \Fact^{coalg}(A)_{\lambda_2}
\end{equation}
in 
\begin{equation}
\label{category_over_open_part_graded_version}
(\Fact(C)_{\lambda_1}\boxtimes \Fact(C)_{\lambda_2})\otimes_{Shv(X^{\lambda_1}\times X^{\lambda_2})} Shv((X^{\lambda_1}\times X^{\lambda_2})_d).
\end{equation} 

\begin{Pp} The map (\ref{factorization_map_coalg}) is an isomorphism. 
\end{Pp}
\begin{proof}
Recall that for $(\und{\lambda}, J\toup{\phi} K)\in \cTw(fSets)$, $C^{\otimes J}\otimes Shv(X^K)$ is viewed as an object of $Shv(X^{\lambda})-mod$ via the map $s^{\phi_*\und{\lambda}}: X^K\to X^{\lambda}$. 
  
  For $(\mu, K)\in fSets^{\lambda}$ we have a cartesian square
$$
\begin{array}{ccc}
X^K & \toup{s^{\und{\mu}}} & X^{\lambda}\\
\uparrow && \uparrow\\
\underset{
\begin{array}{cc}
\scriptstyle{(\und{\mu}_1, K_1)\in fSets^{\lambda_1},}\\ 
\scriptstyle{(\und{\mu_2}, K_2)\in fSets^{\lambda_2}}\\
\scriptstyle{(\und{\mu}_1, K_1)\sqcup (\und{\mu_2}, K_2)\,\iso\,(\und{\mu}, K)}
\end{array}}
{\sqcup} (X^{K_1}\times X^{K_2})_d & \to & (X^{\lambda_1}\times X^{\lambda_2})_d
\end{array}
$$
where the vertical arrow is given by $(X^{K_1}\times X^{K_2})_d\subset X^K$ for $K=K_1\sqcup K_2$ and $\und{\mu}=\und{\mu}_1\sqcup \und{\mu}_2$. 

 Let now $(\und{\lambda}, J\toup{\phi} K)\in\cTw(fSets)^{\lambda}$. Assume given a decomposition 
$$
(\phi_*\und{\lambda}, K)=(\und{\mu}_1, K_1)\sqcup (\und{\mu}_2, K_2)
$$ 
with $(\und{\mu}_i, K_i)\in fSets^{\lambda_i}$. We then get $J_i=\phi^{-1}(K_i)$, and the objects $(\und{\lambda}^i, J_i\to K_i)\in \cTw(fSets)^{\lambda_i}$ equipped with 
\begin{equation}
\label{decomp_for_cTw(fSets)^lambda}
(\und{\lambda}, J\to K)=(\und{\lambda}^1, J_1\to K_1)\sqcup (\und{\lambda}^2, J_2\to K_2).
\end{equation}
For this reason
$$
m_d^* \Fact^{coalg}(A)_{\lambda}\,\iso\, \underset{
\begin{array}{cc}
\scriptstyle{(\und{\lambda}^1, J_1\to K_1)\in \cTw(fSets)^{\lambda_1, op}}\\
\scriptstyle{(\und{\lambda}^2, J_2\to K_2)\in \cTw(fSets)^{\lambda_2, op}}
\end{array}}
{\lim}
(\underset{j\in J}{\otimes} A_{\und{\lambda}(j)})\boxtimes e_{X^K}
$$
in (\ref{category_over_open_part_graded_version}), where $(\und{\lambda}, J\to K)$ is given by (\ref{decomp_for_cTw(fSets)^lambda}). Our claim follows now from (\ref{formula_for_star_product_Sect_E.5.15}). 
\end{proof}

\sssec{} We conclude that in the setting of Section~\ref{Sect_E.5.9_now} the object $\Fact^{coalg}(A)$ is a factorization coalgebra in $(\Fact(C), \star)$ as in (\cite{FG}, Definition~2.4.7).

 \section{Safe pseudo-schemes}
 \label{Sect_Safe pseudo-schemes}
 
 \sssec{} In this section we introduce a notion of safe pseudo-schemes and prove some version of the K\"unneth formula for them. While for $\cD$-modules this is easy, in the constructible context this is a more difficult new result of independent interest. The mains results of this section are Theorem~\ref{Thm_Kunneth_formula}, Corollary~\ref{Cor_F.0.9}.
In the case of $\cD$-modules (resp., in the constructible context) they are used to get in addition Proposition~\ref{Cor_F.1.3} (resp., Proposition~\ref{Pp_F.2.19}) below.
 
\sssec{}  As in \cite{Ga}, by a pseudo-scheme $Y$ we mean a prestack of the form $Y\,\iso\, \colim_{i\in I} Y_i$ taken in $\PreStk$, where $I\in 1-\Cat$ is small, for $i\in Y$ we have $Y_i\in\Sch_{ft}$ with $Y_i$ separated, and for $\alpha: i\to j$ in $I$ the transition map $f_{\alpha}: Y_i\to Y_j$ is proper. 
 
\sssec{} Let $Y$ be a pseudo-scheme. Then $Shv(Y)$ is naturally self-dual, and the corresponding pairing is $Shv(Y)\otimes Shv(Y)\to \Vect$, $K_1\otimes K_2\mapsto \RG(Y, K_1\otimes^! K_2)$.   

 Let $f_i: Y_i\to Y$ be the natural map. Then $f_i$ is pseudo-proper, and the diagonal map $Y\to Y\times Y$ is pseudo-proper by (\cite{Ga}, 7.4.2). Besides, $Shv(Y)$ is compactly generated by objects of the form $(f_i)_*K$ for $i\in I, K\in Shv(Y_i)^c$. 
 
\sssec{} Define $Shv(Y)^{constr}$ as the colimit $\colim_{i\in I} Shv(Y_i)^c$ taken in $\DGCat^{non-cocmpl}$. Since $\Ind: \DGCat^{non-cocmpl}\leftrightarrows \DGCat_{cont}: \oblv$ is an adjoint pair, we get 
$$
\Ind(Shv(Y)^{constr})\,\iso\, \colim_{i\in I} Shv(Y_i)\,\iso\, Shv(Y).
$$ 
In particular, $Shv(Y)^{constr}\subset Shv(Y)$ is a full embedding. In fact, $Shv(Y)^{constr}\subset Shv(Y)^c$ is a full subcategory by (\cite{HTT}, 5.3.5.5). So, the natural map $ \colim_{i\in I} Shv(Y_i)^c\to Shv(Y)^c$ induces an equivalence after applying $\Ind$. 

 Recall that for $K\in Shv(Y_I)^c$ we have canonically $\DD(f_{\alpha !}(K))\,\iso\, f_{\alpha !}\DD(K)$ by (\cite{Ga}, 7.2.3). So, 
$$
\DD: Shv(Y_i)^c\,\iso\, \Shv(Y_i)^{c, op}
$$
is functorial in $i\in I$. Passing to the colimit over $i$ in $\DGCat^{non-cocmpl}$, we get an equivalence
$$
\DD: \underset{i\in I}{\colim} \, Shv(Y_i)^c\,\iso\, \underset{i\in I}{\colim} \, \Shv(Y_i)^{c, op}\,\iso\, (\underset{i\in I}{\colim} \, \Shv(Y_i)^c)^{op}
$$
in $\DGCat^{non-cocmpl}$. We used the fact that $E\mapsto E^{op}$ is an involution on $\DGCat^{non-cocmpl}$. We constructed an equivalence
$$
\DD: Shv(Y)^{constr}\,\iso\, Shv(Y)^{constr, op}
$$
which we also refer to as the Verdier duality. 

\sssec{} Recall that if $i,j\in I$ then $Y_i\times_Y Y_j$ is a pseudo-scheme, consider the diagram 
\begin{equation}
\label{diag_projections_for_Y_i,j}
Y_i\;\getsup{\pr_i}\; Y_i\times_Y Y_j\;\toup{\pr_j}\; Y_j.
\end{equation} 
Then $\pr_i$ are pseudo-proper (cf. \cite{Ga}, 7.4.2). 

\begin{Def} 
\label{Def_safe_pseudo-scheme}
Say that the pseudo-scheme $Y$ as above is \select{safe} if for any $i, j\in I$, $Y_i\times_Y Y_i\in \Sch_{ft}$ and both projections in $\pr_i, \pr_j$ in (\ref{diag_projections_for_Y_i,j}) are finite.
\end{Def}

\sssec{} For the rest of Section~\ref{Sect_Safe pseudo-schemes} we assume that $Y$ is a safe pseudo-scheme. For $i,j\in I$ let 
$$
\cF: Shv(Y_i)\otimes_{Shv(Y)} Shv(Y_j)\to Shv(Y_i\times_Y Y_j)
$$
be the functor obtained from $\pr_i^!\otimes^! \pr_j^!: Shv(Y_i)\otimes Shv(Y_j)\to Shv(Y_i\times_Y Y_j)$ via the universal property.

 The main results of this section are the following.
\begin{Thm} 
\label{Thm_Kunneth_formula}
For $i,j\in I$ the functor $\cF$ is an equivalence.
\end{Thm}

The proof of Theorem~\ref{Thm_Kunneth_formula} for $\cD$-modules (resp., in the constructible context) is found in Section~\ref{Sect_D-modules_proof_of_Thm_Kunneth_formula} (resp., in Section~\ref{Sect_F.2.17}). 

\begin{Cor} 
\label{Cor_F.0.9}
The prestack $Y$ is 1-affine.
\end{Cor}
\begin{proof}
Let $E\in ShvCat(Y)$. For a map $\alpha: i\to j$ in $I$ we have the adjoint pair $(f_{\alpha})_{!, E}: \Gamma(Y_i, E)\leftrightarrows \Gamma(Y_j, E): (f_{\alpha})^!_E$. So, we may pass to the left adjoints in the diagram $\underset{i\in I^{op}}{\lim} \Gamma(Y_i, E)$, and 
$$
\Gamma(Y, E)\,\iso\, \underset{i\in I^{op}}{\lim} \Gamma(Y_i, E)\,\iso\, \underset{i\in I}{\colim} \, \Gamma(Y_i, E).
$$

 Now given $D\in Shv(Y)-mod$ the above yields an isomorphism $\Gamma(Y, \Loc_Y(D))\,\iso\, D$ in $Shv(Y)-mod$, so $\Loc_Y: Shv(Y)-mod\to ShvCat(Y)$ is fully faithful. 

 Let us show that $\Loc_{\Ran}$ is essentially surjective. For $i\in I$ it suffices to show that the natural map
$$
\Gamma(Y, E)\otimes_{Shv(Y)} Shv(Y_i)\to \Gamma(Y_i, E)
$$
is an equivalence. We have 
\begin{equation}
\label{first_iso_for_Pp_1-affineness_any_Y}
\Gamma(Y, E)\otimes_{Shv(Y)} Shv(Y_i)\,\iso\, \underset{j\in I}{\colim} \, (\Gamma(Y_j, E)\otimes_{Shv(Y)} Shv(Y_i)).
\end{equation}
Since 
\begin{multline*}
\Gamma(Y_j, E)\otimes_{Shv(Y)} Shv(Y_i)\,\iso\, \Gamma(Y_j, E)\otimes_{Shv(Y_j)} (Shv(Y_j)\otimes_{Shv(Y)} Shv(Y_i))\,\iso\\  \Gamma(Y_j, E)\otimes_{Shv(Y_j)} Shv(Y_j\times_Y Y_i)\,\iso\, \Gamma(Y_j\times_Y Y_i, E),
\end{multline*}
(\ref{first_iso_for_Pp_1-affineness_any_Y}) identifies canonically with $\underset{j\in I}{\colim} \, \Gamma(Y_j\times_Y Y_i, E)\,\iso\,\Gamma(Y_i, E)$ as desired.
\end{proof}
 
\sssec{} Let $\vartriangle: Y\to Y\times Y$ be the diagonal. For $i,j\in I$ consider the natural maps 
$$
Y\;\getsup{p_{ij}}\;Y_i\times_Y Y_j\;\toup{q_{ij}} \;Y_i\times Y_j.
$$ 
Since $\pr_j$ is finite and $Y_i$ is separated, $q_{ij}$ is proper.  
 
\ssec{Case of $\cD$-modules}

\sssec{} In this subsection our sheaf theory is $\cD$-modules. Note that $\boxtimes: Shv(Y)\otimes Shv(Y)\to Shv(Y\times Y)$ is an equivalence. 

\sssec{Proof of Theorem~\ref{Thm_Kunneth_formula}}
\label{Sect_D-modules_proof_of_Thm_Kunneth_formula}
We have the adjoint pair $(q_{ij})_!: Shv(Y_i\times_Y Y_j)\leftrightarrows Shv(Y_i\times Y_j): q_{ij}^!$ with $(q_{ij})_!$ fully faithful. 

We have the adjoint pair $\vartriangle_!: Shv(Y)\leftrightarrows Shv(Y\times Y): \vartriangle^!$ in $Shv(Y\times Y)$-mod with $\vartriangle_!$ fully faithful. It yields the adjoint pair
$$
L: Shv(Y_i)\otimes Shv(Y_j)\otimes_{Shv(Y\times Y)} Shv(Y)\leftrightarrows Shv(Y_i)\otimes Shv(Y_j): R
$$
in $\DGCat_{cont}$ with $L$ fully faihtful. Besides, 
$$
Shv(Y_i)\otimes Shv(Y_j)\otimes_{Shv(Y\times Y)} Shv(Y)\,\iso\, Shv(Y_i)\otimes_{Shv(Y)} Shv(Y_j)
$$ 
canonically. It remains to show that the endofunctor $LR$ on $Shv(Y_i)\otimes Shv(Y_j)$ is naturally identified with the endofunctor
$(q_{ij})_!q_{ij}^!$. This follows from the compatibility between the exterior products and base change isomorphisms. \QED

\begin{Cor} 
\label{Cor_F.1.3}
Let $i,j\in I$ and $S\to Y_i, S'\to Y_j$ be maps in $\Sch_{ft}$. Then the natural functor 
$$
Shv(S)\otimes_{Shv(Y)} Shv(S')\to Shv(S\times_Y S')
$$
is an equivalence.
\end{Cor}
\begin{proof}
We have
\begin{multline*}
Shv(S)\otimes_{Shv(Y)} Shv(S')\,\iso\, Shv(S)\otimes_{Shv(Y_i)} Shv(Y_i)\otimes_{Shv(Y)} 
Shv(Y_j)\otimes_{Shv(Y_j)} Shv(S')\\ 
\iso\, Shv(S)\otimes_{Shv(Y_i)} Shv(Y_i\times_Y Y_j)\otimes_{Shv(Y_j)} Shv(S')\,\iso\, Shv(S\times_Y S'),
\end{multline*}
where the last isomorphism is given by (\cite{Chen}, A.4.5).
\end{proof}

\ssec{Constructible context}

\sssec{} In this subsection we work in the constructible context. 

\sssec{} 
\label{Sect_F.2.2}
For $i\in I$ define the functor $f_i^*: Shv(Y)\to Shv(Y_i)$ via the compatible family of functors 
$$
Shv(Y_j)\to Shv(Y_i), \, K\mapsto (\pr_i)_!\pr_j^*K
$$ 
indexed by $j\in I$ by passing to the colimit over $j\in I$. Here the transition maps are $f_{\alpha !}$ for $\alpha: j\to j'$ in $I$. The family is compatible because of the base change: for a map $\alpha: j\to j'$ in $I$ and the diagram
$$
\begin{array}{ccc}
Y_i\times_Y Y_j &\toup{\pr_j} & Y_j\\
\uparrow\lefteqn{\scriptstyle \id\times f_{\alpha}} && \uparrow\lefteqn{\scriptstyle f_{\alpha}}\\
Y_i\times_Y Y_{j'} & \toup{\pr_{j'}} & Y_{j'}
\end{array}
$$
we have $\pr_j^*(f_{\alpha})_!\,\iso\, (\id\times f_{\alpha})_!(\pr_{j'})^*$. Indeed, $\pr_j$ is a map in $\Sch_{ft}$. 

Using base change for pseudo-proper maps from (\cite{Ga}, 1.5.4), we see that 
\begin{equation}
\label{adj_pair_f_i^*_and_f_i_*}
f_i^*: Shv(Y)\leftrightarrows Shv(Y_i): (f_i)_*
\end{equation} 
is an adjoint pair in $\DGCat_{cont}$.

\begin{Rem} For a map $\alpha: i\to j$ in $I$ we have canonically $f_{\alpha}^*f_j^*\,\iso\, f_i^*$ as functors $Shv(Y)\to Shv(Y_i)$. So, we get a functor 
$$
\Theta: Shv(Y)\to \underset{i\in I^{op}, f_{\alpha}^*}{\lim} Shv(Y_i),
$$ 
where in the RHS we used the diagram
$I^{op}\to \DGCat_{cont}$, $i\mapsto Shv(Y_i)$ sending $\alpha: i\to j$ in $I$ to $f_{\alpha}^*$. We do not know if $\Theta$ is an equivalence.
\end{Rem}
\begin{Rem} 
\label{Rem_left_adjoint_to_(id_times_f_i)_*}
For $i\in I, S\in \Sch_{ft}$ one similarly shows that for $\id\times f_i: S\times Y_i\to S\times Y$ the functor $(\id\times f_i)_*$ has a left adjoint denoted $(\id\times f_i)^*$. 
\end{Rem}

\begin{Lm}
\label{Lm_existence_of_vartriangle^*}
 i) The functor $\vartriangle_*: Shv(Y)\to Shv(Y\times Y)$ preserves limits, so admits a left adjoint denoted $\vartriangle^*$. Besides, one has canonically
$$
\vartriangle^*(f_i\times f_j)_*\,\iso\, (p_{ij})_*q_{ij}^*.
$$

\smallskip\noindent
ii) The functor $\boxtimes: Shv(Y)\otimes Shv(Y)\to Shv(Y\times Y)$ is fully faithful.
\end{Lm}
\begin{proof}
i) Let $i,j\in I$. It suffices to show that the functor $(f_i\times f_j)^!\!\vartriangle_*: Shv(Y)\to Shv(Y_i\times Y_j)$ preserves limits. We have $(f_i\times f_j)^!\vartriangle_*\,\iso\, (q_{ij})_*p_{ij}^!$. Since $q_{ij}$ is a map in $\Sch_{ft}$, $(q_{ij})_*$ has the left adjoint, so $(q_{ij})_*p_{ij}^!$ preserves limits.

 The second claim is obtained by passing to the left adjoints in the isomorphism $(f_i\times f_j)^!\vartriangle_*\,\iso\, (q_{ij})_*p_{ij}^!$. 

\smallskip\noindent
ii) Since $Shv(Y)$ is dualizable, we get
$$
Shv(Y)\otimes Shv(Y)\,\iso\, \lim_{i,j\in I\times I} Shv(Y_i)\otimes Shv(Y_j).
$$
So, $\boxtimes$ is obtained by passing to the limit over $(i,j)\in I\times I$ in the fully faithful functors $Shv(Y_i)\otimes Shv(Y_j)\to Shv(Y_i\times Y_j)$.
\end{proof}

\sssec{} 
\label{Sect_F.2.6}
We define the non-unital symmetric monoidal structure $\otimes$ on $Shv(Y)$ so that the product is the composition $Shv(Y)\otimes Shv(Y)\toup{\boxtimes} Shv(Y\times Y)\toup{\vartriangle^*}Shv(Y)$. 

 In more details, consider the 1-full subcategory $\PreStk_{ind-sch}\subset\PreStk_{lft}$, where we restrict 1-morphisms to be ind-schematic. Then we have a well-defined functor 
$$
Shv_{\PreStk_{ind-sch}}: \PreStk_{ind-sch}\to \DGCat_{cont}
$$ 
sending $Y$ to $Shv(Y)$ and a morphism $f: Y\to Y'$ to $f_*: Shv(Y)\to Shv(Y')$. Moreover, this functor is right-lax symmetric monoidal. 

 Consider the 1-full subcategory 
$$
\PreStk_{ind-sch, R}\subset \PreStk_{ind-sch},
$$ 
where we restrict the morphism to those $f: Y\to Y'$ for which $f_*: Shv(Y)\to Shv(Y')$ has a left adjoint. Let 
$$
Shv_{\PreStk_{ind-sch, R}}: \PreStk_{ind-sch, R}\to \DGCat_{cont}
$$ 
be the restriction of $Shv_{\PreStk_{ind-sch}}$. Then we may pass to the left adjoints in the functor $\PreStk_{ind-sch, R}$ and get the functor 
$$
\PreStk_{ind-sch}^*: \PreStk_{ind-sch, R}^{op}\to \DGCat_{cont}
$$ 
sending $Y$ to $Shv(Y)$ and sending $f: Y\to Y'$ to $f^*: Shv(Y')\to Shv(Y)$. The functor $\PreStk_{ind-sch}^*$ is still right-lax symmetric monoidal.

 We have $Y\in CoCAlg(\PreStk_{lft})$. This gives
$$
Y\in CAlg^{nu}(\PreStk_{ind-sch, R}^{op}),
$$
so $Shv(Y)\in CAlg^{nu}(\DGCat_{cont})$ naturally. This is the $\otimes$-symmetric monidal structure on $Shv(Y)$. 

 Let us underline the following. For the map $p: Y\to \Spec k$ we do not know if $p_*: Shv(Y)\to \Vect$ has a left adjoint. For this reason, $Y$ is a \select{non-unital} commutative algebra in $\PreStk_{ind-sch, R}^{op}$. Thus, the $\otimes$-symmetric monoidal structure on $Shv(Y)$ is not necessarily unital. 
\begin{Lm} 
\label{Lm_F.2.7}
i) For $i\in I$ the functor $f_i^*: (Shv(Y), \otimes)\to (Shv(Y_i),\otimes)$ is non-unital symmetric monoidal.\\
ii) The right-lax $(Shv(Y),\otimes)$-structure on $(f_i)_*$ is strict. So, the adjoint pair (\ref{adj_pair_f_i^*_and_f_i_*}) takes place in $(Shv(Y),\otimes)-mod$. 
\end{Lm}
\begin{proof}
i) Consider the diagram
$$
\begin{array}{ccc}
Y & \toup{\vartriangle} & Y\times Y\\
\uparrow\lefteqn{\scriptstyle f_i} && \uparrow\lefteqn{\scriptstyle f_i\times f_i}\\
Y_i & \toup{\vartriangle} & Y_i\times Y_i.
\end{array}
$$
Passing to the left adjoints in the isomoprhism $\vartriangle_* (f_i)_* \,\iso\,(f_i\times f_i)_*\vartriangle_*$ we get $f_i^*\vartriangle^*\,\iso\, \vartriangle^*(f_i\times f_i)^*$. For $K, K'\in Shv(Y)$ we have 
$$
(f_i^*K)\boxtimes (f_i^*K')\,\iso\, (f_i\times f_i)^*(K\boxtimes K')
$$
in $Shv(Y_i\times Y_i)$. Our claim follows.

\smallskip\noindent
ii) Let $K\in Shv(Y_i), L\in Shv(Y)$. We show that the natural map $((f_i)_*K)\otimes L\to (f_i)_*(K\otimes f_i^*L)$ is an equivalence. We may and do assume that there is $j\in I$ and $L\,\iso\, (f_j)_*M$ for some $M\in Shv(Y_j)$. By Lemma~\ref{Lm_existence_of_vartriangle^*} i), one has 
$$
((f_i)_*K)\otimes ((f_j)_*M)\,\iso\, (p_{ij})_*q_{ij}^*(K\boxtimes M)
$$
On the other hand, 
\begin{multline*}
(f_i)_*(K\otimes f_i^*(f_j)_*M)\,\iso\, (f_i)_*(K\otimes (\pr_i)_!(\pr_j)^*M)\,\iso\\  (f_i)_*(\pr_i)_!(\pr_i^*K\otimes \pr_j^*M)\,\iso\, (p_{ij})_*(q_{ij}^*(K\boxtimes M)
\end{multline*}
by $(!, ^*)$-base change as desired.
\end{proof}

\begin{Lm} 
\label{Lm_F.2.8}
The functor $(f_i)_!: Shv(Y_i)\to Shv(Y)$ is conservative and monadic. The corresponding monad is $Shv(Y)\to Shv(Y), K\mapsto (f_i)_*e\otimes K$. 
\end{Lm}
\begin{proof}
Let $K\in Shv(Y_i)$ with $(f_i)_!K=0$. Let $j\in I$. For the diagram (\ref{diag_projections_for_Y_i,j}) the functor $(\pr_j)_!$ is conservative.  
So, $(\pr_j)_!\pr_i^!K\,\iso\, f_j^!(f_i)_!K=0$, hence $\pr_i^!K=0$ in $Shv(Y_i\times_Y Y_j)$. This gives $K=0$. The monadicity follows by (\cite{G}, I.1, 3.7.7). 
\end{proof}

\sssec{} Note that  $(f_i)_*: (Shv(Y_i), \otimes)\to (Shv(Y),\otimes)$ is right-lax non-unital symmetric monoidal, so $(f_i)_*e\in CAlg^{nu}(Shv(Y),\otimes)$.

Denote by 
$$
Shv(Y_i)\otimes_{(Shv(Y), \otimes)} Shv(Y_j)
$$ 
the tensor product, where we view $Shv(Y_i), Shv(Y_j)$ as $(Shv(Y), \otimes)$-modules via the monoidal functors $f_i^*, f_j^*$. Let 
$$
\cF': Shv(Y_i)\otimes_{(Shv(Y), \otimes)} Shv(Y_j)\to Shv(Y_i\times_Y Y_j)
$$
be the functor obtained from $Shv(Y_i)\otimes Shv(Y_j)\to Shv(Y_i\times_Y Y_j)$, $K\boxtimes K'\mapsto \pr_i^*K\otimes \pr_j^*K'$ via the universal property.

\begin{Lm} 
\label{Lm_functor_cF'_is_equivalence}
The functor $\cF'$ is an equivalence.
\end{Lm}
\begin{proof} Since $\pr_j$ is a finite morphism in $\Sch_{ft}$, $(\pr_j)_!: Shv(Y_i\times_Y Y_j)\to Shv(Y_j)$ is conservative and continuous, hence monadic. The corresponding monad is $K\mapsto (\pr_j)_!(\pr_j)^*K\,\iso\, (\pr_j)_!e\otimes K$. Note that $f_j^*(f_i)_*e\,\iso\, (\pr_j)_!e$ in $CAlg^{nu}(Shv(Y_j), \otimes)$. By Lemma~\ref{Lm_F.2.8} andd
(\cite{G}, I.1, 8.5.7), we get canonically
$$
Shv(Y_i)\otimes_{(Shv(Y), \otimes)} Shv(Y_j)\,\iso\, f_j^*(f_i)_*e-mod(Shv(Y_j)).
$$
Our claim follows.
\end{proof}

The following is straightforward from definitions.
\begin{Lm} 
\label{Lm_the_dual_of_f_*_is_f^!}
Let $h: Z\to Z'$ be a map in $\PreStk_{lft}$, which is ind-schematic of ind-finite type. We have a canonical evalution map $\ev: Shv(Z)\to \Fun(Shv(Z), \Vect)$ sending $K$ to the functor $\RG(Z, K\otimes^!\!\_)$. The diagram commutes
$$
\begin{array}{ccc}
\Fun(Shv(Z'),\Vect) & \getsup{f^{\vee}} & \Fun(Shv(Z),\Vect)\\
\uparrow\lefteqn{\scriptstyle\ev} && \uparrow\lefteqn{\scriptstyle\ev} \\
Shv(Z') & \getsup{f_*} & Shv(Z)
\end{array}
$$
In particular, if $\ev$ is a self-duality on $Shv(Z)$ and on $Shv(Z')$ then $f_*\,\iso\, (f^!)^{\vee}$. \QED
\end{Lm}

\sssec{} Denote by $\bfitDelta_s\subset \bfitDelta$ the subcategory, where we keep all objects and only injective morphisms $[n]\to [m]$ for $n,m\ge 0$. Recall that  $\bfitDelta_s^{op}\to\bfitDelta^{op}$ is cofinal by (\cite{HTT}, 6.5.3.7).

For $i\in I$ write $\Gamma_i: Y_i\to Y_i\times Y$ for the graph of $f_i$. 

\begin{Lm} 
\label{Lm_dual_of_the_relative_tensor_product}
The dual $(Shv(Y_i)\otimes_{Shv(Y)} Shv(Y_j))^{\vee}$ in $\DGCat_{cont}$ identifies canonically with 
$$
Shv(Y_i)\otimes_{(Shv(Y), \otimes)} Shv(Y_j).
$$
\end{Lm}
\begin{proof} The right adjoint to the composition
$$
Shv(Y_i)\otimes Shv(Y)\toup{\boxtimes} Shv(Y_i\times Y)\toup{\Gamma_i^!} Shv(Y_i)
$$
is $\boxtimes^R\comp (\Gamma_i^!)^R$, here $\boxtimes^R\,\iso\, \boxtimes^{\vee}$ is the right adjoint to $\boxtimes$, and $\Gamma_i^R$ is the right adjoint to $\Gamma_i$. Since $\Gamma_i$ is the composition $Y_i\to Y_i\times Y_i\toup{\id\times f_i} Y_i\times Y$, from Remark~\ref{Rem_left_adjoint_to_(id_times_f_i)_*} we see that $(\Gamma_i)_*$ has the left adjoint $\Gamma_i^*$. So, 
$$
(((\Gamma_i)_*)^{\vee},
(\Gamma_i^*)^{\vee})
$$ 
is an adjoint pair in $\DGCat_{cont}$. By Lemma~\ref{Lm_the_dual_of_f_*_is_f^!}, $((\Gamma_i)_*)^{\vee}\,\iso\, \Gamma_i^!$. So, $(\Gamma_i^!)^R\,\iso\, (\Gamma_i^*)^{\vee}$. 

For the product map 
$$
Shv(Y)\otimes Shv(Y)\toup{\boxtimes} Shv(Y\times Y)\toup{\vartriangle^!} Shv(Y)
$$
the right adjoint is $\boxtimes^R\comp (\vartriangle^!)^R$, and $\boxtimes^R\,\iso\, \boxtimes^{\vee}$. Since $(\vartriangle_*)^{\vee}\,\iso\, \vartriangle^!$, we get similarly $(\vartriangle^!)^R\,\iso\, (\vartriangle^*)^{\vee}$. 

 Let $C_{\bfitDelta^{op}}: \bfitDelta^{op}\to \DGCat_{cont}$ be the bar complex defining $Shv(Y_i)\otimes_{Shv(Y)} Shv(Y_j)$, it sends $[n]$ to $Shv(Y_i)\otimes Shv(Y)^{\otimes n} \otimes Shv(Y_j)$. For a map $\alpha: [n]\to [m]$ in $\bfitDelta$ write
$$
h_{\alpha}: Shv(Y_i)\otimes Shv(Y)^{\otimes m} \otimes Shv(Y_j)\to Shv(Y_i)\otimes Shv(Y)^{\otimes n} \otimes Shv(Y_j)
$$
for the corresponding transition map in $C_{\bfitDelta^{op}}$. If $\alpha$ is injective then
$h_{\alpha}$ is given by !-pullbacks. Let $C^{\vee}_{\bfitDelta}: \bfitDelta\to\DGCat_{cont}$ be the diagram obtained from $C_{\bfitDelta^{op}}$ by passing to the duals. Denote by 
$$
C_{\bfitDelta^{op}_s}: \bfitDelta^{op}_s\to \DGCat_{cont}\;\;\; \mbox{and}\;\;\; C^{\vee}_{\bfitDelta_s}: \bfitDelta_s\to\DGCat_{cont}
$$ 
the restrictions of $C_{\bfitDelta^{op}}$ and $C^{\vee}_{\bfitDelta}$ respectively.
 
  Note that if $\alpha: [n]\to [m]$ is a map in $\bfitDelta$, which is injective (resp., not injective) then the right adjoint $h_{\alpha}^R$ of $h_{\alpha}$ is continuous (resp., maybe discontinuous).  Let 
$$
C_{\bfitDelta_s}^R: \bfitDelta_s\to\DGCat_{cont}
$$ 
be the diagram obtained from $C_{\bfitDelta^{op}_s}$ by passing to right adjoints. Note that passing to the right adjoint in the whole of $C_{\bfitDelta^{op}}$ one gets only a functor $C_{\bfitDelta}^R: \bfitDelta\to \DGCat$.
 
 The bar complex defining $Shv(Y_i)\otimes_{(Shv(Y),\otimes)} Shv(Y_j)$ is denoted by 
$$
\tilde C_{\bfitDelta^{op}_s}:  \bfitDelta^{op}_s\to \DGCat_{cont}.
$$ 
We conclude that $C_{\bfitDelta_s}^R$ is obtained from $\tilde C_{\bfitDelta_s^{op}}$ by passing to the duals. 

Applying (\cite{G}, I.1, 6.3.4) to the diagam $C_{\bfitDelta^{op}_s}$ we see that
$$
Shv(Y_i)\otimes_{Shv(Y)} Shv(Y_j)\,\iso\, \colim C_{\bfitDelta^{op}_s}
$$ 
is dualizable, and 
$$
(Shv(Y_i)\otimes_{Shv(Y)} Shv(Y_j))^{\vee}\,\iso\, \lim C_{\bfitDelta_s}^{\vee}.
$$ 
Passing to the left adjoints in $\lim C_{\bfitDelta_s}^{\vee}$ we get 
$$
\lim C_{\bfitDelta_s}^{\vee}\,\iso\, \colim \tilde C_{\bfitDelta_s^{op}}\,\iso\, Shv(Y_i)\otimes_{(Shv(Y),\otimes)} Shv(Y_j).
$$
as desired.
\end{proof}

\sssec{} Note that $Shv(Y_i)\otimes_{Shv(Y)} Shv(Y_j)$ is compactly generated by images of the objects $K\boxtimes K'\in Shv(Y_i)\otimes Shv(Y_j)$ for $K\in Shv(Y_i)^c, K'\in Shv(Y_j)^c$. 

  If $K\in Shv(Y_j)^c, K'\in Shv(Y_j)^c$ then $q_{ij}^!(K\boxtimes K')\in Shv(Y_i\times_Y Y_j)^c$. So, the right adjoint $\cF^R$ of $\cF$ is continuous.

\sssec{} Write $\delta: Shv(Y_i)\otimes Shv(Y_j)\to Shv(Y_i)\otimes_{Shv(Y)} Shv(Y_j)$ and 
$$
\delta_{\otimes}: Shv(Y_i)\otimes Shv(Y_j)\to Shv(Y_i)\otimes_{(Shv(Y),\otimes)} Shv(Y_j)
$$
for the natural functors. Let $\delta^R$ be the right adjoint to $\delta$. From the construction we get $(\delta^R)^{\vee}\,\iso\, \delta_{\otimes}$.

\begin{Lm}
\label{Lm_relation_between_cF_and_cF'}
One has canonically $\cF^{\vee}\,\iso\, (\cF')^R$ and $(\cF')^{\vee}\,\iso\, \cF^R$, where $R$ stands for the right adjoint.
\end{Lm}
\begin{proof} Use the same notations as in the proof of Lemma~\ref{Lm_dual_of_the_relative_tensor_product}. For $n\ge 0$ denote by $f_n$ the composition 
$$
Shv(Y_i)\otimes Shv(Y))^{\otimes n} \otimes Shv(Y_j)\toup{ins_n} Shv(Y_j)\otimes_{Shv(Y)} Shv(Y_j)\toup{\cF} Shv(Y_i\times_Y Y_j),
$$
where $ins_n$ is a structure morphism for $\colim C_{\bfitDelta^{op}}$. Recall that $C^R_{\bfitDelta_s}: \bfitDelta_s\to\DGCat_{cont}$ is obtained from $C_{\bfitDelta_s^{op}}$ by passing to right adjoints, so the right adjoints $ins_n^R$ and $f_n^R$ are continuous. 

 The map $\cF^R: Shv(Y_i\times_Y Y_j)\to \lim C^R_{\bfitDelta_s}$ in $\DGCat_{cont}$
 is obtained from the compatible system of functors $f_n^R$ for $[n]\in \bfitDelta_s$ by passing to the limit over $\bfitDelta_s$.
   
 As in the proof of Lemma~\ref{Lm_dual_of_the_relative_tensor_product}, $Shv(Y_i)\otimes_{(Shv(Y),\otimes)} Shv(Y_j)\,\iso\, \lim C^{\vee}_{\bfitDelta_s}$. The functor
$$
\cF^{\vee}: Shv(Y_i\times_Y Y_j)\to\lim C^{\vee}_{\bfitDelta_s}
$$
is obtained from the compatible family of functors $f_n^{\vee}$ for $[n]\in \bfitDelta_s$ by passing to the limit over $\bfitDelta_s$.

 We have the adjoint pairs $((f_n^R)^{\vee}, f_n^{\vee})$ and $((\cF^R)^{\vee}, \cF^{\vee})$ in $\DGCat_{cont}$. Let
$$
(\cF^{\vee})^L: \colim \tilde C_{\bfitDelta_s^{op}}\to Shv(Y_i\times_Y Y_j) 
$$
be the left adjoint to $\cF^{\vee}$. It is obtained by passing to the colimit over $\bfitDelta_s^{op}$ in the compatible system of functors $(f_n^{\vee})^L$. 

 For $n\ge 0$ denote by $\tilde f_n$ the composition
$$
Shv(Y_i)\otimes Shv(Y)^{\otimes n}\otimes Shv(Y_j)\toup{\wt{ins}_n} Shv(Y_i)\otimes_{(Shv(Y),\otimes)} Shv(Y_j)\toup{\cF'} Shv(Y_i\times_Y Y_j),
$$
where $\wt{ins}_n$ is the structure map for $\tilde C_{\bfitDelta_s^{op}}$. Now it suffices to show that for $n\ge 0$ we have canonically $(f_n^{\vee})^L\,\iso\, \tilde f_n$ in a way compatible with the transition maps in $\tilde C_{\bfitDelta_s^{op}}$. 

  Denote by $\tau_n$ the projection
$$
Y_i\times_Y Y_j\,\iso\, Y\times_{Y^{n+2}}(Y_i\times Y^n\times Y_j)\to Y_i\times Y^n\times Y_j.
$$
Then $f_n$ is the composition 
$$
Shv(Y_i)\otimes Shv(Y)^{\otimes n}\otimes Shv(Y_j)\toup{\boxtimes} Shv(Y_i\times Y^n\times Y_j)\toup{\tau_n^!} Shv(Y_i\times_Y Y_j).
$$
So, $f_n^R\,\iso\, \boxtimes^R\comp(\tau_n^!)^R$.  As in Lemma~\ref{Lm_existence_of_vartriangle^*}, $(\tau_n)_*$ admits a left adjoint $\tau_n^*$. 

 We have $((\tau_n)_*)^{\vee}\,\iso\, \tau_n^!$ as in Lemma~\ref{Lm_the_dual_of_f_*_is_f^!}, so $(\tau_n^!)^R\,\iso\, (\tau_n^*)^{\vee}$. This gives 
$$
(f_n^{\vee})^L\,\iso\,(f_n^R)^{\vee}\,\iso\, \tau_n^*\comp\boxtimes\,\iso\, \tilde f_n
$$
as desired. 
\end{proof}

\sssec{Proof of Theorem~\ref{Thm_Kunneth_formula}} 
\label{Sect_F.2.17}
By Lemma~\ref{Lm_relation_between_cF_and_cF'}, $\cF^R\,\iso\, (\cF')^{\vee}$. Since $\cF'$ is an equivalence by Lemma~\ref{Lm_functor_cF'_is_equivalence}, $\cF^R$ is an equivalence. \QED

\sssec{} We generalize Lemma~\ref{Lm_functor_cF'_is_equivalence} as follows. Pick $i, j\in J$, let us be given a map $\beta: S\to Y_j$ with $S\in\Sch_{ft}$. View $Shv(S)$ as an object of $(Shv(Y),\otimes)-mod$ via the non-unital symmetric monoidal functor $\beta^*f_j^*: (Shv(Y),\otimes)\to (Shv(S), \otimes)$. Viewing also $Shv(Y_i)$ as an object of $Shv(Y)-mod$ via the functor $f_i^*$, we form the tensor product
$Shv(Y_i)\otimes_{(Shv(Y),\otimes)} Shv(S)$. Note that $Y_i\times_Y S\in \Sch_{ft}$, it is obtained from $Y_i\times_Y Y_j$ by the base change $S\to Y_j$. Consider the natural functor
\begin{equation}
\label{map_cF'_generalized}
Shv(Y_i)\otimes_{(Shv(Y),\otimes)} Shv(S)\to Shv(Y_i\times_Y S)
\end{equation}
coming from $Shv(Y_i)\otimes Shv(S)\to Shv(Y_i\times_Y S)$, $K\otimes K'\mapsto g_{Y_i}^*K\otimes g_S^*K'$ via the universal property. Here the maps are given by the diagram of projections
$$
Y_i\getsup{g_{Y_i}} Y_i\times_Y S \toup{g_S} S.
$$
Note that $g_S$ is a finite morphism.

\begin{Pp} 
\label{Pp_F.2.19}
The functor (\ref{map_cF'_generalized}) is an equivalence.
\end{Pp}
\begin{proof} Since $g_S$ is finite, the functor $(g_S)_!: Shv(Y_i\times_Y S)\to Shv(S)$ is monadic. The corresponding monad is $K\mapsto ((g_S)_!e)\otimes K$ for $K\in Shv(S)$. 

By (\cite{G}, I.1, 8.5.7), we get canonically
$$
Shv(Y_i)\otimes_{(Shv(Y),\otimes)} Shv(S)\,\iso\, \beta^*f_j^*(f_i)_*e-mod(Shv(S)).
$$
By construction, $\beta^*f_j^*(f_i)_*e\,\iso\, (g_S)_!e$ in $CAlg^{nu}(Shv(S))$. Our claim follows. 
\end{proof}

\section{Relatively rigid monoidal categories}
\label{Sect_Appendix_G}

In (\cite{G}, ch. I.1, 9.1.2) for an object of $Alg(\DGCat_{cont})$ a notion of a rigid monoidal category was introduced. In this section we study a relative version of this notion over some base. 

\ssec{Basic properties of relatively rigid categories}
\sssec{} 
\label{Sect_1.0.1G}
Let $A\in CAlg(\DGCat_{cont})$. Let $B\in Alg(A-mod)$, assume that the unit map $u: A\to B$ maps to the center of $B$, that is, we are given an identidication between $A$-actions on $B$ by left and right translations. Write $u^R$ for the right adjoint of $u$. Write $B^{left-dualiz}$ and $B^{right-dualiz}$ for the full subcategories of $B$ spanned by left dualizable and right dualizable objects respectively.

We want to study the following generalization of a notion of a rigid monoidal category.

\begin{Def} 
\label{Def_relatively_rigid_G}
Let $B$ be as in Section~\ref{Sect_1.0.1G}. Say it is rigid relative to $A$ is the following properties hold:
\begin{itemize}
\item The unit map $u: A\to B$ has a continuous $A$-linear right adjoint $u^R$;
\item The product map $m: B\otimes_AB\to B$ has a continuous $B\otimes_AB^{rm}$-linear right adjoint $m^R: B\to B\otimes_AB$.
\end{itemize}
\end{Def} 

\begin{Rem} i) If $A=\Vect$ then the above notion becomes precisely the notion of a rigid monoidal category as an object of $\Alg(\DGCat_{cont})$.

\medskip\noindent
ii) If $B$ as above is rigid relative to $A$ then $B$ is not necessarily rigid as an object of $\DGCat_{cont}$. Indeed, $B=A$ is always rigid relative to $A$.

 For example, if $S$ is a scheme of finite type over an algebraically closed field $k$ then $(Shv(S),\otimes^!)$ in the constructible context or for $\cD$-modules is not rigid. 
 
\smallskip\noindent
iii) $B$ is rigid relative to $A$ iff $B^{rm}$ is rigid relative to $A$. 
\end{Rem} 

\sssec{} For $M\in A-mod$ write $M^c_A\subset M$ for the full subcategory of objects compact relative to $A$ as defined in (\cite{G}, ch. I.1, 8.8.2). So, $m\in M^c_A$ iff the functor $M\to A, x\mapsto \und{\HOM}_A(m, x)$ is continious. Here for $m, x\in M$ we denote by $\und{\HOM}_A(m, x)\in A$ the inner hom with respect to the $A$-action. 

 Recall that $m\in M$ is ULA over $A$ if $m\in M^c_A$ and moreover the right-lax $A$-structure on the functor $\und{\HOM}_A(m,\_): M\to A$ is strict. That is, for $a\in A, x\in M$ the natural map
$$
a\otimes \und{\HOM}_A(m, x)\to \und{\HOM}_A(m, a\otimes x)
$$
is an isomorphism. We write $M^{ULA}\subset M$ for the full subcategory of ULA objects over $A$. So, $M^{ULA}\subset M^c_A$. Recall that $M^{ULA}\subset M$ is a stable subcategory closed under the action of $A^{dualiz}$. Here $A^{dualiz}\subset A$ is the full subcategory of dualizable objects. 

\sssec{} 
\label{Sect_1.0.6G}
Let $B$ be as in Section~\ref{Sect_1.0.1G}, $b\in B$. For $b, b'\in B$ write $\und{\HOM}_B(b, b')\in B$ for the inner hom in $B$, it exists because $B$ is presentable. Note that $b\in B$ is left-dualizable iff the functor $\und{\HOM}_B(b, \_): B\to B$ is a strict morphism of left $B$-modules (with respect to the left multiplication). That is, for any $y\in B$ the natural map
$$
y\otimes \und{\HOM}_B(b, 1)\to \und{\HOM}_B(b, y)
$$
is an isomorphism. In the latter case $b^{\vee, L}=\und{\HOM}_B(b, 1)$ is the left dual of $b$. 

 For $b\in B$ we denote by $\und{\HOM}_A(b,\_): B\to A$ the inner hom with respect to the $A$-action. We have $B^{ULA}\subset B^c_A\subset B$. Note that $u^R=\und{\HOM}_A(1_B, \_)$. 

\sssec{} 
\label{Sect_1.0.7G}
For $b,b'\in B$ we get the relative inner hom $\und{\HOM}_{B^{rm}}(b,b')\in B$. It is characterized by the functorial isomorphisms for $x\in B$
$$
\HOM(x, \und{\HOM}_{B^{rm}}(b,b'))\,\iso\,\HOM(b\otimes x, b')
$$
in $\Vect$. We have an adjoint pair $\cL^{rm}: B\leftrightarrows B: \und{\HOM}_{B^{rm}}(b,\_)$, where $\cL^{rm}(x)=b x$. The functor $\cL^{rm}$ in $B^{rm}$-linear, so $\und{\HOM}_{B^{rm}}(b,\_)$ has a right lax $B^{rm}$-structure. It is given by a canonical map for $x, b'\in B$
$$
\und{\HOM}_{B^{rm}}(b, x)\otimes b'\to \und{\HOM}_{B^{rm}}(b, x\otimes b')
$$ 
As in Section~\ref{Sect_1.0.6G}, $b\in B$ is right-dualizable iff the functor $\und{\HOM}_{B^{rm}}(b,\_)$ is a strict morphism of $B^{rm}$-modules, that is, the canonical map
$$
\und{\HOM}_{B^{rm}}(b, 1)\otimes b'\to \und{\HOM}_{B^{rm}}(b, b')
$$ 
is an isomorphism in $B$. In the latter case $b^{\vee, R}=\und{\HOM}_{B^{rm}}(b, 1)$. 

\begin{Lm} 
\label{Lm_1.0.8G}
Let $b,y\in B$ then one has canonically $u^R\und{\HOM}_B(b, y)\,\iso\,\und{\HOM}_A(b, y)$.  
\end{Lm}
\begin{proof}
For $a\in A$ one has 
\begin{multline*}
\HOM_A(a, \und{\HOM}_A(b, y))\,\iso\,
\HOM_B(u(a)b, y)\,\iso\, 
\HOM_B(u(a), \und{\HOM}_B(b, y))\\ 
\iso\, \HOM_A(a, u^R\und{\HOM}_B(b, y)).
\end{multline*}
\end{proof}

\begin{Lm} Let $B$ be rigid relative to $A$. Then 
\begin{equation}
\label{iso_for_Lm_1.0.8G}
B^{ULA}=B^{right-dualiz}\cap B^{left-dualiz}=B^{left-dualiz}
\end{equation}
In fact, $B^{left-dualiz}=B^{right-dualiz}$.
\end{Lm}
\begin{proof} 
{\bf Step 1}. First, we show the inclusion $B^{ULA}\subset B^{right-dual}\cap B^{left-dual}$. Let $b\in B^{ULA}$.

\smallskip
\noindent
i) First, we show that $b$ is left dualizable. Let $L: A\to B$ be the functor $a\mapsto a\otimes b$, its right adjoint is $\und{\HOM}_A(b,\_)$. Our assumption gives an adjoint pair $L: A\leftrightarrows B: \und{\HOM}_A(b,\_)$ in $A-mod$. Tensoring by the right $A$-module $B$, we get an adjoint pair
$$
\id\otimes L: B\otimes_A A\leftrightarrows B\otimes_A B: \id\otimes\und{\HOM}_A(b,\_)
$$
in $B-mod$, where $B$ acts by left multiplication. Here $\id\otimes L: B\to B\otimes_A B$ is $b'\mapsto b'\otimes b$. So, the functor $m\comp (\id\otimes L): B\to B, b'\mapsto b'b$ has the right adjoint $\und{\HOM}_B(b, \_)$ given by the composition 
$$
B\toup{m^R} B\otimes_A B\toup{\id\otimes \und{\HOM}_A(b,\_)} B
$$
The latter composition is continuous and $B$-linear with respect to the $B$-action by left multiplication. Thus, $b$ is left-dualizable in $B$ by Section~\ref{Sect_1.0.6G}. 

\medskip\noindent
ii) Let us show that $b$ is right dualizable. Tensoring the adjoint pair $L: A\leftrightarrows B: \und{\HOM}_A(b,\_)$ by the left $A$-module $B$, we get an adjoint pair
$$
L\otimes\id: A\otimes_A B\leftrightarrows B\otimes_A B: \und{\HOM}_A(b,\_)\otimes\id
$$
in $B^{rm}-mod$, where $B$ acts by right multiplications. Here $L\otimes\id: B\to B\otimes_A B$ is $b'\mapsto b\otimes b'$. So, the functor $m\comp (L\otimes\id): B\to B, b'\mapsto bb'$ has the right adjoint $\und{\HOM}_{B^{rm}}(b,\_)$ given by the composition
$$ 
B\toup{m^R} B\otimes_A B \toup{\und{\HOM}_A(b,\_)\otimes\id} B
$$
Since $m^R$ is $B^{rm}$-linear, we see that $\und{\HOM}_{B^{rm}}(b,\_)$ in $B^{rm}$-linear. By Section~\ref{Sect_1.0.7G}, $b$ is right dualizable.

\smallskip\noindent
{\bf Step 2}. Let $b\in B^{left-dualiz}$. We claim that $b\in B^{ULA}$. Indeed, by Lemma~\ref{Lm_1.0.8G}, we have for $y\in B$, $\und{\HOM}_A(b, y)\,\iso\, u^R(y\otimes b^{\vee, L})$. Since $u^R$ is continuous and $A$-linear, the functor $\und{\HOM}_A(b,\_): B\to A$ is continuous and $A$-linear. This gives the equality (\ref{iso_for_Lm_1.0.8G}). In particular, $B^{left-dualiz}\subset B^{right-dualiz}$.

\smallskip\noindent
{\bf Step 3} Recall that $B^{rm}$ is rigid relative to $A$. We get $B^{right-dualiz}\subset B^{left-dualiz}$ applying (\ref{iso_for_Lm_1.0.8G}) to $B^{rm}$.
\end{proof}

\begin{Pp} 
\label{Pp_1.0.9G}
Let $B$ be rigid relative to $A$. Then the maps
$$
\epsilon: B\otimes_A B\toup{m} B\toup{u^R} A,\;\;\; \mu: A\toup{u} B\toup{m^R} B\otimes_A B
$$
provide a counit and unit of a duality datum in $A-mod$, hence an isomorphism
$$
\phi_B: B\,\iso\, B^{\vee},
$$
where $B^{\vee}=\Fun_A(B, A)$ is the dual of $B$ in $A-mod$.
\end{Pp}
\begin{proof}
It is precisely the same as for usual rigid categories in (\cite{G}, ch. I.1, 9.2). It uses the fact that for $1_B\in B$ its dual in $B$ is $1_B^{\vee, R}\,\iso\, 1$, and similarly for $1_B^{\vee, L}$.
\end{proof}

\sssec{} Our normalization is that $\phi_B$ sends $a\in B$ to the functor $B\to A, b\mapsto \und{\HOM}_A(1_B, ba)$. With this normalization, $\phi_B$ is a morphism of \select{left} $B$-modules. The left $B$-action on $B^{\vee}$ is such that $b'\in B$ sends $L\in \Fun_A(B, A)$ to the functor $b'L: B\to A, b\mapsto L(bb')$. Our $\phi_B$ is not a morphism of right $B$-modules in general. 

 The map $\phi_B^{-1}: \Fun_A(B, A)\to B$ sends $L$ to the object $(\id\otimes L)m^R(1)$, here $\id\otimes L: B\otimes_A B\to B$. 

\begin{Pp}  Let $B$ be rigid relative to $A$. The diagram commutes
$$
\begin{array}{ccc}
B^{\vee} & \toup{m^{\vee}} & B^{\vee}\otimes_A B^{\vee}\\
\uparrow\lefteqn{\scriptstyle \phi_B} && \uparrow\lefteqn{\scriptstyle \phi_B\otimes\phi_B}\\
B & \toup{m^R} & B\otimes_A B
\end{array}
$$
\end{Pp}
\begin{proof}
The same as (\cite{G}, ch. I.1, 9.2.6). 
\end{proof}

\begin{Pp} 
\label{Pp_1.0.12_strict_morphismG}
Let $M, N\in B-mod$, let $f: M\to N$ be a morphism in $A-mod$. Assume that this structure is lifted to a right-lax (resp., left-lax) morphism $f: M\to N$ between $B$-module categories. Then $f$ is a strict morphism of $B$-module categories.
\end{Pp}
\begin{proof}
The proof is almost the same as (\cite{G}, ch. I.1, 9.3.6), we give the details as in \select{loc.cit.} some of them are left to a reader. Assume $f$ is right-lax morphism of $B$-module categories. Write $\act: B\otimes_A M\to M, \act: B\otimes_A N\to N$ for the action maps over $A$. We need to show that the natural transformation from
\begin{equation}
\label{LHS_transform_1.0.12G}
B\otimes_A M \toup{\id\otimes f} B\otimes_A N\toup{\act} N
\end{equation}
to
\begin{equation}
\label{RHS_transform_1.0.12G}
B\otimes_A M\toup{\act} M\toup{f} N
\end{equation}
is an equivalence. We construct an explicit inverse natural transformation as follows.

Consider two more functors $B\otimes_A M\to N$. One is
\begin{multline}
\label{functor_one_1.0.12G}
B\otimes_A M\toup{m^R(1)\otimes\id} B\otimes_A B\otimes_A B\otimes_A M\toup{\id\otimes\act} B\otimes_A B\otimes_A M\toup{\id\otimes f} \\ B\otimes_A B\otimes_A  N\toup{\id\otimes\act}B\otimes_A N \toup{\otimes\act}N
\end{multline}
The other is
\begin{multline}
\label{functor_two_1.0.12G}
B\otimes_A M\toup{m^R(1)\otimes\id} B\otimes_A B\otimes_A B\otimes_A M\toup{\id\otimes\act} B\otimes_A B\otimes_A M\toup{\id\otimes\act} \\ B\otimes_A M\toup{\id\otimes f}  A\otimes N\toup{\act} N
\end{multline}
The desired natural transformation is a composition $(\ref{RHS_transform_1.0.12G})\to (\ref{functor_one_1.0.12G})\to (\ref{functor_two_1.0.12G})\to (\ref{LHS_transform_1.0.12G})$ of natural transformations.

The dual pair $m: B\otimes_A B\leftrightarrows B: m^R$ gives a diagram $1\toup{unit} m m^R(1)\toup{counit} 1$, whose composition is $\id$. For $b'\in B, z\in N$ denote by $b'z$ the action of $b'$ on $z$, and similarly for the $B$-action on $M$. The map $unit$ gives a natural transformation in $N$
$$
f(bx)\to (mm^R(1))f(bx)=\act(\id\otimes\act)(\id\otimes f)(m^R(1)\otimes bx)
$$
functorial in $b\in B, x\in M$, this is a natural transformation from (\ref{RHS_transform_1.0.12G}) to (\ref{functor_one_1.0.12G}).

 Next, we have a natural transformation in $N$
$$
\act(\id\otimes\act)(\id\otimes f)(m^R(1)\otimes bx)\to \act(\id\otimes f)(\id\otimes\act)(m^R(1)\otimes bx)
$$
functorial in $b\in B, x\in M$ and coming from the right-lax structure on $f$. This is a natural transformation from (\ref{functor_one_1.0.12G}) to  (\ref{functor_two_1.0.12G}).

 The construction of a natural transformation from (\ref{functor_two_1.0.12G}) to 
(\ref{LHS_transform_1.0.12G}) uses the fact that $m^R$ is a morphism of $B$-bimodule categories. Namely, 
$$
(\id\otimes\act)(m^R(1)\otimes bx)\,\iso\,(\id\otimes\act)(m^R(1)\cdot (1\otimes b))\otimes x\,\iso\, (b\otimes \id)\cdot(\id\otimes\act)(m^R(1)\otimes x),
$$
in $B\otimes_A M$, where $\cdot$ stands for the products in $B\otimes_A B$ and in $B$. Thus, we need to construct a morphism in $N$
$$
\act(\id\otimes f)(b\otimes \id)\cdot(\id\otimes\act)(m^R(1)\otimes x)\to bf(x)
$$
In the LHS we may put $(b\otimes\id)$ outside, hence it remains to construct a morphism
$$
\act(\id\otimes f)(\id\otimes\act)(m^R(1)\otimes x)\to f(x)
$$
in $N$. It is the composition
$$
\act(\id\otimes f)(\id\otimes\act)(m^R(1)\otimes x)\to f\act(mm^R(1)\otimes x)\to f(x),
$$
where the first map comes from the right-lax structure on $f$, and the second one from $counit: mm^R(1)\to 1$.
\end{proof}

\begin{Lm} Let $B_1, B_2$ be rigid relative to $A$. Let $f: B_1\to B_2$ be a map in $Alg(A-mod)$. Then its right adjoint $f^R: B_2\to B_1$ is a map in $A-mod$, and the diagram commutes
$$
\begin{array}{ccc}
B_2^{\vee} & \toup{f^{\vee}} & B_1^{\vee}\\
\uparrow\lefteqn{\scriptstyle \phi_{B_2}} && \uparrow\lefteqn{\scriptstyle \phi_{B_1}}\\
B_2 & \toup{f^R} & B_1.
\end{array}
$$
\end{Lm}
\begin{proof}
By Proposition~\ref{Pp_1.0.12_strict_morphismG}, $f^R$ is a strict morphism of $B_1$-module categories. Since $f$ is a map of right $B_1$-module categories, $f^{\vee}$ is a map in $B_1-mod$ also. Let $b_2\in B_2, b_1\in B_1$ the functor $f^{\vee}\phi_{B_2}(b_2): B_1\to A$ is $b_1\mapsto \und{\HOM}_A(1_{B_2}, f(b_1)b_2)$. The functor $\phi_{B_1}f^R(b_2): B_1\to A$ is $\und{\HOM}_A(1_{B_1}, b_1f^R(b_2)$. We must establish an isomorphism in $A$
$$
\und{\HOM}_A(1_{B_1}, b_1f^R(b_2))\,\iso\, \und{\HOM}_A(1_{B_2}, f(b_1)b_2)
$$
For $a\in A$ one has 
$$
\HOM_{B_1}(a\cdot 1_{B_1}, b_1f^R(b_2))\,\iso\, \HOM_{B_2}(a\cdot 1_{B_2}, f(b_1)b_2)
$$
in $\Vect$ by adjointness, as desired. Here $\HOM$ stands for the inner hom with respect to the $\Vect$-action.
\end{proof}

\ssec{Modules over relatively rigid categories}
\sssec{} Let $B$ be rigid relative to $A$, and $M\in B-mod$. Let $\act: B\otimes_A M\to M$ be the action map.

\begin{Lm} The functor $\act: B\otimes_A M\to M$ admits a continuous right adjoint given by
\begin{equation}
\label{act^R_for_Lm_1.1.2G}
M\toup{\mu\otimes\id} B\otimes_A B\otimes_A M\toup{\id\otimes\act} B\otimes_A M.
\end{equation}
Here $\mu$ is given in Proposition~\ref{Pp_1.0.9G}. In fact, (\ref{act^R_for_Lm_1.1.2G}) is a map in $B-mod$. 
\end{Lm}
\begin{proof} The proof of (\cite{G}, ch. I.1, 9.3.2) applies in our situation. Let us indicate the corresponding unit and counit. The composition
$$
M\toup{(\ref{act^R_for_Lm_1.1.2G})} B\otimes_A M\toup{\act} M
$$
identifies with the multiplication map by $mm^R(1)\in B$. So, $counit: mm^R(1)\to 1$ gives the desired counit natural transformation $\act \comp (\ref{act^R_for_Lm_1.1.2G})\to \id$.

 The composition
$$
B\otimes_A M\toup{\act} M\toup{(\ref{act^R_for_Lm_1.1.2G})} B\otimes_A M
$$  
is $B$-linear and sends $1_B\otimes x\in B\otimes_A M$ to $(\id\otimes\act)(m^R(1_B)\otimes x)$. The map $1_B\otimes 1_B\to m^B(1_B)$ yields the desired unit natural transformation $\id\to (\ref{act^R_for_Lm_1.1.2G})\comp\act$.

 Since (\ref{act^R_for_Lm_1.1.2G}) is $A$-linear, its right-lax $B$-structure is strict by Proposition~\ref{Pp_1.0.12_strict_morphismG}.  
\end{proof}

\begin{Lm} Let $M\in B-mod$, $N\in B^{rm}-mod$. Then the canonical arrow $N\otimes_A M\to N\otimes_B M$ has a right adjoint in $A-mod$, in particular it is continuous.
\end{Lm}
\begin{proof} Consider the functor $\cF: \bfitDelta^{op}\to A-mod$, $[n]\mapsto N\otimes_A (B^{\otimes n}_A)\otimes_A M$, here $B^{\otimes n}_A$ is the $n$-th tensor power of $B$ in $A-mod$. Since all the maps $m: B\otimes_A B\to B$, $N\otimes_A B\to B$, $B\otimes_A M\to M$ admit right adjoints in $A-mod$, we may pass to right adjoints in $\cF$ and get $\cF^R: \bfitDelta\to A-mod$. So, $ins_0: N\otimes_A M\to N\otimes_B M$ has a right adjoint in $A-mod$ given by $\ev_0: N\otimes_B M\to N\otimes_A M$.
\end{proof}

\begin{Rem} i) One has $M^c_B\subset M^c_A$. Indeed, for $m, y\in M$ one has canonically $\und{\HOM}_A(m, y)\,\iso\, u^R\und{\HOM}_B(m, y)$ in $A$.

\medskip\noindent
ii) If $B_1, B_2$ are rigid relative to $A$ then $B_1\otimes_A B_2$ is also rigid relative to $A$. 

\medskip\noindent
iii) Let $M$ be left dualizable as a left $B$-module category and $M^{\vee, B}\in B^{rm}-mod$ be the left dual of $M$ in the sense of (\cite{Chen}, A.2.1). Then $M$ is dualizable in $A-mod$, and $M^{\vee, A}\,\iso\, M^{\vee, B}$ by (\cite{Chen}, A.3.8).
\end{Rem}

\begin{Lm} 
\label{Lm_1.2.5_rigidityG}
Assume $B\in CAlg(A-mod)$, and $B$ rigid relative to $A$. Let $M\in B-mod$. Assume $M$ is dualizable in $A-mod$, let $M^{\vee, A}$ be its dual in $A-mod$. Then $M$ is dualizable in $B-mod$ and $\Fun_B(M, B)\,\iso\, M^{\vee, A}$ canonically in $B-mod$. 
\end{Lm}
\begin{proof}
This is analogous to (\cite{G}, ch. I.1, 9.4.4). Note that $\phi_B: B\,\iso\, B^{\vee}$ is an isomorphism of $B$-bimodules under our assumption. Let $E\in B-mod$. Note that $M^{\vee, A}=\Fun_A(M, A)$ is naturally a right $B$-module. 

We claim that there is a canonical isomorphism 
$$
\Fun_B(M, E)\,\iso M^{\vee, A}\otimes_B E.
$$ 
Indeed, $\Fun_B(M, E)\,\iso\, \underset{[n]\in\bfitDelta}{\lim} \Fun_A(B^{\otimes n}_A\otimes_A M, E)$, where we have denoted by $B^{\otimes n}_A$ the $n$-th tensor power of $B$ in $A-mod$. In turn, for $[n]\in\bfitDelta$, 
$$
\Fun_A(B^{\otimes n}_A\otimes_A M, E)\,\iso\, M^{\vee, A}\otimes_A (B^{\vee})^{\otimes n}_A\otimes_A E
$$
Each transition functor in the diagram
$$
\underset{[n]\in\bfitDelta}{\lim} M^{\vee, A}\otimes_A (B^{\vee})^{\otimes n}_A\otimes_A E
$$
admits a left adjoint, so the latter limit identifies with 
$$
\underset{[n]\in\bfitDelta^{op}}{\colim}  M^{\vee, A}\otimes_A B^{\otimes n}_A\otimes_A E\,\iso\, M^{\vee, A}\otimes_B E.
$$
as desired. Our claim follows.
\end{proof}

\begin{Cor} Assume $B\in CAlg(A-mod)$, and $B$ rigid relative to $A$. 

\noindent
i) $B$ is dualizable in $B\otimes_A B^{rm}$-modules, and its dual in $B\otimes_A B^{rm}-mod$ identifies with $B$.\\
ii) For $M\in B-mod$, $N\in B^{rm}-mod$ one has $\Fun_{B\otimes_A B^{rm}}(B, M\otimes_A N)\,\iso\, N\otimes_B M$ canonically.
\end{Cor}
\begin{proof}
i) Combine Proposition~\ref{Pp_1.0.9G} with Lemma~\ref{Lm_1.2.5_rigidityG}.

\smallskip\noindent
ii) One has canonically $B\otimes_{B\otimes_A B^{rm}}(M\otimes_A N)\,\iso\, N\otimes_B M$.
\end{proof}

\end{document}